\patchcmd{\quote}{\rightmargin}{\leftmargin 2em \rightmargin}{}{}
\newtheorem{sat}{Theorem}[section]		\newtheorem{lem}[sat]{Lemma}
\newtheorem{kor}[sat]{Corollary}			\newtheorem{prop}[sat]{Proposition}
\newtheorem{bei}{Example}				
		\newtheorem*{bei*}{Example}
\newtheorem*{sat*}{Theorem}				\newtheorem*{kor*}{Corollary}
\newtheorem*{rmk*}{Remark}				\newtheorem{quest}{Question}
\newtheorem{claim}[sat]{Claim}	
\newtheorem*{claim*}{Claim}	
\newtheorem{fact}[sat]{Fact}	
\newtheorem*{fact*}{Fact}	
\let\ssection=\section
\renewcommand{\section}{\setcounter{equation}{0}\ssection}
\newtheorem*{namedtheorem}{\theoremname}
\newcommand{\theoremname}{testing}
\newenvironment{named}[1]{\renewcommand{\theoremname}{#1}\begin{namedtheorem}}{\end{namedtheorem}}
\theoremstyle{remark}
\newtheorem*{bem}{Remark}
\newtheorem{example}[sat]{Example}	
\theoremstyle{definition}
		\newtheorem{defi}[sat]{Definition}
		\newtheorem*{defi*}{Definition}	
\newcommand{\BC}{\mathbb C}			\newcommand{\BH}{\mathbb H}
\newcommand{\BR}{\mathbb R}			\newcommand{\BD}{\mathbb D}
\newcommand{\BN}{\mathbb N}			
\newcommand{\BS}{\mathbb S}			
				\newcommand{\BT}{\mathbb T}
\newcommand{\CE}{\mathcal E}		
		\newcommand{\CH}{\mathcal H}
\newcommand{\CI}{\mathcal I}		
		\newcommand{\CL}{\mathcal L}
		\newcommand{\CN}{\mathcal N}
		\newcommand{\CR}{\mathcal R}
		\newcommand{\CT}{\mathcal T}
\newcommand{\CU}{\mathcal U}		\newcommand{\CV}{\mathcal V}
\newcommand{\D}{\partial}
\newcommand{\into}{\hookrightarrow}
\DeclareMathOperator{\PSL}{PSL}		
\DeclareMathOperator{\area}{area}
\DeclareMathOperator{\inj}{inj}
\DeclareMathOperator{\diam}{diam}
\DeclareMathOperator{\rank}{rank}
\DeclareMathOperator{\width}{width}
\DeclareMathOperator{\length}{length}
\begin{document}

\title[]{Thick hyperbolic $3$-manifolds with bounded rank}
\author{Ian Biringer \& Juan Souto}
\begin{abstract}
 We construct a geometric decomposition  of the convex cores of $\epsilon$-thick hyperbolic $3$-manifolds $M$ with bounded rank.  Corollaries include upper bounds in terms of rank and injectivity radius on the Heegaard genus of $M$ and on the radius of  any embedded ball in the convex core of $M$.
\end{abstract}
\maketitle

\tableofcontents
\clearpage

\section{Introduction}

Thurston's Geometrization Conjecture gives topological criteria that determine exactly when a closed $3$-manifold admits a hyperbolic metric.  With its resolution by Perelman \cite {Perelmanentropy,Perelmanfinite,Perelmanricci}, new questions now arise. For instance, Mostow's Rigidity Theorem implies that a hyperbolic metric on a closed $3$-manifold $M$ is unique, if it exists, so it is natural to try to extract concrete geometric information about the metric from the topology of $M$. This program is usually referred to as \emph{effective} geometrization, and has been studied by Brock, Canary, Minsky, Namazi, Souto, e.g.\ \cite{Brockclassification,Brockbounded,Brockvolume,Namaziheegaardbounded}, among others.  

The $\emph{rank}$  of a group is the minimum size of a generating set, and if $M$ is a $3$-manifold we define the \emph{rank} of $M$ to be the rank of its fundamental group. Our main focus here is on the  constraints imposed on the geometry of a hyperbolic $3$-manifold $M$ by its rank, and most of our results will be phrased in terms of understanding the geometry of collections of hyperbolic $3$-manifolds with bounded rank.

It is not hard to construct infinite collections of hyperbolic $3$-manifolds with bounded rank.  For instance, if $\Sigma $  is a closed surface of genus $g$ and $\phi : \Sigma \longrightarrow\Sigma$  is a homeomorphism, then the mapping torus $M_\phi$ has rank  at most $2g+1$, and Thurston~\cite{Thurstonhyperbolic2}  showed that $M_\Sigma$ is hyperbolic as long as $\phi$ is pseudo-Anosov.  Similarly, if $H$ is a genus $g$  handlebody and $\phi : \partial H \longrightarrow \partial H$  is a homeomorphism, then any  manifold $$M=H \sqcup_\phi H$$ obtained by gluing $H$ to itself via $\phi$ (i.e.\ a manifold obtained as a   \emph{Heegaard splitting} from $\phi$) has rank  at most $g$, and here Hempel \cite{Hempel3-manifolds} has given a hyperbolicity condition for $M$  in terms of how $\phi$  interacts with the set of curves on $\partial H$  that bound disks in $H$. See \S \ref{corollariesintro} below for more on the relationship of  rank with Heegaard splittings.

 More generally, suppose that one is given a  finite collection $\mathcal M $ of compact $3$-manifolds with boundary.  One can create closed $3$-manifolds $M $ by gluing  boundedly many copies of these `building blocks' together along their boundary components, and infinitely many closed $3$-manifolds can usually be produced by varying the gluing maps.  However, there is a universal upper bound on the rank of any $3$-manifold $M$ produced from such a gluing. 
And again one can often ensure that $M$ is hyperbolic by choosing the gluing maps to be complicated enough, see Brock--Minsky--Namazi--Souto \cite{Brockbounded} in this general case.

 Our main result is a `converse'  of this general construction for $\epsilon$-thick manifolds. (Recall that  the \emph {injectivity radius} $inj(M)$ of a  hyperbolic $3$-manifold $M$ is  half the length of the shortest closed geodesic, and that $M$ is \emph {$\epsilon$-thick} if $\inj(M)\geq \epsilon$, see also \S \ref{tt}.)   It says that the convex core $CC(M)$ of any $\epsilon$-thick hyperbolic $3$-manifold $M$ with bounded rank decomposes  as a gluing  of manifolds from some fixed finite collection,  and that this decomposition  is geometric, not just topological. Here, recall that  $CC(M)$ is  the smallest convex submanifold of $M$   whose inclusion is a homotopy equivalence,  see \S \ref{convexcores}. 

\begin{named}{Theorem \ref{general case}, informally\footnote{Since many of our theorem statement are somewhat complicated, we will only state informal versions of many of them in the introduction.  The reader who wants to know the details can look for the theorem with the same label later in the paper.
}}
 Fix $k,\epsilon>0$ and let $M$ be an orientable hyperbolic $3$-manifold $M$ such that $CC(M)$ is $3$-dimensional and 
$$\rank(\pi_1(M))\le k, \ \ \inj(M)\ge\epsilon.$$
Then $M$ can be written as a union of  `building blocks' glued together along `thin $1$-handles' and `product regions'. The number of terms in this decomposition is at most some $n(k)$, and each building block has  diameter at most some $B(k,\epsilon)$ and is homeomorphic to some manifold on a finite list  that depends only on $k,\epsilon$.
\end{named}

\begin{figure}
	\centering
\includegraphics{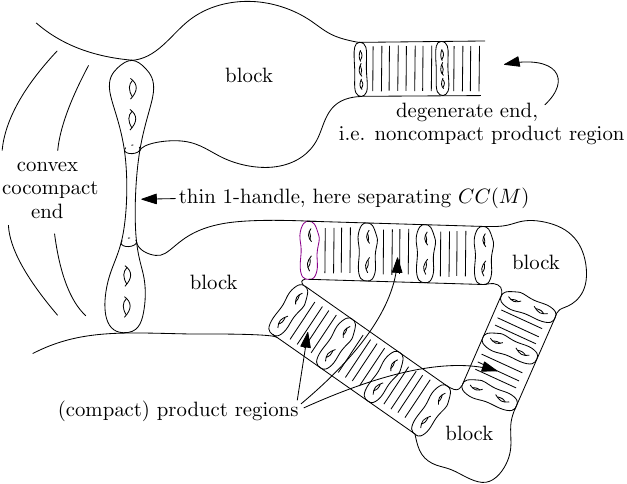}
\caption{An illustration of Theorem \ref{general case}.}\label{mainpic}
\end{figure}
See Figure \ref{mainpic}  for an illustration of the theorem.  The assumption that $CC(M)$ is $3$-dimensional is just for  simplicity in the statement, and if desired, a similar geometric decomposition theorem is trivially true for manifolds with lower dimensional convex cores. Informally,  thin $1$-handles and product regions  should be thought of as follows. 
\begin{enumerate}
\item A \emph {product region}  is a  properly embedded $3$-submanifold $U \subset M $  that is homeomorphic to $\Sigma \times I$,  for some closed surface $\Sigma $ and some closed, open, or half open interval $I$,  in such a way that the fibers $\Sigma \times \{t\}$  have bounded diameter. 
 \item A \emph{thin $1$-handle} is a embedded copy of $D^2 \times [0,1]$ in $ CC(M)$  such that each disk $D^2 \times \{t\}$ has very small diameter, and where $(D^2 \times [0,1]) \cap \partial CC( M)=(\partial D^2) \times [0,1]$, which is a component of the $\delta$-thin part of $\partial CC(M)$ for some small $\delta$.%
\end{enumerate}
See  Definitions \ref{prdef} and \ref{epsilon handle}, respectively, for the real definitions of the above. In (1), any sufficiently large fixed  diameter bound will give basically the same definition of product region, up to a boundary error, see essentially Proposition \ref{Bsurfaces}. We should mention that  (1)  looks quite a bit different from Definition \ref{prdef}, but one can translate between the two using Lemma \ref{limit-double} and work of Nina White \cite{white2013spectral}, if desired. The terminology in (2) is inspired by Bowditch, who uses such $1$-handles in similar manner in  \cite{bowditch2010upper}. Note that when $M$ is closed, $CC(M)=M$ and has no boundary, so the theorem says that $M$  decomposes as a union of building blocks and product regions.

 The hypothesis that $M$ is $\epsilon$-thick is essential in our proof.  The main reason for this is that the building blocks are constructed from compact cores of  geometric limits of \emph{sequences} of such $M$, see \S \ref{limitsintro} below, and without the  thickness assumption these limits can be quite wild. However, much of the work we do on product regions, and even some parts of the proof  of Theorem \ref{general case},  can be done with minimal modification in the thin case, by working relative to the thin part $M_{< \epsilon}$.  So, with an eye toward future applications, we have written almost all of Sections \S \ref{topologysec}--\S \ref{splitting along barriers}  without any assumption on injectivity radius.

 This paper  is the culmination of our earlier papers \cite{Biringergeometry,Biringerfiniteness,Biringerranks,Soutorank}, which all illustrate  in some sense how rank  interacts with product regions, and \cite{Biringeralgebraic}, in which we prove some technical results concerning limits of Kleinian groups that  are necessary for  Theorem \ref{general case}.

 We should say that at least some parts of the rank $2$ case of our theorem were known to Ian Agol, who proved (but did not make publicly available) the  related fact that there are only finitely many closed $\epsilon$-thick hyperbolic $3$-manifolds $M$ with $\rank \pi_1 M =2$ but Heegaard genus bigger than $2$. See \S \ref{corollariesintro} below for more about Heegaard genus.

\medskip

The rest of this introduction will be as follows. In \S \ref{corollariesintro}, we will state three corollaries of Theorem \ref{general case}. In \S \ref{carrierintro}, we  introduce \emph {carrier graphs}  an important technical tool  we use to understand rank geometrically, and in \S \ref{wideprintro} and \S \ref{limitsintro} we present some results on product regions, and on limits of Kleinian groups, that are used in multiple places in the paper. And finally in \S \ref{msrs}, we will  briefly  mention \emph{minimizing simplicial ruled surfaces}, which appear everywhere in the paper, and for which one can prove elegant versions of the well-known interpolation and  shrinkwrapping theorems, as in Canary \cite{Canarycovering} and Soma \cite{Somaexistence}.

\subsection {Conventions}  For simplicity, we'll assume that \emph{all $3$-manifolds in this paper are orientable}, although some of the results are true without this assumption. Also, all Riemannian manifolds are complete.

\subsection{Some corollaries}
\label{corollariesintro}  We present in this section three corollaries of Theorem \ref{general case}. Their brief proofs can be found in \S \ref{corollaries}.

First,  note that both product regions and thin $1$-handles fiber over an interval, with bounded diameter fibers. As building blocks have entirely bounded diameter, this  shows that $CC(M)$ has the coarse structure of a metric graph, where the product regions and thin $1$-handles are edges, and where the building blocks are vertices.  More formally,

\begin{named}{Corollary \ref{qicor}}
 There is some $C=C(k,\epsilon)$  such that the convex core of any  hyperbolic $3$-manifold $M$ with $$\rank(\pi_1(M))\le k, \ \  \inj(M)\ge\epsilon$$
is $(1,C)$-quasi-isometric to a metric graph  with at most $n=n(k)$ edges, some of which some of which may be half-infinite or biinfinite.
\end{named}

As a corollary of the corollary, one can then prove the following:

\begin{named}{Corollary \ref{embedded balls cor}}
	 Suppose that $M$  is a hyperbolic $3$-manifold  such that $\rank(\pi_1(M))\le k$  and $ \inj(M)\ge\epsilon$. Then  the radius $r_M$ of the largest embedded ball in $CC(M)$ is at most some  function of $k,\epsilon$.
\end{named}

This result  was one of our initial motivations here. It resolves the following 1989 question of McMullen, see \cite{conformalproblemlist}, in the thick case.

\begin{quest}[McMullen]
If $M$ is a hyperbolic $3$-manifold with rank at most $k$, is the radius $r_M$ of the largest embedded ball in $CC(M)$  bounded above by some function of $k$?
\end{quest}

  We should mention that in \cite{bowditch2010upper}, Bowditch gave a  uniform bound for $r_M$ that depends only on the topology of $M$.  However, he does not relate it to any specific  algebraic invariants of $\pi_1 M$.

\medskip

Recall that a \emph {Heegaard splitting} of a  compact $3$-manifold $M$ is a surface $S \subset M $ that divides $M $ into two  compression bodies, $M=C_1 \cup_S C_2$, where $S$ is the exterior boundary of each $C_i$. (See \S \ref{compression bodies} for definitions; when $M$ is  closed, $C_1,C_2$ are handlebodies.) Any compact, orientable $M$ admits a Heegaard splitting, and the minimal genus of such a splitting is the \emph {Heegaard genus} $g(M) $.  Note that $$\rank (M ) \leq g(M) ,$$ since the fundamental group of each  compression body in a Heegaard splitting surjects onto $\pi_1 M$.

In the 1960s, Waldhausen conjectured that the rank and genus of a closed orientable $3$-manifold are always equal.  This became known as the \emph {Generalized Poincar\'e Conjecture}, since the rank zero case is the Poincar\'e conjecture.  In 1984, Boileau-Zieschang \cite{Boileauheegaard} constructed Seifert-fibered $3$-manifolds with rank two and Heegaard genus three, disproving the conjecture, and later Schultens-Weidmann \cite {Schultensgeometric} showed that the difference between rank and genus can be arbitrarily large.  The question of whether rank and genus were equal for \emph {hyperbolic} $3$-manifolds remained open (and active) until Li \cite{Lirank} constructed hyperbolic examples where they differ in 2013.  

On the other hand, in all known examples the Heegaard genus is at most twice the rank.  Considerable effort has been made to either prove or disprove the existence of a linear bound \cite{Abertrank,Lackenbyasymptotic,Lackenbyexpanders,Lackenbyheegaard}, but currently  it is not even known whether the Heegaard genus  of a $3$-manifold is bounded above by any function of  rank. However, using our geometric decomposition  one can prove the following.

\begin {named}{Corollary \ref {genus cor}}
The Heegaard genus $g(M)$ of a  hyperbolic $3$-manifold $M $ is bounded above by some function of $\rank(\pi_1 M)$ and $\inj(M)$.
\end {named}

Here, the Heegaard genus of a noncompact $M$ with finite rank is that of any manifold compactification, which must exist by the Tameness  Theorem \cite{Agoltameness,Calegarishrinkwrapping}.  Note that when $M$  is closed, Corollary \ref{genus cor} actually implies Corollary \ref{embedded balls cor},  using work of  Bachman--Cooper--White \cite{Bachmanlarge}.

\subsection {The role of carrier graphs}\label {carrierintro}  It is crucial in our work to have a tool that  allows one to understand a generating set for $\pi_1 M$  in a way that interfaces with the hyperbolic geometry of $M$. 

A \emph{carrier graph} is a $\pi_1$-surjective map $X \longrightarrow M$, where $X$  is a graph. Any generating set for $\pi_1 M$ can be represented by a carrier graph in $M$, for instance as a wedge of circles. Two carrier graphs $f : X \longrightarrow M $ and $g : Y \longrightarrow M$ 
are \it equivalent \rm if there is a homotopy equivalence $h: X \longrightarrow Y $ such that $g \circ h $ is homotopic to $f $, and a carrier graph has \it minimal length \rm if it minimizes edge length sum over all equivalent carrier graphs.   

In \cite {Whiteinjectivity}, White noticed that minimal length  carrier graphs\footnote{In White's work, he assumes that his carrier graphs have minimal rank, i.e.\ $\rank \pi_1 X = \rank \pi_1 M$. However, the same arguments prove the statements we make in this paragraph. Alternatively, see the appendix of \cite{Biringergeometry}.} have restricted geometry: for instance, they are trivalent with $3\rank (\pi_1 X) - 3 $ geodesic edges that connect at $\frac {2\pi} 3 $-angles.  He used this to show that if $\pi_1 M$ is not free, then $X$ contains a cycle whose image in $M $ has length bounded above by a function of $\rank (\pi_1 X) $.  We extend White's observation as follows, generalizing our earlier work in \cite{Soutorank,Biringergeometry,Biringerranks}.

\begin{named}{Theorem \ref{chainsprop}}
Given $\epsilon>0 $, there is  some  constant $D =D(\epsilon)$ as follows. Let $M$ be a  complete $\epsilon$-thick hyperbolic $3$-manifold  and assume that  the surface $\partial CC(M)$ has  injectivity radius at  least $\epsilon$. If $$f: X \longrightarrow M$$ is a  minimal length carrier graph and $Y \subset X$  is a proper subgraph of $X$, there is an edge $e \subset X \setminus Y$ that spends a length of at most $D$ outside of the `hyperbolic convex hulls of the adjacent components of $f(Y)$'. \end{named}

 See \S \ref{carrier}  for a precise statement; the convex hull indicated above should be taken in the universal cover rather than in $M$. Recall that the surface $\partial CC(M)$ is intrinsically hyperbolic, and that it may be thin even if $M$ is thick, if it is compressible in $M$.

\medskip 

 The basic idea behind  the existence of the thin $1$-handles and   product regions in Theorem \ref{general case}, then, is as follows. Each thin part of $\partial CC(M)$  determines a thin $1$-handle. If we split $M$  along all these $1$-handles,  we can reduce to the case where $\partial CC(M)$ is thick, so that  Theorem \ref{chainsprop} applies.  Take a  minimal length, minimal rank carrier graph $$X \longrightarrow M.$$ Since $\rank(M)$ is bounded and $X$  is trivalent, $X$ has  a bounded number of edges. So, we can divide $X$ into a `short' subgraph $Y \subset X$ and its `long' complement $X \setminus Y$,  both possibly disconnected,  such that \begin {itemize}
 \item 	all edges of $Y$ have  length at most some $L=L(k,\epsilon)$,
\item  all edges of $X \setminus Y$ have length \emph{much} bigger than $L$. 
 \end {itemize}
Theorem \ref{chainsprop}  then says that there is an edge $e \subset X \setminus Y$  that only spends a  bounded  amount of its length outside the convex hulls of  the adjacent components $Y_0 \subset Y$.  Since $e$ is extremely long,  some  such $Y_0$ has a huge convex hull. And since all edges of $Y_0$  have bounded length, this convex hull is much bigger than $Y_0$ itself, which means the cover of $M$ corresponding to  the image of $\pi_1(Y_0) \longrightarrow \pi_1 M$ has a huge diameter convex core, which implies that it  contains a product region. (This is a finite version of the statement that any $\epsilon$-thick hyperbolic $3$-manifold with finitely generated fundamental group and \emph{noncompact} convex core has  a degenerate end. Note that degenerate ends have neighborhoods that are noncompact product regions, see Proposition \ref{constructing prs}.)

  Except for the part about thin $1$-handles, the argument above was essentially  present in our earlier paper \cite{Biringerfiniteness}, where we showed that all but finitely many thick, closed hyperbolic $3$-manifolds with bounded rank contain a wide product region.  In some sense, the kernel of this paper is Lemma \ref{kernel}, which is a similar, but much more involved,  manipulation of minimal length carrier graphs. See also \ref{pfsketch} below, where we outline the proof of Theorem \ref{general case} (and of Lemma \ref{kernel}) in detail.

\subsection{Wide product regions are unknotted} \label{wideprintro}While the kernel of this paper may  involve carrier graph manipulations, much of the surrounding work involves understanding how product regions that are wide relative to their genera can sit inside of $M$.  In particular, it is crucial for us to show that such wide product regions cannot be `knotted' inside $M$. In hopes of later applications, we develop almost all of this theory without the $\epsilon$-thick assumption, using a notion of `width'  relative to the $\epsilon$-thin part of $M$.

Let $M$ be a hyperbolic $3$-manifold, let $\epsilon>0$, and let $U \subset M$  be a product region $U \subset M$, say with $U\cong \Sigma_g \times [0,1]$. The \emph{genus} of $U$ is defined to be the genus $g$ of the surface $\Sigma_g$, and the \emph {$\epsilon$-width} of $U$ is 
$$\width_\epsilon(U) := \inf_{\gamma} \length (\gamma \cap M_{\geq \epsilon}),$$
where the  infimum is over paths $\gamma$ in $U$  that join the two boundary components of $U$, and $M_{\geq\epsilon}$  is the $\epsilon $-thick part of $M$.

A special case of  one of our results is the following:

\begin{named}{Special case of Corollary \ref{unknotted}}
Given $g\geq 2,\epsilon>0$,  there is some $L$ as follows.  Suppose $M$ is the interior of a compression body $\bar M$, and $U \subset M$ is a genus $g$  product region with $\epsilon$-width at least $L$. Then  each component of $\bar M \setminus int(U)$ is a compression body. 
\end{named}

 So for instance, if $M$ is the interior of a handlebody, the surface in Figure \ref{knottedcore} cannot be a level surface in a wide product region.

\begin{figure}
	\centering
\includegraphics{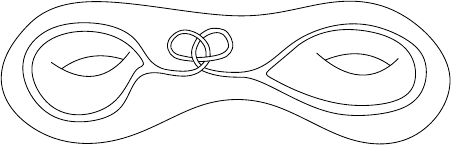}
\caption{A `knotted' genus $2$ surface in a handlebody.}\label {knottedcore}
\end{figure}

 Here is another related result. Given a  product region $U \subset M$, say with $U \cong \Sigma_g \times [0,1]$, we can form the \emph{double compression body} $$\mathcal {DC}(M,U)\subset M$$ from $U$ by  attaching a  maximal collection of disjoint $2$-handles in $M\setminus int(U)$ to the two components of $\partial U$, and  filling in any $2$-sphere boundary components  of the result  with the balls they bound in $M$.

\begin {named}{Theorem \ref{double compression body theorem}}
Given	$g\geq 2,\epsilon>0$, there is $L$ as follows. Suppose $M$ has no cusps, and $U$ has genus $g$ and $\epsilon$-width at least $L$.
\begin{enumerate}
	\item If $\Sigma $ is a  boundary component of $\mathcal {DC}(M,U)$  that is not isotopic to a level surface of $U$, then $\Sigma$ is incompressible in $M$.
\item Either the inclusion $\mathcal{DC}(M,U) \hookrightarrow M$ is $\pi_1$-injective, or  $U$ compresses to one side in $M$ and bounds a twisted interval bundle over a nonorientable surface to the other side.
\end{enumerate}
\end {named}

Part (2)  is the more difficult part of this result.  Now, if $\partial U = S_1 \cup S_2$, one way the double compression body $\mathcal {DC}(M,U)$  could fail to be $\pi_1$-injective is that there could be curves $\alpha_1,\beta_1 \subset S_1$, where $\alpha_1$ compresses in $M \setminus int(U)$  and $\beta_1 $  does not, but where these curves are homotopic in $U$ to $\alpha_2,\beta_2 \subset S_2$, which bound an immersed\footnote{In the exceptional case described in part (2), the annulus goes through the twisted interval bundle and is not embedded --- its two boundary components intersect. Indeed, one can use part (1) to show that when $U$ has large width, no curve that is a boundary component of an embedded annulus in the twisted interval bundle can bound a disk on the other side of $U$.} annulus in $M\setminus int(U)$.  So, $\beta_1$ is nullhomotopic in $M$, but might not be in $\mathcal {DC}(M,U)$. Part (2) says that this can only happen if $S_2$ bounds a twisted interval bundle.

In contrast, suppose that $S$  is the boundary of a regular neighborhood $\CN(K)$ of a nontrivial knot $K \subset S^3$. Then  the  double compression body of (a regular neighborhood of) $S$  can be  defined in the same way as above, and will be isotopic to $\CN(K)$.  However,  the inclusion $\CN(K) \hookrightarrow S^3$  is certainly not $\pi_1$-injective.

\subsection{The structure of strong limits}
\label{limitsintro}
The building  blocks in Theorem \ref{general case} are constructed as compact cores of certain limits of \emph{sequences} of manifolds with bounded rank and injectivity radius. Hence, it is important for us to be able to control the  topology of such limits, and relate their geometry to  that of the approximating manifolds.   

If $\rho_i : \Gamma \longrightarrow \PSL_2\BC$  is a sequence of discrete representations,  one says that $(\rho_i)$ converges {\em algebraically} to a representation $\rho$ if for every $\gamma\in\Gamma$ we have $\rho_i(\gamma)\to\rho(\gamma)$ in $\PSL_2\BC$. And if the images $\rho_i(\Gamma)$ converge in the Chabauty topology  to some subgroup $G<\PSL_2 \BC$, one says that $\rho_i \to G$  \emph {geometrically.} If $(\rho_i)$  converges both algebraically to $\rho$  and geometrically to $G=\rho(\Gamma)$, one says that $\rho_i \to \rho$ \emph {strongly}.  The reader can refer to \S \ref{convergence section} for more details.

The main reason why we consider $\epsilon$-thick manifolds in this paper is that otherwise, cusps may appear in the limit, drastically complicating its structure. For instance, by work of Thurston and Bonahon--Otal~\cite{Bonahonvarieties}, there is a  sequence of  discrete, faithful representations $$\rho_i : \pi_1 S \longrightarrow \PSL_2\BC,$$ where $S$ is a  closed surface of genus $2$, that converges both algebraically and geometrically, but where the geometric limit is a  subgroup of $\PSL_2\BC$  that is not finitely generated.   In contrast, if $\rho_i\to \rho$ is an algebraically convergent  sequence of discrete, faithful surface group representations, and $\rho$ has no parabolics (e.g., if each $\BH^3 / \rho_i(\pi_1 S)$ is $\epsilon$-thick for some uniform $\epsilon>0$) then it is known that $\rho_i\to \rho$ converges strongly. See e.g.\ \cite[Proposition 7.39 and Theorem 7.41]{Matsuzakihyperbolic}.

The structure of algebraic, geometric and strong limits has been heavily studied for sequences of $(\rho_i)$ of \emph{faithful} representations, see for instance \cite{Andersoncores,Andersoncores2,Evansweakly,Ohshikacontinuity}.  However,  in this paper our representations all come from (usually unfaithful) markings of  the  fundamental groups of hyperbolic $3$-manifolds $M$ by free groups that are given by  carrier graphs $f : X \longrightarrow M$. In \cite{Biringeralgebraic}, we studied the algebraic and geometric limits of discrete, unfaithful representations. Combining our work there with our work above on wide product regions gives the following, which is used several times in the proof of Theorem \ref{general case}.

\begin{named}{Theorem \ref{strong limits}, informally}
	Suppose $\Gamma$  is a finitely generated group, $\rho_i : \Gamma \longrightarrow \PSL_2 \BC$  is a sequence of discrete representations, $\rho_i \to \rho_\infty$   strongly, and none of these representations have parabolics.  Set $$M_i = \BH^3 / \rho_i(\Gamma), \ \ i = 1,2,\ldots,\infty.$$
Then  there is a standard compact core $C_\infty \subset M_\infty$ that pulls back to subsets $C_i \subset M_i$ as follows.  Suppose that a component $S \subset \partial C_\infty$ pulls back to components $S_i \subset\partial C_i$. Then for large $i$,
\begin{enumerate}
\item if $S$ faces a degenerate end of $M_\infty$,  the surface $S_i$ is adjacent to a wide product region in $M_i \setminus int(C_i)$,  on the other side of which is a compression body, and 
\item if $S$  faces a convex cocompact end of $M_\infty$,  the surface $S_i $ faces a  convex cocompact end of $M_i$.
\end{enumerate}
\end{named}

See Figure \ref{stronglimpic} for an illustration. Above, a compact core is \emph{standard} if its complementary components are topological products, see \S \ref{cores}.

\begin{figure}
\centering
\includegraphics{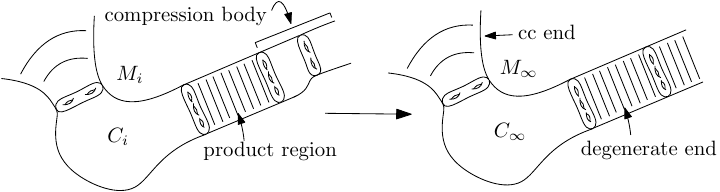}
\caption{An  illustration of Theorem \ref{strong limits}.}
\label{stronglimpic}
\end{figure}

\subsection{Minimizing simplicial ruled surfaces} \label{msrs} In the 1970s, Thurston understood that the geometry of a hyperbolic $3$-manifold is in some sense controlled by the negatively curved surfaces it contains.  Most of his work involved \emph{pleated surfaces}, intrinsically hyperbolic surfaces $S \looparrowright M$ that are totally geodesic except along some lamination in $S$. Recently, many authors have instead turned to the more robust \emph{simplicial  hyperbolic surfaces}, or \emph{simplicial ruled surfaces (SRSs)} when  the underlying manifold has variable negative curvature. These surfaces are no longer intrinsically hyperbolic, but have intrinsic singular metrics of negative curvature. See \S \ref{sec:shs}  for details.

Negatively curved surfaces in $3$-manifolds are useful because their geometry is constrained, and because they are ubiquitous.  Early results of Thurston and Bonahon \cite{Bonahonbouts} construct pleated and simplicial hyperbolic surfaces in hyperbolic $3$-manifolds with great flexibility, and there is a family of results that allows one to  interpolate between negatively curved surfaces in $3$-manifolds with surfaces of the same type.   Perhaps the most well-known of these is due to Canary \cite{Canarycovering}, who proved that  in a hyperbolic $3$-manifold without cusps, any two homotopic, $\pi_1$-injective  simplicial hyperbolic surfaces that are `useful' (see \S \ref{homotopies}) are homotopic through simplicial hyperbolic surfaces. While Canary's argument does not directly apply to variable curvature, Agol  \cite{Agoltameness} gave a slightly weaker interpolation result for SRSs in manifolds with pinched negative curvature  as part of his proof of the Tameness Theorem. Soma \cite{Somaexistence} showed that in the absence of $\pi_1$-injectivity, one can also work with negatively curved surfaces that are `$2$-incompressible' relative to an appropriate geodesic link $\Lambda \subset M $,  extending and simplifying earlier technology of Calegari-Gabai \cite{Calegarishrinkwrapping}. 

For our work in this paper, we found it necessary to develop a more general theory of \emph{minimizing simplicial ruled surfaces} in manifolds with pinched negative curvature. While the theorems mentioned above all require some sort of incompressibility, and are awkward if there are cusps in the manifold, in our setting we prove an Interpolation Theorem (see \S \ref{homotopies}) that works in complete generality, requiring only that the relevant surfaces $S \longrightarrow M$ have nonelementary $\pi_1$-image in $M$. When  $\pi_1$-injectivity is desired for applications, we can find a geodesic link $\Lambda\subset M$ such that $S$ is $\pi_1$-injective in the complement $M\setminus \Lambda$, endow $M\setminus \Lambda$ with  a metric of pinched negative curvature, and work simply with simplicial ruled surfaces in $M\setminus \Lambda$  homotopic to $S$.  We find this new theory to be elegant, and more powerful than its predecessors. For instance, using our theory one can give a clean proof of a strong version of Canary's Filling Theorem in variable negative curvature, as well as an associated convex core filling theorem, see \S \ref{filling thm sec}.

 Briefly,  the idea behind minimizing simplicial ruled surfaces is  that we start with a triangulated surface $S$ in which the edges of the triangulation have been assigned positive weights and a homotopy class of maps $S \longrightarrow M$, find a map  in the right homotopy class from the $1$-skeleton of $S$ to $M$ that minimizes  the \emph{weighted}  sum of the edge lengths, and then extend the map to all of $S$ via ruled triangles.  The result  is unique, at least on the $1$-skeleton, and will be a simplicial ruled surface. To  interpolate  between two such maps, we just take  a common refinement of the two triangulations of $S$ and take any interpolation from the induced system of weights  supported on the edges of the first, to that on the edges of the second. See \S \ref{homotopies}.
 
There is one technical point we should mention. In previous references, a simplicial ruled surface $S \longrightarrow M$ is required to map every triangle in $S$ to a nondegenerate geodesic triangle in $M$. Our theory requires that we allow triangles to degenerate. So, a given triangle in $S$ can map to a point or a geodesic segment in $M$. When $S \longrightarrow M$ is not $\pi_1$-injective, this allows entire essential subsurfaces of $S$ to map to a point or a segment, so the image of $S$ actually looks like a graph of surfaces and edges. Still, these more general degenerate surfaces have many of the geometric bounds that one is used to having in the nondegenerate case. And while the possibility of degeneration complicates some of the notation and requires some extra care to treat, after the basic facts are established the theory is quite easy to use.

\subsection {Plan of the paper}

 In the next section, \S \ref{pfsketch}, we give a detailed outline of the paper  that includes informal statements of many of the main results.  Sections \ref{topologysec} and \ref{negcurved} furnish some necessary background on $3$-manifold topology and  the geometry of manifolds with pinched negative curvature, respectively. In \S \ref{sec:shs}, we discuss some familiar background on simplicial ruled surfaces, and  show how to prove variants of  some of the standard related theorems using minimizing simplicial ruled surfaces.  Section \ref{sec: WPR} sets up the machinery necessary to manipulate wide product regions in $M$, and  we understand how products regions can compress in \S \ref{Double compression bodies}. Sections \ref{carrier} and \ref{convergence section} flesh out the material on carrier graphs and strong limits presented above. The  actual proof of Theorem \ref{general case} is contained in Sections \ref{splitting along barriers}--\ref{section general}; the reader should refer to the outline in \S \ref{pfsketch}  to understand how the individual sections fit into the proof.

\subsection{Acknowledgements}

We would like to start by thanking the referee, who sacrificed an enormous amount of their time to help this paper get published. It is hard to express how deeply we appreciate their efforts, which have greatly improved the paper.  We would also like to thank Ian Agol, Jason DeBlois and Mujie Wang for helpful conversations.  The first author was partially supported by NSF grant DMS-1611851 and CAREER Award DMS-1654114.

\section{An outline of the proof}  \label{pfsketch}

 In this section, we sketch how to construct  the geometric decomposition in Theorem \ref{general case}. Fix $k,\epsilon>0$ and assume that $M$ is an $\epsilon$-thick hyperbolic $3$-manifold  with $\rank(\pi_1 M) \leq k$.  We want to construct a collection of at most $n=n(k)$ pairwise disjoint thin $1$-handles and product regions in  the convex core $CC(M)$  such that the complementary components have bounded diameter, and only boundedly many  topological types. (Everywhere below, `bounded' means bounded above by some constant depending only on $k,\epsilon$.)

As mentioned in \S \ref{carrierintro},  the first step is to  construct the thin $1$-handles, one for each component of the thin part of $\partial CC(M)$. By passing to the cover of $M$  corresponding to some  component of their complement in $CC(M)$, we  can reduce Theorem \ref{general case}  to the case that $\partial CC(M)$ is thick. This  reduction is done in \S \ref{section general}. So, assuming from now on that $\partial CC(M)$ is thick, we want to show that there is a collection of at most some $n=n(k)$  product regions in $CC(M)$ such that the complementary components have bounded diameter and   boundedly many topology types. (This is stated in \S \ref{the proof} as  Theorem \ref{main}.)

 Ideally, one would now just take any  maximal collection of product regions in $M$, and show that all the complementary components have bounded diameter and topology. This kind of statement is true after the fact, but it is hard to make this into a proof of Theorem \ref{general case}, primarily because there are no a priori rank bounds for the complementary components.  So while we will at some point below say `take a maximal collection of product regions', we need to first decompose $M$ enough so that we can understand the  topology of the result.

\smallskip

\noindent \it Step 1, splitting along barriers, see \S \ref{splitting along barriers} and \S \ref{the proof}. \rm
 A \emph{barrier}  of a product region $U\subset M$  is a  boundary component $$S \subset \partial \mathcal {DC}(M,U)$$ of  the double compression body of $U$ that is  incompressible in $ M$.  Note that by Theorem~\ref{double compression body theorem} (1),  stated above in \S \ref{wideprintro}, if $U$ is sufficiently wide then the only way that a component $S \subset \partial\mathcal {DC}(M,U)$ can be  compressible in $M$ is if it is isotopic to a boundary component of $U$, and the other boundary component of $U$ is compressible in  $M\setminus int(U)$.

 The reason we consider barriers is that while the decomposition of $M$ given by an  arbitrary collection of product regions is hard to control, the splitting of $\pi_1 M$ given by a union $\Sigma$ of  disjoint, pairwise non-isotopic barriers  of  sufficiently wide product regions in $M$ is  almost always \emph {$7$-acylindrical}, i.e.\ there is no annulus in $M$  that is a concatenation of $7$ essential annuli in $M \setminus \Sigma$.  This is Lemma \ref{acylindrical}---the $7$ is not optimal, see \S \ref{splitting along barriers}. Work of Weidmann~\cite{Weidmannrank} on acylindrical splittings then allows us to construct and control a splitting of $M$ along a `maximal' collection of barriers.

\begin{named}{Theorem \ref{maxsplitting}, informally}
	For some $k'=k'(k)$, there is a union $\Sigma$ of at most $15k$  pairwise disjoint and non-isotopic barriers of wide genus-at-most-$k'$ product regions,  such that $\Sigma$ is `maximal',  in the sense that even for some huge $K>>k'$, any barrier of a sufficiently wide genus-at-most-$K$ product region is isotopic to  a component of $\Sigma.$  Furthermore, every component of $M\setminus \Sigma$ has rank bounded above by a function of $k$.
\end{named}

\begin {figure}
\centering
\includegraphics{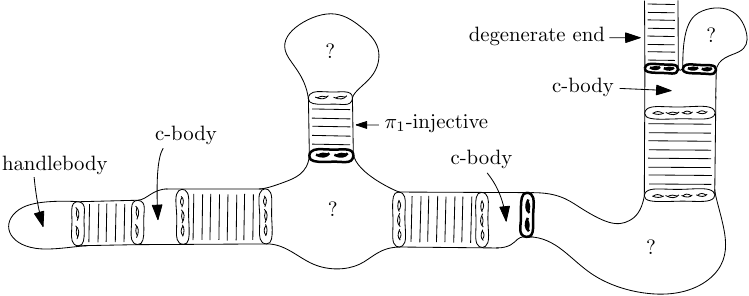}
\caption{The  $\Sigma$ in  Theorem \ref{maxsplitting}  is in heavy ink,  and each component of $\Sigma$ is  a barrier of one of the pictured product regions.}
\label {splittingpic}	
\end {figure}

See Figure \ref{splittingpic}  for an illustration. The interdependencies of the constants in the actual statement of Theorem \ref{maxsplitting} are a bit complicated. For instance, we start by fixing a function $K : \BN \longrightarrow \BN$ and then we choose $k'$ depending on $K,k$, and then the $K$ referenced in the informal statement above is really $K(k')$. 

\smallskip

The proof is inductive. We start with $\Sigma=\emptyset$, and look for any barrier that comes from a product region with genus at most some huge bound, and width at least some even larger constant---it turns out that any such collection of barriers can be realized disjointly in $M$. We  then increase the genus upper bound and the width lower bound,  and continue.  The process terminates because of the  universal upper bound (depending on $k$,  and discussed in the previous paragraph) for the number of  disjoint,  non-isotopic barriers one can place in $M$.

\medskip

\noindent \it Step 2, reducing to the case where barriers are peripheral, see \S \ref{the proof}. \rm
 Let $\Sigma$ be as in  Theorem \ref{maxsplitting}. For each component $N \subset M \setminus \Sigma$, let $$M^N\longrightarrow M$$  be the cover corresponding to the subgroup $\pi_1 N \subset \pi_1 M$. We show:

\begin{named}{Theorem \ref{main}, informally}
	One can  construct a geometric decomposition for $CC(M)$ from geometric decompositions for each  $CC(M^N)$. 
\end{named}  

 In other words, if for \emph{every} $M^N$,  the convex core $CC(M^N)$  admits a  collection of disjoint product regions with bounded complementary components, these  collections will project down and union over $N$ to a similar geometric decomposition for $M$.

Now any sufficiently wide product region in $M^N$ projects  down to give a wide product region in $N$, and any barrier of a genus-at-most-$K$ product region in $N$ is isotopic  to  one of the  adjacent components of $\Sigma$, by the maximality condition in Theorem \ref{maxsplitting}.  So, any barrier of a genus-at-most-$K$  product region in $M^N$ 
 is \emph{peripheral},  meaning that it bounds  a product neighborhood of an end of $M^N$.  
In other words, after replacing $M$ with $M^N$, we can assume from now on that

\begin{enumerate}
	\item[($\star$)] for some huge $K$, all  barriers of  sufficiently wide, genus-at-most-$K$ product regions in $M$ are peripheral in $M$.
\end{enumerate}

\medskip

\noindent \it Step 3, finding short carrier graphs, see \S \ref{shortgraphssec}. \rm  The  reason that  $(\star)$ makes  the problem of finding geometric decompositions tractable is the following, which is in some sense the core result of the paper.

\begin{named}{Proposition \ref{no barriers implies}, informally}
When $M$ satisfies ($\star$),  the manifold $M$   admits a carrier graph $f : X \longrightarrow M$ whose rank is at most some function of $k$, and whose total length is bounded.  
\end{named}

Remember that when unqualified, `bounded' means at most some constant depending only on $k,\epsilon$.  The proof of Proposition \ref{no barriers implies} is  what we called in \S \ref{carrierintro}  the kernel of the entire paper.  The idea is to  start with an  arbitrary minimal rank,  minimal length  carrier graph $$X\longrightarrow M,$$ and to perform a sequence of surgeries on $X$ to eliminate all  extremely long edges. Each surgery can increase the rank of $X$ by some constant factor, but will strictly decrease the total number of `long' edges, so that the total number of steps is at most the number of long edges in the original $X$. Therefore, the final carrier graph will still have rank at most some constant depending on $k$.

 Formally, the proof of Proposition \ref{no barriers implies} is a massive contradiction argument.  We assume that there is no length bound that works, take a sequence of counterexamples $(M_i)$ to larger and larger bounds, and do surgeries simultaneously on all $M_i$ while passing to an extraordinary number of subsequences.   Instead of adopting that perspective here, we instead explain how to perform a single surgery on a minimal length carrier graph $X \longrightarrow M$ that reduces the number of long edges. 

Let $X^{sh} \subset X$ be  the subgraph consisting of all `short' edges, i.e.\ those with bounded length. Let's assume $X^{sh}\neq X$, since otherwise $X$ has bounded length and we are done. In \S \ref{carrierintro} above, we mentioned Theorem \ref{chainsprop}, says that there is  some long edge $e \subset X \setminus X^{sh}$  that has only a bounded amount of length outside of the hyperbolic convex hulls $CH(Y_i)$ of the adjacent components $Y_0,Y_1 \subset X^{sh}$.   For simplicity, let's assume that $e$ spends all but a bounded amount of its (large) length inside one of these, say $CH(Y_0)$.   Replace $e$ by a  geodesic segment with the same endpoints that is distance minimizing (in the cover corresponding to $\pi_1 Y_0$). If $e$ is still long, it must be that $CH(Y_0)$ is huge.  One can then show that the image of $\pi_1 Y_0 \longrightarrow \pi_1 M$ contains the fundamental group of some  wide product region $$U \subset M$$ with genus at most $\rank(\pi_1 Y_0)$ that the edge $e$ traverses soon after it leaves $Y_0$. 
By $(\star)$, all barriers of $U$ are peripheral in $M$. Together with Theorem \ref{double compression body theorem}, this implies that some component $$C \subset M \setminus int(U)$$ is a compression body  with missing interior boundary. (So, the exterior boundary is  the adjacent component of $\partial U$.)
 
Now, $C\cup U$  is also a compression body with missing interior boundary. Create a new carrier graph $X'$ from $X$ by  chopping off the part of $X$ that lies in $C \cup U$ and  replacing it with a bounded length, minimal rank carrier graph for the surface $\partial (C \cup U)$, as in Figure \ref{surgerypic}. Since 
$$\partial (C \cup U) \hookrightarrow C \cup U$$
is $\pi_1$-surjective, this $X'$  is still a carrier graph for $M$. Since $$genus(U) \leq \rank \pi_1 Y_0,$$ the number of edges in $X'$ is at most a  constant factor times the number in $X$. One can show that Theorem \ref{chainsprop} still applies, so it  remains only to show that the number of long edges strictly decreases.

 For simplicity, let's just assume that $Y_0 \cap U = \emptyset$. There are now two cases, depending on whether $Y_0 \subset C$ or not.  In the former case, one can show (see Corollary \ref{extending product regions}) that the product region $U$ extends all the way out to the boundary of $CH(Y_0)$, so that $e$ expends all but a bounded amount of its length inside of $C\cup U$. Hence, $e$ is no longer long in $X'$. If $Y_0 \not \subset C$, then  the fact that $e$ is distance minimizing implies that $e$ does not exit $C \cup U$ after it enters $U$, so again at most a bounded subsegment of $e$  is left in $X'$.

\begin{figure}
\centering
\includegraphics{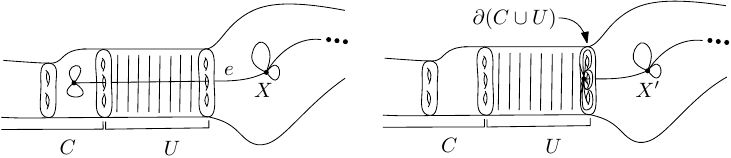}
\caption{Before and after shots of the surgery.}
\label{surgerypic}
\end{figure}
\medskip

\noindent \it Step 4, constructing the   geometric decomposition assuming the existence of a short carrier graph, see \S \ref{when all barriers}. \rm In light of  Proposition \ref{no barriers implies}, we can now assume that  there is a carrier graph $$X \longrightarrow M$$ with rank at most some constant  depending on $k$, and with total length bounded  in terms of $k,\epsilon.$  The rest of the proof is a(nother) big contradiction argument. We assume that we have a sequence $M_i$ of  such manifolds, each containing a bounded rank, bounded length carrier graph $X_i$, and that for each possible diameter bound $B$,  and each  finite collection $\mathcal M$ of  compact $3$-manifolds, there is some $M_i$ that does not admit a geometric decomposition  in which the building blocks have diameter at most $B$ and are  homeomorphic to elements of $\mathcal M$.  Hoping for a contradiction, we want to show that after passing to a subsequence, the $M_i$ do all admit  geometric decompositions  in which the building blocks  have bounded diameter and assume  only finitely many topological types,  where `bounded' and `finite'  are over all $i$.

\medskip

\begin{minipage}{4.5in}
	\center \emph {For simplicity, we assume  here that $CC(M_i)=M_i$ for all $i$. So, we are looking for geometric decompositions of the $M_i$ themselves into product regions and building blocks,  instead of decompositions of their convex cores. The proof is essentially the same in the general case.}
\end{minipage}

\medskip

\medskip

We first construct in each $M_i$ a `central' building block $$N^c_i\subset M_i$$  that lies near $X_i$.  Since the $X_i$  all have bounded rank,  we can pass to a subsequence in which every $\pi_1 X_i $  can be identified with some fixed free group $F$. We then use the carrier graphs $X_i$ to mark the  fundamental groups of the $M_i$,  producing associated  discrete representations 
$$\rho_i :  F \longrightarrow \PSL_2 \BC, \ \ \BH^3 / \rho_i( F ) = M_i.$$
 Using  what is essentially a theorem from our earlier paper \cite{Biringeralgebraic}, see  Proposition \ref{limits-fg}  in this paper,  after possibly doubling the rank of the free group $F$  and passing to a subsequence, we can assume that $(\rho_i)$  converges strongly to some $\rho_\infty$.  Theorem \ref{strong limits} then implies that there is some compact core $C_\infty$ for the limit $$M_\infty := \BH^3 / \rho_\infty(F)$$ that  pulls back to a compact set $N_i^c \subset M_i$ that is bounded by product regions and  neighborhoods of convex cocompact ends of $M_i$. This $N_i^c $ will have diameter bounded above independently of $i$,  since  for large $i$ it is $2$-bilipschitz image of the fixed compact set $C_\infty $. See Figure \ref{centralcomp}  for an illustration.

\begin{figure}
	\centering
\includegraphics{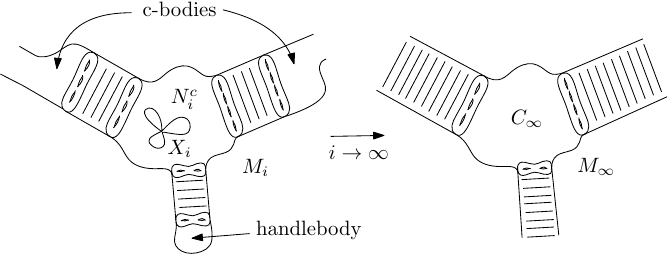}
\caption{We construct the `central'  building block $N_i^c$ around the carrier graph $X_i$  by pulling back the core of a geometric limit.}
\label {centralcomp}
\end{figure}

 So, we have now  constructed the building block $N_i^c$  and its  collection of adjacent product regions, which we call $\mathcal U_i^c$, say.  The idea now is that by Theorem \ref{strong limits},  the parts of $M_i$  that lie on the other sides of  the product regions $U \in \mathcal U_i^c$ are all compression bodies.  To further divide up all of these compression bodies into  bounded diameter building blocks and product regions, we just extend $\mathcal U_i^c$  to any \emph {maximal}  collection of  pairwise disjoint, non-parallel wide product regions $\mathcal U_i$ in $M_i \setminus N_i^c$. By Theorem \ref{unknotted}, any sufficiently wide product region in a compression body divides it into two smaller compression bodies. So,  the components
$$ N_i \subset M_i \setminus \cup_{U \in \mathcal U_i} U$$  consist of $N_i^c$ and  a number of  bounded rank compression bodies,  as  illustrated in  Figure \ref{nonhakenpic}. 
These components will be our building blocks. Note that the topological type of $N_i^c$ is fixed, independent of $i$, so there are a finite number of topological types  in total.  It remains to show that all these components  have bounded diameter.

 So,  fixing a sequence $N_i$ of  such components,  we want to show that $$\sup_i \diam N _i < \infty.$$ 
 To do this,  we use another  limiting argument. We know the $N_i^c$ have bounded diameter, so we can assume $N_i\neq N_i^c$ for all $i$, in which case $N_i$ is a compression body that is bounded in $M$ by wide product regions. We can assume that the widths of  these product regions go to infinity with $i$. (If not, just throw out any product region in $\mathcal U_i$  that has bounded length, and look at complementary components of the result.)   By arguments similar to those we used to produce the $N_i^c$, one can now show that there is a geometric limit of the $M_i$, with base points chosen within $N_i$, that is a tame hyperbolic $3$-manifold with  only degenerate ends. Pieces of the degenerate ends pull back to product regions in $M_i$, which must be  the product regions adjacent to $N_i$, since the collection $\mathcal U_i$ was maximal. Hence, a  compact core in the limit pulls back to a submanifold of $M_i$  that contains $N_i$,  showing that $\sup_i \diam N_i < \infty.$

\begin{figure}
	\centering
\includegraphics{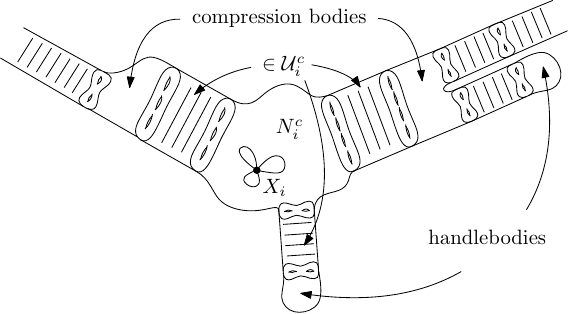}
\caption{The collection $\mathcal U_i^c$  of product regions adjacent to $N_i^c$  extends to a maximal collection $\mathcal U_i$.}
\label {nonhakenpic}
\end{figure}

\section{Some topology}\label {topologysec}
A compact (orientable) $3$-dimensional manifold $M$ with possible non-empty boundary $\D M$ is said to be {\em irreducible} if every embedded sphere $\BS^2$ bounds a ball, and does not contain embedded real projective planes $\BR P^2$. A compact, connected, orientable, properly immersed surface $S\neq\BS^2,\BD^2,\BR P^2$ in $M$ is {\em $\pi_1$-injective} if the homomorphism $\pi_1(S)\longrightarrow\pi_1(M)$ is injective. A $\pi_1$-injective embedded surface is said to be {\em incompressible}.  At the other extreme, a properly embedded surface $S\subset M$ is {\em geometrically compressible} if there is an embedded disk $D\subset M$ such that $\D D=D\cap S$ is a homotopically essential curve in $S$; a surface which is not {\em geometrically compressible} is said to be {\em geometrically incompressible}. By the loop theorem, a two-sided embedded surface is incompressible if and only if it is geometrically incompressible. On the other hand, a one-sided geometrically incompressible surface does not need to be $\pi_1$-injective. Observe that the orientability of $M$ implies that a one-sided surface has to be non-orientable. An irreducible and atoroidal 3-manifold $M$ has {\em incompressible boundary} if every properly embedded disk $(\BD^2,\D\BD^2)$ in $(M,\D M)$ is properly homotopic into $\D M$. By the discussion above this is equivalent to the assumption that every component of $\D M$ is $\pi_1$-injective.

\begin{lem}[Freedman-Hass-Scott, Bonahon]\label {getembedded}
Assume $f: S\longrightarrow M $ is a $\pi_1 $-injective map of a closed orientable surface into a compact, orientable irreducible $3$-manifold $M$, and that $f$ is homotopic to a embedding.  Then given any neighborhood $U$ of $f(S) $, one can find an embedding $g :S \longrightarrow  U $ that is homotopic to $f $ within $M $.
\end{lem}

A word is in order about the attributions.  Freedman-Hass-Scott \cite[Theorem 5.3]{FreedmanIleast} proved that any least area surface in a compact Riemannian $3$-manifold is either embedded or double covers an embedding, and then in our case can be perturbed to be an embedding.  Bonahon \cite{Bonahonbouts} was the first to observe that one can apply their results to a metric on $M $ that is blown up outside of a small neighborhood of $f (S)$, and that the least area surface that is produced lies nearby. See also \cite[Theorem 2.5]{Canarylimits} and \cite{Kapovichhyperbolic}, where it is  explained that the technicalities that arise in this argument can be simplified by working with a combinatorial version of `least area' instead of the usual Riemannian one.

\subsection{Interval bundles}
\label{intervalbundle}
An {\em interval bundle} over a closed surface $S$ is a compact, orientable and irreducible manifold $M$ homeomorphic to the total space of a $[0,1]$-bundle over $S$. Observe that the bundle has to be trivial if the surface $S$ is orientable. If not, the bundle has to be twisted and $\D M$ is connected. 

\begin{named}{Waldhausen's cobordism theorem}[see \cite{Waldhausenirreducible}]
Let $M$ be an irreducible and orientable 3-manifold. 
\begin{itemize}
\item Assume that $S,S'\subset M$ are closed, disjoint, embedded and incompressible surfaces in $M$. If $S$ and $S'$ are homotopic then they bound a trivial interval bundle in $M$.
\item Assume that $S\subset M$ is a closed, embedded and incompressible surface in $M$ and 
$$f:(S\times[0,1],S\times\{0,1\})\longrightarrow(M,S)$$
is a map whose restriction to each component of $S\times\{0,1\}$ is a homeomorphism onto $S$. Either $f$ is homotopic as a map of pairs to some map $f'$ with $f'(S\times[0,1])\subset S$, or one of the connected components of $M\setminus S$ is homeomorphic to a possibly twisted interval bundle.
\end{itemize}
\end{named}

Waldhausen's cobordism theorem is not explicitly stated in Waldhausen's paper \cite{Waldhausenirreducible} but it follows easily from the results therein and is known to all experts in the field.

\subsection{Fibers}
An orientable 3-manifold $M$ {\em fibers over the circle with fiber of genus $g$} if it is homeomorphic to the total space of a $\Sigma_g$-bundle over $\BS^1$. Some of these bundles admit orientation preserving, fixed-point free involutions mapping fibers to fibers in such a way that the induced homeomorphism of $\BS^1$ is orientation reversing. The quotient of the bundle under such an involution is a manifold which {\em fibers over the orbifold $\BS^1/\langle z\mapsto\bar z\rangle$}. The {\em  regular fibers} have genus $g$ and there are two {\em singular fibers} homeomorphic to the non-orientable surface with euler-characteristic $1-g$. Observe that while a connected 3-manifold $M$ without boundary fibers over the circle if and only if it is obtained from two trivial interval bundles by identifying boundary components, $M$ fibers over $\BS^1/\langle z\mapsto\bar z\rangle$ if and only if it is constructed by identifying the boundaries of two twisted interval bundles. 


\subsection{Compact cores and ends}\label{cores}
Let $M$ be an orientable, irreducible noncompact 3-manifold with $\pi_1 M$  finitely generated.  

\begin{defi}\label{deficompactcore}
A \emph{compact core} for $M$ is a compact submanifold $C\subset M$ such that the inclusion is a homotopy equivalence. 
\end{defi} 

This definition is common in papers on hyperbolic geometry, but is atypical from a topologist's perspective. Namely, in $3$-manifold topology, a compact core is usually defined to be a compact submanifold $C \subset M$ whose inclusion is a $\pi_1$-isomorphism. This is strictly weaker than requiring the inclusion to be a homotopy equivalence, since deleting a ball from the interior of a compact core as in Definition \ref{deficompactcore} gives a submanifold whose inclusion is still a $\pi_1$-isomorphism, but where $\pi_2$ is nontrivial. However, we have the following well known fact:

\begin{fact}\label{corefact}
	If $C \subset M$ is irreducible and the inclusion is a $\pi_1$-isomorphism, then $C \subset M$ is a compact core.
	\end{fact}
\begin{proof}
$M$ is a $K(\pi,1)$. Indeed, the universal cover $\tilde M$ is also irreducible (c.f.\ Hatcher \cite[Theorem 3.15]{hatcher2007notes}), so $\pi_2(\tilde M)=0$ by the Sphere Theorem, and then since $H_k(\tilde M)=0$ for $k\geq 3$, the Hurewicz theorem and Whitehead's Theorem imply that $\tilde M$ is contractible. If $C$ is irreducible, it is also a $K(\pi,1)$ by the same argument. Hence, if the inclusion $C \hookrightarrow M$ is a $\pi_1$-isomorphism, it is a homotopy equivalence.
\end{proof}

Scott \cite{Scottcompact} showed that any $M$ as above admits a compact submanifold $C\subset M$ such that the inclusion $C\hookrightarrow M$ is an isomorphism on $\pi_1$. Such a $C$ may not be irreducible, but every $S^2$ component of $\partial C$ bounds a ball in $M\setminus C$, and if we add those balls to $C$, the result is irreducible, and hence a compact core by the fact above. 
 
\medskip

Recall that when $X$ is a topological space, an \emph{end} of $X$ is defined to be an element of the inverse limit $$e(X):=\varprojlim \pi_0(X\setminus K),$$ where $K$ ranges over compact subsets of $X$.

\begin{fact}[Ends $\leftrightarrow$ core boundary components]\label{oneend}
With $M$ as above, let $C \subset M$ be a compact core, and let $S $ be a component of $\partial C$. Let $E_S \subset M\setminus int(C)$ be the component that has $S$ as a boundary component. Then 
$S = \partial E_S$, and $E_S$ has one end, $\CE_S \in e(E_S) \subset e(M)$. Hence, 
$$\pi_0(\partial C) \longrightarrow e(M), \ \ 
 S \longmapsto \CE_S$$
is a bijection.
\end{fact}

Using the notation of the fact, we'll say that a component $S \subset \partial C$ \emph{faces} the corresponding end $\CE_S \in e(M)$.

Strangely, we're not sure where to find a reference for Fact \ref{oneend}, although it is assumed everywhere in the literature. Briefly, the idea of the proof is as follows. First, $S=\partial E_S$, since if $E_S$ shares more than one boundary component with $C$, then one can construct a loop in $M$ that intersects $S$ once, which cannot be homotoped into $C$. For the other assertion, let $K\subset C$ be the compact submanifold obtained by removing all characteristic compression bodies of components of $\partial C$, as defined further down in \S \ref {compression bodies}. Then $K \subset M$ has incompressible boundary, and the inclusion map induces a bijection between components of $M\setminus int(C)$ and components of $M\setminus int(K)$ that preserves the number of ends a component has. One can show that for any component $E \subset M\setminus int(K)$, we have $H_2(E,\partial E)=0$, c.f.\ the first paragraph of the proof of Lemma 5.6 in \cite{Calegarishrinkwrapping}. Poincar\'e-Lefschetz duality then implies that if we look at cohomology with compact supports, we have $H_c^1(E)=0$. And then it follows from Proposition 5.1 of Scott-Wall \cite{scott1979topological} and the long exact sequence immediately above its statement that $E$ has one end.

\medskip

We'll need the following uniqueness theorem for convex cores.

\begin{fact}[McCullough-Miller-Swarup uniqueness]\label{coreuniqueness}
If $C,C'$ are compact cores for $M$, there is a homeomorphism $f: C\longrightarrow C'$ that is homotopic to the inclusion $C \hookrightarrow M$, and where if $S \subset \partial C$, we have that $S$ and $f(S)$ face the same end of $M$. 
\end{fact}
\begin{proof}
This follows from the proof of Theorem 2 of \cite{mccullough1985uniqueness}. Indeed, they show that $C,C'$ are both obtained up to isotopy by attaching collars and $1$-handles to a fixed compact submanifold $K\subset M$, and they then build their homeomorphism so that it fixes $K$ and maps $1$-handles to $1$-handles. This map satisfies the desired condition on ends, and they are able to show that the induced map on $\pi_1$ is the same as that of $\iota_{C'}^{-1} \circ \iota_C$, where the $\iota$'s are the obvious inclusion maps. Since $C,C',M$ are all $K(\pi,1)$ spaces, $f$ is homotopic to the inclusion.
\end{proof}

In general, the ends of $M$ may be topologically wild. However, the noncompact $M$ that appear in this paper will all be `tame' in the following sense.

\begin{defi}
A $3$-manifold $M$ is \emph{tame} if it is homeomorphic to the interior of a compact $3$-manifold $\bar M$. 
\end{defi}

A compact core $C \subset M$ is {\em standard} if $M\setminus int(C)$ is homeomorphic to $\D C\times [0,\infty)$. Note that $M$ admits a standard compact core if and only if it is tame, and in that case $C\subset M$ is standard if and only if the induced embedding $C\longrightarrow\bar M$ is isotopic to a homeomorphism. For a nonstandard example, one can take a knotted embedding of  a solid torus into itself that induces an isomorphism on $\pi_1$.

The following is well known, c.f.\ the proof of Theorem 1 in \cite{mccullough1985uniqueness}.

\begin{lem}\label{unique-compact}
If $M=int(\bar M)$ and $\bar M$ has incompressible boundary, then every compact core of $M$ is standard.
\end{lem}

Here is an application of compact cores that we'll use below. 

\begin{lem}[Pushing homotopies into submanifolds]\label{pushinghomotopieslem}
	Suppose that $M$ is an orientable, irreducible $3$-manifold, $N\subset M$ is a compact, connected irreducible $3$-submanifold with boundary, and the inclusion $ N \hookrightarrow M$ is $\pi_1$-injective. If $S$ is a locally path connected space and $f_0,f_1 : S \longrightarrow N$ are homotopic in $M$, then $f_0,f_1$ are also homotopic in $N$.
\end{lem}

It is necessary to assume that $ N \hookrightarrow M$ is $\pi_1$-injective here. For instance, if $T \subset \BR^3$ is a solid torus, then loop in $T$ generating $\pi_1 T$ is homotopic in $\BR^3$ to a loop in $T$ that is homotopically trivial in $T$.

\begin{proof}
If $\hat M\longrightarrow M$ is the cover corresponding to $\pi_1 N$, then $N$ lifts homeomorphically to a submanifold $\hat N \subset \hat M$ such that the inclusion is an isomorphism on $\pi_1$. Since $N$ is irreducible, $\hat N$ is a compact core for $\hat M$, by Fact \ref{corefact}. Take a homotopy $(f_t)$ between $f_0,f_1$, and lift it to a homotopy $(\hat f_t)$ in $\hat M$, where $\hat f_0,\hat f_1 $ map into $\hat N$. (We assume $S$ is locally path connected so that we can apply the lifting property of covering maps.) Pick a homotopy inverse $h : \hat M \longrightarrow \hat N$ to the inclusion. Then $(h \circ \hat f_t)$ is a homotopy in $\hat N$ from $h\circ \hat f_0 \sim f_0$ to $h\circ \hat f_1 \sim f_1$, where $\sim$ indicates homotopy in $\hat N$. In other words, $\hat f_0,\hat f_1$ are homotopic in $\hat N$, so projecting down, we get that $f_0,f_1$ are homotopic in $N$.
\end{proof}

\subsection{Compression bodies}\label {compression bodies}
A {\em compression body} is a compact, orientable and irreducible 3-manifold $C$ which has a boundary component $\partial_{e}C$ such that the homomorphism 
$$\pi_1(\partial_{e}C)\longrightarrow\pi_1(C)$$ 
is surjective. Here, $\partial_{e}C$ is called the {\em exterior boundary} of $C$. The remaining boundary components form the {\em interior boundary} $\partial_{int} C$. A {\em handlebody} $C$ is a compression body with empty interior boundary. A compression body is {\em trivial} if $\partial_{e}C$ is incompressible; equivalently,  $C$ is homeomorphic to the trivial interval bundle $\partial_{e}C\times[0,1]$. Observe that twisted interval bundles are not compression bodies.

\begin{lem}\label {euler decrease} If $C$  is a nontrivial compression body  that is not a solid torus, we have $\chi(\partial_{int} C) > \chi(\partial_{e} C) $.
\end{lem}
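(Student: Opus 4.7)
The plan is to reduce this to a counting argument via the standard identity $\chi(\partial M) = 2\chi(M)$ for any compact $3$-manifold $M$, treating the handlebody and non-handlebody cases separately.

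In the non-handlebody case, where $\partial_{int} C \neq \emptyset$, I would invoke the standard structural description of $C$: it is built from the product $\partial_{int} C \times [0,1]$ by attaching some number $h \geq 0$ of three-dimensional $1$-handles to $\partial_{int} C \times \{1\}$. Since each $1$-handle (a copy of $D^2 \times [0,1]$ glued along two disks) drops the Euler characteristic by $1$, this gives $\chi(C) = \chi(\partial_{int} C) - h$. Combining with $2\chi(C) = \chi(\partial_e C) + \chi(\partial_{int} C)$ yields the clean identity
$$\chi(\partial_{int} C) - \chi(\partial_e C) = 2h.$$
Assuming $C$ is nontrivial, either $\partial_{int} C$ has $c \geq 2$ components and then $h \geq c - 1 \geq 1$ is forced by connectivity of $C$, or $\partial_{int} C$ is connected and nontriviality directly rules out $h = 0$ (which would give $C = \partial_{int} C \times [0,1]$, contradicting the hypothesis). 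Either way $h \geq 1$, so the difference is at least $2$.

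In the handlebody case $\partial_{int} C = \emptyset$, one has $\chi(\partial_{int} C) - \chi(\partial_e C) = -(2 - 2g) = 2g - 2$, where $g \geq 1$ is the genus; I use here the implicit convention that $\partial_e C$ has positive genus, excluding the degenerate possibility $C = B^3$. The hypothesis that $C$ is not a solid torus forces $g \geq 2$, giving the desired strict inequality.

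The only nontrivial point, and hence the main obstacle, is the structural claim used in the non-handlebody case: that a compression body with nonempty interior boundary really does admit a handle decomposition built on $\partial_{int} C \times [0,1]$ using only $1$-handles. This is a standard fact, dual to the more common description of $C$ as $\partial_e C \times [0,1]$ with $2$-handles attached (and any resulting $S^2$-boundaries capped off by balls); reading this decomposition upside down from the $\partial_{int} C$ side produces precisely the relative $1$-handle structure used above.
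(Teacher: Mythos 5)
Your proof is correct, but it runs along a genuinely different line from the paper's. The paper works from the exterior side: it realizes $C$ by compressing $\partial_{e}C$ along $n$ disjoint, essential, pairwise non-homotopic simple closed curves, so that $\chi(\partial_{int}C)=\chi(\partial_{e}C)+2n-2s$ where $s$ counts the $2$-spheres discarded, and then bounds $s<n$ combinatorially: each discarded sphere comes from a planar complementary region of the curve system, the non-solid-torus hypothesis forces every such planar region to have at least three boundary curves (an annular region would make $\partial_e C$ a torus and $C$ a solid torus), and each curve meets at most two planar regions, giving $3s\le 2n$. You instead work from the interior side, using the dual description of $C$ as $\partial_{int}C\times[0,1]$ with $h$ one-handles attached, together with $\chi(\partial C)=2\chi(C)$, to get the exact identity $\chi(\partial_{int}C)-\chi(\partial_{e}C)=2h$; nontriviality (or connectivity, when $\partial_{int}C$ is disconnected) forces $h\ge 1$, and the solid-torus exclusion is only needed in the handlebody case. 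What each buys: your route gives a cleaner quantitative statement with the hypothesis localized, at the cost of importing the standard structural fact about compression bodies (and, strictly, the small cancellation argument that absorbs the $0$-handles/capping balls into the $1$-handle picture when $\partial_{int}C\neq\emptyset$, which you gloss as ``reading the decomposition upside down''); the paper's route is self-contained from the compression description it has already set up and needs neither the dual decomposition nor the doubling formula. Both arguments tacitly exclude $C=B^3$, where the inequality fails; you flag this convention explicitly, the paper leaves it implicit.
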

\begin {proof}
Regard $C$  as being obtained from $S=\partial_{e} C$  by compressing some collection of  disjoint,  essential and pairwise non-homotopic simple  closed curves $a_1,\ldots,a_n$ on $S$, see e.g.\ \cite{Biringerautomorphisms}. This means that we attach $2$-handles to $S\times\{0\} \subset S \times [0,1]$ along annular neighborhoods of the curves $a_1,\ldots,a_n$, and then fill in any $2$-sphere boundary components that we make with balls.

To construct the interior boundary $\partial_{int} C$  from such a description, cut  $S$ open along all the curves $a_1,\ldots,a_n$, cap off all resulting boundary components with discs, and throw away spheres.  So, $$\chi(\partial_{int}C) = \chi(\partial_{e} C) +2n - 2s,$$  where $s$  is the number of spheres. Each sphere corresponds to a component of $S \setminus a_1 \cup\cdots\cup a_n$ that is a planar surface.  Since the $a_i$ are essential, no complementary component can be a disk. Since no two $a_i\neq a_j$ are homotopic, the only way  a complementary component can be an annulus is if the two boundary components of the annulus are actually the same curve $a_i$,  in which case $S$  is a torus and $C$ is a solid torus. Hence,  we can assume that each planar surface that is a component of $S \setminus a_1\cup\cdots\cup a_n$ has at least three boundary components. 

Now, a given $a_i$ is adjacent to  at most two planar surfaces. So, if $X$  is the number of pairs $(P,a_i)$, where $P$ is one of  these planar surfaces and $a_i$ is a component $\partial P$,  it follows that
\[3s \leq X \leq 2n, \ \ \implies n>s.\qedhere\]
\end {proof}

The  following fact will also be useful.

\begin{fact}[see, e.g.\  Corollary 5.22 \cite{johnson2006notes}]\label{incincomp}
	Let $C$  be a compression body and let $S \subset C$ an embedded, closed, connected, incompressible surface. Then $S$  is isotopic to a component of $\partial_{int} C$.
\end{fact}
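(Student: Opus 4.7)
The plan is an innermost-disk argument within a complete-meridian-system decomposition of $C$. First, I would fix a \emph{complete meridian system} $\Delta = D_1 \cup \cdots \cup D_n \subset C$: a maximal disjoint collection of properly embedded, pairwise non-parallel compressing disks for the exterior boundary $\partial_{e}C$. Standard compression body theory (proved inductively using the loop theorem and irreducibility) shows that cutting $C$ along $\Delta$ yields a disjoint union of pieces, each homeomorphic either to a $3$-ball or to a trivial interval bundle $T \times [0,1]$ with $T \times \{0\}$ a component of $\partial_{int} C$.

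Next, I would isotope $S$ within $C$ to minimize $|S \cap \Delta|$; each component of $S \cap \Delta$ is then a simple closed curve in $\operatorname{int}(\Delta)$, and---since $S$ is incompressible and each such curve bounds a sub-disk of some $D_i \subset C$---each such curve bounds a disk in $S$. The usual innermost-disk surgery, combined with irreducibility of $C$ to replace the sphere produced by an innermost pair of disks with the ball it bounds, yields an isotopy strictly reducing $|S \cap \Delta|$; here the assumption that $S$ is connected and not a sphere rules out extra components of $S$ being trapped inside that ball, which would otherwise obstruct the isotopy. Minimality then forces $S \cap \Delta = \emptyset$, so $S$ lies in a single component $P$ of $C \setminus \Delta$. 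Because $S$ is a closed non-sphere, $P$ cannot be a $3$-ball, so $P \cong T \times [0,1]$ for some component $T \subset \partial_{int} C$, and Waldhausen's cobordism theorem applied inside $P$ (see \S\ref{intervalbundle}) then gives that $S$ is isotopic in $P$---and hence in $C$---to the level $T \times \{0\}$.

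The main obstacle is justifying the structural decomposition of $C$ upon cutting along a complete meridian system; this is a routine but non-trivial foundational result about compression bodies, which is why the statement of the fact cites Johnson's notes rather than reproducing the proof. A secondary delicate point, handled above by combining connectedness with non-sphericity of $S$, is ensuring that the ball appearing in the innermost-disk argument cannot contain stray pieces of $S$ that would obstruct the surgery isotopy.
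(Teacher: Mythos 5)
The paper does not actually prove this statement: it is quoted as a known Fact with a citation to Johnson's notes, so there is no in-paper argument to compare against. Your proof is the standard one (and, as far as I can tell, essentially the argument behind the cited result): cut along a complete meridian system, isotope $S$ off the disks by an innermost-disk argument using incompressibility and irreducibility, observe that $S$ cannot lie in a ball piece since it is $\pi_1$-injective and not a sphere, and conclude inside the product piece $T\times[0,1]$. The structure is sound, and your worry about sheets of $S$ trapped in the ball $B$ is handled even more cleanly than you suggest: since $S$ is connected, $S\setminus \operatorname{int}(D'')$ cannot enter $\operatorname{int}(B)$ at all, because it cannot cross $D''$ (embeddedness) or $\operatorname{int}(D')$ (innermostness), and near the intersection curve it lies outside the wedge; also note the count $|S\cap\Delta|$ drops even if $\Delta$ meets $\operatorname{int}(B)$, since the pushed disk ends up as a parallel copy of $D'$ disjoint from $\Delta$.

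The one step stated too quickly is the last one. Waldhausen's cobordism theorem, as formulated in \S\ref{intervalbundle}, applies to two disjoint incompressible surfaces that are already known to be \emph{homotopic}, so before invoking it you must show that $S\subset T\times[0,1]$ is homotopic to the level $T\times\{0\}$. This is standard but not free: $\pi_1 S$ injects into $\pi_1 T$; an infinite-index subgroup of a closed surface group is free, while a closed surface group of positive genus is not, so the image has finite index; a covering-space/Euler characteristic argument then upgrades this to an isomorphism and a homotopy equivalence, after which the cobordism theorem (or the standard classification of closed incompressible surfaces in trivial $I$-bundles, which packages all of this) gives the isotopy to a level surface, i.e.\ to the component $T$ of $\partial_{int}C$. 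With that step spelled out (or replaced by a citation to the $I$-bundle classification), your argument is complete.
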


Note that when $C$ is a trivial compression-body, i.e.\ a trivial interval bundle, Fact \ref{incincomp} says that any incompressible surface in $C$ is a level surface. This is an earlier result of Waldhausen \cite[Prop 3.1]{Waldhausenirreducible}.

Let $F$ be a boundary component of a compact, orientable and irreducible 3-manifold $M$. The {\em characteristic compression body} of $F$ is a submanifold $C \subset M$ with $F\subset\D C$ satisfying the following conditions:
\begin{itemize}
	\item $C$ is a compression body with exterior boundary $F$, and
	\item each interior boundary component of $C$ is incompressible in $M$.
\end{itemize}

\begin{prop}[Bonahon \cite{Bonahonbouts}]\label{prop:rel-compbody}
Let $M$ be a compact orientable irreducible $3$-manifold and $S$ be a boundary component of $M$. Then $S$ has a characteristic compression body, and any two are isotopic in $M$ relative to $S$. Moreover, the characteristic compression bodies of different boundary components of $M$ can be isotoped to be disjoint. \end{prop}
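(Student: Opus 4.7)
The plan is to construct $C$ by iteratively compressing a collar of $S$, and to prove uniqueness via innermost-disk arguments showing any two candidates can be isotoped inside one another rel $S$.

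\medskip

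\emph{Existence.} Start with $C_0 := S \times [0,1]$, a collar of $S$ in $M$, and view $S$ as the exterior boundary. If the interior boundary is compressible in $M \setminus \mathrm{int}(C_0)$, the loop theorem provides a compressing disk $D_0$. Attach a regular neighborhood of $D_0$ to $C_0$ to form $C_1$, and use irreducibility of $M$ to cap off any resulting $2$-sphere boundary component with a ball. Since attaching a $2$-handle to the interior boundary of a compression body, and filling resulting spheres, produces another compression body with the same exterior boundary, each $C_i$ is a compression body with $\partial_e C_i = S$. Iterate. The counting argument behind Lemma \ref{euler decrease} shows that $\chi(\partial_{int} C_i)$ strictly increases at each step (unless we produce a solid torus, in which case we stop), so the process terminates. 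The final $C$ has interior boundary incompressible in $M$ by construction.

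\medskip

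\emph{Uniqueness.} Suppose $C, C'$ are two characteristic compression bodies for $S$. I would show that $C$ can be isotoped into $C'$ rel $S$; by symmetry this gives the desired isotopy. Realize $C$ as the result of attaching $2$-handles to $S\times[0,1]$ along compressing disks $D_1,\ldots, D_n$ for $S$ and then capping off sphere boundary components with balls. It suffices to isotope each $D_i$ into $C'$ rel $\partial D_i \subset S$, inductively and disjointly from the previously isotoped disks. Put $D_i$ in general position with $\partial_{int} C'$: an innermost intersection circle on $D_i$ bounds a subdisk $E \subset D_i$, and because $\partial_{int} C'$ is incompressible in $M$, this circle also bounds a disk on $\partial_{int} C'$. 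Irreducibility of $M$ then allows one to isotope $D_i$ past $\partial_{int} C'$, strictly reducing the intersection count. Once $D_i$ is disjoint from $\partial_{int} C'$ and has $\partial D_i \subset \partial_e C' = S$, it lies in $C'$. The capping balls also sit inside $C'$ by irreducibility. Thus $C \subset C'$ after isotopy; Waldhausen's cobordism theorem (\S \ref{intervalbundle}) applied to matched components of $\partial_{int} C$ and $\partial_{int} C'$ identifies the closure of $C' \setminus C$ as a union of trivial $I$-bundles, giving the ambient isotopy.

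\medskip

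\emph{Disjointness for distinct boundary components.} If $S, S'$ are distinct components of $\partial M$ with characteristic compression bodies $C, C'$, then $\partial_{int} C$ and $\partial_{int} C'$ are both incompressible in $M$. Arrange them in general position and eliminate intersection circles by the same innermost-disk routine, which succeeds because incompressibility holds in all of $M$, not merely in the relevant subregion. Once interior boundaries are disjoint one may further isotope the interiors of $C$ and $C'$ apart using irreducibility.

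\medskip

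\emph{Main obstacle.} The substantive technical point is the uniqueness step: one must verify that the innermost-disk reductions can be performed as an ambient isotopy fixing $S$ pointwise while preserving disjointness from the disks $D_1,\ldots,D_{i-1}$ already placed inside $C'$. This is standard Haken-style bookkeeping, but the order in which intersection circles are resolved matters, and some care is required to avoid reintroducing intersections with previously isotoped disks or with the capping balls.
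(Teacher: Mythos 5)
First, note that the paper does not prove this proposition at all---it is quoted from Bonahon---so your argument can only be measured against the standard one, which is indeed the route you take: build $C$ by repeatedly compressing a collar of $S$ and capping spheres, with termination by a complexity count, and prove uniqueness by isotoping a defining disk system of one candidate into the other and recognizing the leftover region as trivial $I$-bundles. Those parts are essentially right, with small caveats. In the existence step you still need to pass from ``incompressible in $M\setminus int(C)$'' to ``incompressible in $M$'' (the interior boundary is automatically incompressible in the compression body itself, and a two-sided innermost-disk argument combines the two sides), and the termination measure should be the genus (or $-\chi$) of the non-sphere part of the interior boundary rather than $\chi$ itself, since capping a sphere cancels the $\chi$ gained by the compression. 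In the uniqueness step, ``matched components'' needs a source: each component of $\partial_{int}C$, being incompressible in $M$ and contained in the compression body $C'$, is isotopic in $C'$ to a component of $\partial_{int}C'$ by Fact~\ref{incincomp}, and only then does Waldhausen's cobordism theorem produce the product regions; one still has to check these products are the components of $\overline{C'\setminus C}$, which is the bookkeeping you flag.

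The genuine gap is in the disjointness statement. You claim that since $\partial_{int}C$ and $\partial_{int}C'$ are incompressible in $M$, their intersection circles can be eliminated ``by the same innermost-disk routine.'' That is false in general: two incompressible surfaces in general position can meet in circles that are essential on both, and no innermost-disk argument removes such circles, so incompressibility of the two interior boundaries against each other is not enough. What makes disjointness work is that the compressing disks defining $C'$ have their boundaries on $S'$, which is disjoint from $C$: every intersection of such a disk with $\partial_{int}C$ is therefore an interior circle, an innermost one bounds a disk on $\partial_{int}C$ by incompressibility, and irreducibility lets you push it off. Once the disks miss $\partial_{int}C$ they miss $C$ entirely (they cannot cross $S\subset\partial M$), and the collar of $S'$, the $2$-handles, and the capping balls can all be taken in $M\setminus C$ (a capping sphere cannot bound a ball containing $C$, since such a ball is disjoint from $\partial M$ and hence cannot contain $S$). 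This yields a characteristic compression body for $S'$ disjoint from $C$, and your uniqueness part then transfers the conclusion to the original $C'$.
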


  We will now denote  the characteristic compression body of $S \subset \partial M$ by $C(M,S)$. Here is a way to recognize $\partial_{int} C(M,S)$.

\begin{fact}\label {compressions isotopies}
 Suppose that $M$  is a compact, orientable, irreducible $3$-manifold and $S$ is a boundary component of $M$. Then up to isotopy, the components of $\partial_{int} C(M,S)$  are exactly the incompressible surfaces in $M$ that are not spheres and are obtained from $S$ by a sequence of isotopies and compressions.  Finally, if $C \subset M$  is any compression body with $\partial_e C = S$, then  every component of $\partial_{int} C(M,S)$  can be obtained by a sequence of isotopies and compressions from a component of $\partial_{int} C$.
\end{fact}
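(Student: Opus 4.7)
The plan is to prove the \emph{moreover} clause first and then deduce both halves of the main statement from it. Given any compression body $C\subset M$ with $\partial_e C=S$, I would build a chain of compression bodies $C=C_0\subset C_1\subset\cdots\subset C_\infty\subset M$ by the following inductive procedure. Whenever some component $F\subset\partial_{int}C_i$ is compressible in $M$, choose a compressing disk for $F$; since $F$ is incompressible inside the compression body $C_i$, an innermost-disk argument produces such a disk $D\subset M\setminus\operatorname{int}(C_i)$. Let $C'_{i+1}=C_i\cup\CN(D)$, and obtain $C_{i+1}$ by filling any $2$-sphere components of $\partial C'_{i+1}$ with the $3$-balls they bound in $M$ (these exist by irreducibility). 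Then $C_{i+1}$ is again a compression body with $\partial_e C_{i+1}=S$, and each of its interior boundary components is obtained from one of those of $C_i$ by a single compression or is unchanged. As in Lemma~\ref{euler decrease}, the total genus of the interior boundary strictly decreases at each step, so the process terminates at a compression body $C_\infty$ whose interior boundary is incompressible in $M$. Hence $C_\infty$ is a characteristic compression body for $S$, and Proposition~\ref{prop:rel-compbody} supplies an isotopy between $C_\infty$ and $C(M,S)$ relative to $S$. By construction, each component of $\partial_{int}C(M,S)$ is obtained from a component of $\partial_{int}C$ by a sequence of isotopies and compressions in $M$, proving the moreover clause.

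For the forward direction of the main statement, take $C$ to be an embedded collar $S\times[0,1]\subset M$, a trivial compression body with $\partial_e C=S\times\{0\}$ and $\partial_{int}C$ a parallel copy of $S$. The clause just proved shows that every component of $\partial_{int}C(M,S)$ is obtained from $S$ by isotopies and compressions in $M$. These components are incompressible and non-spheres by the definition of the characteristic compression body combined with the convention in \S\ref{topologysec} that spheres are not incompressible.

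For the backward direction, let $\Sigma\subset M$ be an incompressible, non-sphere closed surface obtained from $S$ by isotopies and compressions in $M$. The surgery data assemble, using general position and irreducibility, into an embedded compression body $C'\subset M$ with $\partial_e C'=S$ and $\Sigma$ isotopic to a component of $\partial_{int}C'$: start from a collar of $S$, iteratively attach regular neighborhoods of the compressing disks used, and fill in any $2$-sphere boundary components with balls. Running the inductive enlargement procedure above starting from $C'$ produces a compression body isotopic to $C(M,S)$; since $\Sigma$ is incompressible in $M$, no $2$-handle is ever attached to the component isotopic to $\Sigma$, so that component persists all the way to a component of $\partial_{int}C_\infty$. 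Hence $\Sigma$ is isotopic to a component of $\partial_{int}C(M,S)$.

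The main technical point to verify is that the inductive enlargement both preserves the compression-body property and terminates. The first is a consequence of the $2$-handle description of compression bodies recalled in the proof of Lemma~\ref{euler decrease}: attaching a $2$-handle along an essential curve in an interior boundary component and capping any resulting sphere with a ball again yields a compression body. The second follows from the strict monotonicity of the sum of interior-boundary genera established in that same lemma.
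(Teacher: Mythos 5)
Your \emph{moreover} clause and the forward direction are essentially fine: enlarging a given compression body by attaching $2$-handles along outside compressing disks and capping spheres, then invoking uniqueness of the characteristic compression body, is a legitimate alternative to the paper's assertion that $C$ is isotopic to a subcompression body of $C(M,S)$. (One small slip: the total genus of the interior boundary does \emph{not} strictly decrease at every step — a compression along a separating curve preserves total genus and increases the number of components — so termination needs a slightly finer complexity, e.g.\ lexicographic in total genus and number of components, using that the number of components is bounded by the genus of $S$. Also, the innermost-disk argument may hand you a compressing disk for a \emph{different} interior boundary component than the one you started with, which is harmless for your induction but should be said.)

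The genuine gap is in the backward direction, at the sentence ``the surgery data assemble, using general position and irreducibility, into an embedded compression body $C'$.'' A compression, as defined after Fact~\ref{compressions isotopies}, only requires the disk $D$ to be embedded in $M$ with $\partial D\subset T$ essential and $int(D)\cap T=\emptyset$, and between compressions the surface may be isotoped arbitrarily. So when you try to attach a regular neighborhood of the next compressing disk to the compression body built so far, that disk need not be disjoint from it: its interior can run through other interior boundary components, and it can even approach $T$ from the compression-body side and dive inside (in which case it necessarily crosses other components of $\partial_{int}$, since the interior boundary is incompressible in a compression body). In those situations ``attach $\CN(D)$ and cap spheres'' does not visibly produce an embedded compression body with exterior boundary $S$; and since the intermediate surfaces are in general compressible, you cannot appeal to Fact~\ref{incincomp} to reposition them. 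Repairing this requires a real argument — e.g.\ isotopy extension to keep the current surface on $\partial_{int}$ of your compression body after each intermediate isotopy, plus an induction on $|int(D)\cap\partial C|$ that removes inessential intersection circles by cut-and-paste (using irreducibility) and attaches $2$-handles along innermost essential subdisks, which are forced to lie outside $C$ — none of which appears in your write-up, and this is exactly where the difficulty of the converse sits. The paper sidesteps all of it by passing to the cover $M_S$ corresponding to $\pi_1 S$: there $C(M,S)$ lifts to a standard compact core (Lemma~\ref{unique-compact}), so $M_S$ compactifies to a compression body; the whole sequence of isotopies and compressions lifts, the lifted final surface is embedded, incompressible and not a sphere, so Fact~\ref{incincomp} applies, and projecting down one converts the homotopy into an isotopy by Waldhausen. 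No control on the compressing disks is ever needed in that route.
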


Here, a  surface $T'$ is  obtained from $T$ by a \emph{compression} if $T'$ is isotopic to  a boundary component of a regular neighborhood of $T \cup D$, where $D$ is an embedded disc in $M$  such that $\partial D \subset T$ and $int(D) \cap T =\emptyset.$

\begin{proof}
 It is a standard fact that the components of $\partial_{int} C(M,S)$  can  be obtained from $S$ by compressions and isotopies, see e.g.\ \cite[\S 2]{Biringerautomorphisms}.  Similarly, if $C \subset M$  is a compression body with $\partial_e C = S$, then $C$ is isotopic to a subcompression body of $C(M,S)$, and  the same logic implies that every component of $\partial_{int} C(M,S)$  can be obtained by a sequence of isotopies and compressions from a component of $\partial_{int} C$. 

 For the converse, let $M_S$ be  the cover of $M$  corresponding to the subgroup $\pi_1 S$. Then $C(M,S)$ lifts to a  submanifold of $M_S$, whose  inclusion is a homotopy equivalence. Since $\partial_{int} C(M,S)$  is incompressible in $C(M,S)$, it  follows from Lemma \ref{unique-compact} that $M_S \setminus C(M,S)$ is homotopic to  $\partial_{int} C(M,S) \times\BR$. In  particular, $M_S$  admits a compactification that is naturally homeomorphic to $C(M,S)$. 

Any  finite sequence of isotopies and compressions starting with $S$ lifts  homeomorphically to $M_S$. If  the resulting surface is incompressible in $M$, its lift is incompressible in $M_S$, so if it  not a sphere it is isotopic to a component of $\partial_{int} C(M,S)$ by  Fact \ref{incincomp}.
\end{proof}

%

\section{Negatively curved manifolds}\label {negcurved}
The main theorems in this paper conccern hyperbolic 3-manifolds. However, manifolds of variable negative curvature will be used as tools in the proofs. In these section we remind the reader of a few facts about such manifolds. A few of these facts can be found in the literature only for hyperbolic manifolds; the proofs remain exactly the same and we will only add a few remarks needed to clarify some minor differences.
\medskip

Throughout this section assume that $M$ is a complete orientable Riemannian $3$-manifold with pinched negative sectional curvature 
$$\kappa^-\le\kappa_M\le\kappa^+<0.$$
Below, we will refer to $\kappa^\pm$ as the pinching constants. Recall that $M$ is {\em hyperbolic} if its sectional curvature is $-1$ everywhere; equivalently, $M$ is locally isometric to hyperbolic space $\BH^3$.

By the Cartan-Hadamard theorem, the universal cover of $M$ is diffeomorphic to $\BR^3$. This implies that $M$ is irreducible and that its fundamental group has no torsion. In particular, $\pi_1(M)$ is infinite unless $M$ is simply connected. Those manifolds $M$ as above with abelian fundamental group are well-understood and we will assume from now on and without further mention that $\pi_1(M)$ is not abelian.

\subsection{Thin-thick decomposition}\label{tt}
The \emph{injectivity radius} $\inj_x(M)$ of $M$ at the point $x$ is half of the length of the shortest homotopically essential loop in $M$ through $x$, and the \emph{injectivity radius of $M$} is 
$$\inj(M)=\inf\{\inj_x(M) \ \vert \ x\in M\}.$$
Given a small $\epsilon>0$, we will denote by $M_{<\epsilon}$ the set of points in $M$ with injectivity radius less than $\epsilon$ and refer to it as the {\em $\epsilon$-thin part} of $M$. The  {\em $\epsilon$-thick part} $M_{\geq\epsilon}=M\setminus M_{<\epsilon}$ is its complement.  The manifold $M$ is called \emph{$\epsilon$-thick} if $\inj(M)\geq \epsilon$, or equivalently if $M_{\geq\epsilon}=M$. 

The Margulis Lemma \cite{Benedettilectures} asserts that there is some $\mu>0$, called {\em Margulis constant}, depending only on the pinching constants $\kappa^\pm$, with the property that for every positive $\epsilon<\mu$, every connected component of the $\epsilon$-thin part $M_{<\epsilon}$ has abelian fundamental group. For hyperbolic $M$, i.e.\ when $\kappa^\pm=-1$, Meyerhoff \cite{meyerhoff1987lower} has estimated that $\mu\ge0.1$.

Suppose that $\epsilon>0$ is smaller than the Margulis constant. The non-compact components of $M_{<\epsilon}$ are called {\em $\epsilon$-cusp neighborhoods} or just {\em cusp neighborhoods} when $\epsilon$ is understood from the context. Since $M$ is orientable and $3$-dimensional, a cusp $U$ is homeomorphic to either $S^1 \times \BR \times (0,\infty)$ or $S^1 \times S^1 \times (0,\infty)$. In the first case we say that $U$ has \emph{rank 1}, while in the second case we say that $U$ has \emph{rank 2}.

 The manifold $M$ is said to have {\em hyperbolic cusps} if there is some $\epsilon>0$ such that the curvature is constant $-1$ on all the $\epsilon$-cusp neighborhoods. Throughout the paper, we will only encounter this kind of manifold. 

\subsection{Distances rel the thin part}\label {relative distances}  Fix some $\epsilon >0$ and define the \it  $\epsilon $-length \rm of a path $\gamma$ in $M$ as $$\length_\epsilon(\gamma)= \length ( \gamma \cap M_{\geq\epsilon} ). $$
If $x,y\in M$, the \emph{$\epsilon $-distance} $d_\epsilon(x,y)$ is the infimum of $\epsilon$-lengths of paths from $x$ to $y$. The \emph{$\epsilon$-distance} between two sets $A,B \subset M$ is the infimum of $d_{\epsilon}(a,b)$, where $a\in A$ and $b\in B$, and the \emph{$\epsilon$-diameter} of a set $A \subset M$ is the supremum of $d_\epsilon(x,y),$ where $x,y\in A$. See also our earlier paper \cite{Biringerranks},  in which we called $\epsilon$-distances \emph{`$\epsilon$-electric distances'}.

The following lemma shows that changing $\epsilon $ results in coarsely equivalent  $\epsilon $-distances.  Namazi proved this lemma for hyperbolic $3$-manifolds, but the argument works in pinched negative curvature under the obvious modifications.

\begin{lem}[Namazi, Lemma 5.2 \cite{Namaziquasiconvexity}] 
\label {differentepsilons}
Given $\epsilon_2 > \epsilon_1 > 0$ less than the Margulis constant, there exists a
constant $C = C (\epsilon_1,\epsilon_2, \kappa^ -,\kappa^ +) \geq 0 $ such that for every Riemannian $3$-manifold $M $ with pinched negative curvatures $\kappa^ - \leq K_M \leq \kappa^ + < 0 $ and $x,y \in M $ we have
$$d_{\epsilon_2 } (x, y) \leq  d_{\epsilon_1 } (x, y) \leq (C + 1) d_{\epsilon_2 } (x, y) + C.$$
\end{lem}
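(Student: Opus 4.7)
The lower bound $d_{\epsilon_2}(x,y)\le d_{\epsilon_1}(x,y)$ is immediate: since $\epsilon_1<\epsilon_2$ implies $M_{>\epsilon_2}\subset M_{>\epsilon_1}$, every path $\gamma$ from $x$ to $y$ satisfies $\length_{\epsilon_2}(\gamma)\le \length_{\epsilon_1}(\gamma)$, and taking the infimum gives the inequality. The content of the lemma is the upper bound, which I would prove by surgery on a near-minimizing $\epsilon_2$-path.

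Fix $\gamma$ with $\length_{\epsilon_2}(\gamma)\le d_{\epsilon_2}(x,y)+1$, set $L=\length_{\epsilon_2}(\gamma)$, and modify $\gamma$ in two stages. First, \emph{simplify:} for each component $U$ of $M_{\le\epsilon_2}$ that $\gamma$ meets more than once, replace the subpath from the first entry to the last exit of $U$ by a connected path inside $U$. This does not increase $\length_{\epsilon_2}$ because $U\subset M_{\le\epsilon_2}$ contributes zero $\epsilon_2$-length, and it ensures the resulting $\gamma^\ast$ meets each component of $M_{\le\epsilon_2}$ in at most one connected arc. Second, \emph{shortcut through the $\epsilon_1$-thin core:} for each component $U$ visited, replace the subarc $\gamma^\ast\cap U$ by a path $\alpha_U$ from its entry point to its exit point constructed as follows. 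If $U\cap M_{\le\epsilon_1}=\emptyset$, then $U$ is a Margulis tube around a short geodesic of length in $(2\epsilon_1,2\epsilon_2]$, and standard Margulis-tube geometry bounds $\diam(U)$ by some $D_0=D_0(\epsilon_1,\epsilon_2,\kappa^\pm)$, so take $\alpha_U$ to be any path of length $\le D_0$. Otherwise, let $V=U\cap M_{\le\epsilon_1}$; take $\alpha_U$ to go radially from the entry to $\partial V$, then along $\partial V$ to the foot of the radial perpendicular through the exit, then radially back out to the exit. Since $\partial V\subset M_{\le\epsilon_1}$ the middle piece has $\epsilon_1$-length zero, while the two radial segments lie in the annular transition region $U\setminus V$, whose radial width is bounded by some $D_1=D_1(\epsilon_1,\epsilon_2,\kappa^\pm)$ from the standard model description of Margulis tubes and cusps in pinched negative curvature. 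In either case, $\length_{\epsilon_1}(\alpha_U)\le D$ for some $D=D(\epsilon_1,\epsilon_2,\kappa^\pm)$.

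It remains to bound the number $N$ of components visited by $\gamma^\ast$ in terms of $L$. Here I would invoke the Margulis lemma: for $\epsilon_2$ below the Margulis constant, distinct components of $M_{\le\epsilon_2}$ lie at distance at least some $\delta=\delta(\epsilon_2,\kappa^\pm)>0$, because two close components would produce short loops through a common point generating a subgroup that is not virtually abelian, contradicting the virtual nilpotency forced by Margulis and the well-known fact that discrete virtually nilpotent subgroups in pinched negative curvature are virtually abelian of rank $\le 2$ and correspond to a single thin component. After simplification each thick subarc of $\gamma^\ast$ runs between two distinct components, so there are at least $N-1$ thick subarcs each of length $\ge\delta$, whence $N\le L/\delta+1$. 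Adding up, the modified path has $\epsilon_1$-length at most $L+DN\le(1+D/\delta)L+D$, which gives the lemma with $C=\max(D,D/\delta)$.

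\textbf{Main obstacle.} The delicate point is the second stage of the surgery together with the separation estimate: one must control simultaneously (i) the radial width of the transition annulus $U\setminus V$ and (ii) the minimum distance between distinct $\epsilon_2$-thin components, both uniformly across all manifolds $M$ with pinched curvature $\kappa^-\le K_M\le\kappa^+<0$. Both are consequences of the Margulis lemma combined with explicit model geometry of tubes and cusps, but require a bit of care because the Margulis constant for the separation estimate may need to be shrunk from $\epsilon_2$; if necessary one chains the lemma through an intermediate value $\epsilon_2'<\min(\epsilon_2,\mu/4)$ to run the separation argument cleanly, absorbing the composition into the final constant $C$.
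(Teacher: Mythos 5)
The paper does not actually prove this lemma: it cites Namazi's Lemma 5.2 verbatim and only remarks that his argument "works in pinched negative curvature under the obvious modifications," so there is no in-paper proof to compare against. Your surgery-and-counting scheme is the natural (and, as far as I can tell, essentially Namazi's) argument, and its skeleton is sound: the lower bound is trivial, each $\epsilon_2$-thin component crossed can be shortcut at $\epsilon_1$-cost bounded by $D(\epsilon_1,\epsilon_2,\kappa^\pm)$ (your Case A tube-diameter bound and Case B transition-width bound are standard Margulis-tube/cusp geometry, acceptable to cite at this level of detail), and the number of components crossed is controlled by their mutual separation. The only bookkeeping slip is at the end: the constant should also absorb the $+1$ from near-minimality and the additive $D$, so $C=1+D+D/\delta$ rather than $\max(D,D/\delta)$.

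The genuine weak point is the separation estimate, and your proposed patch for it does not work. Your argument that distinct components of $M_{\le\epsilon_2}$ are $\delta$-separated produces two loops of length roughly $2\epsilon_2+2\delta$ at a common point and invokes the displacement form of the Margulis lemma; when $\epsilon_2$ is close to the constant $\mu$ of the paper (which the hypothesis $\epsilon_2<\mu$ allows), the combined displacements exceed the Margulis threshold and no contradiction is obtained. Chaining through $\epsilon_2'<\min(\epsilon_2,\mu/4)$ is circular: to conclude you would need $d_{\epsilon_2'}\le (C''+1)\,d_{\epsilon_2}+C''$, which is again the hard direction of the lemma for a pair whose larger parameter is the original $\epsilon_2$, so the same separation problem recurs. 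The fact you need is nonetheless true for every $\epsilon_2<\mu$, by a different argument: if $U_1\neq U_2$ are components of $M_{\le\epsilon_2}$ with $d(U_1,U_2)=d$ and $\epsilon_2+d<\mu$, then, since the injectivity radius is $1$-Lipschitz, $U_1$ and $U_2$ lie in a single component $W$ of $M_{\le\epsilon_2+d}$, whose image in $\pi_1M$ is elementary; displacement functions on the universal cover are convex, so the locus of points carrying a loop of length $\le 2\epsilon_2$ lying in a fixed elementary subgroup is connected (in the loxodromic case all nonempty sublevel sets contain the axis, in the parabolic case they all meet arbitrarily deep horoball neighborhoods of the fixed point), and this connected set contains points of both $U_1$ and $U_2$ inside $M_{\le\epsilon_2}$, forcing $U_1=U_2$. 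Hence $d\ge\mu-\epsilon_2>0$, which is exactly the $\delta(\epsilon_2,\kappa^\pm)$ your count $N\le L/\delta+1$ requires; with that substitution your proof goes through.
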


Here is a well known lemma about distances modulo the thin part that will be useful later. In particular, it implies that a bounded length loop homotopic into a cusp or Margulis tube in $M$ lies at bounded distance from that cusp or tube.

\begin{lem}[{compare with \cite [Lemma 2.1]{Namaziquasiconvexity}}]\label {distancetogeodesic}
Let $\epsilon > 0$, and $\gamma$ be a homotopically essential loop in $M $.  Then the following estimates hold.
\begin{enumerate}
	\item If $\gamma$ is homotopic to a geodesic $\gamma^* $, then $$ \cosh \Big (\sqrt { | \kappa^ +| } d(\gamma, T)\Big ) \leq \frac {\length (\gamma)} {\epsilon }, $$
where $T=\gamma^*$ if $\length \gamma^* \geq \epsilon$, and otherwise $T$ is the component of $M_{<\epsilon}$ that is the Margulis tube containing $\gamma^*$.
\item If $\gamma$ is parabolic, then
 $$ \cosh \Big (\sqrt { | \kappa^ +| } d(\gamma,U)\Big ) \leq \frac {\length (\gamma)} {\epsilon },$$
where $U$ is the non-compact component of the $\epsilon$-thin part of $M$ (i.e.\ the cusp neighborhood) into which $\gamma$ is homotopic.\end{enumerate} \end{lem}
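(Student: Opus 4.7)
The plan is to lift $\gamma$ to the universal cover $\tilde M$ and project onto the convex subset associated to the target (the axis of $g$ for (1), an invariant horoball for (2)), exploiting the exponential contraction of nearest-point projection. For (1), let $g \in \pi_1 M$ be the deck transformation represented by $\gamma$, let $A \subset \tilde M$ be its axis (a lift of $\gamma^*$), and write $a_+ := \sqrt{|\kappa^+|}$. The projection $\pi \colon \tilde M \to A$ is $1$-Lipschitz and, by a Sturm/Rauch comparison of the perpendicular Jacobi fields from $A$ with those in constant curvature $\kappa^+$, it contracts tangent vectors at distance $r$ from $A$ by a factor of at least $1/\cosh(a_+ r)$.

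Lift $\gamma$ to an arclength-parametrized path $\tilde\gamma \colon [0,L] \to \tilde M$ with $\tilde\gamma(L) = g \tilde\gamma(0)$, where $L = \length(\gamma)$. The projected loop $\pi \circ \tilde\gamma$ runs from $\pi\tilde\gamma(0)$ to $g\pi\tilde\gamma(0)$ along $A$, so it has length at least $\ell := \length(\gamma^*)$. Writing $r(t) = d(\tilde\gamma(t),A)$, the contraction estimate gives
\begin{equation*}
\ell \;\leq\; \length(\pi\circ\tilde\gamma) \;\leq\; \int_0^L \frac{dt}{\cosh(a_+ r(t))},
\end{equation*}
so by the mean-value principle there is some $t_0$ with $\cosh(a_+ r(t_0)) \leq L/\ell$, giving a point $p := \gamma(t_0)$ with $d(p,\gamma^*) \leq (1/a_+)\cosh^{-1}(L/\ell)$. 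When $\ell \geq \epsilon$, this already yields $d_\epsilon(\gamma,\gamma^*) \leq d(p,\gamma^*) \leq (1/a_+)\cosh^{-1}(L/\epsilon)$, which is the claim. When $\ell < \epsilon$, the geodesic $\gamma^*$ lies in its Margulis $\epsilon$-tube $T \subset M_{\leq\epsilon}$, so $d_\epsilon(\gamma,\gamma^*) = d_\epsilon(\gamma,T) \leq d(p,\gamma^*) - \rho$, where $\rho$ is the radius of a metric ball around $\gamma^*$ contained in $T$ and satisfying $\cosh(a_+\rho) \geq \sinh(a_+\epsilon)/\sinh(a_+\ell/2)$ (derived from the comparison formula for the translation length of $g$). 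A brief manipulation using $\cosh(x-y) \leq \cosh(x)/\cosh(y)$ for $x \geq y \geq 0$ together with monotonicity of $\sinh(x)/x$ then gives
\begin{equation*}
\cosh\bigl(a_+(d(p,\gamma^*)-\rho)\bigr) \;\leq\; \frac{L/\ell}{\sinh(a_+\epsilon)/\sinh(a_+\ell/2)} \;\leq\; \frac{L}{2\epsilon} \;\leq\; \frac{L}{\epsilon}.
\end{equation*}

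Part (2) is entirely analogous, with $A$ replaced by the $g$-invariant horoball $H \subset \tilde M$ covering $U_\epsilon$. The projection $\pi \colon \tilde M \to H$ contracts tangent vectors at distance $r$ from $H$ by a factor of at least $e^{-a_+ r}$ (comparison with the analogous horospherical metric in constant curvature $\kappa^+$). The parabolic $g$ acts on the intrinsically flat $\partial H = \partial U_\epsilon$ by a Euclidean translation of length $\lambda \geq 2\sinh(a_+\epsilon)/a_+ \geq 2\epsilon$, which is the standard Margulis cusp-boundary estimate. Repeating the projection argument produces $p \in \gamma$ with $d(p,U_\epsilon) \leq (1/a_+)\log(L/\lambda)$, and since $\cosh(\log y) \leq y$ for $y \geq 1$ we conclude $\cosh(a_+ d_\epsilon(\gamma,U_\epsilon)) \leq L/\lambda \leq L/\epsilon$.

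The main obstacle is the thin case $\ell < \epsilon$ of (1): the Margulis tube around $\gamma^*$ in variable curvature is not round, so one must verify that it still contains a ball large enough to absorb the discrepancy between $\ell$ and $\epsilon$ while keeping the final bound expressed purely in terms of $\kappa^+$; this is precisely what the monotonicity of $\sinh(x)/x$ delivers.
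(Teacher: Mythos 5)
Your projection-to-the-axis argument is fine in the case $\length(\gamma^*)\ge\epsilon$, but the thin case of (1) has the curvature comparison running the wrong way, and this is a genuine gap rather than a bookkeeping issue. To know that the radius-$\rho$ tube about $\gamma^*$ lies inside $M_{\le\epsilon}$ you need an \emph{upper} bound on the displacement $d(y,gy)$ at distance $\rho$ from the axis, and such an upper bound comes from the \emph{lower} curvature bound $\kappa^-$ (geodesics spread at most as fast as in constant curvature $\kappa^-$), not from $\kappa^+$. The $\kappa^+$ comparison you quote gives $\sinh\big(\sqrt{|\kappa^+|}\,d(y,gy)/2\big)\ \ge\ \cosh\big(\sqrt{|\kappa^+|}\,r\big)\sinh\big(\sqrt{|\kappa^+|}\,\ell/2\big)$, i.e.\ a \emph{lower} bound on displacement, which says nothing about membership in the $\epsilon$-thin part; with very negative $\kappa^-$ the Margulis tube can be far thinner than your $\rho$. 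If you insert the correct, $\kappa^-$-controlled radius, the final arithmetic no longer closes: as $\ell\to 0$ one gets $\cosh(\sqrt{|\kappa^+|}\,\rho)$ of order $\ell^{-\sqrt{\kappa^+/\kappa^-}}$, which does not cancel the factor $L/\ell$, so your route only recovers the stated $\kappa^+$-only bound when $\kappa^-=\kappa^+$ (e.g.\ hyperbolic $M$), not in pinched variable curvature. The ``monotonicity of $\sinh(x)/x$'' step handles only the algebra at the end, not this geometric containment. Part (2) leans on the same kind of constant-curvature input (identifying the preimage of $U_\epsilon$ with a horoball with intrinsically flat boundary on which $g$ translates by $2\sinh(\sqrt{|\kappa^+|}\epsilon)/\sqrt{|\kappa^+|}$); what is available in general is only that every nontrivial deck transformation displaces lifts of points of $M_{>\epsilon}$ by more than $2\epsilon$.

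The repair --- and what the paper's hint ``exponential decrease of path lengths under convex projection'' is pointing at --- is to project $\tilde\gamma$ not onto the axis or a horoball but onto the $g$-invariant set $W=\{y\in\tilde M:\ d(y,gy)\le 2\epsilon\}$, which is convex because displacement functions are convex in nonpositive curvature. Its image in $M$ is a connected subset of $M_{\le\epsilon}$ containing $\gamma^*$ (resp.\ contained in $U_\epsilon$ in the parabolic case), so $d_\epsilon(\gamma,\gamma^*)\le d(\tilde\gamma,W)$; on the other hand, if $\tilde\gamma$ misses $W$ (otherwise the relative distance is zero), its projection is a path in $W$ from a point $q\in\partial W$ to $gq$, hence of length at least $d(q,gq)=2\epsilon$. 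Your contraction estimate, applied to the convex set $W$, then gives in one line and uniformly in both cases $\cosh\big(\sqrt{|\kappa^+|}\,d_\epsilon(\gamma,\gamma^*)\big)\le \length(\gamma)/(2\epsilon)$, with no Margulis-tube radius estimate and no appeal to $\kappa^-$; the case $\ell\ge 2\epsilon$ is already covered by your axis argument.
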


Note that the two inequalities above can be interpreted as giving bounds on the $\epsilon$-distance from $\gamma$ to either the homotopic geodesic $\gamma^*$ or the corresponding cusp of $M$, depending on whether $\gamma$ has hyperbolic or parabolic type.

The key to Lemma \ref{distancetogeodesic} is the following well known estimate.

\begin{fact}[Exponential decrease of path length under projection]
\label{expdecrease}Suppose that $X$ is a simply connected $n$-manifold with sectional curvatures at most $\kappa^+<0$ and $C \subset X$ is convex. Let $\pi : X \longrightarrow C$ be the closest point projection, and let $\gamma $ be a path in $X \setminus \CN_r(C)$, where $r>0$. Then $$\length(\gamma) \geq \cosh(\sqrt{|\kappa^+|} \cdot r) \length(\pi\circ \gamma).$$
\end{fact}
\begin{proof}
Take two points $y,y' \in X \setminus \CN_r(C)$ and let $x,x'$ be their projections onto $\partial C$. We first claim that
\begin{equation}
\sinh \big (|\sqrt{\kappa^+}| d(y,y')/2\big ) \geq \cosh(\sqrt{|\kappa^+|} \cdot r) \sinh \big (|\sqrt{\kappa^+}| d(x,x')/2\big ).\label{saccheri}
\end{equation}
To prove this, consider the geodesic quadrilateral with vertices $x,x',y,y'$. The angles at $x,x'$ are both at least $\pi/2$, since otherwise there would be closer points to $y,y'$ on the geodesic from $x$ to $x'$, which lies inside $C$. Take a diagonal from $x$ to $y'$, say, subdividing the quadrilateral into two triangles, and construct  comparison triangles with the same side lengths in the $2$-dimensional model space $\BH^2_{\kappa^+}$ of curvature $\kappa^+$, arranged side-by-side to form a quadrilateral with vertices $\bar x,\bar x',\bar y,\bar y'$.

 Since $X$ is CAT($\kappa^+$), by \cite[pg 161, Prop 1.7 (3)]{Bridsonmetric} the angles at $\bar x,\bar x'$ are both at least $\pi/2$. Hence, the distance between $\bar y$ and $\bar y'$ is at least what it would be if both those angles were $\pi/2$, and if the distances from $\bar x$ to $\bar y$ and from $\bar x'$ to $\bar y'$ were both $r$. Such a quadrilateral is called a \emph{Saccheri quadrilateral}, and scaling the usual side length formula in $\BH^2$ then gives \eqref{saccheri}.

To prove the fact, write $\gamma : [0,1] \longrightarrow X$, let $0=t_0<\cdots <t_n=1$ be a fine partition of the unit interval, set $x_i = \pi\circ \gamma(t_i)$ and $y_i = \gamma(t_i)$, and apply \eqref{saccheri} above. As the partition becomes arbitrarily fine, the polygonal approximations $$\sum_i d(x_i,x_{i+1}) \to \length(\pi \circ \gamma), \ \ \ \sum_i d(y_i,y_{i+1}) \to \length(\gamma).$$ Using \eqref{saccheri} and the fact that $\sinh'(0)= 1$, for any $\lambda<1$, when the partition is sufficiently fine we have
$$ d(y_i,y_i') \geq \lambda \cosh(\sqrt{|\kappa^+|} \cdot r) d(x_i,x_i'),$$
so the Fact follows.\end{proof}

We now prove Lemma \ref{distancetogeodesic}.

\begin{proof}
Let's assume we're in in case (1), for simplicity, as (2) is similar. Suppose also that the length of our geodesic is less than $\epsilon$ for concreteness, since the other case is less complicated. After lifting everything to the universal cover and replacing the original curves by appropriate lifts, we assume we have the following data:
\begin{enumerate}
	\item an isometry $g: \tilde M \longrightarrow \tilde M$ stabilizing a geodesic $\gamma^*$,
\item a path $\gamma : [0,1] \longrightarrow \tilde M$ such that $g(\gamma(0))=\gamma(1)$,
\item the metric neighborhood $T \supset \gamma^*$ consisting of all $x\in \tilde M$ such  that $d(x,g(x))\leq \epsilon$.
\end{enumerate} 
We want to show that $\length(\gamma) \geq \epsilon \cdot \cosh( \sqrt{|\kappa^+|} d(\gamma,T) )$. Note that this $T$ is not quite the lift of the Margulis tube in the statement of the lemma, because Margulis tubes also take into account translation distances of powers of $g$. However, the $T$ in (3) is \emph{contained} in the lift of the Margulis tube, so it suffices to use it in the desired inequality. 

As $T \subset X$ is convex, we have
$$\length(\gamma) \geq  \cosh( \sqrt{|\kappa^+|} d(\gamma,T) ) \length(\pi \circ \gamma)$$ by Fact \ref{expdecrease}. But $$\length(\pi \circ \gamma) \geq d(\pi(\gamma(0)),\pi(\gamma(1)))$$
and the latter is the distance between a point on $\partial T$ and its $g$-translate, which is at least $\epsilon$, so we're done.
\end{proof}

 Here is a useful consequence of the above.

\begin{kor}\label {distancetogeodesic2}
	 Suppose that $G$ is a finite graph, $p \in G$ is a vertex, and $g: G \hookrightarrow M$ is a continuous map such that $g_*(\pi_1 M)$ is not elementary parabolic.  For every $L>0$, let $K$ be
 the set of points $x\in M$ for which there is a map $f: G \longrightarrow M$ that is homotopic to $g$, such that $f(p)=x$ and $\length f(G) \leq L.$ Then $K$ is compact.
\end{kor}

For instance, set $G$ to be a loop that maps to a hyperbolic type element $\alpha \in \pi_1 M$.  The corollary says that the set of points $x\in M$ through which there is a loop of length at most $L$ freely homotopic to $\alpha$ is compact. One can also apply the corollary when $G$ is a wedge of two circles both of which map to parabolics in $\pi_1 M$, as long as the parabolics have different fixed points at infinity. (Note that `the fixed point' on $\partial \tilde M$ of an element of $\pi_1 M$ is not well defined unless we fix an identification of $\pi_1 M$ with an appropriate deck group acting on $\tilde M$, but two group elements `having the same fixed point' is well defined.)

\begin {proof}
Since $g_*(\pi_1 M)$ is not elementary parabolic, there is some loop $\alpha$ on $G$ such that $g_*(\alpha) \in \pi_1 M$ has hyperbolic type. If $f : G \longrightarrow M$ is as in the statement of the corollary, then the length of $f(\alpha)$ is bounded by some constant depending on $\alpha$ and $L$, but not on $f$.  So, part 1 of Lemma \ref{distancetogeodesic} implies that $f(\alpha)$ lies within a bounded neighborhood of the homotopic geodesic, or its containing Margulis tube. Since $\length(G)\leq L$, the point $f(p)$ also lies in a slightly bigger bounded neighborhood of this geodesic or tube. Since tubes are compact, we are done.\end {proof}

\subsection{Ends of negatively curved 3-manifolds}\label {sec:ends}
In this section we describe the geometry of the ends of $3$-manifolds with pinched negative curvature. We start with the following theorem.  

\begin{named}{Tameness Theorem}[Agol, Calegari-Gabai]
Let $M$ be a complete Riemannian manifold with pinched negative curvature and hyperbolic cusps. If $M$ has finitely generated fundamental group, then $M$ is tame, i.e.\ it is homemorphic to the interior of a compact $3$-manifold.
\end{named}

Agol \cite{Agoltameness} and Calegari--Gabai \cite{Calegarishrinkwrapping} gave simultaneous proofs of the tameness theorem for hyperbolic manifolds. Agol's argument is written directly in the context of non-positively curved manifolds with hyperbolic cusps, and one can also apply the outline of Calegari-Gabai's argument in the variable curvature case.

\medskip

Let $C\subset M$ be a compact core, see \S \ref{cores}. As described in Fact \ref{oneend}, the ends of $M$ are in bijective correspondence with components of $\partial C$. An end $\CE$ of $M$ {\em has no cusps} if  it has a neighborhood whose intersection with every component of $M_{<\epsilon}$ is bounded. Here, $\epsilon$ is positive and smaller than the Margulis constant. We now describe how to classify ends of $M$ with no cusps geometrically. 

The {\em convex-core} of $M$, written $CC(M)$, is the smallest closed convex\footnote{Here, `convex' means that any geodesic segment with endpoints in $CC(M)$ is contained in $CC(M)$. However, given that the inclusion into $M$ is a homotopy equivalence, this is equivalent to saying that any arc in $CC(M)$ is homotopic rel endpoints to a geodesic in $CC(M)$, which is the other definition of convex you might consider for non-simply connected spaces.}  subset of $M$ whose inclusion into $M$ is a homotopy equivalence. This is well-defined unless $M$ is simply connected, in which case we set $CC(M)=\emptyset$. An end $\CE$ of $M$ is called \emph {convex-cocompact} if it has a neighborhood disjoint from the convex core $CC(M)$. An end $\CE$ of $M$ facing $S \subset \partial C$ is {\em degenerate} if there is a sequence $f_i : S \longrightarrow M \setminus C$ of simplicial ruled surfaces (see \S \ref{sec:shs}) that are homotopic to $S$ within $M\setminus C$ and where the images $f_i(S)$ exit the end $\CE$. See Bowditch \cite{bowditch1995geometrical} for a dive into some of the subtleties of these definitions in negative curvature.

The following is a corollary of the Tameness Theorem and earlier work of Hou \cite{hou2003critical}, following Canary \cite{Canaryends} and Bonahon \cite{Bonahonbouts}.

\begin{named}{Geometric Tameness Theorem}
Assume that $M$ is a complete Riemannian $3$-manifold with pinched negative curvature, finitely generated fundamental group and hyperbolic cusps. Every end $\CE$ of $M$ that has no cusps is either convex-cocompact or degenerate.
\end{named}

Specifically, a lemma of Bonahon \cite{Bonahonbouts} shows that any end $\CE$ that has no cusps and is not convex-cocompact as defined above admits an exiting sequence of closed geodesics, i.e.\ a sequence of closed geodesics $(\gamma_i)$ where every neighborhood of $\CE$ contains all by finitely many of the $\gamma_i$. In the terminology of Hou \cite{hou2003critical}, this means $\CE$ is geometrically infinite, and then the main result of that paper implies that $\CE$ is degenerate as defined above. We note that while technically, Hou assumes throughout the paper that $M$ has no cusps, this is just for simplicity of the statements, and all that is needed in the argument is that $\CE$ has no cusps. Hou's result is a modified version of an earlier argument of Canary \cite[Theorem 4.1]{Canaryends}; the main trick in Canary's paper is to show that a certain branched cover of a hyperbolic manifold has a metric of pinched negative curvature, and Hou shows that one can also do this if the original manifold has variable negative curvature.

For later convenience, we note this property of ends without cusps.

\begin{fact}[$\epsilon$-distance is proper on ends without cusps]\label{fact: dist proper}
	Suppose that $M$ is a complete Riemannian $3$-manifold with pinched negative curvature $\kappa^- \leq \kappa \leq \kappa^+ < 0$ and hyperbolic cusps, $\CE$ is an end of $M$ with no cusps, and $\epsilon$ is less than half the $\kappa^+$-Margulis constant. Then if $(p_i) $ is a sequence that exits $\CE$, and $p\in M$, we have $d_\epsilon(p_i,p) \to \infty$. 
	
	In particular, if we fix $R>0$, let $(p_i)$ be a sequence of points that exits $\CE$, and let $B_i \subset M$ be subsets such that $d_\epsilon(x,p_i)\leq R$ for all $x\in B_i$, then $(B_i)$ also exits $\CE$. 
\end{fact}

Here, a sequence of subsets $(X_i)$ in $M$ \emph{exits} $\CE$ if any neighborhood of $\CE$ contains all but finitely many $X_i$. 

\begin{proof}
Since $\CE$ has no cusps, there is a neighborhood $U$ of $\CE$ such that every component of $M_{<\epsilon}$ that intersects $U$ is compact. Taking $U$ to be a component of the complement of a compact set, we can assume that the frontier $\partial U \subset M$ is compact. For the first part of the Fact, it suffices to show that for each $L>0$, the set of all $x\in U$ such that $d_\epsilon(x,\partial U) \leq L$ is bounded in $M$. 
	
It's helpful to formulate this inductively. We claim that for each $n$, the set of all $x\in U$ such that there is a path $\gamma$ from $\partial U$ to $x$ with $\epsilon$-length at most $L$ that only intersects $n$ components of $M_{<\epsilon}$ is at most some $B_n<\infty$. The proof is easy. The $n=0$ case is trivial, and assuming the $n$ case, suppose $\gamma$ is a path as above in the $n+1$ case. By the induction hypothesis, the point at which $\gamma$ enters the last component $T \subset M_{<\epsilon}$ it intersects lies at most distance $B_n$ from $\partial U$. The set of all $T$ at distance $B_n$ from $\partial U$ is finite, and each is compact, so there is an upper bound $D$ for all their diameters. It follows that the entire path $\gamma $ lies at most $B_{n+1} := B_n + D + L$ from $\partial U$. 

As long as $\epsilon$ is less than half the Margulis constant, the distance between any two components of the $\epsilon$-thin part is at least $\epsilon$: indeed, the injectivity radius function is $1$-lipschitz, so the distance between $M_{<\epsilon}$ and $\partial M_{<2\epsilon}$ is at least $\epsilon$, and disjoint components of $M_{<\epsilon}$ are contained in disjoint components of $M_{<2\epsilon}$. So, a path of $\epsilon$-length $L$ in $U$ can intersect at most $L/\epsilon$ distinct components of $M_{<\epsilon}$. It follows from the inductive result in the previous paragraph that the set of all $x\in U$ such that $d_\epsilon(x,\partial U) \leq L$ is bounded in $M$. 

For the last statement of the Fact, it suffices to show that for any $U$ as above, the sets $B_i$ are eventually contained in $U$. For this, just note that for large $i $, the points $p_i$ are contained in $U$ and are at large $\epsilon$-distance from $\partial U$. So, the sets $B_i$ are also in $U$.
 \end{proof}

Also, we note the following well-known fact about cusps of PNC $3$-manifolds, just to reassure the reader that it is still true in variable negative curvature.

\begin {fact}[Cusps generated by simple loops] \label{simplecusp} Suppose $M$ is a complete Riemannian $3$-manifold with pinched negative curvature and hyperbolic cusps, and $\epsilon$ is less than the Margulis constant. Suppose that $C \subset M$ is a compact core and $T$ is an $\epsilon$-cusp of $M$ that goes out the end of $M$ facing a component $S \subset \partial C$. Then $\pi_1 T$ is generated by a loop that is homotopic in $N$ to a {simple} closed curve on $S$. 

\end{fact}
\begin{proof}
Let $N_0 =N \setminus T$. Then $\partial N_0$ is an open annulus. McCullough's relative core theorem \cite{mccullough1986compact} says that there is a compact core $K \subset N_0$ that intersects $\partial N_0$ in a compact annulus $A$. Then $\pi_1 T$ is generated by a loop homotopic to the core curve of $A$, and hence to a simple closed curve $\gamma$ on some component $\Sigma \subset \partial K$.
Since the inclusion $N_0 \hookrightarrow N$ is a homotopy equivalence, $K$ is a compact core of $N$ as well. Fact \ref{coreuniqueness} says that there is a homeomorphism $h : C \longrightarrow K$ homotopic to the inclusion $C \hookrightarrow N$ such that $h(S)$ faces the same end as $S$. It follows that $\Sigma=h(S)$, so $\gamma$ is homotopic to a simple closed curve on $S$.\end{proof} 

\subsection{Geometry of the convex core  boundary}\label {convexcores}
Suppose now that $M$ is a complete Riemannian $3$-manifold with finitely generated fundamental group, pinched negative curvature $$\kappa^-\le\kappa\le\kappa^+<0,$$ and where $M$ has hyperbolic cusps, all of which have rank $2$.


Agol has shown that the area of a component of $\D CC(M)$ is bounded  in terms of the topological type and the pinching constants.

\begin{sat}[{Agol, \cite[Appendix]{Agoltameness}}] \label{area-boundary}
	For all $\kappa^-,\kappa^+<0$, there is some $A$ such that for every component $S\subset \D CC(M)$, we have $$\area(S)\le A\cdot \vert\chi(S)\vert.$$
\end{sat}

Agol's theorem statement technically requires no cusps, but his proof works just fine in the setting above. More directly, though, if $M$ has hyperbolic cusps, all of which have rank $2$, then we can fill those cusps using the $2\pi$-theorem (see the proof of Theorem 9 in \cite{Bleilerspherical}) to create a new manifold $M'$ with pinched negative curvature and no cusps, where $\partial CC(M')$ is isometric to $\partial CC(M)$, and then apply Agol's statement.

In his proof, instead of talking directly about the `area' of $S$, which is not a differentiable surface, Agol really just talks about the Riemannian areas on approximating \emph{equidistant surfaces} $S_t$, defined as follows. Let $\pi : M \longrightarrow CC(M)$ be the nearest point retraction, fix $t>0$, and let $S_t$ be the preimage of $S \times \{t\}$ under the map $$M \longrightarrow CC(M) \times \BR_{\geq 0}, \ \ p \longmapsto (\pi(p),d(p,\pi(p))).$$ 
Then for each $t>0$, we have that $S_t$ is a $C^{1,1}$-surface\footnote{The point is to prove that if $C$ is a closed, convex set in a simply connected manifold $X$ of negative curvature and $C_t$ is its $t$-neighborhood, then $\partial C_t$ is $C^{1,1}$. It suffices to show that the distance function $p \mapsto d(C,p)$ is $C^{1,1}$ along $C_t$; indeed, $t$ is a regular value of this function, so the preimage $C_t$ is a $C^{1,1}$-submanifold. The $C^{1,1}$-regularity of the distance function follows from Proposition 2.7 in \cite{ghomi2022total}. In their notation, $\Gamma=\partial C$, and  the negative curvature and the convexity of $C$ imply that $\mathrm{cut}(\Gamma) \subset C$. Alternatively, one can cite Theorem 2 and Corollary 1 in \cite{walter1974metric}.} that is homotopic to $S$. Endowing each $S_t$ with the Lipschitz-varying Riemannian metric induced from $M$, what Agol proves above is that for some $A>0$, \begin{equation}
	\limsup_{t\to 0} \area(S_t) \leq A \cdot |\chi(S_t)|.\label{areaform}
\end{equation}

Note that since $S_t$ admits a lipschitz map onto $S$ and lipchitz maps do not increase Hausdorff measure, Theorem \ref{area-boundary} makes sense as stated and follows from \eqref{areaform} if you take the area of $S$ to mean $2$-dim Hausdorff measure, with the definition normalized so that Hausdorff measure on $\BR^2$ is Lebesgue measure. The theorem is even true when $CC(M)$ has dimension less than $3$, although probably a few more words should be said about how the $S_t$ collapse onto $\partial CC(M)$ in that case. Below, we'll always cite \eqref{areaform} instead of Theorem \ref{area-boundary}, though. 

\begin{kor}\label {ccshortloop}
With $S_t$ as above, for small $t$ there is a homotopically essential loop $\gamma $ in $S_t$ with length  at most some $C=C(\chi(S_t),\kappa^+)$.
\end{kor}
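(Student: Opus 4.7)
The plan is to combine the area estimate of Theorem~\ref{area-boundary} with a systolic inequality for surfaces, following the hint provided by the citation to Gromov \cite[4.5 3/4]{Gromovmetric} in Agol's appendix.

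First I would note that under the standing assumption of this section that $M$ is cusp-free with nonabelian $\pi_1 M$, every component $S$ of $\partial CC(M)$ has $\chi(S) < 0$; in particular, $S$ is a closed surface of genus $g \geq 2$. Although $S$ need not be smooth, it is rectifiable and inherits a natural length metric from $M$, with respect to which compact pieces have finite area. Theorem~\ref{area-boundary} then gives
\[
\area(S) \leq A \cdot |\chi(S)|,
\]
where $A$ depends only on the pinching constants $\kappa^{\pm}$, which are fixed throughout this section.

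Next I would invoke Gromov's systolic inequality for surfaces of positive genus: on any closed length surface of genus $g \geq 1$ there exists a homotopically essential loop $\gamma$ satisfying
\[
\length(\gamma)^2 \leq C_0 \cdot \area(S),
\]
where $C_0$ depends only on $g$, hence only on $\chi(S)$. Combining the two inequalities gives
\[
\length(\gamma) \leq \sqrt{C_0 \cdot A \cdot |\chi(S)|} =: C(\chi(S)),
\]
which is precisely the bound asserted by the corollary.

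The main subtle point is the a priori low regularity of $\partial CC(M)$, so that one cannot quote the Riemannian version of the systolic inequality directly. However, Gromov's argument only uses coverings by small metric balls and a filling argument, both of which go through for length metrics of finite area. If one prefers to avoid this technical step, one could instead run a direct packing argument using the fact that $\partial CC(M)$ is locally convex in $M$, so by a Gauss-equation computation it inherits intrinsic curvature bounded above by $\kappa^+ < 0$; if no essential loop on $S$ had length at most $L$, an embedded disk of radius $L/2$ would exist around every point, whose area lower bound (explicit in $L$ and $\kappa^+$) would eventually exceed $A \cdot |\chi(S)|$, yielding the same conclusion.
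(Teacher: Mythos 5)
Your proposal is correct and is essentially the paper's own argument: the paper derives Corollary \ref{ccshortloop} by combining the area bound of Theorem \ref{area-boundary} with precisely the Gromov \cite[4.5 3/4]{Gromovmetric} systolic-type fact for surfaces of bounded area that you invoke. The regularity caveat you raise is handled the same way (the statement is a fact about length/rectifiable surfaces of finite area, not smooth Riemannian ones), so no genuinely different route is involved.
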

\begin{proof}
	Applying \cite[4.5 3/4]{Gromovmetric}, the area bounds from \eqref{areaform} give a  bounded length curve $\gamma$ on $S_t$ for all small $t$.
\end{proof}

As another application of the area bound, we prove a bounded diameter lemma for the equidistant surfaces $S_t$. 

\begin{sat}[Relative BDL for equidistant surfaces]\label{bdlcc}
	Suppose as above that $M$ has sectional curvature in $[\kappa^-,\kappa^+]$ and all of its cusps are hyperbolic of rank $2$. Suppose that $\epsilon,t$ are small and positive, $S_t$ is the equidistant surface above, and that $S_t$ is incompressible in $M$. 

Then there is a collection of disjoint, pairwise non-isotopic, embedded, essential annuli $A_i \subset S_t$ such that 
\begin{enumerate}
\item each boundary component of each $A_i$ has length at most $3\epsilon$,
\item if $p\in A_i$, there is a loop $\gamma$ in\footnote{Perhaps one can take $\gamma \subset S_t$, but our proof doesn't produce such a loop.} $M$ with length at most $3\epsilon + 2t$ that passes through $p$, and that is homotopic in $M$ to an essential simple closed curve in $A_i$,
\item each component of the complement $S_t \setminus \cup_i A_i$ has (intrinsic) diameter at most some constant depending only on $\chi(S_t), \epsilon, \kappa^\pm$.
\end{enumerate}\end{sat}

This theorem may look unsurprising to hyperbolic geometers, and it's trivial when $M$ is hyperbolic since then the convex core boundary is intrinsically hyperbolic. But when $M$ has variable curvature it is hard to adequately control the curvature of the convex core boundary, or that of the associated equidistant surfaces, so we have to give an argument that somehow avoids any discussion of curvature.

\medskip

	Before starting the proof of Theorem \ref{bdlcc}, we need to understand the geometry of the universal cover of $S_t$. Write $M=\Gamma \backslash X,$ where $X$ is a simply connected manifold with pinched negative curvature. Let $CH(\Gamma)$ be the convex hull in $X$ of the limit set of $\Gamma$ and let $CH_t(\Gamma)$ be the closed $t$-neighborhood of $CH(\Gamma)$. Then $S_t$ is covered by some component $T \subset \partial CH_t(\Gamma)$. Since $S_t$ is incompressible, $T$ is homeomorphic to an open disk, and therefore $T$ is the universal cover of $S_t$. 
	
	Our first result says that a subset of $T$ doesn't stray far from the convex hull of its boundary, as long as the subset has the same limit points in $\partial X$ as its boundary.
	
	\begin{lem}\label{close to ch}
		Suppose that $A \subset T$ is a closed subset with frontier $\partial A \subset T$, and that $\overline{\partial A} \cap \partial X =  \overline A \cap \partial X$, where the closures are taken in $X \cup \partial X$. Then $A $ is contained in the $t$-neighborhood of the convex hull $CH(\partial A) \subset X.$
	\end{lem}
	
		In particular, Lemma \ref{close to ch} applies when $A$ is a closed disk in $T$, or when $A\cong \BR \times [0,1]$ is a strip that is a lift of an essential annulus in $S_t$.  These are the two cases in which we will apply the lemma.

	\begin{proof}
	The argument is an extension of that of Lemma 11.1 in Agol's paper \cite{Agoltameness}. Briefly, Agol's argument is as follows. Suppose for a minute that $T$ is a component of $\partial CH(\Gamma)$ instead of $\partial CH_t(\Gamma)$, and that $A \subset T$ is a closed disc. The convex hull of $\partial A$ in $X$ is a ball contained in $CH(\Gamma)$, and the loop $\partial A$ separates the boundary of this ball into two discs $D_1,D_2$, where $D_1$ faces $A$. In fact, $D_1=A$: if not, we could cut $CH(\Gamma)$ along $D_1$ and get a smaller convex subset of $X$ limiting onto $\Gamma$, a contradiction to the definition of $CH(\Gamma)$. So, $A \subset CH(\partial A)$. 
	
	Here are the complexities in the real argument below. Of course, we must work with the $t$-neighborhood $CH_t(\Gamma)$ instead of the convex hull $CH(\Gamma)$, and we work with more general $A$. But also, the part about \emph{`cutting along the disk $D_1 \subset \partial CH(\partial A)$ that faces $A$'} is a little vague, both in the paragraph above and in Agol's paper. Most of the length of the argument below is related to the second point.
	
	\medskip
	
Let's return to the statement of the lemma, where $T\subset CH_t(\Gamma)$ and $A\subset T$ is a closed subset such that $\overline{\partial A} \cap \partial X =  \overline A \cap \partial X$. Set $$K:=CH(\partial A) \subset X.$$ If $x\in A$, let 
	$d_x := d(x,K)$ be the distance to $K$, let 
	$$\CI :=\big \{(x,t) \ \big | \ x\in  A,\, t\in [0,d_x]\big \}$$ and let
	$\gamma : \CI \longrightarrow X$
	be the map such that for every $x\in A$, the path $t\mapsto \gamma(x,t)$ is the shortest path from $K$ to $x$, parameterized so that $d(\gamma(x,t),K)=t$. We consider the domain of $\gamma$ with its induced topology as a subset of $ A \times \BR$. 
	 Then $\gamma$ is continuous, $\gamma(x,\cdot)=x$ if $x\in \partial A$, and the image of $\gamma$ is contained in $CH_t(\Gamma)$. Also, since $\overline{\partial A} \cap \partial X =  \overline A \cap \partial X$ the map $\gamma$ is proper.  Set
	$$\mathring \CI= \{(x,t) \in \CI \ | \ x\in   A, \ t>0\}.$$

	\begin{fact*} $\gamma(\mathring \CI)$ is open in $CH_t(\Gamma)$, and the frontier of $\gamma(\mathring \CI)$ in $CH_t(\Gamma)$ is contained in $\partial K$.
	\end{fact*}
	
	With respect to the naive proof sketch above, the set $\gamma(\mathring \CI)$ corresponds to the set of points that lie between $D_1$ and $A$.
	
	\begin{proof}
		Fixing $r>0$, we'll actually show that the image of the set 
		$$\mathring \CI_r := \{ (x,t)\in \CI \ | \  x\in A, t>r \}$$ is open in $CH_t(\Gamma)$. Openness of $\gamma(\mathring \CI)$ will follow by taking the union over all $r>0$. For the second part of the lemma, properness of $\gamma$ implies that the frontier of $\gamma(\mathring \CI)$ in $CH_t(\Gamma)$ is contained in $\gamma(\CI)$, and therefore in $\gamma(\CI \setminus \mathring \CI)$ by the first part of the lemma. But if $(x,t) \in \CI \setminus \mathring \CI$, then $t=0$, so $d(\gamma(x,t),K)=t=0$ by construction.

		We now show $\gamma(\mathring \CI_r)$ is open. Let $A'\subset A$ be the subset on which $d(\cdot,K) > r$ and consider the closest point projection $$\rho : A' \longrightarrow \partial K_r,$$ where $K_r$ is the $r$-neighborhood of $K$. 
		
		We claim that $\rho$ is injective. If $y,z\in A'$ have the same $\rho$-image, they both lie on a geodesic ray $\alpha$ emanating out from $K_r$, say hitting $y$ before $z$. Negative curvature implies that the distance function $d(\cdot, CH(\Gamma))$ is strictly convex along $\alpha$ wherever it is positive, and hence $\alpha$ must exit $CH_t(\Gamma)$ after passing through $y\in T\subset \partial CH_t(\Gamma)$. The part of $\alpha$ that lies between $y,z$ then violates convexity of $CH_t(\Gamma)$. 
		
		Since $\rho$ is a continuous and injective map between surfaces, it is open by invariance of domain, and hence a homeomorphism onto its image $V:=\rho(A')$, which is an open subset of $\partial K_r$. Consider the radial parametrization $$R: V \times (0,\infty) \longrightarrow X$$ that takes a point $(v,s)$ to the point $s$ away from $K_r$ on the geodesic ray emanating out perpendicularly from $v$. Since $K_r$ is convex with $C^{1,1}$ boundary, $R$ is a homeomorphism onto its image, which is an open subset of $X$. If $y\in A'$ and $v=\rho(y)\in V$, then we have 
		$$R(v,t) = \gamma(y,r+t).$$
		As argued in the previous paragraph, the ray $t\mapsto R(v,t)$ stays inside $CH_t(\Gamma)$ exactly until it passes through $y$, after which it lies outside. So, the intersection of the image of $R$ with $CH_t(\Gamma)$ is exactly $\gamma(\mathring \CI_r)$, which is then open in $CH_t(\Gamma)$.
	\end{proof}
	
	We now claim that the set $$C := CH_t(\Gamma)\setminus \gamma(\mathring \CI)$$
	is convex in $X$. (In the naive proof sketch above, $C$ corresponds to one of the components you get after you cut $CH(\Gamma)$ along $D_1$.) Indeed, if $C$ is not convex, there is some geodesic segment $\alpha$ that has endpoints in $\partial C$, but where $int(\alpha) \subset X\setminus C$. Since $CH_t(\Gamma)$ is convex and contains the endpoints of $\alpha$, we have $\alpha \subset CH_t(\Gamma)$, so the endpoints of $\alpha$ must lie on the frontier of $\gamma(\mathring \CI) \subset CH_t(\Gamma)$, and hence in $\partial K$ by the fact above. Since $K$ is convex, $\alpha$ is contained in $K$, but $K\subset C $, so we have a contradiction.
	
	Since $\overline{\partial A} \cap \partial X =  \overline A \cap \partial X$, the set $C$ above still limits onto the entire limit set of $\Gamma$. Hence, $C \supset CH(\Gamma)$. Now suppose $x\in A$.  To prove the lemma, we want to show that $d(x,K)\leq t$. If $x\in C$, then $x\in K$ and we're done. Otherwise, let $\alpha $ be a shortest path from $x$ to $CH(\Gamma)$. This $\alpha$ must enter $C$ before it extends a distance of $t$ from $x$. Since $x\not \in C$, the first point at which $\alpha$ intersects $C$ cannot lie on $\partial CH_t(\Gamma)$. So it lies on the frontier of $\gamma(\mathring \CI)$ and hence on $\partial K$ by the fact above. This shows $d(x,K)\leq t$, as desired.
	\end{proof}

	Our next goal is to find a lower bound on the areas of small discs in $T$. Later, this will be applied to show that small discs in the thick part of $S_t$ have a similar area lower bound.

	\begin{lem}\label{area lower bound on discs}
		Suppose that $t$ is small and $p\in T$. With respect to the intrinsic metric on $T$, the open metric disc $B_{T}(p,\epsilon/2) \subset T$ has area at least $\epsilon^2/K$ for some constant $K$ depending only on $\kappa^-$.
	\end{lem}

	\begin{proof}
Briefly, the idea here is to use the co-area formula: if one can show that the intrinsic circle of radius $\delta$ around $p\in T$ has length that's at least linear in $\delta$, then we get a lower bound for the area of $B_{T}(p,\epsilon/2) \subset T$ by integrating. The problem is that because of the variable curvature of $M $, we don't have good control over the curvature of $T$, so to get length lower bounds we need to instead use directly that $T$ is a component of $\partial CH_t(\Gamma)$. Also, while there is a well-defined and locally invertible exponential map for $C^{1,1}$ surfaces, there is no guarantee here that the exponential map at $p$ is a homeomorphism onto $B_{T}(p,\epsilon/2)$ for our fixed $\epsilon$, so (at least a priori) the set of points on $T$ at distance $\delta$ from $p$ does not need to be a circle.

\medskip

Fix $p\in T$, let $\delta< \epsilon/2$ and let $B_\delta := B_{T}(p,\delta)$ be the open metric ball in $T$, with respect to the intrinsic metric on $T $. Let $\partial B_\delta$ be its frontier in $T$, which is the set of points that exactly distance $\delta$ from $p$. As mentioned above, the closure $\overline B_\delta$ may not be a submanifold with boundary $\partial B_\delta$.

\begin{claim}\label{connectedsubset}
There is an open disk $D \subset T$ with $B_\delta \subset D$ and where the frontier $\partial D \subset \partial B_\delta$.
\end{claim}

Recall that $T$ itself is a topological disk. As an example, if $\overline B_\delta$ is a pair of pants embedded in $ T$, then $D$ above is the union of the interior $B_\delta$ with two adjacent closed disks.

\begin{proof}[Proof of Claim \ref{connectedsubset}] 
Let $ E \subset  T$ (for `exterior') be the path component of $ T \setminus \overline B_\delta$ containing the single end of $ T$, and let $$ D = T \setminus \overline E.$$ 
If $\gamma$ is any simple closed curve in $ D$, then $ T \setminus \gamma$ is the union of an open disk $D_\gamma $ and a complementary open annulus that contains the end of $ T$. Since $E$ is connected, does not intersect $\gamma$, and intersects this complementary annulus, $E$ is contained in the complementary annulus, hence $\overline E$ is contained in the union of the annulus and $\gamma$, and hence $D_\gamma \subset  D$. In other words, $\gamma$ is nullhomotopic in $ D$, and since $\gamma$ was arbitrary, this means $ D$ is an open disk.
\end{proof}

Let $\CH$ denote $1$-dimensional Hausdorff measure, with respect to the intrinsic metric on $T$. Let $D$ be as in Claim \ref{connectedsubset}, and set $$h_\delta := \CH(\partial B_\delta).$$
The frontier $\partial D$ is connected, so we have $$\diam (\partial D) \leq \CH(\partial D) \leq h_\delta, $$ where the first inequality is an exercise using connectedness and the definition of Hausdorff measure. Here, all metric quantities are computed with respect to the intrinsic metric on $T$. However, the above certainly implies the extrinsic inequality
$\diam_X (\partial D) \leq h_\delta$, where now $\diam_X$ is diameter as a subset of $X$. 

By Lemma \ref{close to ch}, $D$ lies in the $t$-neighborhood of the convex hull $CH(\partial D) \subset X$. Since $\partial D$ has diameter at most $h_\delta$, it lies in a ball of radius $h_\delta$ in $X$. Negative curvature implies balls in $X$ are convex, so $CH(\partial D)$ also lies in an $h_\delta$-ball. It follows that $$\diam_X D \leq 2(h_\delta + t).$$

We now upgrade this inequality to give an upper bound on the diameter of $D$ with respect to the intrinsic metric on $T$. Choose a metric ball $B \subset X$ that contains $\overline  D$ and has radius $2(h_\delta+t)$. If $$\pi : \partial B \longrightarrow CH_t(\Gamma)$$ is the closest point projection, then the image of $\pi$ contains $\overline D$, since a geodesic ray emanating orthogonally out of $CH_t(\Gamma)$ from a point in $\overline D \subset B$ must intersect $\partial B$. So, given $x \in D$, choose some $y\in \partial B$ with $\pi(y)=x$, and join $y$ to $\pi^{-1}(\partial D)$ by a shortest path $\gamma$ in $\partial B$. Standard Jacobi field estimates imply that 
$$\diam \partial  B \leq \pi s \sinh( 4(h_\delta+t)/s), \ \ s= 1/\sqrt{|\kappa^-|},$$
where the right hand side is the intrinsic diameter of a sphere of radius $2(h_\delta+t)$ in a space of constant curvature $\kappa^-$. As long as $h_\delta +t$ is bounded (in our work $t \approx 0$ and $h_\delta\leq \epsilon$) we can use a linear upper bound for $\sinh$, giving $\diam \partial B\leq K(h_\delta + t)$ for some $K$ depending only on $\kappa^-$. The length of the path $\gamma$ above is then also at most $K(h_\delta + t)$. Since the projection $\pi$ is $1$-lipshitz, we then get that $x$ can be connected to $\partial D$ by a path on $T$ of length at most $K(h_\delta + t)$. But $\diam_T \partial D \leq h_\delta$, so after increasing $K$ slightly, we get:

\begin {claim}\label {intrinsic diameter claim}
As long as $h_\delta,t\leq 1$, say, we have $\diam_T D \leq K(h_\delta+t)$ for some $K$ depending only on $\kappa^-$.
\end{claim}

Here is the point of all this. Recall that $p\in T$, that $B_\delta\subset T$ is the metric ball around $p$ with respect to the intrinsic metric on $T$, that $h_\delta = \CH(\partial B_\delta)$ is the one dimensional Hausdorff measure, and $B_\delta \subset D$, a disc with boundary in $\partial B_\delta$. As $p$ is at distance $\delta$ from $\partial B_\delta$, we have
$$\delta = \diam_T B_\delta \leq \diam_T D.$$
So, as long as $t\leq 1$, Claim \ref{intrinsic diameter claim} says that
$$h_\delta \geq \min\{\delta/K-t,1\}.$$
Since $\delta < \epsilon/2$, as long as $\epsilon<1$, say, and $K\geq 1$, we can forget about the minimum and just write $h_\delta \geq \delta/K-t$. Applying the coarea formula for Lipschitz functions on $C^1$-manifolds \cite[Theorem 5.3]{nicolaescu2011coarea} to the function $d(p,\cdot) $ on $T$, we get
$$\area B_T(p,\epsilon/2) = \int_0^{\epsilon/2} h_\delta \, d\delta \geq \int_0^{\epsilon/2} (\delta/K-t) \, d\delta \geq \epsilon^2/4K - t\epsilon/2. $$
So, as long as $t$ is small, after increasing $K$  we get an inequality $$\area B_T(p,\epsilon/2) \geq \epsilon^2/K$$
for some new $K$, still depending only on $\kappa^-$.\end{proof}

We are now ready to prove our version of the bounded diameter lemma for incompressible boundary components of convex cores in variable negative curvature.

\begin{proof}[Proof of Theorem \ref{bdlcc}]
The annuli in the statement of Theorem \ref{bdlcc} are related to those in a `thick-thin decomposition' of $S_t$. Since $S_t$ does not have negative curvature, for complete clarity we begin with some definitions. If $X \subset S_t$ and $\gamma \subset S_t$ is a path, define the \emph{length of $\gamma$ rel $X$} to be the length of $\gamma \cap (S_t \setminus X)$. The \emph{distance between two points $p,q \in S_t$ rel $X$} is the infimal length rel $X$ of a path from $p$ to $q$, and the \emph{diameter of $S_t$ rel $X$} is the supremal distance rel $X$ between two points in $S_t$. The \emph{$\epsilon$-thin part} $(S_t)_{< \epsilon}$ is the set of points in $S_t$ through which there is a loop of length less than $2\epsilon$ that is essential in $S_t$, while the \emph{$\epsilon$-thick part} is $(S_t)_{\geq \epsilon} := S_t \setminus (S_t)_{<\epsilon}$.

As a first goal, we claim:

\begin{claim} $S_t$ has diameter at most $C |\chi(S_t)|/\epsilon$ relative to the thin part $(S_t)_{< \epsilon}$, for some $C=C(\kappa^\pm)$.
\end{claim}
\begin{proof}
If $p\in (S_t)_{\geq \epsilon}$, there are no loops in $ B(p,\epsilon/2)$ that are essential in $S_t$. This follows from an argument of Gromov: if $\gamma$ is a loop in $ B(p,\epsilon/2)$, we can write $\gamma$ as a concatenation of loops based at $p$ of length less than $2\epsilon$, by subdividing $\gamma$ into small arcs of length less than $\epsilon/2$, and joining their endpoints to $p$ with near minimal length paths in $B(p,\epsilon/2)$. So if $\gamma$ is essential in $S_t$, there is some such loop based at $p$ with length less than $2\epsilon$ that is essential, a contradiction.

Hence, $B(p,\epsilon/2)$ is an open, and hence locally path connected, subset of $S_t$ with trivial $\pi_1$-image in $S_t$. By the lifting criterion, $B(p,\epsilon/2)$ lifts to $T$, the chosen component of $\partial CH_t(\Gamma)\subset X$ that covers $S_t$. Then as long as $t$ is small, Lemma \ref{area lower bound on discs} implies that $B(p,\epsilon/2)$ has area at least $\epsilon^2/K$ for some $K=K(\kappa^-)$.

Suppose now that for some $n\in \BN$, the surface $S_t$ has diameter rel $(S_t)_{< \epsilon}$ in the interval $ (n\epsilon,2n\epsilon].$ Choose points $p_0,\ldots,p_n\in T$ such that 
\begin {enumerate}
\item[(a)] $p_i \in (S_t)_{\geq \epsilon}$ for all $i$,
\item[(b)] the distance rel $(S_t)_{<\epsilon}$ from $p_0$ to $p_i$ is $\epsilon \cdot i$. \end {enumerate}
For instance, if $x,y$ realize the diameter of $S_t$ rel $(S_t)_{<\epsilon}$ and $\alpha$ is a path from $x$ to $y$ with nearly minimal $\epsilon$-length, we can construct such $p_i$ along $\alpha$, by choosing $p_i$ to be the first point where (b) holds. Such a $p_i$ is automatically in the $\epsilon$-thick part of $M$, since the $\epsilon$-distance to $x$ does not change when moving in the thin part.

For $i=1,\ldots,n$, the sets $B_{S_t}(p_i,\epsilon/2)$ are all disjoint, and all have area at least $\epsilon^2/K$, from above. So, for small $t$ we have $$n \cdot \epsilon^2/K \leq 2A \cdot |\chi(S_t) |$$ by \eqref{areaform}, where $A=A(\kappa^\pm)$.
But the diameter of $S_t$ rel $(S_t)_{<\epsilon}$ is at most $2n\epsilon$, and we have
$$2n\epsilon \leq 4AK \cdot |\chi(S_t)|/\epsilon,$$
so we're done with $C=4AK$.
\end{proof}

Next, we must construct a collection of disjoint, pairwise non-isotopic embedded, essential annuli $A_i \subset S_t$ such that \begin{enumerate}
	\item each boundary component of each $A_i$ has length at most $3\epsilon$,
	\item if $p\in A_i$, there is a loop $\gamma$ in $M$ that passes through $p$, that is homotopic in $M$ to an essential simple closed curve in $A_i$ and has length at most $3\epsilon + 2t$,
	\item each component of the complement $S_t \setminus \cup_i A_i$ has diameter at most some constant depending only on $\chi(S_t), \epsilon, \kappa^\pm$.
	\end{enumerate}

First, note that if $\alpha,\beta$ are non-isotopic simple closed curves on $S_t$ that both have length less than $2\epsilon$, then we have $\alpha\cap \beta=\emptyset$. Indeed, if $p\in \alpha\cap \beta$, then since $S_t$ is incompressible, $p$ lies in a component $U$ of the thin part $M_{<\epsilon}$.  So, $\alpha,\beta$ generate an abelian subgroup of $\pi_1 M$, and therefore of $\pi_1 S_t$ by incompressibility. This implies that $\alpha,\beta$ have a common power in $\pi_1 S_t$. As both loops are simple, this implies that they are isotopic, a contradiction. 

Now given an essential simple closed curve $\alpha$ on $S_t$, let 
$(S_t)_{<\epsilon}(\alpha)$ be the set of points on $S_t$ through which there is a curve of length less than $2\epsilon$ that is homotopic to $\alpha$. Then $$(S_t)_{<\epsilon} = \cup_\alpha (S_t)_{<\epsilon}(\alpha),$$ since whenever there is an essential loop of length less than $2\epsilon$ through $p$, $C^1$-perturbation and surgery gives a simple such loop. The previous paragraph implies that the $\alpha$ where $(S_t)_{<\epsilon}(\alpha)\neq \emptyset$ are all pairwise disjoint, so finitely many $\alpha$ suffice, and moreover that corresponding sets $(S_t)_{<\epsilon}(\alpha)$ are also disjoint. So, $$(S_t)_{<\epsilon} = \sqcup_{\alpha_i} (S_t)_{<\epsilon}(\alpha_i),$$
for some finite collection $\alpha_i$ of essential simple closed curves on $S_t$.

Fix some $i$, and for concreteness homotope $\alpha_i$ on $S_t$ so that it has length less than $2\epsilon$. We now build an annulus $A_i \subset S_t$ such that $(S_t)_{<\epsilon}(\alpha_i)$ is contained in a $2\epsilon$-neighborhood of $A_i$. To do this, consider the collection of all \emph{open} annuli $A \subset S_t$ that contain $\alpha_i$, and where the frontier of $A$ in $S_t$ is the image of a pair of (possibly non-embedded) curves of length at most $2\epsilon$. Partially order the set of all such $A$ with respect to inclusion. Then for any increasing sequence of such $A$, the union is an annulus of the same type, by Arzela-Ascoli. So, we can pick a maximal element $A_i'$ of this collection. If $\gamma$ is any essential simple closed curve on $S_t$ with length less than $2\epsilon$ that is isotopic to $\alpha_i$, then $\gamma$ must intersect $A_i'$: if not, $\gamma$ bounds an annulus with $\alpha_i$, and the union of this annulus with $A_i'$ is a bigger annulus of the preferred type above, contradicting maximality of $A_i'$. So, it follows that every point in $(S_t)_{<\epsilon}(\alpha_i)$ is within $\epsilon$ of $A_i'$. 

The only problem is that the closure $\overline {A_i'}$ may not be an embedded annulus in $S_t$, and in the statement of the theorem we want our annuli $A_i$ to be embedded compact annuli. To fix this, we just push the boundary components of $A_i'$ slightly inside to create a closed annulus $A_i \subset A_i'$, at the expense of increasing boundary length by a small factor, and slightly increasing the distance to certain points in $(S_t)_{<\epsilon}(\alpha_i)$.  In particular, we can assume each component of $\partial A_i$ has length at most $3\epsilon$, and $(S_t)_{<\epsilon}(\alpha_i)$ is contained in a $3\epsilon$-neighborhood of $A_i$.

Note that as $i$ varies, the annuli $A_i$ constructed above are all disjoint. Indeed, as they are all essential and nonisotopic, no $A_i$ can be contained in any $A_j$, where $i\neq j$. So if $A_i$ intersects $A_j$, their boundaries intersect, in which case we have two essential, intersecting simple closed curves of length less than $3\epsilon$ on $S_t$. As long as $\epsilon$ is sufficiently small, $3\epsilon$ is still smaller than the Margulis constant for $M$, so this violates incompressibility of $S_t$ in the same way as we discussed above.

We next show that each component $K \subset S_t \setminus \cup_i A_i$ has intrinsic diameter at most some constant depending only on $\chi(S_t), \epsilon, \kappa^\pm$. But since each component of $\partial K$ has length at most $3\epsilon$ and the number of components of $\partial K$ is bounded in terms of $|\chi(S_t)|$, if $K$ has huge intrinsic diameter, then there is a point in $K$ at huge distance from $\partial K$. This contradicts that $S_t$ has diameter rel $(S_t)_{<\epsilon}$ at most $C|\chi(S_t)|/\epsilon$  and that $(S_t)_{<\epsilon} $ is contained in a $3\epsilon$-neighborhood of $\cup_iA_i$.

Finally, we show that if $p\in A_i$, there is a loop in $M$ that passes through $p$, is homotopic in $M$ to an essential simple closed curve in $A_i$, and has length at most $3\epsilon+t$. For this, lift $A_i$ to the universal cover $T\subset X$ of $S_t$, giving a strip $\tilde A_i \cong \BR\times [0,1]$, and let $g \in \Gamma$ be a deck transformation such that $\langle g\rangle \backslash \tilde A_i = A_i$. Let $X_{g,3\epsilon} \subset X$ be the set of points that are translated less than $3\epsilon$ by $g$. Since the boundary components of $A_i$ have length at most $3\epsilon$, we have  $\partial \tilde A_i \subset X_{g,3\epsilon} $. Negative curvature implies that $X_{g,3\epsilon} \subset X$ is convex, and then Lemma \ref{close to ch} says that $\tilde A_i$ is contained in the $t$-neighborhood of $CH(\partial \tilde A_i) \subset X_{g,3\epsilon}$. Hence, every point in $\tilde A_i$ is within $3\epsilon+2t$ of its $g$-translate. Projecting to $M$, we are done.
\end{proof}

\subsection{Covering maps}

We begin by stating two results about how ends of $3$-manifolds can cover. The first is a variable curvature version of the Thurston-Canary covering theorem \cite{Canarycovering}.

\begin{named}{The Covering Theorem}
Let $N$ be a complete Riemannian 3-manifold with pinched negative curvature, finitely generated fundamental group and hyperbolic cusps and let $\pi:N\longrightarrow M$ be a Riemannian covering. If $\CE$ is a degenerate end (without cusps) of $N$, and $S$ is the adjacent component of a standard compact core of $N$, then either:\begin{itemize}
\item $M$ is closed and $\pi$ factors as $$N \overset{\pi'}\longrightarrow M' \longrightarrow M,$$ where $\pi'(S)$ is homotopic to a fiber with respect to a fibration of $M'$ over the circle.
\item There is a neighborhood $E\cong S\times\BR$ of $\CE$ in $N$, a degenerate end $\CE'$ of $M$ and a neighborhood $E'\cong S'\times\BR$ of $\CE'$ in $M$ such that $\pi(E)=E'$ and the restriction
$$\pi\vert_E:E\longrightarrow E'$$
is a finite covering.
\end{itemize}
\end{named}

Agol \cite{Agoltameness} proved the above when $\CE$ is incompressible, by proving a version of Canary's Filling Theorem for incompressible ends in variable curvature and then following Canary's argument from \cite{Canarycovering}. One can prove the theorem above in the same way, using our version of the variable curvature filling theorem, see Theorem \ref{filling theorem}, to deal with compressible ends.

The second result is the following easy observation.

\begin{lem}[End embeddings extend to convex core boundary]\label{convex cocompact cover}
Let $\pi: N \longrightarrow M$ be a Riemannian covering of hyperbolic $3$-manifolds. Suppose $\pi$ restricts to an embedding on some neighborhood $U$ of a convex cocompact end $ \CE$ of $N$. Then $\pi$ is an embedding on the component $$  E \subset   N \setminus CC(N)$$ that is a neighborhood of $ \CE$. Moreover, the image $\pi(  E)$  is a component of $M \setminus CC(M)$. \end{lem}
\begin{proof}
Write $  N=\BH^3 / \Delta$ and $M=\BH^3/\Gamma$, where $\Delta \subset \Gamma$. Let $\tilde E,\tilde U$ be the preimages of $E,U$ in $\BH^3$. Let $\overline N  = \BH^3 \cup \Omega(\Delta) / \Delta$ be the associated Kleinian manifold, and let $\partial_\CE N$ be the boundary component of $\overline N$ adjacent to $\CE$. Let $\Omega_\CE \subset \partial_\infty \BH^3$ be the preimage of $\partial_\CE N$. Note that 
\begin{enumerate}
	\item any $\xi \in \Omega_\CE$ has a neighborhood whose intersection with $\BH^3$ lies in $\tilde U$, and
\item any $\xi  \in \BH^3 \setminus \Omega_\CE $ is a limit of a sequence of points in $\BH^3 \setminus \tilde E$.
\end{enumerate}

We claim that the limit set $\Lambda(\Gamma)$ cannot intersect $\Omega_\CE$. If it did, then there would be some hyperbolic type element $\gamma \in \Gamma \setminus \Delta$ with a fixed point $\xi$ in $\Omega_\CE$. If $x \in \BH^3$ is close enough to $\xi$, then (1) says that $x$ and $\gamma(x)$ both lie in $\tilde U$, contradicting that $\pi$ is an embedding on $U$.

From (2), it then follows that $\BH^3 \setminus \tilde E$ is a convex subset of $\BH^3$ limiting onto $\Lambda(\Gamma)$, and therefore contains the convex hull of $\Lambda(\Gamma)$. But the frontier $\partial  \tilde E$ in $\BH^3$ is contained in the convex hull $CH(\Lambda(\Delta))$, by definition of $E$. Since $\Delta \subset \Gamma$, it follows that $\partial \tilde E$ is a collection of components of the boundary of $CH(\Lambda(\Gamma))$, and $\tilde E$ is a collection of complementary components of this convex hull.
Projecting down, this shows that $\pi(E)$ is a component of $M\setminus CC(M)$. 

Suppose now that $\pi$ is not injective on $E$. Then there is some $\gamma \in \Gamma \setminus \Delta$ and some $x\in \tilde E$ such that $\gamma(x)$ is also in $\tilde E$. Let's say that the components of $\tilde E$ containing $x,\gamma(x)$  are $\tilde E_1, \tilde E_2$. Since $\tilde E_1$ and $\tilde E_2$ are components of $\BH^3 \setminus CH(\Lambda(\Gamma))$, we have $\gamma(\tilde E_1)=\tilde E_2$.  But points in $\tilde E_i$ close to $\partial_\infty \BH^3$ lie in $\tilde U$, so this means $\gamma$ maps a point of $\tilde U$ into $\tilde U$. This contradicts that $\pi$ is injective on $U$.
\end{proof}

Finally, we recall a construction from \cite{Biringeralgebraic} that extends an isometric immersion from a manifold with convex boundary to  a covering map. 

\begin{prop}\label {radialextension}
	Suppose that $N $ is a complete, convex hyperbolic $3$-manifold that has compact boundary and $$\pi : N \longrightarrow M$$  is an isometric immersion  into a (complete) hyperbolic $3$-manifold $M $. Then there is a complete  boundaryless hyperbolic $3$-manifold $ N' \supset N$ and an extension $ \pi' : N' \longrightarrow M $ of $\pi$ such that 
\begin {enumerate}
\item $N' \setminus N$ is homeomorphic to $\partial N \times \BR$,
\item $\pi'$  is a covering map.
\end {enumerate}
\end{prop}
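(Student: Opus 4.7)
The plan is to build $N'$ by attaching a collar $\partial N \times [0, \infty)$ to $N$ along $\partial N$, using the outward normal exponential map in $M$ to define both the identification and the hyperbolic structure on the collar. Let $\nu$ denote the outward unit normal vector field along $\partial N$. Since $M$ is complete, the smooth map
\[
\Phi : \partial N \times [0, \infty) \longrightarrow M, \qquad \Phi(x, t) = \exp_{\pi(x)}\bigl(t \cdot d\pi(\nu(x))\bigr)
\]
is defined everywhere. The first key step is to show $\Phi$ is a local diffeomorphism. A variation of $\Phi$ in a tangent direction $v \in T_x \partial N$ yields a Jacobi field $J$ along the geodesic $t \mapsto \Phi(x, t)$ with $J(0) = d\pi(v)$ and $J'(0) = d\pi(W_\nu(v))$, where $W_\nu$ is the Weingarten map of $\partial N$ with respect to $\nu$. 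Since $M$ has constant curvature $-1$, the Jacobi equation for perpendicular fields reduces to $J'' = J$, giving $J(t) = \cosh(t)\, d\pi(v) + \sinh(t)\, d\pi(W_\nu(v))$. Convexity of $\partial N$ in $N$ means $W_\nu$ is positive semidefinite, so in the eigenbasis of $W_\nu$ every coefficient $\cosh(t) + \lambda_i \sinh(t)$ is strictly positive for $t \geq 0$, forcing $J(t) \neq 0$ whenever $v \neq 0$. Together with the fact that $\partial_t \Phi$ is a unit vector orthogonal to these tangential variations, this shows $d\Phi$ is injective everywhere.

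Next I would pull back the hyperbolic metric on $M$ via $\Phi$ to obtain a smooth metric of constant curvature $-1$ on $\partial N \times [0, \infty)$, and define
\[
N' := N \sqcup \bigl(\partial N \times [0, \infty)\bigr)/\sim, \qquad x \sim (x, 0) \text{ for } x \in \partial N.
\]
At the seam the two metrics agree (both induce the metric of $\partial N$ on $\partial N \times \{0\}$, and the outward normal $\nu$ on the $N$ side matches $\partial_t$ on the collar side), so $N'$ is a smooth boundaryless hyperbolic $3$-manifold. Defining $\pi' : N' \to M$ by $\pi'|_N = \pi$ and $\pi'$ equal to $\Phi$ on the collar gives a well-defined local isometry extending $\pi$, and the product structure $N' \setminus N = \partial N \times (0, \infty) \cong \partial N \times \BR$ is immediate.

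Finally, to promote $\pi'$ to a covering map I would verify that $N'$ is complete and then apply the standard fact that a local isometry from a complete connected Riemannian manifold to a complete connected Riemannian manifold is a covering onto its image, which is both open (local homeomorphism) and closed (path lifting plus completeness), hence equals $M$. Completeness of the collar follows because the function $t$ is $1$-Lipschitz in the pullback metric, so any Cauchy sequence has bounded $t$-coordinate and lies in some compact set $\partial N \times [0, T]$; combined with completeness of $N$ and compactness of $\partial N$ to handle sequences that cross the seam, this yields completeness of $N'$. The main obstacle is the Jacobi field calculation in the first step: it is precisely the hypothesis that the boundary is convex (not merely smooth) together with the constant sectional curvature $-1$ that prevents focal points along the outward normal geodesics, so the argument would genuinely fail under arbitrary pinched negative curvature or for non-convex boundary.
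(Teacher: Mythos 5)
Your proof is correct, but it runs on a more self-contained track than the paper's, which essentially outsources both halves of the argument: there, $N'$ is obtained by extending the action of $\pi_1 N$ on the convex subset of $\BH^3$ covering $N$ to all of $\BH^3$ (so $N'$ is complete and boundaryless by construction, and the product structure of $N'\setminus N$ comes from the nearest-point projection, citing Canary's notes), while the fact that $\pi$ radially extends to a locally bilipschitz --- hence covering --- map $\pi':N'\longrightarrow M$ is quoted from Corollary 4.2 of \cite{Biringeralgebraic}. Your normal exponential map $\Phi$ is exactly that radial extension viewed downstairs, and your Jacobi field computation (no focal points along outward normal rays, because the Weingarten map is positive semidefinite and the curvature is $-1$) is the geometric content of the cited result. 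What your version buys is a from-first-principles argument making explicit where convexity and negative curvature enter; what the paper's version buys is brevity, plus the fact that completeness of $N'$ and the structure of $N'\setminus N$ need no verification since $N'$ is a quotient of $\BH^3$.

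Two steps in your write-up deserve one more line each, though neither is a genuine gap. First, for the glued metric to be smooth and hyperbolic across the seam you need more than agreement of the induced metrics and of the normal directions: you should note that the second fundamental forms also match (they do, since $\pi$ is an isometric immersion, so the Weingarten map of $\partial N$ in $N$ coincides with that of the immersed surface $\pi|_{\partial N}$ with respect to $d\pi(\nu)$), and that matching of the pair $(g_0, W_\nu)$ forces the two constant-curvature metrics to coincide in two-sided Fermi coordinates, where each is $dt^2+g_t$ with $g_t$ determined by the same Jacobi fields you already wrote down. Second, in the completeness argument a $d_{N'}$-Cauchy sequence lying in $N$ is not a priori $d_N$-Cauchy, since a priori one only has $d_{N'}\le d_N$; the quick fix is to observe that collapsing the collar, $(x,t)\mapsto x$, is $1$-Lipschitz because $g_t\ge g_0$ for $t\ge 0$ (again using $W_\nu\ge 0$), so in fact $d_{N'}=d_N$ on $N$ and completeness of $N$ applies directly.
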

\begin {proof}
 The manifold $N'$ is obtained by extending the action of $\pi_1 N$ on  a convex subset of $\BH^3$ to all of $\BH^3$, see  Theorem I.2.4.1 in \cite{Canarynotes} and (1)  follows from properties of the nearest point projection. See \cite[II.1]{Canarynotes}. 

In \cite[Cor 4.2]{Biringeralgebraic}, we show that $\pi$  radially extends to a locally bilipschitz map $ \pi' : N' \longrightarrow M $, which is then necessarily a covering map.
\end {proof}

%
%
%
%
%
%
%
%

\section{Simplicial ruled surfaces}\label{sec:shs}

Let $M$ be a complete, oriented, $3$-dimensional Riemannian manifold with pinched negative curvature $\kappa^- \le\kappa\le \kappa^+ <0$.

\subsection{Ruled triangles} Let $\Delta$ be an affine 2-simplex. Here, \emph{affine} means that $\Delta$ is equipped with an \emph{affine structure}, which is a realization of $\Delta$ as an affine simplex in $\BR^2$ up to affine isomorphism. A \emph{geodesic segment} in $\Delta$ is therefore well-defined as a path that is a straight line segment in some/any affine realization of $\Delta$. We usually view geodesics as equipped with constant speed parametrizations. Note that for parametrizations of geodesics in $\Delta$,  `constant speed'  is well-defined by the affine structure, but the actual speed is not.

A continuous map $f:\Delta\longrightarrow M$ is called a {\em ruled triangle} if for some vertex $v \in \Delta$, called the \emph{ruling vertex}, we have that whenever $e$ is 
\begin{itemize}
	\item an edge of $\Delta$, or
	\item a geodesic segment from $v$ to a point on the opposite edge,
\end{itemize}
then $e$ is mapped at constant speed to a geodesic segment in $M$ by $f $.

We say that a ruled triangle $f$ is \emph{degenerate} if its image is a point or a geodesic segment, and \emph{nondegenerate} otherwise. When $M$ is hyperbolic, a nondegenerate ruled triangle is a totally geodesic triangle.

\begin{fact}[Construction of ruled triangles]\label{ruledconstruction} Let $\Delta$ be an affine $2$-simplex and $v$ be a vertex of $\Delta$. Suppose we have a map $$f : \Delta \longrightarrow M$$ that takes each edge to a constant speed geodesic in $M$. Then $f$ is homotopic rel $\partial \Delta$ to a ruled triangle. Moreover, if the image of $f$ is not a geodesic segment, then any such ruled triangle is nondegenerate and is a smooth immersion\footnote{Here, taking a realization $\Delta \subset \BR^2$, the map $f$ is a smooth immersion on $\Delta$ if it is a smooth immersion on an open subset of $\BR^2$ containing $\Delta$.} on $\Delta \setminus v$.
\end{fact} 
\begin{proof}
Since $\Delta $ is simply connected, we can lift it to the universal cover, so we might as well assume $M$ is simply connected. Let $\gamma$ be the side of $\Delta$ opposite to $v$ and let
$$\gamma \times (0,1] \longrightarrow \Delta \setminus v$$ be the natural parametrization, where $p \times [0,1]$ maps to the geodesic in $\Delta$ from $v$ to $p$, at constant speed. Let $$\exp : \BR^3 \longrightarrow M$$ be the exponential map of $M$ based at $f(v)$. Since $M$ is simply connected and negatively curved, $\exp$ is a diffeomorphism. Hence, 
$$\gamma \times (0,1] \longrightarrow M, \ \ (p,t) \longmapsto \exp\big (t \cdot \exp^{-1} \circ f(p)\big )$$
defines a ruled triangle that agrees with $f$ on $\partial \Delta$, and hence is homotopic rel $\partial \Delta$ to $f$, since $M$ is contractible. If $f(\partial \Delta)$ is not a single geodesic segment, the path $\exp^{-1} \circ f(\gamma)$ is never tangent to the radial direction in $\BR^3$, in which case the map above is an immersion.
\end{proof}

If $f:\Delta\longrightarrow M$ is a ruled triangle, we can endow $\Delta$ with a path pseudo-metric, by declaring the length of a path in $\Delta$ to be the length of its image in $M$, and defining distances as infima of connecting path lengths. When $\Delta $ is nondegenerate, this construction gives an actual metric that is smooth except possibly at the ruling vertex $v$. 

\begin{fact}\label{CATtriangle}
	The intrinsic path metric on a nondegenerate ruled triangle has Gaussian curvature at most $\kappa^+$ on $\Delta \setminus v$, and is CAT($\kappa^+$) on $\Delta$. 
\end{fact}

Recall that for $\kappa \leq 0$, a geodesic metric space $X$ is called {CAT($\kappa$)} if for every geodesic triangle $\Delta$ in $X$ the distances between points on $\partial \Delta$ are at most the distances between corresponding points on the boundary of the `comparison triangle' $\Delta' \subset M_{\kappa}^2$. Here,  $M_{\kappa}^2$  is the $2$-dimensional model space of constant curvature $\kappa$, and $\Delta'$ is the triangle with the same side lengths as $\Delta$. See \cite[II.1]{Bridsonmetric} for details.

\begin{proof}
If $p\in \Delta \setminus v$, pick a vector $v \in T\Delta_p$ in the direction of the geodesic ruling. Then $B(v,v)=0$, where $B$ is the second fundamental form of $f(\Delta) \subset M$. The Gaussian curvature upper bound follows from the Gauss Equation \cite[Theorem 2.5]{do1992riemannian}. 

By \cite[Theorem II.1A.6]{Bridsonmetric}, the interior of $\Delta$ is locally CAT($\kappa^+)$. Since it is simply connected, it is globally CAT($\kappa^+$) by \cite[Theorem II.4.1]{Bridsonmetric}. But $\Delta$ is the metric completion of its interior, so $\Delta$ is CAT($\kappa^+$) as well by \cite[Corollary II.3.11]{Bridsonmetric}.
\end{proof}

\subsection{$\Delta$-complexes with CAT($\kappa^+$)-triangles}
A \emph{$2$-dimensional $\Delta$-complex} is an affine gluing of affine $0,1,2$-simplices, where faces of a given simplex are allowed to be glued to each other. See Hatcher \cite[pg 103]{Hatcheralgebraic} for a formal definition. We'll often refer to $0,1$ and $2$-simplices below as vertices, edges and triangles.

In this section we study $\Delta$-complexes equipped with metrics where all triangles are negatively curved, in the following sense.

\begin{defi}\label{catkdef}
	A \emph{CAT($\kappa)$-triangle} is an affine $2$-simplex $\Delta$ equipped with a CAT($\kappa)$ path metric that is smooth on $int(\Delta)$, and where all edges of $\Delta$ are geodesic. 
\end{defi}

For example, Fact \ref{CATtriangle} says that any nondegenerate ruled triangle in a manifold $M$ with curvature at most $\kappa^+$ is a CAT($\kappa^+)$-triangle. For a CAT($\kappa)$-triangle $\Delta$, the \emph{interior angle} of $\Delta$ at a vertex $p$ is defined to be the Alexandrov angle between the two adjacent edges. See \cite[pg 9]{Bridsonmetric} for more information on Alexandrov angles.

Suppose that $X$ is a $2$-dimensional $\Delta$-complex equipped with a path metric $d$. We say that $(X,d)$ \emph{has CAT($\kappa$)-triangles} if for each triangle $\Delta$, the completion of the restriction of $d$ to the interior of $\Delta$ is a CAT($\kappa$)-triangle; note that this complicated phrasing is necessary since a triangle in $X$ is allowed to have two of its sides identified. In other words, $(X,d)$ has CAT($\kappa$)-triangles if it is obtained by gluing together vertices, geodesic edges and CAT($\kappa$)-triangle via isometries. 

A vertex $p$ of $X$ is \emph{regular} if its link is homeomorphic to a circle. The \emph{cone angle} of $X$ at a regular vertex $p$ is the sum over all triangles incident to $p$ of the interior angle at $p$. 

\begin{lem}[Geometry near regular vertices]\label{CATfactX}
If $X$ has CAT($\kappa$)-triangles and the cone angle at a regular vertex $p\in X$ is at least $2\pi$, then $X$ is locally CAT($\kappa$) at $p$.
\end{lem}

Lemma \ref{CATfactX} is well known to experts and has been implicitly assumed in a number of papers on simplicial ruled surfaces, e.g.\ \cite{Agoltameness,Somaexistence,souto2008short}. When all triangles have \emph{constant} curvature $\kappa$, one can cite \cite[(5.6) on pg 251]{Bridsonmetric}, but in the variable curvature case we do not know a reference, so we include a proof here.

\begin{proof}[Proof of Lemma \ref{CATfactX}]
Since the statement is local, it suffices to assume that all triangles of $X$ contain $p$, and that they are only glued along edges incident to $p$, so that $X$ is a homeomorphic to a closed disc. Under this assumption, we will prove that $X$ is (globally) CAT($\kappa$).
	
	If $\Delta$ is a triangle in $X$, the fact that $\Delta$ is CAT($\kappa$) means that there is a unique geodesic from $p$ to any given point $q$ on the opposite side, and that these geodesics and the angles between them depend continuously on $q$. After using some such geodesic to cut some $\Delta$ into two triangles, there are edges $\alpha,\beta$ incident to $p$ that divide $X$ into two topological half-discs $U,V$, in such a way that the intrinsic angle sum at $p$ of all the triangles in $U$ (resp. $V$) is at least $\pi$. Both $U$ and $V$ are CAT($\kappa$), by repeated applications of  \cite[II.11.1]{Bridsonmetric}. 
	
	Working in $U$ for the moment, we claim that the Alexandrov angle between $\alpha, \beta$ at $p$ is $\pi$. Take $x\in \alpha$ and $y\in \beta$ close to $p$. The geodesic $[x,y] \subset U$ is contained in a small neighborhood of $p$, and intersects all edges of triangles in $U$, and only crosses each edge once because geodesics in $U$ are uniquely determined by their endpoints. So, if $$x=z_1,\ldots,z_n=y \in [x,y]$$ are these intersection points, for each $i$ the Alexandrov angle $\angle_p(z_i,z_{i+1})$ between the geodesics from $p$ to $z_i$ and $z_{i+1}$ is an intrinsic angle of one of our triangles. By construction of $U$, we then have
	\begin{equation}\label{anglesumm}
		\sum_{i=1}^{n-1} \angle_p(z_i,z_{i+1}) \geq \pi.
	\end{equation}

	\begin{figure}
		\centering
		\includegraphics{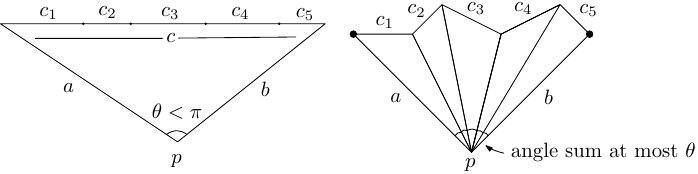}
		\caption{Take a (nondegenerate) triangle in $\BR^2$ with side lengths $a,b,c$, and an angle of $\theta$ opposite to $c$. Subdivide the $c$-edge into segments of length $c_i$, and then take a sequence of triangles glued side to side in the manner of the picture on the right, where all the triangles share the vertex $p$, the opposite sides have length $c_i$, and the initial and terminal adjacent sides have length $a,b$ respectively. Then the angle sum at $p$ is at most $\theta$. For a proof, imagine the picture on the left is made of rods and hinges, and drag the vertices subdividing the $c$ edge one by one so that they are at the correct distance from $p$. This operation decreases the total angle at $p$.}
		\label{comptri}
	\end{figure}

Assume, hoping for a contradiction, that the Alexandrov angle of $\alpha,\beta$ at $p$ is strictly less than $\pi$. By \cite[II.3.1]{Bridsonmetric}, as long as $x,y$ are close to $p$, the comparison triangle $$\Delta(\bar p,\bar x,\bar y) \subset \BR^2$$ has an angle $\theta$ at $\bar p$ that is strictly less than $\pi$. Construct a polygon in $\BR^2$ by gluing together the comparison triangles $\Delta(\bar p,\bar z_i,\bar z_{i+1})$ side by side in $\BR^2$, in the same way that the triangles $\Delta(p,z_i,z_{i+1})$ are glued in $U$. If $x,y$ are close to $p$, all the $z_i$ are also close to $p$, so in light of  \eqref{anglesumm} we can assume that the total angle sum at $\bar p$ of all these comparison triangles is arbitrarily close to $\pi$, in particular, we can assume it is bigger than $\theta$. However, the sides $[\bar z_i,\bar z_{i+1}]$ of these comparison triangles have length that sum to $d(x,y)=d(\bar x,\bar y)$, so the angle sum at $\bar p$ should be smaller than $\theta$. See Figure \ref{comptri}. This is a contradiction.

	The CAT($\kappa$) condition now implies that the union $\alpha \cup \beta$ is a geodesic in $U$, and similarly in $V$. Since $X$ is created by gluing $U,V$ along $\alpha \cup \beta$, it follows that $X$ is CAT($\kappa$).\end{proof}

When $X$ is a locally CAT($\kappa$) metric space and $p\in X$, the \emph{space of directions} $\Sigma_p(X)$ is obtained from the set of all geodesics issuing out from $p$ by identifying pairs of geodesics that intersect at $p$ with Alexandrov angle $0$, see \cite[pg 190]{Bridsonmetric}. The Alexandrov angle gives a natural metric $\angle$ on $\Sigma_p(X)$, and we also consider the induced path metric $d$.  The proof of the following result is in some sense contained in that of Lemma \ref{CATfactX} above, but it makes more sense to state it afterwards.

\begin{lem}[The space of directions at a regular vertex]\label{sofd}
As above, let $p\in X$ be a regular vertex with cone angle $\alpha \geq 2\pi$. With respect to $d$, the space of directions $\Sigma_p(X)$ is a circle with length equal to $\alpha$. Moreover, if $\alpha,\beta \in \Sigma_p(X),$ then $\angle(\alpha,\beta)= \min\{d(\alpha,\beta),\pi \}.$
\end{lem}
\begin{proof}
	If $\Delta$ is a triangle in $X$, the geodesics issuing from $p$ foliate $\Delta$, and are parametrized by their terminal point on the opposite side of $\Delta$. In particular, $\Sigma_p(\Delta)$ is an interval, so $\Sigma_p(S)$ is a circle.
	
	Let's prove the `moreover' statement before calculating the length of $\Sigma_p(S)$. As $d\geq \angle$ and Alexandrov angles are always at most $\pi$, to prove the `moreover' statement it suffices to take $\alpha,\beta \in \Sigma_p(X)$ with $\angle(\alpha,\beta)<\pi$, and show that $d(\alpha,\beta)\leq \angle(\alpha,\beta).$
	
	To do this, fix a finite set $T \subset  \Sigma_p(S) \cong S^1$ such that the angle between any two points in an arc of $\Sigma_p(S) \setminus T$ is at most $\pi/4$. Then \cite[Lemma 9.1.39]{Buragocourse} says that on each such arc, we have $d=\angle$.  Take $x\in \alpha$ and $y\in \beta$ in a small convex CAT$(\kappa^+)$ neighborhood $B \ni p$, whose size will be determined later. Since $\angle(\alpha,\beta)<\pi$, the geodesic $$[x,y] \subset B$$ does not pass through $p$, and each geodesic through $p$ intersects $[x,y]$ at most once. It follows that the induced map $$G : [x,y] \longrightarrow \Sigma_p(X), \ G(t) = [p,t]$$ is an injective path in $\Sigma_p(X)$ from $\alpha$ to $\beta$. Let $$x=t_1,\ldots, t_n=y\in T$$
	be all the points along $[x,y]$ such that $G(t_i) \in \{\alpha,\beta\} \cup T,$ ordered in the way they appear along $[x,y]$. Since $d=\angle$ on (the closure of) each arc of $\Sigma_p(X) \setminus T$, we have $$\length G = \sum_{i=1}^{n-1} \angle(G(t_i),G(t_{i+1})).$$
	As long as our neighborhood $B \ni p$ is small, each angle $\angle(G(t_i),G(t_{i+1}))$ is well approximated by the angle in the $\BR^2$-comparison triangle for $p,t_i,t_{i+1}$, see \cite[II.3.1]{Bridsonmetric}, and a similar statement holds for $\angle(\alpha,\beta)$. So fixing $\epsilon>0$, Figure \ref{comptri} implies that when $B$ is small,
	$$\sum_{i=1}^{n-1} \angle(G(t_i),G(t_{i+1})) \leq \angle(\alpha,\beta) + \epsilon.$$
	Since $\epsilon$ is arbitrary, this shows $d(\alpha,\beta)\leq \angle(\alpha,\beta)$ as required. 
	
	Finally, to calculate the length of $\Sigma_p(X)$, note that for a fixed ruled triangle $\Delta$ incident to $p$, all angles in $\Sigma_p(\Delta)$ are less than $\pi$, so $\angle=d$ on $\Sigma_p(\Delta)$. Hence, the length of $\Sigma_p(\Delta)$ is the sum of the intrinsic angles of all the $\Delta$ incident to $p$, which is the intrinsic cone angle.
\end{proof}

%

\subsection{Presimplicial ruled surfaces} Let $S$ be a closed surface and let $\CT$ be a \emph{triangulation} of $S$, by which we mean a homeomorphic identification with a $2$-dimensional $\Delta$-complex. 

A continuous map $f:S\longrightarrow M$ is a {\em pre-simplicial ruled surface with respect to $\CT$} if $f$ restricts to a simplicial ruled triangle on every triangle of $\CT$. Here, note that each triangle $\Delta$ of $\CT$ comes equipped with a natural affine structure, and we are requiring then that $f|_{\Delta}$ is ruled with respect to that affine structure.   When we wish to emphasize the triangulation we will write $$f:(S,\mathcal T)\longrightarrow M.$$ We call a triangle $\Delta$ of $\CT$ \emph{degenerate} if its $f$-image is a degenerate geodesic triangle in $M$, and we call $\Delta$ \emph{nondegenerate} otherwise.
 Note that there are three types of degenerate triangles $\Delta$, where either
\begin{enumerate}
\item $f(\Delta)$ is a point,
\item two edges of $\Delta$ are mapped by $f$ to nondegenerate geodesic segments, and one edge is mapped to a point, or
\item all three edges of $\Delta$ are mapped by $f$ to nondegenerate geodesic segments, but the image of one is the concatenation of the image of the other two.
\end{enumerate}
We'll refer to these as type (1), (2) and (3)  degenerate triangles below.


Let $d$ be the path pseudometric on $S$ obtained by defining the length of a path $\gamma$ to be the length of the path $f\circ \gamma$ in $M$. Fact \ref{CATtriangle} says that each non-degenerate triangle of $\CT$ is intrinsically a (metric) CAT($\kappa^+$)-triangle, in the sense of Definition \ref{catkdef}. However, within a degenerate triangle there are always distinct points $x,y$ with $d(x,y)=0$.

Let $\sim$ be the equivalence relation on $S$ where $$x\sim y \iff d(x,y)=0,$$ let $S' := S/{\sim}$ be the associated quotient space, and let $$c : S \longrightarrow S'$$ be the quotient map. Note that $f$ factors as $f = f' \circ c$ for some
$$f ' : S'\longrightarrow M.$$
The path pseudometric $d$ descends to a path metric $d'$ on $S'$, where now the length of a path $\gamma$ in $S'$ is the length of $f'\circ \gamma$.

\begin{fact}[Geometry of the quotient] \label{geomquotient}There is a refinement $\CR$ of $\CT$ and a $2$-dimensional $\Delta$-complex structure on $S'$ such that the map $$c : (S,\CR) \longrightarrow S'$$ is simplicial,\footnote{Recall that a map of $\Delta$-complexes is simplicial if it maps vertices spanning a simplex to vertices spanning a simplex, and is affine on simplices.} and where the following hold.
\begin{itemize}
\item The vertex set $\CR^0$ is contained in the $1$-skeleton $\CT^1$.
\item The vertex set of $S'$ is $c(\CT^0)$. 
\item With respect to the path metric $d'$, each edge is a (local) geodesic and $S'$ has CAT($\kappa^+$)-triangles. \end{itemize}
Moreover, for each triangle $\Delta$ of $\CR$, let $\sim_\Delta$ be the equivalence relation on $\Delta$ where $x\sim_\Delta y$ if there is a path $\gamma$ in $\Delta$ from $x$ to $y$ such that $f\circ \gamma$ is constant. Then $\sim$ is generated by all the relations $\sim_\Delta$, and for each $\Delta$, the relation $\sim$ restricts to $\sim_\Delta$ on $int(\Delta)$.
\end{fact}

Intuitively, the $\Delta$-complex structure on $S'$ is obtained by collapsing any degenerate triangles in $S$; the assertion that $c$ is simplicial means that each triangle $\Delta$ in $S$ either maps to a triangle in $S'$, a vertex of $S'$, or is collapsed onto an edge of $S'$ in a way that collapses one edge of $\Delta$ to a point. The reason we need to pass to a refinement of $\CT$ is because of the type (3) degenerate triangles referenced above, see Figure \ref{collapsefig}. 

One word of caution is in order: above, there may be edges $\CR$ that are not mapped to geodesics in $M$ by $f$. So, $f$ is not simplicial ruled with respect to $\CR$, and similarly the quotient map $f' : S' \longrightarrow M$ need not be `simplicial ruled', even in some generalized sense that allows $S'$ to be a  $\Delta$-complex instead of a surface. The reason for all of this is that when constructing $\CR$, it may be necessary to subdivide nondegenerate triangles in $\CT$ in a way that is incompatible with their rulings. Note that this paragraph is compatible with the statement that $S'$ has CAT$(\kappa^+)$ triangles since the latter is just an intrinsic statement, and doesn't reference the map into $M$. 

\begin{figure}
\centering
\includegraphics{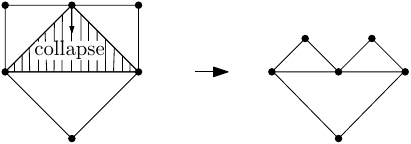}
\caption{Collapsing a type (3) degenerate triangle does not produce a triangulation, since in the result, a single edge of a triangle may be glued to multiple other triangles.}\label{collapsefig}
\end{figure}

\begin {proof}
We begin by constructing $\CR$. Define $\CR^0$ to be the set of all points in $c^{-1}(c(\CT^0))$ that are either in $\CT^0$, or lie on edges $e$ of $\CT$ such that $f(e)$ is not a point. For instance, on the left in Figure \ref{collapsefig}, the set $c^{-1}(c(\CT^0))$ would include all the shown vertices, plus an additional vertex in the middle of the long edge of the shaded triangle. On each edge $e$ that doesn't map to a point under $f$, the map $f$ restricts to an immersion, and $c^{-1}(c(\CT^0)) \subset f^{-1}(f(\CT^0))$, so $\CR^0$ is finite. 

To define the edges of $\CR$, fix a triangle $\Delta$ of $\CT$ and triangulate $\Delta$ with vertex set $\CR^0 \cap \partial \Delta$ as follows, depending on the type of $\Delta$.
\begin{itemize}
	\item If $\Delta$ is non-degenerate, we triangulate $\Delta$ using geodesics in the intrinsic CAT($\kappa^+$)-path metric on $\Delta$ given by Fact \ref{CATtriangle}.
	\item If $\Delta$ is type (1) degenerate, i.e.\ it maps to a point under $f$, the triangulation of $\Delta$ can be arbitrary.
	\item Suppose $\Delta$ is type (2) degenerate, so $f$ is constant on one edge $e_3$ of $\Delta$ and nonconstant on the other two, $e_1,e_2$. Then $\Delta$ is foliated by affine geodesics parallel to $e_3$ on which $f$ is constant. So, elements of $\CR^0 \cap \partial \Delta$ other than the intersection point of $e_1,e_2$ come in pairs, where each pair consists of the two endpoints of an affine geodesic in $\Delta$ that is contained in $c^{-1}(c(\CT^0))$. Use these affine geodesics and diagonal affine geodesics in each complementary quadrilateral to triangulate $\Delta$.
	\item Suppose $\Delta$ is type (3) degenerate, so $f$ is nonconstant on all three edges of $\Delta$ but maps the edge $e_3$ to the union of the other two, $e_1,e_2$. Then there is an affine geodesic $\alpha$ in $\Delta$ from the intersection point of $e_1,e_2$ to a point on $e_3$ such that $f\circ \alpha$ is constant. We declare $\alpha$ to be an edge of $\CR$. The two remaining halves of $\Delta$ are type (2) degenerate, and we triangulate them as in the previous case.
\end{itemize}
This defines a refinement $\CR$ of $\CT$. Note that as promised, $\CR^0$ is contained in the $1$-skeleton of $\mathcal T^1$. 

If $\Delta$ is a triangle of $\CR$, we call $\Delta$ \emph{nondegenerate} if it is contained in a nondegenerate triangle of $\CT$, and \emph{degenerate} otherwise. Note that the edges of a nondegenerate triangle $\Delta$ of $\CR$ might not map to geodesics under $f$, so $f|_{\Delta}$ is not necessarily a simplicial ruled triangle. However, if $\Delta$ is degenerate, then $f|_{\Delta}$ is a degenerate simplicial ruled  triangle in $M$. So, we can classify the degenerate triangles of $\CR $ into types (1)--(3). However, by construction, there are no type (3) degenerate triangles in $\CR$. Indeed, all degenerate triangles of $\CR $ were constructed above to have an edge that is sent to a point by $f$.

\medskip

We now want to construct the $\Delta$-complex structure on $S'$. For technical reasons, it is easiest to replace $\sim$ with an (a priori) finer equivalence relation $\approx$, set $X=S/\approx$, construct a $\Delta$-complex structure on $X$ with the required properties, and then show at the end that $\approx =\sim$, and therefore that $X=S'$. 

On each triangle $\Delta$ of $\CR$, consider the equivalence relation $\sim_\Delta$ where $x\sim_\Delta y$ if there is a path $\gamma$ in $\Delta$ from $x$ to $y$ such that $f\circ \gamma$ is constant. If $\Delta$ is nondegenerate, the $x\sim_\Delta y \iff x=y$. If $f(\Delta)$ is a point, then $x\sim_\Delta y $ for all $x,y\in \Delta$. Otherwise, $\Delta$ is type (2) degenerate, in which case the $\approx$-equivalence classes in $\Delta$ are exactly the affine geodesics parallel to the edge of $\Delta$ that is sent to a point by $f$.

Let $\approx$ be the equivalence relation on $S$ generated by all the $\sim_\Delta$, where $\Delta$ ranges over all triangles of $\CR $, let $X:= S/\approx$ and let 
$$\pi: S \longrightarrow X$$ be the quotient map. Let's define a $\Delta$-complex structure on $X.$ We start by defining the \bf vertex set \rm to be $\pi(\CR^0)\subset X$, and now we have to construct appropriate maps into $X$ from $1$ and $2$-simplices. 

Let's understand how edges of $\CR$ are identified under $\approx$. For each triangle $\Delta$ of $\CR$, the restriction of $\sim_\Delta$ to an edge $e$ of $\Delta$ either identifies all points of $e$ (this happens when $f(e)$ is a point), or identifies no distinct points of $e$. Moreover, if $e_1,e_2$ are two edges of $\Delta$ that are not collapsed to points by $f$, then $\sim_\Delta$ identifies points of $e_1$ with distinct points of $e_2$ only when there is an affine isomorphism $A : e_1 \longrightarrow e_2$ that commutes with $f$, and in that case $$x\in e_1 \sim_\Delta y\in e_2 \iff y=A(x).$$ Note that since $f$ restricts to an immersion on $e_1$ and $e_2$, the map $A$ is uniquely defined by $f$. Since the triangles $\Delta$ of $\CR$ are attached to each other along affine isomorphisms of edges, which may induce possible additional vertex identifications, all these statements translate to the generated equivalence relation $\approx$ as follows: the restriction of $\approx$ to the interior of an edge $e$ of $\CR$ either identifies all points $int(e)$ (this happens if $f(e)$ is a point) or no distinct points of $int(e)$, and if $e_1,e_2$ are two edges of $\CR$, the relation $\approx$ identifies points of $int(e_1),int(e_2)$ only when there is an affine isomorphism $A : e_1\longrightarrow e_2$ as above, in which case
$$x\in int(e_1 ) \approx y\in int(e_2) \iff y=A(x).$$

Pick a maximal collection $\{e_i\}$ of edges of $\CR$ such that no $f(e_i)$ is a point, and where for $i\neq j$, no point in $int(e_i)$ is $\approx$-equivalent to a point in $int(e_j)$. We define the \bf edges \rm of $X $ to be the maps
$$\pi|_{e_i} : e_i \longrightarrow X.$$
Note that as required in a $\Delta$-complex, $\pi|_{e_i}$ maps the endpoints of $e_i$ onto our previously defined vertices of $X$, and  is injective on $int(e_i)$. Injectivity follows since $x\in int(e_1) \approx y\in int(e_1)$ if and only if there is an affine isomorphism $A : e_1\longrightarrow e_1$ commuting with $f$ such that $A(x)=y$, and the only such $A$ is the identity.

Finally, for each non-degenerate triangle $\Delta $ of $\CR$ we define a \bf triangle \rm of $X$, namely the restriction
$$\pi|_{\Delta} : \Delta \longrightarrow X.$$
As the collection $\{e_i\}$ above is maximal, in $X$ the edges of $\Delta$ are attached via affine isomorphisms to the edges of $X$ previously constructed. Since $\Delta$ is non-degenerate, the equivalence relation $\approx$ satisfies $x\approx y \iff x=y$ on $int(\Delta)$, so $\pi|_{int(\Delta)}$ is injective. 

Each point $q\in X$ lies in the interior of a unique simplex. To see this, just take some $p\in S$ with $\pi(p)=q$. If $p$ is a vertex of $\CR$, then $q$ is a vertex of $X$. If $p$ lies in the interior of an edge $e$ of $\CR$, then either $f(e)$ is a point, in which case $q$ is a vertex of $X$, or $e$ is equivalent to some edge $e_i$ above, and $q\in \pi(int(e_i))$. If $p$ lies in the interior of a degenerate triangle $\Delta$ of $\CR$, then it is $\approx$-related to either a vertex or a point on the interior of an edge of $\Delta$, so we are in one of the cases already discussed. If $p$ lies in the interior of a non-degenerate triangle, then $q$ is in the interior of the corresponding triangle of $X$. It is also easy to see from the construction above that all these interiors of simplices are disjoint, so the one containing $p$ is unique.

Finally, we must check that a set $A \subset X$ is open if and only if all its preimages under the simplex maps above are open. (See Hatcher \cite[pg 103]{Hatcheralgebraic} for the definition of a $\Delta$-complex.) As $\pi$ is continuous, the nontrivial direction is to assume that all the preimages are open, and try to show that $A$ is open. Well, look at the way the preimage $\pi^{-1}(A) \subset S$ intersects each triangle $\Delta$ of $\CR$. If $\Delta$ is type (1) degenerate, i.e.\ $f(\Delta)$ is a point, then $\pi^{-1}(A) \cap \Delta$ is either empty or $\Delta$, so is open in $\Delta$. If $\Delta$ is a type (2) degenerate triangle, then $\pi^{-1}(A) \cap \Delta$ is a union of affine geodesics parallel to the collapsed edge of $\Delta$, and is therefore determined by its intersection with either of the non-collapsed edges of $\Delta$. Since the preimages $\pi|_{e_i}^{-1}(A)$ are all assumed to be open, and some $e_i$ is identified by $\approx$ with the two non-collapsed edges of $\Delta$, it follows that $\pi^{-1}(A) \cap \Delta$ is open. Finally, if $\Delta$ is a non-degenerate triangle, the intersection $\pi^{-1}(A) \cap \partial \Delta$ is open by assumption. It then follows that $\pi^{-1}(A)$ is open in $S$, so $A$ is open in the quotient space $X$.

We now have equipped $X$ with a $\Delta$-complex structure, which is constructed exactly so that the map $\pi: (S,\CR) \longrightarrow X$ is simplicial. Indeed, $\pi$ takes vertices to vertices, and if $e$  is an edge of $\CR$, either $f(e)$ is a point, in which case $\pi|_e$ factors through the inclusion of a vertex, or there is an affine isomorphism $A : e\longrightarrow e_i$ for some $i$ such that $\pi|_e = \pi|_{e_i} \circ A$. Similarly, if $\Delta$   is a triangle of $\CR$, then $\pi|_{\Delta}$ factors affinely through one of the simplex maps of $X$, where the dimension of the simplex is either $0,1$ or $2$ depending on whether $\Delta$ is type (1) degenerate, type (2) degenerate, or non-degenerate.

Now since $\approx$  is a finer relation than $\sim$, the path pseudometric on $S$ descends to a path pseudometric $D$ on $X$. Since the edges of $X$ are of the form $\pi|_{e_i} : e_i \longrightarrow X$, where $f|_{e_i}$ is an immersed geodesic in $M$, all edges of $X$ are (non-zero speed) local geodesics with respect to $D$. Similarly, since the triangles of $X$ are of the form $\pi|_{\Delta}$, where $\Delta$  is a nondegenerate triangle of $(S,\CR)$, each such triangle is intrinsically a CAT($\kappa^+)$-triangle. So in fact, $D$ can be constructed by patching together actual path metrics on all the simplices of $X$, implying that $D$ is a metric, not just a pseudo-metric. Hence, $\approx = \sim$, $X=S'$, $\pi=c$ and $D=d'$, so we have  constructed a $\Delta$-complex structure on $S'$, and proved the third bullet point in the fact. Also, the vertex set of $S'$ was defined to be $c(\CR^0)$, and we have $\CT^0 \subset \CR^0 \subset c^{-1}(c(\CT^0))$, so $c(\CR^0) = c(\CT^0)$, which proves the second bullet point in the fact. 

For the `moreover' statement, the fact that the $\sim_\Delta$ generate $\sim$ is just the assertion that $\sim=\approx.$ The fact that $\sim$ restricts to $\sim_\Delta$ on $int(\Delta)$ can be proved in cases. If $\Delta$ is nondegenerate, both relations are just `equality' on $int(\Delta)$. If $f(\Delta)$ is a point, both relations identify all of $int(\Delta)$ to a point. And if $\Delta$ is type (2) degenerate, then this follows from the fact that the interiors of the edges of $\Delta$ that are not collapsed to points both inject under $c$, since they do not admit affine automorphisms commuting with $f$ as mentioned above.
\end{proof}

We can also use the fact above to describe preimages under $c$ of open simplices in $S'$.

\begin{lem}[Simplex preimages]\label{preimageslem}
If $q\in S'$ is a vertex, then $c^{-1}(q)$ is a connected simplicial subcomplex of $\CR$. If $e$ is an edge of $S'$, then
$$c: c^{-1}(int(e)) \longrightarrow int(e)$$ is a fiber bundle with fibers homeomorphic to either a point, an interval or a circle. In the latter two cases, each fiber is a union of affine-geodesic segments in type (2) degenerate triangles $\CR$, with at most one segment per triangle. If $\Delta$ a triangle of $S'$, then $c^{-1}(int(\Delta))$ is the interior of a nondegenerate triangle of $\CR$, on which $c$ is injective. 

In particular, all point preimages under $c$ are connected.
\end{lem}
\begin{proof}
	Let $q\in S'$ be a vertex. Then there is a vertex $p \in \CR^0$ in $c^{-1}(q)$. In each generating relation $\sim_\Delta$, the set of points $\sim_\Delta$-equivalent to a subsimplex of $\Delta$ is a (possibly larger dimensional) subsimplex of $\Delta$. So, if we construct $c^{-1}(q)$ by starting with $p$ and applying the relations $\sim_\Delta$ iteratively, we get that $c^{-1}(q)$ is a subcomplex of $S$. 
	
	Next, let $e$ be an edge of $S'$. Since $e$ is not a point, any triangle of $\CR$ that intersects $c^{-1}(int(e))$ is either nondegenerate or type (2) degenerate. So, start with some edge $e_0$ in $\CR$ with $c(e_0)=e$. If one of the two triangles $\Delta_0$ of $\CR$ containing $e_0$ is degenerate, then if we let $e_1$ be the other edge of $\Delta_0$ that's not collapsed to a point by $c$, we have that $c^{-1}(int(e))$ contains $int(e_0) \cup  int(f_0) \cup int(\Delta_0)$, and on this union $c$ is an interval bundle. (The interval bundle structure comes since $\sim$ restricts to $\sim_{\Delta_0}$ on $int(\Delta_0)$, and hence also on the union above.) We then continue this process. If there's another type (2) degenerate triangle $\Delta_1$ containing $e_1$, we get another interval bundle in $c^{-1}(int(e))$ that concatenates with the one produced above, etc. This process terminates either with  $c^{-1}(int(e))$ an interval bundle, or a circle bundle.
	
	The assertion about preimages of triangle interiors $int(\Delta)$ is immediate, since $\sim=\sim_\Delta$ on $int(\Delta)$, and the latter relation is just `equality'. 
\end{proof}

Define the \emph{regular part} $S'_{reg}\subset S'$ to be the set of points that have a neighborhood homeomorphic to $\BR^2$, and let the \emph{singular part} $S'_{sing}\subset S'$ be its complement. Note that the vertices of $S'$ that are contained in the regular part are exactly the `regular vertices' of Fact \ref{CATfactX}. Then $S'_{sing}$ is a graph, $S'_{reg}$ is homeomorphic to the complement of a finite set of points in a (possibly disconnected) closed surface, and the intersection $\overline{S'_{reg}}\cap S'_{sing}$ is a finite set of vertices.

\begin{kor}\label{regsingkor}
An edge $e$ of $S'$ is in $S'_{sing}$ if and only if the map $c : c^{-1}(int(e))\longrightarrow int(e)$ is a circle bundle.
	\end{kor}

	\begin{proof}
		This follows from Lemma \ref{preimageslem}, and its proof. If $e$ is an edge of $S'$, there are $0$ triangles attached to $e$ when $c^{-1}(int(e))$ is a circle bundle, since then all edges in $S$ that map to $e$ are part of type (2) degenerate triangles. If $c^{-1}(int(e))$ is an interval bundle, there are two triangles attached to $e$, which are the nondegenerate triangles adjacent to the edges forming the $\partial I$-subbundle of the interval bundle $c^{-1}(int(e))$.
	\end{proof}

\subsection{Simplicial ruled surfaces} 
In the previous subsection, we saw that when $f: S \longrightarrow M$ is a presimplicial ruled surface, the pullback path pseudometric on $S$ collapses it to a $\Delta$-complex $S'$ that is a union of a graph $S'_{sing}$ and a possibly punctured surface $S'_{reg}$.

\begin{defi}[SRS]
	We say that a presimplicial ruled surface $f : S \longrightarrow M$ is a \emph {simplicial ruled surface}, or SRS, if every regular vertex in $S'$ (i.e.\ any vertex contained in $S'_{reg}$) has cone angle at least $2\pi$. 
\end{defi}


Recall from after Definition \ref{catkdef} that the cone angle at a vertex of $S'_{reg}$ is the sum of the Alexandrov angles of the incident $2$-simplices. The definition of an SRS we give above is more general than that considered in previous papers, e.g. \cite{Agoltameness,souto2008short}, in which all triangles in $S$ are assumed to map to nondegenerate triangles in $M$. In that case, $S'=S'_{reg}=S$, so the condition above is just that all cone angles in $S$ are at least $2\pi$. 

As a consequence of Lemma \ref{CATfactX} and \cite[II.11.1]{Bridsonmetric}, which controls the geometry along an edge where two triangles are glued, we have:

\begin{kor}\label{CATfact}
If $f : S \longrightarrow M$ is a simplicial ruled surface, then $S'_{reg}$ is locally CAT($\kappa^+$).
\end{kor}

Simplicial ruled surfaces can be constructed in many ways. Here is one of the most well-known methods.

\begin{example}[`Useful' simplicial ruled surfaces]\label{usefulsrs}
Suppose that  $$f: S \longrightarrow M$$ is a $\pi_1$-injective map from a surface $S$ with genus at least $2$ into a $3$-manifold $M$ with pinched negative curvature as above. Then $f$ can be homotoped to a simplicial ruled surface as follows. First, triangulate $S$ with one vertex in such a way that all edges are homotopically essential loops, fix a distinguished edge $e$, and homotope $f$ so that $e$ maps to a closed geodesic in $M$. Then homotope all other edges rel endpoints to geodesic segments, and homotope the interior of every triangle to be ruled using Fact \ref{ruledconstruction}.  While $f$ cannot send edges to points, it is possible that some triangles are degenerate\footnote{This detail seems to be ignored in some other treatments of the subject, e.g.\ \cite{Canarycovering}. If one really wants all triangles here to be nondegenerate, one can accomplish this with a perturbation, but it's not necessary to do that in our framework.}, since $f$ can take one edge of a triangle to the union of the other two.  However, using the notation of Fact \ref{geomquotient}, the resulting CW-structure on $S'$ has a single vertex, which has cone angle at least $2\pi$ since the geodesic $e$ goes through it. Hence, $f$ is simplicial ruled. Simplicial ruled surfaces with a single vertex and a distinguished edge that is mapped to a closed geodesic are called \emph{useful} in Canary \cite{Canarycovering}. In the hyperbolic setting, this construction was used previously by Bonahon~\cite{Bonahonbouts} and Thurston~\cite[pg 195]{Thurstongeometry}.
\end{example}


Here is a basic property of simplicial ruled surfaces that we will need in the next section.

\begin{fact}\label{preimagefact}
Suppose $f : S \longrightarrow M$ is simplicial ruled. If $p\in S'_{sing}$, the preimage $c^{-1}(p)$ is non-simply connected and is $\pi_1$-injective in $S$. For each component $X \subset S'_{reg}$, the preimage $c^{-1}(X)$ is an open, $\pi_1$-injective subsurface of $S$ with $\chi(c^{-1}(X)) \leq \chi(X)<0$.
\end{fact}

We'll need the following lemma.

\begin{lem}\label{transversality}
Suppose that $U \subset S'$ is a closed regular neighborhood of a vertex $p\in S'$, and $\alpha$ is a component of $\partial U$. Then $c^{-1}(\alpha)$ is a (possibly inessential) simple closed curve in $S$.
\end{lem}
\begin{proof}
Each $\alpha \subset \partial U$ is either a point in the interior of some singular edge $e$, or a loop in $S'_{reg}$. In the first case, $c^{-1}(\alpha)$ is a simple closed curve by Corollary \ref{regsingkor}. In the second, $\alpha$ is a concatenation of segments in nondegenerate triangles. The preimage of the interior of each such segment is a segment in $S$, and the preimage of each intersection point of $\alpha$ an each edge of $S'$ is either a point or a segment, by Lemma \ref{preimageslem}. So, $c^{-1}(\alpha)$ is a simple closed curve.
\end{proof}

\begin{proof}[Proof of Fact \ref{preimagefact}]
Let $p\in S'_{sing}$ and let $U \ni p$ be an (open) regular neighborhood of $p$ in $S'$.  By Lemma \ref{transversality}, the preimage $T:=c^{-1}(\overline U)$ is compact subsurface with boundary in $S$. Since point preimages of $c$ are connected and $U$ is connected, $T$ is connected. Since $p\in S'_{sing}$, the boundary $\partial U$ has at least two components, so $\partial T$ has at least $2$ components. Hence $T$ is not simply connected. But $T$ deformation retracts onto $c^{-1}(p)$, so $c^{-1}(p)$ is not simply connected either. We'll postpone $\pi_1$-injectivity and now shift attention to $S'_{reg}$.

Let $X \subset S'_{reg}$ be a component. Then $c^{-1}(X)$ is an open subsurface of $S$. To control its Euler characteristic, first note that as all cone angles in $X$ are at least $2\pi$ and the interior angles of a CAT($\kappa^+$)-triangle always sum to less than $\pi$, we have $\chi(X)<0$ by a standard calculation. We claim that $\chi(c^{-1}(X)) \leq \chi(X)$. 

For any vertex $p\in \overline X$, let $U_p$ be the intersection of a closed regular neighborhood of $p$ in $S'$ with $X$. If $p\not \in  X$, then $c^{-1}(U_p)$ is homeomorphic to a collection of once-punctured disks --- this is proved with the same argument as Lemma \ref{transversality} --- and hence has Euler characteristic zero. (Here, the number of disks is the number of components of $U_p \setminus p$.)  If $p\in X$, then $c^{-1}(U_p)$ is a connected subcomplex of a surface that is not a sphere, and hence has Euler characteristic at most $1$. Next, let $e$ be an edge of $\overline X$, and let $U_e$ be obtained by taking a closed neighborhood of $e$ in $X$ and removing $int(U_p)$ and $int(U_q)$, where $p,q$ are the endpoints of $e$. Then $c^{-1}(U_e) $ is homeomorphic to a disk. Finally, for each triangle $\Delta$ of $\overline X$, let $U_\Delta \subset \Delta$ be the subset obtained by removing the interiors of all the $U_p,U_e$, where $p,e$ range over the vertices and edges of $\Delta$. We can now build up $c^{-1}(X)$ as follows. Start with the union of all the $c^{-1}(U_p)$, which has Euler characteristic at most the number of vertices of $X$. Then attach the sets $c^{-1}(U_e)$: each of these is a disk which is attached along two arcs, so attaching it decreases Euler characteristic by $1$. Finally, attach all the sets $c^{-1}(U_\Delta)$; each of these is a disk attached along its boundary, so attaching it increases Euler characteristic by $1$. Using the definition of $\chi(X)$ as the alternating sum of the numbers of vertices, edges, and faces, it follows that $\chi(c^{-1}(X))\leq \chi(X) <0$.

It only remains to show that preimages of singular points and components of the regular part are $\pi_1$-injective. To this end, for every singular vertex $q\in S'_{sing}$, let $U_q$ be a closed regular neighborhood of $q$ in $S'$, and let $\CU=\cup_q U_q$. Then we have
\begin{equation}\label {decomposition of surfaces}S =  \bigcup_{\substack{\text{components } \\X \subset S'_{reg}}} c^{-1}(X \setminus \mathcal U) \ \ \cup \ \bigcup_{\substack{\text{edges } \\e\subset S'_{sing}}} c^{-1}(e \setminus \mathcal U) \ \ \cup \ \ \bigcup_{\substack{\text{vertices } \\ q \in S'_{sing}}}c^{-1}(U_q)
\end{equation}
which by Lemma \ref{transversality} is a decomposition of $S$ into compact, connected subsurfaces glued along their boundaries. The subsurfaces in the first union are deformation retracts of the full preimages $c^{-1}(X)$, and the subsurfaces in the second and third unions deformation retract onto point preimages $c^{-1}(p), p\in S'_{sing}$, where $p\in int(e)$ or $p=q$ depending on which case we are in. By the first three paragraphs of the proof, none of these subsurfaces are disks. It follows from an iteration of Van Kampen that each subsurface is $\pi_1$-injective in $S$. So each $c^{-1}(X)$ is $\pi_1$-injective in $S$ since it deformation retracts onto one of the subsurface is in the first union, while each point preimage $c^{-1}(p), p\in S'_{sing}$ is $\pi_1$-injective since it is a deformation retract of one of the subsurfaces in the second or third union. 
\end{proof}

Here's a useful corollary:

\begin{kor}[Uncollapsed SRSs]\label{pi1injkor}
Suppose that $f : S \longrightarrow M$ is a simplicial ruled surface such that no essential closed curve $\gamma \subset S$ maps to a point under $f$. Then $S'_{sing}$ is empty, so $S'$ is a closed surface. Moreover, the map $c : S \longrightarrow S'$ is a homotopy equivalence.
\end{kor}

For instance, the corollary applies when $f$ is $\pi_1$-injective.

\begin{proof}
If there is some $p \in S'_{sing}$, then Lemma \ref{preimagefact} says that $c^{-1}(p)$ contains a  closed curve $\gamma $ that is essential in $S$. So, $S'_{sing}$ is empty, and hence $S'$ is a surface.

We claim that $c$ is a homotopy equivalence. By a theorem of Youngs \cite{youngs1948homeomorphic} it suffices to show that all point preimages $c^{-1}(p), p\in S'$ are contractible. If $p$ lies in the interior of a triangle, then $c^{-1}(p)$ is a point. If $p$ lies in the interior of an edge, $c^{-1}(p)$ is a segment or a point by Lemma \ref{preimageslem} and Corollary \ref{regsingkor}. 

If $p$ is a vertex of $S'$, then $c^{-1}(p)$ is a deformation retract of $c^{-1}(U)$, where $U \subset S'$ is a small closed regular neighborhood of $p$. As $\partial U$ is connected, Lemma \ref{transversality} says that $c^{-1}(U)$ is a subsurface of $S$ with connected boundary. By our assumption on $S$, the inclusion $c^{-1}(p) \hookrightarrow S$  is trivial on $\pi_1$, so the same is true for the inclusion $c^{-1}(U) \hookrightarrow S$. 

We claim that $c^{-1}(U)$ is a disk. If not, then $\pi_1$-triviality implies that the complement $S \setminus c^{-1}(U)$ has a disk component. As $c^{-1}(U)$ has connected boundary,  $S \setminus c^{-1}(U)$ is a disk. But in a higher genus surface the complement of a disk never has $\pi_1$-trivial inclusion, so this is a contradiction.
\end{proof}
\subsection{Bounded diameter lemmas}
\label{bdlsec}
Simplicial ruled surfaces in manifolds with an upper curvature bound $\kappa^+$ have bounded diameter relative to their thin parts. This idea goes back to Thurston \cite{Thurstongeometry} and Bonahon~\cite{Bonahonbouts}. Essentially, the point is that these surfaces have an area upper bound, and a lower bound on the areas of small balls in their thick parts. While the statements below may be familiar from other references, we remind the reader that our definition of a simplicial ruled surface is more general than usual, and we also include some details that we think are missing from other references.

Let $f: S \longrightarrow M$ be a simplicial ruled surface. We would first like to apply Gauss-Bonnet to give an upper bound on the area of $S$. To control the area of CAT($\kappa$)-triangles, we use the following consequence of Reshetnyak’s Majorization Theorem \cite[9.53 and 9.54]{alexander2019alexandrov}.

\begin{fact}[Majorization of triangles]\label {triangle majorization}
Suppose that $\Delta$ is a CAT($\kappa$)-triangle, in the sense of Definition \ref{catkdef}, and let $\Delta_{\kappa} \subset M^2_{\kappa}$ be (the convex hull of) a triangle with the same side lengths as $\Delta$. Then there is a $1$-lipschitz surjection $\Delta_{\kappa} \longrightarrow \Delta$ that maps each edge of $\Delta_{\kappa}$ isometrically onto the corresponding edge of $\Delta$.
\end{fact}

This has the following consequence.

\begin{kor}[Gauss-Bonnet for SRSs]\label{majorizationkor}
Suppose that $f : S \longrightarrow M$ is a simplicial ruled surface, with $S'=S/\sim$ the quotient. Then 
\begin{equation}\label{eq:gauss-bonnet}
\area(S')\le\frac{2\pi}{|\kappa^+|}\vert\chi(S)\vert.
\end{equation}
\end{kor}

Recall that $S$ is equipped with the pseudo-metric defined so that $f$ preserves path lengths, and $S'$ is the quotient metric space. Here, `area' could mean the scale by $\pi/4$ of $2$-dimensional Hausdorff measure, which agrees with Lebesgue measure on $\BR^2$, but one can also calculate it by adding up the Riemannian areas of the interiors of the triangles in $S'$. The constant $\kappa^+$ is the curvature upper bound for $M$. 

\begin{proof}
Let $X$ be a component of $S'_{reg}$. Then the closure $\overline X$ is the union of $X$ with some finite number of \emph{boundary vertices}. For each triangle $\Delta \subset \overline X$, take the corresponding triangle $\Delta_{\kappa^+}$ from Fact \ref{triangle majorization}, delete any vertices corresponding to boundary vertices of $X$, and glue all the resulting triangles-with-missing-vertices together as dictated by the way the $\Delta$ are glued, to create a space $X_{\kappa^+}$ homeomorphic to $X$ that comes equipped with a $1$-lipschitz surjection $$ X_{\kappa^+} \longrightarrow X.$$

The Alexandrov interior angles of a CAT$(\kappa^+)$-triangle $\Delta$ are at most the angles of the corresponding $\Delta_{\kappa^+}$, so all vertices of $ X_{\kappa^+}$ have cone angles at least $2\pi$, since we deleted any vertices corresponding to boundary vertices of $X$. Applying Gauss-Bonnet to each $\Delta_{\kappa^+}$ and using that $1$-lipschitz maps cannot increase Hausdorff measure, we obtain $$\area X \leq \area X_{\kappa^+} \leq \frac{2\pi}{|\kappa^+|}\vert\chi(X)\vert$$ 
As $X$ varies over all components of $S'_{reg}$, Fact \ref{preimagefact} says the preimages $c^{-1}(X)$ are disjoint, $\pi_1$-injective subsurfaces of $S$, and that $$\chi^{-1}(c^{-1}(X)) \leq \chi(X)<0.$$ So, we have $$\sum_X |\chi(X)| \leq \sum_X |\chi(c^{-1}(X))| \leq |\chi(S)|,.$$
and the corollary follows.
\end{proof}

Given this area upper bound, the other component of a bounded diameter lemma is a lower bound for the area of small balls in the thick part of $S$. As in the Riemannian case, the \emph{injectivity radius} $\inj_p(S)$ at a point $p\in S$ is half the length of a shortest homotopically essential loop at $p$. Note that since $S$ is only equipped with a path pseudo-metric, it is possible that $\inj_p(S)=0$ for some $p\in S$. 

\begin{lem}\label{injcompare}
Suppose $f: S \longrightarrow M$ is simplicial ruled, $p\in S$, and $q=c(p)\in S'$. If $q\in S'_{sing}$, then $\inj_p(S)=0$. Otherwise, $$\inj_{p}(S) \leq \min\{ d(q,S'_{sing}), \inj_q(S'_{reg})\}.$$
\end{lem}

Note that the reverse inequality is not always true. Namely, it is possible that $c : S \longrightarrow S'$ collapses an entire essential subsurface $X \subset S$ to a vertex in $S'_{reg}$, and then if $p\in X$, we'll have $\inj_p(S)=0$ but the minimum on the right will be nonzero.

\begin{proof}
First, we always have $ \inj_p(S) \leq d(q,S'_{sing})$, since we can create an essential loop in $S$ based at $p$ by following a shortest path from $p$ to $c^{-1}(S'_{sing})$, traversing a zero length essential loop given by Fact \ref{preimagefact}, and then following the path back to $p$.  So, assume that $q\in S'_{reg}$.

To show that $\inj_{p}(S) \leq \inj_q(S'_{reg})$, pick some small $\epsilon>0$ and take an essential loop $\gamma$ in $S'_{reg}$ based at $q$ that has length less than $2\inj_q(S')+\epsilon$. After a small perturbation, we can assume that $\gamma$ is a finite concatenation of paths $\gamma_i$, each of which is contained in some simplex of $S'$. For each $\gamma_i$, there is some path $\alpha_i $ in some simplex of $S$ such that $c\circ \alpha_i = \gamma_i$. (This follows from Fact \ref{geomquotient}, since the restriction of $\sim $ to the interior of a simplex $\Delta$ is the explicit relation $\sim_\Delta$.) As point preimages under $c$ are all path connected, for each $i$ we can choose a path that connects the terminal endpoint of each $\alpha_i$ to the initial endpoint of $\alpha_{i+1}$, without leaving their common preimage. With this method, we can construct a loop $\alpha$ in $S$ based at $p$ with the same length as $\gamma$ such that $c\circ \alpha$ parameterizes $\gamma$, albeit possibly with intervals where the parameterization is constant. Any nullhomotopy of $\alpha$ can be performed in $c^{-1}(S'_{reg})$, since the latter is incompressible in $S$ by Fact \ref{preimagefact}. And any nullhomotopy of $\alpha$ in $c^{-1}(S'_{reg})$ composes with $c$ to a nullhomotopy of $\gamma$ in $S'_{reg}$, so it must be that $\alpha$ is an essential loop in $S$. Hence $$\inj_{p}(S) \leq \length(\gamma) \leq \inj_q(S') +\epsilon,$$ for our arbitrary $\epsilon$.
\end{proof}

\begin{fact}[Convex balls]\label{convexballs}
	Suppose $S$ is simplicial ruled, $p\in S$ and $q=c(p)\in S'$. If $r < \inj_p(S)/2$, the ball $B(q,r) \subset S'$ is convex and CAT($\kappa^+$). Moreover,
	$\area(B(q,r))\geq \pi r^2.$
\end{fact}
\begin{proof}
By Lemma \ref{injcompare}, the metric ball $B(q,2r)$ lies in a component $X \subset S'_{reg}$, and $2r < \inj_q(X)$. 

We claim $B(q,2r)$ lifts homeomorphically to the universal cover $\tilde X$.  Indeed, we can construct such a lift by lifting geodesics issuing out from $q$. This fails only if there is a point $x\in B(q,2r)$ that is connected to $q$ by two geodesics $\alpha,\beta$ with length less than $2r$ that are not homotopic rel endpoints in $X$, which contradicts that $2r < \inj_q(X)$. 

By Fact \ref{CATfact}, $\tilde X$ is CAT($\kappa^+$). Picking a lift  $\ell : B(q,2r)\hookrightarrow \tilde X$, whenever $x,y\in B(q,r)$, the  unique geodesic $[\ell(x),\ell(y)] \subset X$ lies in $\ell(B(q,r))$, by convexity of the distance function, so projects to a geodesic in $B(q,r)$ of length less than $2r$. And since $B(q,2r)$ lifts to $\tilde X$, any other geodesic in $S'$ from $x$ to $y$ has length at least $(2r - d(q,x)) + (2r-d(q,y)) \geq 2r$. So, $B(q,r)$ is convex in $S'$. And as $B(q,r)$ is isometric to its lift in $\tilde X$, we have that $B(q,r)$ is CAT($\kappa^+$).

We now prove the area bound. Working in $X \subset S'_{reg}$, it suffices to prove that $\partial B(q,r)$ has length at least $2\pi r$. Indeed, since $X$ is piecewise Riemannian, the coarea formula \cite{federer1959curvature} applies to the Lipschitz function $d(q,\cdot)$, so we can integrate over $r$ to get the desired area lower bound. 

The space of directions $\Sigma_q(S)$ is intrinsically a circle of length at least some $\alpha \geq 2\pi$; this follows from Lemma \ref{sofd} when $q$ is a vertex, and is immediate from the piecewise Riemannian structure otherwise. Pick a cyclically ordered set $\{\gamma_1,\ldots,\gamma_n\} \subset \Sigma_q(S)$ that divides $\Sigma_q(S)$ into arcs of equal length $\alpha/n$. Each $\gamma_i \in T$ is a geodesic issuing from $q$, and intersects $\partial B(q,r)$ in a point $x_i$. Then we have
\begin{equation}\label{distanglesum}
	\length \partial B(q,r) \geq \sum_i d(x_i,x_{i+1}),
\end{equation} where $i+1$ is interpreted cyclically. As long as $\alpha/n$ is less than $\pi$, the Alexandrov angle $\angle(\gamma_i,\gamma_{i+1}) = \alpha/n$ by Lemma \ref{sofd}. So, $d(x_i,x_{i+1})$ is at least the length of the opposite side in a Euclidean triangle made by two length $r$ edges joined at angle $2\pi/n$. But then the sum in \eqref{distanglesum} is at least the length of a regular $n$-gon inscribed in a circle of radius $r$ in $\BR^2$. As  $n\to \infty$, we get that $	\length \partial B(q,r) \geq 2\pi r$.
\end{proof}

Given $\epsilon>0$, define the \emph{$\epsilon$-thick part} $S_{\geq \epsilon}$ to be the set of points on $S$ where the injectivity radius is at least $\epsilon$, and let the \emph{$\epsilon$-thin part} $S_{<\epsilon}$ be its complement. We can now prove:

\begin{named}{Bounded Diameter Lemma}[Thurston, Bonahon]
If $f:S\longrightarrow M$ is a simplicial ruled surface and $\epsilon>0$, every two points on $S$ are joined by a path $\gamma$ such that
$$\length (\gamma \cap S_{\geq\epsilon}) \leq \frac{32\cdot|\chi(S)|}{| \kappa^+ | \epsilon}.$$
\end{named}

Again, the length of a path on $S$ is the length of its composition with $f$, i.e.\ its length with respect to the pseudometric $d$ on $S$.

\begin{proof}
Given $x,y\in S$, let $\gamma$ be a shortest path from $x$ to $y$, ordered via a parameterization from $x$ to $y$. Take a maximal set of points $x_1<\cdots<x_n$ on $\gamma \cap S_{\geq\epsilon}$ such that the length of $\gamma|_{[x_i,x_{i+1}]}$ is at least $\epsilon/2$ for each $i$. Then $\gamma \cap S_{\geq\epsilon}$ is contained in the union of all the radius $\epsilon/2$ intervals of $\gamma$ centered around the $x_i$, so we have $$\length (\gamma \cap S_{\geq\epsilon}) \leq n \cdot \epsilon.$$ 

Since $\gamma$ is distance minimizing, the distance in $S$ between any two $x_i,x_j$ is at least $\epsilon/2$. So, the $\epsilon/4$-balls around the $x_i$ are disjoint, and in light of Corollary \ref{majorizationkor} and Fact \ref{convexballs} we have $$n \cdot \pi(\epsilon/4)^2 \leq \frac{2\pi}{|\kappa^+|}\vert\chi(S)\vert. $$
The Lemma follows.
\end{proof}

\begin{kor}[A Bers constant for SRSs]\label{berskor}
If $f: S \longrightarrow M$ is a SRS, there is an essential simple closed curve $\gamma$ on $S$ with $$\length(\gamma) \leq \max\{2\epsilon, 64 |\chi(S)| / (|\kappa^+|\epsilon)\}.$$
\end{kor}

Of course, taking $\epsilon$ to be the $\kappa^\pm$-Margulis constant, we get a Bers constant depending only on $\chi(S)$ and $\kappa^\pm$.

\begin{proof}
If $S_{<\epsilon}$ is nonempty, we have a curve of length at most $2\epsilon$. Otherwise, the bounded diameter lemma says that $$diam(S) \leq D:=32 |\chi(S)| / (|\kappa^+|\epsilon).$$ Assume a shortest essential curve $\gamma$ on $S$ has length more than $2D$. Then there are two points $x,y $ on $\gamma$ that cut $\gamma$ into two arcs both of length  at least $D$. If $\alpha$ is an arc with length at most $D$ joining $x,y$ then one of the two $\alpha$-surgeries on $\gamma$ is essential and shorter than $\gamma$, a contradiction. 
\end{proof}

 Note that in general, we have no fixed lower curvature bound for simplicial ruled surfaces in $M$, so a priori there may be simplicial ruled surfaces whose $\epsilon$-thick parts are empty, even when $\epsilon$ is small relative to the pinching constants $\kappa^\pm$ of $M$.   

\begin{defi}\label{NATdef}
A simplicial ruled surface $f : S \longrightarrow M$ is \emph{not accidentally $\epsilon$-thin}, or $\epsilon$-NAT, or just \emph{NAT} when $\epsilon$ is understood, if whenever $\gamma$ is an essential closed curve on $S$ with length less than $\epsilon$, the image $f(\gamma)$ is essential in $M$. 
\end{defi}

Here, as usual we equip $S$ with the pullback path pseudometric.

\medskip

When $f: S \longrightarrow M$ is $\epsilon$-NAT, we have $ f(S_{<\epsilon}) \subset M_{<\epsilon}$. As an example, any $\pi_1$-injective simplicial ruled surface $S \longrightarrow M$ is $\epsilon$-NAT.  More generally, any $S \longrightarrow M$ that is incompressible within an $\epsilon$-neighborhood of its image is also $\epsilon$-NAT. Note that when $f:S \longrightarrow M$ is an $\epsilon$-NAT simplicial ruled surface, no essential closed curve on $S$ is sent to a point by $f$, so by Corollary \ref{pi1injkor} $S'$ is a closed surface homeomorphic to $S$, and the projection map is a homotopy equivalence. 

We then have the following version of the  Bounded Diameter Lemma for NAT  surfaces. 

\begin{named}{Bounded Diameter Lemma rel $M_{< \epsilon}$}\label{rbdl}
Fix $\epsilon >0$. If $f : S \longrightarrow M$  is an $\epsilon$-NAT simplicial ruled surface, then  any two points $x,y\in S$  can be joined by a path $\gamma$ such that $$\length_\epsilon(f(\gamma)) \leq D:=\frac{32\cdot |\chi(S)|}{| \kappa^+ | \epsilon}.$$  In particular,
 we have  that $\diam_\epsilon f(S) \leq D.$
\end{named}

Here, $\length_\epsilon$ and $\diam_\epsilon$ measure length and diameter outside the thin parts of $M$, see \S \ref{relative distances}. So, this version of the bounded diameter lemma is extrinsic, rather than intrinsic as before.

\medskip

The following result shows that when no triangles are degenerate, simplicial ruled surfaces are majorized by hyperbolic surfaces.

\begin{lem}[Uniform lipschitz markings]\label {lipschitzmarkings}
Let $S$ be a $2$-dimensional $\Delta$-complex with CAT($\kappa^+$)-triangles, such that $S$ is homeomorphic to a closed surface, and the cone angle at every vertex is at least $2\pi$. Then there is a smooth hyperbolic surface $S_\text {hyp} $ and a $C$-lipschitz homotopy equivalence $S_\text{hyp} \longrightarrow S $, where $C=C(|\chi(S)|,\kappa^+).$
\end{lem}

Here is a simple corollary of this that we will find useful below.

\begin{kor}\label {lipschitzcorollary}
	Let $S \longrightarrow M$ be a $\pi_1$-injective simplicial ruled surface and suppose $S$ is $\epsilon$-thick. Then there is a minimal size generating set for $\pi_1 S$  that can be realized as a wedge of loops on $S$ that has length at most some $L=L(|\chi(S)|,\epsilon).$
\end{kor}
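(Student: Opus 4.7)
The plan is to reduce the statement to the purely hyperbolic case via Lemma~\ref{lipschitzmarkings}, and then to invoke Mumford compactness of moduli space.

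First, I would apply Lemma~\ref{lipschitzmarkings} to produce a closed hyperbolic surface $S_\text{hyp}$ of the same topological type as $S$ together with an $L$-lipschitz homeomorphism $\phi : S_\text{hyp} \longrightarrow S$, with $L = L(|\chi(S)|, \kappa^+)$. Any essential loop $\gamma$ in $S_\text{hyp}$ is sent by $\phi$ to an essential loop in $S$ of length at most $L \cdot \length(\gamma)$; since $\inj(S) \geq \epsilon$ forces every essential loop in $S$ to have length at least $2\epsilon$, we deduce that $S_\text{hyp}$ is $(\epsilon/L)$-thick. Moreover, $\phi_*$ is an isomorphism on fundamental groups, so it identifies minimal-size generating sets and carries wedges of loops to wedges of loops whose length is multiplied by a factor of at most $L$. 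Thus it suffices to prove the corresponding statement for $S_\text{hyp}$.

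Concretely, it is enough to show that any closed, $(\epsilon/L)$-thick hyperbolic surface $\Sigma$ with $|\chi(\Sigma)| = |\chi(S)|$ admits a wedge $W \subset \Sigma$ of $\rank(\pi_1 \Sigma)$ loops generating $\pi_1 \Sigma$, of total length at most some $C = C(|\chi(S)|, \epsilon/L)$. For this I would invoke Mumford compactness: the moduli space of closed hyperbolic surfaces of fixed topological type with systole bounded below is compact. Define $m(\Sigma)$ to be the infimum of $\length(W)$ over all such minimal-size generating wedges $W$ in $\Sigma$. For each $\Sigma$, $m(\Sigma) < \infty$, since $\Sigma$ is compact and $\pi_1 \Sigma$ is finitely generated. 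If $m$ is upper semi-continuous on this compact moduli space, it is bounded above by some $C$, and setting $L' := L \cdot C$ completes the argument.

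The main obstacle is verifying the upper semi-continuity of $m$. For this I would use that convergence $\Sigma_n \to \Sigma$ in moduli space is realized by bilipschitz markings $\Sigma \to \Sigma_n$ whose bilipschitz constants tend to $1$. Pushing a near-optimal wedge $W \subset \Sigma$ through such markings produces wedges $W_n \subset \Sigma_n$ with $\length(W_n) \to \length(W)$; since the markings are homeomorphisms, each $W_n$ still generates $\pi_1 \Sigma_n$ with the minimal number of loops. Hence $m(\Sigma_n) \leq \length(W_n) \to m(\Sigma)$, yielding $\limsup_n m(\Sigma_n) \leq m(\Sigma)$, as required.
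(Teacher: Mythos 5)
Your proposal is correct and follows essentially the same route as the paper: use Lemma \ref{lipschitzmarkings} to pass to an $(\epsilon/L)$-thick hyperbolic surface $S_\text{hyp}$, invoke Mumford compactness to get a bounded-length minimal generating wedge there, and push it forward to $S$ via the lipschitz homeomorphism. Your fleshing-out of the compactness step (via semicontinuity of the minimal wedge length under bilipschitz markings) is a reasonable way to make precise what the paper leaves implicit.
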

\begin{proof}
	By Fact \ref{pi1injkor}, we have that the quotient $S'=S/\sim$ is a surface, and Lemma \ref{injcompare} then says that $S'$ is also  $\epsilon$-thick. The surface $S_{hyp}$ given by applying Lemma \ref{lipschitzmarkings} to $S'$ is $\epsilon/C$-thick, so (at worst) by Mumford's compactness theorem there is some such $L$ such that the conclusion of the corollary is satisfied for $S_{hyp}.$ We can then push the wedge of loops on $S_{hyp}$ forward to $S'$ via the lipschitz homotopy equivalence. 
	
	After a perturbation, we can assume each loop on $S'$ is a finite concatenation of arcs in triangles of $S'$, and then we can lift the entire wedge to $S$ without increasing its length, using the same argument as in Lemma \ref{injcompare}. The resulting wedge of loops in $S$ still represents a minimal size generating set for $\pi_1$, since by Corollary \ref{pi1injkor} the map $c: S \longrightarrow S'$ is a homotopy equivalence.
\end{proof}

\begin {proof}[Proof of Lemma \ref{lipschitzmarkings}]
Replacing each triangle in $S$ by the constant curvature $\kappa^+$ triangle with the same side lengths does not decrease cone angles, and the resulting surface admits a $1$-lipschitz homotopy equivalence to $S$ by Fact \ref{triangle majorization}. So, it suffices to assume that all triangles in $S$ have constant curvature $\kappa^+$. Scaling the metric by $\sqrt{|\kappa^+|}$ is an obviously bilipschitz operation, so from now on we assume that $S$ is a gluing of hyperbolic triangles. 

Set $\chi:=\chi(S)$. By the Gauss--Bonnet Theorem, the cone angles of $S$ are all at \emph{most} $2\pi |\chi|$. Using this bound, we will modify the singular metric in a small neighborhood of each cone point, producing a new \emph{smooth} metric with curvature at most $\chi^{-2} < 0$ that is $|\chi|$-bilipschitz to the original one. By the Schwartz-Pick-Ahlfors Lemma and the Uniformization Theorem, the new smooth metric is the image of a $|\chi|^{2}$-lipschitz homeomorphism from a hyperbolic surface $S_{hyp}$. The new metric is $|\chi|$-bilipschitz to the original, so the Lemma will follows.

We now describe how to perform the local smoothing referenced above. Pick a vertex $p \in S $ with cone angle $\alpha  > 2\pi $.  A punctured $\delta$-neighborhood of $p$ on $S$ admits `polar coordinates' $(r,\theta)$, where $ 0 < r < \delta$ and $ \ 0\leq \theta < \alpha$, such that the path metric on $S$ has the form
$$ds^ 2 = dr^2 + \sinh(r)^2 d\theta^2. $$
Formally, we parametrize each triangle incident to $p$ in geodesic polar coordinates, where $\theta$ ranges in an interval of length equal to the interior angle at $p$, then concatenate all these parametrizations.

Pick a smooth, monotone decreasing surjection $$D: [0,\delta] \longrightarrow \left [\frac \alpha{2\pi},1 \right] $$ that is constant in neighborhooods of $0$ and $\delta$. The metric $$dt^2= D(r)^2 dr^ 2 + \sinh(r)^2 d\theta^ 2$$ agrees with the original metric near the boundary of our parametrized neighborhood of $p$, and we claim that $S$ admits a smooth structure wherein $dt$ is smooth at $p$. To see this, note that for small $r$ we have $$dt^2 = (\frac \alpha {2\pi})^2 dr^2 + \sinh(r)^2 d\theta^2.$$ If we change coordinates by setting $R=\frac {\alpha}{2\pi} r$ and $\Theta =  \frac {2\pi}{\alpha} \theta$, then 
$$dt^2 =  dR^2  + \left (\frac{\alpha}{2\pi}\right )^2 \sinh^2 \left (\frac {2\pi}{\alpha} r\right ) d\Theta^2.$$ But this is exactly the expression in geodesic polar coordinates for a hyperbolic metric scaled by $\alpha/(2\pi)$, which extends smoothly to $p$. 

As desired, the new metric $dt^2$ is $\frac \alpha { 2\pi } $-bilipschitz to the original metric. We can also calculate its Gaussian curvature using the familiar formula for curvature in an orthogonal parametrization. Since $D$ does not depend on $\theta$, the formula is simpler than usual:
\begin{align*}
	K(r,\theta) &= \frac{-1}{2D(r)\sinh(r)} \left (\frac { (\sinh^2(r))'}{D(r)\sinh(r)}\right )' \\
&= \frac {\cosh(r) D'(r)}{\sinh(r)D(r)^3} - \frac 1{D(r)^2},
\end{align*}
where primes indicate $r$-derivatives. Since $D$ is positive and monotone decreasing, the first term is negative. As $$D \leq \frac \alpha{2\pi} \leq |\chi|,$$ the curvature $K$ of our new metric is bounded above by $\chi^{-2}$.\end{proof}

\subsection{Minimizing simplicial ruled surfaces}\label {homotopies}  Part of the power of simplicial ruled surfaces is that one can often interpolate between two of them with a homotopy of simplicial ruled surfaces. Canary \cite{Canarycovering} proved a first version of this for hyperbolic $M$;  a concrete statement that one can prove using his arguments is the following.

\begin{named}{Canary's  Interpolation Theorem}[see {\cite[\S 5]{Canarycovering}}]
Let $M$ be a  hyperbolic $3$-manifold with no cusps and let $f_0,f_1: S \longrightarrow M$ be homotopic $\pi_1$-injective useful simplicial hyperbolic surfaces. Then  there is a homotopy $(f_t)$ from $f_0$ to $f_1$  through simplicial hyperbolic surfaces.
\end{named}

Recall from Example \ref{usefulsrs} that a simplicial ruled surface $f:S\longrightarrow M$ is {\em useful} if the associated triangulation of $S$ has a single vertex and there is an edge $e$ such that $f(e)$ is a closed geodesic in $M$.  Canary's proof involves a discussion of hyperbolic tetrahedra that does not immediately translate to the negatively curved setting, and the statement is a bit awkward if $M$ has cusps. Agol \cite{Agoltameness} proved a similar, but weaker, interpolation result in pinched negative curvature, again when $M$ has no cusps. Here, we develop a theory of `minimizing simplicial ruled surfaces', in which there is a natural interpolation theorem for manifolds with pinched negative curvature and possibly cusps. Our theorem also does not require $\pi_1$-injectivity, which is essential in previous versions.

\medskip

   Let $M$ be a complete Riemannian $3$-manifold with pinched negative curvature, let $S$  be a closed  orientable surface of genus $g\geq 2$, let $ G$  be a  finite graph embedded in $S$, and choose a  function
$$\omega: E( G) \longrightarrow \BR_+$$
 assigning every edge $e$ of $ G$ a positive \emph{weight} $\omega(e)$.  For the most part, $G$  will be the $1$-skeleton of a triangulation, but we start  with  arbitrary graphs since we need  Fact \ref {uniquenessfact} below to  apply more generally.

  We say that a piecewise smooth map $f : S \longrightarrow M$ \emph{minimizes for $(G,\omega)$} in a given homotopy class if it minimizes the weighted sum
\begin {equation}L(f,G,\omega)=\sum_{e \in E(G)} \omega(e) \cdot \big (\length  f(e) \big )^2. \label {lengthsome}
\end {equation}

 To understand why we square the length,  suppose  for simplicity that two edges $e,e'$ of equal weight intersect in a vertex of valence two. The image of $e \cup e'$ under any minimizing map $f$ will be a geodesic, and  the edges $f(e)$ and $f(e')$ will partition $f(e\cup e')$ in half by length.  On the other hand, if there were no square, the partition of $f(e\cup e')$ could be arbitrary. This sort of uniqueness is necessary for the following:

\begin{fact}[Uniqueness of minimizers]\label {uniquenessfact}
Suppose $f_i : S \longrightarrow M$, $i=0,1$, are homotopic, both minimize for $(G,\omega)$, and $(f_i)_*(\pi_1 G) \subset \pi_1 M$  is nontrivial and not cyclic of hyperbolic type. Then $f_0=f_1$ on $G$ up to orientation-preserving reparametrizations of each edge.
%
\end{fact}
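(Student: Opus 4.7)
The plan is to work in the universal cover $\tilde M$, which is a CAT$(\kappa^+)$ space with $\kappa^+<0$, and to interpolate between $f_0$ and $f_1$ on $G$ via geodesic straightening. The heart of the argument is that the weighted squared-length functional $L$ is forced to be constant along this interpolation, and CAT-convexity of distances then pins every edge down.

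First I would fix a homotopy $H:S\times[0,1]\to M$ from $f_0$ to $f_1$, lift to $\tilde H:\tilde S\times[0,1]\to\tilde M$, and take the resulting equivariant lifts $\tilde f_0,\tilde f_1$. Since $L(\cdot,G,\omega)$ is a sum of squared edge-lengths, and any non-geodesic restriction $\tilde f_i|_e$ could be strictly shortened keeping endpoints fixed, minimality forces each $\tilde f_i(e)$ to be a geodesic segment. Next I would define the interpolation on $G$: for each vertex $v$, let $v_t$ be the point at parameter $t$ on the unique geodesic from $\tilde f_0(v)$ to $\tilde f_1(v)$; for each edge $e=[v,w]$, set $\tilde f_t(e)$ to be the geodesic from $v_t$ to $w_t$. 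Equivariance passes to a family $f_t:G\to M$, and a standard extension argument (using that $f_0,f_1$ are extensions over $S$ and that extendibility is a homotopy invariant) shows each $f_t|_G$ extends to a map $S\to M$ homotopic to $f_0$, so $L(f_t,G,\omega)\ge L_{\min}$.

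Now by CAT$(\kappa^+)$ convexity of the distance function (Bridson--Haefliger, II.2), $\ell_e(t):=d(v_t,w_t)$ is a convex function of $t$, and hence so is $\ell_e(t)^2$ (via $(\ell^2)''=2(\ell')^2+2\ell\ell''\ge 0$). The convex sum $L(t)=\sum_e\omega(e)\ell_e(t)^2$ satisfies $L(0)=L(1)=L_{\min}\le L(t)$, forcing $L\equiv L_{\min}$. A sum of convex functions that is constant has each summand affine, so each $\ell_e(t)^2=a_e t+b_e$. Then $\ell_e(t)=\sqrt{a_e t+b_e}$ is concave, while also convex from the first step, so its second derivative vanishes and $a_e=0$; hence every $\ell_e$ is constant.

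The key geometric step is then to turn constancy of $\ell_e$ into coincidence of endpoints. For any edge $e$ with $\ell_e>0$, the four points $\tilde f_0(v),\tilde f_1(v),\tilde f_0(w),\tilde f_1(w)$ bound a geodesic quadrilateral with $d(v_t,w_t)$ constant; by the Flat Quadrilateral Theorem this would span an isometric Euclidean flat strip, which cannot exist in CAT$(\kappa^+)$ with $\kappa^+<0$ unless the quadrilateral degenerates. The only non-degenerate possibility compatible with strict negative curvature is $v_t\equiv\tilde f_0(v)$ and $w_t\equiv\tilde f_0(w)$, giving $\tilde f_0(v)=\tilde f_1(v)$ and $\tilde f_0(w)=\tilde f_1(w)$. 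This fixedness then propagates along $G$ via degenerate edges (which collapse both endpoints together under each $\tilde f_i$). The \emph{hard part} is ruling out the last possibility that some connected component $K\subset G$ has every edge degenerate: on such a $K$ the images $f_0(K),f_1(K)$ would be single points that need not coincide. Here is where the hypothesis that $(f_i)_*(\pi_1 S)$ is not cyclic enters: if all of $G$ collapsed under some $f_i$, then $f_i$ would factor through $S/G$, which in the intended application ($G$ the $1$-skeleton of a triangulation) is a wedge of $2$-spheres and hence simply connected, contradicting non-cyclicity; the same factoring argument applies whenever $G$ carries enough of $\pi_1 S$ that collapsing it makes $\pi_1$ cyclic. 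Once every vertex is fixed, each $\tilde f_i(e)$ is a geodesic segment in the uniquely geodesic space $\tilde M$ between the same ordered endpoints, so $\tilde f_0(e)$ and $\tilde f_1(e)$ coincide as oriented subsets and agree up to an orientation-preserving reparametrization of $e$, as required.
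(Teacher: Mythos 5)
Your overall strategy (straighten a homotopy, use convexity of edge lengths to force them constant, then analyze the equality case) is the same as the paper's, which adapts an argument of Siler. But the equality-case analysis contains a genuine error, at the step ``the only non-degenerate possibility compatible with strict negative curvature is $v_t\equiv\tilde f_0(v)$ and $w_t\equiv\tilde f_0(w)$.'' Constancy of $t\mapsto d(v_t,w_t)$ in a CAT($\kappa^+$) space with $\kappa^+<0$ does \emph{not} force the two vertex tracks to be constant. The equality case of convexity produces a flat quadrilateral, and since there are no flats the correct conclusion is only that the four points $\tilde f_0(v),\tilde f_1(v),\tilde f_0(w),\tilde f_1(w)$ lie on a single geodesic; along that geodesic $v_t$ and $w_t$ can translate at equal speeds, keeping $d(v_t,w_t)$ constant while no vertex is fixed. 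So your argument never actually establishes that any vertex is fixed, and the only place you invoke the non-cyclic hypothesis (ruling out total collapse of $G$ to a point) does not repair this: the hypothesis is needed precisely to exclude the translation case. The paper's proof runs the dichotomy ``either some (hence, by propagation and connectedness, every) vertex is fixed, or no vertex is fixed,'' and in the second case concludes that the whole $f_0$-image of $G$ lies on one geodesic, so the $\pi_1$-image is at most cyclic, contradicting the hypothesis.

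That this case is not vacuous—and that your reasoning proves too much—is shown by the example stated in the paper immediately after the Fact: if $G$ is a cycle whose image is freely homotopic to a closed geodesic $\gamma$ in $M$, then translating a minimizer along $\gamma$ yields a one-parameter family of distinct minimizers, all with constant edge lengths and with no fixed vertices. Your argument, as written, would force all of these to coincide on $G$. Separately, your treatment of the degenerate case is also shaky: you only rule out the situation where \emph{all} of $G$ collapses, whereas the worry you raised was a single collapsed component $K$, and collapsing $K$ does not make $f_i$ factor through $S/G$; in the paper this issue is absorbed into the same connectedness dichotomy (fixedness propagates across degenerate edges because their endpoints have identical tracks), which is the cleaner way to finish once the collinear case is handled correctly.
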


Note that if $g :S \longrightarrow M$ and $g|_G$  is homotopically trivial, then any minimizer homotopic to $g$ is constant on $G$, but the image of $G$ can be any point in $M$.  Similarly, if $G$  is a cycle and $g(G)$  is homotopic to a closed geodesic $\gamma$ in $M$, then a one parameter family of minimizers homotopic to $g$ can be obtained by translating along $\gamma$.

\begin {figure}
\centering
\includegraphics {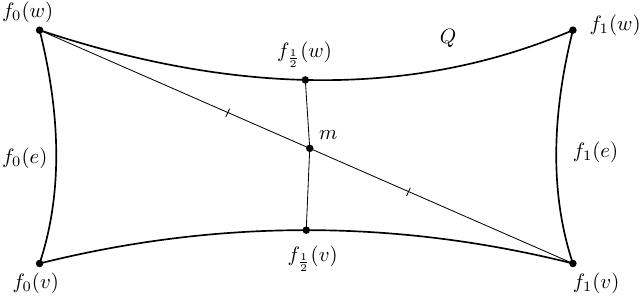}
\caption {Unless the quadrilateral $Q$ is degenerate, $d(f_{\frac 12}(v),f_{\frac 12 }(w)) < \length f_0(e)=\length f_1(e).$   For instance, if $m$ is the midpoint of the appropriate geodesic joining $f_0(w)$ and $f_1(v)$, applying \cite[Lemma 1]{siler2012note} to two appropriate comparison triangles in $\BH^3_{\kappa^+}$ gives that $d(f_{\frac 12}(v),m) < \frac 12 \length f_0(e).$
}	
\label {quad}
\end {figure}

\begin{proof}
 Our proof is a   variation of what is essentially an argument of Siler~\cite[Theorem 1]{siler2012note}.
Suppose that $f_0,f_1$ are homotopic and both minimize for $(G,\omega)$. Then there is a homotopy $(f_t)$ such that
\begin{enumerate}
	\item  for each vertex $v$ of $G$,  the path $f_t(v), t \in [0,1]$  is a (possibly constant)   constant speed geodesic,
\item for all $t$,  the image $f_t(e)$ of every edge of $G$ is a geodesic.
\end{enumerate} 
Indeed, one can construct such a homotopy from an arbitrary homotopy by first straightening the trajectories of vertices, and then straightening images of edges. By convexity of the distance function, if $e$ is an edge of $G$, the function $t \mapsto (\length f_t(e))^2$  is strictly convex unless it is constant. So, define $L_t  = L(f_t,G,\omega)$. Since $f_0,f_1$ are minimizers,  $$L_{\frac 12} \geq L_0=L_1,$$ so $\length f_t(e)$ is constant. From this, it follows that the  quadrilateral in Figure \ref{quad} is degenerate, so is contained in some geodesic $\gamma$ in $M$.   

There are then two cases. Suppose first that $f_t(v)$ is constant for some vertex $v$. Then if $e$ is an edge connecting $v$ to $w$, the vertical sides of $Q$ in Figure \ref{quad} have the same length and the same bottom endpoint, so degeneracy of $Q$ implies that the vertical sides are equal, so the top side is constant. Hence $f_t(w)$ is constant. Since $G$ is connected, this implies that $f_0=f_1$ on $G$ as desired.

Next assume that no vertex of $G$ is fixed by the homotopy. Picking $e=(v,w)$, degeneracy of $Q$ implies  the paths $t \mapsto f_t(v)$, $t \mapsto f_t(w)$ and $f_t(e)$ all lie along a single geodesic in $M$. By connectivity of $G$, for all $t$ the entire $f_t$-image of the $1$-skeleton of $G$ lies on a single geodesic $\gamma$ in $M$. Hence, $(f_t)_*(\pi_1G)$  is either trivial or is cyclic of hyperbolic type, contrary to our assumption.
\end{proof}

Below, let $\mathcal T$ be a triangulation of the surface $S $. We'll write 
$$L(f,\mathcal T,\omega) := L(f,\mathcal T^1,\omega),$$
 just to cut down a notation, and will similarly say that $f$ \emph{minimizes for} $(\mathcal T,\omega)$  if it minimizes $L(\ \cdot \ ,\mathcal T,\omega)$.

\begin {lem}[Simplicial ruled minimizers]
Suppose that $g : S \longrightarrow M$ is a map such that $g_*(\pi_1 S) \subset \pi_1 M$ is nonelementary, and $(\mathcal T,\omega)$  is a weighted triangulation of $S$. Fixing a ruling vertex for each simplex of $\CT$, there is a unique simplicial ruled surface $$f: (S,\CT) \longrightarrow M$$ homotopic to $g$ that minimizes $L(\cdot ,\mathcal T,\omega)$, and where for each triangle $\Delta$ in $\CT$, the map $f|_{\Delta}$ is ruled using the given ruling vertex. Moreover, the map $\omega \longmapsto f$ is continuous, where the topology on functions $S \longrightarrow M$ is that of uniform convergence.
\label{minimizers}
\end {lem}

Recall also that a triangulation comes equipped with a fixed affine structure on each triangle $\Delta$, and that any ruled triangle $f|_{\Delta}$ is ruled according to this structure.

\begin {proof}
 Take a sequence of maps $f_i : (S,\mathcal T) \longrightarrow M $ homotopic to $g$ such that $L(f_i,\mathcal T,\omega)$  approaches the infimum. We may assume each $f_i$ is constant speed on each edge of the $1$-skeleton $\mathcal T^1$; this implies there is an upper bound independent of $i$ on the Lipschitz constant of $f_i |_{\mathcal T^1}$. Since $\CT^1$ carries the fundamental group of $S$ and $g_*(\pi_1 S)$ is nonelementary, Corollary \ref{distancetogeodesic2} implies that the images $f_i(\CT^1)$  all lie in a compact subset of $M $. So, by Arzela-Ascoli, after passing to a subsequence the maps $f_i$ converge on $\mathcal T ^ 1$ to some  $f : \mathcal T ^ 1 \longrightarrow M$. 
Note that $f$ takes every edge $e$ of $\mathcal T^1$ to a geodesic in $M$. 

If $\Delta$ is a triangle of $\CT$, the image $f(\partial \Delta)$ is nullhomotopic, as it is the limit of the nullhomotopic loops $f_i(\partial \Delta)$. So, by Fact \ref{ruledconstruction} we can extend $f$ to a presimplicial ruled surface 
$$f : S \longrightarrow M$$
that respects the given affine structures and ruling vertices on triangles. Uniqueness of $f$ follows immediately from Fact \ref{uniquenessfact}.

We want to show $f$ is simplicial ruled. So, let $d$ be the induced path metric on $S$, let $\sim$ be the equivalence relation where $x\sim y$ if $d(x,y)=0$, let $S'=S/\sim$, let $c: S \longrightarrow S'$ be the quotient map, and let $f' : S' \longrightarrow M,$ where $f=f'\circ c$. We want to show that the cone angle at every vertex $v\in S'_{reg}$ is at least $2\pi$.

If the cone angle at $v$ is less than $2\pi$, the tangent vectors pointing along the $f'$-images of the edges of $S'$ incident to $v$ form a subset of the sphere $T^1M_{f(v)}$ that is contained in an open hemisphere. Perturbing $f(v)$ in the direction of the center of that hemisphere decreases the lengths of all adjacent edges. At least one edge of $S'$ adjacent to $v$ is the $c$-image of a segment of an edge of $\CT$, as all vertices of $\CR$ lie on $\CT^1$, see Fact \ref{geomquotient}. Hence, such a perturbation of $f$ would decrease $L(f\circ c,\CT,\omega)$, contradicting that $g=f\circ c$ minimizes.

Finally, we prove continuity of $\omega \mapsto f$. It suffices to check continuity on the $1$-skeleton $\mathcal T^1$, since ruling vertices have been specified. But given any convergent sequence $\omega_i\to \omega$, Arzela-Ascoli allows us to pass to a subsequence where the associated maps $f_i$ converge on $\mathcal T^1$ to some $f : S \longrightarrow M$. Then $L(f_i,\mathcal T,\omega_i)\to L(f,\mathcal T,\omega)$. There can be no $h$ with  $L(h ,\mathcal T,\omega)<L(f,\mathcal T,\omega)$, since if there were then we would have $L(h,\mathcal T,\omega_i ) < L(f_i,\mathcal T,\omega_i)$ for large $i$, contradicting that the $f_i$ are minimizers. Hence, $f$  minimizes for $(\mathcal T,\omega)$, and thus is the unique map that minimizes. So, every sequence $\omega_i $  that converges to $\omega$ has a subsequence such that the corresponding maps $f_i \to f$, which shows that the map $\omega \mapsto f$  is continuous.
\end {proof}

 Below, we will also need to understand how  minimizing surfaces degenerate when some of the edge weights tend to zero.

\begin{lem}[Degenerating minimizers]\label{degenerating minimizers}
 Suppose that we have a map $g : S \longrightarrow M$ and that $\mathcal T$  is a  triangulation of $S$.  Let $$\omega : E(\mathcal T) \longrightarrow [0,1]$$ be a system of nonnegative edge weights on $\mathcal T$, and let $G \subset \mathcal T^1$  be the subgraph consisting of those edges $e$ where $\omega(e)>0$.  Fix an affine structure on $\mathcal T$, let $\omega_i$ be  a sequence of positive weights on $\mathcal T$ such that $\omega_i \to \omega$, and let $f_i : S \longrightarrow M$ be SRSs homotopic to $g$ that are minimizing for $(\mathcal T,\omega_i)$. Then either 
\begin{enumerate}
	\item  there is some map $f : S \longrightarrow M$  that is minimizing for  the weighted graph $(G,\omega|_G)$  such that $f_i\to f$ on $G$.
	\item the image $g_*(\pi_1 G) $ is cyclic hyperbolic, and any subsequence of $(f_i)$ contains a further subsequence that converges on $G$ to one of the nonunique minimizers $f : S \longrightarrow M$ for $(G,\omega|_G)$.
\item the image $g_*(\pi_1 G) \subset \pi_1 M$ is cyclic parabolic, and as $i\to \infty,$ we have $\CL(f_i,\mathcal T,\omega_i) \to 0$, and the images $f_i(G)$ exit the corresponding cusp of $M$.
\item  the image $g_*(\pi_1 G)$  is trivial.
\end{enumerate}
\end{lem}

\begin {proof}
 Let's assume that the image $g_*(\pi_1 G)$ is nontrivial. First, suppose $g_*(\pi_1 G)$ is a parabolic subgroup of $\pi_1 M$. Now, $g$ can be homotoped to maps $g_j$ such that $L(g_j,G,\omega)\to 0$ as $j\to \infty$, by taking $g_j(G)$ out the corresponding cusp. For fixed $j$, we have $L(g_j,\CT,\omega_i)\to L(g_j,G,\omega)$. So, given $\epsilon>0$, if we pick some huge $j$ such that $L(g_j,G,\omega)<\epsilon$, then for all sufficiently large $i$ we'll have 
 $$L(f_i,\mathcal T,\omega_i) \leq L(g_j,\CT,\omega_i) \leq L(g_j,G,\omega) + \epsilon \leq  2\epsilon.$$
 Hence, $L(f_i,\mathcal T,\omega_i)\to 0$. It follows immediately that $f_i(G)$ exits a cusp.

So, we can now assume that $g_*(\pi_1 G)$ is not an elementary parabolic subgroup of $\pi_1 M$.  
We claim that the length of $f_i(G)$ is bounded above, independent of $i$. First, if $E$ is the number of edges in $ G$, then
\begin{align}
\length f_i(G)  &\leq L(f_i,G,1) + E \leq \frac{L(f_i,G,\omega_i)}{\min_{e\in G} \omega_i(e)} + E.  \label {someq} \end{align}
For the first inequality, recall that $L$ is defined in  \eqref{someq} as the sum of the \emph {squared} lengths of edges. So if some edge of $f_i(G)$ has length less than $1$, squaring the length gives a smaller value, but we can  compensate for this by adding $1$.  
Since $\omega_i \to \omega$, which is positive on $G$, the denominator  on the right side in \eqref{someq} is bounded away from zero for large $i$. And
\begin{align*}
L(f_i,G, \omega_i) \leq  L(f_i,\mathcal T, \omega_i)  \leq L(g,\mathcal T, \omega_i) \leq L(g,\mathcal T,1),
\end{align*}
at least after perturbing $g$ to have finite length. So, $\length f_i(G)$ is bounded above.

Due to the length bounds discussed in the previous paragraph, the restrictions $f_i |_{G}$ are uniformly lipschitz. Moreover, as $f_i|_G$ are bounded length homotopic graphs in $M$ whose fundamental groups are not elementary parabolic, their images are contained in some compact subset of $M $ by Corollary \ref{distancetogeodesic2}. So, for any  subsequence $f_{i_j}$, Arzela--Ascoli  gives a  further subsequential limit $f$ of $f_{i_j}$ on $G$.  Such an $f$  extends to a map $f : S \longrightarrow M$ (for instance, one can just pick some huge $i_j$ and perturb $f_{i_j}$ to agree with the limit map on $G$)
and this $f$ will minimize the quantity $ L(\ \cdot \ ,G,\omega)$. 

We have shown that every subsequence of $f_i$ has a subsequence that converges on $G$ to some restriction $f|_G$, where $f : S\longrightarrow M$  minimizes $ L(\ \cdot \ ,G,\omega)$.   Any such $f$  must be constant speed on the edges of $G$, since the $f_i$ are constant speed on edges. So, if $g_*(\pi_1 G)$ is not cyclic hyperbolic, Fact \ref{uniquenessfact} says that $f|_G$ is independent of the subsequence chosen.  In other words, picking any such $f$, we have that $f_i|_G$  actually converges to $f|_G$.
\end {proof}

\subsection{Constructions}

Here are some constructions of minimizing SRSs in manifolds with pinched negative curvature.

\begin {kor}[SRSs near closed geodesics and cusps]\label {construction}
Suppose that $g : S \longrightarrow M$ is a map from a closed orientable surface of genus at least two such that $g_*(\pi_1 S) \subset \pi_1 M$ is nonelementary, and $\gamma \subset S $ is a simple closed curve.  Then there is a family of minimizing SRSs $$f_t : S \longrightarrow M$$ homotopic to $g$  such that as $t\to 0$, $f_t(\gamma)$  either converges to a closed geodesic or exits a cusp of $M$, depending on whether $g_*(\gamma)$  is  a hyperbolic or parabolic element of $\pi_1 M$. Moreover, if $g(\gamma)$ has length at most $L$, for small $t$ we have that $d_\epsilon(g(\gamma),f_t(S)) \leq D=D(\kappa^+,\epsilon,L).$
\end {kor}
\begin {proof}
Take a triangulation $\mathcal T$ of $S$ in which $\gamma$ can be realized as a simple cycle in the $1$-skeleton of $T$, and let $f_t : S \longrightarrow M$ be a homotopy of simplicial ruled surfaces
  that minimize the squared-lengths of the weighted triangulations $(\mathcal T,\omega_t)$, where $\omega_t(e)=1$ if $e$  is an edge on $\gamma $, and $\omega_t(e)=t$ otherwise. Then apply Lemma \ref{degenerating minimizers}. The $\epsilon$-distance estimate at the end of the statement follows from Lemma \ref{distancetogeodesic}.
\end {proof}

We can also construct minimizing SRSs near any nonelementary surface in $M$ that satisfies a bounded diameter lemma rel annuli. 

\begin{fact}[Minimizers near bounded surfaces] \label{nearboundary}
Let $S$ be a closed orientable surface of genus $g\geq 2$, let $f_0 : S \longrightarrow M$ be a rectifiable\footnote{This means that any smooth path on $S$ has an image of finite length in $M$. When we apply the fact later, $f_0$ will be the inclusion of a $C^{1,1}$ subsurface of $M$.} map and equip $S$ with the pullback pseudo-metric. Suppose there is a collection $(A_i)$ of pairwise disjoint, pairwise nonhomotopic, essential embedded annuli in $S$ such that 
\begin{itemize}
\item for each $i$, the map $f_0 |_{A_i}$ is homotopically nontrivial, each boundary component of $A_i$ has length at most $\epsilon$, and for each point $p\in A_i$, there is an essential loop $\alpha \subset M$ through $f_0(p)$ that has length at most $\epsilon$ and is homotopic in $M$ to the $f_0$-image of an essential simple closed curve on $A_i$,
\item for each component $Y \subset S \setminus \cup_i int(A_i)$, the map $f_0|_{Y}$ has nonelementary $\pi_1$-image, and $Y$ has intrinsic diameter at most $D$.\end{itemize}
Then $f_0 $ is homotopic to a minimizing simplicial ruled surface, through a homotopy whose image has $\epsilon$-diameter at most $R=R(\kappa^+,D,g)$.  \end{fact}

Recall that $\epsilon$-diameter is diameter with respect to the path pseudo-metric $d_\epsilon$ on $M$ defined by letting the length of a path be the length of its intersection with the thick part. The condition on the annuli $A_i$ is rather awkward: it would be more intuitive to just assume the stronger condition that every point of $A_i$ lies on some essential curve $\alpha \subset S$ of length at most $\epsilon$, but unfortunately our description of the convex core boundary in negative curvature (Theorem~\ref{bdlcc}) doesn't give such curves, so we state the Fact more generally as above.

As an example, the fact applies to any (intrinsically) bounded diameter surface in $M$ with non-elementary $\pi_1$-image. It also applies to $\pi_1$-injective\footnote{Really, what is required is weaker: one needs that the thin parts are $\pi_1$-injective and the thick parts have nonelementary $\pi_1$-image in $M$.} pleated surfaces in hyperbolic $M$, since such surfaces are intrinsically hyperbolic and therefore have a thick-thin decomposition into essential annuli and bounded diameter complementary components. So, it applies to incompressible boundary components of $CC(M)$ when $M$ is hyperbolic. Applying\footnote{Some of the constants produced in Theorem~\ref{bdlcc} are $3\epsilon$ and $3\epsilon+2t$, rather than $\epsilon$, as needed for the Fact. So, when plugging Theorem~\ref{bdlcc} into the Fact, we need to use a smaller $\epsilon,t$ in the former. Also, in the Corollary the reader might expect us to use a $t$-equidistant surface with $t\approx 0$ instead of a $1$-equidistant surface, but as the $1$-equidistant surface is homotopic to the $t$-equidistant surface via a homotopy of tracks less than $1$, there's no difference in the corollary.} Theorem~\ref{bdlcc}, we can generalize this statement to the setting of pinched negative curvature.

\begin{kor}[Minimizing SRSs near $\partial CC$]\label{kor SRSs near partial}
	Take $\epsilon>0$ small. Suppose that $M$ has finitely generated fundamental group and only rank $2$ cusps, all of which are hyperbolic. If $S$ is a genus $g$ incompressible boundary component of the $1$-neighborhood of $CC(M)$, then $S$ is homotopic to a minimizing simplicial ruled surface through a homotopy whose image has $\epsilon$-diameter at most $R=R(\kappa^+,g)$.
\end{kor}

We can also apply Fact \ref{nearboundary} to SRSs that are sufficiently $\pi_1$-injective.

\begin{kor}[Minimizing SRSs near arbitrary SRSs]\label{nearby minimizers}
There are $r,R>0$ depending on $\epsilon,g,\kappa^+$ as follows. Let $S$ be a closed orientable surface of genus $g\geq 2$ and suppose that $f : S \longrightarrow M$ is a simplicial ruled surface that is $\pi_1$-injective within the $d_\epsilon$-neighborhood of radius $r$ around $f(S)$. Then $f$ is homotopic to a minimizing SRS through a homotopy whose image has $\epsilon$-diameter at most $R$.
\end{kor}
\begin{proof}
Equip $S$ with the pullback path pseudometric, and let $S_{<\epsilon} \subset S$ be the $\epsilon $-thin part. As long as $r>\epsilon$, any essential loop of length at most $\epsilon$ in $S$ has essential image in $M$, since otherwise we could nullhomotope the image in its $\epsilon$-neighborhood in $M$, using a coning construction in the universal cover. In other words, $S$ is $\epsilon$-NAT, so by Corollary \ref{pi1injkor} the quotient space $S'$ associated to the pseudometric is homeomorphic to $S$, and the quotient map is a homotopy equivalence. In particular, after replacing $S$ with $S'$, which corresponds to doing a homotopy \emph{within} the image of the SRS, we can assume that $S$ is a locally CAT($\kappa^+$) surface, by Corollary \ref{CATfact}.

If $\alpha,\beta$ are two essential loops of length at most $\epsilon$ that pass through a point $p\in S$, then $\alpha,\beta$ map into a component $T \subset M_{<\epsilon}$, which is contained in the radius $0$ neighborhood of $f(S)$ with respect to the $d_\epsilon$-metric. So by our $\pi_1$-injectivity assumption, the subgroup $\langle \alpha,\beta\rangle \subset \pi_1 (S,p)$ injects into $\pi_1 T$, which is virtually abelian, implying that the subgroup $\langle \alpha,\beta\rangle$ is cyclic. In other words, $S$ satisfies a Margulis Lemma, and then the usual proof\footnote{Briefly, if $S = \Gamma \backslash X$, where $X$ is CAT($\kappa^+$), then for each maximal cyclic subgroup $C \subset \Gamma$ we consider the set $T_C \subset X$ of all points that are translated by less than $\epsilon$ by an element of $C$. If $T_C$ is nonempty, then it is a star-shaped neighborhood of the common axis of the elements of $C$, so $C \backslash T_C$ is an annulus. If $C,D$ are distinct maximal cyclic subgroups, then $T_C\cap T_D=\emptyset$ by our previous discussion with the loops $\alpha,\beta$. So, the components of the $\epsilon$-thin part of $S$ are the projections of the sets $T_C$ that are nonempty. Note that if $g\in \Gamma \setminus C$, then $g(T_C)=T_{gCg^{-1}}$, and $C\neq gCg^{-1}$ since maximal cyclic subgroups of higher genus surface groups have trivial normalizers, so $g(T_C) \cap T_C = \emptyset$ from above, and hence the projection of $T_C$ to $S$ is $C \backslash T_C$, which is an annulus as noted above.} of the thick-thin decomposition for hyperbolic surfaces applies to our locally CAT($\kappa^+$) surface $S$, showing that the components of $S_{<\epsilon}$ are pairwise nonhomotopic essential annuli. 

The Bounded Diameter Lemma says that $S$ has bounded diameter relative to $S_{<\epsilon}$. To use a component $A \subset S_{<\epsilon}$ as one of the annuli in Fact \ref{nearboundary}, we'd have to give a length bound for $\partial A$. Instead it's a bit easier to perturb $A$ so that its boundary obviously has bounded length, as follows.  Write $\partial A = c_1\cup c_2$, and let $\gamma_1,\gamma_2$ be essential simple closed curves in $\partial A$ with length less than $3\epsilon$ (say) based at points of $c_1,c_2$, respectively. For example, take $\gamma_1,\gamma_2$ to be shortest essential curves through $c_1,c_2$. We may assume that $\gamma_1,\gamma_2$ do not intersect the geodesic core of $A$ transversely, since projecting any part that lies on the `wrong side' of the core back onto the core decreases length. So after perturbing them off the core, we can assume $\gamma_1,\gamma_2$ lie on opposite sides of the core, and hence are disjoint. If we let $A' \subset \overline A$ be the subannulus bounded by $\gamma_1,\gamma_2$, then every point of $\overline A$ is within $2\epsilon$ of $A'$: indeed, given $p\in \overline A \setminus A',$ there is a loop of length at most $2\epsilon$ based at $p$, which is then contained in $\overline A$; this loop must intersect $A'$ since otherwise it separates $A'$ from some component of $\partial A$, which $A'$ intersects by construction. So, if we apply the process $A \mapsto A'$ to every component of $S_{<\epsilon}$, we get a collection of $\epsilon$-thin annuli $A_i$ with boundary lengths at most $3\epsilon$, and where $S$ has bounded diameter relative to $\cup_i A_i$. 

It follows from the bounded relative diameter and the bound on the boundary lengths that each component $Y \subset S \setminus \cup_i A_i$ has intrinsic diameter at most some $D=D(\epsilon,g,\kappa^+)$. Moreover, we claim that $f_*(\pi_1 Y)$ is non-elementary. Supposing it is elementary, there is a single component $T \subset M_{<\epsilon}$ such that $f(\partial Y) \subset T$, and then we can homotope $f|_Y$  into $T$ by flowing each $f(x)$ along the shortest path to either the geodesic core of $T$, or out the cusp of $T$. As $Y$ has bounded diameter, $f(Y)$ lies in a bounded neighborhood of $T$, and our homotopy decreases distance to $T$, so the entire homotopy lies in a bounded neighborhood of $T$. But as $Y$ is not an annulus or a disc, the resulting map $Y \longrightarrow T$ is not $\pi_1$-injective. So, if $r$ is larger than all the bounded constants in this paragraph, this contradicts our assumption that $f$ is $\pi_1$-injective inside the $r$-neighborhood of $f(S)$, with respect to the $d_\epsilon$ metric. 

The corollary now follows from Fact \ref{nearboundary}.
\end{proof}

We now prove Fact \ref{nearboundary}. 

\begin{proof}[Proof of Fact \ref{nearboundary}]
The goal is to eventually produce a minimizing simplicial ruled surface homotopic to $f_0$, so we first focus on producing an appropriate triangulation. First, for each $i $, add a vertex on each boundary component of $A_i$, consider each boundary component as an edge with both endpoints at that vertex, and triangulate $A_i$ by adding two more edges to the interior of $A_i$ so that their endpoints are on the given vertices.  We call the edges intersecting the interior of the $A_i$ \emph{long} edges, since we have no control over their length. Edges constructed outside the interiors of all the annuli will be called \emph{short} edges everywhere below. We claim that we can extend the current collection of vertices and edges to a triangulation of $S$ so that all additional edges have length at most some $C=C(g,D)$ and all have interior in $S\setminus \cup_i A_i$.

The construction is recursive. Suppose that we currently have a graph $G$ embedded in $S$ that includes the vertices and edges discussed above, and let $G_{short}$ be the subgraph of $G$ consisting of all short edges. If there is some component of $S\setminus G$ that is not a disk, we will find an embedded graph $G'\supset G$ with one additional edge in $S\setminus \cup_i A_i$, where $G'_{short}$ has length at most $2(D+\length(G_{short}))$, and where the complexity of the set of components of $S\setminus G'$ is less than it was for $G$. Assuming we can always do this, the process terminates with some $G$ where all complementary components are polygons. The number of steps in the process is bounded in terms of $g$, so all short edges have length at most some constant depending on $D,g$. We can turn $G$ into the $1$-skeleton of a triangulation $\CT$ by adding diagonals that have lengths bounded in terms of $D,g$, by perturbing concatenations of sides into the interior.

We now explain how to construct $G'$. First, it is possible that $G$ is empty, as long as the collection of annuli is empty. In this case, $S$ itself has diameter at most $D$, and if $\gamma$ is a shortest essential closed curve in $S$, then $\length \gamma \leq 2D$. (If not, we could find two points $x,y $ on $\gamma$ that cut $\gamma$ into two arcs both of length  at least $D$; if $\alpha$ is a length $D$ arc in $S$ joining $x,y$ then one of the two $\alpha$-surgeries on $\gamma$ is essential and shorter than $\gamma$, a contradiction.) We then place a vertex on $\gamma $ and regard $\gamma$ as an edge, creating $G'$ as desired. Next, assume $G$ is nonempty and that some component $Y$ of $S\setminus G$ is not a disk. If $\gamma$ be a shortest arc from $\partial Y$ to $\partial Y$ that is essential, i.e.\ not homotopic rel endpoints into $\partial Y$, then $\length \gamma \leq 2D$, by another surgery argument. Moving the endpoints of $\gamma$ along edges of $G$ to its vertices, we have an essential arc $\gamma'$ in $Y$ with length at most $2D+\length(G_{short})$ (at the very worst), and we create $G'$ by adding $\gamma'$ to $G$.

\medskip

Let $\CT$ be the triangulation constructed above. Note that all short edges of $\CT $ have length at most $C=C(g,D)$, by construction.  Let $(\CT,\omega)$ denote the weighted triangulation where all short edges have weight $1$, and all long edges $e$ have weight $1/\length(e)^2$.  Let $$f_1 : (S,\CT) \longrightarrow  M$$ be a SRS that minimizes for $(\CT,\omega)$, as is Lemma \ref{minimizers}.
Then $$L(f_1,\CT,\omega) \leq L(f_0,\CT,\omega),$$
and the latter is bounded in terms of $g,D$, since there are only boundedly many edges, the short edges contribute at most $C^2$ to the sum defining $L(f_0,\CT,\omega)$, and the long edges contribute at most $1$. 
In particular, for each short edge of $\CT$, \emph{we have that $\length (f_1(e))$ is bounded in terms of $g,D$.} Using this, we want to prove that $f_0$ and $f_1$ are homotopic via a homotopy whose image has $\epsilon$-diameter at most $R=R(\kappa^+,g,D,\epsilon)$. 


 In the following, it will be convenient to be able to assume that $f_0$ is presimplicial ruled. To this end, we construct a homotopy from $f_0$ to a presimplicial ruled surface $g$, where the image of the homotopy has bounded $d_\epsilon$-diameter.  First, define $g$ on $\CT^1$ by homotoping all edges of $\CT$ to geodesics, rel endpoints. By Fact~\ref{ruledconstruction}, we can extend $g$ to the entire surface $S$ so that the restriction to each triangle in $\CT$ is a ruled triangle in $M$. Suppose that $\Delta $ is a triangle in $ \CT$ that is not contained in $\cup_i A_i$. Then $f_0(\partial \Delta)$ and $g(\partial \Delta)$ both have bounded length, and in fact both $f_0(\Delta)$ and $g(\Delta)$ have bounded intrinsic diameter, the former because of the diameter bound on components of $S\setminus \cup_i A_i$, and the latter because it is ruled with bounded diameter boundary. Since these two triangles have the same vertices, it follows that the image of the straight-line homotopy\footnote{Really, this is defined by lifting both triangles to triangles in the universal cover $X\longrightarrow M$ that share the same vertices, and then homotoping through geodesics.} from $f_0( \Delta)$ to $g( \Delta)$  has bounded diameter in $M$. If $\Delta$ is a triangle of $\CT$ that lies in some annulus $A_i$, it may not have bounded diameter, so this argument doesn't work. However, by the assumptions in the statement of our Fact, through every point in the image $f_0(\Delta)$ there is a loop in $M$ homotopic to an essential simple closed curve in $A_i$ that has length less than $\epsilon$. If we lift this picture to the universal cover $X\longrightarrow M$, we get a lift $D$ of $f_0(\Delta)$ and a deck transformation $g$ that translates every point of $D$ by less than $\epsilon$. The set $X_{g,\epsilon} \subset X$ of points translated less than $\epsilon$ by $g$ is convex, by negative curvature, so a straight-line homotopy rel vertices from $D$ to a ruled triangle stays inside $X_{g,\epsilon} $. So downstairs, the image of the homotopy from $f_0( \Delta)$ to $g( \Delta)$ lies inside a component of $M_{<\epsilon}$, and hence has $\epsilon$-diameter zero. Hence, $f_0$ is homotopic to $g$ through a homotopy whose image has bounded $\epsilon$-diameter. 

This new $g$ satisfies all the same hypotheses as $f_0$ in the statement of the Fact, so replacing $f_0$ with $g$, we can now just assume that $f_0$ is presimplicial ruled. Fix a homotopy $(f_t)$ from $f_0$ to $f_1$, where $f_1$ is the minimizing SRS constructed above. For each vertex $v$ of $\CT$, homotope the homotopy so that $t \mapsto f_t(v)$ is a geodesic. Then for each edge $e$, homotope the homotopy rel $\CT^0$ so that $f_t(e)$ is a geodesic for each $t$. Finally, note that when we homotoped $f_0$ to be presimplicial ruled, the ruling vertices could have been specified arbitrarily, so we can assume that they match those of $f_1$. We can then homotope the homotopy rel $\CT^1$ so that $f_t$ is presimplicial ruled for each $t$. 

As in the proof of Fact \ref{uniquenessfact}, see Figure \ref{quad}, for each edge $e$ of $\CT$, the map $t \mapsto \length f_t(e)$ is convex. So if $f_0(e), f_1(e)$ have bounded length, so does each $f_t(e)$. In particular, we get that for every short edge $e$ of $\CT$, we have $\length(f_t(e)) \leq N$ for some $N=N(g,D)$. It follows that for every component $K \subset S \setminus \cup_i A_i$, the diameter of $f_t(K)$ is bounded in terms of $g,D$.

\begin{claim}\label{gammaclaim}
For every vertex $v$ of $\CT$, the path $t \mapsto f_t(v)$ has  length bounded in terms of $g,D,\kappa^+$.
\end{claim}
\begin{proof}
Suppose $v$ is contained in a component $K \subset S\setminus \cup_i int(A_i)$. Pick essential loops $\alpha,\beta \subset K$ based at $v$ that are closed edge paths in $\CT^1$, that repeat edges at most twice, and where $(f_0)_*(\alpha),(f_0)_*(\beta)$ generate a nonelementary subgroup of $\pi_1 M$. This is possible since loops of the type above generate $\pi_1 (K,v)$, which has nonelementary image in $\pi_1 M$. Then as explained above, for all $t$ the loops $f_t(\alpha),f_t(\beta)$ have length bounded in terms of $g,D$.

Lift $f_t |_{\{v\}} $ to a homotopy $\tilde f_t : \{v\} \longrightarrow X$, regarded as a geodesic path, in the universal cover $X\longrightarrow M$. Then the elements $\alpha,\beta \in \pi_1(S,v)$ give deck transformations that for all $t$, translate $\tilde f_t(v)$ by a distance bounded in terms of $g,D$. These deck transformations generate a non-elementary subgroup of isometries of $X$, by construction. It therefore suffices to show that \emph{if $h_1,h_2$ are isometries of $X$ that generate a discrete, nonelementary subgroup, the set  $$S_N(h_1,h_2) := \{ x\in X \ | \ d(h_1(x),x),d(h_2(x),x)\leq N\}$$ has diameter bounded in terms of $N,\kappa^\pm$.} For as $t \mapsto \tilde f_t(v)$ is a geodesic in $X$, a diameter bound implies a length bound. 

The italicized claim above is probably well-known to experts, but we were unable to find a reference, so we will quickly sketch the proof.

\begin{claim}\label{sketchclaim}Given $\delta,R,N>0$, there is some $R'=R'(\delta,R,N,\kappa^+)$ as follows. Suppose $h: X \longrightarrow X$ is an isometry, $x,y\in X$ are points with $d(x,y)\geq R'$, and $d(h(x),x),d(h(y),y)\leq N.$ Let $\gamma : I \longrightarrow X$ be a unit speed parameterization of the geodesic from $x $ to $y$. Then there is a subinterval $J\subset I$ with length at least $R$, and $a \in [-N,N]$ such that for all $t\in J$ we have $d(h(\gamma(t)),\gamma(t+a)) < \delta.$\end{claim}

In other words, if $h$ translates points $x,y$ that are extremely far apart a bounded distance, it acts almost as a (bounded) translation along a long segment of the geodesic from $x$ to $y$. 

\begin{proof}[Proof Sketch of Claim \ref{sketchclaim}] By a hyperbolic trigonometry computation, in the model space of constant curvature $\kappa^+$, two unit speed rays $a(t),b(t)$ with $a(0)=b(0)$ satisfy $0<\delta/2\leq d(a(t),b(t)) \leq N$ only for $t$ in an interval whose length is bounded above in terms of $\delta,N,\kappa^+$. So, applying Toponogov's comparison theorem to the geodesic triangle with vertices $x,y,h(y)$, we get that the geodesics $[x,y]$ and $[x,h(y)]$ are within $\delta/2$ of each other except for a bounded length of each near their terminal endpoints. Applying the same argument to the triangle with vertices $x,h(x),h(y)$, and using the triangle inequality, we get that $[x,y]$ and $[h(x),h(y)]$ are within $\delta$ of each other except in bounded length intervals around their endpoints. Taking the $h$-preimage of a long segment of $[h(x),h(y)]$ that tracks $[x,y]$ gives our interval $J$. We leave the details as an exercise.\end{proof}

So, let $h_1,h_2$ be a pair of isometries of $X$ that generate a discrete, non-elementary subgroup. Given $N$, if $x,y \in S_N(h_1,h_2)$ and $d(x,y)$ is huge, Claim \ref{sketchclaim} implies that both isometries act as-close-as-you-want to a translation along some as-long-as-you-want segment $\ell $ of the geodesic $[x,y]$. It follows that the commutator $[h_1,h_2]$ translates every point on some slightly shorter segment $\ell'$ by a small amount, and similarly, so does $[h_1,[h_1,h_2]]$ on some even shorter $\ell''$. But since $h_1,h_2$ generate a discrete, non-elementary group, so do $h_1,[h_1,h_2]$, for instance by a fixed point analysis. Applying the same logic again, $[h_1,[h_1,h_2]]$ and $[h_1,h_2]$ generate a discrete, non-elementary group. But these two isometries both translate points of $\ell''$ a small amount. So, if we rig the constants correctly, this contradicts the Margulis lemma.
\end{proof}

We now return to the proof of the Fact. Our goal is to show that the image of the homotopy $f_t : S \longrightarrow M$ has $\epsilon$-diameter bounded in terms of $g,D,\kappa^+$. As mentioned above, for every component $K \subset S \setminus \cup_i A_i$ and every $t$, the diameter of $f_t(K)$ is bounded in terms of $g,D$. We claim that \emph{for every $i,t$, the image $f_t(A_i)$ has bounded $\epsilon$-diameter.} If we prove this, then we will have that each $f_t(S)$ has bounded $\epsilon$-diameter, and then the Fact will follow from Claim \ref{gammaclaim}.

So, fix $i$. Let $\hat M \longrightarrow M$ be the cover corresponding to $(f_t)_*(\pi_1 A_i)$, so that $\pi_1 \hat M=\langle\alpha\rangle$ is cyclic.  Let $T \subset \hat M$ to be the subset of points through which there is a closed curve of length less than $2\epsilon$ that is homotopic to the generator $\alpha$ of $\pi_1 \hat M$. Since $\hat M$ has negative curvature, $T$ is convex in $\hat M$, meaning that any geodesic segment with endpoints in $T$ lies in $T$. Note that $T$ projects into the $\epsilon$-thin part of $M$. We can then lift the homotopy $f_t|_{A_i}$ to a homotopy $$\tilde f_t : A_i \longrightarrow \hat M.$$
By the first hypothesis of the Fact, we have $\tilde f_0(A_i) \subset T$. By Claim~\ref{gammaclaim}, if $v$ is one of the two vertices on $\partial A_i$, the path $t \mapsto f_t(v)$ has bounded length in  $M$. Lifting, the same is true of the path $t \mapsto \tilde f_t (v)$. So for all $t$ and for both vertices $v$ of $A_i$, we have that $f_t(v)$ lies in an $r$-neighborhood $\CN_r(T)$, for some $r=r(g,D,\kappa^+)$. Since $\tilde f_t$ is presimplicial ruled and $\CN_r(T)$ is convex, we have $\tilde f_t(A_i) \subset \CN_r(T)$  as well. So projecting, $f_t(A_i)$ lies in the $r$-neighborhood of a component of the $\epsilon$-thin part $M_{<\epsilon}$, and hence has $\epsilon$-diameter bounded in terms of $g,D,\kappa^+$.
\end{proof}

\label{sec:interpolation}

We now prove the central result of this section.

\begin{named}{The Interpolation Theorem}
Let $M$ be a complete 3-manifold with pinched negative curvature, $S$ be a closed surface of genus $g\ge 2$ and $$g : S \longrightarrow M$$ is a map such that $g_*(\pi_1 S) \subset \pi_1 M$ is nonelementary. Suppose that $$f_i: (S,\mathcal T_i) \longrightarrow M,  \ i=0,1$$ are two SRSs homotopic to $g$, where each $f_i$ minimizes for some weighted triangulation $(\mathcal T_i,\omega_i)$, and where the $1$-skeleta of $\CT_0,\CT_1$ are  transverse. 

If $\mathcal T$ is a common refinement of $\CT_0,\CT_1$, then there is a homotopy
$$F:S\times[0,1]\longrightarrow M,\ \ F(x,t)=F_t(x)$$
 with the following two properties:
\begin {enumerate}
\item for $t\in [0,1]$,	each $F_t : (S , \mathcal T) \longrightarrow M$ is a minimizing SRS  with respect to some weight $o_t$ on $\mathcal T$,
\item  for $i=0,1$, there is  a homotopy from $f_i$ to $F_i$ that is contained in some $R$-neighborhood $\CN_R(f_i(S))$, where $R$ depends only on the  pinching constants of $M$.\end {enumerate}
  \end{named}

 With a little more work, one can even produce a homotopy from $f_i$ to $F_i$ all of whose tracks have bounded length. However, we do not have an application for that here so we will just prove (2) above.

\begin {figure}
\centering
\includegraphics{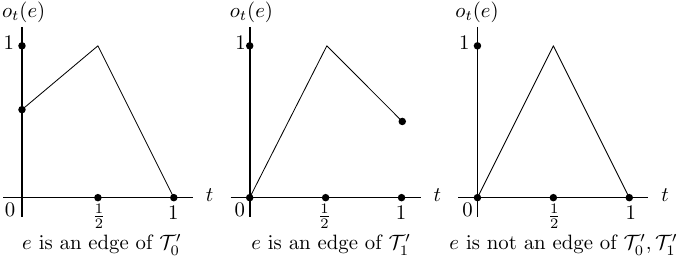}	
\caption {The weights $o_t$ defining the interpolation. In the first case, say, the weights at time zero are determined by distributing the $\omega_0$-weights of edges of $\mathcal T_0$ uniformly over the corresponding edges of $\mathcal T_0'$.}\label {graphs}
\end {figure}

\begin {proof}
 For each $i=0,1$, let $\mathcal T_i'\subset \mathcal T^1$ be the edges that come from edges of $\mathcal T_i$. The $\omega_i$  then induce weight functions $o_i$ on $\mathcal T$ that are supported on $\mathcal T_i'$, where if an edge $e$ of  $\mathcal T_i$  is subdivided into $n$  edges $e_1,\ldots,e_n$ of $\mathcal T_i'$, then we have $o_i(e_j) =\omega_i(e)/n$  for all $j$.  After precomposing $f_i$ with an isotopy, we can assume that the segments $f_i(e_j)$ all have the same length in $M$,  in which case $f_i$ minimizes for the   cell decomposition of $S$ with edge set $\mathcal T_i'$ and weight $o_i$.

For $t\in[0,1]$, let $o_t$ be the  system of weights on $\mathcal T$ described in Figure~\ref{graphs},  remembering that the weights $o_i$ are already defined when $i=0,1$. Fix an affine structure on $\mathcal T$ so that the maps $f_i$ have constant speed when restricted to edges of $\mathcal T_i'$---this is used in the next paragraph.
The weights $o_t$ are positive when $t\in (0,1)$, so after arbitrarily specifying ruling vertices, the  minimizing SRSs $F_t, \ t \in (0,1)$ given by Lemmas \ref{minimizers} form a homotopy.

By Lemma \ref{degenerating minimizers}, as $t\to 0$ the restrictions $F_t |_{\mathcal T_0'}$ converge to $f_0 |_{\mathcal T_0'}$. So, pick some  very small $t_0 > 0$. There is a homotopy with very short tracks from $F_{t_0}$ to a map $F_{t_0}'$ that agrees with $f_0$ on $\mathcal T_0'$.  In fact, we can choose $F_{t_0}'$ to agree with $F_{t_0}$ outside of a small neighorhood of $\mathcal T_0'$, and so that the image of $F_{t_0}'$ is the union of the image of $F_{t_0}$ with the image of a straight line homotopy from $F_{t_0}|_{\mathcal T_0'}$ to $f_0|_{\mathcal T_0'}$. Note that this straight line homotopy has small area, since it has short tracks and is between two bounded length graphs with geodesic edges.

 If $\Delta$ is a  complementary region of $\mathcal T_0'$,  the irreducibility of $M $ implies that the maps $f_0 |_{\Delta}$  and  $F_{t_0}' |_{\Delta}$  are homotopic rel $\partial.$   By Gauss--Bonnet, the areas of the simplicial ruled discs $f_0(\Delta)$  and  $F_{t_0} (\Delta)$ are bounded above in terms of the pinching constant $\kappa^+$, and a slightly increased estimate also holds for $F'_{t_0} (\Delta)$ by the previous paragraph. An argument using the isoperimetric inequality \cite[Lemma 12.1]{Agoltameness} then implies that the two maps $f_0 |_{\Delta}$  and  $F_{t_0}' |_{\Delta}$ are homotopic rel $\partial$ in $M$ within an $R$-neighborhood of the union of their images, for some $R=R(\kappa^\pm)$. 
 
Both $f_0 |_{\Delta}$ and $F_{t_0} (\Delta)$ are contained in a bounded neighborhood of their boundaries; indeed, if some interior point lay far from the boundary, then a large radius disc (which has large area by Fact \ref{convexballs}) would fit in the interior, violating the area upper bound from the previous paragraph. It follows that $F_{t_0}' (\Delta)$ is also contained in a bounded neighborhood of its boundary. So, in light of the previous paragraph, $f_0 |_{\Delta}$  and $F_{t_0}' |_{\Delta}$ are homotopic rel $\partial $  in a bounded neighborhood of $f_0(S)$. Applying this to every  complementary region and then increasing $R$ slightly to replace $F_{t_0}'$ with $F_{t_0}$, it follows that $f_0$ is homotopic to $F_{t_0}$ within an $R$-neighborhood of $f_0(S)\subset M,$ for some $R=R(\kappa^\pm)$.

The same argument  gives some $t_1$ very close to $1$  such that $f_1$  is homotopic  to $F_{t_1}$ within some bounded neighborhood $\mathcal N_R(f_1(S))$.  To finish, just reparametrize the homotopy $F_t$, where $t \in [t_0,t_1]$,  so that the parameter $t$ lies in $[0,1]$.\end{proof}

\subsection{Filling theorems}\label{filling thm sec}

As an application of the Interpolation Theorem, we can prove PNC versions of the Canary/Thurston filling theorems for degenerate ends and convex cores, see  \cite{Canarycovering}. In the following, let $M$ be a complete Riemannian $3$-manifold with pinched negative sectional curvatures $\kappa^- \leq \kappa \leq \kappa^+ < 0$ and hyperbolic cusps.

\begin{sat}[Filling degenerate ends] \label{filling theorem} If $\CE$ is a degenerate end of $M $, where $\CE$ has no cusps. Given a product neighborhood $U \cong S \times (0,\infty)$ of $\CE$, there is a neighborhood $V\subset U$ such that every point $p\in V$ lies in the image of a minimizing SRS $f : S \longrightarrow U$ in the homotopy class of a level surface.
\end{sat}

This theorem generalizes Canary's Filling Theorem \cite{Canarycovering}, proved for hyperbolic $3$-manifolds, to the PNC setting. We note that Agol \cite[Theorem 14.1]{Agoltameness} previously proved a coarse PNC filling theorem for incompressible degenerate ends, showing that every such end has a neighborhood in which every point is at bounded distance from a SRS. 

\begin{proof}
Let $U \cong S \times [0,\infty)$ to be a closed product neighborhood of $\CE$, and set $S_0=\partial U$. By definition, there are simplicial ruled surfaces $$f_i : S \longrightarrow U, \ \ i=1,2,\ldots,$$ in the homotopy class of a level surface whose images exit $\CE$. Fact \ref{fact: dist proper} says that the $\epsilon$-distance to $S_0$ goes to infinity as one exits $\CE$. So by Corollary \ref{nearby minimizers}, we can homotope the $f_i$ to be \emph{minimizing} SRSs without leaving the neighborhood $U$.

By Freedman--Haas--Scott (see Lemma \ref{getembedded}), each $f_i$ is homotopic in $U$ to a level surface $S_i$ that is contained in the $1$-neighborhood of $f_i(S)$. Passing to a subsequence, we can assume that when $i<j$, both $f_j(S),S_j$ are contained in the unbounded component of $U \setminus S_i$, which we call $[S_i,\infty)$. Note that the homotopy from $f_j(S)$ to $S_j$ in $U$ can then be homotoped to lie in $[S_i,\infty)$. When $i<j$, the surfaces $S_i,S_j$ bound a subset $[S_i,S_j]\subset U$ homeomorphic to $S \times [0,1]$.

After discarding finitely many terms of the sequence, we can assume
\begin{enumerate}
\item the $\epsilon$-distance between $S_0,S_2$ is large relative to $g,\epsilon$,
\item $f_1(S)$ is contained in $[S_0,S_2]$, at large $\epsilon$-distance from $S_0,S_2$.
\end{enumerate}
We then set $V = [S_2,\infty)$. Given $p\in V$, pick $i$ large enough so that $p\in [S_2,S_i]$, and then pick $j>>i$ so that $f_j(S)$ has large $\epsilon$-distance from $S_i$. Apply the Interpolation Theorem to the two minimizing SRSs $f_j$ and $f_1$, giving a homotopy as follows. We first homotope $f_j$ in a bounded neighborhood of its image, so in particular within $[S_i,\infty)$. Then we homotope through minimizing SRSs until we enter a bounded neighborhood of $f_1$. This latter homotopy can a priori leave $U$, but since minimizing SRSs in $U$ have bounded $\epsilon$-diameter when they are at least $\epsilon$ away from $S_0$, we can cut off the homotopy the first time we obtain a minimizing SRS contained in $[S_0,S_2]$. (Note that such a time exists, since $f_1(S)$ lies at large $\epsilon$-distance from $\partial [S_0,S_2]$.) The result is a homotopy in $U$ through minimizing SRSs in the homotopy class of a level surface, that starts in $[S_i,\infty)$ and ends in $[S_0,S_2]$. This homotopy must pass through $p$. If not, we can homotope our homotopy to create a map $$S \times [0,1] \longrightarrow [S_2,S_i]$$ that restricts to a homeomorphism on the boundary, but has zero degree. So, there is a minimizing SRS in the homotopy class of a level surface of $U$ whose image contains our arbitrary point $p\in V$.
\end{proof}

Furthermore, we can construct minimizing simplicial ruled surfaces coarsely near every point in the convex core of a manifold $M \cong \Sigma \times \BR$ that has pinched negative curvatures and no cusps. 

\begin{sat}[Filling convex cores]\label{CC filling thm}
	Suppose that $S$ is a closed, orientable surface of genus $g$. Let $M \cong S \times \BR$ have pinched negative sectional curvatures $\kappa^-<\kappa<\kappa^+<0$ and no cusps, and let $\epsilon$ be less than half the Margulis constant, say. If $p\in CC(M),$ then there is a minimizing SRS $f : S \longrightarrow M$ in the homotopy class of a level surface such that $d_\epsilon(p,f(S)) \leq D=D(\kappa^+,g,\epsilon).$
\end{sat}

In particular, fixing $\epsilon$ to be half the Margulis constant, this implies that the injectivity radius of $M$ inside $CC(M)$ is bounded above by some constant depending on $\kappa^+,g$. In the hyperbolic case, this is a result of Thurston, see Canary \cite[Theorem 6.2]{Canarycovering}. Indeed, Theorem~\ref{CC filling thm} is a coarse PNC version of Canary's argument, in which he fills up the convex core of a hyperbolic $M$ with simplicial hyperbolic surfaces.

Below, `bounded' means in terms of $\kappa^+,g,\epsilon$.

\begin{proof}
Fix $p\in CC(M)$. Label the two ends of $M$ as the \emph{positive end} $\CE^+$ and the \emph{negative end}  $\CE^-$, in a way compatible with a fixed parametrization $M\cong S \times \BR$.  By the geometric tameness theorem, each  end $\CE$ of $M$ is either convex cocompact or degenerate. For concreteness, let's assume $\CE^-$ is convex cocompact and $\CE^+$ is degenerate. The other cases use the same arguments.

As in the first paragraph of the proof of Theorem \ref{filling theorem}, there is a sequence of minimizing SRSs $f_i : S \longrightarrow M$ in the homotopy class of a level surface whose images exits $\CE^+$. Applying Freedman--Haas-Scott (see Lemma \ref{getembedded}), for each $i$ there is an embedded level surface $S_i$ in the $1$-neighborhood of $f_i(S)$, and the surfaces $S_i$ must also exit $\CE$. As in the previous proof, we write $(S_i,\infty)$ for the component of $M\setminus S_i$ that is a neighborhood of the end $\CE^+$, and we denote the other component by $(-\infty,S_i)$.  Pick some $i>>0$ such that $p\in (-\infty,S_i)$, and then pick some $j>>i$ such that $f_j(S),S_j\subset (S_i,\infty)$. Note that the homotopy from $f_j(S),S_j$ can then be homotoped to lie in $(S_i,\infty)$.

Let $S^-$ be the boundary component of the $1$-neighborhood of $CC(M)$ that faces $\CE^-$. We construct a homotopy from $S^-$ to $S_j$, by 
\begin{enumerate}
	\item first applying Corollary \ref{kor SRSs near partial} to homotope $S^-$ to a minimizing SRS via a homotopy whose image has bounded $\epsilon$-diameter,
	\item then homotoping in a bounded neighborhood,
	\item then homotoping through minimizing SRSs to a SRS in a bounded neighborhood of $f_i$,
	\item then homotoping to $f_j$ in that bounded neighborhood,
	\item then homotoping to $S_j$ in $(S_i,\infty)$.
\end{enumerate}    Here, homotopies (2)-(4) are as given by The Interpolation Theorem.

Since $p\in CC(M)$, it lies in the submanifold $[S^-,S_i]\subset M$ bounded by  $S^-$ and $S_i$. The homotopy (1)--(5) above must pass through $p$: indeed, it can be retracted into $[S^-,S_i]$, giving a map $$S \times [0,1] \longrightarrow [S^-,S_i]$$ that restricts to a homeomorphism on the boundary, and hence has nonzero degree. Now, homotopy (5) above cannot pass through $p$, and homotopies (1), (2) and (4) all have images of bounded $\epsilon$-diameter, so it follows that $p$ lies at bounded $\epsilon$-distance from one of the minimizing SRSs in homotopy (3), as desired.
\end{proof}

\subsection{Ruled surfaces in link complements} \label {shrinkwrapping section} In \cite{Calegarishrinkwrapping}, Calegari-Gabai showed  how to \emph{shrinkwrap} a surface in  a hyperbolic $3$-manifold $M$ around a geodesic link $\Lambda$. This was the crucial technical tool in their proof, in the same paper, of the Tameness Theorem.   Shrinkwrapping was later refined and greatly simplified by Soma \cite{Somaexistence}, see also Namazi \cite [Section 3]{Namaziquasiconvexity}, who replaced the minimal surfaces used by Calegari-Gabai with simplicial ruled surfaces. In both versions, the idea is to take a surface $S \subset M$ and homotope it \emph{without passing through $\Lambda$} to be `tight', in either the sense of being minimal or being simplicial ruled. 

In this section, we develop an even simpler alternative to shrinkwrapping. Instead of working in $M$ relative to the link $\Lambda$, we just drill out $\Lambda$ and work with simplicial ruled surfaces in $M \setminus \Lambda$.  


A {\em geodesic link} in a Riemannian $3$-manifold $M$ is a  finite disjoint union of simple, closed geodesics. The {\em tube radius} of  a geodesic link $\Lambda$ is the maximal $\delta$ such that  the metric neighborhood $$\CN_\delta(\Lambda)=\{x\in M\ \vert \ d_M(x,\Lambda)\leq \delta\}$$ is a (closed) regular neighborhood of $\Lambda$. A geodesic link is \emph {$\delta$-separated} if its tube radius is at least $\delta$, and \emph{hyperbolically $\delta$-separated} if in addition, the metric of $M$ is hyperbolic when restricted to $\CN_\delta(\Lambda)$.

\begin{lem}[PNC metrics on link complements]
\label{neg-metric}For all $\delta$ there are $\kappa^-\leq \kappa^+ <0$  as follows. Suppose $\Lambda\subset M$ is a  hyperbolically $\delta$-separated geodesic link in a complete Riemannian 3-manifold $(M,\rho_0)$. Then the manifold $M\setminus\Lambda$ admits a complete Riemannian metric $\rho$ with the following properties:
\begin{enumerate}
\item the sectional curvature of $\rho$ is between $\kappa^-$ and $\kappa^+$ on $\CN_{\delta}(\Lambda)$.
\item the metrics $\rho$ and $\rho_0$ agree outside of  $\CN_{\delta}(\Lambda)$,
\item the ends of $(M\setminus \Lambda,\rho)$ corresponding to the components of $\Lambda$ have neighborhoods on which $\rho$  is hyperbolic.
\end{enumerate}
\end{lem}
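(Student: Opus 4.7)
The plan is to construct $\rho$ by modifying $\rho_0$ independently in each component tube of $\Lambda$, keeping $\rho=\rho_0$ on $M\setminus\CN_\delta(\Lambda)$. Fix a component $\gamma\subset\Lambda$ of length $\ell$; since $\rho_0$ is hyperbolic on $\CN_\delta(\gamma)$, in Fermi coordinates $(s,z,\theta)\in(0,\delta]\times(\BR/\ell\BZ)\times(\BR/2\pi\BZ)$ around $\gamma$ (with $s=d_M(\cdot,\gamma)$) it reads
$$\rho_0 = ds^2 + \cosh^2(s)\,dz^2 + \sinh^2(s)\,d\theta^2.$$
Reparametrize with $t=\delta-s$ and look for $\rho$ in warped product form
$$\rho = dt^2 + \phi(t)^2\,dz^2 + \psi(t)^2\,d\theta^2, \qquad t\in[0,\infty).$$
The three sectional curvatures in the coordinate planes are $-\phi''/\phi$, $-\psi''/\psi$, and $-\phi'\psi'/(\phi\psi)$. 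Both the original hyperbolic tube ($\phi(t)=\cosh(\delta-t)$, $\psi(t)=\sinh(\delta-t)$) and a hyperbolic rank-$2$ cusp model ($\phi(t)=Ae^{-t}$, $\psi(t)=Be^{-t}$ for arbitrary $A,B>0$) make all three equal to $-1$, so the problem reduces to interpolating between these two templates along $t$.

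Choose $0<t_1<t_2$. On $[0,t_1]$, use the hyperbolic tube formulas: since $\rho_0$ is hyperbolic throughout $\CN_\delta(\gamma)$, the resulting $\rho$ coincides with $\rho_0$ on a one-sided collar of $\partial\CN_\delta(\gamma)$ and glues $C^\infty$-smoothly to $\rho_0$ across the tube boundary $\{t=0\}$, giving (2). On $[t_2,\infty)$, use the cusp formulas; the resulting metric is genuinely hyperbolic, giving (3). On $[t_1,t_2]$, prescribe smooth positive $\phi,\psi$ matching the two templates to infinite order at the endpoints and satisfying $\phi''>0$, $\psi''>0$, $\phi'<0$, and $\psi'<0$ throughout. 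All three sectional curvatures are then strictly negative, and since the interpolating data is independent of $\ell, \gamma, A$ and $B$, the resulting pinching constants $\kappa^\pm$ depend only on $\delta$. Running this construction simultaneously over every component of $\Lambda$ yields the desired global metric.

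The main obstacle is arranging the three curvature quantities $\phi''/\phi$, $\psi''/\psi$, $\phi'\psi'/(\phi\psi)$ to be simultaneously bounded above by a constant and away from zero on $[t_1,t_2]$. A clean way is to prescribe the logarithmic derivatives $u:=\phi'/\phi$ and $v:=\psi'/\psi$ directly, as smooth monotone functions with boundary values $u(t_1)=-\tanh(\delta-t_1)$, $v(t_1)=-\coth(\delta-t_1)$ and $u(t_2)=v(t_2)=-1$, with small derivatives and appropriate higher-order vanishing at the endpoints. Recovering $\phi,\psi$ by integration gives sectional curvatures
$$-\frac{\phi''}{\phi}=-(u^2+u'),\qquad -\frac{\psi''}{\psi}=-(v^2+v'),\qquad -\frac{\phi'\psi'}{\phi\psi}=-uv,$$
all uniformly controlled in terms of $\sup|u|,\sup|v|,\sup|u'|,\sup|v'|$, and all strictly negative provided $|u'|,|v'|$ are small relative to $u^2,v^2$. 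Because $\coth(\delta-t_1)\to\infty$ as $\delta\to 0$, the constant $\kappa^-$ must be allowed to degrade as $\delta$ shrinks, consistent with the statement. Completeness of $\rho$ on $M\setminus\Lambda$ is automatic: inside each tube the cusp end pushes $\gamma$ to infinity, and outside the tubes $\rho=\rho_0$, where $\rho_0$ is already complete.
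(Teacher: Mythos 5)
Your construction is correct and is exactly the approach the paper intends: the proof of Lemma \ref{neg-metric} is left to the reader there, with the stated idea (following Kojima and Agol) of working in polar/Fermi coordinates around $\Lambda$ and stretching the metric radially to create a hyperbolic cusp, which is precisely what your doubly warped interpolation via the logarithmic derivatives $u,v$ carries out, with pinching constants depending only on $\delta$ because the matching data $-\tanh(\delta-t_1)$, $-\coth(\delta-t_1)$ and the interpolation interval can be chosen in terms of $\delta$ alone. Two cosmetic points to tidy: the tube is the quotient of a tube in $\BH^3$ by a loxodromic element with possibly nontrivial rotational holonomy, so the $(z,\theta)$-coordinates carry a twist identification $(z,\theta)\sim(z+\ell,\theta+\alpha)$ rather than being a metric product of circles (your warped-product metrics are invariant under this twist, so the construction descends unchanged, and one should take $t_1<\delta$ so the matching collar lies inside $\CN_\delta(\gamma)$); and since the curvature operator of a doubly warped product on a $3$-manifold is diagonal in the coordinate bivector basis, the three coordinate-plane curvatures do bound all sectional curvatures, which is the fact your pinching claim implicitly uses.
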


In the literature one can find numerous constructions of negatively curved metrics in the complement of a geodesic link; see for instance Kojima \cite[Theorem 1.2.1]{Kojimadeformations} (using an argument of Kerckhoff) and Agol~\cite{agol2002volume}.  The idea is to  work in polar coordinates around $\Lambda$ and stretch the metric in the radial direction to create a cusp. In particular, Kojima's computations imply the lemma above. While his input is a hyperbolic cone manifold, the computation is local, so applies in the setting above. Choosing his unspecified functions $\phi,\psi$ appropriately, one can ensure that the resulting manifold has hyperbolic cusps as in (3). Also, he only proves that his metric is negatively curved, but if we have arranged the cusps to be eventually hyperbolic, a uniform sectional curvature bound comes just from a compactness argument.

Also, the reader may wonder why we are assuming the metric on $M$ is hyperbolic in a neighborhood of $\Lambda$. While hyperbolicity is essential in the explicit metric deformations discussed above, one can probably start with a general manifold of pinched negative curvature, take a $\delta$-separated link $\Lambda \subset M$, perturb the metric in a neighborhood of $\Lambda$ to be hyperbolic using the techniques of \S 3 of Hou \cite{hou2003critical}, and then apply Lemma \ref{neg-metric}. However, for this to be useful one would need to control the tube radius of $\Lambda$ \emph{after} perturbation, which we couldn't easily do using Hou's argument. In fact, in the definition of a product region in \S \ref{sec: WPR} below, we assume that product regions are hyperbolic, and this assumption could be removed if we had a version of Lemma \ref{neg-metric} that worked without any hyperbolicity assumption in $M$.

\medskip


The following theorem plays a crucial role in this paper. 

\begin{sat}[Shrinkwrapping onto links]\label{shrinkwrapping}
Let $ M$ be a $3$-manifold with a  complete metric $\rho_0$ of pinched negative curvature and let $\Lambda\subset M$ be a hyperbolically $\delta$-separated geodesic link.  Suppose that  $S$ is a closed orientable surface of genus at least $2$ and that $$h:S\longrightarrow M\setminus \Lambda$$ is a $\pi_1$-injective map. Fix a  complete negatively curved metric $\rho$ on $M\setminus \Lambda$  such that the metrics $\rho$ and $\rho_0$ agree outside of  $\CN_{\delta}(\Lambda)$. Then $h$  is homotopic in $M \setminus \Lambda$ to a  minimizing simplicial ruled surface $$f: S \longrightarrow  (M \setminus \Lambda,\rho),$$
and if there is a simple closed curve $\gamma \subset S$  such that $h(\gamma)$ is homotopically trivial in $M$,  we can take $f(S) \cap \CN_{\delta}(\Lambda) \neq \emptyset$.
\end{sat}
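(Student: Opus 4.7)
The plan is to obtain the minimizing simplicial ruled surface directly from Lemma~\ref{minimizers} applied in the complete, pinched negatively curved manifold $(M\setminus\Lambda,\rho)$. Fix any triangulation $\mathcal{T}$ of $S$ and a positive weight $\omega\equiv 1$. Since $g\ge 2$, $\pi_1S$ is non-elementary, and the $\pi_1$-injectivity of $h$ makes $h_\ast(\pi_1 S)$ a non-elementary subgroup of $\pi_1(M\setminus\Lambda)$, supplying the compactness input (Corollary~\ref{distancetogeodesic2}) required by Lemma~\ref{minimizers}. That lemma then yields a collapse $c:(S,\mathcal{T})\to(S',\mathcal{T}')$ and a simplicial ruled surface $f:(S',\mathcal{T}')\to(M\setminus\Lambda,\rho)$ with $f\circ c$ homotopic to $h$ and minimizing for $(\mathcal{T},\omega)$, establishing the first assertion.

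For the strengthening involving $\gamma$, I would run the degenerating-weights construction underlying Corollary~\ref{construction} and Lemma~\ref{degenerating minimizers}. Choose $\mathcal{T}$ so that $\gamma$ is realized as a simple cycle in $\mathcal{T}^1$, and for each $t\in(0,1]$ define $\omega_t$ by $\omega_t(e)=1$ on edges of $\gamma$ and $\omega_t(e)=t$ otherwise. Lemma~\ref{minimizers} produces minimizing CSRSs $f_t:S\to (M\setminus\Lambda,\rho)$ homotopic to $h$. Because $\gamma$ is essential on $S$ (otherwise the hypothesis is vacuous) and $h$ is $\pi_1$-injective, $h(\gamma)$ is non-trivial in $\pi_1(M\setminus\Lambda)$, so Lemma~\ref{degenerating minimizers} leaves only two possibilities as $t\to 0$: either the restrictions $f_t|_\gamma$ converge to a closed geodesic $\gamma^\ast$ of $(M\setminus\Lambda,\rho)$ representing $h(\gamma)$, or they exit a cusp of $(M\setminus\Lambda,\rho)$ with length tending to zero.

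The crux is then to verify that both alternatives force $f_t(S)\cap\CN_\delta(\Lambda)\ne\emptyset$ for all sufficiently small $t$. The parabolic case is immediate because the cusps of $(M\setminus\Lambda,\rho)$ are contained in $\CN_\delta(\Lambda)$: by Lemma~\ref{neg-metric}, $\rho=\rho_0$ outside $\CN_\delta(\Lambda)$, so no parabolic end of $\rho$ can form outside this region, and once $f_t(\gamma)$ descends far enough down the cusp its image lies in $\CN_\delta(\Lambda)$. In the hyperbolic case I claim $\gamma^\ast$ must enter $\operatorname{int}\CN_\delta(\Lambda)$: otherwise $\gamma^\ast$ would lie entirely in $M\setminus\operatorname{int}\CN_\delta(\Lambda)$, where $\rho$ and $\rho_0$ agree (by continuity extending their open-set agreement), making $\gamma^\ast$ a closed $\rho_0$-geodesic in $(M,\rho_0)$ representing $h(\gamma)\in\pi_1 M$; this contradicts the hypothesis $h(\gamma)=1$ in $\pi_1 M$, since a complete pinched negatively curved manifold admits no contractible closed geodesic. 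Hence $\gamma^\ast$ meets the open set $\operatorname{int}\CN_\delta(\Lambda)$, and the $C^0$-convergence $f_t|_\gamma\to\gamma^\ast$ gives $f_t(\gamma)\cap\CN_\delta(\Lambda)\ne\emptyset$ for all small enough $t$.

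The step I expect to require the most care is the hyperbolic-case argument above, specifically the passage from ``$\gamma^\ast\subset M\setminus\operatorname{int}\CN_\delta(\Lambda)$'' to ``$\gamma^\ast$ is a $\rho_0$-geodesic in $M$''. One needs the modified metric $\rho$ to agree with $\rho_0$ to sufficient order on the closed complement of $\operatorname{int}\CN_\delta(\Lambda)$ so that a $\rho$-geodesic supported there is automatically a $\rho_0$-geodesic in $M$; this is where smoothness of the radial interpolation in Lemma~\ref{neg-metric} plays a role. Beyond this point everything else reduces to standard applications of the minimizing-CSRS machinery developed in \S\ref{homotopies}, namely the existence and uniqueness of minimizers (Lemma~\ref{minimizers}, Lemma~\ref{minimizer2}) and the continuity/degeneration result Lemma~\ref{degenerating minimizers}.
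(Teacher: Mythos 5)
Your proposal is correct and follows essentially the same route as the paper: existence comes directly from Lemma \ref{minimizers}, and the second assertion from the degenerating-weights family underlying Corollary \ref{construction}, with both the geodesic and the parabolic alternatives forced to meet $\CN_{\delta}(\Lambda)$ because $h(\gamma)$ is trivial in $\pi_1 M$. One small nit: your claim that all cusps of $(M\setminus\Lambda,\rho)$ lie in $\CN_{\delta}(\Lambda)$ fails if $(M,\rho_0)$ itself has cusps, but the cusp relevant to $\gamma$ must be one created by drilling $\Lambda$, since otherwise $h(\gamma)$ would be parabolic, hence nontrivial, in $\pi_1 M$ --- the same observation you already use in the hyperbolic case.
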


 The above is our replacement for the shrinkwrapping theorems of Calegari--Gabai \cite{Calegarishrinkwrapping} and Soma~\cite{Somaexistence}. The version we state above is advantageous for us since it fits in better with our Interpolation Theorem, its hypotheses are weaker (Calegari--Gabai and Soma need a stronger incompressibility condition on $h$), and its proof is short given \S \ref{homotopies}.
 
 \begin {proof}
In general, the existence of such an $f$ is immediate from, say, Lemma \ref{minimizers}. If $h(\gamma)$ is homotopically trivial in $ M $, it is either homotopic in $M \setminus \Lambda$ to a geodesic  that intersects $N_\delta(\Lambda)$  nontrivially, or it is parabolic and can be homotoped completely into $N_\delta(\Lambda)$.  So, for $t\approx 0$, the image of the surface $f_t$ from Corollary \ref{construction} intersects $N_\delta(\Lambda)$. 
\end {proof}
%
%

Perhaps the only drawback of our approach is that we produce simplicial ruled surfaces in $M \setminus \Lambda$, while Calegari--Gabai and Soma produce surfaces in $M$.  In other words, the Bounded Diameter Lemma applies to the surfaces produced in Theorem \ref{shrinkwrapping}, but the metric involved is $\rho,$ not the hyperbolic metric.  However, all of our links in this paper will have bounded length and $\rho=\rho_0$  outside $N_{\delta}(\Lambda)$. So, we can bound the  relative diameter in $M$ with the following trivial  consequence of Theorem \ref{shrinkwrapping} and the Bounded Diameter Lemma\footnote{Note that for the proof of Corollary \ref{shrinkwrapping2}, the complete metric on $M\setminus \Lambda$ will have sectional curvature bounded above by the maximum of $\kappa^+$ and the upper sectional curvature bound in Lemma \ref{neg-metric}, which depends only on $\delta$.}.

\begin {kor}\label {shrinkwrapping2}
Using the notation of Theorem \ref{shrinkwrapping}, if $(M,\rho_0)$ has an upper sectional curvature bound of some $\kappa^+$, the metric $\rho$ is selected using Lemma \ref{neg-metric}, each component of the  link $\Lambda$ has length at most $L$ and we fix $\epsilon>0$, then the $\epsilon$-diameter of $f(S)\subset M$ is at most some constant $D=D(\kappa^+,\epsilon,\delta,g,L)$.  In particular, if there is a simple closed curve $\gamma \subset S $  such that $h(\gamma)$  is null-homotopic in $M$, we can assume the entire image of $f(S)$ lies at $\epsilon$-distance at most $D+\delta$ from $\Lambda \subset M$. \end {kor}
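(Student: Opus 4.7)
The plan is to invoke Theorem~\ref{shrinkwrapping} to produce the minimizing simplicial ruled surface $f$, apply the relative Bounded Diameter Lemma to bound the $\epsilon$-diameter of $f(S)$ in $(M \setminus \Lambda, \rho)$, and finally translate this bound back to the original metric $\rho_0$ on $M$, exploiting that $\rho$ and $\rho_0$ agree outside $\CN_\delta(\Lambda)$ while the tube itself has $\rho_0$-diameter controlled by $\delta$, $L$ and $\kappa^+$.

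First I would apply Theorem~\ref{shrinkwrapping} to produce a minimizing simplicial ruled surface $f : S \longrightarrow (M \setminus \Lambda, \rho)$ homotopic to $h$ in $M \setminus \Lambda$. Since $h$ is $\pi_1$-injective in $M \setminus \Lambda$, so is $f$, hence $f$ is NAT in $(M \setminus \Lambda, \rho)$. The upper sectional curvature bound $\kappa^+_{\mathrm{new}}$ of $\rho$ depends only on $\kappa^+$ and $\delta$ by Proposition~\ref{neg-metric} and the footnote, so the Bounded Diameter Lemma rel $(M \setminus \Lambda)^{\rho}_{\leq \epsilon}$ yields, for any $p, q \in f(S)$, a path $\gamma \subset M \setminus \Lambda$ from $p$ to $q$ with
$$ \length_\rho \bigl( \gamma \cap (M \setminus \Lambda)^{\rho}_{>\epsilon} \bigr) \ \leq \ D_1 \ := \ \frac{8(g-1)}{|\kappa^+_{\mathrm{new}}| \, \epsilon}. $$

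Next I would translate this $\rho$-bound into the original metric. On $M \setminus \CN_\delta(\Lambda)$ the two metrics coincide, so the contribution of $\gamma$ there to $\length_\epsilon^{\rho_0}$ in $M$ is at most $D_1$, once one checks the easy observation that outside $\CN_\delta(\Lambda)$ a point with $\inj_{\rho_0} > \epsilon$ also lies in $(M \setminus \Lambda)^{\rho}_{>\epsilon}$: indeed $\rho \geq \rho_0$ pointwise, and the only way the shortest $(M \setminus \Lambda)$-essential $\rho$-loop at such a point could be shorter than $2\epsilon$ is if it were a meridian of $\Lambda$, but such a meridian has $\rho$-length bounded below in terms of $L$ and $\delta$ when the basepoint lies outside the tube. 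Each excursion of $\gamma$ into $\CN_\delta(\Lambda)$ contributes at most the $\rho_0$-diameter of a single tube component, which is a constant $D_2 = D_2(\delta, L, \kappa^+)$ since each component of $\Lambda$ has $\rho_0$-length at most $L$ and the tube has radius $\delta$. The number of such excursions is itself controlled by $D_1/D_2$, so summing yields a total bound $D = D(\kappa^+, \epsilon, \delta, g, L)$ on $d_\epsilon^{\rho_0}(p, q)$.

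For the second assertion, when some simple closed curve $\gamma_0 \subset S$ has $h(\gamma_0)$ null-homotopic in $M$, the second clause of Theorem~\ref{shrinkwrapping} guarantees $f(S) \cap \CN_\delta(\Lambda) \neq \emptyset$. Picking any $p$ in this intersection, we have $d_\epsilon^{\rho_0}(p, \Lambda) \leq \delta$, so the triangle inequality combined with the $\epsilon$-diameter bound just obtained gives $d_\epsilon^{\rho_0}(q, \Lambda) \leq D + \delta$ for every $q \in f(S)$. The hardest step in this program is the injectivity-radius comparison between the two metrics near $\partial \CN_\delta(\Lambda)$: meridians of $\Lambda$ become parabolic in $\rho$ but remain hyperbolic in $\rho_0$, so the two $\epsilon$-thin parts do not coincide near the tube wall, and it is precisely in controlling this transition region (perhaps by passing to a slightly smaller sub-tube $\CN_{\delta'} \subset \CN_\delta$) that the constants $L$ and $\delta$ enter the final bound.
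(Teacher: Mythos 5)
Your overall route is the paper's own: the paper offers no written proof of this corollary, asserting it as a ``trivial consequence'' of Theorem~\ref{shrinkwrapping} and the Bounded Diameter Lemma (with the footnoted curvature bound for $\rho$), and your plan --- run the relative Bounded Diameter Lemma for the NAT minimizing surface in $(M\setminus\Lambda,\rho)$, then transfer the bound to $\rho_0$ using that the two metrics agree outside $\CN_\delta(\Lambda)$ and that each tube component has $\rho_0$-size controlled by $\delta$ and $L$ --- is exactly that intended argument. However, two of your intermediate steps fail as written. First, the excursion count: you bound each excursion into $\CN_\delta(\Lambda)$ by the tube's $\rho_0$-diameter $D_2$ and then assert that the number of excursions is ``controlled by $D_1/D_2$.'' There is no such control: between excursions the path may stay entirely inside the $\rho$-thin part of $(M\setminus\Lambda,\rho)$ (for instance wandering in the cusp region just outside the tube wall), so it can re-enter the tube arbitrarily often while contributing nothing to the relative length $D_1$. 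The repair is not to count excursions at all but to consolidate: for each component of $\Lambda$, replace the subpath from the first entry into (a bounded enlargement of) that tube component to the last exit by a path inside it; since the component is connected and of $\rho_0$-diameter bounded in terms of $\delta$, $L$, $\kappa^+$ (and $\epsilon$ for the enlargement), the cost is paid once per component rather than per excursion.

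Second, the thin-part comparison outside the tube is not correct as stated. The inequality $\rho\geq\rho_0$ is not part of Lemma~\ref{neg-metric}, and the meridian based just outside the tube wall has $\rho$-length comparable to the meridian length at radius $\delta$ (roughly $2\pi\sinh\delta$ in the hyperbolic case), which need not exceed $2\epsilon$; so a point slightly outside $\CN_\delta(\Lambda)$ can perfectly well be $\rho$-thin while being $\rho_0$-thick in $M$, and your dichotomy does not rule this out. The correct statement, and the one you need, is weaker but suffices: if a point is $\rho$-thin in $M\setminus\Lambda$ yet $\rho_0$-thick in $M$, the witnessing short $\rho$-loop either meets $\CN_\delta(\Lambda)$ (forcing the point to lie within $\rho_0$-distance $\epsilon$ of the tube, since the metrics agree outside it) or is null-homotopic in $M$, in which case coning it to a point shows $\Lambda$ passes within bounded distance of the basepoint. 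Hence the discrepancy set between the two thin parts lies in a bounded-$\rho_0$-diameter enlargement of the tube components and is absorbed into the consolidation above, after which the surviving pieces of the Bounded Diameter Lemma path lie where the metrics and the relevant thick parts agree, giving the bound $D$; the final assertion about $d_\epsilon(f(S),\Lambda)\leq D+\delta$ then follows exactly as you say from the second clause of Theorem~\ref{shrinkwrapping}.
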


\section{Wide product regions}

\label{sec: WPR}

As the reader might recall, our main theorem is that a closed hyperbolic $3$-manifold with rank bounded above and injectivity radius bounded below decomposes into a union of bounded diameter `building blocks' and wide product regions. In this section, we define \it product regions \rm precisely, and detail their useful properties. Many of the results here are technical lemmas that will be used later.

For simplicity, we will assume everywhere below that \emph {$M$ is a complete Riemannian $3$-manifold with  pinched negative curvature}
$$\kappa^- \leq \kappa(M) \leq \kappa^+ < 0$$
\emph{and hyperbolic cusps}. For the most part, we only care about the case where $M$ is hyperbolic, and when we define product region below, we will actually specify that the metric of $M$ is hyperbolic within a product region. However, it is necessary for \S \ref{Double compression bodies} to allow there to be some variable negative curvature somewhere in $M$, even if  it occurs far from the part of $M$ with which we are concerned.   In particular,  if we phrase all of the following for manifolds with pinched negative curvature, then we can start with a hyperbolic $M$,  drill out some link $\Lambda \subset M$, and then work without comment in the manifold $M\setminus \Lambda$, equipped with the  complete metric of variable  negative curvature  described in Lemma \ref{neg-metric}.

We will fix below some $\epsilon \in (0,1) $ smaller than the Margulis constant and some genus bound $g \in \BN $, and we will often write things like $$L = L ( \kappa^+,\epsilon, g)$$ to indicate that $L $ is a constant depending on $\epsilon, g$ and $\kappa^+.$ Unless otherwise specified, when we talk about NAT surfaces, we mean $\epsilon$-NAT surfaces for our fixed $\epsilon$, i.e. maps $f: S\longrightarrow M$ where essential loops of length less than $\epsilon$ in $S$ map essentially into $M$. See Definition \ref{NATdef}.

\medskip

Intuitively, a `product region' is a subset of $M$ homeomorphic to $\Sigma_g \times [0,1]$  that satisfies a Bounded Diameter Lemma  in the surface direction.  We give two related definitions in this section.  Generally, we prefer the first one because it does not require fixing an arbitrary constant, but the second is needed is order to show that product regions are preserved under bilipschitz maps (Proposition \ref{bilipschitzpreservation}). We will see in Proposition \ref{Bsurfaces} that the two definitions characterize the same objects, up to some boundary error.

\begin{defi}[Product region]\label {prdef}
	A \emph{product region} of genus $g$ in $M$  is the image $U \subset M$  of a proper embedding $$\Sigma_g \times I \longrightarrow M, \ \ I=[0,1],[0,\infty), \text{or } (-\infty,\infty),$$  such that  for some regular neighborhood $\CN(U) \supset U$, we have:
\begin{enumerate}
\item  every point $p\in U$  is in the image of a NAT simplicial ruled surface $\Sigma_g \longrightarrow \CN(U)$  that is a homotopy equivalence,
\item each component $ S \subset \partial U$ lies in the $1$-neighborhood of  another such NAT simplicial ruled surface,
\item the sectional curvature $\kappa$ of $M$  satisfies $\kappa\equiv -1$ on $\CN(U).$
\end{enumerate}
\end{defi}

 For the second definition, if $S$ is a closed surface with some path pseudo-metric, recall from Section \ref{bdlsec} that the \emph{injectivity radius} $\inj_p(S)$ is half the length of the shortest essential loop on $S$ through $p$, and the thick part $S_{\geq \epsilon}$ and thin part $S_{<\epsilon}$ are defined as usual to be the sets of points where the injectivity radius is at least $\epsilon$ and less than $\epsilon$. We say that a rectifiable map $f : S \longrightarrow M$ has \emph {intrinsic $\epsilon$-diameter at most $B$} if when $S$ is equipped with the pullback path pseudo-metric, for every two points $x,y\in S$, there is a path $\gamma$ on $S$ from $x$ to $y$  such that
$$\length (\gamma \cap S_{\geq \epsilon}) \leq B.$$
In other words, these are maps $f$ that satisfy the conclusion of the Bounded Diameter Lemma. Just as in Definition \ref{NATdef} we say that $f$ is ($\epsilon$-)\emph{NAT} if any essential closed curve on $S$ with length less than $\epsilon$ has essential image in $M$.

\begin{defi}[$B$-product region]\label {bprdef}
A  $B$-{\em product region} in $M$ of genus $g$ is the image $U \subset M$  of a proper embedding $$\Sigma_g \times I \longrightarrow M, \ \ I=[0,1],[0,\infty), \text{or } (-\infty,\infty),$$  such that  for some regular neighborhood $\CN(U) \supset U$, we have:
\begin{enumerate} 
\item every $p \in U $ is in the image of a homotopy equivalence $f: \Sigma_g \longrightarrow \CN(U)$  that has intrinsic $\epsilon$-diameter at most $B$,
\item for each component $S \subset \partial U$, we have $\diam_\epsilon(S) \leq B+ 2$,
\item the sectional curvature $\kappa$ of $M$ satisfies $\kappa\equiv-1$ on $\CN(U)$. \end{enumerate}
\end{defi}

We saw in \S \ref{bdlsec} that any genus $g$ NAT simplicial ruled surface in a hyperbolic $M$ has intrinsic $\epsilon$-diameter at most
\begin{equation}
B_0:=\frac{32(2g-2)}{ \epsilon}.
\label{b0}
\end{equation}
So, any product region as in Definition \ref{prdef} is a $B_0$-product region. In light of this, the reader may wonder why we do not work just with $B$-product regions in this paper. Indeed, it would be fine to do so, but there are enough constants in this paper, and morally, there should be no constant in the definition of a product region.

Note that condition (2) in Definition \ref{bprdef} is just a bound on the $\epsilon$-diameter of $S$ as a subset of $M$, rather than a true analogue of condition (2) in Definition \ref{prdef}.  The difference is not important and is just because it's simpler to state. The only reason our first definition requires the boundary components to be close to SRSs is that then we get an $\epsilon$-diameter bound for free without having to specify some arbitrary constant. Also, note that the $2$ in Definition \ref{bprdef} is there because there is a $1$ in Definition \ref{prdef} and $1+1=2$.

\medskip

We expect it is not wrong to think of a ($B$-)product region as a hyperbolic region homeomorphic to $\Sigma_g \times I$  that is foliated by surfaces with bounded intrinsic $\epsilon$-diameter. This stronger  characterization would be a pain to verify, however, and is not needed here. We do refer the reader to White \cite{white2013spectral}, though, who essentially shows that our definition is equivalent  to this stronger one inside $\epsilon$-thick manifolds.

 There are still other definitions of product region in the literature. For instance, Namazi \cite {Namaziquasiconvexity} defines product regions as  subsets of $M$  that are bilipschitz to certain subsets of doubly degenerate hyperbolic $3$-manifolds homeomorphic to $S \times \BR$.   Up to some boundary error, his product regions are also product regions as in (either) of our definitions---one can deduce this from Proposition \ref{Bsurfaces} below. The converse should also be true in some sense, although a proof of this might be rather complicated. 

 \medskip

Here is our main source of product regions.

\begin{prop}[Constructing PRs]\label {constructing prs}
Let $M $ be a complete hyperbolic $3$-manifold with no cusps that has a degenerate end $\CE $. Then there is a (closed) neighborhood of $\CE $ that is a product region  homeomorphic to $\Sigma \times [0,\infty)$, for some  closed surface $\Sigma$.
\end{prop}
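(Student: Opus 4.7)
The plan is to stack trivial interval bundles between embedded incompressible level surfaces extracted from the sequence of simplicial ruled surfaces that defines degeneracy of $\CE$, and then cover the resulting topological product by a continuous family of NAT simplicial ruled surfaces using the Interpolation Theorem. I would first invoke the Tameness Theorem to obtain a standard compact core $C \subset M$ with a component $\Sigma \subset \partial C$ facing $\CE$. Since $\CE$ is degenerate and $M$ has no cusps, the classical fact (Canary, after Bonahon) that compressible ends of cusp-free hyperbolic $3$-manifolds are convex cocompact forces $\Sigma$ to be incompressible in $M$.

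\textbf{Embedded approximations and assembly.}
By the definition of a degenerate end, there is a sequence of simplicial ruled surfaces $f_i : \Sigma \to M \setminus C$, each homotopic to $\Sigma$ in $M \setminus C$, with $d(C, f_i(\Sigma)) \to \infty$. Since $\Sigma$ is incompressible in $M$ each $f_i$ is $\pi_1$-injective, and so NAT. I would apply Theorem~\ref{getembedded} to replace each $f_i$ by a nearby embedded surface $S_i \subset M$, and then pass to a subsequence so that the $S_i$ are pairwise disjoint and ordered as one exits $\CE$. Waldhausen's cobordism theorem then furnishes a trivial interval bundle $V_i \cong \Sigma \times [0,1]$ with $\partial V_i = S_i \sqcup S_{i+1}$ for each $i$, and $U := S_0 \cup \bigcup_i V_i$ is a closed neighborhood of $\CE$ homeomorphic to $\Sigma \times [0,\infty)$, with $\partial U = S_0$ lying within distance $1$ of $f_0(\Sigma)$.

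\textbf{Sweep-out and main obstacle.}
Applying Corollary~\ref{usefulhomotopy} to interpolate between consecutive $f_i$'s through bounded-$\epsilon$-diameter families of $\pi_1$-injective simplicial ruled surfaces, I would concatenate to produce a continuous family $(f_t)_{t \in [0,\infty)}$ of NAT simplicial ruled surfaces homotopic to $\Sigma$. Conditions (2) and (3) of Definition~\ref{prdef} are then immediate: (3) follows from the hyperbolicity of $M$, and (2) from the $1$-closeness of $\partial U = S_0$ to $f_0(\Sigma)$. The main obstacle is verifying condition~(1), that every $p \in U$ lies in the image of a NAT simplicial ruled surface in $\CN(U)$ that is a homotopy equivalence. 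I would establish this by a degree argument applied to the continuous map $F_i : \Sigma \times [i, i+1] \to M$ defined by $F_i(x,t) = f_t(x)$: after a small isotopy aligning $f_i, f_{i+1}$ with $S_i, S_{i+1}$, the relative map $(\Sigma \times [i, i+1], \partial) \to (M, S_i \sqcup S_{i+1})$ represents the same relative class as the inclusion of the trivial bundle $V_i \hookrightarrow M$, so it has mapping degree $\pm 1$ onto $V_i$. Consequently every $p \in V_i$ has a preimage, placing $p$ on $f_t(\Sigma)$ for some $t$, and the bounded $\epsilon$-diameter from the Interpolation Theorem guarantees each $f_t(\Sigma)$ lies in an appropriately chosen regular neighborhood $\CN(U)$.
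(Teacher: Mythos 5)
Your argument breaks at the very first step: you assert that the surface $\Sigma\subset\partial C$ facing the degenerate end must be incompressible in $M$, citing a ``classical fact'' that compressible ends of cusp-free hyperbolic $3$-manifolds are convex cocompact. No such fact exists: degenerate ends with compressible end-neighborhoods do occur (for instance, hyperbolic structures on open handlebodies arising as algebraic limits of Schottky groups have a single, compressible, degenerate end), and the proposition is stated for arbitrary degenerate ends. Note also that in this paper's definition of a degenerate end the surfaces $f_i$ are only required to be homotopic to $S$ \emph{within $M\setminus C$}; nothing makes them $\pi_1$-injective in $M$. Since every subsequent step of your proof uses incompressibility in $M$ in an essential way --- the $\pi_1$-injectivity (hence NAT-ness) of the $f_i$, Theorem~\ref{getembedded} (Freedman--Hass--Scott), Waldhausen's cobordism theorem, the Interpolation Theorem, and Corollary~\ref{usefulhomotopy} all carry $\pi_1$-injectivity as a hypothesis --- the argument collapses precisely in the compressible case, which is the hard case.

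For comparison, the paper's proof sidesteps this by quoting Canary's Filling Theorem, which is valid for compressible degenerate ends (its proof in that case uses a branched-covering trick): it directly supplies nested neighborhoods $U\subset U'$ of $\CE$ such that every point of $U$ lies on a NAT simplicial ruled surface mapped into $U'$ as a homotopy equivalence, i.e.\ condition (1) of Definition~\ref{prdef}. After that, one only needs a single embedded level surface deep inside $U$ --- obtained from one such simplicial ruled surface via the Bounded Diameter Lemma together with Theorem~\ref{getembedded}, applied inside $U$ where the surface \emph{is} $\pi_1$-injective since it is a homotopy equivalence onto the end neighborhood --- to cut off the desired closed product-region neighborhood of $\CE$. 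If you want to avoid the Filling Theorem and run your interpolation-plus-degree argument (which is in the spirit of arguments used elsewhere in the paper, e.g.\ Proposition~\ref{Bsurfaces}), you would first have to restore incompressibility, e.g.\ by drilling a link as in Theorem~\ref{links}, and you would also need to replace the given $f_i$ by useful or minimizing surfaces before invoking Corollary~\ref{usefulhomotopy} and to justify NAT-ness of the interpolating family; as written, these points are unaddressed.
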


 Conversely, note that if a product region $U$ in a hyperbolic manifold $M$ is homeomorphic to $\Sigma \times [0,\infty)$, then $U$ is a neighborhood of a degenerate end, by definition of `degenerate', see \S \ref{sec:ends}.

\begin{proof}
Canary's Filling Theorem \cite {Canarycovering} (or, see Theorem \ref{filling theorem}) implies that there is a pair of neighborhoods $U \subset U' $ of $\CE $, both homeomorphic to $\Sigma \times (0,\infty) $, such that every point of $U $ lies in the image of a NAT simplicial ruled surface $\Sigma \to U'$ that is a homotopy equivalence.

The end $U$ has  infinite $\epsilon $-diameter, by Fact \ref{fact: dist proper}.  So pick $p\in U$ such that $d_\epsilon(p,M \setminus U)$ is larger than one plus the constant in the Bounded Diameter Lemma, and pick a NAT simplicial ruled surface $\sigma : \Sigma \longrightarrow U'$ through $p$.   Then the $1$-neighborhood $\CN_1(\sigma(\Sigma))$ of the image of $\sigma$ lies in $U$. Since $\sigma $ is incompressible in $U $ and homotopic within $U $ to an embedding, Lemma \ref {getembedded} implies that $\sigma $ is homotopic to an embedded surface $S $ whose image is contained in $\CN_1(\sigma(\Sigma))$. By Waldhausen's Cobordism Theorem, $S$ bounds a (closed) neighborhood of $\CE$   that is homeomorphic to $\Sigma \times [0,\infty)$ and is contained in $U$. It is immediate from the construction that this neighborhood is a product region.\end{proof}

%

\subsection{Width and Topological Product Regions.} \rm A {\em level surface} $S$ in a product region $U$ is an embedded surface such that the inclusion $S\hookrightarrow U$ is a homotopy equivalence. We say that $U$ is a \it subproduct region \rm of $V$ if both are product regions and $U \hookrightarrow  V$ is a homotopy equivalence. By Waldhausen's Cobordism Theorem, if $U \subset int(V)$ is a subproduct region then the pair $(V,U)$ is homeomorphic to the pair $(\Sigma \times [-2,2],\Sigma \times [-1,1])$, for some surface $\Sigma$.

  The {\em width} of a product region is the shortest length of a path in it that connects one of its ends or boundary components to the other.  We also define the \it $\epsilon$-width \rm of a  product region to be the shortest $\epsilon$-length of a path from one of its ends or boundary components to the other, see Section \ref {sec:shs}. (So if $U \cong \Sigma_g\times [0,\infty)$, the  $\epsilon$-width of $U$ is the smallest $\epsilon $-length of a path that starts on $\partial U$ and exits the end of $U$.) Note that the $\epsilon$-width of a noncompact product region may be finite if its ends have rank one cusps. However, mostly we will consider ends without rank one cusps in this paper.

\begin{fact}[Width vs. Diameter]\label {width diameter}  Suppose $U \subset M $ is a  (compact) product region of genus $g$ that has $\epsilon$-width $L < \infty$. Then 
$$L/2 \leq \diam_\epsilon(U) \leq L+D, \text{ where } D = 4 \cdot \frac{32(2g-2)}{| \kappa^+ | \epsilon} + 2.$$
\end{fact}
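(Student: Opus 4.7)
The proof will split into two independent arguments, one for each inequality, and the upper bound is the one that requires a little work.

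For the lower bound $L/2 \le \diam_\epsilon(U)$, the plan is to use the triangle inequality for $\epsilon$-distance together with the definition of $\epsilon$-width. Write $\partial U = S_0 \cup S_1$. Any path from $S_0$ to $S_1$ passing through an arbitrary point $p \in U$ has $\epsilon$-length at least $L$, so $d_\epsilon(p,S_0)+d_\epsilon(p,S_1)\ge L$, forcing $d_\epsilon(p,S_i)\ge L/2$ for some $i\in\{0,1\}$. Then any $q\in S_i\subset U$ witnesses $\diam_\epsilon(U)\ge d_\epsilon(p,q)\ge L/2$. This step is immediate.

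For the upper bound, given $p,q\in U$ the plan is to connect each of them to a near-geodesic path between the two boundary components via NAT simplicial ruled surfaces. By Definition~\ref{prdef}(1), there exist NAT simplicial ruled surfaces $\sigma_p,\sigma_q:\Sigma_g\to\CN(U)$, each a homotopy equivalence, with $p\in\sigma_p(\Sigma_g)$ and $q\in\sigma_q(\Sigma_g)$. Since they are NAT, the Bounded Diameter Lemma rel $M_{\le\epsilon}$ gives that any two image points lie at ambient $\epsilon$-distance at most $B_0:=8(g-1)/(|\kappa^+|\epsilon)$. Now fix $\delta>0$ and choose a path $\alpha\subset U$ from $S_0$ to $S_1$ with $\length_\epsilon(\alpha)\le L+\delta$. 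The next step will be to show that $\alpha$ meets both $\sigma_p(\Sigma_g)$ and $\sigma_q(\Sigma_g)$; assuming this, pick points $p'\in\alpha\cap\sigma_p(\Sigma_g)$ and $q'\in\alpha\cap\sigma_q(\Sigma_g)$. Then $d_\epsilon(p,p')\le B_0$, $d_\epsilon(q,q')\le B_0$, and $d_\epsilon(p',q')\le L+\delta$ (walking along $\alpha$), so by the triangle inequality $d_\epsilon(p,q)\le L+2B_0+\delta$. Letting $\delta\to 0$ yields $\diam_\epsilon(U)\le L+2B_0\le L+D$.

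The only delicate step is the intersection claim: that $\alpha$ necessarily meets $\sigma_p(\Sigma_g)$ (and likewise $\sigma_q(\Sigma_g)$). I would argue this via a transverse intersection/mapping degree computation. Because $\sigma_p$ is a homotopy equivalence into $\CN(U)$ and $\CN(U)$ is a regular neighborhood of the product $U\cong\Sigma_g\times I$, the map $\sigma_p$ is properly homotopic within $\CN(U)$ to a level surface $\Sigma_g\times\{t\}$. The algebraic intersection number of $\sigma_p$ with a path joining opposite boundary components of $U$ therefore equals that of a level surface, namely $\pm 1$. Since this algebraic intersection is nonzero, the geometric intersection $\alpha\cap\sigma_p(\Sigma_g)$ is nonempty, and because $\alpha\subset U$ the intersection point automatically lies in $U$.

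The main obstacle is precisely this intersection step: although the homology/degree calculation is standard once one fixes the product structure on $\CN(U)$, it requires a little care because $\sigma_p$ is only immersed (not embedded) and lives in $\CN(U)$ rather than in $U$ itself. Everything else is bookkeeping via the triangle inequality and the Bounded Diameter Lemma. Note that the argument actually yields the sharper bound $L+2B_0$, so the constant $D=4B_0+2$ in the statement leaves some slack.
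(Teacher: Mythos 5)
Your overall strategy (Bounded Diameter Lemma plus a near-width-realizing path plus an intersection argument) matches the paper's, but both halves of your write-up have a genuine gap, and both gaps have the same source: $d_\epsilon$ and $\diam_\epsilon(U)$ are \emph{extrinsic} quantities, infima over paths in $M$, while the $\epsilon$-width is an infimum over paths in $U$. For the lower bound, the inequality $d_\epsilon(p,S_0)+d_\epsilon(p,S_1)\ge L$ is false for an arbitrary $p\in U$: a path in $M$ nearly realizing $d_\epsilon(p,S_i)$ may leave $U$, and truncating it at its first crossing of $\partial U$ may land on the \emph{wrong} boundary component, so your concatenation need not be a path in $U$ joining $S_0$ to $S_1$. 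In the configuration the paper mentions right after the Fact --- the two components of $\partial U$ very close to each other in $M\setminus int(U)$ --- a point $p$ near $S_0$ has both $d_\epsilon(p,S_0)$ and $d_\epsilon(p,S_1)$ tiny, so the dichotomy ``$d_\epsilon(p,S_i)\ge L/2$ for some $i$'' fails (your argument is valid for the \emph{intrinsic} distance in $U$, which is why the paper notes one would even get $L$ intrinsically). The fix, which is what the paper does, is to apply the argument only to the $\epsilon$-midpoint $m$ of a path $\gamma$ nearly realizing the width: an ambient path from $m$ to $\partial U$ of $\epsilon$-length $<L/2$, truncated at its first crossing of $\partial U$, stays in $U$, and concatenating it with whichever half of $\gamma$ reaches the other boundary component gives a path in $U$ between $S_0$ and $S_1$ of $\epsilon$-length $<L$, a contradiction regardless of which component the truncated path hits.

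For the upper bound, the intersection claim you flag as delicate is in fact false as stated, and your degree argument does not repair it: $\alpha$ is not a properly embedded arc in $\CN(U)$ (its endpoints lie on $\partial U$, in the interior of $\CN(U)$), so its algebraic intersection number with a homotopy class of maps of closed surfaces into $\CN(U)$ is not well defined --- a homotopy of $\sigma_p$ may sweep across $\partial\alpha$. Concretely, Definition \ref{prdef}(1) only requires $\sigma_p$ to map into $\CN(U)$, not into $U$, so a level surface placed in $\CN(U)\setminus U$ with a thin finger attached so as to pass through $p$ is a homotopy equivalence through $p$ that is disjoint from $\alpha$. This is exactly why the paper proves only the weaker statement $f_i(S)\cap(\partial U\cup\gamma)\neq\emptyset$: either the surface leaves $int(U)$, in which case it meets $\partial U$, or it lies in $int(U)$, in which case one can extend $\gamma$ to a proper arc of $\CN(U)$ by adding arcs in the collars and the homological pairing argument goes through. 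The price is that the path from $p$ to $q$ must be allowed to run along components of $\partial U$, whose $\epsilon$-diameters are controlled by $B_0+2$ via Definition \ref{prdef}(2); this is where the constant $D\approx 4B_0$, rather than your $2B_0$, comes from, so the ``slack'' you observe is not slack --- your sharper bound $L+2B_0$ is not established by your argument.
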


Here, $\diam_\epsilon(U)$  is the \emph {extrinsic} diameter rel $M_{< \epsilon}$ of $U$  as a subset of $M$. So, the $L/2$ would be  nearly realized, for instance, if the two components of $\partial U$ lie very close to each other in $M \setminus int(U)$.  One would get a lower bound of $L$  using intrinsic diameter, but then there would be no possible upper bound, since we do not impose any local regularity on the boundary components of $U$.

\begin {proof}
Let $\gamma$ be a path in $U$ realizing its $\epsilon$-width. The midpoint of $\gamma$  lies at  a $d_\epsilon$-distance at least $L/2$  from $\partial U$, which proves the lower bound. For the upper bound,  just note that if $p_0,p_1 \in U$ then  we can pick NAT  simplicial  ruled surfaces $$f_0,f_1 : \Sigma_g \longrightarrow \CN(U)$$ that are homotopy equivalences and whose images pass through $p_0,p_1$, respectively. By the Bounded Diameter Lemma for NAT surfaces, if $T$  is a  component of $\partial U$, we have: $$\diam_\epsilon f_i(\Sigma_g) \leq \frac{32(2g-2)}{| \kappa^+ | \epsilon}, \ \ \diam_\epsilon T\leq \frac{32(2g-2)}{| \kappa^+ | \epsilon} + 2.$$ 
But
$f_i(\Sigma_g) \cap \partial U \cup \gamma \neq \emptyset$
 for each $i=0,1$.  So, we  can construct a path from $p_0$ to $p_1$ by concatenating  appropriate paths joining points in the sets $f_i(\Sigma_g)$, $\partial U$, and $\gamma$,  which gives the upper bound above. \end {proof}

 Finally, we will refer to properly embedded copies of $\Sigma_g \times I $ in $M$ as \emph{topological product regions}, in contrast with the (geometric) product regions above.  The terms \emph{level surface} and \emph{topological subproduct region} will be used for topological product regions in the obvious ways.


\subsection{Level surfaces and subproduct regions}

 Throughout this section, we remind the reader that $M$ is a complete Riemannian $3$-manifold with  pinched negative curvature
$\kappa^- \leq \kappa(M) \leq \kappa^+ < 0.$

\begin{lem}[Level surfaces]\label{level surfaces}
 We have the following  related constructions of level surfaces in product regions and $B$-product regions:
\begin{enumerate}
	\item Given $\epsilon,g,\kappa^+$,  there is some $D$  as follows. Let $U \subset M$  be a genus $g$  product region of $\epsilon $-width at least $D$ and let $p\in U$. Then there is a level surface $S \subset U$ that lies in the $1$-neighborhood of a NAT simplicial ruled surface $f : \Sigma_g \longrightarrow U$, and where
$$d_\epsilon(p,q) \leq D, \ \ \forall q\in S.$$
\item  Given $\epsilon,g,\kappa^+,B$,  there is some $D$  as follows. If $U \subset M$  is a genus $g$  $B$-product region of $\epsilon $-width at least $D$ and $p\in U$, then there is a level surface $S \subset U$  such that  $\diam_\epsilon(S)\leq B+2$ and $$d_\epsilon(p,q) \leq D, \ \ \forall q\in S.$$
\end{enumerate}
\end{lem}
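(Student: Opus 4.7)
Both parts follow a common strategy in the spirit of Lemma~\ref{bdlpr}: given $p\in U$, we use the relevant clause of Definition~\ref{prdef} or \ref{bprdef} to produce a map $\Sigma_g\to\CN(U)$ near $p$ of bounded intrinsic $\epsilon$-diameter, and then upgrade this map to an embedded level surface in $U$ via the Freedman--Hass--Scott--Bonahon Theorem (Theorem~\ref{getembedded}). The large $\epsilon$-width hypothesis is used to guarantee that the resulting embedded surface can be taken to lie inside $U$, not merely in the collar $\CN(U)$.

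For part (1), set $B_0:=8(g-1)/\epsilon$, the NAT Bounded Diameter Lemma constant, valid on $\CN(U)$ since the metric there is hyperbolic by Definition~\ref{prdef} (3). Invoke Definition~\ref{prdef} (1) to pick a NAT simplicial ruled surface $f:\Sigma_g\to\CN(U)$ through $p$; if $p$ is too close to $\partial U$, we first relocate to a nearby interior point $q$ using the $\epsilon$-width bound, with $d_\epsilon(q,\partial U)\ge B_0+2$ and $d_\epsilon(p,q)$ bounded only in terms of $\kappa^+, g, \epsilon$. The latter is possible because by Definition~\ref{prdef} (2) each component of $\partial U$ lies in the $1$-neighborhood of a NAT simplicial ruled surface and hence has bounded $\epsilon$-diameter, so we can connect $p$ to a boundary component, traverse it, and then move inward along the product direction a controlled distance. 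After a small isotopy we may assume $f(\Sigma_g)\subset U$ and $\CN_1(f(\Sigma_g))\subset U$. Since $f$ is a homotopy equivalence onto $\CN(U)\simeq U$, it is $\pi_1$-injective in $U$ and homotopic within $U$ to a level surface, so Theorem~\ref{getembedded} applied in a compact irreducible submanifold of $U$ containing $\CN_1(f(\Sigma_g))$ and a reference level surface produces an embedded $S\subset\CN_1(f(\Sigma_g))\subset U$ homotopic to $f$. Then the inclusion $S\hookrightarrow U$ is a homotopy equivalence, so $S$ is a level surface of $U$ by definition. The bound $d_\epsilon(p,q')\le D$ for all $q'\in S$ follows with $D$ absorbing $B_0+2$ plus the relocation cost.

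Part (2) is the same argument, with the NAT simplicial ruled surface replaced by the homotopy equivalence $h:\Sigma_g\to\CN(U)$ of intrinsic $\epsilon$-diameter at most $B$ supplied by Definition~\ref{bprdef} (1). The boundary $\epsilon$-diameter bound needed for the relocation step is now provided directly by Definition~\ref{bprdef} (2). The resulting embedded $S\subset\CN_1(h(\Sigma_g))$ has extrinsic $\epsilon$-diameter at most $B+2$ by the triangle inequality, and $d_\epsilon(p,q')\le B+2+(\text{relocation cost})$ for all $q'\in S$, yielding the required $D$.

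The main obstacle is confining $S$ to $U$ rather than to $\CN(U)$: the definitions only land the auxiliary maps in the collar $\CN(U)$, and for $p$ close to $\partial U$ a careless application of Theorem~\ref{getembedded} could easily yield a surface straddling the boundary. This is handled by the relocation step above, which is where the large $\epsilon$-width hypothesis is genuinely used; beyond that, the construction is a direct combination of the bounded intrinsic $\epsilon$-diameter input with the embedding theorem, together with the observation that embedded surfaces homotopic to level surfaces of $U\cong\Sigma_g\times I$ are themselves level surfaces.
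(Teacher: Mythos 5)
Your proof is correct and follows essentially the same route as the paper's: relocate $p$ to a nearby point $q$ with $d_\epsilon(q,\partial U)$ larger than the bounded-diameter constant (possible since $\partial U$ has bounded $\epsilon$-diameter and the $\epsilon$-width is large), take the bounded intrinsic $\epsilon$-diameter surface (NAT SRS in case (1), the Definition \ref{bprdef} map in case (2)) through $q$, note its image stays in $int(U)$, and apply Theorem \ref{getembedded} to get an embedded surface in its $1$-neighborhood, which is automatically a level surface. The only cosmetic difference is that you insist the relocation cost is bounded purely in terms of $\epsilon,g,\kappa^+$, while the paper only needs $d_\epsilon(p,q)\le D-(B_0+1)$; both suffice.
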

\begin{proof}
For (1), let $B_0$  be the constant in the Bounded Diameter Lemma for NAT simplicial ruled surfaces. The  $\epsilon$-diameter of each component of $\partial U$ is at most $B_0+2$, so if $D$ is large there is a point $q\in U$ with $$d_\epsilon(q,\partial U)> B_0, \ \ d_\epsilon(p,q) \leq D-(B_0+1).$$
Let  $f : \Sigma_g \longrightarrow \CN(U)$  be a NAT simplicial ruled surface in the homotopy class of a level surface whose image passes through a point $q \in U$, and note that since $d_\epsilon(q,\partial U)\geq B_0$, we have $f(\Sigma_g) \subset int(U)$. By work of Freedman-Hass-Scott (see Lemma \ref{getembedded}), the map $f$ is homotopic in $U$ to an embedded surface $S$ that lies in $\CN_1(f(\Sigma_g))$. Since $S$ is incompressible in $U$, it is a level surface. So since $d_\epsilon(p,q) \leq D-(B_0+1)$, we have. $$d_\epsilon(p,q) \leq D, \ \ \forall q\in S.$$

 The proof of (2)  is the same, except that we use $B$  instead of $B_0$ and NAT surfaces with intrinsic $\epsilon $-diameter at most $B$ instead of NAT  simplicial ruled surfaces in $M$.
\end{proof}

Lemma \ref{level surfaces} can be used  to construct subproduct regions $V \subset U $ while prescribing the positions of the two components of $\partial V $ up to a bounded $\epsilon$-distance. For instance, the following is immediate:

\begin{kor}[Chopping PRs]\label{cutting-product}
There is some $D=D(\epsilon,\kappa^+,g)$ (or, $D=D(\epsilon,\kappa^+,g,B)$) as follows. Suppose that $U\subset M$ is a  compact genus $g$ product region (or, a $B$-product region) with $\epsilon$-width $L$. Pick $L_1,\ldots,L_n$ with $\sum_i L_i = L$, and assume that each $L_i \geq D$. Then $U$ can be decomposed as the union of $n$ subproduct regions (or, as a union of sub-$B$-product regions)  with  disjoint interiors  $$U=U_1\cup\dots\cup U_n,$$
where each $U_i$ has $\epsilon$-width in $[L_i-D,L_i+D]$.\end{kor}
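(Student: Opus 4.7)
The plan is to build $n-1$ embedded level surfaces $S_1,\ldots,S_{n-1}$ inside $U$ whose positions along $U$ track the prescribed cumulative sums $T_i:=L_1+\cdots+L_i$, and to take $U_i$ to be the subregion bounded by $S_{i-1}$ and $S_i$ (with $S_0,S_n$ the two boundary components or end-neighborhoods of $U$). First, for any small $\eta>0$, I would pick a path $\gamma:[0,L']\to U$ parameterized by $\epsilon$-arclength from one end/boundary of $U$ to the other with $L'\le L+\eta$, and set $p_i:=\gamma(T_i)$ for $i=1,\ldots,n-1$. Applying Lemma~\ref{level surfaces}(1) in the product region case (resp.\ Lemma~\ref{level surfaces}(2) in the $B$-product region case) to each $p_i$ produces an embedded level surface $S_i\subset U$ with $d_\epsilon(p_i,q)\le D_0$ for all $q\in S_i$, where $D_0=D_0(\epsilon,\kappa^+,g)$ (resp.\ $D_0(\epsilon,\kappa^+,g,B)$).

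Since subpaths of nearly $\epsilon$-length-minimizing paths are themselves nearly minimizing, $d_\epsilon(p_{i-1},p_i)\in[L_i-\eta,L_i]$, so $d_\epsilon(S_{i-1},S_i)\ge L_i-2D_0-\eta$; choosing $D$ substantially larger than $2D_0$ guarantees that $S_0,S_1,\ldots,S_n$ are pairwise disjoint. Because $U\cong\Sigma_g\times I$ is a topological product, each $S_i$ is separating and isotopic within $U$ to some $\Sigma_g\times\{s_i\}$, with the $s_i$ appearing in the same order as the $T_i$. Let $U_i$ be the closed subregion of $U$ between $S_{i-1}$ and $S_i$; topologically $U_i\cong\Sigma_g\times[0,1]$. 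To verify that $U_i$ is a sub-(product region) (resp.\ sub-$B$-PR) of $U$, I would take $\CN(U_i)\subset\CN(U)$ to be a suitable thickening of $U_i$: every point of $U_i$ lies on a NAT simplicial ruled surface (resp.\ a homotopy equivalence of intrinsic $\epsilon$-diameter $\le B$) coming from the product-region structure on $U$, and because such a surface has bounded $\epsilon$-diameter, it can be arranged to sit in $\CN(U_i)$ provided $D$ exceeds roughly twice the Bounded Diameter Lemma constant; moreover each component of $\partial U_i$ lies in a $1$-neighborhood of such a surface by construction.

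For the width bounds, the subarc of $\gamma$ from $p_{i-1}$ to $p_i$, extended by paths of $\epsilon$-length $\le D_0$ from $p_{i-1}$ to $S_{i-1}$ and from $p_i$ to $S_i$, gives a path from $S_{i-1}$ to $S_i$ of $\epsilon$-length at most $L_i+2D_0+\eta$ which can be pushed into $U_i$ with negligible increase in $\epsilon$-length (since $\partial U_i$ has small $\epsilon$-diameter), yielding the upper bound on $\width_\epsilon(U_i)$. Conversely, any path $\beta\subset U_i$ from $S_{i-1}$ to $S_i$, extended by the same short paths, becomes a path in $U$ from $p_{i-1}$ to $p_i$, whence $\length_\epsilon(\beta)\ge d_\epsilon(p_{i-1},p_i)-2D_0\ge L_i-2D_0-\eta$. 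Setting $D:=2D_0+C$ for a suitable additive constant $C$ absorbing the Bounded Diameter Lemma constants and choosing $\eta$ sufficiently small then gives the desired interval $[L_i-D,L_i+D]$. The main obstacle is the technical bookkeeping in confirming that each $U_i$ genuinely satisfies the definition of (sub-)product region with a valid regular neighborhood; this is essentially automatic from the bounded diameter of NAT simplicial ruled surfaces, but is why $D$ must be taken large relative to that diameter, rather than merely large relative to $D_0$.
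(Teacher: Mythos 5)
Your argument is correct and is essentially the paper's own proof: take a (nearly) width-realizing path in $U$, mark points at the prescribed cumulative $\epsilon$-lengths, apply Lemma \ref{level surfaces} to get embedded level surfaces near those points, and let the $U_i$ be the regions between consecutive surfaces, with disjointness, ordering, and the width bounds in $[L_i-D,L_i+D]$ all coming from the same $2D_0$-type distance estimates you use. One small remark: your concern about arranging the NAT simplicial ruled surfaces to lie in a small thickening $\CN(U_i)$ is unnecessary --- one may simply take $\CN(U_i)=\CN(U)$, which already satisfies the definition of a (sub)product region, and note that enlarging $D$ would not in any case help for points of $U_i$ close to $\partial U_i$, so it is fortunate that nothing in your construction actually depends on that step.
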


\begin{proof}
Let $D_1$ be  at least the constant in Lemma \ref{level surfaces}, and let us also assume that $D_1\geq B+2$ if $U$ is a $B$-product region. Set $D=2D_1$.

Pick a path $\gamma$ realizing the $\epsilon $-distance between the components $\partial_\pm U$ of $\partial U$. For each $i=1,\ldots,n$, let $p_i$ be a point on $\gamma$  such that
$$d_\epsilon(p_i,\partial_- U)=\sum_{j=1}^i L_j.$$  And for each $i=1,\ldots,n$, pick a level surface $S_i \subset U$ by applying Lemma \ref{level surfaces} to $p_i \in U_i$, and also define $S_0 = \partial_- U$, and $S_{n+1}=\partial_+ U$.
Then the $\epsilon $-distance between each $S_i$ and $S_{i+1}$ is between $L_i-2D_1$ and $L_i+2D_1>0$, so in particular $S_i,S_{i+1}$ are disjoint as  we have assumed that each $L_i \geq D=2D_1$. Hence, $S_i,S_{i+1}$ bound a  topological subproduct region $U_i \subset U$. Each $S_i$ is contained in the $1$-neighborhood of a NAT simplicial ruled surface in $U$ (or, has  intrinsic $\epsilon$-diameter at most $B+2$), so each $U_i$ is a subproduct region of $U$ (or, a sub-$B$-product region).  Finally,  since $\gamma $ minimizes the $\epsilon$-distance between $\partial_\pm U$, the surface $S_i$  separates $S_{i+1}$ from $\partial_- U$ in $U$. So, one can see inductively that all the $U_i$ have disjoint interiors, and $\cup_i U_i = U$.
\end{proof}

 Here is a version of the above that works for noncompact product regions.   The proof is essentially the same, so we leave it to the reader\footnote{The industrious reader could also write down a version for $B$-product regions.}.

\begin{kor}[Truncating noncompact PRs]\label {truncating noncompact}
Fix $\kappa^+,\epsilon,g$ and let $D$ be as in Lemma \ref{cutting-product}. Suppose that $U\subset M$ is a noncompact genus $g$ product region. Then for any $L > D$,  we have: 
\begin{enumerate}
\item If $\partial U\neq \emptyset$, we can write 	$U=U_1 \cup U_2$  as a union of two  product regions with disjoint interiors, where $U_2$ is compact and has $\epsilon$-width in $[L-D,L+D]$, while $U_1$ is noncompact.
\item If $\partial U=\emptyset$, we can write $U=U_1\cup U_2 \cup U_3$  as a union of product regions with disjoint interiors, where $U_2$ is compact and has $\epsilon$-width in $[L-D,L+D]$, and $U_1,U_3$ are both noncompact.
\end{enumerate}
\end{kor}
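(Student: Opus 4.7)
The plan is to mimic the proof of Corollary \ref{cutting-product}, replacing the minimizing segment between two boundary components with a proper ray (or line) exiting one or both ends of $U$, and then using Lemma \ref{level surfaces} (1) to produce level surfaces at prescribed $\epsilon$-distance from a reference. Let $D_1$ be at least the constant from Lemma \ref{level surfaces} (1), and set $D = 2D_1$.

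For case (1), where $U \cong \Sigma_g \times [0,\infty)$, the function $p \mapsto d_\epsilon(\partial U, p)$ is continuous on $U$, vanishes on $\partial U$, and (assuming $L$ does not exceed the $\epsilon$-width of $U$, which is automatic if the end has no cusp) takes arbitrarily large values. By the intermediate value property it achieves the value $L$ at some point $p \in U$. Apply Lemma \ref{level surfaces} (1) at $p$ to obtain a level surface $S \subset U$, lying in the $1$-neighborhood of a NAT simplicial ruled surface, with $d_\epsilon(p, q) \leq D_1$ for every $q \in S$. Since $U$ is topologically a half-infinite product, $S$ separates $U$ into a compact piece $U_2$ containing $\partial U$ and a noncompact piece $U_1$, each homeomorphic to $\Sigma_g \times I$; both are subproduct regions of $U$ by the NAT condition satisfied by $S$. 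The triangle inequality gives that the $\epsilon$-width of $U_2$ lies in $[L-D, L+D]$.

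For case (2), where $U \cong \Sigma_g \times \mathbb{R}$ has no boundary, fix any proper embedded line $\gamma : \mathbb{R} \to U$ that exits both ends of $U$, and choose $t_- < t_+$ with $d_\epsilon(\gamma(t_-), \gamma(t_+)) = L$. Apply Lemma \ref{level surfaces} (1) at each of the points $p_\pm = \gamma(t_\pm)$ to produce level surfaces $S_\pm \subset U$, each within $\epsilon$-distance $D_1$ of the corresponding $p_\pm$. The triangle inequality gives $d_\epsilon(S_-, S_+) \geq L - 2D_1 > 0$, so $S_-$ and $S_+$ are disjoint. They then separate $U$ into a compact middle subproduct region $U_2$ bounded by $S_-$ and $S_+$ (with $\epsilon$-width in $[L-D, L+D]$) together with two noncompact subproduct regions $U_1$ and $U_3$. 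The only technical subtlety, as in Corollary \ref{cutting-product}, is confirming that the chosen level surfaces sit at the intended positions along $\gamma$ so that $U_2$ is indeed the piece bounded between them; this is an immediate consequence of the triangle-inequality estimate above and the fact that $\gamma$ passes through both $S_-$ and $S_+$.
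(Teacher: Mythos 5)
Your argument is correct and is essentially the adaptation of the proof of Corollary \ref{cutting-product} that the paper has in mind when it leaves this corollary to the reader: produce level surfaces via Lemma \ref{level surfaces} at controlled $\epsilon$-distance along a ray or line exiting the end(s), then get the width estimates from the triangle inequality exactly as in that proof (with two surfaces there is no nesting issue, so the minimality of the reference path used there is not needed). Your parenthetical about ends with cusps is a fair caveat---the paper itself notes that a noncompact product region may have finite $\epsilon$-width in that case---but in the cusp-free situations where the corollary is applied the argument goes through as you wrote it.
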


 One advantage of Corollary \ref{truncating noncompact}  is that we can now  state most of our results for compact product regions,  knowing that they translate to the noncompact case using the above.

%
%
%
%

\subsection{Incompressibility rel links}

In this section, we prove Theorem~\ref{links}, which is inspired by Namazi \cite{Namaziquasiconvexity}, and which allows us to make a product region $U \subset M$ $\pi_1$-injective by drilling out appropriate geodesic links in $U$ near its boundary components. After proving it, we'll describe some of its many applications.

The first step is the following lemma, which shows how to construct closed geodesics homotopic to simple closed curves on level surfaces at arbitrary locations in a product region. Many of the results in this section will be phrased just for $B$-product regions, but we remind the reader that (unqualified) product regions as in Definition \ref{prdef} are also $B_0$-product regions, where $B_0=32(2g-2)/\epsilon$.

\begin{lem}[Geodesics in PRs]
\label {geosinprs}
There is some $D=D(\kappa^+,g,B,\epsilon)$ as follows. Let $U \subset M $ be a genus $g $ $B$-product region with $\epsilon$-width at least $D$, and let $p\in U$. Then there is a closed geodesic $\gamma \subset U$  that is homotopic in $U$ to a simple closed curve on a level surface, such that $$d_\epsilon(p,\gamma)\leq D, \ \ \ \length \gamma \leq D.$$ \end{lem}
\begin {proof}


The $\epsilon$-diameter of each component of $\partial U$ is at most $B$, so if the $\epsilon$-width of $U$ is bigger than $2A$ (for some $A$ to be determined later) there is a point $q\in U$ with $$d_\epsilon(p,q) \leq B+A, \ \ d_\epsilon(q,\partial U)\geq A.$$
 Choose a homotopy equivalence $f : \Sigma_g \longrightarrow \CN(U)$ with intrinsic $\epsilon$-diameter at most $B$ that passes through $q$. 

For convenience in notation, let's assume that $B\geq \epsilon$. Then by the same argument as in Corollary \ref{berskor}, there is a simple closed curve $\alpha $ on $\Sigma_g$  such that the length of $ f(\alpha)$ is at most $2B$. (If the $\epsilon$-thin part of $\Sigma_g$ is nonempty, there's a curve of length $2\epsilon$. Otherwise, $\Sigma_g$ has diameter at most $B$, so has a curve of length at most $2B$.)
If the curve $\alpha$ is null-homotopic in $M$, it bounds a disk in $M$  with diameter $2B$; this disk must then intersect $M \setminus U$, which is impossible if $A\geq 3B$. We  also claim the element $f(\alpha) \in \pi_1 M$ is not parabolic as long as $d(q,\partial U)$ is large enough.  For if it were parabolic, Lemma \ref{distancetogeodesic} would give a noncompact  component of the $\epsilon $-thin part of $M$ at an $\epsilon$-distance of at most $2B/\epsilon$ from $f(\alpha)$. This component would have to intersect $M \setminus U$, which is a contradiction as long as $A \geq 2B/\epsilon$. Therefore, $f(\alpha)$  is homotopic to a geodesic $\gamma$ in $M$. Again by Lemma \ref{distancetogeodesic}, we have $d_\epsilon(f(\alpha),\gamma)\leq 2B/\epsilon$, so $d_\epsilon(p,\gamma) \leq 2B/\epsilon + B + B+A$. So, for any $A \geq \max\{3B,2B/\epsilon\}$, we have $d_\epsilon(p,\gamma) \leq D$ for any $D \geq 2B/\epsilon + 2B+A.$
\end{proof}

We are now ready to state the main theorem of the section.

\begin{sat} [Existence of links]
\label {links}

Given $R$, there is some constant $L=L(\kappa^+,g,\epsilon,R)$ (or, $L=L(\kappa^+,g,\epsilon,R,B)$) as follows. Let $U \subset M $ a compact genus $g$ product region (or, a $B$-product region) that has $\epsilon $-width at least $L$. Then $U $ can be  divided into subproduct regions (or, sub-$B$-product regions) $$U=U_1 \cup U_2 \cup U_3,$$ where $U_i,U_{i+1}$ share a boundary component for $i=1,2$, each of $U_1,U_3$ has  $\epsilon $-width at most $L$, and such that there is a $0.025$-separated geodesic link $\Lambda \subset U_1 \cup U_3$  with the following properties:
\begin{enumerate}
\item $\length(\Lambda) \leq L$ and $d_\epsilon(\Lambda,\partial(U_1\cup U_3)) \geq R$,
\item the inclusions into $M \setminus \Lambda$ of $U_2$ and every component of $M\setminus U$ are all $\pi_1$-injective.\end{enumerate}   
Moreover, there is a $0.025$-separated geodesic link $\Lambda_2 \subset U_2$  such that
\begin{enumerate}
\item[(3)]  there is no $\pi_1$-injective map $f : S^1 \times [0,1]\longrightarrow U_2 \setminus \Lambda_2$  such that $f(S^1 \times \{0\})$ and $f(S^1 \times \{1\})$ lie on different components of $\partial U_2$.
\end{enumerate}
Finally, if we fix a complete metric on $M \setminus \Lambda$ with pinched negative curvature that agrees with the metric of $M$ outside $N_{0.025}(\Lambda)$, as in Theorem \ref{shrinkwrapping}, then $U_2 \subset M \setminus \Lambda$ is a product region (or, a $B$-product region, respectively). \end{sat}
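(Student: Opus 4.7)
The plan is to follow a Namazi-style construction, placing filling geodesic links near the ends of $U$ so that drilling them obstructs every compression of level surfaces of $U$ except those already living inside components of $M\setminus U$. First I would invoke Corollary~\ref{cutting-product} to split $U=U_1\cup U_2\cup U_3$, with $U_1,U_3$ of $\epsilon$-width roughly $L_0$ and $U_2$ absorbing the remaining middle portion. The threshold $L_0=L_0(\kappa^+,g,\epsilon,B)$ will be chosen large enough to accommodate (i) geodesic links $\Lambda_1\subset U_1$ and $\Lambda_3\subset U_3$ sitting at $d_\epsilon$-distance at least $D$ from $\partial U_i$, (ii) the requirement that the $0.025$-neighborhood $N_{0.025}(\Lambda)$ be disjoint from the regular neighborhood $\CN(U_2)$, so that the pinched negatively curved metric from Proposition~\ref{neg-metric} agrees with the hyperbolic metric of $M$ on $\CN(U_2)$, and (iii) leaving room in $U_2$ for the secondary link $\Lambda_2$.

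Next I would construct the primary link $\Lambda=\Lambda_1\cup\Lambda_3$. Fix once and for all a finite \emph{filling} collection $\mathcal F$ of simple closed curves on $\Sigma_g$---every essential simple closed curve on $\Sigma_g$ meets some $c\in\mathcal F$ essentially---whose cardinality depends only on $g$ (e.g.\ take two pants decompositions in sufficiently different positions). For each $c\in\mathcal F$ and each $i\in\{1,3\}$, Lemma~\ref{bdlpr}(1) produces a closed geodesic $c^*_i\subset U_i$ realizing $c$, of bounded length and at $d_\epsilon$-distance at least $D$ from $\partial U_i$; sufficient $\epsilon$-width of $U_i$ rules out $c$ being parabolic or null-homotopic in $M$. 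Set $\Lambda_i=\bigcup_{c\in\mathcal F}c^*_i$ and $\Lambda=\Lambda_1\cup\Lambda_3$. To secure the $0.025$ tube-radius condition, observe that two distinct $c,c'\in\mathcal F$ are non-homotopic simple closed curves on a level surface, so their geodesic realizations cannot share a Margulis tube of any fixed size without generating a virtually abelian subgroup and hence being parallel, which is impossible. The secondary link $\Lambda_2\subset U_2$ is built in exactly the same way, using a filling system on a level surface placed in the middle of $U_2$.

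For conclusion (2), the $\pi_1$-injectivity claim, which is the heart of the theorem, I plan the following argument. Let $\gamma$ be an essential loop in $U_2$ (the case of a component $V\subset M\setminus U$ is parallel) that is null-homotopic in $M\setminus\Lambda$, and let $\phi:D^2\to M\setminus\Lambda$ be a null-homotopy. Choose middle level surfaces $\Sigma_1\subset U_1$ and $\Sigma_3\subset U_3$ containing (isotopes of) $\Lambda_1$ and $\Lambda_3$, and make $\phi$ transverse to $\Sigma_1\cup\Sigma_3$. Since $\phi(\partial D^2)=\gamma\subset U_2$ is disjoint from $\Sigma_1\cup\Sigma_3$, the preimage $\phi^{-1}(\Sigma_1\cup\Sigma_3)$ consists only of simple closed curves. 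Each such curve maps to a simple closed curve on $\Sigma_i$ disjoint from $\Lambda_i$, and the filling property of $\mathcal F$ forces it to be inessential on $\Sigma_i$. Standard innermost-disk surgery---together with the asphericity of $\overline{U_i}\setminus\Lambda_i$ via the sphere theorem---eliminates all these curves one by one, producing a new null-homotopy $\phi'$ with $\phi'(D^2)$ disjoint from $\Sigma_1\cup\Sigma_3$. Connectedness then forces $\phi'(D^2)$ to lie in the middle slab $U_1^{\mathrm{inner}}\cup U_2\cup U_3^{\mathrm{inner}}$, which is itself a topological product region, so $\gamma$ must be inessential there---a contradiction. Condition (3) follows by the identical argument applied to $\Lambda_2$: an essential annulus joining the two sides of $U_2$ would meet the filling level surface in a collection of curves all disjoint from $\Lambda_2$, hence all inessential, and a reduction then produces an essential curve on one side homotopic to a null-homotopic one on the other, an impossibility.

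The main obstacle is executing the innermost-disk surgery cleanly while staying in $M\setminus\Lambda$: homotoping subdisks with inessential boundary on $\Sigma_1\cup\Sigma_3$ across those surfaces requires the sphere theorem for $\overline{U_i}\setminus\Lambda_i$ (aspherical as an irreducible $3$-manifold with infinite fundamental group), together with the combinatorial/geometric translation of the filling property, which is why $\Lambda_i$ is placed at uniform $d_\epsilon$-distance $D$ from $\partial U_i$ so that isotopy via the product structure is controlled. The final assertion---that $U_2$ remains a ($B$-)product region in the drilled manifold with the modified metric---is then immediate from the choice of $L_0$: since the metric on $\CN(U_2)$ is unchanged, every NAT simplicial ruled surface and every diameter bound certifying Definition~\ref{prdef} (respectively Definition~\ref{bprdef}) for $U_2$ in $M$ remains valid verbatim in $M\setminus\Lambda$.
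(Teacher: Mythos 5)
Your construction and, more importantly, your proof of the $\pi_1$-injectivity in (2) have a genuine gap. The crucial claimed implication is: a curve of $\phi^{-1}(\Sigma_i)$ maps to a curve on the level surface $\Sigma_i$ that is ``disjoint from $\Lambda_i$, and the filling property of $\mathcal F$ forces it to be inessential.'' But your $\Lambda_i$ is a union of closed \emph{geodesics} in $M$ that are merely freely homotopic to the filling curves; they do not lie on $\Sigma_i$ (and there is no reason they can be simultaneously isotoped onto an embedded level surface -- they need not even be simple or pairwise disjoint, and the surgeries needed to guarantee the $0.025$ tube radius change their free homotopy classes, destroying the filling property anyway). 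Disjointness of a curve on $\Sigma_i$ from a geodesic pushed off $\Sigma_i$ is a $3$-dimensional condition and says nothing about its intersections with the filling curves \emph{on} $\Sigma_i$, so the filling property gives no obstruction: a compressing disc can simply dive around the geodesics. Indeed, your argument for (2) is purely topological -- it never uses the bounded length of the link components, their huge pairwise $\epsilon$-separation, the number of components, or the width of $U$ -- and no purely topological argument can work here, since without such geometric input nothing prevents a compression of a level surface from missing a fixed link.

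The paper's proof (following Namazi) supplies exactly this missing geometric mechanism. One takes $n=n(g)$ closed geodesics in each of $U_1,U_3$ of bounded length and huge pairwise $\epsilon$-distance (in arbitrary homotopy classes, with Gabai-style surgeries to get $0.025$-separation), assumes a level surface $S\subset U_2$ compresses in $M\setminus\Lambda$, and surgers $S$ to a family of at most $|\chi(S)|$ surfaces incompressible in $M\setminus\Lambda$. A counting argument shows some surgered surface $S_j$ separates at least three link components in $U_1$ from three in $U_3$; then Corollary \ref{shrinkwrapping2} (shrinkwrapping in the drilled metric, using the bounded length of $\Lambda$) homotopes $S_j$ to a surface of bounded $\epsilon$-diameter, which the separation constraints force to lie inside $U$ and to separate the two components of $\partial U$ -- impossible since its genus is less than $g$. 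Conclusion (3) is proved by the same bounded-diameter mechanism with $\Lambda_2$ consisting of just two widely $\epsilon$-separated geodesics in $U_2$, not a filling system. Your splitting of $U$ via Corollary \ref{cutting-product}, the use of Lemma \ref{bdlpr}(1) to produce bounded-length geodesics deep inside $U_1,U_3$, and the final observation that $U_2$ remains a ($B$-)product region because the metric is unchanged near $\CN(U_2)$ all match the paper; it is the central incompressibility argument that needs to be replaced by the separation-plus-shrinkwrapping argument above.
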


Most of Theorem \ref{links} is essentially due to Namazi \cite{Namaziquasiconvexity}, except that his manifolds are hyperbolic, his definition of product region is stronger than ours (see \S \ref{PR unqualified}), he doesn't discuss (3), and he proves a stronger incompressibility statement, which is overkill here given our shrinkwrapping technology (Theorem \ref{shrinkwrapping}). However, his proof works in our setting,  and is even a bit simpler since our incompressibility conclusion is weaker.   Since Theorem \ref{links} is one of the most crucial   technical tools in this paper,  we include a proof sketch. 

The constant 0.025 is unimportant, although we do need a uniform separation constant so that Lemma \ref{neg-metric} outputs a metric on $M\setminus \Lambda$ with uniformly bounded sectional curvatures. Here, 0.025 comes from an argument of Gabai, and is chosen so that any closed geodesic that is not 0.025-separated can be surgered to give a shorter geodesic. 

Below, `huge' is much bigger than `bounded', and both terms should be interpreted relative to the constants $\kappa^+,B,\epsilon,g$.

\begin{proof}[Proof Sketch] (See Namazi \cite[\S 5]{Namaziquasiconvexity}.)
We'll first work on constructing $\Lambda$ so that (1) and (2) hold.  For (2) it suffices to  do the construction so that  $U_2$ is  incompressible in $M\setminus \Lambda$, since then $M \setminus U$  is incompressible in $M\setminus \Lambda$ by Van Kampen's theorem. Also, by replacing $M$ by the cover corresponding to $\pi_1 U$, we can assume that $U$ separates $M$. 
 
Fix $n=n(g)$, to be determined later. If $U$ has huge $\epsilon $-width, then by Lemma \ref{geosinprs} and Corollary \ref{cutting-product}, we can  decompose $U$  as a union of product regions (or, $B$-product regions) $$U=U_1\cup U_2 \cup U_3$$ and find a union $\Lambda$ of disjoint closed geodesics in $U_1 \cup U_3$,  with $n$ geodesics contained in each of $U_1,U_3$, such that 
\begin{itemize}
	\item each  component $\alpha$ of $\Lambda$ has bounded length,
\item there is a huge $\epsilon $-distance between  any two components $\alpha,\beta$ in $ \Lambda$, and between any component $\alpha$ and $\partial U_1 \cup \partial U_3$.
\end{itemize}
 Moreoever, we can do this so that the widths of $U_1,U_3$ are at most some (huge) constant depending only on $B,\epsilon,g$.
 If any $\alpha$ comes within $0.025$ of itself, perform a surgery to get a shorter closed geodesic. (This argument is due to Gabai \cite{gabai1997geometric}, but see Namazi's proof.) Hence, we can assume each component $\alpha$ of $\Lambda$ is $0.025$-separated.

 Pick a level surface $S \subset U_2$.   Hoping for a contradiction, suppose that $S$ is compressible in $M \setminus \Lambda$. The Loop Theorem then gives a {simple} closed curve on $S$ that bounds an embedded disc  $D \subset M$  disjoint from $\Lambda$. Surger $S$ along $D$ to produce a new surface in $M \setminus \Lambda $. Repeating, we obtain a collection of at most $|\chi(S)|$ surfaces $S_1,\ldots,S_r$ in $M$ that are all $\pi_1$-injective in $M \setminus \Lambda$. (Possibly some are spheres.)

\begin{figure}
	\centering
\includegraphics{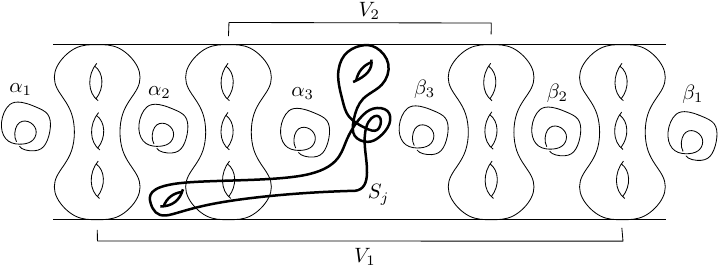}
\caption{The surface $S_j$ in the proof of Theorem \ref{links}. The decomposition $U=U_1\cup U_2 \cup U_3$ is not pictured; if it were, $U_2$ would be a region between $\alpha_3$ and $\beta_3$. }
\label{hosseinpf}
\end{figure}

Whenever  components $\alpha$ and $\beta$ of $\Lambda $  lie in $U_1$ and $U_3$, respectively, they are separated in $M$ by $S$, and hence they are also separated in $M$ by some $S_j$. (One proves this by cutting up $S$ one surgery at a time---remember that $U$  separates $M$.) If the number $n$ of  components of $\Lambda$  we constructed in each of $U_1,U_3$ is very large, there is even some fixed $S_j$ that  separates  three components $\alpha_1,\alpha_2,\alpha_3$  that lie in $U_1$ from three  components $\beta_1,\beta_2,\beta_3$  that lie in $U_3$.  Place four bounded $\epsilon$-diameter level surfaces  between these 6 components of $\Lambda$, so that we have the situation shown in Figure \ref{hosseinpf}. 

Since $S_j$ separates distinct components of $\Lambda$,  it cannot be a sphere. As the components of $\Lambda$ have bounded length,  Corollary \ref{shrinkwrapping2}  implies that we may homotope  $S_j$ in $M\setminus \Lambda$ to a (possibly nonembedded) surface with bounded $\epsilon $-diameter. As  this new $S_j$ still separates $\alpha_3,\beta_3$, it must intersect the subproduct region $V_2 \subset U$ pictured in Figure \ref{hosseinpf}; consequently, it will be contained in the larger region $V_1 \subset U$ as long as the $\epsilon $-distance between any two level surfaces is huge. But since $S_j$ separates $\alpha_1$ from $\beta_1$, it then also separates the components of $\partial U$, which is impossible if the genus of $S_j$ is less than $g$. Hence, no compressions were performed and $S$ was  incompressible in $M \setminus \Lambda$,  contradicting our initial assumption that it was not.

We now   prove (3). At the expense of starting with a slightly larger $L$, subdivide $U_2$ into very wide product regions $$U_2= V_l \cup V_m \cup V_r,$$ where the indices stand for left, middle and right. Let $\partial_l U_2, \partial_r U_2$ be the components of $\partial U_2$, where $\partial_l U_2 \subset \partial V_l$ and $\partial_r U_2 \subset \partial V_r$. Pick bounded length $0.025$-separated geodesics $\gamma,\delta$ in $V_l,V_r$, respectively,  and let $\Lambda_2 = \gamma \cup \delta$. By the annulus theorem (c.f.\ Scott \cite{Scottstrong}), it suffices to show  there is no properly embedded annulus $$f : S^1 \times [0,1]\longrightarrow U_2 \setminus \Lambda_2$$  such that $f(S^1 \times \{0\})\subset \partial_l U_2$ and $f(S^1 \times \{1\}) \subset \partial_r U_2$. So, suppose such an $f$ exists: then there are simple closed curves on $\partial_l U_2, \partial_r U_2$ that are homotopic in $U_2 \setminus \Lambda_2$.  It follows  from Corollary \ref{construction} that $\partial_l U_2, \partial_r U_2$ are homotopic in $M \setminus (\Lambda \cup \Lambda_2)$   to surfaces $\Sigma_l,\Sigma_r$ that have bounded $\epsilon$-diameter with respect to the induced negatively curved metric on $M \setminus (\Lambda \cup \Lambda_2)$, and that intersect along some closed $M \setminus (\Lambda \cup \Lambda_2)$-geodesic. So, $\Sigma_l \cup \Sigma_r$ has bounded $\epsilon$-diameter in $M$ by Corollary \ref{shrinkwrapping2}. 

Recall our assumption that $U$ separates $M$. Let $M_r$ be the component of $M\setminus int(U_2)$ adjacent to $\partial_r U_2$ and define $M_l $ similarly. Since $V_m$ has large $\epsilon$-width, the union $\Sigma_l \cup \Sigma_r$ is contained in either $M_l \cup V_l \cup V_m$ or $V_m \cup V_r \cup M_r$. Let's assume $\Sigma_l \cup \Sigma_r \subset V_m \cup V_r \cup M_r$ without loss of generality. By (2), $\partial_l U_2$ is incompressible in $M\setminus \Lambda$, so the inclusion $U_2 \cup M_r \setminus (\Lambda\cup \Lambda_2) \hookrightarrow M \setminus (\Lambda \cup \Lambda_2)$  is $\pi_1$-injective, and then Lemma~\ref{pushinghomotopieslem} implies that $\Sigma_l$ and $\partial_l U_2$ are homotopic in $(U_2 \cup M_r )\setminus (\Lambda\cup \Lambda_2)$.  A small perturbation of $\partial_l U_2$ separates $\partial_l U_2$ from $\gamma$ in $M$, and the homotopy from this perturbation to $\Sigma_l$ can then be performed in the complement of $\partial_l U_2 \cup \gamma$, implying the $\Sigma_l$ separates $\partial_l U_2$ and $\gamma$ in $M$. This is a contradiction, though, since $\gamma \subset V_l$ and $\Sigma_l \subset V_m \cup V_r \cup M_r$.

\medskip

Finally, we must show that when we endow $M\setminus \Lambda$ with  a  complete negatively curved metric as in Theorem \ref{shrinkwrapping}, we can assume that the subset $U_2 \subset M\setminus \Lambda$  is also a product region (or, a $B$-product region). In  the construction above, the $\epsilon$-distance from $\Lambda$ to $U_2$ in $M$ was chosen to be large. So, we can assume that  the  distance is large enough to use Corollary \ref{cutting-product} to construct a  regular neighborhood $U \supset \CN(U_2) \supset U_2$ that does not intersect $\CN_{0.025}(\Lambda)$.  Moreover, we can assume that this distance is large enough so that no NAT simplicial ruled surface (or, surface with $\epsilon $-diameter at most $B$) can intersect both $U_2$ and $\CN_{0.025}(\Lambda)$. Hence, if use this $\CN(U)$ in the definition of a product region (or, $B$-product region) in $M \setminus \Lambda$, all the required properties come directly from the fact that $U_2$ is a product region  (or, $B$-product region) in $M$.
\end{proof}

Here is an application of Theorem \ref{links} that we'll use many times.

\begin{lem}[Minimizing SRSs in PRs]
\label {minsrsinpr}
There is some constant $D=D(\kappa^+,B,g,\epsilon)$ as follows. Let $U \subset M $ be a genus $g $ $B$-product region with $\epsilon$-width at least $D$, and let $p\in U$. Then there is a minimizing SRS $f : \Sigma_g \longrightarrow U$
 in the homotopy class of a level surface with $$d_\epsilon(p,f(\Sigma_g)) \leq D.$$
 \end{lem}

For product regions, rather than the more general $B$-product regions, Lemma \ref{minsrsinpr} actually follows from Corollary \ref{nearby minimizers}, since at least if $p$ is deep in $U$, we can just take an SRS through $p$ and then homotope it to a nearby minimizing SRS without leaving $U$. One could also rephrase Corollary \ref{nearby minimizers} so it applies to $B$-product regions, but it's a little ugly, so we'll just give a proof using Theorem \ref{links} instead.

\begin{proof}
Suppose that $U$ has large $\epsilon$-width, and let $p\in U$. Write $$U = U_1 \cup U_2 \cup U_3$$ and $\Lambda$ as in Theorem \ref{links}, with $\Lambda$ a link in $U_1\cup U_3$ such that $U_2 \hookrightarrow M \setminus \Lambda$ is $\pi_1$-injective. Since the $\epsilon$-widths of $U_1,U_3$ are bounded, we can apply Lemma \ref{geosinprs} to $U_2$ to get a geodesic $\gamma$ in $U_2$ at bounded $\epsilon$-distance to $p$ that is homotopic to a simple closed curve on a level surface of $U_2$, and which is at least an $\epsilon$-distance of $C+1$ away from $\partial U_2$, where $C$ is the constant in the bounded diameter lemma for SRSs. 

Equip $M\setminus \Lambda$ with the negatively curved metric from Theorem \ref{shrinkwrapping}. Then Corollary \ref{construction} gives a minimizing simplicial ruled surface $$f : \Sigma_g \longrightarrow M \setminus \Lambda$$ whose image intersects a $1$-neighborhood of $\gamma$, and which is homotopic in $M\setminus \Lambda$ to a level surface of $U_2$.  The Bounded Diameter Lemma says that $f(\Sigma_g) \subset U_2$. Since $U_2 $ is $\pi_1$-injective in $M\setminus \Lambda$, $f$ is a $\pi_1$-injective surface in $U_2$ with the same genus as $U_2$, and hence is homotopic \emph{within $U_2$} to a level surface. As $f(\Sigma_g)$ has bounded $\epsilon$-distance to $p$, we are done.
\end{proof}

\begin{kor}[PRs in the convex core]\label {prs in the convex core}	Given $\kappa^+,\epsilon,g$,  there is some $C>0$ as follows. Let $U \subset M$ be a genus $g$ product region with $\epsilon$-width at least $C$. Then  there is a subproduct region $V\subset U$  such that each component of $U \setminus V$ has $\epsilon$-width at most $C$, and where $V \subset int(CC(M))$.\end{kor}
\begin {proof}
By Lemma \ref{minsrsinpr}  and Corollary \ref{cutting-product}, if $U$ is sufficiently wide then there is  a subproduct region $V \subset U$ such that each component of $U \setminus V$ has $\epsilon$-width at most some $C=C(\kappa^+,\epsilon,g)$ and  contains the image of a minimizing simplicial  ruled surface $f : \Sigma_g \longrightarrow U$ in the homotopy class of a level surface. 

The image of any minimizing SRS is contained in $CC(M)$, since the closest point retraction onto $CC(M)$ is distance decreasing on $M\setminus CC(M)$, and no component of $M \setminus CC(M)$ can be  contained in $U$, since the injectivity radius at every point of $U$ is bounded above by Lemma \ref{geosinprs}. So as the images of the  simplicial ruled surfaces $f$ above separate $V$ from $\partial U$,  it follows that $V \subset int(CC(M))$.\end {proof}

As a simple corollary of Theorem \ref{links}, we have the following version of Theorem \ref{links} for unions of multiple product regions.

\begin {kor}\label {linkkor}
 Suppose that $\{U^i\}$ is  a collection of compact product regions (or, $B$-product regions) in $M$ as in Theorem \ref{links} and that $\Lambda^i \subset U^i$ is obtained by applying Theorem \ref{links} to each $U^i$. Then the inclusion into $M \setminus \cup_i \Lambda^i$ of each component of $M\setminus U^i$ is $\pi_1$-injective.
\end {kor}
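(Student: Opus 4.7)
The plan is a direct reduction to Theorem \ref{links}, using the principle that incompressibility of a surface is preserved when additional links are drilled from the ambient manifold. Fix $i$ and let $C$ be a component of $M \setminus U^i$; the goal is to show that the inclusion $C \cap (M \setminus \cup_j \Lambda^j) \hookrightarrow M \setminus \cup_j \Lambda^j$ is $\pi_1$-injective.

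First, I would extract from the proof of Theorem \ref{links} the stronger statement that a level surface $S_i \subset U_2^i$ is \emph{incompressible} in $M \setminus \Lambda^i$---this is exactly what the proof sketch establishes en route to the $\pi_1$-injectivity claim. Since any compressing disk for $S_i$ in the further drilled manifold $M \setminus \cup_j \Lambda^j$ is automatically a compressing disk in $M \setminus \Lambda^i$, the surface $S_i$ remains incompressible in $M \setminus \cup_j \Lambda^j$.

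Next, by the standard topological fact that cutting an orientable $3$-manifold along a two-sided incompressible surface yields $\pi_1$-injective pieces, every component $Y$ of the manifold obtained by cutting $M \setminus \cup_j \Lambda^j$ along $S_i$ embeds $\pi_1$-injectively into $M \setminus \cup_j \Lambda^j$. Since $C$ is disjoint from $S_i \subset U^i$, the set $C \cap (M \setminus \cup_j \Lambda^j)$ lies in exactly one such $Y$, and $Y$ is obtained from $C \cap (M \setminus \cup_j \Lambda^j)$ by attaching along one or both components of $\partial U^i$ (depending on whether $U^i$ separates $M$) a sub-product region of $U^i$ lying between $\partial U^i$ and $S_i$, with its share of $\Lambda^i$ drilled. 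It remains to prove that $C \cap (M \setminus \cup_j \Lambda^j) \hookrightarrow Y$ is $\pi_1$-injective.

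Each such attached sub-product region is a product region in which the relevant component of $\partial U^i$ is a $\pi_1$-injective level surface, and drilling out components of $\Lambda^i$ preserves this injectivity (drilling can only remove compressing disks, never create them). An application of Van Kampen's theorem to the decomposition of $Y$---iterated in the non-separating case---combined with the standard group-theoretic fact that the natural map $A \hookrightarrow A *_C B$ in a pushout of groups is injective whenever $C \hookrightarrow B$ is, then completes the proof. The main obstacle, I expect, is the first step: Theorem \ref{links} as stated yields only $\pi_1$-injectivity of $U_2$ and of components of $M \setminus U$, not the explicit incompressibility of level surfaces in $U_2$, so one must reach into the proof of Theorem \ref{links} to extract this finer assertion.
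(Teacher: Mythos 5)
Your overall route is the paper's own: the proof in the text is exactly ``cut $M\setminus\cup_j\Lambda^j$ along the incompressible middle regions $U^j_2$ and apply Van Kampen,'' and your first three steps carry this out correctly. (One small simplification: you do not need to reach into the proof of Theorem \ref{links} for incompressibility of a level surface $S_i\subset U^i_2$; a level surface is a deformation retract of $U^i_2$, so $\pi_1$-injectivity of $U^i_2$ in $M\setminus\Lambda^i$, which is part (2) of the theorem as stated, already gives it, and your observation that compressing disks persist under further drilling carries it to $M\setminus\cup_j\Lambda^j$.) The genuine problem is the last step. The ``standard group-theoretic fact'' you invoke --- that $A\to A\ast_C B$ is injective whenever only $C\to B$ is injective --- is false. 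For instance, take $B=F(x,y)$, $C=\langle x,\,yxy^{-1}\rangle\le B$ (free of rank two), $A=\mathbb{Z}=\langle t\rangle$, and $\phi\colon C\to A$ with $\phi(x)=1$, $\phi(yxy^{-1})=t$: the pushout is $\langle t,x,y\mid x=1,\ t=yxy^{-1}\rangle\cong\mathbb{Z}$, in which $t\mapsto 1$, so $A$ does not inject. The true amalgamation fact requires \emph{both} edge maps to be injective, and that is precisely what you cannot assume here: $\pi_1$ of a component of $\partial U^i$ need not inject into $\pi_1$ of the adjacent drilled component of $M\setminus U^i$, since $\partial U^i$ may well compress to the outside --- controlling exactly such compressions is the subject of Theorem \ref{double compression body theorem}. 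So, as written, the assertion that $C\cap(M\setminus\cup_j\Lambda^j)\hookrightarrow Y$ is $\pi_1$-injective is unjustified.

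That assertion is nevertheless true, and the fix is short. Set $C':=C\setminus\cup_{j\ne i}\Lambda^j$ and let $\hat C$ be $C'$ with the \emph{full, undrilled} collar(s) of $U^i$ between $\partial U^i\cap\partial C$ and $S_i$ attached; then $C'\subset Y\subset\hat C$, and the inclusion $C'\hookrightarrow\hat C$ is a homotopy equivalence, since it merely attaches product collars along boundary components. Hence the composition $\pi_1 C'\to\pi_1 Y\to\pi_1\hat C$ is an isomorphism, so $\pi_1 C'\to\pi_1 Y$ is injective; this closes your argument (and handles the non-separating, iterated case the same way). This retraction-type observation, rather than any one-sided amalgamation fact, is also what makes the paper's terse Van Kampen proof legitimate.
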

\begin {proof}
 This follows from Van Kampen's Theorem, since for each $i$ the  middle subproduct region $U^i_2 \subset U^i$ from Theorem \ref{links} will be incompressible in $M\setminus \Lambda$.
\end {proof}

In light of Theorem \ref{shrinkwrapping}, our shrinkwrapping theorem, we will often be homotoping  surfaces in the complement of links. The following observation will be useful, and so is worth mentioning now. 

\begin{fact}\label {homotopyfact}
If $\{U^i\}$ is a collection of product regions (or, $B$-product regions) in $M$ and $\Lambda^i \subset U^i$, $\Lambda^i_2 \subset U^i_2 \subset U^i$ are links as in  Theorem~\ref{links}, then if two maps $f,g : X \longrightarrow M \setminus \cup_i \ int(U^i)$ with nontrivial $\pi_1$-image are  homotopic in $ M \setminus \cup_i (\Lambda^i \cup \Lambda^i_2)$,  they are also homotopic in $ M \setminus \cup_i int( U^i )$.
\end{fact}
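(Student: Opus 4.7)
The plan is a covering-space argument based on two ingredients: both $N := M \setminus \bigcup_i(\Lambda^i\cup\Lambda^i_2)$ and $N' := M \setminus \bigcup_i U^i$ are aspherical, and the inclusion $\iota\colon N' \hookrightarrow N$ is $\pi_1$-injective with its image malnormal against nontrivial subgroups. For asphericity, $N$ admits a complete pinched-negatively-curved metric by Proposition~\ref{neg-metric} applied to the hyperbolically $0.025$-separated link $\bigcup_i(\Lambda^i\cup\Lambda^i_2)$. For $N'$, each component of $\partial U^i$ is isotopic within $U^i$ to a component of $\partial U^i_2$, which is incompressible in $M\setminus\Lambda^i$ by the proof of Theorem~\ref{links}; hence $\partial U^i$ is incompressible in $M$, so $N'$ is an irreducible $3$-manifold with incompressible boundary, and is aspherical. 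For $\pi_1$-injectivity of $\iota$, Van Kampen decomposes $N$ as $N'$ glued to $\bigsqcup_i (U^i\setminus(\Lambda^i\cup\Lambda^i_2))$ along $\bigsqcup_i\partial U^i$, and each edge group $\pi_1(\partial U^i)$ injects into both adjacent vertex groups by the incompressibility just established.

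Next I would fix a basepoint $y_0 = f(x_0) \in N'$ and pass to the covering $\pi\colon \tilde N \to N$ with $\pi_*\pi_1(\tilde N,\tilde y_0) = \iota_*\pi_1(N',y_0)$. The component $\tilde N'\subset\pi^{-1}(N')$ containing $\tilde y_0$ maps homeomorphically onto $N'$ under $\pi$, and $\tilde N'\hookrightarrow\tilde N$ is a $\pi_1$-isomorphism of aspherical spaces, hence a homotopy equivalence by Whitehead's theorem. Since $H_*(\pi_1(X\times I)) = f_*(\pi_1 X)\subseteq\iota_*\pi_1(N')$, the covering-space lifting criterion produces a lift $\tilde H\colon X\times I \to \tilde N$ with $\tilde H|_{X\times\{0\}}$ equal to the canonical lift $\tilde f$ into $\tilde N'$. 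If one can show the endpoint $\tilde H|_{X\times\{1\}}$ also lies in $\tilde N'$, then composing $\tilde H$ with a homotopy inverse to the inclusion $\tilde N'\hookrightarrow\tilde N$ and projecting back by $\pi|_{\tilde N'}$ yields a homotopy $f\simeq g$ inside $N'$, as required.

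The main obstacle is precisely the claim that $\tilde H|_{X\times\{1\}}\subset\tilde N'$, rather than in some other sheet of $\pi^{-1}(N')$. Equivalently, the element $h\in\pi_1(N)$ conjugating the based representations $f_*$ and $g_*$ must already lie in $\iota_*\pi_1(N')$. This is a malnormality statement: if the nontrivial subgroup $f_*(\pi_1 X)$ and its $h$-conjugate both lie in $\iota_*\pi_1(N')$, then $h\in\iota_*\pi_1(N')$—and this is the role of the hypothesis $f_*(\pi_1 X)\neq 1$. The malnormality itself is derived from property~(3) of Theorem~\ref{links}: no essential annulus in $U^i_2\setminus\Lambda^i_2$ connects distinct components of $\partial U^i_2$. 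In Bass--Serre terms, this gives acylindricity of the splitting of $\pi_1(N)$ along $\bigsqcup_i\partial U^i_2$; together with the $\pi_1$-injectivity already established, this yields malnormality of the vertex groups containing $\iota_*\pi_1(N')$ against nontrivial subgroups. Carrying out this translation of (3) into the required malnormality statement is the technical heart of the argument, and the reason property~(3) was built into Theorem~\ref{links} in the first place.
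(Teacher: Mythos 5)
Your overall strategy (drill the links, work $\pi_1$-injectively, and run a covering-space/lifting argument) is in the same spirit as the paper's second step, but the proposal has two genuine problems. First, a factual error: you claim that each component of $\partial U^i$ is incompressible in $M$ because $\partial U^i_2$ is incompressible in $M\setminus\Lambda^i$. Incompressibility in the drilled manifold does not pass to $M$, and indeed product-region boundaries typically do compress in $M$ --- that is the entire subject of \S\ref{Double compression bodies} (Theorem \ref{double compression body theorem}). Consequently $N'=M\setminus\cup_i U^i$ need not have incompressible boundary. This particular error is repairable, since the $\pi_1$-injectivity of $N'$ in $M\setminus\cup_i\Lambda^i$ that you actually need is supplied directly by Theorem \ref{links}(2) and Corollary \ref{linkkor}, but as written your justification is wrong.

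Second, and fatally, the step you yourself identify as the ``technical heart'' is both unproven and false as stated. The subgroup $\iota_*\pi_1(N')$ is not a vertex group of the splitting of $\pi_1(N)$ along $\sqcup_i\partial U^i_2$: the vertex piece containing $N'$ also contains $U^i_1\setminus\Lambda^i$ and $U^i_3\setminus\Lambda^i$. More importantly, property (3) of Theorem \ref{links} only excludes essential annuli in $U^i_2\setminus\Lambda^i_2$ joining the two \emph{distinct} components of $\partial U^i_2$; it says nothing about annuli returning to the same component, and these exist: each component $\alpha$ of $\Lambda^i_2$ is a geodesic homotopic in $U^i_2$ to a simple closed curve $c$ on a level surface (hence on a surface parallel to $\partial U^i_2$), and conjugating $c$ by the meridian of the drilled $\alpha$ carries a nontrivial element of the boundary surface group back into the same vertex group by an element outside it. So the vertex groups are not malnormal, and the lifting argument --- which requires the lifted homotopy to terminate in the distinguished sheet $\tilde N'$ --- is not justified; note also that even when $f\simeq g$ in $N'$, a \emph{given} homotopy in $N$ may lift so as to end in a different sheet, so you would in any case have to modify the homotopy, and the weaker coset statement you actually need is essentially a restatement of the Fact itself. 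The paper avoids all of this by using (3) in a more elementary way: an annulus between $f$ and $g$, made transverse to the incompressible surfaces $\partial U^i_2$, would contain a subannulus in some $U^i_2\setminus\Lambda^i_2$ joining its two boundary components, which (3) forbids; this pins $f,g$ into a single component $N$ of $M\setminus\cup_i U^i_2$, after which $N\setminus\Lambda$ lifts to a standard compact core of a cover of $M\setminus\Lambda$ and the homotopy is pushed into that core.
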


Note that this result is very similar to Lemma \ref{pushinghomotopieslem}. Indeed, we could use Lemma \ref{pushinghomotopieslem} in the proof if we know that the components of $M\setminus\cup_i int(U_i)$ are compact, and if the images of $f,g$ lie in the same component. As is, the argument is slightly more complicated.

\begin{proof}
We  first claim that  the images of $f,g$ are contained in the same component $N$ of $ M \setminus \cup_i  int(U^i_2)$.  If they are not, then since they have nontrivial $\pi_1$-image there is some smooth map $g: A \longrightarrow M \setminus \Lambda$ from an annulus where the boundary components map into different components of $ M \setminus \cup_i int(U^i)$.  Since all the boundary components of the $U^i_2$ are incompressible in $M \setminus \Lambda$, we can homotope $g$ so that all intersections with all $\partial U^i_2$ are essential loops in $A$.  Then $g$ restricts to a map on some subannulus of $A$ that maps into some $U_2^i \setminus \Lambda_2^i$ and connects distinct boundary components of $U_2^i$. By (3) in Theorem~\ref{links}, this is a contradiction.

 We now claim that $f,g$ are homotopic in $N$. For this, let $N_{ext} $ be the union of $N$ and all subproduct regions $U^i_j$ that share a boundary component with $N$. Then each component $S \subset \partial N_{ext}$ is a boundary component of some $U^i_2$, and hence is incompressible in $M\setminus \Lambda$ by (2) of Theorem \ref{links}. The cover of $M\setminus \Lambda$ corresponding to $\pi_1 S$ is then homeomorphic to $S \times \BR$, by the Tameness Theorem \cite{Agoltameness} and standard topological arguments, see Hempel \cite{Hempel3-manifolds}. We can identify $S\times \{0\}$ with a lift of $S$, and up to reversing the sign of $t$, we can assume that for small $t>0$, the point $(p,t) \in S \times \BR$ projects into $U_2^i\subset M\setminus \Lambda$.  Construct a covering space $X \longrightarrow M \setminus \Lambda $ by starting with $N_{ext}$, and gluing each component $S \subset \partial N_{ext}$ to the manifold $S\times [0,\infty)$, and then defining the covering map to $M \setminus \Lambda$ to be the identity on $N_{ext}$, and the covering maps considered above on each $S\times [0,\infty)$. The maps $f,g$ and the homotopy between them all lift to $X$, and the homotopy can then be retracted into $N_{ext} \subset X$, since the complementary components are products.  Projecting down to $M$, we get a homotopy between $f,g$ in $N_{ext}$, which can be homotoped in $M$ to a homotopy in $N$.
\end{proof}
\subsection{Reconciling the definitions, and bilipschitz maps} 
 \label {PR unqualified}

 Here, we show that $B$-product regions are the same as product regions, up to some boundary error depending on $B $.  We then use the  equivalence to show that  bilipschitz images of product regions are product regions, again up to some boundary error.

\begin{prop}[$B$-PR $\implies$ PR]
\label{Bsurfaces}
 There is some $C=C(\kappa^+\epsilon,g,B)$ as follows.
Suppose that $U \subset M $ is a compact $B$-product region of genus $g $ and $\epsilon$-width $L+C$, where $L\geq 0$.
Then $U $ contains a topological subproduct region  that is a product region in the sense of Definition \ref{prdef}, and that has $\epsilon $-width  at least $L$.
\end{prop}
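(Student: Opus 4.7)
My plan is to apply Theorem~\ref{links} to produce a sub-$B$-product region $U_2 \subset U$ whose inclusion into $M\setminus \Lambda$ (for a suitable geodesic link $\Lambda$) is $\pi_1$-injective and whose image is disjoint from $\CN_{0.025}(\Lambda)$. If $C$ is chosen large enough to absorb the width loss in Theorem~\ref{links} (plus the additional losses below), this $U_2$ still has $\epsilon$-width much larger than $L$. Once I have produced a NAT simplicial ruled surface $\Sigma_g\to\CN(U)$ through every point of a slightly smaller sub-region $V'\subset U_2$, condition~(1) of Definition~\ref{prdef} will hold; condition~(2) will then be arranged by carving a further subregion $V\subset V'$ using Lemma~\ref{level surfaces}(1) applied to $V'$ viewed as a product region, whose boundary components automatically lie within $1$ of NAT simplicial ruled surfaces. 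Condition~(3) is immediate from $V\subset U$.

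To produce a NAT simplicial ruled surface through a given $p\in V'$, the idea is to construct two minimizing simplicial ruled surfaces on opposite sides of $p$ and interpolate. Let $\gamma$ be an $\epsilon$-minimizing arc in $U_2$ joining its two boundary components through $p$, and pick points $q_\pm\in\gamma$ at $\epsilon$-distance $R$ on either side of $p$, with $R$ much larger than the constant $D$ of Lemma~\ref{bdlpr}(3) and the Bounded Diameter Lemma constant $B_0$. Because $U_2$ is a $B$-product region embedded $\pi_1$-injectively in $(M\setminus \Lambda,\rho)$, Lemma~\ref{bdlpr}(3) yields minimizing simplicial ruled surfaces $f_\pm\colon\Sigma_g\to U_2$ whose images meet the $\epsilon$-ball of radius $D$ around $q_\pm$. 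Since $U_2$ is disjoint from $\CN_{0.025}(\Lambda)$ the metric $\rho$ agrees with the hyperbolic metric on the image of $f_\pm$, so these are simplicial ruled in $M$ as well; and $\pi_1$-injectivity in $M\setminus\Lambda$ makes them NAT.

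The next step is to feed $f_-,f_+$ into the Interpolation Theorem to obtain a continuous family $F_t\colon \Sigma_g\to M\setminus\Lambda$ of minimizing CSRSs, and then show $p\in F_{t^\ast}(\Sigma_g)$ for some $t^\ast$ by a degree argument. Let $N$ be the cover of $(M\setminus\Lambda,\rho)$ corresponding to the image of $\pi_1 U_2$; since this inclusion is $\pi_1$-injective and $N$ is tame with $N\simeq\Sigma_g\times \BR$, the set $U_2$ lifts homeomorphically to $\tilde U_2\subset N$. The lifts $\tilde f_\pm$ are $\pi_1$-isomorphisms, hence (after the small Freedman--Hass--Scott perturbation of Lemma~\ref{getembedded}) we can treat them as embedded level surfaces bounding a compact product region $\tilde W\subset\tilde U_2$. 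For $R$ large enough, the lift $\tilde p$ lies strictly inside $\tilde W$. The lifted interpolation $\tilde F\colon\Sigma_g\times[0,1]\to N$ has boundary representing $[\tilde f_+]-[\tilde f_-]\in H_2(N)$, so $[\tilde F]-[\tilde W]$ is a cycle in $H_3(N)=0$; this forces $\tilde F$ to have algebraic degree~$1$ onto $\tilde W$, and in particular $\tilde p$ lies in the image. Projecting back, $p\in F_{t^\ast}(\Sigma_g)$. Because the homotopy class is $\pi_1$-injective and non-elementary, the collapse underlying $F_{t^\ast}$ preserves $\pi_1$ and hence the genus of its quotient surface, so $F_{t^\ast}$ can legitimately be read as a NAT simplicial ruled surface $\Sigma_g\to \CN(U)$, with image inside $\CN(U)$ thanks to the Bounded Diameter Lemma for NAT surfaces.

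The main obstacle will be the sweep-out step: ensuring that the interpolation family, regarded in the cover $N$, strictly separates $\tilde p$ at its two endpoints so the degree argument applies. This requires choosing $R$ carefully in terms of the intrinsic diameter bound for the minimizers, and using that $\tilde U_2$ sits inside the tame product $N\simeq\Sigma_g\times\BR$ in order to talk about ``sides'' of $\tilde p$. A secondary issue, which I will dispatch with the $\pi_1$-injectivity of the class, is verifying that the CSRSs produced by the Interpolation Theorem are honest genus-$g$ simplicial ruled surfaces after the collapse, rather than maps collapsing essential curves.
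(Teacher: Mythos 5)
Your overall architecture is the paper's: drill via Theorem~\ref{links}, produce minimizing simplicial ruled surfaces with Lemma~\ref{bdlpr}, connect them with the Interpolation Theorem, and use a degree/homology sweep (your $H_3(N)=0$ plus local homology at $\tilde p$ is a fine substitute for the paper's degree-zero argument for a glued map of a closed fibered $3$-manifold missing $\Lambda$), with Freedman--Hass--Scott supplying the embedded boundary surfaces and distance-to-$\Lambda$ giving NAT-ness. The genuine gap is exactly the step you flag and then defer: the claim that $\tilde p$ lies \emph{strictly between} $\tilde S_-$ and $\tilde S_+$. As written, this does not follow from ``choosing $R$ carefully.'' Your points $q_\pm$ sit at $\epsilon$-arc-distance $R$ from $p$ along an arc you chose through $p$, but in a $B$-product region nothing you have quoted prevents the two halves of that arc from re-approaching one another, nor guarantees that the bounded-$\epsilon$-diameter surface near $q_-$ separates $p$ from $\partial_-U_2$: the functions $d_\epsilon(\cdot,\partial_\pm U_2)$ are not a priori coordinates in the middle of $U_2$, and tameness of $N\simeq\Sigma_g\times\BR$ only tells you the surfaces are level surfaces, not which side of them $\tilde p$ is on. So the crux of the proof is asserted rather than argued.

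There are two ways to close it. One is to prove the coarse-coordinate fact your per-point scheme needs: by Definition~\ref{bprdef}(1), every $x\in U_2$ lies on a surface of intrinsic $\epsilon$-diameter at most $B$ in the correct class, which must cross the width-realizing arc of $U_2$; this gives $d_\epsilon(x,\partial_-U_2)+d_\epsilon(x,\partial_+U_2)\le \mathrm{width}+2B$ (up to boundary error), after which one can count crossings of that arc with $S_\pm$ and conclude $p$ lies between them once $R$ dominates $B$ plus the Lemma~\ref{bdlpr} and Bounded Diameter constants. The other, which is what the paper does, is to avoid locating an arbitrary interior point relative to surfaces built from it: anchor the two minimizing surfaces at bounded $\epsilon$-distance from the two ends of $U_2$ (where a surface of bounded $\epsilon$-diameter near $\partial_-U_2$ visibly separates, homologically, everything deep in $U_2$ from that end), let $V$ be the region between their Freedman--Hass--Scott embeddings, and run the sweep/degree argument once to cover all of $V$ simultaneously. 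A related small slip: you invoke Lemma~\ref{level surfaces}(1) on $V'$ ``viewed as a product region'' to arrange Definition~\ref{prdef}(2) before $V'$ is known to be one; instead apply Lemma~\ref{getembedded} directly to two of the NAT simplicial ruled surfaces you have already produced near the ends of $V'$ and take the region they bound.
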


Recall  that for the converse, every product region is a $B_0$-product region, where $B_0=32(2g-2)/\epsilon$, by the Bounded Diameter Lemma.  In particular, this shows that up to some boundary error, every $B$-product region is a $B_0$-product region for this fixed $B_0$.

\begin {proof}
 Suppose that the $\epsilon $-width of $U$ is very large. Using Corollary~\ref{cutting-product} and Theorem~\ref{links}, divide $U$ into three topological product regions $$U=U_1 \cup U_2 \cup U_3,$$  satisfying (1) and (2)  in the statement of Theorem \ref{links}, where regions with adjacent indices share a boundary component, where the $\epsilon $-widths of $U_1$ and $U_3$ are  at most some $L_0=L_0(\epsilon,g,B)$, and where there is a $0.025$-separated geodesic link $\Lambda \subset U_1 \cup U_3$ with length at most $L_0$ such that  the region $U_2$ is incompressible in $M\setminus \Lambda$.  Moreover, fixing a complete metric on $M \setminus \Lambda$ with pinched negative curvature that agrees with the metric of $M$ outside $N_{0.025}(\Lambda)$, as in Theorem \ref{shrinkwrapping}, we can assume $U_2 $  is an (incompressible) $B$-product region in $M \setminus \Lambda$. 

By Lemma \ref{minsrsinpr}, there are minimizing simplicial ruled surfaces $$f_1,g_1,g_3,f_3: \Sigma_g \longrightarrow U_2 \subset M\setminus \Lambda$$  such that the images of all four surfaces are all separated from each other and from $\partial U_2$ by some to-be-determined bounded $\epsilon$-distance, the images of $f_1,g_1$ and $g_3,f_3$ lie at a bounded $\epsilon$-distance from $U_1$ and $U_3$, respectively, and where $f_1,f_3$ are separated by both $g_1$ and $g_3$ within $U$.  (So in other words, within $U_2$ the four surfaces appear in the order in which they were initially listed.) By choosing these surfaces to have distance at least $\epsilon$ to $\partial U_2$,  we can ensure that they are all $\epsilon$-NAT when included into $M$. Applying Freedman--Hass--Scott (Lemma \ref{getembedded}),  choose  level surfaces $S_1,S_3$ in $U_2$  that are contained in $1$-neighborhoods of  the images of $g_1,g_3$,  respectively. Then $S_1,S_3$  bound a topological product region $V \subset U_2$  whose $\epsilon $-width differs from that of $U$  by only some constant $C=C(\epsilon,g,B)$.  We must show that every point of $V$  is in the image of a NAT simplicial ruled surface $S \longrightarrow U$ in the homotopy class of a level surface.

We can assume that  the images of $f_1,g_1,g_3, f_3$ are all at least $R$ from each other, where $R$ is as in the Interpolation Theorem. Hence, after replacing $f_1,f_3$ by nearby maps, we can assume that there is a homotopy $(f_t), \ t \in [1,3],$ from $f_1$ to $f_3$ in $M_{\Lambda}$  such that $f_t$ is a simplicial ruled surface for all $t$.  One can also homotope $f_1$ to $f_3$ by first homotoping it to $S_1$ within $U_2 \setminus V$, then homotoping $S_1$ to $S_3$ through level surfaces in $V$, and finally homotoping $S_3$ to $f_3$ in $U_2 \setminus V$.  This homotopy  passes through every point of $V $ exactly once.  Concatenating it with the homotopy $(f_t)$ by simplicial ruled surfaces gives a map from $\Sigma_g \times S^1$ to $M $ whose image is disjoint from $\Lambda $. If this map were smooth it would have degree zero, showing that every point of $V$ lies in the image of the homotopy by simplicial ruled surfaces, and we would be done, and the general case follows from the smooth case by smooth approximation. Hence, every point in $V$  is in the image of some $f_t$.  And  as long as $d(V,\partial U_2)$  is large, every such $f_t$ is a NAT simplicial ruled surface in $M$. \end{proof}

As  a consequence of Proposition \ref{Bsurfaces}, we now prove the following proposition, promised at the beginning of \S \ref{sec: WPR}.

\begin{prop}[Bilipschitz maps preserve PRs]\label{bilipschitzpreservation}
Given $K\geq 1$ and $\epsilon,g$, such that both $\epsilon,K\epsilon$ are less than the Margulis constant, there is some $C=C(K,\epsilon, g)$ as follows. 

Let $M_1,M_2$ be  complete hyperbolic $3$-manifolds and let $U_1 \subset M_1 $ be a genus $g$ product region of $\epsilon$-width  at least $L+C$, where $L>0$, and let $$F : U_1 \longrightarrow U_2 \subset M_2 $$ be a $K $-bilipschitz embedding onto some subset $U_2 \subset M_2 $.  Then $U_2 $ contains a product region $V$ of $\epsilon$-width at least $L $ whose inclusion into $U_2$ is a homotopy equivalence.
\end{prop}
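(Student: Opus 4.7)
The plan is to use Proposition \ref{Bsurfaces} as a black box: a bilipschitz image of a product region is not literally a product region in the sense of Definition \ref{prdef} (since NAT simplicial ruled surfaces are not preserved), but it will automatically be a $B$-product region for some $B=B(K,\epsilon,g)$, and then \ref{Bsurfaces} converts $B$-product regions back to product regions at the cost of a little width.

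First I would transfer the surface structure across $F$. Given any point $p_2 \in U_2$, set $p_1 = F^{-1}(p_2)$ and pick a NAT simplicial ruled surface $\sigma : \Sigma_g \to \CN(U_1)$ through $p_1$ that is a homotopy equivalence. Then $F \circ \sigma$ is a homotopy equivalence $\Sigma_g \to F(\CN(U_1))$ through $p_2$. The only point of NAT-ness was to invoke the Bounded Diameter Lemma rel $M_{\leq \epsilon}$, which gives $\sigma$ intrinsic $\epsilon$-diameter at most $B_0 = 8(g-1)/\epsilon$ in $M_1$. Because $F$ is $K$-bilipschitz it distorts lengths by at most $K$ and sends $(M_1)_{>\epsilon}$ into $(M_2)_{>\epsilon/K}$; therefore the intrinsic $(\epsilon/K)$-diameter of $F \circ \sigma$ in $M_2$ is at most $KB_0$. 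Applying Lemma \ref{differentepsilons} to replace $\epsilon/K$ by $\epsilon$ gives a bound $B_1 = B_1(K,\epsilon,g)$ on the intrinsic $\epsilon$-diameter of $F\circ\sigma$. The same bilipschitz-plus-Lemma \ref{differentepsilons} argument, applied to the components of $\partial U_2 = F(\partial U_1)$, yields $\diam_\epsilon(\partial U_2) \leq B_2$ for some $B_2 = B_2(K,\epsilon,g)$. Since $M_2$ is hyperbolic on all of $U_2$, we conclude that $U_2$ is a $B$-product region in the sense of Definition \ref{bprdef}, for $B = \max\{B_1, B_2 - 2\}$.

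Second, I would control the width. Any path $\gamma_2$ from one boundary component of $U_2$ to the other pulls back via $F^{-1}$ to a path $\gamma_1$ joining the boundary components of $U_1$, and the same bilipschitz argument as above shows that $\length(\gamma_2 \cap (M_2)_{>\epsilon/K}) \geq K^{-1}\length_\epsilon(\gamma_1) \geq K^{-1}(L+C)$. Taking infima and converting $(\epsilon/K)$-width to $\epsilon$-width via Lemma \ref{differentepsilons} yields a lower bound of the form $\alpha(L+C) - \beta$ for the $\epsilon$-width of $U_2$, where $\alpha,\beta$ depend only on $K,\epsilon$.

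Finally, I would apply Proposition \ref{Bsurfaces} to $U_2$. Letting $C_0 = C_0(\kappa^+ = -1, \epsilon, g, B)$ be the constant furnished by that proposition, I choose $C = C(K,\epsilon,g)$ large enough that $\alpha(L+C) - \beta \geq L + C_0$ for every $L > 0$ (which is possible because $\alpha$ depends only on $K$). Then $U_2$ is a $B$-product region of $\epsilon$-width at least $L + C_0$, so Proposition \ref{Bsurfaces} produces a topological subproduct region $V \subset U_2$ that is a product region in the sense of Definition \ref{prdef} and has $\epsilon$-width at least $L$. A topological subproduct region of a topological product region shares its level surfaces up to isotopy, so $V \hookrightarrow U_2$ is a homotopy equivalence. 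The main bookkeeping obstacle is simply tracking the constants through the two applications of Lemma \ref{differentepsilons} so that $C$ indeed depends only on $K,\epsilon,g$ and not on $L$; this works because every bilipschitz distortion enters multiplicatively with factors determined by $K$, while the additive terms from Lemma \ref{differentepsilons} are absorbed into $C_0$ and $C$.
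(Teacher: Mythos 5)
Your overall strategy (show the image is a $B$-product region, then convert back using Proposition \ref{Bsurfaces} and Lemma \ref{differentepsilons}) is the same as the paper's, but the key step is not supported as written, and it is precisely the point the paper's proof is actually about. The bound on the intrinsic diameter of $F\circ\sigma$ rel the thin part of $M_2$ does not follow from ``$F$ sends $(M_1)_{>\epsilon}$ into $(M_2)_{>\epsilon/K}$.'' Write $\sigma(\gamma)=A\cup B$, where $A=\sigma(\gamma)\cap (M_1)_{>\epsilon}$ has length at most $B_0$ and $B$ is the (possibly very long) part lying in $(M_1)_{\leq\epsilon}$. To bound $\length\bigl(F(\sigma(\gamma))\cap (M_2)_{>\epsilon''}\bigr)$ you must know that $F(B)$ stays in the $\epsilon''$-thin part of $M_2$, i.e.\ you need the containment $F\bigl(U_1\cap (M_1)_{\leq\epsilon}\bigr)\subset (M_2)_{\leq \epsilon''}$ (with $\epsilon''=K\epsilon$, say); your thick-to-thick containment constrains only $F(A)$ and says nothing about where $F(B)$ goes, and with the threshold $\epsilon/K$ it is not even length-plausible, since $F$ stretches loops by up to $K$. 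Moreover, neither containment is automatic, because $F$ is defined only on $U_1$ and so does not preserve homotopic essentiality: a loop of length at most $2\epsilon$ through $x\in U_1$ that is essential in $M_1$ maps to a short loop that is essential in $U_2$ but may be nullhomotopic in $M_2$, so $F(x)$ can be thick in $M_2$ even though $x$ was thin in $M_1$; symmetrically, the thick-to-thick statement your width estimate relies on can fail because a short loop essential in $M_2$ may pull back to a loop essential in $U_1$ but nullhomotopic in $M_1$. The paper's first paragraph is exactly the repair: if $F(\gamma)$ is nullhomotopic in $M_2$ it bounds a disk of diameter at most $K\epsilon$, which must exit $U_2$ since $F(\gamma)$ is essential in $U_2$, so this can only happen within a $K^2\epsilon$-collar of $\partial U_1$; one then shrinks $U_1$ slightly to arrange $F(U_1\cap(M_1)_{\leq\epsilon})\subset (M_2)_{\leq K\epsilon}$, and the analogous disk argument in $M_1$ handles the reverse direction needed for the width.

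A second, smaller issue of the same flavor: the composition $F\circ\sigma$ is not defined, because Definition \ref{prdef} only provides surfaces mapping into a regular neighborhood $\CN(U_1)$, on which $F$ need not be defined. The paper fixes this by first passing, via the Bounded Diameter Lemma and Corollary \ref{cutting-product}, to a subproduct region $V\subset U_1$ of nearly the same $\epsilon$-width whose witnessing surfaces map into $U_1$ itself (and which avoids the boundary collar above); this shrinking is where part of the constant $C$ comes from. With these repairs your outline becomes the paper's proof.
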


Here, we are only assuming our manifolds are hyperbolic (really it only matters that $M_2$ is) because we require the metric on  a product region to be hyperbolic, and hyperbolicity is certainly not preserved under bilipschitz maps.

 \begin {proof}
 Perhaps  shrinking $U_1$  slightly,  we can assume that $F$ maps $$F(U_1 \cap (M_1)_{< \epsilon}) \subset (M_2)_{< K\epsilon}.$$ For if not, some homotopically essential loop $\gamma$ of length at most $\epsilon$ in $U_1$ is mapped  to a homotopically trivial loop $F(\gamma)$ in $U_2$. Since $$ \length F(\gamma) \leq K\epsilon,$$ the loop $F(\gamma)$ bounds a disk with diameter at most $K\epsilon$ in $M_2$. This disk must exit $U_2$,  which can only happen if $\gamma$  was within $K^2 \epsilon$ of $\partial U_1$.

 By the Bounded Diameter Lemma and Corollary \ref{cutting-product}, we can pass to a subproduct region $V \subset U_1 $, with nearly the same $\epsilon$-width as $U_1$, such that every point $p \in V $ is in the image of a NAT simplicial ruled surface $f : S \longrightarrow U_1 $  in the homotopy class of a level surface. These surfaces have  intrinsic diameter at most $B_0 :=32(2g-2)/\epsilon$ rel $M_{< \epsilon}$, by the Bounded Diameter Lemma, and the components of $\partial V$ have $\epsilon$-diameter at most $B_0+2$.
 For any $f$ as above, the   composition $F\circ f$ has   intrinsic $K\epsilon$-diameter at most $KB_0$. A similar $K\epsilon$-diameter bound applies to the components of $f(\partial V)$. Lemma \ref{differentepsilons} transforms these into $\epsilon$-diameter bounds. Therefore, $f( V)$ is a $B$-product region, for some $B= B(K,g,\epsilon)$, and Proposition~\ref{Bsurfaces} finishes the proof. \end{proof}

 From now on, we will always work with product regions instead of $B$-product regions. 

\subsection{Multiple product regions} In this short section,  we  prove two properties of  \emph{pairs} of product regions in $M$.
 First, we  show that if two product regions overlap significantly, they share a  level surface.

\begin{lem}\label{intersectprs}
	Given $g,\epsilon$, there is some $D=D(g,\epsilon,\kappa^+)$  as follows.  Suppose that $U,V$ are two product regions in $M$, both with genus at most $g$,  and suppose that there is some $p \in U \cap V $ such that
$$d_\epsilon(p, \partial U \cup \partial V) \geq D.$$
 Then there is an embedded surface $S\subset M$  that is a level surface in the interiors of both $U$ and $V$.  Moreover, we can assume  every point in $S$ lies at a $d_\epsilon$-distance at most $D$  from $p$.
\end{lem}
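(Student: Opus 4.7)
The plan is to drill out a geodesic link using Theorem~\ref{links} so that $U$ and $V$ both become $\pi_1$-injective in the complement, then show that a sufficiently deep common point $p$ forces their $\pi_1$-images to coincide, and finally extract $S$ via Freedman--Hass--Scott and Waldhausen's cobordism theorem.

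First, I will apply Theorem~\ref{links} to both $U$ and $V$ to obtain decompositions $U=U_1\cup U_2\cup U_3$ and $V=V_1\cup V_2\cup V_3$, together with a $0.025$-separated geodesic link $\Lambda=\Lambda_U\cup\Lambda_V\subset U_1\cup U_3\cup V_1\cup V_3$, such that when $M\setminus\Lambda$ is endowed with a complete pinched negatively curved metric (Proposition~\ref{neg-metric}), both $U_2$ and $V_2$ are $\pi_1$-injective product regions there. Since the $\epsilon$-widths of $U_1,U_3,V_1,V_3$ and the length of $\Lambda$ depend only on $\kappa^+,g,\epsilon$, after taking $D$ large enough I may assume that $p\in U_2\cap V_2$ lies at $\epsilon$-distance at least some large $D'$ from $\partial U_2\cup\partial V_2\cup\Lambda$ inside $M\setminus\Lambda$.

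Next, I apply Lemma~\ref{bdlpr}(3) in $M\setminus\Lambda$ to obtain NAT minimizing simplicial ruled surfaces $f_U:\Sigma_U\to U_2$ and $f_V:\Sigma_V\to V_2$ in the homotopy class of a level surface, each passing within bounded $\epsilon$-distance of $p$ and, by the Bounded Diameter Lemma for NAT surfaces, of bounded $\epsilon$-diameter. The point is that if $D'$ exceeds these diameters, then $f_U(\Sigma_U)\subset U_2\cap V_2$ and $f_V(\Sigma_V)\subset U_2\cap V_2$. Because $f_U$ is a homotopy equivalence onto its correct class, its $\pi_1$-image in $\pi_1(M\setminus\Lambda,p)$ equals $\pi_1(U_2)$; the inclusion $f_U(\Sigma_U)\subset V_2$ then forces $\pi_1(U_2)\subset\pi_1(V_2)$, and symmetrically $\pi_1(V_2)\subset\pi_1(U_2)$. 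Hence $\pi_1(U_2)=\pi_1(V_2)$ as subgroups of $\pi_1(M\setminus\Lambda,p)$, so in particular $\Sigma_U\cong\Sigma_V$ and $f_U,f_V$ are homotopic in $M\setminus\Lambda$.

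Finally, Freedman--Hass--Scott (Lemma~\ref{getembedded}) allows me to homotope $f_U$ to an embedded surface $S$ inside any preassigned neighborhood of $f_U(\Sigma_U)$, so I choose the neighborhood to lie in $U_2\cap V_2\setminus\Lambda$ near $p$. Then $S\subset U_2$ is embedded and incompressible in $U_2$ with $\pi_1(S)=\pi_1(U_2)$, and since $U_2\cong\Sigma\times I$ topologically, Waldhausen's cobordism theorem (\S\ref{intervalbundle}) shows $S$ is isotopic in $U_2$ to a level surface of $U_2$, hence is a level surface of $U$; the symmetric argument gives that $S$ is also a level surface of $V$, and the bounded $\epsilon$-diameter of $f_U(\Sigma_U)$ provides the claimed $d_\epsilon$-bound. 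The main obstacle is bookkeeping the constants in the drilling so that $p$ is genuinely deep in $U_2\cap V_2$ and far from $\Lambda$; once this is arranged, the coincidence of $\pi_1$-images, and with it the conclusion, follows immediately.
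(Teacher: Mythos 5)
Your overall strategy can probably be pushed through, but as written it has a genuine gap at its very first step: you apply Theorem~\ref{links} to $U$ and to $V$ \emph{simultaneously}, even though $U$ and $V$ overlap. Theorem~\ref{links} (and Corollary~\ref{linkkor}) are set up for a single product region, or for a \emph{disjoint} collection; here nothing prevents the link $\Lambda_V\subset V_1\cup V_3$ from running through $U_2$ or through the regular neighborhood $\CN(U_2)$ (the regions $V_1,V_3$ hug $\partial V$, but $\partial V$ can pass anywhere through $U_2$ away from $p$), nor prevents $\Lambda_U$ and $\Lambda_V$ from coming within $0.025$ of each other. If $\Lambda_V$ meets $\CN(U_2)$, then after drilling and modifying the metric as in Proposition~\ref{neg-metric}, $U_2$ is no longer a product region in $M\setminus\Lambda$ (condition (3) of Definition~\ref{prdef} fails, and $U_2\setminus\Lambda_V$ is not even a product topologically), the final clause of Theorem~\ref{links} does not apply (it concerns $M\setminus\Lambda_U$ only), and the $\pi_1$-injectivity of $U_2$ in $M\setminus\Lambda$ is no longer justified. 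Since Lemma~\ref{bdlpr}(3), the Bounded Diameter Lemma argument, and the whole $\pi_1(U_2)$ versus $\pi_1(V_2)$ comparison all rest on $U_2$ and $V_2$ being $\pi_1$-injective product regions in one and the same drilled manifold, the flagged ``bookkeeping of constants'' is not the real obstacle; the collision of the two drillings is, and it needs an actual argument (or a different arrangement of the links) that the proposal does not supply.

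Two smaller points: the step ``$\pi_1(U_2)\subset\pi_1(V_2)$ and $\pi_1(V_2)\subset\pi_1(U_2)$, hence equal'' only gives containments up to conjugation, and equality requires an argument (e.g.\ that a closed surface subgroup of a closed surface group has finite index, so the two containments force index one by Euler characteristic); likewise $f_U$ and $f_V$ are only homotopic after precomposing with a mapping class. These are repairable, but they are asserted rather than proved. Finally, note that the paper's own proof needs none of this machinery: by Lemma~\ref{level surfaces}(1) there is an embedded level surface $S$ of $U$ within bounded $\epsilon$-distance of $p$, and by Corollary~\ref{cutting-product} it sits inside a bounded-width subproduct region $V'\subset V$ with $V'\subset U$; since $S$ is incompressible in $U$ it is incompressible in $V'$, hence a level surface of $V'$ and so of $V$. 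No drilling, no injectivity in a link complement, and no surface-subgroup argument is required, which is why the constant $D$ there depends only on $g,\epsilon,\kappa^+$ in a transparent way.
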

\begin {proof}
If $D$ is much larger than the constants from Lemma \ref{level surfaces} (1) and Corollary \ref{cutting-product}, there is a level surface $S$ for $U$  that lies within $D$ of $p$ that is  contained in a subproduct region $V' \subset V$, where $V'$  is a subset of $U$. This $S$ is an incompressible embedded surface in $V'$, and hence is a  level surface.  Certainly, if $D$  is large enough this $S$  will lie in the interiors of $U,V$.\end {proof}

Second,  we show that if wide product regions co-bound a trivial interval bundle, the whole picture is contained in a single product region.  This is important in the proof of Theorem \ref{nonhakenthm}.  Because the metric on a product region is assumed to be hyperbolic, even when $M$ has variable curvature, we must assume  that $M$  itself is hyperbolic here.

 \begin {lem}\label {trivial interval bundles}
Given $g,\epsilon$, there is some $L=L(g,\epsilon)$  as follows. Suppose that $U_0,U_1$ are two disjoint genus $g$ product regions in a complete hyperbolic $3$-manifold $M$, that each have $\epsilon $-width at least $L $, and that $V$  is a component of $M \setminus int(U_0 \cup U_1)$  that is a trivial interval bundle (i.e., a topological product region). Then $U_0 \cup V \cup U_1$  is a product region. 
 \end {lem}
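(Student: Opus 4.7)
The set $W := U_0 \cup V \cup U_1$ is a topological product region $\cong \Sigma_g \times I$, and conditions (2) and (3) of Definition \ref{prdef} follow immediately: $\partial W \subset \partial U_0 \cup \partial U_1$ inherits the $1$-neighborhood property from the product region structure of $U_0, U_1$, and $M$ is hyperbolic. Condition (1) at $p \in U_0 \cup U_1$ is easy, since the NAT simplicial ruled surface through $p$ from the product region structure of the relevant $U_i$ is automatically a homotopy equivalence with $\CN(W)$ (as $U_i \hookrightarrow W$ is one). The real content of the lemma lies in producing, for each $p \in V$, a NAT simplicial ruled surface $\Sigma_g \to \CN(W)$ through $p$ that is a homotopy equivalence.

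My plan is to pass to the cover $\pi : M_W \to M$ corresponding to the surface subgroup $\pi_1 W \cong \pi_1 \Sigma_g \subset \pi_1 M$. Since $W \hookrightarrow M$ is a $\pi_1$-isomorphism onto this subgroup, $W$ lifts homeomorphically to a copy $\hat W \subset M_W$, with sublifts $\hat U_0, \hat V, \hat U_1$ of $U_0, V, U_1$, and the inclusion $\hat W \hookrightarrow M_W$ is itself a $\pi_1$-isomorphism. Using Lemma \ref{bdlpr}(3) in $M_W$, I will choose NAT minimizing simplicial ruled surfaces $\hat f_0 : \Sigma_g \to \hat U_0$ and $\hat f_1 : \Sigma_g \to \hat U_1$; both are $\pi_1$-injective in $M_W$, and they are homotopic within $\hat W$ via the topological product structure. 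The Interpolation Theorem applied in $M_W$ then yields a homotopy $(\hat F_t)_{t \in [0,1]}$ from $\hat f_0$ to $\hat f_1$ through minimizing collapsed simplicial ruled surfaces, and all $\hat F_t$ are $\pi_1$-injective in $M_W$ by their homotopy class.

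To locate a surface through a given $p \in V$, I will invoke a degree argument. Concatenating $(\hat F_t)$ with a topological sweep $(\hat G_t)$ of $\hat W$ from $\hat f_1$ back to $\hat f_0$ through level surfaces of the topological product, arranged so that each point of $\hat V$ is hit exactly once, yields a continuous map $\Phi : \Sigma_g \times S^1 \to M_W$. The key observation will be that $\Phi$ has degree zero: the cover $M_W$ is noncompact, because its fundamental group $\pi_1 \Sigma_g$ has cohomological dimension $2$, while the fundamental group of any closed hyperbolic $3$-manifold has cohomological dimension $3$, so $M_W$ cannot be closed; a continuous map from a compact orientable $3$-manifold to a noncompact orientable $3$-manifold has degree zero. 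At each $p \in \hat V$, the topological sweep contributes algebraic local degree $\pm 1$, forcing the SRS interpolation to hit $p$ with nonzero algebraic count. Hence some $\hat F_{t_p}$ will pass through $p$.

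For $L = L(g, \epsilon)$ taken larger than twice the Bounded Diameter Lemma constant $B_0 = 8(g-1)/\epsilon$, each $\hat F_{t_p}$ has intrinsic $\epsilon$-diameter at most $B_0$ in $M_W$, and since $p \in \hat V$ is at $\epsilon$-distance at least $L$ from $\partial \hat W$ through the wide product regions $\hat U_0, \hat U_1$, the image of $\hat F_{t_p}$ remains inside $\hat W$. Being $\pi_1$-injective in $M_W$, the surface $\hat F_{t_p}$ is NAT in $M_W$, and projecting via $\pi$ produces $F_{t_p} = \pi \circ \hat F_{t_p} : \Sigma_g \to W \subset \CN(W)$, which is NAT in $M$ (NAT-ness transfers under the local isometry $\pi$, because essential loops remain essential under the injection $\pi_1 M_W \hookrightarrow \pi_1 M$) and a homotopy equivalence with $\CN(W)$ (since $\pi|_{\hat W}$ is a homeomorphism). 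The main obstacle in the plan will be rigorously justifying the degree computation for the concatenated map, but this should be routine once one exploits that $M_W$ cannot be a closed $3$-manifold.
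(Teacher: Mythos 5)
Your plan works only under an assumption the lemma does not make: that $U_0\cup V\cup U_1=W$ is incompressible in $M$. The very first move, passing to ``the cover corresponding to the surface subgroup $\pi_1 W\cong\pi_1\Sigma_g\subset\pi_1 M$,'' presupposes that $\pi_1 W\to\pi_1 M$ is injective. Product regions in this paper are frequently compressible (that is the whole point of double compression bodies, barriers, etc.), and the lemma is invoked in situations where they are. If $W$ compresses in $M$, then in the cover $M_W$ corresponding to the \emph{image} of $\pi_1 W$ the lifted copy $\hat W$ is still compressible (a compressing disk downstairs lifts, since the covering map is $\pi_1$-injective), so your surfaces $\hat f_0,\hat f_1$ are not $\pi_1$-injective in $M_W$ and neither Lemma \ref{bdlpr}(3) nor the Interpolation Theorem applies there. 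Your noncompactness claim for $M_W$ also breaks in this case: the image subgroup need not be a surface group, and can even be all of $\pi_1 M$ with $M$ closed (e.g.\ a $\pi_1$-surjective compressible product region), in which case $M_W=M$ is compact and the cohomological-dimension argument, and hence the degree-zero conclusion, evaporates. So the proposal proves the lemma only in the incompressible case.

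The paper's proof is designed precisely to avoid this hypothesis. It splits each $U_i$ into an inner half $U_i^1$ adjacent to $V$ and an outer half $U_i^2$, and uses Theorem \ref{links} to drill a $0.025$-separated geodesic link $\Lambda\subset U_0^2\cup U_1^2$ so that $V$ and the minimizing simplicial ruled surfaces $f_0,f_1\subset U_0^1,U_1^1$ become incompressible in $M\setminus\Lambda$, equipped with the pinched negatively curved metric of Lemma \ref{neg-metric}. The Interpolation Theorem is then applied in $M\setminus\Lambda$, and the degree-zero statement for the concatenated mapping-torus map comes not from noncompactness of a cover but from the fact that the map misses $\Lambda$; finally, since the interpolating surfaces lie far from $\Lambda$ (the links sit in the outer halves), they are genuine NAT simplicial hyperbolic surfaces in $M$ itself. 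If you want to salvage your cover-based argument, you would still need an incompressibility input of exactly this kind, which is what the link-drilling provides; as written, the proposal has a genuine gap.
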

 \begin {proof}
 It suffices to show that every point $p\in V$  is in the image of a NAT simplicial ruled surface $f : S \longrightarrow U_0 \cup V \cup U_1$ in the homotopy class of a level surface. Using Corollary \ref{cutting-product} divide each $U_i$ into two product regions $U_i^1,U_i^2$, of roughly equal width, where $U_i^1$ is adjacent to $V $. As long as $L$ is large, there is a $0.025$-separated geodesic link $$\Lambda \subset U_1^2 \cup U_2^2$$  such that  the inclusion $V \hookrightarrow M \setminus \Lambda$  is $\pi_1$-injective, by Corollary \ref{linkkor}.
  By Lemma \ref{minsrsinpr}, if $L$  is large there is  for each $i $ a minimizing simplicial ruled surface $f_i : S \longrightarrow U_i^1$  in the homotopy class of a level surface.  The maps $f_0$ and $f_1$  are incompressible and homotopic in $M \setminus \Lambda $,  so after replacing them by nearby surfaces,  the Interpolation Theorem gives a homotopy $(f_t)$  between them in $M \setminus \Lambda $  such that $f_t$ is simplicial ruled for all $t$. 
  
    The rest of the  argument is the same as the end of the proof of Proposition \ref{Bsurfaces}.   Briefly, if $L$  is large  we can also assume that there is a homotopy from $f_0$ to $f_1$  that touches each point of $V $ once. So, concatenating that with $(f_t)$  gives a map from some closed mapping torus to $M$ that  misses $\Lambda$, and hence has degree zero.  Therefore, each point of $V$ is in the image of some $f_t$.   And the image of any such $f_t$   is far from the product regions $U_i^2$ if $L$ is large, so any such $f_t$  is a NAT  simplicial ruled surface in $M$. \end {proof}

\subsection{A covering theorem} Recall that our main source for product regions is  degenerate ends of hyperbolic $3$-manifolds.  In Section \ref {sec:ends}, we saw that Canary's Covering Theorem  \cite{Canarycovering} limited the ways in which degenerate ends can cover.  Similar covering theorems can  be made for product regions with sufficiently large width;  below is one version that will find use in later sections.   We state it just for $\epsilon$-thick manifolds, since then we can  deduce it as a corollary of Canary's result via a geometric  limit argument. However, we expect that a similar statement  holds for product regions with large $\epsilon$-width in arbitrary $M$, and that one  may be able to prove it using arguments more or less along the same lines as those in Canary's paper.

\begin{sat}
[A PR covering theorem] \label {PR covering theorem}    There is some $L=L(g,\epsilon)$  as follows. Let $M$  be a  complete hyperbolic $3$-manifold and let $$U,V,W \subset M $$ be $\epsilon $-thick, genus $g $ product regions  that have disjoint interiors,  and where each shares  one boundary component with the next. Assume that $$f : M \longrightarrow \bar M $$ is a covering map, that $f|_U$  is an embedding, and that the widths of $U,W$  are at least $L$.  Then either
\begin {enumerate}
\item the restriction $f |_{U \cup V}$ is  an embedding,
\item[(2)] some component of $\bar M \setminus f(int(U))$ is a  compact interval bundle.
\end {enumerate}
\end{sat}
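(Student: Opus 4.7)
The plan is a contradiction argument via geometric limits, reducing to Canary's Covering Theorem. If the statement fails, there is a sequence $L_n\to\infty$ and counterexamples $(M_n,\bar M_n,f_n,U_n,V_n,W_n)$; in particular, there exist $x_n\ne y_n$ in $U_n\cup V_n$ with $f_n(x_n)=f_n(y_n)$, but no component of $\bar M_n\setminus f_n(int(U_n))$ is a compact interval bundle. Since $f_n|_{U_n}$ is injective we may assume $x_n\in V_n$, and by subdividing $V_n$ via Corollary~\ref{cutting-product} (absorbing surplus width into enlarged $U_n$ or $W_n$ while preserving all hypotheses) we may further assume $V_n$ has $\epsilon$-width bounded by some $B=B(\epsilon,g)$. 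Basepoint $M_n$ at $x_n$ and pass to subsequential pointed geometric limits $(M_n,x_n)\to(M_\infty,x_\infty)$ and $(\bar M_n,f_n(x_n))\to(\bar M_\infty,\bar x_\infty)$, with covering maps converging to a covering map $f_\infty:M_\infty\to\bar M_\infty$.

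In the limit, $V_\infty$ is a bounded product region through $x_\infty$, while $U_n$ and $W_n$, having $\epsilon$-widths tending to infinity, limit to product-region neighborhoods $U_\infty$ and $W_\infty$ of two distinct degenerate ends of $M_\infty$, by the converse to Proposition~\ref{constructing prs}. Since $\pi_1 M_\infty$ is a limit of marked genus-$g$ surface groups and is therefore finitely generated, the Tameness Theorem yields $M_\infty\cong\Sigma_g\times\BR$. The $\epsilon$-thickness of the $M_n$ combined with injectivity of $f_n|_{U_n}$ gives $d_{M_n}(x_n,y_n)\ge\epsilon$, and passing to the limit shows that $f_\infty|_{U_\infty}$ is injective. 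To see that $f_\infty$ itself is nontrivial: if $d(x_n,y_n)$ stays bounded, we extract $y_n\to y_\infty\ne x_\infty$ with $f_\infty(x_\infty)=f_\infty(y_\infty)$; if $y_n$ instead escapes (possible when $y_n\in U_n$), we use a bounded-length loop in $\bar M_n$ lifting to an arc from $x_n$ to $y_n$, shortened if necessary by replacing $y_n$ by a nearer preimage in $f_n^{-1}(f_n(x_n))$, to produce a nontrivial limit deck transformation of $f_\infty$, as in the strong-limit constructions of \S\ref{convergence section}. Either way $f_\infty$ is a nontrivial cover.

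Now apply Canary's Covering Theorem to the degenerate end contained in $U_\infty$. Its first alternative would make $\bar M_\infty$ a closed surface bundle or semi-bundle, but every cover $\Sigma_g\times\BR\to\bar M_\infty$ of that type has a translation-type deck element that identifies distant points within $U_\infty$, violating injectivity of $f_\infty|_{U_\infty}$. Hence the second alternative must hold, and $f_\infty$ embeds a neighborhood of that end onto a neighborhood of an end $\CE'$ of $\bar M_\infty$; the same argument applied to $W_\infty$ gives an end $\CE''$. The only way $f_\infty$ can be nontrivial while embedding both ends of $M_\infty$ is for $\CE'=\CE''$, with the deck group equal to $\BZ/2$ and acting on $\Sigma_g\times\BR$ as $(x,t)\mapsto(\sigma(x),-t)$ for some free involution $\sigma$ of $\Sigma_g$. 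Thus $\bar M_\infty$ is a twisted $[0,\infty)$-bundle over $\Sigma_g/\sigma$, and $\bar M_\infty\setminus f_\infty(int(U_\infty))$ is itself a compact twisted interval bundle over $\Sigma_g/\sigma$.

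Finally we transfer this back. By geometric convergence there is, for large $n$, a $(1+o(1))$-bilipschitz embedding of a large compact piece of $\bar M_\infty$ into $\bar M_n$ carrying the compact twisted interval bundle above to a subset of $\bar M_n\setminus f_n(int(U_n))$ bounded by a surface isotopic to $f_n(\partial U_n)$. Using the incompressibility of $\partial U_n$ after drilling an appropriate short link (Theorem~\ref{links}), Proposition~\ref{bilipschitzpreservation}, and Waldhausen's cobordism theorem to match the bilipschitz-image boundary with $f_n(\partial U_n)$ up to isotopy, we upgrade the subset to a full component, contradicting the assumption. The hard parts will be (i) producing the nontrivial limit deck transformation when $y_n$ escapes to infinity, where one must carefully shorten the identification to a bounded loop in $\bar M_n$ that survives the geometric limit, and (ii) the transfer-back step, ensuring that the bilipschitz image in $\bar M_n$ is a genuine \emph{component} of the complement rather than merely a subset.
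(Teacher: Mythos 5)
Your overall architecture is the same as the paper's (cut $V$ down to a bounded-width middle piece, base at the failure point, pass to pointed geometric limits of the covering maps, apply the Covering Theorem in the limit, then transfer back), but there is a genuine gap at the pivotal claim "either way $f_\infty$ is a nontrivial cover." In the case where $y_n$ escapes, i.e.\ where every preimage of $f_n(x_n)$ other than $x_n$ lies at distance tending to infinity from $x_n$, there is no reason for a bounded-length essential loop at $f_n(x_n)$ or a "nearer preimage" to exist: in that situation the pointed limit $f_\infty:(M_\infty,x_\infty)\to(\bar M_\infty,\bar x_\infty)$ is simply a homeomorphism, and Canary's Covering Theorem yields nothing. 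This is precisely the hard case of the theorem — non-injectivity of $f_n$ on $U_n\cup V_n$ caused by identification with a point deep inside $U_n$ is invisible to any limit based in $V_n$. The paper's proof (Lemma \ref{propagating}) does not try to show the limit cover is nontrivial; instead, when the limit is a homeomorphism it deduces that for large $n$ the map $f_n$ embeds a second region $U_n'\cup V_n\cup W_n$ overlapping $U_n$, and then applies a separate combinatorial statement for two overlapping embedded product regions (Lemma \ref{overlapping embeddings}, which rests on the shared-level-surface Lemma \ref{intersectprs} and Waldhausen's cobordism theorem) to force either injectivity on $U_n\cup V_n$ or a compact interval-bundle component of the complement. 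Your proposal has no analogue of this mechanism, and without it the contradiction cannot be completed.

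Some secondary points: the asserted bound $d_{M_n}(x_n,y_n)\ge\epsilon$ does not follow from $\epsilon$-thickness of $M_n$ together with injectivity of $f_n|_{U_n}$ (a short essential loop at $f_n(x_n)$ in $\bar M_n$ lifts to an arc, not a loop, and $\bar M_n$ need not be thick), though this can be routed around since a shrinking displacement still forces $f_\infty$ to be nontrivial; the identification $M_\infty\cong\Sigma_g\times\BR$ should be obtained from Lemma \ref{limit-double} rather than from "$\pi_1 M_\infty$ is a limit of marked surface groups and hence finitely generated," since geometric limits are not a priori finitely generated; and in your width-reduction step, enlarging $U_n$ by the maximal initial piece on which $f_n$ embeds is fine, but you must also re-derive the counterexample hypothesis that no component of $\bar M_n\setminus f_n(int(U_n))$ is a compact interval bundle for the enlarged $U_n$, as the paper does at the end of its proof using $U\subset V^k$. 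These are repairable; the trivial-limit case is the real gap.
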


Intuitively, if we restrict a covering map $f$ to a wide product region in $M$, which above is $U\cup V \cup W$, and we know that the $f$ is an embedding on a sufficiently wide subproduct region, then either $f$ is an embedding on almost all of the product region, or the covering map wraps the product region up into a interval bundle in $\bar M$.

 A less brief, but more clear  way to rephrase (2)  is that  either some component of $f(\partial U)$  bounds a twisted interval bundle over a non-orientable surface in $\bar M \setminus f(int(U))$, or  the  entire  complement $\bar M \setminus f(int(U))$  is a trivial  interval bundle,  in which case $\bar M$ fibers  over the circle  in such a way that level surfaces of $f(U)$ are fibers.

Note that with two applications of Theorem \ref{PR covering theorem},  one can prove that if a product region $V$ has a sufficiently wide subproduct region $U \subset V$ that embeds  under a covering map, then $V$  has a subproduct region $W \subset V$  that also embeds, and where  each component of $V \setminus W$  has bounded width.  However, the version stated above is that which we will need in this paper.
\medskip

 Before starting the proof, we need to establish a couple preliminary results about product regions in $\epsilon$-thick hyperbolic $3$-manifolds.  The first is a (slightly more involved) cousin of Corollary \ref{cutting-product}.

\begin{lem}\label {subproducts in balls}
 Given $g,\epsilon$, there is some $D=D(g,\epsilon)$ as follows.
Suppose that $M $ is a  complete,  $\epsilon $-thick hyperbolic $3$-manifold and that $U \subset M$  is a product region with genus $g$. Fix some $r>0$ and  assume that $p\in U$ is a point with $d(p,\partial U) \geq r+D$.  Then  there is a  subproduct region $V \subset U$ with  $ B_{M}(p ,r-D) \subset V  \subset B_{M }(p ,r+D).$
\end{lem}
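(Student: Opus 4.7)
Let $C$ denote a generic constant depending only on $g$ and $\epsilon$, possibly changing between occurrences. The plan is to realize $V = V_- \cup V_+$ as the union of two subproduct regions lying on either side of a middle level surface through $p$.

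First, I apply Lemma~\ref{level surfaces} at $p$ to produce a level surface $S_p \subset U$ with $\diam(S_p) \leq C$ and $d(p, S_p) \leq C$ (using that $\epsilon$-thickness makes $\epsilon$-distances coincide with ordinary distances). This $S_p$ separates $U$ into subproducts $U_p^\pm \supset \partial_\pm U$ of widths at least $r + D - C$. Next, for $D$ sufficiently large, I apply Corollary~\ref{cutting-product} to each $U_p^\pm$ with prescribed widths $r$ and $\width(U_p^\pm) - r$ to obtain a subproduct $V_\pm \subset U_p^\pm$ of width in $[r - C, r + C]$ adjacent to $S_p$. Writing $S^\pm := \partial V_\pm \setminus S_p$ and setting $V := V_- \cup V_+$ yields a subproduct region of $U$ bounded by $S^-$ and $S^+$, with $S_p$ in its interior, and $p \in V$ since $p$ is within $C$ of $S_p$ while $S^\pm$ lies at distance at least $r - C$ from $S_p$.

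The outer containment $V \subset B_M(p, r + D)$ follows from Fact~\ref{width diameter} and $\epsilon$-thickness: each $V_\pm$ has $M$-diameter at most $\width(V_\pm) + C \leq r + C$, so any $q \in V_\pm$ lies within $r + C$ of every point of $S_p \subset \partial V_\pm$, and choosing a point of $S_p$ within $C$ of $p$ gives $d(p, q) \leq r + C$ by the triangle inequality. For the inner containment $B_M(p, r - D) \subset V$, the key estimate is the lower bound $d_M(p, S^\pm) \geq r - C$: granted this, the minimal $M$-geodesic from $p$ to any $q \in B_M(p, r - D)$ has length less than both $d_M(p, S^\pm)$ and $r + D = d(p, \partial U)$, so it neither crosses $S^\pm$ nor leaves $U$, placing $q$ on the same side of $S^\pm$ as $p$ and hence in $V$.

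The main obstacle is therefore establishing $d_M(p, S^\pm) \geq r - C$. Here the argument is topological: any $M$-path from $p$ to $S^\pm$ of length less than $r + D$ stays in $B_M(p, r + D) \subset U$, so it is actually a path in $U$. Any such $U$-path reaching $S^\pm$ from $p$'s side of $S_p$ must contain a sub-segment inside $\overline{V_\pm}$ joining $S_p$ to $S^\pm$, namely the ``final entry'' of the path into $V_\pm$; this sub-segment has length at least $\width(V_\pm) \geq r - C$. Accounting for the short distance $d(p, S_p) \leq C$ (relevant when $p \in V_\pm$) yields the bound, and choosing $D$ larger than all implicit constants $C$ completes the proof.
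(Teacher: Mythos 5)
Your construction takes a genuinely different route from the paper. The paper does not cut $U$ at a level surface through $p$ and then appeal to Corollary~\ref{cutting-product}; instead it fixes a level surface $S$ in the $D_1$-ball around $p$, runs a unit-speed geodesic $\gamma$ through $S$ that minimizes the distance \emph{in $U$} between the two components of $\partial U$, and takes $S^\pm$ to be level surfaces in the $D_1$-balls around $\gamma(\pm r)$. The minimizing property of $\gamma$ then gives the crucial estimate $S^\pm\subset B(p,r+2D_1)\setminus B(p,r-2D_1)$ for free, and both inclusions follow because every level surface of $U$ must intersect $\gamma$, so any point of $U$ is ``located'' along $\gamma$ by the nearby level surface through it. Your approach is more modular (it reuses Corollary~\ref{cutting-product} and Fact~\ref{width diameter}) but has to work to recover the distance lower bound that the minimizing geodesic hands the paper for free, and that is exactly where your argument is not yet tight.

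Concretely: your ``final entry'' claim --- that any $U$-path from $p$ to $S^+$ contains a subsegment of $\overline{V_+}$ joining $S_p$ to $S^+$ --- fails when $p$ lies in $\operatorname{int}V_+$ and the path runs from $p$ to $S^+$ inside $V_+$ without ever touching $S_p$; in that case the subsegment joins $p$ (not $S_p$) to $S^+$, and its length is only bounded below by $\width(V_+)$ minus the distance from $p$ to $S_p$ \emph{measured inside $V_+$}. Your parenthetical patch uses $d_M(p,S_p)\le C$, but the short $M$-geodesic realizing this is not a priori a path in $V_+$ (ruling out that it crosses $S^\pm$ is essentially the estimate you are trying to prove), so as written the case is not closed. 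The fix is easy: prove instead $d_M(S_p,S^\pm)\ge r-C$. For a path starting \emph{on} $S_p$ the final-entry argument is airtight (the last visit to $S_p$ before the first visit to $S^\pm$ always exists), and a path leaving $U$ is even longer since $d_M(S_p,\partial U)\ge r+D-C$; then $d_M(p,S^\pm)\ge r-2C$ by the triangle inequality with $d_M(p,S_p)\le C$, and both $p\in\operatorname{int}V$ and your inner containment follow as you argue. Two smaller points: applying Corollary~\ref{cutting-product} with prescribed width $r$ requires $r$ to exceed that corollary's constant, so tiny $r$ should be dispatched separately (trivially, since then $B_M(p,r-D)=\emptyset$); and one should note that $U_p^\pm$ are themselves product regions (their boundary components are $S_p$, which Lemma~\ref{level surfaces} places in the $1$-neighborhood of a NAT simplicial ruled surface, and a component of $\partial U$), which is at the same level of rigor as the paper's own use of Corollary~\ref{cutting-product}.
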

\begin{proof}
By Lemma \ref{level surfaces} (1), there is some $ D_1= D_1(\epsilon,g)$  such that the $ D_1$-ball around any point in a genus $g$ product region in an $\epsilon$-thick  hyperbolic $3$-manifold contains a level surface  that is contained in the open $1$-neighborhood of a NAT  simplicial  hyperbolic surface  in the homotopy class of a level surface. Set $D=5D_1$.

Assume for concreteness that  the product region $U $ is compact, i.e.\  homeomorphic to $\Sigma_g \times [0,1]$.  The other cases are similar, and we leave them to the reader. Choose a  level surface $S$  that is contained in the $ D_1$-ball around $p $, and let $\partial_\pm U  $ be the two components of $U $. Since $d(p,\partial U )\geq r+5D_1$, there is a unit speed  geodesic
$$\gamma  : \BR \longrightarrow M $$ such that for some $t ^- < - r-4 D_1 < 0 <  r+4 D_1 < t ^+$, we have  $$\gamma (0)\in S, \ \ \gamma ([t ^-,t ^+]) \subset U , \ \  \gamma(t ^-) \in \partial_- U , \ \  \gamma(t ^+) \in \partial_+ U ,$$
and where $\gamma |_{[t ^-,t ^+]}$ minimizes the distance in $U $ between the components $\partial_\pm U $. 

Let $S ^-,S ^+$ be  level surfaces of $U $  that are contained in the $ D_1$-balls around $\gamma (-r)$ and $\gamma (r)$, respectively. So
$$S ^\pm \subset B(p ,r+2 D_1) \setminus B(p ,r-2 D_1),$$ since $\gamma $  is distance minimizing.  Moreover, $S ^-,S ^+$  are disjoint, and thus  bound a topological subproduct region $V  \subset U $. Note that 
\begin {enumerate}
\item $\gamma \big ( [-r+ D_1,r- D_1]\big ) \subset  V , $
\item $\gamma \big ([t ^-,t ^+] \setminus[-r- D_1,r+ D_1]\big ) \subset U  \setminus  V ,$
\end{enumerate}
since  the geodesic $\gamma |_{[t ^-,t ^+] }$ passes through $S ^-,S ^+$ within  the intervals $$[-r- D_1,-r+ D_1], \ \ [r- D_1,r+ D_1],$$ respectively, and $V $ is the set of points in $U $ that are separated from $\partial U $ by the level surfaces $S ^\pm$.

Suppose $q \in B(p ,r-3 D_1)$ and let $T$  be a level surface of $U $  contained in the $ D_1$-ball around $q$. For some $t \in [t ^-,t ^+]$, we have $\gamma (t)\in T$, and since $\gamma $  is minimizing we must have $t \in [-r+ D_1,r- D_1]$. There is then a path of length  at most $ D_1$ from $q$ to $\gamma (t)$, and  this path cannot intersect either $S ^\pm$. Hence, $q \in V $ by (1) above.

Next, suppose $q \in V $. If $d(q,S ^- \cup S ^+) \leq  D_1,$ then $q \in B(p ,r+3 D_1)$ and we are done.  Otherwise, for any level surface  $T \subset U $  contained in the $ D_1$-ball around $q$,  we have $T \subset V $. Pick $t \in [t ^-,t ^+]$  with $\gamma (t)\in T$. Then by (2) above, we have $t \in [-r- D_1,r+ D_1]$. Hence, $d(p ,q) \leq  D_1 +  (r+ D_1) +  D_1\leq r+3 D_1$ and we are done. \end{proof}

 Second, we use Lemma \ref {subproducts in balls} to prove the following:

\begin{lem}\label{limit-double}
Fix $g\ge 2$ and $\epsilon>0$. Let $(M_i)$ be a sequence of  complete, $\epsilon$-thick hyperbolic $3$-manifolds, let $U_i\subset M_i$ be genus $g$ product regions, and $p_i\in U_i$ a sequence of points with $d(p_i,\D U_i)\to\infty$.  Then after passing to a subsequence, $(M_i,p_i)$ converges geometrically to a doubly degenerate hyperbolic 3-manifold $(M,p)$ homeomorphic to $\Sigma_g\times\BR$.  

Moreover, if $(\phi_i)$  is a sequence of almost isometric maps coming from the geometric convergence, as in Definition \ref{geolimits}, then 
\begin{enumerate}
	\item[(a)] for any level surface $S \subset M$,  the image $\phi_i(S)$  is a level surface in $U_i$  for all large $i$, and  similarly,
\item[(b)] if $S_i \subset U_i$  are level surfaces  such that $S_i \subset B(p_i,R)$  for some $R$ independent of $i$, then  the image of $\phi_i$  contains $S_i$  for large $i$, and $\phi_i^{-1}(S_i)$  is a level surface in $M$.
\end{enumerate}  
\end{lem}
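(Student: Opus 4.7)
My plan is to pass to a pointed geometric limit $(M,p)$ of $(M_i,p_i)$, use Lemma~\ref{subproducts in balls} and Proposition~\ref{bilipschitzpreservation} to exhibit $M$ as an exhaustion by genus $g$ product regions (forcing $M\cong\Sigma_g\times\BR$, doubly degenerate), and then deduce (a) and (b) by identifying level surfaces in $M$ with level surfaces in $U_i$ via the almost-isometries, using Waldhausen's cobordism theorem. By standard compactness for pointed, complete $\epsilon$-thick hyperbolic $3$-manifolds, after subsequence $(M_i,p_i)\to(M,p)$ geometrically, with almost-isometric embeddings $\phi_i\colon (K_i,p)\to(M_i,p_i)$ whose bilipschitz constants tend to $1$ and with $(K_i)$ exhausting $M$.

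\textbf{Topology and ends of $M$.} For each $R>0$, Lemma~\ref{subproducts in balls} gives, for large $i$, a genus $g$ subproduct region $V_i^R\subset U_i$ with $B_{M_i}(p_i,R-D)\subset V_i^R\subset B_{M_i}(p_i,R+D)$. Since $V_i^R$ eventually lies in $\phi_i(K_i)$, applying Proposition~\ref{bilipschitzpreservation} to the almost-isometry $\phi_i^{-1}$ restricted to $V_i^R$ yields, inside $\phi_i^{-1}(V_i^R)\subset M$, a genus $g$ product region $W_i^R$ whose inclusion into $\phi_i^{-1}(V_i^R)$ is a homotopy equivalence and whose $\epsilon$-width is at least $2R-O(1)$. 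After a diagonal extraction in $R$ and arranging (using Corollary~\ref{cutting-product}) the boundary surfaces of successive $W_i^R$ to be disjoint, $M$ is exhibited as an increasing union of genus $g$ product regions, so $M\cong\Sigma_g\times\BR$. Every point of $M$ lies on a NAT simplicial ruled surface in the correct homotopy class (Definition~\ref{prdef}), so $M=CC(M)$; combining with $\epsilon$-thickness (no cusps) and the Geometric Tameness Theorem, together with the remark after Proposition~\ref{constructing prs}, both ends of $M$ are degenerate.

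\textbf{Properties (a) and (b), and the main obstacle.} For (a), a level surface $S\subset M$ lies in some $B_M(p,R)$, so for large $i$ we have $S\subset W_i^{R'}$ for some $R'=R+O(1)$, and hence $\phi_i(S)\subset\phi_i(W_i^{R'})\subset V_i^{R'}\subset U_i$. Applying Proposition~\ref{bilipschitzpreservation} to the almost-isometry $\phi_i\vert_{W_i^{R'}}\to M_i$ produces inside $\phi_i(W_i^{R'})$ a product region of $M_i$ containing $\phi_i(S)$ as a level surface; Waldhausen's cobordism theorem (\S\ref{intervalbundle}) then isotopes this sub-product region inside $U_i$ onto a sub-product region of $U_i$, whose level surfaces are also level surfaces of $U_i$, so $\phi_i(S)$ is (up to isotopy in $U_i$) a level surface of $U_i$. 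Part (b) is symmetric: a level surface $S_i\subset B_{M_i}(p_i,R)$ lies in $V_i^{R'}$ for $R'=R+O(1)$, and for large $i$ its preimage $\phi_i^{-1}(S_i)$ sits inside $W_i^{R'}\subset M$, where the same bilipschitz preservation plus Waldhausen argument identifies it with a level surface of $M\cong\Sigma_g\times\BR$. The main obstacle is the second paragraph: arranging the exhausting product regions $W_i^R$ to nest coherently so that the union is a genuine $\Sigma_g\times\BR$ rather than some larger manifold, and ensuring that Proposition~\ref{bilipschitzpreservation} can be applied in both directions (from $M_i$ into $M$ to set up the exhaustion, and from $M$ back into $M_i$ to compare level surfaces) without loss of genus.
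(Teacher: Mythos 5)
Your overall strategy is the same as the paper's: pass to a pointed geometric limit, use Lemma~\ref{subproducts in balls} to produce subproduct regions of $U_i$ sandwiched between metric balls around $p_i$, pull them back through the almost-isometries to exhaust $M$, conclude $M\cong\Sigma_g\times\BR$ and doubly degenerate, and read off (a) and (b) from the identification of level surfaces. However, the step you yourself flag as ``the main obstacle'' is a genuine gap, and it is exactly where the paper's proof does its real work. Having an increasing union of genus $g$ product regions with disjoint boundary surfaces does \emph{not} imply the union is $\Sigma_g\times\BR$: you need each region to include into the next as a homotopy equivalence (i.e.\ to be a subproduct region of the next). A copy of $\Sigma_g\times[0,1]$ can sit inside a larger one in a homotopically inessential or knotted way, and then the nested union can be a completely different open manifold. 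Corollary~\ref{cutting-product} only chops a single product region into pieces of prescribed widths; it says nothing about how two product regions obtained from \emph{different} indices $i$ (or different radii $R$) sit relative to one another in $M$, so it cannot supply the coherence. The same unresolved coherence is silently used again in your arguments for (a) and (b): to know that $\phi_i(S)$ is $\pi_1$-surjective into $U_i$ (resp.\ that $\phi_i^{-1}(S_i)$ is $\pi_1$-surjective into $M$) you need the pulled-back regions $\phi_i^{-1}(V_i^R)$ to include into $M$ as homotopy equivalences, which is the very point in question.

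The paper closes this gap with Lemma~\ref{intersectprs} (deeply overlapping product regions share a level surface). Concretely, it sets $W_n=\phi_{i_n}^{-1}(V_{n,i_n})$ with radii $2Dn\mp 2D$ so that $W_n\subset W_{n+1}$ automatically, then pushes $W_n$ forward into $M_{i_{n+1}}$ via $\phi_{i_{n+1}}$, uses Proposition~\ref{bilipschitzpreservation} to find a wide geometric product region inside that image, observes that the basepoint $p_{i_{n+1}}$ lies deep inside both this region and $V_{n+1,i_{n+1}}$, and applies Lemma~\ref{intersectprs} to conclude the two share a level surface; pulling back by $\phi_{i_{n+1}}^{-1}$ shows $W_n$ is a genuine subproduct region of $W_{n+1}$, so the nested union is $\Sigma_g\times\BR$, and (a), (b) then follow immediately because level surfaces of $M$ (resp.\ of $U_{i_n}$) land in the $W_n$ (resp.\ $V_{n,i_n}$) and are level there by incompressibility. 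Your proposal needs this (or an equivalent) argument inserted at the nesting step; as written, it asserts the conclusion at the one point where the proof is not routine.
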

\begin {proof}
Since the $M_i$  are all $\epsilon$-thick,  after passing to a subsequence we can assume that $(M_i,p_i)$ converges geometrically  to some  pointed hyperbolic $3$-manifold $(M,p)$, see e.g.\ \cite[Theorem E.1.10]{Benedettilectures}. We claim $$\sup_{x\in M} \inj_M(x)< \infty.$$ To see this, let $(\phi_i)$  be a sequence of almost isometric maps given by the geometric convergence, as in Definition \ref{geolimits}. Given any $x\in M$,  we can take $i$  large enough so that  $\phi_i(x)$ is contained  deep inside $ U_i$. By  Lemma \ref{geosinprs},  there is a closed geodesic $\gamma$ in $U_i$ that lies near $\phi_i(x)$, and the preimage $\phi^{-1}(\gamma)$ is a  closed curve with bounded length that lies near $x$.  Moreover, $\phi^{-1}(\gamma)$ is homotopically essential in $M$, since if it were not, it would bound a disk with bounded diameter, which we could compose with $\phi_i$ to get a nullhomotopy of $\gamma \subset M_i$, which is impossible since $\gamma $ is a geodesic.

 It therefore suffices to show that $M \cong \Sigma_g \times \BR$. Indeed, $M$ is $\epsilon$-thick, and hence has no cusps, and any such $M$ where the injectivity radius is uniformly bounded above is doubly degenerate by the Geometric Tameness Theorem.  To show $M \cong \Sigma_g \times \BR$, let $D$ be as in Lemma \ref{subproducts in balls} and fix $n\in \BN$.  Then  as $d(p_i,\partial U_i)\to \infty $, for all $i \geq i_0=i_0(n)$,  there is a subproduct region $V_{n,i} \subset U_i$  with $$ B_{M_i}(p_i ,4Dn-D) \subset V_{n,i}  \subset B_{M_i }(p_i ,4Dn+D).$$
 If $i$  is large, $\phi_i$ is almost isometric and its image contains $V_{n,i}$, so we then have  a topological product region $\phi_i^{-1}(V_{n,i}) \subset M$ such that
$$  B_{M_i}(p_i ,4Dn-2D) \subset \phi_i^{-1}(V_{n,i})  \subset B_{M_i }(p_i ,4Dn+2D).$$
Here, $2D$ is  just some constant bigger than $D$, which we need to increase because of the bilipschitz distortion in the maps $\phi_i$. Set
$$W_n :=\phi_i^{-1}(V_{n,i_n}),$$
for some  large $i_n$ chosen as above. Then since $$4Dn+2D  \leq 4D(n+1)-2D,$$  we have
$W_n \subset W_{n+1}$  for all $n$.
 Moreover,  for large $n$ each $W_n$ is a subproduct region of $W_{n+1}$. One way to see this is to note that with $i=i_{n+1}$, the image $\phi_{i_{n+1}}(W_n)$  is an almost isometric image of $W_n$ in $M_{i_{n+1}}$, and hence contains a subproduct region with nearly the same width by  Proposition \ref{bilipschitzpreservation}. If $n$  is large,  the point $p_{i_{n+1}}$ is contained in both this subproduct region and in $V_{n+1,i_{n+1}}$, far away from the boundaries of both. So, Lemma \ref{intersectprs}  implies that $\phi_{i_{n+1}}(W_n)$ and $V_{n+1,i_{n+1}}$ share a level surface.  Applying $\phi_{i_{n+1}}^{-1},$  we see that $W_n$ and $W_{n+1}$  share a level surface as well.  This proves that $M$  is a nested union of topological product regions, each a subproduct region of the next. So, $M\cong \Sigma_g \times \BR$. 

 The statements about level surfaces  are immediate from the construction. For we can assume that we have passed to the subsequence $M_{i_n}$, $n=1,2,\ldots,$ and then any  fixed level surface $S \subset M$ lies in $W_n$  for large $n$, and therefore  is a level surface in $W_n$. Hence, $S$ pulls back to a  level surface in $V_{n,i_n} \subset U_{i_n}$.  Similarly, for any sequence of level surfaces $S_n \subset U_{i_n}$  that lie at bounded distance from $p_{i_n}$,  we have that $S_{i_n} \subset V_{n,i_n}$ is a level surface for large $n$.  This implies that $\phi_{i_n}^{-1}(S_{i_n})$  is a level surface of $W_n$,   and hence  is a level surface of $M$. \end{proof}

 Before starting on Theorem \ref{PR covering theorem}, we  first  prove two lemmas.

\begin{lem}[Overlapping embeddings]\label {overlapping embeddings}
 There is some $L=L(\epsilon,g)$  as follows. Suppose that $M$  is a complete hyperbolic $3$-manifold, $$f : M \longrightarrow \bar M$$  is a covering map, and $U_1,\ldots,U_4$  are  compact, $\epsilon $-thick, genus $g$  product regions  with disjoint interiors, each sharing a boundary component with the next, and where $U_1,U_2,U_4$ all have  width at least $L$. Set $$V = U_1 \cup U_2, \ \ W = U_2 \cup U_3 \cup U_4.$$  If both $f|_{V} $  and $f|_{W}$  are embeddings, either \begin{enumerate}
	\item $f|_{U_1 \cup U_2 \cup U_3}$  is an embedding, or
\item  some component of $\bar M \setminus f(int(V))$, say,  is a compact  interval bundle.\end{enumerate} 
 \end{lem}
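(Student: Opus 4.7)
\textbf{The plan} is to pass to the intermediate cover $\tilde M$ of $\bar M$ corresponding to the surface subgroup $\tilde\Gamma := f_*(\pi_1 U_2) \subset \pi_1 \bar M$ and to apply Canary's Covering Theorem to the resulting cover $\tilde M \to \bar M$. Since $V = U_1 \cup U_2$ and $W = U_2 \cup U_3 \cup U_4$ are both $\pi_1$-injective genus-$g$ product regions containing $U_2$, and $f|_V, f|_W$ are embeddings, we have $f_*(\pi_1 V) = f_*(\pi_1 W) = \tilde \Gamma \cong \pi_1 \Sigma_g$. Thus $V$ and $W$ lift to embeddings $V', W' \hookrightarrow \tilde M$ sharing a common embedded copy $U_2' \subset \tilde M$ of $U_2$. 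Because $\pi_1 \tilde M = \tilde \Gamma$ is a surface group (hence does not split as a free product) and contains no parabolic elements (as $U_2 \subset M$ is $\epsilon$-thick, so all elements of $\pi_1 U_2$ are hyperbolic), the Tameness Theorem and Lemma~\ref{unique-compact} together give $\tilde M \cong \Sigma_g \times \BR$ with no cusps. Waldhausen's cobordism theorem then forces $V'$ and $W'$ to lie on opposite sides of $U_2'$, with $V' \cap W' = U_2'$ and $V' \cup W'$ a topological product region in $\tilde M$. For $L$ sufficiently large, Lemma~\ref{bdlpr} applied to the wide subregions $U_1' \subset V'$ and $U_4' \subset W'$ produces sequences of simplicial ruled surfaces exiting each end of $\tilde M$, so both ends of $\tilde M$ are degenerate.

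\textbf{The case analysis} splits according to Canary's Covering Theorem applied to $\tilde M \to \bar M$. In Case~(a), $\bar M$ is closed and $\tilde M$ is a virtual fiber, so $\bar M$ is a $\Sigma$-bundle either over $\BS^1$ or over the orbifold $\BS^1 / \langle z \mapsto \bar z \rangle$. Cutting $\bar M$ along the embedded product region $\bar V := f(V)$ then yields either a trivial interval bundle $\Sigma \times I$ or a disjoint union of twisted interval bundles, and in both subcases every component of $\bar M \setminus f(\mathrm{int}(V))$ is a compact interval bundle, so (2) holds.

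In Case~(b), each end of $\tilde M$ has a neighborhood that covers a neighborhood of an end of $\bar M$ as a finite cover. Since $V'$ and $W'$ embed and, for $L$ large, contain arbitrarily wide neighborhoods of the two ends of $\tilde M$, these finite covers must be degree one, so each end of $\tilde M$ embeds into an end of $\bar M$. First suppose the two ends of $\tilde M$ embed into distinct ends of $\bar M$. Then $f^{-1}(E_i') = E_i$ for $i = 1, 2$ (no extra components, since each would need to occupy a third end of $\tilde M$), and the compact middle of $\tilde M$ maps onto the compact middle of $\bar M$; by a connectivity argument using the degree-one embeddings at the ends, this middle cover is also degree one, so $\tilde M = \bar M$, and then $V \cup W \subset \tilde M = \bar M$ embeds, giving (1). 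Otherwise, both ends of $\tilde M$ embed into the same end $\bar\CE$ of $\bar M$, so $f^{-1}(\text{nbhd of } \bar\CE)$ has exactly two components, and there is a free deck transformation $g$ of $\tilde M \to \bar M$ swapping the two ends of $\tilde M$; necessarily $g^2 = 1$, since $g^2$ preserves both ends and therefore acts trivially on a neighborhood of each by degree-one embedding. Topologically, $g$ acts on $\tilde M \cong \Sigma_g \times \BR$ as $(x,t) \mapsto (\phi(x), c - t)$ for some orientation-reversing free involution $\phi$ of $\Sigma_g$. If $g(V') \cap W' = \emptyset$, then $V \cup W$ embeds in $\bar M$ and (1) holds. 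Otherwise $V'$ and $g(V')$ are disjoint and bound a compact region $R \subset \tilde M$ which is preserved by $g$ with the two boundary components of $R$ swapped freely. The quotient $R/g$ is then a compact twisted interval bundle over $\Sigma_g/\phi$, and it appears as a component of $\bar M \setminus f(\mathrm{int}(V))$, so (2) holds.

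\textbf{The main obstacle} lies in choosing $L$ large enough to guarantee that both ends of $\tilde M$ are visibly degenerate with end neighborhoods embedding in the structurally convenient way (containing $V'$ and $W'$ respectively), so that Canary's theorem can be applied cleanly. Once this geometric setup is in place, the topological analysis in the nontrivial deck-transformation subcase, in particular the verification that $R/g$ really is a compact interval bundle component of $\bar M \setminus f(\mathrm{int}(V))$, requires careful bookkeeping but proceeds along the sketched lines using the rigidity of $\Sigma_g \times \BR$ as a product of the surface and a line.
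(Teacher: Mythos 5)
Your argument has two genuine gaps, the first of which is fatal to the whole strategy. You begin by passing to the cover $\tilde M \to \bar M$ corresponding to $f_*(\pi_1 U_2)$ and assert that $V$ and $W$ are ``$\pi_1$-injective genus-$g$ product regions,'' so that $\tilde\Gamma$ is a surface group, $\tilde M \cong \Sigma_g\times\BR$, and $V,W$ lift homeomorphically. But the lemma has no incompressibility hypothesis: product regions in this paper are allowed to compress in $M$ (and hence in $\bar M$), and the lemma is in fact applied later (via Lemma \ref{propagating} and Theorem \ref{PR covering theorem}, e.g.\ inside the compression-body cover in Corollary \ref{extending product regions}) precisely in situations where the $U_i$ need not be $\pi_1$-injective. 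If $U_2$ compresses, $\tilde\Gamma$ is not a surface group, $\tilde M$ is not a product, and neither $V$ nor $W$ need lift, so the entire setup collapses. Any proof along your lines would first have to handle compressible product regions, which your reduction never addresses.

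Even granting incompressibility, the step ``both ends of $\tilde M$ are degenerate'' is unjustified and false in general: $U_1$ and $U_4$ have width at least the \emph{fixed} constant $L$, so Lemma \ref{bdlpr} only produces simplicial ruled surfaces inside a compact region, not a sequence exiting the ends. A quasi-Fuchsian $\tilde M$ with a sufficiently thick, wide convex core contains product regions of width $\ge L$ for any prescribed $L$, and then both ends are convex cocompact, Canary's Covering Theorem does not apply, and your case analysis (which also silently identifies the compact $V',W'$ with end neighborhoods when arguing the end-covers have degree one) has no branch covering this situation. The paper's proof avoids all of this: assuming $f$ identifies a point of $U_1$ with a point of $U_3$, it uses Lemma \ref{intersectprs} to find a level surface $S$ shared by the embedded product regions $f(V)$ and $f(W)$ in $\bar M$, lifts $S$ in two ways to distinct level surfaces $S_x\neq S_y$ in $M$, and observes that a homotopy $S_x\simeq S_y$ projects to an essential self-homotopy of $S$, whence Waldhausen's cobordism theorem produces the compact interval bundle component of $\bar M\setminus f(int(V))$ — no covers, no tameness, and no end analysis are needed.
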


See Figure \ref{UV}  for an illustration.

\begin{figure}
	\centering 
\includegraphics{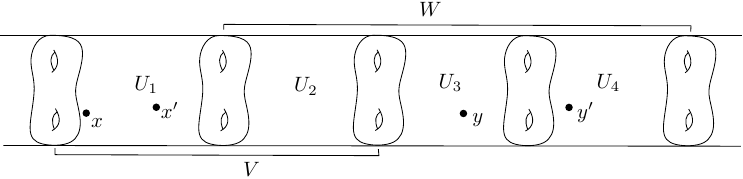}
\caption {The product regions in Lemma \ref{overlapping embeddings},  and the points involved in its proof.}
\label {UV}
\end{figure}
\begin {proof}
Fix some large $L$ to be specified later, and assume $U_1,\ldots,U_4$ are as above. Assume that $f|_{U_1 \cup U_2 \cup U_3}$  is not an embedding.   Then there are points $x \in U_1$ and $y\in U_3$ with $f(x)=f(y)$. 

Choose some point $x' \in U_1$ such that $d(x,x') \leq L/2$ and $d(x',\partial U_1) \geq L/2$; for instance, if we join $x$ to the two components of $\partial U_1$ by shortest paths, the sum of the two path lengths is at least $L$, and we can find such an $x'$ on their union. Since $d(y,\partial W) \geq L$, there is some $y' \in W$ with $f(y')=f(x')$ and $d(y,y') \leq L/2$.


As long as $L/2$ is larger than the constant $D$ in Lemma \ref{intersectprs}, the intersection of the two product regions $f(V),f(W) \subset \bar M$  contains a $D$-ball around $f(x')=f(y')$, so from the lemma we get that $f(V),f(W) $ share a level surface $S$ that lies in that $D$-ball. Lifting, we obtain level surfaces $S_x,S_y$ of $V$ and $W$ that lie in $D$-balls  around $x',y'$,  respectively, and that both project to $S$. If $L$ is large enough, the surface $S_x$ lies in $V$,   at least $D$ away from $U_3 \cup U_4$, while $S_y$  lies $W$, in the $D$-neighborhood of $y'\in U_3 \cup U_4$. So, $S_x $ and $ S_y$ are disjoint, and bound a topological product region $I \subset U_1 \cup \cdots \cup U_4$.

The restriction $f|_I$ maps both boundary components to $S$, which is $2$-sided in $\bar M$, given our standard orientability assumption. Moreover, $f(int (I)) $ does not intersect $S$, since $f$ is injective on $V$ and on $W$. Let $N$ be the connected manifold obtained by cutting $\bar M$ along $S$, and taking the connected component that contains $f(I)$. (Note that the two `ends' of $f|_I$ may a priori limit onto the same side of $S$, so if $S$ is separating we cut along $S$ and then take the component that lies to `that' side.) Then the restriction $f|_I$ extends to a covering map $I \longrightarrow N$. It follows that $N $ has incompressible boundary, and $f|_I$ is a nontrivial homotopy between two (possibly identitical) components $\partial N$. So Waldhausen's Cobordism Theorem implies that $N$ is an $I$-bundle. It follows that $\bar M \setminus f(int(V))$, say, is an $I$-bundle, as in the statement of the lemma. \end {proof}

The following is a weaker version of Theorem \ref{PR covering theorem} in which $V$ has bounded width. 

\begin{lem}[Propagating the embedding]\label {propagating}
	Given $L_+$, there is some $L'=L'(g,\epsilon,L_+) $ as follows. Let $M$ be a complete hyperbolic $3$-manifold and let $U,V,W\subset M $ be  compact, $\epsilon$-thick, genus $g $ product regions with disjoint interiors,  which are adjacent, in the sense that each shares a boundary component with the next. Let $$f : M \longrightarrow \bar M $$ be a covering map,  and assume that $f|_U$  is an embedding, that $V$  has width at most $L_+$, and that $U,W$ have width at least $L'$. Then either 
\begin{enumerate}
	\item $f|_{U \cup V}$  is an embedding, or  
\item some component of $\bar M \setminus f(int(U))$  is a (compact, possibly twisted) interval bundle.
\end{enumerate}\end{lem}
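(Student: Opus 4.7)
The plan is to argue by contradiction via a geometric limit argument modeled on the proof of Lemma~\ref{limit-double}. Assuming the lemma fails, we obtain for a sequence $L'_n \to \infty$ counterexamples $(M_n, \bar M_n, U_n, V_n, W_n, f_n)$ satisfying all hypotheses (in particular, widths of $U_n, W_n \geq L'_n$ and width of $V_n \leq L_+$) but with $f_n|_{U_n \cup V_n}$ not an embedding and no component of $\bar M_n \setminus f_n(\operatorname{int}(U_n))$ a compact interval bundle. Choose base points $p_n \in V_n$; since $V_n$ is an $\epsilon$-thick genus-$g$ product region of width at most $L_+$, Fact~\ref{width diameter} bounds $\diam(V_n)$ in terms of $L_+, g, \epsilon$. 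Because $U_n \cup V_n \cup W_n$ is itself a genus-$g$ product region with $d(p_n, \partial(U_n \cup V_n \cup W_n)) \geq L'_n \to \infty$, Lemma~\ref{limit-double} gives, after a subsequence, $(M_n, p_n) \to (M_\infty, p_\infty)$ geometrically with $M_\infty$ doubly degenerate and homeomorphic to $\Sigma_g \times \BR$. Write $M_\infty = U_\infty \cup V_\infty \cup W_\infty$, where $U_\infty, W_\infty$ are neighborhoods of the two degenerate ends pulled back from $U_n, W_n$, and $V_\infty$ is the bounded middle region pulled back from $V_n$.

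Next, set $\bar p_n = f_n(p_n)$ and pass to a further subsequence so $(\bar M_n, \bar p_n) \to (\bar M_\infty, \bar p_\infty)$. The covering maps $f_n$, being local isometries, converge to a local isometry $f_\infty \colon M_\infty \to \bar M_\infty$, which is automatically a Riemannian covering since $M_\infty$ is complete. A lifting argument in the universal cover (any $g \in \pi_1 \bar M_\infty$ identifying two distinct points of $U_\infty$ approximates an element $g_n \in \pi_1 \bar M_n \setminus \pi_1 M_n$ identifying corresponding points in $U_n$, contradicting $f_n|_{U_n}$ embedding) shows $f_\infty|_{U_\infty}$ is an embedding. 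Apply the Covering Theorem to the positive end of $M_\infty$: the virtual-fiber alternative is impossible, since an infinite end neighborhood cannot embed into a compact fibered manifold. Hence there is a neighborhood $E \subset M_\infty$ of the positive end and a neighborhood $E'$ of a degenerate end of $\bar M_\infty$ with $f_\infty(E) = E'$ and $f_\infty|_E$ a finite cover; the embedding property of $f_\infty|_{U_\infty}$ forces degree one, so $f_\infty|_E \colon E \to E'$ is a homeomorphism.

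If $f_\infty|_{U_\infty \cup V_\infty}$ embeds, then for large $n$ the almost-isometric embeddings from the geometric convergence transport this embedding onto a neighborhood of $p_n$ in $M_n$ containing $U_n \cup V_n$ (up to bounded error), yielding that $f_n|_{U_n \cup V_n}$ embeds, contradicting the standing hypothesis. Otherwise there exist distinct $x, y \in U_\infty \cup V_\infty$ with $f_\infty(x) = f_\infty(y)$, and the embedding on $U_\infty$ forces at least one into $V_\infty$. A careful analysis of the resulting nontrivial $g \in \pi_1 \bar M_\infty$---which has translation length at most $\diam(V_\infty)$ when both points lie in $V_\infty$, and when one point lies in $U_\infty$ must compose with the homeomorphism $f_\infty|_E$ in a prescribed way---together with a Waldhausen-type argument, shows that $\bar M_\infty$ is a closed manifold fibering over $\BS^1$ or a mirrored arc with $\Sigma_g$ as (possibly virtual) fiber, so that $\bar M_\infty \setminus f_\infty(\operatorname{int}(U_\infty))$ is a compact interval bundle. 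Geometric convergence of $\bar M_n \to \bar M_\infty$ then descends this to a compact interval bundle component in $\bar M_n \setminus f_n(\operatorname{int}(U_n))$ for large $n$, again a contradiction.

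The hardest step is the last: extracting a global fibered structure on $\bar M_\infty$, and hence the required compact interval bundle, from a single pair $(x,y)$ witnessing failure of injectivity. This is subtle because $y$ could lie deep inside $U_\infty$ with the identifying deck transformation having large translation length in $M_\infty$ but translating its restriction to $V_\infty$ by a bounded amount in $\bar M_\infty$; one must leverage the homeomorphism $f_\infty|_E$ provided by the Covering Theorem to propagate the identification consistently throughout the positive end of $M_\infty$, thereby forcing $\bar M_\infty$ to be closed with the desired fibered compact structure.
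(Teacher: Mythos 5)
Your overall framework (contradiction, base points in $V_n$, Lemma~\ref{limit-double} to get a doubly degenerate limit $M_\infty\cong\Sigma_g\times\BR$, a limiting covering $f_\infty$, and the Covering Theorem) is the same as the paper's, but there is a genuine gap at the decisive step. In your first branch you claim that injectivity of $f_\infty$ on $U_\infty\cup V_\infty$ can be ``transported'' to give injectivity of $f_n$ on all of $U_n\cup V_n$ for large $n$. The almost-isometric maps of geometric convergence only control a region of bounded radius around $p_n$, while $\diam(U_n)\to\infty$; a failure of injectivity of $f_n|_{U_n\cup V_n}$ necessarily involves a point of $V_n$ (since $f_n|_{U_n}$ embeds), but the point it is identified with may lie arbitrarily deep in $U_n$, outside every fixed-radius neighborhood of $p_n$, so it is invisible in the limit and the identifying deck transformation need not converge. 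This is exactly why the paper does not attempt such a transport: it uses the limit homeomorphism only to show $f_i$ embeds on the \emph{bounded} region $U_i'\cup V_i\cup W_i$ (via the homotopy-lifting trick, which your ``up to bounded error'' transport also needs), and then invokes the separate Lemma~\ref{overlapping embeddings} --- whose proof runs through Lemma~\ref{intersectprs} and Waldhausen's cobordism theorem --- to combine this with the hypothesis that $f_i|_{U_i}$ embeds and conclude either that $f_i|_{U_i\cup V_i}$ embeds or that an interval-bundle component appears. Your proposal contains no substitute for this propagation step, and without it the argument does not close.

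Your second branch is also not carried out, and as stated it is internally inconsistent: having correctly observed that the virtual-fiber (closed $\bar M_\infty$) alternative is incompatible with $f_\infty$ embedding the infinite-volume end neighborhood $U_\infty$, you then assert in the non-injective case that $\bar M_\infty$ is a closed manifold fibering over $\BS^1$ or the mirrored arc --- the very possibility you just excluded. The correct outcome there is that $f_\infty$ is a degree-two quotient onto a one-ended manifold in which $\bar M_\infty\setminus f_\infty(\mathrm{int}(U_\infty))$ is a compact \emph{twisted} interval bundle; the paper gets this dichotomy (homeomorphism versus compact interval-bundle complement of the image of an embedded product region) directly from the Covering Theorem, and moreover arranges the descent to $\bar M_n$ cleanly by defining the relevant product region in the limit as the exact $\bar\phi_i$-pullback of $f_i(U_i')$, so that the interval bundle's boundary lies exactly in $f_i(\partial U_i')$ and it is literally a component of the complement; your sketch leaves both this identification and the ``careful analysis \ldots together with a Waldhausen-type argument'' unproven, which you yourself flag as the hardest step.
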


So, if $f$ is an embedding on $U$, either it extends to an embedding on the slightly larger product region $U\cup V$ or we see an interval bundle in $\bar M$.

\begin{proof}
Fix $L_+$, and	assume that this is not the case, for any $L'$.  Then there is a sequence of counterexamples,  by which we mean $$f_i : M_i \longrightarrow \bar M_i, \ \ U_i,V_i,W_i\subset M_i, \ \ \width(U_i),\width(W_i) \to  \infty,$$
such that  for all $i$,  the product regions $V_i$ have width  at most $L_+$, the restriction $f_i|_{U_i}$  is an embedding, but the restriction $f_i|_{U_i \cup V_i}$ is not, and where no component of $\bar M_i \setminus f_i(int(U_i))$ is an  interval bundle.  By Corollary \ref{cutting-product}, for sufficiently large $i$ we may also pick subproduct regions $U_i' \subset U_i$  and $W_i' \subset W_i$,  both adjacent to $V_i$,  that have width in the interval $[L,2L]$,  where $L=L(\epsilon,g)$  is a constant at least as large as the constant in Lemma \ref{overlapping embeddings}.

 Pick base points $p_i \in V_i $ for $M_i$.
Since all our manifolds are $\epsilon $-thick, after passing to a subsequence, we can assume by Arzela--Ascoli that $(f_i)$  converges uniformly on compact sets to a covering $$f : (M,p) \longrightarrow (\bar M,f(p)),$$ where $(M,p)$ and $(\bar M,f(p))$ are geometric limits of  the  sequences of pointed manifolds $(M_i,p_i)$ and $(\bar M_i,f_i(p))$, respectively. If $(\phi_i)$ and $(\bar \phi_i)$ are sequences of almost isometric maps coming from these two geometrically convergent sequences, as in Definition \ref{geolimits}, this means that on every  fixed compact subset of $M$, the composition $\bar \phi_i^{-1} \circ f_i \circ \phi_i$  is defined  for large $i$ and  converges uniformly to $ f$ as $i\to \infty$.
By Lemma~\ref{limit-double}, we may assume that $M$  is a doubly degenerate hyperbolic $3$-manifold homeomorphic to $\Sigma_g \times \BR$, and that both
\begin{enumerate}
	\item[(a)] for any level surface $S \subset M$,  the image $\phi_i(S)$  is a level surface in $U_i$  for all large $i$, 
\item[(b)] the image of $\phi_i$  contains $V_i$  for large $i$, and $\phi_i^{-1}(V_i)$  is a topological subproduct region of $M$.
\end{enumerate}  
Here, to get (b) above from Lemma~\ref{limit-double} (b), we are using the  fact that \begin{equation}
 	\sup_i \diam(V_i) < \infty, \label {sssssup} \end{equation}  which follows from Fact \ref{width diameter} since  each $V_i$  is $\epsilon $-thick  and has width at most $L_+$,  which is independent of $i$.  There is a similar diameter bound for the product regions $U_i'$, so we may also assume after excluding  finitely many $i$ that $U_i'$ lies in the image of $ \phi_i$  for every $i$.

   Since  $\bar \phi_i^{-1} \circ f_i \circ \phi_i \to f$, for large $i$  there is a homotopy \begin{equation}\label {homotp} f \circ \phi_i^{-1}|_{U_i'} \sim \bar \phi_i^{-1} \circ f_i|_{U_i'}\end {equation}  
By (b) above, $\phi_i^{-1}|_{U_i'}$  is a  topological subproduct region in $M$.    Moreover, $\bar \phi_i^{-1} \circ f_i|_{U_i'}$  is an embedding, and by  \eqref{homotp}, its image $$\bar U:=\bar \phi_i^{-1} \circ f_i(U_i') \subset \bar M$$ lifts homeomorphically to a topological subproduct region of $M$.


Thurston's Covering Theorem (see \cite{Canarycovering}) then implies that either  $f$  is a homeomorphism, or that some component $$I \subset \bar M \setminus int(\bar U)$$ is a (compact, possibly twisted) interval bundle.  Suppose for a moment that we are in the second case.  Then when $i$  is large, the image $\bar \phi_i(I)$ is defined, and  is an  embedded interval bundle in $\bar M_i$  whose boundary is a subset of $\partial f_i(U_i)$. Hence, $\bar \phi_i(I)$  is a component of $\bar M_i \setminus f_i(int(U_i'))$,  which is a contradiction since $U_i'$  is a subproduct region of $U_i$ and we assumed no component of $\bar M_i \setminus f_i(int(U_i))$ was an interval bundle.  It follows that $f$  is a homeomorphism. 

Let $\CN_i$  be the $1$-neighborhood of $ U_i' \cup V_i \cup W_i' \subset M_i$.  
If $i$  is large,  the composition $\bar g_i :=\bar \phi_i \circ f\circ\phi_i^{-1}$  is defined on $\CN_i$, and since $f$  is a homeomorphism, this $\bar g_i$ will be an embedding of $\CN_i$ into $\bar M_i$.  And since $\bar \phi_i^{-1} \circ f_i \circ \phi_i \to f$,  we know that  for large $i$,   the maps  $$f_i,\bar g_i : {\CN_i} \longrightarrow \bar M_i$$ are homotopic via a homotopy with tracks of length less than $1$. Lifting this homotopy to $M_i$, we get a homotopy  between two maps $$\iota,g_i : \CN_i \hookrightarrow M,$$ where $\iota$  is the inclusion and where $f_i \circ g_i = \bar g_i$. Since this homotopy also  has tracks of length less than $1$,  we have $g_i(\CN_i) \supset U_i' \cup V_i \cup W_i' .$ So as $ \bar g_i$ is an embedding on $\CN_i $, the map $f_i$ is an embedding on $U_i' \cup V_i \cup W_i .$

 We now know that $f_i$  is an embedding on both $U_i$  and $U_i' \cup V_i \cup W_i' .$ So,  applying Lemma \ref{overlapping embeddings} to  the four  adjacent  product regions
$$U_i\setminus int(U_i'), U_i', V_i,W_i',$$
 we see that either  some component of $\bar M_i \setminus f_i(int(U_i))$ is a compact interval bundle or that $f_i$  restricts to an embedding on $U_i \cup V_i$.
\end{proof}

 We are now ready to prove the main result of the subsection.  

\begin{proof}[Proof of Theorem \ref{PR covering theorem}]
 It suffices to prove Theorem \ref{PR covering theorem} when  the product regions $U,V$ are compact. Any noncompact $U$ can be exhausted by compact product regions using  Corollary \ref{truncating noncompact}, and  the reader  can use this to deduce the general case from the compact case.

Using Corollary  \ref{cutting-product}, write $U, V,W$ as a union of subproduct regions  with disjoint interiors $$  U = V_0, \ \ V = V_{1} \cup  \cdots \cup V_{n}, \ \ W =V_{n+1},$$
where each $V_i$  shares a boundary component  with $V_{i+1}$, and where each $V_i, i \in [1,n]$ has width  at most some $L_+=L_+(\epsilon,g)$.  Here, in the language of Corollary  \ref{cutting-product}, $L_+$ is `$2D$', and $n$   is determined by the width of $V$. Take the lower bound $L$ for the widths of $U=V_0$ and $W=V_{n+1}$ discussed in the statement of the theorem to be larger than the constant obtained by applying Claim \ref{propagating} with the inputs $L_+,\epsilon,g$.

Pick  some maximal $k\geq 0$  such that $f$  restricts to an embedding on 
$$V^{k} := V_{0} \cup \cdots \cup V_{k}$$
 We may assume that $k<n$, since otherwise we are done. Apply Lemma~\ref{propagating}  to the three adjacent product regions $$V^{k}, \ \ V_{k+1}, \ \ V_{k+2} \cup \cdots \cup V_{n+1},$$
 noting that $V_{k+1} $  has width   at most $L_+$ and  the other two have width at least $L=L(L_+,\epsilon,g)$.
If some component of $\bar M \setminus f(int(V^k))$ is a compact interval bundle,  it follows from the fact that $U \subset V^k$  that some component of some component of $\bar M \setminus f(int(U))$  is a compact interval bundle, so we are done.  Therefore, $f$  restricts to an embedding on $V^k \cup V_{k+1}$,  violating the maximality of $k$.
\end{proof}
 
\subsection{Wide product regions are unknotted}\label {wide product regions are unknotted}
If a surface $S$ is embedded in a $3$-manifold, the existence of a wide product neighborhood puts strong topological constraints on the embedding. 

A first result in this direction is the following. See also the subsequent Corollary \ref{unknotted}, and especially Theorem \ref{double compression body theorem} in \S \ref{Double compression bodies}.

\begin{prop}[Surfaces compress in complement, if at all]\label{incompressible-incompressible}
There is some $L=L(g,\epsilon,\kappa^+)$  as follows. Assume that $ \mathcal U \subset \mathcal V$ are collections of pairwise disjoint product regions in $M$, where each product region has $\epsilon$-width  at least $L$ and genus at most $g$.

 Let $\Sigma \subset M\setminus \cup_{V\in \mathcal V} V$ be an embedded, closed orientable surface of genus at most $g$. If $\Sigma$ is incompressible in $M\setminus  \cup_{V\in \mathcal V} V$ and  compressible in $M \setminus \cup_{U\in \mathcal U} U$, then $\Sigma$ is  homotopic in  $M\setminus  \cup_{V\in \mathcal V} int(V)$ to a boundary component of some $V \in \mathcal V \setminus \mathcal U$.\end{prop}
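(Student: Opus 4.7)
The plan is to combine the shrinkwrapping machinery with the geodesic link construction of Theorem \ref{links}. For each $V \in \mathcal{V}$, I would apply Theorem \ref{links} to produce a $0.025$-separated geodesic link $\Lambda_V \subset V$, and set $\Lambda = \bigcup_{V \in \mathcal{V}} \Lambda_V$. Corollary \ref{linkkor}, combined with the hypothesis that $\Sigma$ is incompressible in $M \setminus \bigcup_V V$, then ensures $\Sigma$ is $\pi_1$-injective in $M \setminus \Lambda$. Next, apply the Loop Theorem to the hypothesized compressibility to obtain an embedded compressing disc $D \subset M \setminus \bigcup_U U$ with essential boundary $\gamma = \partial D$ on $\Sigma$; being the boundary of $D$, the curve $\gamma$ is nullhomotopic in $M$. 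Since $D$ avoids $\bigcup_U U$ and thus each $\Lambda_U$ for $U \in \mathcal{U}$, while $\pi_1$-injectivity prevents $D$ from being disjoint from all of $\Lambda$, a generic position of $D$ sees it meet some $\Lambda_{V'}$ with $V' \in \mathcal{V} \setminus \mathcal{U}$.

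Endowing $M \setminus \Lambda$ with a pinched-negatively-curved metric via Proposition \ref{neg-metric} and invoking Theorem \ref{shrinkwrapping} produces a minimizing simplicial ruled surface $f : \Sigma \to M \setminus \Lambda$, homotopic in $M \setminus \Lambda$ to the inclusion, whose image meets $\mathcal{N}_\delta(\Lambda)$. Corollary \ref{shrinkwrapping2} combined with the Bounded Diameter Lemma shows that $f(\Sigma)$ has bounded intrinsic $\epsilon$-diameter and lies $\epsilon$-close to $\Lambda$. Taking $L$ large relative to these estimates and the buffer widths of Theorem \ref{links}, the different $\Lambda_V$ lie in pairwise distant regions of their respective product regions, forcing $f(\Sigma)$ into a single $V^* \in \mathcal{V}$. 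Tracing the degenerating minimizing family of Corollary \ref{construction} that produces $f$, the wrapping locus is driven by the actual nullhomotopy of $\gamma$; since $\gamma$ is nullhomotopic via $D$, and $D$ meets only links $\Lambda_{V'}$ with $V' \in \mathcal{V} \setminus \mathcal{U}$, one concludes $V^* \in \mathcal{V} \setminus \mathcal{U}$.

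With $V^* \in \mathcal{V} \setminus \mathcal{U}$ in hand, Theorem \ref{getembedded} replaces $f(\Sigma)$ with an embedded surface $\Sigma^* \subset V^*$ in an arbitrarily small neighborhood, and as an embedded incompressible surface in the product region $V^*$, Fact \ref{incincomp} renders $\Sigma^*$ isotopic to a level surface of $V^*$. Waldhausen's cobordism theorem applied to the homotopic incompressible embedded surfaces $\Sigma$ and $\Sigma^*$ in $M \setminus \Lambda$ produces a trivial interval bundle $P$ between them; disjointness of the product regions in $\mathcal{V}$ ensures $P \setminus V^* \subset M \setminus \bigcup_U U$, so concatenating the homotopy through $P \setminus V^*$ with an isotopy inside $V^*$ realizes the desired homotopy within $M \setminus \bigcup_U U$. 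The main obstacle will be making rigorous the identification $V^* \in \mathcal{V} \setminus \mathcal{U}$, which demands careful bookkeeping of how the minimizing family producing $f$ is steered by the specific nullhomotopy of $\gamma$ provided by $D$.
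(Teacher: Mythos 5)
The most serious gap is your endgame. The shrinkwrapped surface $\Sigma^* \subset V^*$ is only known to be incompressible in $M \setminus \Lambda$ (equivalently in $V^* \setminus \Lambda_{V^*}$), not in $V^*$ itself: by hypothesis $\Sigma$ is compressible in $M \setminus \bigcup_{U \in \mathcal{U}} U$, hence in $M$, so there is no reason $\pi_1\Sigma^*$ injects into $\pi_1 V^*$, and Fact \ref{incincomp} (which is about incompressible surfaces in compression bodies) simply does not apply; ``embedded incompressible surface in a product region is a level surface'' is exactly the statement you are not entitled to. This is where the paper's real work lies: it cuts the chosen $V$ into three wide thirds $V_1 \cup V_2 \cup V_3$, shrinkwraps three times to get embedded surfaces $\Sigma_i \subset V_i$ homotopic to $\Sigma$, proves each $\Sigma_i$ separates the two boundary components of $V$ (using that $\Lambda_i$ has huge $\epsilon$-diameter, together with Waldhausen and the Bounded Diameter Lemma to rule out the alternative), and then nests two of the three parallelism regions $W_1 \subset W_2$ so that $W = W_2 \setminus W_1 \subset V$ is a product region trapping an honest level surface of $V$, which is incompressible in $W$ and hence a level surface of $W$; only then does the conclusion follow. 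Nothing in your proposal substitutes for this step. Relatedly, your claim that the parallelism region $P$ between $\Sigma$ and $\Sigma^*$ satisfies $P \setminus V^* \subset M \setminus \bigcup_{U} U$ is unjustified: $P$ avoids the drilled links, but it can perfectly well contain large pieces of product regions $U \in \mathcal{U}$ lying between $\Sigma$ and $V^*$. The paper instead promotes its link-complement homotopies to homotopies in $M \setminus \bigcup_{U \in \mathcal{U}} U$ via Fact \ref{homotopyfact}, which is the tool you need here as well.

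The localization $V^* \in \mathcal{V} \setminus \mathcal{U}$, which you flag yourself, is also a genuine gap as written. Theorem \ref{shrinkwrapping} and Corollary \ref{shrinkwrapping2} only assert that the minimizing surface comes within bounded $\epsilon$-distance of the \emph{entire} drilled link $\Lambda$; they give no control over which component it wraps, and the minimizing family is in no sense ``driven by'' your particular disc $D$ -- the fact that $D$ meets only links $\Lambda_{V'}$ with $V' \in \mathcal{V} \setminus \mathcal{U}$ does not transfer to the geodesic representative of $\gamma$ in the drilled metric, which could a priori wrap a component $\Lambda_U$ with $U \in \mathcal{U}$. What $D$ actually gives is that $\gamma$ is nullhomotopic in $M \setminus \bigcup_{U \in \mathcal{U}} \Lambda_U$ while essential in $M \setminus \Lambda$, and the correct argument is to rerun the proof of Theorem \ref{shrinkwrapping}: if the geodesic representative of $\gamma$ avoided the $\delta$-tubes of the components of $\Lambda$ not lying in $\bigcup_{U \in \mathcal{U}} \Lambda_U$, it would be a closed geodesic (or a peripheral curve) of a complete negatively curved metric on $M \setminus \bigcup_{U \in \mathcal{U}} \Lambda_U$, hence essential there, a contradiction. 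That localized shrinkwrapping statement is not quotable from the paper and must be proved; the paper packages it by first enlarging $\mathcal{U}$ to a maximal subcollection with $\Sigma$ still compressible in its complement and then drilling only the links of $\mathcal{U}$ plus a single link $\Lambda_i$ inside one $V \in \mathcal{V} \setminus \mathcal{U}$ at a time, so that every component not coming from $\mathcal{U}$ lies in the $V$ under consideration.
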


Before starting on the proof, we record the following useful lemma.

\begin{lem}\label{its a level surface lem}
Suppose that $M$ is an orientable, irreducible $3$-manifold, $V \subset M$ is a topological product region, $S$ is a closed orientable surface, and $f : S \longrightarrow V$ is a $\pi_1$-injective map that is homotopic in $M$ to an embedding. Then $f$ is homotopic in $V$ to a level surface.
\end{lem}
\begin{proof}
Since $f$ is $\pi_1$-injective in $M$, and hence in $V$, it is homotopic in $V$ to a finite cover of a level surface. From this, we get that the inclusion $V \hookrightarrow M$ is $\pi_1$-injective: indeed, the kernel of the map $\pi_1 V \longrightarrow \pi_1 M$ intersects $f_*(\pi_1 S) \subset \pi_1 V$ trivially, but the image has finite index, and $\pi_1 V$ is a surface group, so it has no nontrivial finite subgroups.

Since $f$ is homotopic to an embedding in $M$, work of Freedman--Hass--Scott (see Lemma \ref{getembedded}) implies $f$ is homotopic in $M$ to an embedded surface $\Sigma \subset V$, which must be a level surface of $V$. And then Lemma \ref{pushinghomotopieslem} implies $f$ is homotopic to $\Sigma$ within $V$.
\end{proof}

\begin{proof}[Proof of Proposition \ref{incompressible-incompressible}]
 In the proof below, a number is `bounded' if it is less than some constant depending on $g,\epsilon, \kappa^+$, and is `huge' if it is much bigger than any of the other bounded constants in the proof.

 Suppose that the $\epsilon$-width of each  product region in $\mathcal V$ is huge and that $\Sigma\subset M\setminus \cup_{V\in \mathcal V} V$ is an  incompressible orientable surface with genus at most $g$ that is compressible in $M\setminus \cup_{U\in \mathcal U} U$ . 

By Corollary \ref{cutting-product},  we can  decompose each $V\in \CV \setminus \CU$  as a union 
$$V=V_1\cup V_2$$
of product regions\footnote{Hopefully the notation is not confusing: there is a product region $V_1$ inside each $V$, so really $( \cdot )_1$ is a function with input $V$.}, each with huge $\epsilon$-width.  Apply Theorem~\ref{links} to all the product regions $U \in \mathcal U$ and all $V_1,V_2$, where $V \in \CV \setminus \CU$, to get $0.025$-separated, bounded length geodesic links $\Lambda_U \in U$ and $\Lambda_{V,i} \subset V_i$ such that for each $i=1,2$, we have
\begin{enumerate}
	\item the link $\Lambda_{V,i}$ has huge $\epsilon $-distance from $\partial V_i$,	
\item the surface $\Sigma$ is incompressible in $$M_i := M \setminus (\bigcup_{V\in \CV \setminus \CU}\Lambda_{V,i} \cup  \bigcup_{U\in \mathcal U}  \Lambda_U ).$$
\item if $\Sigma$ is  homotopic in $M_i$ to a component $S \subset \partial V$, it is homotopic to $S$ within $M\setminus \cup_{V \in \CV} int(V)$.
\end{enumerate}
Here, property (2) comes from Corollary \ref{linkkor} and the fact that $\Sigma$  is incompressible in $M \setminus \cup_{V \in \mathcal V} V$. Property (3) comes from Fact \ref{homotopyfact}. Note that each of the links $\Lambda_{V,i}$ above is really a union of all 3 of the links mentioned in Theorem~\ref{links}.  
 
By (2) above, $\Sigma$ is incompressible in the manifold $$M_{1,2} := M \setminus \Big (\bigcup_{\substack{V\in \CV \setminus \CU,\\ i\in \{1,2\}}}\Lambda_{V,i} \ \cup \  \bigcup_{U\in \mathcal U}  \Lambda_U \Big ),$$ which we equip with a metric of pinched negative curvature as in Lemma \ref{neg-metric}. Since $\Sigma$ is not incompressible in $M \setminus \bigcup_{U\in \mathcal U}  \Lambda_U$, Corollary~\ref{shrinkwrapping2} implies that $\Sigma \hookrightarrow M_{1,2}$ is homotopic within $M_{1,2}$ to an SRS $$f: \Sigma \longrightarrow M_{1,2}$$ whose entire image lies at a bounded  $\epsilon $-distance from one of the links $ \Lambda_{V,i}$. Assume $i=1$ for concreteness, and call the product region containing the link $W \in \CV$, so that $f(\Sigma)$ lies in $W_1$, at bounded $\epsilon$-distance from $\Lambda_{W,1}$. 

By property (2) above, $f$ is $\pi_1$-injective in $M_2$. Since it is homotopic to the embedded surface $\Sigma$, Lemma \ref{its a level surface lem} implies that $f$ is homotopic within $W_1 \subset M_2$ to a level surface of $W_1$. Concatenate the homotopy from $\Sigma$ to $f$ in $M_{2}$ with a homotopy through $W_1$ to the shared boundary component $S =\partial W \cap \partial W_1$, to get a homotopy from $\Sigma$ to $S$ in the complement of $\Lambda_{W,2}$. Applying (3) above, we get that $\Sigma$ is homotopic to $S$ in the complement of the union of $int(W_2)$ and all $int(V)\in \CV \setminus \{W\}$. But then $\Sigma$ is also homotopic to $S$ within the complement of all $int(V), V\in \CV$, as desired.
\end{proof}

 One can interpret Proposition \ref{incompressible-incompressible}  as saying that collections of product regions are `unknotted' with respect to embedded surfaces with bounded genus in their complements. Here is a corollary that  will help to better  explain  this intuition. 
 
\begin{kor}[WPRs in compression bodies are unknotted]\label {unknotted}  There is some $L=L(\epsilon,g,\kappa^+)$  as follows. Suppose that $M$ has pinched negative curvature $\kappa \in [\kappa^-,\kappa^+]$, no cusps, and finitely generated fundamental group, so that by the Tameness Theorem we can write $M=int(\bar M)$ for some compact $3$-manifold $\bar M$. 

Suppose that all components of $\partial \bar M$ have genus at most $g$, that $\mathcal U$  is a (possibly empty) collection of product regions in $M $, each with genus at most $g$ and $\epsilon$-width at least $L$, and that  there is a connected component $$C \subset \bar M \setminus \cup_{U \in \mathcal U} int(U)$$  that is a compression body.   Suppose that $V\subset int(C)$ is a compact product region with genus at most $g$ and $\epsilon$-width at least $L$. Then $C\setminus int (V)$  is the union of two compression bodies $C_1,C_2$, where $\partial_{e} C_1=\partial_{e} C$, one boundary component of $V$ is an interior boundary component of $C_1$, and the other boundary component of $V$ is $\partial_{e} C_2$.
\end{kor}

See \S \ref{compression bodies} for the definition of a compression body. When $C $ is a compression body, a compression body $D \subset C$ such that $\partial_e D=\partial_e C$ is called a \emph {subcompression body} of $C$. So, the conclusion above is that $V$ is a product neighborhood of an interior boundary component of some subcompressionbody of $C$.  The corollary has content even if $\mathcal U$ is empty, as long as $M $ is a compression body.  For  example, if $S$ is the boundary of a knotted compact core in a handlebody $M$, then $S$ cannot be a level surface in a wide product neighborhood.

\begin {proof}
Let $\partial_{e} C$  be the exterior boundary of $C$, and let $\Sigma_- \subset C$ be an interior boundary component of the characteristic compression body of $\partial_{e} C$ within $C \setminus int(V)$. By Proposition \ref{incompressible-incompressible}, if $L$ is large then either 
\begin {enumerate}
\item 	$\Sigma_-$ is isotopic in $C \setminus int(V)$ to a boundary component of $V$, or
\item  $\Sigma_-$ is incompressible in $C$, and hence is isotopic to an interior boundary component $\Sigma_+$ of the compression body $C$. 
\end {enumerate}

In the first case, $V$  is a product neighborhood of an interior boundary component of a subcompressionbody of $C$, and hence $C\setminus int(V)$  is a union of two (possibly trivial) compression bodies of the form desired, say by Corollary 2.4 of \cite{biringer2017automorphisms}.
So we can assume that for every $\Sigma_-$, we are in the second case. Then for some $\Sigma_-$ we have that $\Sigma_-$ and $\Sigma_+$ bound  a topological product region $[\Sigma_-,\Sigma_+]$ with $V \subset [\Sigma_-,\Sigma_+] \subset C$.  We need to show $V$ is a  topological subproduct region of $[\Sigma_-,\Sigma_+]$. 

If $L$ is large, then Corollary \ref{cutting-product} says each $U\in \CU$ is a union $$U=U_1\cup U_2 \cup U_3$$ of adjacent subproduct regions, each with $\epsilon$-width at least $L/4$. Theorem \ref{links} says that for each $U \in \CU$, there is some geodesic link $\Lambda_U \subset U_2$ such that if $\Lambda := \cup_{U\in \CU} \Lambda_U$, the manifold $M_\Lambda := M \setminus \Lambda$ admits a metric with uniformly pinched negative curvature and hyperbolic cusps, that agrees with the metric on $M$ outside a small neighborhood $\BT_\Lambda \supset \Lambda$, and where the inclusion $C \hookrightarrow M_\Lambda$ is $\pi_1$-injective.

Just to simplify notation, let $\Sigma$ be a level surface of $[\Sigma_-,\Sigma_+]$. Let $N \longrightarrow M_\Lambda$ be the cover corresponding to $\pi_1 \Sigma$. Since $N$ has pinched negative curvature and hyperbolic cusps, it is tame by Agol's version of the Tameness Theorem \cite{Agoltameness}. As $\Sigma  \hookrightarrow C$ and $C \hookrightarrow M_\Lambda$ are $\pi_1$-injective, so is $\Sigma \hookrightarrow M_\Lambda$, so we have $N\cong\Sigma  \times \BR$ by Lemma \ref{unique-compact}. We'd like to lift $[\Sigma_-,\Sigma_+]$ to $N$, but technically, $\Sigma_+$ may be a component of $\partial \bar M$. If this is the case, the end of $M_\Lambda$ adjacent to $\Sigma_+$ lifts homeomorphically to $N$, and can then be compactified, giving a bordification $\bar N \supset N$ such that the covering map $N \longrightarrow M_\Lambda$ extends continuously to a homeomorphism from $\partial \bar N$ to $\Sigma_+$. For consistency in notation, when $\Sigma_+$ is not a component of $\partial \bar M$, we set $\bar N := N$. In both cases, then, we can lift the topological product regions $V \subset [\Sigma_-,\Sigma_+]$ to $\hat V \subset [\hat \Sigma_-,\hat \Sigma_+] \subset \bar N$. 

Our goal is to show that there is a map $$f: \Sigma \longrightarrow N \cong \Sigma\times \BR$$ in the homotopy class of a level surface, whose image is contained in $\hat V$. Then $f$ will be homotopic within $\hat V$ to an level surface of $\hat V$ by Lemma~\ref{its a level surface lem}, implying that $\hat V \subset [\hat \Sigma_-,\hat \Sigma_+] \subset \bar N$ are topological subproduct regions, and therefore so is $V \subset [\Sigma_-,\Sigma_+]$.

The map $f$ will be a minimizing SRS produced by the Interpolation Theorem, see \S \ref{sec:interpolation}.   Take $L$ large and first use Corollary \ref{cutting-product} to write $$\hat V = \hat V_1 \cup \hat V_2 \cup \hat V_3$$ as a union of three adjacent subproduct regions, each with $\epsilon$-width at least $L/4.$ To construct $f$, it suffices to find a pair of disjoint level surfaces $S_\pm\subset N \cong \Sigma \times \BR$ such that 
\begin{enumerate}
\item $\hat V_2$ is contained in the interior of the compact submanifold $[S_-,S_+] \subset N$ bounded by $S_-$ and $S_+$, and
\item $S_\pm$ are both homotopic in $N \setminus \hat V_2$ to minimizing SRSs.
\end{enumerate}
Indeed, assuming that we have all this, suppose $L$ is large and take a point $p \in \hat V_2$ with $d_\epsilon(p,\partial \hat V_2) \geq L/8$. Construct a homotopy $(f_t)$ that is a concatenation of a homotopy in $N \setminus \hat V_2$ from $S_-$ to a minimizing SRS, a homotopy within an $R$-neighborhood, a homotopy through minimizing SRSs, a homotopy within an $R$-neighborhood, and a homotopy from the resulting minimizing SRS to $S_+$ in $N \setminus \hat V_2$. Here, the middle three homotopies come from the Interpolation Theorem. The image of $(f_t)$ must contain $p$: if not, we can retract it to a degree zero homotopy equivalence $\Sigma \times [0,1] \longrightarrow [S_-,S_+]$ that is a homeomorphism on the boundary, a contradiction. If $L$ is large relative to $R$, the only part of the homotopy $(f_t)$ that can pass through $p$ is the homotopy through minimizing SRSs given by the Interpolation Theorem. So, we can take $f=f_t$, where $f_t(\Sigma) \ni p$.

\medskip

It remains to construct the surfaces $S_\pm$. Let's fix an identification $N\cong \Sigma \times \BR$ and refer to an end of $N$ as \emph{positive} or \emph{negative}, according to the sign of the second coordinate in a neighborhood of the end. We can assume that the surface $\hat \Sigma_-$ faces the negative end of $N$, and that $\hat \Sigma_+$ either lies in $N$ and faces the positive end of $N$, or lies in $\partial \bar N$ and compactifies the positive end. Note that to ensure that $\hat V_2 \subset [S_-,S_+]$, it suffices to construct each $S_\pm$ so that it  separates $\hat V_2 $ from the $\pm$-end of $N$. That is, we can work on constructing $S_-,S_+$ separately.

\medskip

Let's first work on constructing $S_+$. The surface $\Sigma_+$ is an interior boundary component of $C$, so either it is a component of $\partial \bar M $, or a boundary component of some $U\in \CU$. In the first case, $\Sigma_+$ bounds an end of $M$, and since $M$ has no cusps, this end is either convex cocompact or degenerate. So, there are really three cases to consider: the convex cocompact case, the degenerate case, and the product region case. Note that when $\Sigma_+ \subset \partial \bar M$, then the lift $\hat \Sigma_+$ is a component of $\bar N$ that compactifies the positive end of $N$.

 First, suppose that $\Sigma_+ \subset \partial \bar M$ bounds a convex cocompact end of $M$, so that the positive end of $N$ is convex cocompact. Then we can just set $S_+ \subset \partial \CN_1(CC(N))$ to be the boundary component of the $1$-neighborhood of the convex core that faces $\hat \Sigma_+$. If $L$ is large, then Corollary \ref{prs in the convex core} says that a large $d_\epsilon$-neighborhood of $\hat V_2$ is contained in $ CC(N)$. In particular, $S_+$ separates $\hat V_2$ from the positive end of $N$. Corollary \ref{kor SRSs near partial} says that $S_+$ is homotopic to a minimizing SRS via a homotopy of bounded $\epsilon$-diameter. Since a large neighborhood of $\hat V_2$ is contained in $ CC(N)$, the image of this homotopy lies outside $\hat V_2$.

Next, suppose $\Sigma_+ \subset \partial \bar M$ bounds a degenerate end of $M$, so that the positive end of $N$ is degenerate. By Theorem \ref{filling theorem}, there is a minimizing SRS $f_1 : \Sigma \longrightarrow N$ with image arbitrarily far out the positive end of $N$. Use Freedman--Haas--Scott (see Lemma \ref{getembedded}) to homotope $f_1$ to a level surface $S_1$ contained in the $1$-neighborhood of its image. We can assume that $S_1 $ separates $\hat V_2$ from the positive end of $N$. Use Theorem \ref{filling theorem} again, producing a minimizing SRS $f_2 : \Sigma \longrightarrow N$ and a level surface $S_2 \subset \CN_1(f_2(\Sigma))$ even further out the end, so that both $f_2(\Sigma)$ and $S_2$ lie in the component $(S_1,\infty) \subset N$ of $N\setminus S_1$ that is a neighborhood of the positive end. Then $f_2$ and $S_2$ are homotopic in $(S_1,\infty)$, and hence outside of $\hat V_2$. So, we can set $S_+=S_2$.

Finally, suppose that $\Sigma_+ \subset M$, and is a boundary component of a product region $U\in \CU$. Recalling that $U=U_1 \cup U_2 \cup U_3$, where $\Lambda_U \subset U_2$, we may assume $\Sigma_+ \subset \partial U_1$. Then we can lift $U_1 \subset M_\Lambda$ to a product region $\hat U_1 \subset N$ that is adjacent to $[\hat \Sigma_-,\hat \Sigma_+]$ along $\hat \Sigma_+$. If $L$ is large, we can use Lemma \ref{minsrsinpr} to find a minimizing SRS $g : \Sigma \longrightarrow \hat U_1$, whose image has large $\epsilon$-distance to $\partial \hat U_1$. Work of Freedman--Haas--Scott (see Lemma \ref{getembedded}) gives a level surface $S_+$ contained in a $1$-neighborhood of $g(\Sigma)$. Since $g,S_+$ are homotopic within $\hat U_1 \subset N \setminus \hat V_2$, we are done.

\medskip

We now work on constructing $S_-$. If the negative end of $N$ has no cusps, it is either convex-cocompact or degenerate. Then we can follow the construction of $S_+$ above, and set $S_-$ either to be the corresponding boundary component of $\CN_1(CC(N))$, or a level surface obtained via Freedman-Haas-Scott from a SRS far out the degenerate end. So, we can assume from now on that the negative end of $N$ has cusps. 
	
By Fact \ref{simplecusp} and Corollary \ref{constructing prs}, there is a minimizing SRS $h: \Sigma \longrightarrow N$ in the homotopy class of a level surface whose image intersects deeply some cusp of the negative end of $N$. We can assume that $h(\Sigma)$ intersects the negative end neighborhood $(-\infty, \hat \Sigma_-] \subset N\times \BR$ bounded by $\Sigma_-$. If $\pi : N \longrightarrow M_\Lambda$ is the projection map, then $\pi \circ h(\Sigma)$ intersects an $\epsilon$-cusp neighborhood of $M_\Lambda$. Since $M$ has no cusps, $\pi \circ h(\Sigma)$ intersects the small neighborhood $(\BT_{\Lambda_U}\setminus \Lambda_U) \subset M_\Lambda$ of some link $\Lambda_U \subset \Lambda$, where $U \in \CU$. Since $U=U_1 \cup U_2 \cup U_3$, with $\Lambda_U\subset U_2$, and $U_1,U_3$ have large $\epsilon$-width, we get that a large $d_\epsilon$-neighborhood of $\pi \circ h(\Sigma)$ is contained in $U\setminus \Lambda \subset M_\Lambda$. 

Since $ \Sigma_-$ lies outside $U$, we then have that a large $d_\epsilon$-neighborhood of $h(\Sigma) \subset N$ is disjoint from $\hat \Sigma_-=\emptyset$, and hence lies in $ (-\infty, \hat \Sigma_-).$ In particular, if we use Freedman-Haas-Scott (c.f.\ Lemma \ref{getembedded}) to produce a level surface $S_-$ in a $1$-neighborhood of $h$, then $S_-$ lies in $ (-\infty, \hat \Sigma_-)$ as well, and hence separates $\hat V_2$ from the negative end of $N$. Moreover, $S_-,h$ are homotopic in $ (-\infty, \hat \Sigma_-) \subset N \setminus \hat V_2$, so we're done.\end {proof}

Here is a strengthened version of the above for multiple  product regions lying in a compression body complementary component.

\begin{kor}\label {unknotted2} 	 There is some $L=L(\epsilon,g,\kappa^+)$  as follows. Write $M$  as the interior of a compact manifold $\bar M$, suppose $\mathcal U$ is a collection of product regions in $M $, each with genus at most $g$ and $\epsilon$-width at least $L$, and that  there is a connected component $$C \subset \bar M \setminus \cup_{U \in \mathcal U} int(U)$$  that is a compression body  with exterior boundary $\partial_{e} C$.
Suppose $\mathcal V$  is a finite collection of compact product regions in $int(C)$, each with genus at most $g$ and $\epsilon$-width at least $L$. Then  every component $$N \subset C \setminus \cup_{V \in \mathcal V} int(V)$$  is a compression body.
Moreover, create a graph $\mathcal T$ whose vertices are these $N$, whose edges are product regions $V \in \mathcal V$, and where for a  product region $V $, each of the two components $S \subset \partial V$ determines an adjacency of $V$ with the vertex $N$ with $S \subset \partial N$.

Then $\mathcal T$ is a tree. Root this tree at the component $$N^c \subset C \setminus \cup_{V \in \mathcal V} int(V)$$ whose boundary contains $\partial_{e} C$. Then for every  vertex $N \neq N^c$ that is a nontrivial compression body, the exterior boundary $\partial_{e} N$ faces the product region $V \in \mathcal V$   that connects $N$ to its parent.  Moreover,   there are at most $2g-2$ vertices $N$  that are nontrivial compression bodies.\end{kor}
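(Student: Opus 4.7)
The plan is to proceed by induction on $|\mathcal V|$, with Corollary~\ref{unknotted} as the key inductive tool. The base case $|\mathcal V|=0$ is immediate: the tree consists of a single vertex $C$, which is a compression body by hypothesis, with $\partial_{e}N^{c} = \partial_{e}C$.

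For the inductive step, fix any $V \in \mathcal V$ and apply the inductive hypothesis to $\mathcal V' := \mathcal V \setminus \{V\}$. Then $V$ lies in a unique component $C_{0}$ of $C \setminus \bigcup_{V' \in \mathcal V'} int(V')$, which is a compression body by the inductive hypothesis, and $C_{0}$ is itself a component of $\bar M \setminus \bigcup_{W \in \mathcal U \cup \mathcal V'} int(W)$. Invoking Corollary~\ref{unknotted} with $\mathcal U \cup \mathcal V'$ as the ambient collection and $V$ as the new product region inside the compression body $C_{0}$, we obtain a splitting $C_{0} \setminus int(V) = C_{1} \cup C_{2}$, where $C_{1}, C_{2}$ are compression bodies with $\partial_{e}C_{1} = \partial_{e}C_{0}$ and $\partial_{e}C_{2}$ a boundary component of $V$. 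This establishes claim~(1) for the full $\mathcal V$. The new graph $\mathcal T$ is obtained from the old tree by replacing the vertex $C_{0}$ with $C_{1}, C_{2}$ joined by the new edge $V$, and re-routing each edge formerly incident to $C_{0}$ to whichever of $C_{1}, C_{2}$ it meets (well-defined because $V$ separates $C_{0}$, by Corollary~\ref{unknotted}). This is a tree subdivision, so $\mathcal T$ remains a tree. Re-rooting at the old root: because $\partial_{e}C_{1} = \partial_{e}C_{0}$, the vertex $C_{1}$ inherits $C_{0}$'s position (as root if $C_{0}$ was the old root, otherwise retaining the same parent edge), while $C_{2}$'s parent in $\mathcal T$ is $C_{1}$ via the new edge $V$, whose $C_{2}$-side boundary is precisely $\partial_{e}C_{2}$. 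The parent-edge claim thus propagates from the inductive statement.

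For the bound on the number of nontrivial compression bodies, consider the integer invariant $e(N) := \chi(\partial_{int}N) - \chi(\partial_{e}N)$. By Lemma~\ref{euler decrease}, $e(N) \geq 1$ whenever $N$ is a nontrivial compression body that is not a solid torus, while $e(N) = 0$ for products and for solid tori. Under the standing $g \geq 2$ assumption on product regions (forced by the simplicial ruled surface technology), no boundary component of any $V$ is a torus, so no non-root $N$ is a solid torus, and hence $e(N) \geq 1$ exactly when $N$ is nontrivial. Each $V \in \mathcal V$ contributes one boundary component of Euler characteristic $2 - 2g_{V}$ to $\partial_{int}$ of its parent-side vertex, and the other, equal-$\chi$ component to $\partial_{e}$ of its child-side vertex, giving net contribution zero to $\sum_{N} e(N)$. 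The only surviving contributions come from $\partial C$, yielding
\begin{equation*}
\sum_{N} e(N) \;=\; \chi(\partial_{int}C) - \chi(\partial_{e}C) \;\leq\; -\chi(\partial_{e}C) \;\leq\; 2g - 2,
\end{equation*}
using $\chi(\partial_{int}C) \leq 0$ and that $\partial_{e}C$ has genus at most $g$ in the setting of application. Therefore at most $2g - 2$ vertices of $\mathcal T$ are nontrivial compression bodies.

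The main obstacle will be the inductive bookkeeping for the tree and parent-edge structure, in particular handling the re-rooting cleanly in both cases $C_{0} = N^{c}$ and $C_{0} \neq N^{c}$, and verifying that the separating property of $V$ in $C_{0}$ (from Corollary~\ref{unknotted}) makes the reassignment of the old edges of $C_{0}$ between $C_{1}$ and $C_{2}$ unambiguous.
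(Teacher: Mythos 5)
Your proof is correct and takes essentially the same route as the paper's: induction on $|\mathcal V|$ using Corollary \ref{unknotted} to split the component containing the newly added product region into two compression bodies (giving the tree, root, and parent-edge claims), and an Euler-characteristic count via Lemma \ref{euler decrease}, together with the observation that a vertex whose exterior boundary lies on a genus-$\ge 2$ product region cannot be a solid torus, for the bound $2g-2$. The only difference is organizational: you phrase the count as a telescoping sum of $\chi(\partial_{int}N)-\chi(\partial_{e}N)$ over vertices, while the paper runs the identical computation as a monotone build-up of $\chi(\partial C_i)$ along a tree-compatible enumeration of the vertices.
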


Here, the \emph{parent} of a non-root vertex $N$ in a rooted tree is the unique vertex adjacent to $N$ that is closer to the root.  

\begin {proof}
Everything except the bound on the number of  nontrivial vertices follows via induction on $|\mathcal V|$, where the base case $\mathcal V=\emptyset$ is trivial.  Namely, if $\mathcal V$ is as above, fix a product region $V_0\in \mathcal V$. By induction, the    claim is true for $\mathcal V_{old}:=\mathcal V \setminus V_0$, so the components $$N \subset C \setminus \bigcup_{V \in \mathcal V_{old}} int(V)$$
are compression bodies that fit together into a tree $\mathcal T_{old}$ satisfying the properties in the statement of the corollary.  In particular, the  component $N_0$
containing $V_0$  is a compression body, and either $\partial_{e} C  = \partial_{e} N_0$, in which $N_0$ is the root of $\mathcal T_{old}$ , or the exterior boundary $\partial_{e} N_0$ faces the product region connecting $N_0$ to its parent.

Let $\mathcal T$  be the graph whose vertices are the components of $$N \subset C \setminus \bigcup_{V \in \mathcal V} int(V).$$
If $L$ is large, then Corollary \ref{unknotted}  implies that $N_0 \setminus int(V_0)$  is a union of two compression bodies $N_1,N_2$, where $\partial_{e} N_1 = \partial_{e} N_0$ and $\partial_{e} N_2$ is a boundary component of $V_0$. In particular, $V_0$  separates $N_0$, so $\mathcal T$  is obtained from $\mathcal T_{old}$ by replacing $N_0$ by an edge and partitioning its adjacent vertices between the two vertices of this edge, where the partition corresponds to the partition of the components of $\partial N_0$ induced by the decomposition $\partial N_0 = \partial N_1 \cup \partial N_2 \setminus \partial V_0$. So, $\mathcal T$ is a tree, and  all of its vertices are  compression bodies.   

If $N_0$  is the root of $\mathcal T_{old}$, then $N_1$ is the root of $\mathcal T$. Otherwise,  the exterior boundary $\partial N_1 = \partial N_0$ faces  the parent of $N_0$ in $\mathcal T_{old}$, which is also the parent of $N_1$ in $\mathcal T$, by the description of $\mathcal T$ in the previous paragraph. Similarly, the parent of $N_2$ is $N_1$, and the exterior boundary $\partial N_2$ faces $N_1$. 

 It remains to verify the bound on the number of vertices that are nontrivial compression bodies. Enumerate the  vertices of $\mathcal T$ as $N_0,\ldots,N_m$ in such a way that the index of a parent is always less than the index of its child, and for $i=0,\ldots,m$ let $C_i\subset C$  be the submanifold that is the union of all vertices and edges in the subgraph of $\mathcal T$   spanned by $N_0,\ldots,N_i$. So, $C_i$ is obtained from $C_{i-1}$ by gluing some product region $V \in \mathcal V$ onto a  component of $\partial C_{i-1}$, and then attaching the exterior boundary $\partial_{e} N_i$ to the free boundary component of $V$.

No $N_i$ is a solid torus, since it shares its exterior boundary with a product region, which by definition has higher genus. So, 
$$\chi(\partial_{int} N_i) > \chi(\partial_{e} N_i).$$
for every \emph {nontrivial} $N_i$, by Lemma \ref{euler decrease}. So, for all such $i$,  we have $$\chi(\partial C_i) > \chi(\partial C_{i-1}).$$  Since $C_0$ is a compression body with exterior boundary of genus at most $g$, we have $\chi(\partial C_0)\geq 2-2g$. So as $\chi(C)\geq 0$, the number of  $N_i$ that are nontrivial compression bodies is at most $2g-2$.  
\end {proof}

%
%
%
%
%

\section{Double compression bodies}
\label{Double compression bodies}
Suppose that $U$ is a compact topological product region in an irreducible 3-manifold $M$, and assume that $M\setminus int(U)$ is irreducible. 

\begin{bem}
For example, $M\setminus int(U)$ is irreducible when the homomorphism $\pi_1(U)\to\pi_1(M)$ is non-trivial. Indeed, any sphere $S \subset M\setminus int(U)$ bounds a ball $B \subset M$, and if $B$ doesn't lie in $M\setminus int(U)$, it contains $U$, and hence $\pi_1(U)\to\pi_1(M)$ is trivial.
\end{bem}

Let $S_1$ and $S_2$ be the components of $\partial U$. For $i=1,2$, let $$C_i\subset M \setminus int(U)$$ be the characteristic compression body  of $S_i$. Up to isotopy we may assume that $C_1$ and $C_2$ are disjoint. The {\em double compression body} $\mathcal{DC}(M,U)$ of $U$ is then the union 
$$\mathcal{DC}(M,U)=C_1 \cup \CN(U) \cup C_2$$
Existence and uniqueness up to isotopy of the double compression body follow directly from Proposition \ref{prop:rel-compbody}. 

\begin{bem}
With $U\subset M$ as above,  $\mathcal{DC}(M,U)$ is the smallest irreducible submanifold containing $U $ with the property that any disc $$(D^2,\partial D^2) \into (M, U) $$ with interior disjoint from $U $ can be homotoped rel-$\partial$ into $\mathcal{DC}(M,U)$. By `smallest', we mean that $\mathcal{DC}(M,U)$ can be homotoped, relative to $U$, into any other irreducible submanifold having this property.
\end{bem}

 Our main goal in this section is the following theorem.

\begin{sat}[Compressing wide product regions]\label{double compression body theorem}
For all $g\ge 2$ and $\epsilon>0$, there is  some $L=L(\epsilon,g)$ as follows. Suppose $M$ is a  complete hyperbolic 3-manifold  with no cusps  and $U \subset M$ is a compact genus $g$  product region of $\epsilon$-width at least $L$.
\begin{enumerate}
	\item If $\Sigma $ is a  boundary component of $\mathcal {DC}(M,U)$  that is not isotopic to a level surface of $U$, then $\Sigma$ is incompressible in $M$
\item Either the inclusion $\mathcal{DC}(M,U) \hookrightarrow M$ is $\pi_1$-injective, or  $U$ compresses to one side in $M$ and bounds a twisted interval bundle over a nonorientable surface to the other side. 
\end{enumerate}
%
\end{sat}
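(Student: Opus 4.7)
The plan is to prove Part~(1) as a direct application of Proposition~\ref{incompressible-incompressible}, and then to use~(1) to set up Part~(2), whose decisive step will produce the twisted interval bundle via the product-region covering theorem (Theorem~\ref{PR covering theorem}).

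For Part~(1), let $\Sigma$ be a boundary component of $\mathcal{DC}(M,U)$ that is not isotopic to a level surface of $U$. Being a component of $\partial_{\mathrm{int}}C_i$ for some $i$, it is embedded, closed, two-sided, of genus at most $g$, and incompressible in $M\setminus\mathrm{int}(U)$ by the defining property of the characteristic compression body. I will apply Proposition~\ref{incompressible-incompressible} with $\mathcal U=\emptyset$ and $\mathcal V=\{U\}$: if $\Sigma$ were compressible in $M$, it would be homotopic in $M\setminus\mathrm{int}(U)$ to a level surface of $U$, which via Theorem~\ref{getembedded} improves to an isotopy, contradicting the hypothesis.

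For Part~(2), I argue by contradiction. Let $\beta\subset\mathcal{DC}(M,U)$ be essential but nullhomotopic in $M$, with spanning disk $D$. A standard innermost-disk reduction on $D\cap\partial\mathcal{DC}(M,U)$, using irreducibility together with Part~(1), shows that if every component of $\partial\mathcal{DC}(M,U)$ were incompressible in $M$ then $D$ could be pushed entirely into $\mathcal{DC}(M,U)$, contradicting $\beta$ essential. So some component $\Sigma_0$ of $\partial\mathcal{DC}(M,U)$ is compressible in $M$; by~(1) it is isotopic to a level surface of $U$, forcing (say) $C_1$ to be trivial. Hence $S_1$ is incompressible in the component $N_1\subset M\setminus\mathrm{int}(U)$ on its side, while its compressions in $M$ occur only via disks lying in $U\cup C_2$; pushing through the product $U$ yields a compression of $S_2$ inside $C_2$, so $C_2$ is nontrivial. (If $C_2$ were also trivial, a second innermost-disk reduction across $S_2$ would show $S_1$ incompressible throughout $M$, so $\mathcal{DC}(M,U)\simeq S_1$ would inject into $\pi_1 M$, contradicting non-injectivity.) This produces the ``$U$ compresses to the $S_2$ side'' half of the conclusion.

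To finish, I plan to show $N_1$ is a twisted interval bundle over a nonorientable surface. Via Corollary~\ref{cutting-product}, I chop $U$ into three adjacent wide sub--product regions $V_1\cup V_2\cup V_3$, and pass to the cover $\pi\colon\hat M\to M$ associated with a subgroup chosen so that $\pi|_{V_1}$ is an embedding (for example, the image of $\pi_1(N_1\cup C_1\cup U)\to\pi_1 M$). The compressions of $S_2$ inside $C_2$ obstruct $\pi|_{V_1\cup V_2}$ from being an embedding, so Theorem~\ref{PR covering theorem} forces some complementary component of $\pi(\mathrm{int}(V_1))$ in $\hat M$ to be a compact interval bundle. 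The $C_2$-side component contains the nontrivial compression body $C_2$ and therefore cannot be an interval bundle, leaving the $N_1$-side; since $\partial N_1=S_1$ is connected while trivial $I$-bundles have disconnected boundary, this bundle is automatically twisted over a nonorientable surface. The hard part is exactly this last step: converting the algebraic non-injectivity of $\mathcal{DC}(M,U)\hookrightarrow M$ into the rigid topological conclusion requires picking the right cover, verifying the three-adjacent-wide-PR hypothesis there, and isolating the bundle on the correct side.
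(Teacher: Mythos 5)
Your Part (1) and your reduction of Part (2) to the case where one characteristic compression body is trivial and the other is nontrivial follow the paper's own route and are fine. The gap is in the decisive step, where you try to extract the twisted interval bundle from Theorem~\ref{PR covering theorem}: the obstruction you invoke simply is not there. Whatever cover $\pi\colon\hat M\to M$ you choose whose defining subgroup contains the image of $\pi_1(U)$ (in particular the one corresponding to the image of $\pi_1(N_1\cup C_1\cup U)$), the inclusion $U\hookrightarrow M$ lifts to a homeomorphic copy $\hat U\subset\hat M$, and $\pi$ restricted to $\hat U$ --- hence to the lifts of $V_1\cup V_2$, or of any subset of $U$ --- is automatically an embedding. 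Compressions of $S_2$ inside $C_2$ live on the far side of $U$ and change nothing that Theorem~\ref{PR covering theorem} can detect; applied with $f=\pi$ and the three sub--product regions of $\hat U$, it just returns its first alternative and yields no interval bundle. To get content out of that theorem you would need wide product regions in $\hat M$ extending \emph{beyond} the lift of $S_2$, which requires understanding the ends of $\hat M$ there; but $\pi_1(N_1)$ need not be finitely generated, so tameness and the degenerate/convex-cocompact end dichotomy are not available for this cover. Moreover, Theorem~\ref{PR covering theorem} is an $\epsilon$-thick statement (its proof runs through geometric limits of $\epsilon$-thick manifolds, Lemmas~\ref{limit-double} and~\ref{propagating}), whereas Theorem~\ref{double compression body theorem} only assumes a lower bound on the $\epsilon$-\emph{width} of $U$ in a manifold that may contain arbitrarily deep Margulis tubes, so the tool is not applicable in the stated generality in any case.

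The real content of Part (2) is exactly the step you compress into one sentence: converting the algebraic non-injectivity of $\mathcal{DC}(M,U)\hookrightarrow M$ into a geometric wrap-around of the surface $S$. The paper does this by a different mechanism: it drills a link $\Lambda\subset U$ (Theorem~\ref{links}) so that $\partial\mathcal{DC}(M,U)$ becomes incompressible in $M\setminus\Lambda$, passes to the cover $N_\Lambda$ of the drilled manifold in which $\mathcal{DC}(M,U)$ is a standard compact core, and splits into cases according to whether the end $\CE$ facing $S$ meets the tube $\BT_\Lambda$ inside $\CN_1(CC(N_\Lambda))$. In the first case it builds a hyperbolic manifold with convex boundary isometrically immersed in $M$ and applies Proposition~\ref{radialextension} to get $\pi_1$-injectivity; in the second, shrinkwrapping/interpolation of minimizing simplicial ruled surfaces (Lemma~\ref{lemma:hastaloshuevos}) pushes $S$ out the end $\CE$ while its projection stays in $U_\Lambda$, producing two distinct lifts of the same map and hence an essential self-homotopy of $S$, after which Waldhausen's cobordism theorem gives the twisted interval bundle. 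Your proposal has no substitute for this mechanism, so the final step does not go through.
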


 These conclusions do not hold for general  topological product regions in $3$-manifolds.  For instance, for (2) suppose that $K$  is a nontrivial knot in $\BS^3$ and $U$ is a  topological product region obtained by taking a  regular neighborhood $\mathcal N(K) \supset K$ and removing the interior of a smaller regular neighborhood. Then $\mathcal{DC}(\BS^3,U) = \mathcal N(K)$, which does not $\pi_1$-inject in $\BS^3$, and $\BS^3 \setminus \mathcal N(K)$ is not a twisted interval bundle.
 In some sense, (2) says that a wide product region separates its two sides enough that compressions one performs on one side do not affect the other side.

 As a further illustration of Theorem \ref{double compression body theorem}, note the following corollary, which will never be used in this paper, but which we think is particularly instructive. 

\begin{kor}\label{non-haken-handle}
For all $g\ge 2$ and $\epsilon>0$, there is $L$ as follows. Suppose $M$ is a \emph{non-Haken} hyperbolic $3$-manifold and $S \subset M$ is a genus $g$ surface that has a product neighborhood of $\epsilon$-width at least $L$. Then at least one of the components of $M\setminus S$ is a handlebody $H$. Moreover, either $\pi_1(H)\longrightarrow\pi_1(M)$ is injective or $M\setminus H$ is homeomorphic either to a handlebody or to a twisted interval bundle.
\end{kor}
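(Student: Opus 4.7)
The plan is to apply Theorem~\ref{double compression body theorem} to $U$ and perform a case analysis on part~(2) of that theorem. Let $L$ be the constant provided by Theorem~\ref{double compression body theorem} for $(g,\epsilon)$, let $S_1,S_2$ be the two components of $\D U$, and let $C_1,C_2\subset M\setminus int(U)$ be the characteristic compression bodies of $S_1,S_2$, so that $\mathcal{DC}(M,U)=C_1\cup\CN(U)\cup C_2$. Being a closed hyperbolic $3$-manifold, $M$ is irreducible and has no cusps; being non-Haken, $M$ contains no embedded two-sided closed incompressible surface of positive genus.

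I first apply Theorem~\ref{double compression body theorem}(1): each component of $\D\mathcal{DC}(M,U)$ is either isotopic in $M$ to a level surface of $U$ (hence to $S$) or incompressible in $M$, and non-Hakenness rules out the latter. So every component of $\D_{int}C_i$ is a genus $g$ surface of Euler characteristic $2-2g<0$. Since $g\ge 2$, $C_i$ is not a solid torus, so Lemma~\ref{euler decrease} gives $\chi(\D_{int}C_i)>\chi(\D_eC_i)=2-2g$ whenever $C_i$ is nontrivial; since a nonempty disjoint union of copies of $S$ has Euler characteristic $\le 2-2g$, we must have $\D_{int}C_i=\emptyset$, i.e.\ $C_i$ is a handlebody. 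Therefore each $C_i$ is either trivial ($\cong S\times I$) or a handlebody.

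Now I split on Theorem~\ref{double compression body theorem}(2). Suppose first that $\mathcal{DC}(M,U)\into M$ is $\pi_1$-injective. Then the $C_i$ cannot both be trivial, since otherwise $\mathcal{DC}(M,U)\cong S\times I$ would inject $\pi_1$, forcing $S$ to be incompressible in $M$. If both $C_i$ are handlebodies, then $\D\mathcal{DC}(M,U)=\emptyset$, so $\mathcal{DC}(M,U)=M$, both components of $M\setminus S$ are handlebodies, and we may take $H$ to be either (then $M\setminus H$ is also a handlebody). If exactly one, say $C_2$, is a handlebody, then $\mathcal{DC}(M,U)$ deformation retracts onto $C_2$, and the $S_2$-side component $H$ of $M\setminus S$ (namely $C_2$ with a collar) is a handlebody contained in $\mathcal{DC}(M,U)$ with the same fundamental group; the factorization $H\hookrightarrow\mathcal{DC}(M,U)\into M$ then makes $\pi_1(H)\to\pi_1(M)$ injective. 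In the remaining alternative of Theorem~\ref{double compression body theorem}(2), $U$ compresses in $M$ on (say) the $S_1$ side while the $S_2$ side is a twisted interval bundle $E$ over a nonorientable surface; the compression forces $C_1$ to be nontrivial and hence a handlebody, and $\D_{int}C_1=\emptyset$ shows the $S_1$ side of $M\setminus int(U)$ equals $C_1$, so the $S_1$-side component $H$ of $M\setminus S$ is a handlebody while $M\setminus H\cong E$ is a twisted interval bundle. The main obstacle is really the bookkeeping in the $\pi_1$-injective case---ruling out both $C_i$ trivial and then matching each remaining configuration to the correct conclusion in the corollary; everything else follows formally from Theorem~\ref{double compression body theorem} combined with the Euler-characteristic argument via Lemma~\ref{euler decrease}.
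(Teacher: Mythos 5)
Your proof is correct and is essentially the paper's intended argument: the corollary is stated there without proof as an immediate consequence of Theorem \ref{double compression body theorem}, and your derivation---part (1) plus non-Hakenness forcing every component of $\partial\mathcal{DC}(M,U)$ to be isotopic to $S$, Lemma \ref{euler decrease} making each characteristic compression body trivial or a handlebody, then the case analysis of part (2)---is exactly that fleshing-out. The only unstated point is that $S$ must separate $M$ (so that "the $S_i$-side components" make sense), which is standard: a non-separating closed surface would give $b_1(M)>0$ and hence an incompressible surface, contradicting non-Hakenness.
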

\begin{proof}
The non-Haken assumption means that $M$ does not have incompressible surfaces. So, $U$  compresses to at least one side. Part (1) of the previous theorem then implies that the corresponding compression body is a handlebody $H$. If $U$ also compresses to the other side, the part (1) gives us another handlebody, and we are done. If $U$ is incompressible to the other side, part (2) of the theorem says that either $H$ is $\pi_1$-injective, or we have a twisted interval bundle on the other side.
\end{proof}

The rest of the  section is devoted to the proof of Theorem \ref{double compression body theorem}. It would be possible, albeit very messy, to give a direct bound for $L$ in terms of all the constants we have met so far, but we will not attempt this, to keep the argument as transparent as possible.

\subsection{Proof of Theorem \ref{double compression body theorem}, (1)} Assume that $U$ is wider than the constant $L$ provided by Proposition~\ref{incompressible-incompressible}. By construction, 
$$\mathcal{DC}(M,U)=C_1\cup U\cup  C_2$$
where $C_1,C_2\subset M\setminus U$ are the characteristic compression bodies in $M\setminus U$ associated to the boundary components $S_1$ and $S_2$ of $U$. If $\Sigma $ is an interior boundary component of $C_i$, then either $\Sigma$ is isotopic to a level surface of $U$ or it has at most genus $g-1$ and is incompressible in $M\setminus U$. In the latter case, $\Sigma$ is incompressible in $M$ by Proposition~\ref{incompressible-incompressible}.

\subsection{Proof of Theorem \ref{double compression body theorem}, (2)}  The second half of the proof is much more involved.  As in the previous case, let $U$ be a wide compact product region,
let $S_1,S_2$ be the two components of $\partial U$, and write 
$$\mathcal{DC}(M,U)=C_1\cup U \cup C_2.$$

If $C_1$ and $C_2$ are trivial compression bodies, i.e.\ interval bundles over $S_1$ and $S_2$, the Loop Theorem \cite{Hempel3-manifolds} implies that $U$ is $\pi_1$-injective in $M$. Since in this case $\mathcal{DC}(M,U)$ is isotopic to $U$, it is also $\pi_1$-injective. On the other hand, if both $C_1,C_2$ are nontrivial, then (1) says that $\D \mathcal {DC}(M,U)$ is incompressible in $M$, and hence $\mathcal{DC}(M,U) \hookrightarrow M$ is again $\pi_1$-injective.
This reduces Theorem \ref{double compression body theorem} (2) to the case that $C_2$, say,  is a trivial compression body and $C_1$ is not.

\vspace{2mm}

To limit notation, \emph{we now disregard the trivial compression body $C_2$, write $C:=C_1$ and  $DC:=\mathcal {DC}(M,U) = C \cup U$, and set $S$ to be the boundary component of $M \setminus DC$ adjacent to $U$.} See Figure \ref{2claimfig}.  

Our goal now is to prove:

 \begin{claim}\label{2claim}
If $U$ has $\epsilon $-width at least some $L_0=L_0(\epsilon,g)$, then either $DC \hookrightarrow M$ is $\pi_1$-injective, or $S$ bounds a twisted interval bundle over   a non-orientable surface in $M \setminus DC$.
\end{claim}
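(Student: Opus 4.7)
The plan is by contradiction: I assume $DC \hookrightarrow M$ fails to be $\pi_1$-injective, and aim to show that the $S$-side component $Y_2 := M \setminus \mathrm{int}(DC)$ adjacent to $S$ is a twisted interval bundle over a non-orientable surface. First I would pass to the cover $p : \tilde M \to M$ corresponding to the image $G$ of $\pi_1 DC$ in $\pi_1 M$. Then $DC$ lifts to an embedded copy $\widetilde{DC} \subset \tilde M$, with the inclusion inducing a $\pi_1$-surjection onto $\pi_1 \tilde M = G$ that by assumption fails to be injective. The wide product region $U$ lifts to a genus-$g$ wide product region $\widetilde U \subset \widetilde{DC}$ on which $p$ restricts to an embedding onto $U$. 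By part (1) of Theorem \ref{double compression body theorem}, all components of $\partial DC$ other than $S$ are incompressible in $M$, and hence their lifts are incompressible in $\tilde M$; so any failure of $\pi_1$-injectivity must be concentrated at $\tilde S := \partial \widetilde U \cap \partial \widetilde{DC}$.

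Next, I would use Theorem \ref{links} to drill a hyperbolically $\delta$-separated geodesic link $\Lambda \subset U$ that renders $S$ incompressible in $M \setminus \Lambda$, and equip $M \setminus \Lambda$ with the pinched-negatively-curved metric of Proposition \ref{neg-metric}. Using shrinkwrapping (Theorem \ref{shrinkwrapping}) in the lifted link complement, I would push $\tilde S$ outward into $\tilde M \setminus \widetilde{DC}$ through simplicial ruled surfaces, organizing these into a sequence of genus-$g$ subproduct regions $\widetilde V_1, \widetilde V_2, \dots \subset \tilde M \setminus \widetilde{DC}$ consecutively adjacent to $\tilde S$, each with $\epsilon$-width at least $L$. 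Then I would apply the PR Covering Theorem (Theorem \ref{PR covering theorem}) iteratively to the triples $(\widetilde U, \widetilde V_i, \widetilde V_{i+1})$, exploiting that $p|_{\widetilde U}$ is an embedding and that the widths of $\widetilde U$ and $\widetilde V_{i+1}$ are large. Each application yields either (i) an extension of the embedding to $\widetilde U \cup \widetilde V_i$, in which case I iterate, or (ii) a compact interval bundle component of $M \setminus \mathrm{int}(U)$.

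If (i) holds indefinitely, then the whole neighborhood $\widetilde U \cup \widetilde V_1 \cup \widetilde V_2 \cup \cdots$ embeds in $M$, so $\tilde S$ faces a product end of $\tilde M$ that embeds under $p$; combined with the incompressibility of the other boundary components of $\widetilde{DC}$ and Lemma \ref{convex cocompact cover}, this forces $\widetilde{DC} \hookrightarrow \tilde M$ to be a homotopy equivalence, contradicting the assumed non-injectivity of $\pi_1 DC \to \pi_1 M$. Hence (ii) must hold at some finite step. Since $C$ is a nontrivial compression body, the $S_1$-side component of $M \setminus \mathrm{int}(U)$ cannot be an interval bundle (a trivial one would make $C$ trivial, and a twisted one has closed non-orientable surface group $\pi_1$ incompatible with an essential disk in $C$). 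Thus the compact interval bundle lies on the $S$-side; as $\partial X_2 = S$ is connected, $X_2$ must be a \emph{twisted} interval bundle over a non-orientable surface. Since $C_2 \cong S \times [0,\epsilon)$ is the trivial characteristic collar, $Y_2 = X_2 \setminus \mathrm{int}(C_2)$ is likewise a twisted interval bundle with $\partial Y_2 = S$.

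The hard part will be constructing the sequence $\widetilde V_i$ of wide subproduct regions outside $\widetilde{DC}$: unlike inside $\widetilde{DC}$, the outside carries no a priori product structure, and its behavior depends on whether $\tilde S$ faces a convex-cocompact end, a degenerate end, or a compact component of $\tilde M \setminus \widetilde{DC}$. Treating the first two cases uses Proposition \ref{constructing prs} together with Tameness and Geometric Tameness, while the compact-component case requires extracting local product structure via shrinkwrapping and the width of $\widetilde U$. A secondary subtlety in the indefinite-iteration step is ruling out $\pi_1$-contributions from other components of $\tilde M \setminus \widetilde{DC}$, which I would handle using the incompressibility of the non-$S$ boundary components established in part (1).
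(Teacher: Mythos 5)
Your route is genuinely different from the paper's, but it has a gap at its core. Everything rests on producing, inside the cover $\tilde M$ corresponding to the image of $\pi_1 DC$, an arbitrarily long chain of wide genus-$g$ product regions $\widetilde V_1,\widetilde V_2,\dots\subset\tilde M\setminus\widetilde{DC}$ consecutively adjacent to $\tilde S$, so that Theorem \ref{PR covering theorem} can be iterated. Nothing supplies these: since you assume $\pi_1 DC\to\pi_1 M$ is \emph{not} injective, $\widetilde{DC}$ is $\pi_1$-surjective but not a compact core of $\tilde M$, so the complement adjacent to $\tilde S$ carries no a priori product structure; and in exactly the case the claim is about --- where $S$ bounds a small twisted interval bundle on the outside --- there need be no wide product region out there at all, so your dichotomy (``embed forever'' versus ``hit a compact interval bundle at a finite stage'') never gets started. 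Shrinkwrapping only yields individual ruled surfaces homotopic to $S$ in a link complement; turning families of such surfaces into product regions requires the sweep-out/degree arguments of Proposition \ref{Bsurfaces} or Lemma \ref{trivial interval bundles}, which presuppose an ambient topological product --- precisely what is unknown here. You flag this as ``the hard part,'' but it is where the twisted-bundle conclusion must come from, and no mechanism is offered. Two further problems: Theorem \ref{PR covering theorem} is proved only for $\epsilon$-thick product regions (and uses widths rather than $\epsilon$-widths), while Theorem \ref{double compression body theorem} assumes only that $M$ has no cusps, so the tool does not apply as stated; and in the ``embed forever'' branch, asserting that the embedded product end ``forces $\widetilde{DC}\hookrightarrow\tilde M$ to be a homotopy equivalence'' begs the question --- non-injectivity of $\pi_1 DC\to\pi_1\tilde M$ is the standing assumption, and to contradict it you must actually construct a cover of $M$ in which $DC$ sits as a core, which your sketch does not do.

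The paper sidesteps all of this by passing to the cover not of $M$ but of the drilled manifold $M_\Lambda$ corresponding to $\pi_1(DC_\Lambda)$: drilling the link in $V_2$ makes $\partial DC$ incompressible in $M_\Lambda$, so in that cover $N_\Lambda$ the lift of $DC_\Lambda$ \emph{is} a standard compact core, and the end $\CE$ adjacent to $S$ automatically has a product neighborhood. The dichotomy is then whether $p(\CE\cap\CN_1(CC(N_\Lambda)))$ meets the tube about $\Lambda$. If not, one glues the covers $M_\Sigma$ along the incompressible boundary components and the region $W$ out to the convex core boundary of $N_\Lambda$ to obtain a hyperbolic manifold with convex boundary isometrically immersing in $M$ with $DC$ as core, and Proposition \ref{radialextension} upgrades this to a covering, proving $\pi_1$-injectivity. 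If so, Lemma \ref{lemma:hastaloshuevos} produces a surface in $\CE$ homotopic to the inclusion of $S$ whose projection lies in $U_\Lambda$; re-lifting that projection through the homeomorphic lift of $DC_\Lambda$ gives a second, different map homotopic to the first, the projected homotopy is an essential self-homotopy of $S$ in $M_\Lambda$, and Waldhausen's cobordism theorem yields the twisted interval bundle. Your endgame (the compact interval bundle must lie on the $S$-side and be twisted because $C$ is nontrivial) is fine, but as written the proposal does not establish the claim.
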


\begin {figure}
\centering
\includegraphics{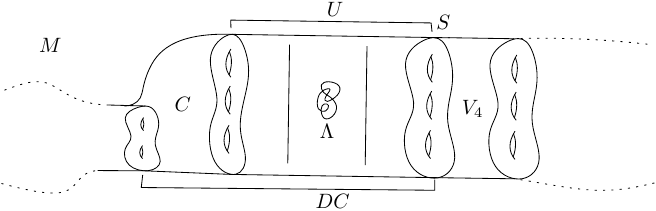}
\caption{The difficult case in (2), see Claim \ref{2claim}.}\label{2claimfig}
\end {figure}

Suppose that $U$ has huge $\epsilon $-width. By Corollary~\ref{cutting-product} and Theorem \ref{links} we can decompose $U$ as the union of three subproduct regions 
$$U=V_1\cup V_2\cup V_3$$
with disjoint interiors, such that the following properties hold:
\begin{itemize}
	\item $V_i$ and $V_{i+1}$ are adjacent to each other, and the boundary component $S$ of $U$ is contained in $\D V_3$,
	\item where $V_1,V_3$ have $\epsilon$-width at least some huge $L$,
	\item $V_2$ has $\epsilon$-width bounded above by some $L_0=L_0(g,\epsilon)$, but is wide enough to contain the $1$-neighborhood of a $0.025$-separated link $\Lambda\subset V_2$ such that $M\setminus U$ is incompressible in $M\setminus\Lambda$.
\end{itemize}
Perhaps starting out with a smaller $U$, we may also assume that there is a product region $V_4$ on the other side of $S$ from $V_3$, and that $V_4$ also has $\epsilon $-width at least $L$. (This $V_4$ is used in Lemmas \ref{product-in-convex} and \ref{lemma:hastaloshuevos}.) See Figure \ref{2claimfig} again.

 The rest of the proof will operate under the assumption that the lower bound $L$ for the $\epsilon$-widths of $V_1,V_3,V_4$ is large with respect to $g,\epsilon $.  In particular, we will feel free to increase $L$ a finite number of times to make it bigger than various constants depending on $g,\epsilon $ that appear in Sections \ref{sec:shs}  and \ref{sec: WPR}.  We think that this approach makes the section more readable (and writable) than if we were to try to identify an appropriate $L$  at the start.  That said, $L=(100+g+\frac 1\epsilon)!$ works.

\vspace{2mm}


Let $\rho$ be the negatively curved metric on $M\setminus\Lambda$  that was described in Lemma~\ref{neg-metric}. We set from now on 
$$M_\Lambda=(M\setminus\Lambda,\rho).$$
Moreover, if $W\subset M$ is a 3-dimensional submanifold containing $\Lambda$ we will write $W_\Lambda=W\setminus\Lambda$ when we want to think of it as a submanifold of the Riemannian manifold $M_\Lambda$. For instance, we will encounter $DC_\Lambda$ and $\BT_\Lambda$, where $\BT_\Lambda$ is the $0.025$-tubular neighborhood of $\Lambda$ in $M$. For the convenience of the reader we recall a few facts about $M_\Lambda$:

\begin{itemize}
\item $\rho$ agrees with the  hyperbolic metric on $M$ outside of $\BT_\Lambda$.
\item $M_\Lambda$ has hyperbolic cusps and sectional curvature pinched by
$$\kappa^-\le\kappa_{M_\Lambda}\le \kappa^+,$$
where $\kappa^\pm$ were  fixed at the beginning of \S \ref{sec: WPR}.  In the following, we will assume that $L$ is huge when compared to any previously seen constants depending on $\kappa^\pm$.
\end{itemize}

As we mentioned above, the components of $\D DC$ are incompressible in $M\setminus\Lambda$. So, the inclusion $DC_\Lambda \hookrightarrow M_\Lambda$ is $\pi_1$-injective. Let $$ p:  N_\Lambda \longrightarrow M_\Lambda$$  be the cover corresponding to the subgroup $\pi_1(DC_\Lambda)\subset\pi_1(M_\Lambda)$. By construction, the submanifold $DC_\Lambda\subset M_\Lambda$ lifts homeomorphically to a submanifold of $  N_\Lambda$, which we will also denote by $DC_\Lambda$, hoping that no confusion will arise. Since $  N_\Lambda$ has finitely generated fundamental group and hyperbolic cusps, $  N_\Lambda$ is tame by the Tameness Theorem \cite{Agoltameness}. Since compact cores are standard when the boundary is incompressible (Lemma \ref{unique-compact}),  it follows that $  N_\Lambda\setminus DC_\Lambda$ is homeomorphic to $\D DC_\Lambda\times\BR$.

  For later use, we record the following result. 
 
\begin{lem}\label{product-in-convex}
If $L $ is at least some $L_1=L_1(\epsilon,g)$, then $S \subset CC(N_\Lambda)$.
\end{lem}

\begin{proof}
$S$ is contained within the product region $U \cup V_4$, at large $\epsilon$-distance from the boundary. So, this follows from Corollary \ref{prs in the convex core}. \end{proof}

 Let $E$ be the component of $ N_\Lambda\setminus DC_\Lambda$ adjacent to $S$, so $E$ is a product neighborhood of an end $ \CE$ of $ N_\Lambda$. Let $\CN_1(CC(N_\Lambda))$ be the radius one neighborhood of the convex core in $  N_\Lambda$. The two cases of Claim~\ref{2claim} will depend on whether  there are points in $E$  that lie in  $\CN_1(CC(N_\Lambda))$  and project into the regular neighborhood $\BT_\Lambda$ of our link $\Lambda$.

\vspace{2mm} 

\noindent \bf Case 1. \rm $p(E \cap \CN_1(CC(N_\Lambda)))$  does not intersect $\BT_\Lambda$.

\vspace{2mm}

We will prove that $DC\hookrightarrow M$ is $\pi_1$-injective  by building a  hyperbolic $3$-manifold $\hat M$  with convex boundary and an isometric immersion $$\rho : \hat M \longrightarrow M$$  such that $DC$ embeds as a compact core in $\hat M$ on which $\rho$ restricts to the inclusion $DC \hookrightarrow M$. By  Proposition \ref{radialextension}, $\rho$ will then extend to a covering map from a complete hyperbolic $3$-manifold in which $DC$ is a compact core,  implying that  $\pi_1 DC$ injects into $\pi_1 M$. 

Let $\Sigma\subset\D DC$ be a boundary component different from $S$ and recall that any such $\Sigma$ is $\pi_1$-injective in $M$. Let $M_\Sigma \cong \Sigma \times \BR$ be the cover of $M$ associated to  $\Sigma$. Then $\Sigma$ lifts to a level surface $\tilde \Sigma \subset M_\Sigma$; let $U_\Sigma$ be the component of $M_\Sigma \setminus \tilde \Sigma$  such that points in $U_\Sigma$ sufficiently close to $\tilde \Sigma$ project under the cover $M_\Sigma\longrightarrow M$ to points \emph{outside} $DC$. 


 Since we are in Case 1 and our original manifold $M$ had no cusps, the end $ \CE$ also has no cusps. If $ \CE$ is  degenerate, let $W=E$. If $ \CE$   is convex cocompact, let $W\subset E$ be  the topological product region bounded by $S$ and the component $\partial_{ \CE}\CN_1(CC(  N_\Lambda))) \subset \partial \CN_1(CC(  N_\Lambda)))$ that faces $\CE$. Note that Lemma~\ref{product-in-convex}  implies that $\partial_{ \CE}\CN_1(CC(  N_\Lambda)))$ lies in $E$. 

 In both cases, Lemma~\ref{product-in-convex} says that $S \subset CC(N_\Lambda)$, which  implies that $W \subset \CN_1(CC(N_\Lambda))$.  So  since we are in Case 1, the restriction of the metric of $  N_\Lambda$ to $W$ is hyperbolic. 
Define
$$\hat M = \left(\bigcup_{\Sigma \subset \partial DC\setminus S} U_\Sigma \right ) \cup DC \cup_S W.$$ Here, each $U_\Sigma$ is  attached to $DC$ along $\Sigma$, and $DC$ is glued to $W$ along $S$. Note that the gluing respects the metrics, so $\hat M$  is a hyperbolic manifold  with one convex boundary component. 
Each of the pieces defining $M_\Sigma$ comes with an isometric immersion into $M$: for $DC$ it is the inclusion, for $U_\Sigma$ it is the restriction of $M_\Sigma \longrightarrow M$, and for $W$ it is the restriction of $p : N_\Lambda \longrightarrow M_\Lambda$, since  $p(W)$ does not  intersect $\BT_\Lambda$.  These maps combine to give an isometric immersion $$\rho : \hat M \longrightarrow M$$
as desired, which finishes the proof.
\vspace{2mm} 

\noindent \bf Case 2. \rm $p(E \cap \CN_1(CC(N_\Lambda)))$  intersects $\BT_\Lambda$.

\vspace{2mm} 

 Our goal now is to prove that $S$  bounds a twisted interval bundle in $M \setminus U$. The first step is to prove the following Lemma.

\begin{lem} \label{lemma:hastaloshuevos}
If $L$ is large,
there is a minimizing SRS $f : S \longrightarrow N_\Lambda$ in the homotopy class of the inclusion, such that $f(S)$ is contained in $E$ and $p\circ f(S) $ intersects $\mathbb T_\Lambda$.
\end{lem}

\begin{proof}
 Suppose first that the end $ \CE$ has cusps, and using Fact \ref{simplecusp}, pick a simple closed curve $\gamma \subset S$  that is homotopic into a cusp of $ \CE$. By Corollary \ref{construction}, there is a family of minimizing SRSs $f_t : S \longrightarrow N_\Lambda$  in the homotopy class of the inclusion such that $f_t(\gamma)$ exits that cusp as $t\to 0$. In particular, for small $t$ the  surface $f_t(S)$   intersects $E \setminus V_4$. If $L $ is large, $V_4$ has large $\epsilon$-width, so the Bounded Diameter Lemma implies that $f_t(S) \subset E$ for small $t$. Moreover, for small $t$ the projection $p \circ f_t(\gamma)$ then lies deep inside a cusp neighborhood of $M_\Lambda$. But $M$ had no cusps, so this implies that for small $t$, $$p \circ f_t(S) \cap \BT_\Lambda \neq \emptyset.$$  
  Hence, we can take $f=f_t$ for some small $t$.
  
   Suppose now that $ \CE $ has no cusps. The Geometric Tameness Theorem  implies that  $ \CE$ of $N_\Gamma$ is either convex-cocompact or degenerate.  Suppose for simplicity that $ \CE$  is convex-cocompact; we will deal with the degenerate case at the end of the proof.  
By Lemma~\ref{product-in-convex}, if $L$ is large the boundary component $\D_{ \CE} \CN_1(CC(  N_\Lambda))$ is  contained in $E$. 

By Corollary \ref{kor SRSs near partial}, the surface $\partial_\CE \CN_1(CC(  N_\Lambda))$ is homotopic to a minimizing SRS $f_0 : S \longrightarrow N_\Lambda$ via a homotopy whose image has bounded $\epsilon$-diameter. Choose a minimizing SRS $f_1 : S\longrightarrow  N_\Lambda $ in the homotopy class of the inclusion such that the $R$-neighborhood of its image is contained in $ V_3$, say, where $R$ is as in  the Interpolation Theorem. Perhaps after homotoping $f_0,f_1$ within $R$-neighborhoods of their images, the Interpolation Theorem gives a homotopy $(f_t)$  between $f_0,f_1$ through minimizing SRSs.  

We claim every point $x\in \CN_1(CC(  N_\Lambda))\cap E$ lies within a bounded $\epsilon$-distance of the image of the homotopy $(f_t)$. Indeed, concatenating the 4 homotopies discussed in the previous paragraph gives a homotopy $H$ from $f_1$ to the surface $\partial_{\CE} \CN_1(CC(  N_\Lambda))$. It suffices to show that any $x\in \CN_1(CC(  N_\Lambda))\cap E $ lies in the image of $H$, since the three homotopies that are not $(f_t)$  all have images of bounded $\epsilon$-diameter. 

So, assume that some $x\in \CN_1(CC(  N_\Lambda))\cap E $ does not lie in the image of $H$. The image of $H$ is contained in $U \cup E$, which is a topological product region that contains $\CN_1(CC(  N_\Lambda))\cap E  $ as a topological subproduct region. Hence, we can retract $H$ to a homotopy from $S$ to $\partial_{\CE} \CN_1(CC(  N_\Lambda))$ that is contained in $\CN_1(CC(  N_\Lambda))\cap E $, and whose image still misses $x$. In other words, we have a map $$S \times [0,1] \longrightarrow \CN_1(CC(  N_\Lambda))\cap E \cong S \times [0,1]$$ that restricts to a homotopy equivalence between the boundaries. This map still misses a point of $\CN_1(CC(  N_\Lambda))\cap E$, so has zero degree, which is a contradiction.

As we are in Case 2, there is some $f_t$ whose image comes within a bounded $\epsilon $-distance of some $x\in E$ such that $p(x) \in  \BT_\Lambda$. 
 As $L$  is large and $x \not \in V_4$, the Bounded Diameter Lemma  implies that  $f_t(S) \subset  E $. We are then done with Lemma \ref{lemma:hastaloshuevos}, with $f=f_t$.

\medskip

The case where $ \CE$  is degenerate is  exactly the same, except that instead of constructing $f_0$ near the convex core boundary, we fix $x \in  E $ with $p(x) \in \BT_\Lambda$ and take $f_0$ deep enough in the end so that any homotopy from it into $U_\Lambda$  passes through $x.$ \end{proof}
 
We now want to use the map $f$ above to show that $S$ bounds a twisted interval bundle in $M\setminus U$. To this end, let $R$ be the constant given in the Interpolation Theorem. If $L$ is large, we can use Corollary~\ref{cutting-product} to find subproduct regions $V_1' \subset V_1$ and $V_3' \subset V_3$, both sharing a boundary component with $V_2$, such that for $i=1,3$, 
\begin{enumerate}
	\item $V_i'$ is $\epsilon$-wide enough to contain the $R$-neighborhood of a minimizing SRS in the homotopy class of a level surface, see for instance Lemma~\ref{minsrsinpr},
	\item $V_i'$ is $\epsilon$-wide enough to contain a level surface $S_i'$ such that any incompressible SRS in $N_\Lambda$ intersecting $S_i'$ is contained inside of $V_i'$, and where $S_i'$ lies at least $R$ away from $\partial V_i'$, see Lemma~\ref{level surfaces} and the Bounded Diameter Lemma for SRSs,  
	\item $V_i'$ has $\epsilon$-width bounded \emph{above} in terms of $g,\epsilon$.
\end{enumerate}
So in other words, in (3) we are saying that $V_i'$ should be chosen just wide enough that (1) and (2) hold.

Using (1) and the Interpolation Theorem, there is a homotopy $(f_t)$ in $N_\Lambda$ through minimizing SRSs such that $f_0$ is an SRS in $V_3'$ in the homotopy class of a level surface, and where $f_1$ lies in an $R$-neighborhood of the map $f$ from the Lemma above. Note that $f(S) \subset E \setminus V_4$, since $p$ is injective on $U_\Lambda \cup V_4$ and $p\circ f$ intersects $\BT_\Lambda$. So, if $L$ is large, $f_1(S) \subset E$. The projection $(p \circ f_t)$ is a homotopy in $M_\Lambda$ through SRSs, where $p\circ f_0$ lies in $V_3'$ and is homotopic to a level surface, and where $p\circ f_1$ lies at bounded $\epsilon$-distance from $\BT_\Lambda$, and hence in the topological product region $[S_1',S_3'] \supset V_2$ bounded by the level surfaces $S_1',S_3'$.

Let $m \in [0,1]$ be the maximal parameter such that $$p\circ f_m(S) \not \subset [S_1',S_3'].$$ Here $m>0$, since as $f_0 \subset U_\Lambda$ and $f_1(S) \subset E$, some $f_t(S)$ has to intersect $S$ and hence cannot project into $[S_1',S_3']$ since $p$ is injective on $U_\Lambda$. Similarly, we can show that $f_m(S) \subset E$: if not, there is some $t\in (m,1)$ such that $f_t(S) $ intersects $S$, and then $p\circ f_t(S) \not \subset [S_1',S_3']$, contradicting maximality. Also, note that the image $p\circ f_m(S)$ must intersect  $S_i'$ for either $i=1$ or $i=3$, and hence is contained in $V_i'$, by (2) above. Since $p \circ f_m$ is incompressible in $M_\Lambda$ and is homotopic to an embedding, it is homotopic to an embedding in $V_i'$ by Freedman--Hass--Scott (see Lemma \ref{getembedded}). Hence, it is homotopic to a level surface in $V_i'$, and in particular to $S_i'$.

 Let's now construct a new homotopy $(g_t)$ from $(f_t)$ as follows. On $[0,\frac 13]$, say, define $(g_t)$ to be a homotopy in $V_3'$ such that $g_0(S)=S_3'$ and where $g_{\frac 13} = f_0$ is as before. On $[\frac 13,\frac 23]$, define $(g_t)$ as a reparametrization of the restriction of $(f_t)$ to the interval $[0,m]$. On $[\frac 23, 1]$, use the homotopy lifting property to define $(g_t)$ to be homotopy with $g_{\frac 23}=f_m$ such that $p\circ g_t$ is a homotopy within $V_i'$ from $p\circ f_m$ to a parametrization of $S_i'$, where here $i$ is as in the previous paragraph.
 
 There are now two cases. If $p \circ g_1(S)=S_1'$, then $(p \circ g_t)$ is a homotopy in $M_\Lambda$ from $S_3'$ to $S_1'$, which are incompressible embedded surfaces. Waldhausen's cobordism theorem (see \S  \ref{intervalbundle}) then implies that $S_1'$ and $ S_3'$ together bound a trivial interval bundle in $M_\Lambda$. However, this is impossible since no component of $M_\Lambda \setminus (S_1'\cup S_3')$ is precompact. 
 
 We may therefore assume that $p \circ g_1(S) = S_3'$, so that $(p \circ g_t)$ is a homotopy from $S_3'$ to itself in $M_\Lambda$. We  claim that this homotopy is `essential', in the sense that it cannot be homotoped rel $p\circ g_0,p\circ g_1$ to a homotopy whose image lies in $S_3'$. To see this, note that on $[\frac 23, 1]$, we have defined $(g_t)$ so that $g_{\frac 23}(S)=f_m(S) \subset E$, as noted above, and where $p \circ g_t$ is always contained within $V_3'$. In particular, injectivity of $p$ on $U_\Lambda$ means that no $g_t(S)$ can intersect $S \subset \partial U_\Lambda$. So, $g_t(S) \subset E$ for all $t\in  [\frac 23, 1]$. But by construction, $g_0(S) =S_3'$, which does not lie in $E$, so $g_0$ and $g_1$ disjoint images in $N_\Lambda$. It follows from the homotopy lifting property that $(p\circ g_t)$ is essential.

Applying Waldhausen's cobordism theorem, $S_3'$ bounds a twisted interval bundle in $M_\Lambda$. There are two components of $M_\Lambda \setminus S_3'$, and the one whose closure is a twisted interval bundle must be the one containing $S$, since the other one is not pre-compact due to the drilled out link $\Lambda$.  It follows that $S \subset M_\Lambda$ bounds a twisted interval bundle outside $DC_\Lambda$, and hence that $S\subset M$ bounds a twisted interval bundle outside $DC$, as desired.

\vspace{2mm}

 This completes the proof of Theorem \ref{double compression body theorem}.

\subsection{Some applications}
 In this section, we give two applications of Theorem \ref{double compression body theorem}.  The first is a corollary of Theorem \ref{double compression body theorem} and the unknotting results of \S \ref{wide product regions are unknotted}; it and its corollaries will be used in \S \ref{splitting along barriers}, and to prove our characterization of strong convergence in Theorem \ref{strong limits}.

\begin{sat}\label {complementary compression bodies} 	Given $g,\epsilon$,  there is some $L=L(g,\epsilon) $ as follows.  Suppose that $M $ is a  complete hyperbolic $3$-manifold  and that  $N \subset M$ is a (compact, connected) $3$-submanifold such that
\begin {enumerate}
\item  the inclusion $N \hookrightarrow M$ is $\pi_1$-surjective,
\item each component $S \subset \partial N$ has genus at most $g$ and bounds a  product region $U \subset M \setminus int(N)$ with $\epsilon$-width at least $L$.
\end {enumerate}
Then every component of $M \setminus int(N)$ is a compression body with missing interior boundary.
\end{sat}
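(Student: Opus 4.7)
My plan is to combine the Tameness Theorem with a graph-of-groups analysis of $\pi_1 M$, using the wide product region hypothesis only to rule out a potential sphere obstruction to irreducibility. Since $\pi_1 N$ is finitely generated and surjects onto $\pi_1 M$, the target $\pi_1 M$ is finitely generated, so the Tameness Theorem lets me write $M = int(\bar M)$ for a compact $3$-manifold $\bar M$. Fixing a complementary component $C$ of $M \setminus int(N)$, I would take its closure $\bar C \subset \bar M$, which is a compact $3$-manifold with $\partial \bar C = \partial C \sqcup F$ for $F := \bar C \cap \partial \bar M$. The goal then reduces to showing that $\bar C$ is a compression body with $\partial_{e} \bar C = \partial C$ and $\partial_{int} \bar C = F$, since then $C = \bar C \setminus F$ is exactly a compression body with missing interior boundary.

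The topological heart of the argument would be a Van Kampen / graph-of-groups computation for $M = N \cup \bigcup_i C_i$, realizing $\pi_1 M$ as the fundamental group of a bipartite graph of groups with vertex groups $\pi_1 N$ and $\pi_1 C_i$ and one edge for each connected component of $\partial N$. Surjectivity of $\pi_1 N \to \pi_1 M$ then forces the underlying graph to be a tree, for otherwise $\pi_1 M$ would admit a nontrivial free quotient that annihilates every vertex group; by bipartiteness this is equivalent to each $\partial C_i$ being connected. In the resulting iterated amalgamated product, surjectivity further forces each $\pi_1 C_i$ to coincide with its edge group $\pi_1 \partial C_i$. Applied to our $C$, I would conclude that $\partial C$ is a single connected component $S \subset \partial N$ and that $\pi_1 S \to \pi_1 \bar C$ is surjective.

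The main obstacle will be irreducibility of $\bar C$, and this is the one step where the wide product region hypothesis enters. Given an embedded $2$-sphere $\Sigma \subset int(\bar C)$, hyperbolicity of $M$ produces a ball $B \subset M$ with $\partial B = \Sigma$. Since $S$ is connected and $\Sigma \cap S = \emptyset$, the set $B \cap S$ is both open and closed in $S$, hence either empty or all of $S$. In the first subcase, $S$ separates $M$ into $int(C)$ and its complement, and since $\Sigma \subset int(C)$, the ball $B$ lies on the $C$-side, giving $B \subset \bar C$. In the second subcase, $S \subset B$ factors $\pi_1 S \to \pi_1 M$ through $\pi_1 B = 1$, so every loop on $S$ would be null-homotopic in $M$. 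Choosing $L$ large enough to invoke Lemma \ref{bdlpr}(1), the product region $U \subset C$ adjacent to $S$ contains a closed geodesic $\gamma$ of $M$ that is homotopic in $U$ to a simple closed curve on a level surface, and hence to a curve on $S$; but a closed geodesic in a hyperbolic manifold cannot be null-homotopic, a contradiction. Thus $\bar C$ is irreducible, and combining with the previous paragraph, $\bar C$ is a compression body with exterior boundary $\partial C$ and interior boundary $F$, as required.
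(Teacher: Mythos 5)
Your reduction to showing that $\pi_1(\partial C)\to\pi_1(\bar C)$ is surjective and $\bar C$ is irreducible is the right target (that is literally the paper's definition of a compression body), and your irreducibility argument via Lemma \ref{bdlpr} is fine, as is the dual-graph-is-a-tree observation. But the central step fails: it is not true that $\pi_1$-surjectivity of $N\hookrightarrow M$ "forces each $\pi_1 C_i$ to coincide with its edge group." First, the splitting surfaces $S\subset\partial N$ are typically compressible in $M$ (that is the whole point of the theorem), so the decomposition $M=N\cup\bigcup_i C_i$ is not a graph of groups in the Bass--Serre sense and no such structural conclusion can be extracted from it. Second, even at the level of Van Kampen pushouts, killing the image of $\pi_1 N$ in $\pi_1 M$ only shows that the image of $\pi_1 S$ in $\pi_1 C$ \emph{normally generates} $\pi_1 C$; it does not show the map is surjective. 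A knot exterior inside a solid torus, with $\pi_1$ of the peripheral torus mapping in, already illustrates the difference, and the relevant $3$-dimensional phenomenon is exactly the one the paper warns about: a \emph{knotted} compact core $N$ in (say) a convex cocompact open handlebody has $\pi_1 N\to\pi_1 M$ an isomorphism, $\partial C$ connected, and $\bar C$ irreducible, yet $C$ is not a compression body with missing interior boundary. So every step of your argument that you actually justify goes through in that example, while the conclusion is false there — which shows the gap is real and that the width hypothesis cannot enter only through the sphere/irreducibility step.

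In other words, the wide product regions are not a technical convenience for irreducibility; they are what rules out knotting, and that is where all the work in the paper's proof lies. The paper attaches to $N$ the adjacent product regions and the characteristic compression bodies $C(M\setminus int(U_i),T_i)$ of their far boundary components to form $N^{ext}$, proves that $N^{ext}\hookrightarrow M$ is $\pi_1$-injective using Theorem \ref{double compression body theorem} (including a delicate analysis, via double covers and Corollary \ref{unknotted2}, of the case where a component of $M\setminus int(U_i)$ is a twisted interval bundle), and then shows $N^{ext}$ is a \emph{standard} compact core using passes to compression-body covers, Corollary \ref{unknotted}, and Waldhausen's cobordism theorem. Some argument of this geometric kind — using the width of the $U_i$ through Theorem \ref{double compression body theorem} or the unknottedness results of \S\ref{wide product regions are unknotted} — is unavoidable, and your proposal currently contains no substitute for it.
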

\begin {proof}
 Enumerate  the components of $\partial N$ as $S_1,\ldots,S_n$  and let $U_1,\ldots,U_n$  be the adjacent product regions.  For each $i=1,\ldots,n$, let $S_i'$ be the component of $\partial U_i$  that is not $S_i$, and let
$$C_i = C(M\setminus int(U_i), S_i')$$
be the associated characteristic compression body.
Since $N \hookrightarrow M$ is $\pi_1$-surjective, every component of $\partial N$ separates $M$. Hence, all the $C_i$ are disjoint, from each other and from $N$. Let $$N^{ext} = N \ \cup \ \bigcup_{i=1}^n 
(U_i \cup C_i), \ \ \text{ so } \ \partial N^{ext} = \bigcup_i \partial_{int} C_i,
$$

\begin {claim} \label {inj}The inclusion $N^{ext} \hookrightarrow M$ is $\pi_1$-injective. 
\end {claim}

 Assuming this for a moment, let's finish the proof of the theorem. Since $N^{ext} \hookrightarrow M$ is also $\pi_1$-surjective, we have that $N^{ext} $ is a compact core for $M$.  It suffices to show that $N^{ext} $  is a \emph{standard} compact core, since certainly every component of $int(N^{ext}) \setminus int(N)$ is a compression body  with missing interior boundary.

  By the Tameness Theorem, $M$  is the interior of a manifold with boundary $\bar M$. Let $T$  be a component of some $\partial C_i \subset \partial N^{ext}$, let $ T_\partial$  be the component of $\partial \bar M$ that $T$ faces, and let $$E \subset \bar M \setminus int(N^{ext})$$  be the component bounded by $T$ and $ T_\partial$.
By uniqueness of compact cores up to homotopy, see Fact \ref{coreuniqueness}, $T$ is homotopic to $ T_\partial$ in $\bar M$. 
 
  We want to show that $E$  is a trivial interval bundle. This follows from Waldhausen's cobordism theorem when $T$ and $ T_\partial$ are incompressible in $\bar M$, so we can assume that they are both compressible.  
Let  $$\pi: \hat M \longrightarrow \bar M$$  be the cover corresponding to $\pi_1  T_\partial$. Then $\hat M $ has a $\pi_1$-surjective boundary component $\hat { T_\partial}$ that projects homeomorphically to $ T_\partial$. Lift the homotopy from $T_\partial$ to $T$ to a homotopy from $\hat T_\partial$ to some $\hat T \subset \hat M$. 


Since $T$ is compressible in $M$, Theorem \ref{double compression body theorem} (1)  implies that  as long as $L$ is large, the compression body $C_i$ with $T \subset \partial_{int} C_i$ is in fact a trivial interval bundle. So, $U_i \cup C_i$ lifts homeomorphically to $\hat U_i \cup \hat C_i \subset \hat M$, where $\hat T \subset \partial \hat C_i$. 
By Corollary \ref{unknotted}, the component $\hat E $ of $\hat M \setminus int(\hat U_i \cup \hat C_i) $ that has $\hat T$ in its boundary is a compression body. But $\hat E$ has two boundary components, $\hat T$ and $\hat T_\partial$, and these have the same genus! So $\hat E$ is a trivial interval bundle. 

Given $x\in \bar M$, let $n(x) = |\pi^{-1}(x) \cap \hat E|.$ Then $n\equiv 1$ on $T_\partial$, since $\hat E \setminus \hat T_\partial$ projects into $M$, not $\partial \bar M$. The function $n$ is continuous except on $T$, where it can jump by $1$. So, $n\equiv 1$ on the component of $\bar M \setminus T$ containing $T_\partial$, and either $n\equiv 0$ or $n\equiv 2$ on the other component. In the first case, $\pi$ restricts to a homeomorphism from $\hat E$ to $E$, and we're done. So, assume we're in the second case\footnote{Concretely, in this case you should imagine that $\bar M$ is a twisted interval bundle over a nonorientable surface, that $T_\partial=\partial \bar M$ and $T$ is parallel to $T_\partial$, that $\hat M$ is a trivial interval bundle that double covers $\bar M$, and $\hat T$ is the lift of $T$ that is farther from $\hat T_\partial$. This can't happen, since we're assuming $T$ is compressible, and the argument we give in the proof is essentially just saying that.}. By compressibility of $T$, we can take a disc $D \subset M \setminus int(E)$ with $\partial D \subset T$ an essential curve, since by definition $T$ is incompressible in $E$. Since we're in the second case, the inward normal direction of $\hat T \subset \hat E$ projects to the inward normal direction of $T \subset M \setminus int(E)$, and we can lift $D$ to a disc $\hat D \subset \hat E$ whose boundary is an essential curve in $\hat T$. This is impossible, since $\hat E$ is an interval bundle.

\medskip

 We now prove Claim \ref{inj}, i.e.\ that $N^{ext} \hookrightarrow M$ is $\pi_1$-injective. It suffices to show that whenever $\gamma \subset \partial N^{ext}$  is an  essential closed curve that is  null-homotopic in $M$, then $\gamma$  is also null-homotopic in $N^{ext} $. For if this is true, and $N^{ext} \hookrightarrow M$ is still not $\pi_1$-injective, one can take a smooth map $f : (D^2,\partial D^2) \longrightarrow (M,N^{ext})$ such that $f|_{\partial D^2}$ is essential in $N^{ext}$,   and such that $f$  is transverse to $\partial N^{ext}$ and minimizes the number of components of the collection of circles $$\mathcal I_f := f^{-1}(\partial N^{ext}) \subset int(D^2).$$ Taking an outermost component $c \subset \mathcal I_f$, we can redefine $f$ on the disk bounded by $c$ to be a null-homotopy of the curve $\gamma = f(c) \subset \partial N^{ext}$ in $N^{ext}$. After a perturbation, this reduces the number of components of $\mathcal I_f$, a contradiction to the minimality condition.

 So, let $\gamma \subset \partial N^{ext}$  be an  essential closed curve that is  null-homotopic in $M$. Then 
  we have that $\gamma \subset \partial_{int} C_i$ for some $i$. Let $$D_i := C(M \setminus int(U_i),S_i), \ \ \mathcal {DC}(M,U_i) = D_i \cup U_i \cup C_i.$$

 Suppose first that $\mathcal {DC}(M,U_i) \hookrightarrow M$ is $\pi_1$-injective. Then $\gamma$ is null-homotopic in $\mathcal {DC}(M,U_i)$, so it suffices to show that $D_i$ is homotopic rel $S_i$  to a submanifold of $N^{ext}$, for then we can isotope our null-homotopy to lie in $N^{ext}$ too. This is  obvious if the characteristic compression body $C(N,S_i)$ is isotopic to $D_i$, so we may assume this is not the case.  In other words, $C(N,S_i)$ has an interior boundary component $X \subset \partial_{int} C(N,S_i)$ that is compressible in $M \setminus int(U_i)$. As $X$ is incompressible in $N$,  Proposition \ref{incompressible-incompressible} says that $X$ is homotopic in $N$ to some  component $S_j \subset \partial N$, where $j =j(X) \neq i$. Since $X,S_j$ are embedded and incompressible in $N$, they bound a trivial interval bundle $I_X$ in $N$ by Waldhausen's cobordism theorem. Then  $$D_i ' := C(N,S_i ) \ \ \ \cup \hspace{-7mm} \bigcup_{\substack {X \subset \partial_{int} C(N,S_i ) \\ X \text{  compressible in } M \setminus int(U_i) }}  \hspace{-14mm}  I_X \cup U_{j(X)} \cup C_{j(X)}$$
is a  compression body in $M \setminus int(U_i)$ with exterior boundary $S_i$.  By definition of the compression bodies $C_{j(X)}$, no interior boundary component of $D_i'$ compresses in $M \setminus int(D_i')$. Hence, $D_i'$ is a characteristic compression body for $S_i \subset \partial (M \setminus int(U_i))$, and hence $D_i$  is homotopic rel $S_i$ to $D_i'$. But $D_i' \subset N^{ext}$  by construction, so we are done.

Now assume that $\mathcal {DC}(M,U_i)\hookrightarrow M$  is not $\pi_1$-injective.
Theorem \ref{double compression body theorem}  says that some component of $M \setminus int(U_i)$  is a twisted interval bundle $I$ over a non-orientable surface, and that $U_i$ compresses nontrivially to the other side. Let's first suppose that the component $T_i \subset\partial U_i$  defined above bounds such a component $I$, so that $$N \cap I = \emptyset.$$ Now $I$ has a double cover which is a trivial interval bundle over the orientation cover of the  given non-orientable surface. Attaching two copies of $M \setminus int(I)$ to this double cover, we can construct a nontrivial connected cover of $M$ to which $N$ lifts homeomorphically, contradicting the fact that $N \hookrightarrow M$ is $\pi_1$-surjective. 

Next, suppose that the component $S_i \subset\partial U_i$ bounds  a twisted interval bundle $I \subset M \setminus int(U_i)$, so that $N \subset I$.  As in the previous paragraph, we can construct a double cover $$\hat M\longrightarrow M$$  and a pair of product regions $\hat U_i^1, \hat U_i^2 \subset\hat M$, each of which projects homeomorphically onto $U_i$, and which together bound a trivial interval bundle $\hat I \subset \hat M$ that double covers $I \subset M $. Now any  product region $U_j \subset I$  is covered by a product region $\hat U_j$ in $\hat I$. A priori, $\hat U_j$ could be a double cover of $U_j$, and  could have genus at most\footnote{Here, $2g$ is an overestimate, since it is $\chi$ that multiplies by two.} $2g$, but if $L$  is large, then Corollary \ref{unknotted2} implies that any product region in $\hat I$ with genus at most $2g$ and $\epsilon$-width at least $L$ is  actually a topological subproduct region of $\hat I$, and hence $\hat U_j$ is a homeomorphic lift of $U_j$, while $I \setminus int(U_j)$ is the union of a trivial interval bundle adjacent to $U_i$, and a smaller twisted interval bundle. From this,  it follows that we are in one of the following two situations:
\begin{enumerate}
	\item $N$  is a trivial interval bundle bounded by $U_i$ and some $U_j$, and  the component of $M\setminus int(U_j)$ that does not contain $N$ is a twisted interval bundle over  a non-orientable surface,
\item $N$ is a twisted interval bundle over a non-orientable surface.
\end{enumerate}
Since $N \hookrightarrow M$ is $\pi_1$-surjective, case (1) is impossible, as shown  in the previous paragraph. In case (2),  the manifold $N^{ext}$ is just $N \cup U_i \cup C_i$, and as mentioned above $C_i$ is a nontrivial compression body. So by Theorem \ref{double compression body theorem} (1), $N^{ext}$  has incompressible boundary in $M$, and hence is $\pi_1$-injective.\end {proof}

 In Theorem \ref{PR covering theorem}, we showed that product regions in $\epsilon$-thick hyperbolic $3$-manifolds obey a `covering theorem' analogous to Canary's Covering Theorem \cite{Canarycovering} for degenerate ends.  Combining this with Theorem~\ref{double compression body theorem}  gives the following application.

\begin{kor}[Extending product regions]\label {extending product regions}
 For all $g\geq 2$ and $\epsilon>0$,  there  are $L,D$ as follows.  Let $M$ be a complete $\epsilon $-thick hyperbolic $3$-manifold and $U\subset M $ be a product region of genus at most $g$ and width at least $L $, such that no component of $M \setminus int(U)$  is a compact (twisted) interval bundle. Suppose that $S\subset\partial U$  is a component that is incompressible in $M\setminus int(U)$. Then $U$ is a subproduct region of some  product region $$U^e \supset U$$ such that one of the two following possibilities holds. 
\begin{enumerate}
\item $S$ bounds a component of $U^e \setminus U$  that is a product neighborhood of a degenerate end of $M$.
	\item if $S$ and $S^e \subset \partial U^e$ bound a component of $U^e\setminus U$, letting $$\hat M \longrightarrow M$$ be the cover corresponding to $\pi_1 U$ and lifting $S^e\subset \partial U^e$ homeomorphically to $\hat S^e \subset \partial \hat U^e \subset \hat M$, the  component $ \hat E^e\subset \hat M \setminus int(\hat U^e)$  adjacent to  $\hat S^e$ is a product neighborhood of a convex cocompact end  of $\hat M$,  and 
$
\diam(\hat E^e \cap \CN_1(CC(\hat M))) < D.$
\end{enumerate} 
\end{kor}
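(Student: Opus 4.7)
The natural setting is the cover $\pi\colon \hat M \to M$ corresponding to $\pi_1(U) \subset \pi_1(M)$. Since $U \cong \Sigma_g \times I$ is a product region, $\pi_1(U)$ is a surface group, so $U$ lifts homeomorphically and isometrically to $\hat U \subset \hat M$ with $\pi|_{\hat U}$ an embedding. The boundary component $\hat S \subset \partial \hat U$ covering $S$ faces an end $[\hat \CE]$ of $\hat M$; since $M$ is $\epsilon$-thick, $\hat M$ has no cusps, so by the Geometric Tameness Theorem $[\hat \CE]$ is either convex cocompact or degenerate. I plan to handle both cases through a common iterative extension of the embedding $\pi|_{\hat U}$.

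The core mechanism is step-by-step use of Theorem \ref{PR covering theorem}. Given $\hat U_{i-1} \supset \hat U$ with $\pi|_{\hat U_{i-1}}$ an embedding, I adjoin two width-$L$ subproduct regions $V_i, V_{i+1}$ of $\hat M$ on the $[\hat \CE]$ side and apply Theorem \ref{PR covering theorem}: either $\pi|_{\hat U_{i-1} \cup V_i}$ embeds, or some component $I$ of $M \setminus \pi(\text{int}(\hat U_{i-1}))$ is a compact interval bundle. In the latter case $I$ must lie inside the component $\mathcal O$ of $M \setminus \text{int}(U)$ adjacent to $S$, and the decomposition $\mathcal O = \pi(\hat U_{i-1} \setminus U) \cup_{\pi(\hat S_{i-1})} I$ glues a trivial interval bundle $\pi(\hat U_{i-1} \setminus U) \cong \Sigma_g \times [0,L']$ to the (possibly twisted) compact interval bundle $I$ along a full boundary component — so $\mathcal O$ is itself a compact interval bundle, contradicting the hypothesis. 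Thus the embedding always propagates.

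In the degenerate case, Proposition \ref{constructing prs} yields a product neighborhood of $[\hat \CE]$ in $\hat M$ homeomorphic to $\Sigma_g \times [0,\infty)$, which Corollary \ref{truncating noncompact} decomposes into an infinite chain of width-$L$ pieces; iterating the extension produces a noncompact product region $\hat U^e \subset \hat M$ on which $\pi$ is injective. The Covering Theorem then identifies $U^e := \pi(\hat U^e)$ as a product neighborhood of a degenerate end of $M$, since the only alternative — $M$ closed with $\hat S$ a virtual fiber — would force $M$ to fiber (over $S^1$ or $S^1/\langle z \to \bar z\rangle$) with fiber isotopic to $S$, making $M \setminus \text{int}(U)$ a union of compact interval bundles. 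In the convex cocompact case, $\hat M \cong \Sigma_g \times \BR$ and $\partial_{[\hat \CE]} CC(\hat M)$ is an incompressible closed surface with area bounded by Theorem \ref{area-boundary} and hence, via Corollary \ref{bdlcc} applied inside the $\epsilon$-thick $\hat M$, with diameter bounded by some $D_0 = D_0(g,\epsilon)$. Thurston--Bonahon theory inside $CC(\hat M)$ (packaged via Lemma \ref{bdlpr} and Lemma \ref{level surfaces}) supplies the NAT simplicial ruled surfaces realizing each $V_i$ as a bona fide product region, and I iterate across the region between $\hat S$ and $\partial_{[\hat \CE]} CC(\hat M)$, stopping when the remaining room past the current boundary drops below $2L$. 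At termination $\hat S^e$ lies within $O(L)$ of $\partial_{[\hat \CE]} CC(\hat M)$, so the adjacent $\hat E^e$ — a bounded-width slab inside $CC$ joined to the full convex cocompact end — satisfies $\diam(\hat E^e \cap \CN_1(CC(\hat M))) \le D_0 + O(L) =: D$, as required.

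The main obstacle throughout is certifying that no spurious compact interval bundle component appears in $M \setminus \pi(\text{int}(\hat U_{i-1}))$ at any step of the iteration; the gluing argument in the second paragraph is the crux and the place where the structural hypothesis on $M \setminus \text{int}(U)$ is really invoked. An additional subtlety in the degenerate case is passing from the step-wise embeddings to a single embedding of the noncompact $\hat U^e$ onto a neighborhood of a genuine single end of $M$, which requires combining the Covering Theorem with the interval bundle exclusion to rule out higher-degree covering of a degenerate end.
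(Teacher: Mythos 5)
Your outline has the right skeleton (pass to the cover corresponding to $\pi_1 U$, use tameness to split into degenerate and convex cocompact cases, and use Theorem \ref{PR covering theorem} with the no-compact-interval-bundle hypothesis to push the embedding outward), and your gluing trick for ruling out the interval-bundle alternative at later stages of an iteration is correct. But there is a genuine gap at the very start: you never use the hypothesis that $S$ is incompressible in $M\setminus int(U)$, and without it your claim that $\hat S$ faces an end of $\hat M$ whose adjacent complementary component is a product --- which is what lets you line up adjacent product regions $V_i$ beyond $\hat U$ at all --- is unjustified. In the case the corollary is really about, $U$ compresses in $M$ (to the $T$ side), so $\pi_1 \hat M$ is a proper quotient of the surface group $\pi_1 U$; then $\hat U$ is not even a compact core of $\hat M$, and $\hat M$ is in general a nontrivial compression body, not $\Sigma_g\times\BR$ as you assert in the convex cocompact case. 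The paper extracts the end structure adjacent to $\hat S$ exactly from the missing hypothesis: Theorem \ref{double compression body theorem} (2) makes $\mathcal{DC}(M,U)=U\cup C(M\setminus int(U),T)$ $\pi_1$-injective, hence a compact core of $\hat M$, so $\hat M$ is a compression body; Corollary \ref{unknotted} then shows this core is standard, so the component of $\hat M\setminus int(\hat U)$ adjacent to $\hat S$ is a product neighborhood of an end. (In the degenerate case one also needs Lemma \ref{trivial interval bundles} to see that $\hat U\cup\hat E$ is a single geometric product region before Theorem \ref{PR covering theorem} applies; and since that theorem places no width restriction on the middle region $V$, a single application extends the embedding across an arbitrarily wide region, so your step-by-step iteration is not needed.)

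The convex cocompact case also does not work as written. Because $\hat M$ is generally a compression body, the component of $\partial CC(\hat M)$ facing the end adjacent to $\hat S$ is typically compressible in $\hat M$ and may be intrinsically thin, so Corollary \ref{bdlcc} (which requires incompressibility) does not give your bound $D_0$; and Lemmas \ref{bdlpr} and \ref{level surfaces} only produce surfaces inside a product region you already have --- they do not show that the slab between $\hat S$ and $\partial_{[\hat\CE]}CC(\hat M)$ is swept out by bounded-diameter surfaces in the correct homotopy class, which is what both the existence of the extended product region and the bound on $\diam(\hat E^e\cap\CN_1(CC(\hat M)))$ require. This is the technical heart of the paper's case (2): one drills a link in a sub-region of $\hat U$ (Theorem \ref{links}) to gain $\pi_1$-injectivity, sweeps out from $\partial_{\hat\CE}CC(\hat M)$ via a bounded-track homotopy to a useful simplicial hyperbolic surface (Lemma 6.1 of \cite{Canarylimits}), interpolates with minimizing simplicial ruled surfaces back into $\hat U$, and uses a sweep-out/degree argument together with Freedman--Hass--Scott to produce the embedded level surface $\hat T^e$, the product region $\hat V^e$, and the diameter bound, before applying Theorem \ref{PR covering theorem} once to see that $\hat U^e$ embeds. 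None of this is present in your proposal, so case (2) is missing rather than merely compressed.
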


 Informally, the corollary says that the product region $U$ can be extended  almost all the way to the boundary of its `convex hull' in $M$, while staying embedded.

\begin {proof}
 As in the statement of the corollary, let $$\pi : \hat M \longrightarrow M$$ be the cover corresponding to $\pi_1 U$.  Suppose that $S \subset \partial U$  is a component that is incompressible in  $M\setminus int(U)$, and lift it to $$\hat S \subset \hat U \subset \hat M.$$
If $L$  is large, then  Theorem \ref{double compression body theorem} (2)  implies that $\mathcal{DC}(M,U) \hookrightarrow M$ is $\pi_1$-injective, so $\mathcal{DC}(M,U)$  is a compact core for $\hat M$.   But since $S$  is incompressible in $M\setminus int(U)$,  the double compression body $\mathcal{DC}(M,U)$  is just  the union of $U$ with a compression body, so $\hat M $  is a compression body by the  Tameness Theorem \cite{Agoltameness,Calegarishrinkwrapping} and the uniqueness of compact cores up to homeomorphism \cite{mccullough1985uniqueness}.  Corollary \ref{unknotted} then implies that $\mathcal{DC}(M,U) \subset \hat M$  is a \emph {standard} compact core, so the component $$\hat E \subset\hat M \setminus int(\hat U)$$ adjacent to $\hat S$  is a product neighborhood of some end $\hat \CE$ of $\hat M$. This $\hat \CE$  is either convex cocompact or degenerate, since $\hat M$ has no cusps.

\smallskip
\noindent \it Case (1), when $\hat \CE$ is degenerate. \rm By Proposition \ref{constructing prs}, $\hat \CE$ has a neighborhood that is a product region,  which we can take to be contained in $\hat E$ by Corollary \ref{cutting-product}. This product region  bounds a trivial interval bundle with $\hat U$, so Lemma \ref{trivial interval bundles}  implies that $\hat U \cup \hat E$  is itself a product region,  as long as $L$  is large.  Since no component of $M \setminus int(U)$  is a compact interval bundle, Theorem~\ref{PR covering theorem}  says that there is a subproduct region $$\hat U_{emb} \subset \hat U \cup \hat E$$  that embeds in $M$, and  where  the difference $ (\hat U \cup \hat E) \setminus U_{emb}$ is a subproduct region  with bounded width  that is  adjacent to the single component of $\partial( \hat U \cup \hat E)$. Now if $L$ is  large, $\hat U$  is wide enough  so that $$(\hat U \cup \hat E) \setminus U_{emb} \subset \hat U,$$
 in which case $\pi(\hat E)$ is an embedded product region in $M$  that is a product neighborhood of a degenerate end of $M$, so we are done with $$U^e=U \cup \pi(\hat E).$$ 
\smallskip

\noindent \it Case (2), when $\hat \CE$  is convex cocompact. \rm Let $\partial_{\hat \CE} CC$  be the component of $\partial CC(\hat M)$ that faces the end  $\hat \CE$.   Using  Theorem \ref{links} and Corollaries~\ref{cutting-product} and \ref{prs in the convex core}, if we are given any $L_->0$,  there are $L_+,L$  depending on $L_-,\epsilon,g$  such that as  long as $U$  has width at least $L$, we can write
$$\hat U = \hat U_1 \cup \hat U_2 \cup \hat U_3\cup \hat U_4$$  as a union of adjacent product regions, where $\hat S \subset\partial \hat U_4$, where \begin{equation}
	\label{incc}\hat U_2 \cup \hat U_3 \subset int(CC(\hat M)),
\end{equation} where $\hat U_1,\hat U_3,\hat U_4$  all have width in $[L_-,L_+]$, and  where there is some $0.025$-separated geodesic link $\Lambda \subset \hat U_1$ with length at most $L_+$, say, such that the inclusion  of the trivial interval bundle $$\hat U_2 \cup \hat U_3 \cup \hat U_4\cup \hat E \hookrightarrow \hat M \setminus \Lambda$$ is $\pi_1$-injective.   Note that as long as $L_- \geq \epsilon$, say, we have that the $\epsilon$-ball around every point of $\hat U_2 \cup \hat U_3 \cup \hat U_4 \cup \hat E$ is contained in $\hat U \cup \hat E$, so the following condition holds: \begin {enumerate}
\item[($\star$)]	for any map $f : S \longrightarrow\hat U_2 \cup \hat U_3 \cup \hat U_4 \cup \hat E$ in the homotopy class of a level surface, there is no  essential closed curve on $S$ whose image has length less than $\epsilon$.
\end {enumerate}

Since $\hat M$ is hyperbolic, $\partial_{\hat \CE} CC$ is a pleated surface, and hence is intrinsically hyperbolic. As $\hat M$ is $\epsilon$-thick and $\partial_{\hat \CE} CC$ is incompressible in a large neighborhood of itself, the surface $\partial_{\hat \CE} CC$ is $\epsilon$-thick. Hence, $\partial_{\hat \CE} CC$ has bounded geometry in the sense of Fact \ref{nearboundary}. So, Fact \ref{nearboundary} gives a homotopy $$f_t : S \longrightarrow \hat U_2 \cup \hat U_3 \cup \hat U_4\cup \hat E \subset \hat M \setminus \Lambda, \ \ t\in[0, 1]$$  where $f_0$ parameterizes $\partial_{\hat \CE} CC$ and $f_{1}$ is a minimizing simplicial ruled surface, and where  the entire homotopy is contained in $\CN_R(\partial_{\hat \CE} CC)$, for some $R=R(\epsilon,g)$. By Lemma \ref{minsrsinpr}, there is a minimizing SRS
 $$f_2 : S \longrightarrow \hat U_2 $$
 in the homotopy class of a level surface. After  modifying  $f_1$ and $f_2$ by homotopies supported within balls of  bounded radius,  we may assume that the Interpolation Theorem applies, giving a  homotopy $(f_t), \ t\in [1,2]$  through minimizing simplicial ruled  surfaces.  Increasing $R$ appropriately, we may assume by the Bounded Diameter Lemma that for every $t\in [0,2]$, the image $f_t(S)$ has  diameter at most $R$.

Pick  some point $p \in CC(\hat M)$ with $$2R+1 \leq d(p,\partial_{\hat \CE}CC) \leq 2R+2.$$ By \eqref{incc}, the surface $\partial_\CE CC$ lies in $ \hat U_4 \cup \hat E$. So as long as $L_-$ is large enough,  we can assume that $$p\in \hat U_3 \cup \hat U_4 \cup \hat E.$$  So, $p$  is in the image of some $f_{t_0}, \ t_0\in [0,2]$, by the same argument as in the last paragraph of the proof of Proposition \ref{Bsurfaces}. Work of Freedman--Hass--Scott, see Lemma \ref{getembedded},   implies that the map $f_{t_0} $  is homotopic to an embedding $\hat T^e \subset \CN_1(f_{t_0}(S))$. If $L_-$ is large,  we  can assume $$\hat T^e \subset \hat U_3 \cup \hat U_4 \cup \hat E;$$ note that $\hat T^e$  is a level surface therein. Note also that since $f_{t_0} (S)$ has  diameter at most $R$, we have \begin{equation} \label{sintno}\hat T^e \cap \CN_R(\partial_{\hat \CE}CC) = \emptyset,\end{equation} by our definition of the point $p$. Let $$\hat V^e \subset \hat U \cup \hat E$$ be  the topological subproduct region bounded by $\hat T^e$ and containing  the product regions $\hat U_1, \hat U_2$. 

We claim that every  $x \in \hat V^e $  is in the image of some NAT simplicial ruled surface in the homotopy class of a level surface. If $x \in \hat U_1\cup \hat U_2$, this is clear since $\hat U$  is a product region.  Otherwise, $x$  is in the image of some $f_{t}, t\in [0,2]$. By \eqref{sintno}, we have $x \not \in \CN_R(\partial_{\hat {\CE}} CC)$, so  in fact $t\in [1,2],$ in which case $f_t$ is a  simplicial ruled surface. Now, a priori  all these surfaces are in the variably curved manifold $\hat M \setminus \Lambda$, but since $\Lambda \subset \hat U_1$ and $L_-$ is large, all the surfaces we are considering  are very far from $\Lambda$, and hence are NAT  simplicial  hyperbolic surfaces in $\hat M$. Finally, as $\hat T^e$  was constructed to be in the $1$-neighborhood  of a NAT simplicial ruled surface,  we have that $\hat V^e$ is  a product region,  not just a topological one.

If $L_-$  is large, we can write $$\hat V^e = \hat U^e \cup W, $$  in such a way that $\hat U_1 \subset \hat U^e$, say, and $W$ has width  in the interval $[L,2L]$,  where $L=L(\epsilon,g)$  is the constant in  Theorem \ref{PR covering theorem}.  
Let $$\hat S^e\subset\partial \hat U^e$$  be the component facing $\hat \CE$, and let $$\hat I \subset\hat M$$ be the topological product region bounded by $\hat S^e$ and $\partial_{\hat \CE}CC$.  The distance $d(\hat S^e,\hat T^e) = \width(W)\leq 2L$, and by construction of our point $p$, the distance from $\hat T^e$  to $\partial_{\hat \CE} CC$ is also bounded, say by $3R+3$. Hence, $\hat I$  has bounded width. Let $\gamma$ be a  path in $\hat I$  realizing this width.   Since $\partial_{\hat \CE} CC \subset\hat U_4 \cup \hat E$, as long as $L_-$ is large,  we can assume that $$\hat I \subset \hat U_3 \cup \hat U_4 \cup \hat E,$$ so any point $x \in \hat I$  is in the image of some $f_t, t\in [0,2]$. Each such image $f_t(S)$ has  diameter at most $R$, and we must have $$f_t(S) \cap (\hat S^e \cup \gamma \cup \partial_{\hat \CE}CC) \neq \emptyset.$$ So, since each of $\hat S^e$,  $\gamma, $ and $\partial_{\hat \CE}CC$ has  bounded diameter, $\hat I$ also has  diameter at most some  constant depending only on $\epsilon,g$. In  particular, this implies that if $\hat E^e \subset \hat M \setminus \hat U^e$  is the component facing $\hat \CE$,  the intersection $\hat E^e \cap \CN_1(CC(\hat M))$ has  bounded diameter as desired.

Suppose for the moment that $\hat U \subset \hat U^e$.    As long as $L_-$ is large, we can assume that the width of $\hat U$  is at least the constant $L$  from Theorem~\ref{PR covering theorem}. As  we are assuming that  no component of $M \setminus int(U)$ is a compact interval bundle,  if we apply Theorem \ref{PR covering theorem}  to the triple
$$\hat U, \hat U^e \setminus int(\hat U), W$$
 of adjacent product regions, we see that the covering map $\pi$  restricts to an embedding on $\hat U^e$.  So,  setting $U^e = \pi(\hat U^e)$, we are done.

If  $\hat U \not \subset \hat U^e$,  then we have
$$d(\hat U,\partial_{\hat \CE}CC) \leq d(\hat S^e,\partial_{\hat \CE}CC),$$
 which is bounded above by some constant depending on $\epsilon,g$.  We can then   discard the $\hat U^e$  produced above, set $U^e = U$,  and the required diameter bound will follow from the same arguments as above.
 \end {proof}

\section{Carrier graphs}

\label {carrier}

A \emph{carrier graph} in a hyperbolic manifold $M $ is a $\pi_1$-surjective map $f : X \longrightarrow M$, where $X$ is a connected finite graph.   For instance, any generating set for $\pi_1 M $ gives a carrier graph whose domain is a wedge of circles.  We say that carrier graphs $f: X\longrightarrow M $ and $g: Y\longrightarrow M $ are \emph {equivalent} if there is a homotopy equivalence $h:X\longrightarrow Y $ such that $g\circ h$ is homotopic to $f $.
We will assume everywhere below that all carrier graphs are rectifiable, i.e.\ they have finite length.

If $f: X \longrightarrow M$  is a carrier graph, the hyperbolic metric on $M$ pulls back to a path (pseudo)-metric on $X$, which we use to measure the lengths of the edges of $X$.  We say that $f $ has \emph {minimal length} if its total edge length is at most that of any other equivalent carrier graph $Y \longrightarrow M$.  It follows from Arzela-Ascoli that any equivalence class of carrier graphs has a {minimal length} representative.  

A carrier graph $f : X \longrightarrow M$ is \emph{degenerate} if there is an edge of $X$ that is mapped to a point in $M$, and \emph{nondegenerate} otherwise. When $f$ is degenerate, let $X'$ be the graph obtained from $X$ by collapsing all such edges to points, let $\pi : X \longrightarrow X'$ be the projection, and let $f' : X' \longrightarrow M$ be the nondegenerate carrier graph with $f = \pi \circ f'$. If there are entire loops in $X$ that are mapped to points in $M$, then $f'$ will not be equivalent to $f$. However, if $f$ has minimal length, then $f'$ does too, since any equivalent carrier graph can be blown up to a carrier graph with the same length that is equivalent to $f$. So, in the following, \emph{whenever we consider minimal length carrier graphs, we will always assume they are nondegenerate.}

White \cite {Whiteinjectivity} has shown\footnote{Technically, White has a standing minimal rank assumption in his paper, but his argument here is just that if $X$ isn't trivalent with $2\pi/3$ angles in the image, there is a length decreasing perturbation of $f$. So, this result applies to any (nondegenerate) minimal length $f$.} that if $f: X \longrightarrow M $ is a  minimal length carrier graph, then $X$ is trivalent and the edges of $X$ map to geodesic segments in $ M$ that connect at $2\pi/3$-angles.  He used this to show that any minimal length carrier graph has a cycle with total edge length less than some constant depending on $\rank(\pi_1 X)$. In fact, we will see that minimal  length carrier graphs  have a lot of extra structure.

\begin {defi}[Relatively connected]\label {relatively connected}
 A carrier graph $f: X \longrightarrow M$ with geodesic edges is $D$-\emph{relatively connected} if whenever $Y\subset X$  is a proper subgraph, there is an edge $e\subset X \setminus Y$ with $
\length^f_{Y}(e) \leq D.
$
\end{defi}

Here, $\length^f_{Y}(e)$ is the \emph{length of $e$  relative to $Y$}, which we will define below. Informally (and  slightly inaccurately) it is the length of the part of $e$ that lies outside the convex hull of $Y$ in $M$. 

Extending earlier results of ours in \cite{Biringergeometry,Soutorank}, we prove:

\begin{sat}
\label{chainsprop}
Fixing $\epsilon>0 $, there is  some constant $D=D(\epsilon) $ as follows. Let $M$ be a  complete $\epsilon$-thick hyperbolic $3$-manifold  and assume that every essential closed curve on $\partial CC(M)$ that is nullhomotopic in $M$ has length at  least $\epsilon$. Then  any minimal length carrier graph $f: X \longrightarrow M$ is $D$-relatively connected. \end{sat}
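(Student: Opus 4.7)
My plan is to argue by contradiction, in the spirit of our earlier papers \cite{Soutorank, Biringergeometry, Biringerranks}. Assuming no such $D$ exists, for each $n$ we obtain a complete $\epsilon$-thick hyperbolic $3$-manifold $M_n$ satisfying the $\partial CC$-hypothesis, a minimal length carrier graph $f_n : X_n \to M_n$, and a proper subgraph $Y_n \subsetneq X_n$ with the property that every edge $e \in X_n \setminus Y_n$ has $\length^{f_n}_{Y_n}(e) > n$. White's structural theorem \cite{Whiteinjectivity} ensures $X_n$ is trivalent, with geodesic edges meeting at $\frac{2\pi}{3}$-angles; combined with White's bounded short cycle result, this lets us reduce to a single edge $e_n \in X_n \setminus Y_n$ realizing the minimum relative length, with endpoints in specific components $Y_n^0, Y_n^1$ of $Y_n$. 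After passing to a subsequence, all relevant combinatorial data and local configurations can be taken fixed.

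Next I would lift to $\widetilde{M}_n = \BH^3$, choose a lift $\tilde e_n$ with endpoints in distinguished lifts of $f_n(Y_n^0)$ and $f_n(Y_n^1)$, and form the convex hulls $CH^0_n, CH^1_n \subset \BH^3$. The central piece of $\tilde e_n$ outside $CH^0_n \cup CH^1_n$ then has length exceeding $n$; base $M_n$ at its midpoint $p_n$. Using $\epsilon$-thickness, I extract a geometric limit $(M_\infty, p_\infty)$, in which $f_n(Y_n^0), f_n(Y_n^1)$ recede to infinity in opposite directions while $e_n$ limits to a biinfinite geodesic $e_\infty \subset M_\infty$ through $p_\infty$. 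The $\frac{2\pi}{3}$-angle condition, together with length minimality, passes to the limit, so any nearby edges of $X_n$ at $p_n$ converge to geodesic rays or arcs satisfying the same angular constraints at any vertices they produce.

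The crux is to convert this limiting data into a length-decreasing surgery on $f_n$ for large $n$, contradicting minimality. The main cases I would consider are: (i) another edge of $X_n$ passes within bounded distance of $p_n$, in which case we surger $e_n$ into it by a slide-and-fold move that produces a shorter equivalent carrier graph; (ii) a nontrivial element $\gamma \in \pi_1 M_n$ translates $\tilde e_n$ to a nearby copy of itself near $p_n$, providing a short essential loop into which we can absorb a subarc of $e_n$; or (iii) neither occurs, in which case the limit $e_\infty$ is an isolated biinfinite geodesic in a thick hyperbolic $3$-manifold with no nearby structure, contradicting the requirement that $e_n$ lies in a carrier graph of bounded combinatorial type. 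The hardest step will be case (iii): here I would use the $\partial CC$-hypothesis together with Corollary~\ref{bdlcc} to control how $CH^0_n$ and $CH^1_n$ approach $\partial CC(M_n)$, preventing a degenerate configuration blocked by thin boundary tubes. This is the point where the boundary thickness assumption is genuinely used, and is the novelty compared to the closed case treated in our earlier papers.
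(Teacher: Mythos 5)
Your set-up already runs into trouble before the case analysis: Theorem \ref{chainsprop} must produce a $D$ depending only on $\epsilon$, valid for minimal length carrier graphs of \emph{arbitrary} rank, so the graphs $X_n$ in your sequence of counterexamples have no bounded combinatorial complexity. There is no a priori bound on how many edges of a minimal length carrier graph meet a fixed ball about $p_n$, so "passing to a subsequence in which all relevant combinatorial data and local configurations are fixed" is unjustified, and minimality is not a property that survives the kind of geometric limit you take (the limit object need not even be a carrier graph of anything). The paper avoids this entirely by working with explicit constants $L(\epsilon),A(\epsilon)$ and a direct argument in a single manifold, with no compactness.

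More seriously, your trichotomy does not yield a contradiction in the main case. Case (iii) --- the midpoint of the long edge sees nothing nearby and the limit is a lone biinfinite geodesic --- is not contradictory at all: it is exactly the expected picture when the relative length is huge, and no "bounded combinatorial type" requirement exists to violate. The real content of the theorem is global. In the paper one shows (Claim \ref{projectionclaim}) that if \emph{every} edge of $X\setminus Y$ is long rel $Y$, then any point of $\tilde X$ is joined to the distinguished point $p$ on the lift $\tilde e_0$ by a broken geodesic whose segments are long and meet at angles at least $\pi/3$: the segments outside the hulls are long by the relative length hypothesis, White's $2\pi/3$-angles handle the vertices, and a replacement/minimality argument shows the chords $[x_i,y_i]$ cut across $\CN_A(CH(\tilde Y_i))$ are long and meet the adjacent edges at controlled angles. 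It follows that all of $\tilde X$ projects orthogonally into a disk of radius $\sinh^{-1}(\epsilon/2\pi)$ in the plane through $p$ perpendicular to $\tilde e_0$, and the boundary circle of that disk is a length-$\epsilon$ curve, essential in $\BH^3\setminus CH(\tilde X)$ but trivial in $\BH^3$; projecting it to $\partial CC(M)$ contradicts the boundary hypothesis. So the hypothesis on $\partial CC(M)$ is the \emph{source} of the final contradiction, not, as in your sketch, a device to exclude "degenerate configurations" via Corollary \ref{bdlcc} (which in any case concerns incompressible boundary components). Your cases (i) and (ii) also leave the key verifications open --- that the slide-and-fold produces an \emph{equivalent} carrier graph of strictly smaller length, and in (ii) thickness forces any essential loop near $p_n$ to have length at least $2\epsilon$, so absorbing a subarc into it has no evident length gain --- but the essential missing idea is a global mechanism converting "one edge is very long rel $Y$" into a statement about the entire graph and $\partial CC(M)$.
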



 We should mention that there is also a  relative version of  Theorem~\ref{chainsprop} that works without the $\epsilon $-thick assumption,  compare with the version in  the appendix of \cite{Biringerranks}, but we will not need it in this paper. 
 
\medskip

We now give  the definition of  relative length. Let $f : X \longrightarrow M $ be a carrier graph with geodesic edges in a hyperbolic $3$-manifold $M $. 

\begin{defi}
A \emph{rooted edge} of $X$ is a pair $(e,v)$  where $e$  is a directed edge of $X$ and $v$  is the initial vertex of $e$.
\end{defi}

 For the most part, one can ignore the direction of $e$. We only mention it since  our carrier graphs can have edges that are closed loops, and in a rooted edge we want to specify a `side' of $e$ that is attached to $v$.

Given a rooted edge $(e,v)$ and a subgraph $Y \subset X$, we want  to define a number called  the \emph{convex hull length}
$$CHL^f_Y(e,v) \in [0,\infty).$$
This will be zero if $v\not \in Y$, and if $v \in Y$, it will informally measure the amount of length $f(e)$ expends near the convex hull of $f(Y) \subset M$ just after leaving the vertex $v$.
However, to make this precise we need to pass to universal covers. So, assume $v\in Y$ and let
$$ \pi : \tilde X \longrightarrow X, \  \  \pi : \BH^3\longrightarrow M,  \ \ \tilde  f: \tilde X \longrightarrow \BH ^ 3, \ \ \pi \circ \tilde f = f \circ \pi$$ be the appropriate universal covers and lifts. Lift $e $ to an edge $\tilde e$ in $\tilde X$ and let $\tilde v$ be the initial endpoint of $\tilde e$,  with respect to the lifted orientation.  If the component of $\pi^{-1}(Y)$ containing $\tilde v$ is $$\tilde Y_0 \subset \pi^{-1}(Y) \subset\tilde X,$$
we define the convex hull length of $(e,v)$ to be 
\begin{equation}CHL^f_Y(e,v) := \length \left (\tilde f(\tilde e) \cap \CN_1(CH(\tilde f(\tilde Y_0)))\right).\label {chl}	
\end{equation}

Here, $\CN_1(\, \cdot \, )$ denotes the $1$-neighborhood in $\BH^3$. Note that the intersection above is a subsegment of $\tilde f(\tilde e)$  starting at the vertex $\tilde v$, and that $CHL^f_Y(e,v)$ does not depend on any of the choices of lifts.

\begin{defi}[Relative length]
\label {relative length} The \emph{length of $e$ relative to $ Y$} is 
$$\length^f_{Y} (e) \ =\ \max \Big \{0, \   \length (e) \  - \sum_{\substack{\text{ vertices } \\ v \text{ of } e}} CHL^f_Y(e,v)\Big \}.$$ 
  \end{defi}
  
There are only two terms in the summation.   Recall that each term $CHL^f_Y(e,v)$ is the length of some subsegment of $f(e)$. These segments may overlap, in which case the difference above will be negative.  This is the reason for the maximum. Note that as defined, relative length is  then just the length of the part of $f(e)$ that is not contained in either of the subsegments defining convex hull length.

 Before beginning the proof of Theorem \ref{chainsprop}, note that  the definition of  convex hull length (and hence, that of relative length) would change drastically if we used the convex hulls themselves  instead of their $1$-neighborhoods in \eqref{chl}. For taking the convex set  to be a plane, a geodesic can run for a long time in the $1$-neighborhood before intersecting the plane. However, the  following standard lemma shows that convex hull length  only changes by a uniform constant if the radius is changed from $1$ to another fixed constant.

\begin{lem}[Changing the radius]\label{convlem1}
If $C$  is a convex set in $\BH^3$ and $\gamma$  is a geodesic segment contained in $\CN_b(C) \setminus \CN_a(C)$, where $b>a$, then
$$\length \gamma \leq \frac {2b}{1-\frac 1{\cosh a}}.$$
\end{lem}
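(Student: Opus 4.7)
The plan is to exploit the exponential contraction of the nearest-point projection $\pi : \BH^3 \to C$, which is well-defined and $1$-Lipschitz because $C$ is closed and convex in the CAT($-1$) space $\BH^3$. The key hyperbolic-geometry fact I will use is the pointwise bound: for any $p \in \BH^3$ with $r := d(p, C) > 0$ and any tangent vector $v \in T_p \BH^3$,
\[
\|d\pi_p(v)\| \;\leq\; \frac{\|v\|}{\cosh r}.
\]
In Fermi coordinates around a totally geodesic $C$, where the ambient metric has the form $ds^2 = dr^2 + \cosh^2 r \cdot g_C$, this is an immediate calculation: $\pi$ collapses the radial direction and contracts each tangential direction by exactly $\cosh r$. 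For general closed convex $C$ one can reduce to this case by a supporting-halfspace or smooth-approximation argument.

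With that in hand, I would parametrize $\gamma$ by arclength as $\gamma : [0, L] \to \BH^3$, where $L = \length \gamma$. Since $\gamma([0,L]) \subset \CN_b(C) \setminus \CN_a(C)$, we have $d(\gamma(t), C) \geq a$ for every $t$, and applying the pointwise contraction estimate along $\gamma$ gives
\[
\length(\pi \circ \gamma) \;=\; \int_0^L \|d\pi_{\gamma(t)}(\dot\gamma(t))\|\,dt \;\leq\; \int_0^L \frac{dt}{\cosh d(\gamma(t),C)} \;\leq\; \frac{L}{\cosh a}.
\]
Since $\gamma$ is a geodesic segment parametrized by arclength, $L = d(\gamma(0), \gamma(L))$, and both endpoints lie in $\CN_b(C)$, so $d(\gamma(0), \pi(\gamma(0)))$ and $d(\gamma(L), \pi(\gamma(L)))$ are each at most $b$. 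The triangle inequality then yields
\[
L \;\leq\; d(\gamma(0), \pi(\gamma(0))) + d(\pi(\gamma(0)), \pi(\gamma(L))) + d(\pi(\gamma(L)), \gamma(L)) \;\leq\; 2b + \frac{L}{\cosh a},
\]
and rearranging gives $L(1 - 1/\cosh a) \leq 2b$, i.e.\ $L \leq 2b / (1 - 1/\cosh a)$, as claimed.

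The only genuinely nontrivial ingredient is the pointwise contraction bound $\|d\pi_p(v)\| \leq \|v\|/\cosh r$; everything else is a one-line triangle-inequality estimate. This contraction bound is a standard consequence of hyperbolic geometry---essentially the same exponential-divergence phenomenon that underlies the Margulis lemma---and one expects the authors to invoke it without further comment. This is also consistent with the fact that the lemma is used only to compare $\epsilon$-neighborhoods of different fixed radii in the definition of convex hull length, where the precise numerical factor is not very important.
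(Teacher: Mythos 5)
Your proposal is correct and is essentially the paper's own argument: the paper likewise projects $\gamma$ to $C$, uses the $\cosh a$ contraction of the nearest-point projection outside $\CN_a(C)$ to get $\length \pi(\gamma) \leq \length(\gamma)/\cosh a$, and then uses that $\gamma$ minimizes distance between its endpoints together with the $2b$ detour through the projected endpoints. The only difference is that you spell out the pointwise derivative bound $\|d\pi_p(v)\|\leq \|v\|/\cosh r$, which the paper invokes implicitly as the standard exponential contraction of convex projection.
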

\begin{proof}
Since $\gamma \subset \BH^3 \setminus \CN_a(C)$,  we have $$\length \gamma \geq \cosh a \cdot \length \pi(\gamma), $$ by Fact \ref{expdecrease}, where $\pi$ is the nearest point  projection to $C$. But  also $$\length \gamma \leq 2b+\length \pi(\gamma),$$ since $\gamma$ minimizes the distance between its endpoints. Putting these two estimates together proves the lemma.
\end{proof}

\subsection{Proof of Theorem \ref{chainsprop}}
 The proof is  somewhat similar to that in the appendix of \cite{Biringerranks}. Before starting the proof proper, we identify the  constant $D=D(\epsilon)$  in the statement of the theorem. It will depend on two constants, $L$ and $A$, which are described in the two claims below.

\begin{claim}[Long segments, bounded angles]\label {long segments}
Given $\epsilon$, there is some $L=L(\epsilon)>0$  as follows.	 Suppose that $\gamma$ is a path in $\BH^3$  that is a  concatenation of  geodesic segments $\gamma_i = [p_i,p_{i+1}]$, where $i = 0,\ldots,n-1$, such that for all $i=0,\ldots,n-2$,  we have
\begin {enumerate}
\item  the angle made by $\gamma_i$ and $\gamma_{i+1}$ at $p_{i+1}$ is at least $\pi/3$, 
\item $\length \gamma_i \geq L$.
\end {enumerate}
Let $P\subset\BH^3$ be the plane through $p_0$  that is perpendicular to $\gamma_0$,  and let $$\pi_P : \BH^3 \longrightarrow P$$  be the  closest point projection. Then we have $$d(p_0,\pi_P(p_n)) < \sinh^{-1}(\frac{\epsilon}{2\pi}) \ \text{ and } \ \gamma \cap P = \{p_0\}.$$
\end{claim}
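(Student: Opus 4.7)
I read condition (1) as saying the piecewise geodesic $\gamma$ bends by at most $2\pi/3$ at each interior vertex: writing $v_i$ for the unit tangent to $\gamma_i$ at $p_i$, the angle between $v_{i-1}(p_i)$ (the incoming tangent of $\gamma_{i-1}$) and $v_i$ is at most $2\pi/3$. Since $\gamma_0$ starts at $p_0\in P$ perpendicular to $P$, it moves straight out into the forward open half-space $H$, and $d(p_1,P)=\length(\gamma_0)\geq L$. The task is to show that, provided each $\gamma_i$ with $i\leq n-2$ is long enough, the bending is too weak to bring the path back to $P$, and the lateral drift projects to something arbitrarily small.

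\textbf{Forward motion by induction.} The plan is to choose $L$ so large that the angle of parallelism $\Pi(L)=\arcsin(1/\cosh L)$ is much smaller than $\pi/3$, and then show inductively for $i\geq 1$:
\begin{itemize}
\item[(a)] $\gamma_i\subset H\setminus P$,
\item[(b)] $d(p_{i+1},P)\geq d(p_i,P)+\alpha L$ for some fixed $\alpha>0$, and
\item[(c)] the angle between the terminal tangent of $\gamma_i$ at $p_{i+1}$ and the ``outward normal" direction to $P$ at $p_{i+1}$ is at most some $\delta(L)\to 0$ as $L\to\infty$.
\end{itemize}
The base case $i=0$ is immediate since $\gamma_0$ is perpendicular to $P$. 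For the inductive step, (c) at $i-1$ combined with the $2\pi/3$ bending bound gives that the initial tangent $v_i$ of $\gamma_i$ makes angle at least $\pi/3-\delta(L)$ with the ``toward $P$" direction at $p_i$, so $\gamma_i$ cannot reach $P$ by angle of parallelism; a hyperbolic-trigonometry computation in the geodesic plane containing $\gamma_i$ and the perpendicular from $p_i$ to $P$ then yields (b) and (c) for $i$. The upshot is $d(p_i,P)\geq \alpha i L$ for all $i\geq 1$, and $\gamma\cap P=\{p_0\}$.

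\textbf{Bounding the drift.} Since $d\pi_P$ contracts tangents by at least $1/\cosh d(\cdot,P)$ and $\pi_P(\gamma_0)=\{p_0\}$, arc-length parametrizing $\gamma$ by $s$ one gets
\begin{equation*}
d(p_0,\pi_P(p_n))\ \leq\ \length(\pi_P(\gamma))\ \leq\ \int_{L}^{\length(\gamma)} \frac{ds}{\cosh(d(\gamma(s),P))}.
\end{equation*}
Using the linear lower bound $d(\gamma(s),P)\geq c(L)\, s$ from the previous step, the integral is bounded by a quantity of order $e^{-L}$. Choosing $L=L(\epsilon)$ large enough then forces this to lie below $\sinh^{-1}(\epsilon/(2\pi))$, completing the proof.

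\textbf{Main obstacle.} The delicate step is (c) in the induction: verifying quantitatively that the terminal direction of a long geodesic in $\BH^3$ emanating outward from $P$ becomes nearly aligned with the outward normal to $P$ at its far endpoint, with an alignment improving with $L$ in a way compatible with the $2\pi/3$ bending loss at the next vertex. This is ultimately a statement about exponential divergence of geodesics together with strict convexity of the distance-to-$P$ function, and making the constants fit together so that the inductive hypothesis is preserved (rather than degrading) across vertices is where the real work lies.
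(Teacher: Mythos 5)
Your proposal is correct in outline, but it takes a genuinely different route from the paper. The paper's proof is a nesting argument: it takes the perpendicular bisector plane $P_i$ of each segment $\gamma_i$ and checks (a hyperbolic trigonometry exercise using the $\pi/3$ lower bound on the corner angles and $\length \gamma_i \geq L$) that consecutive bisectors are disjoint, so the half-spaces they bound are nested and $\gamma$ crosses each $P_i$ exactly once; in particular $P_0$ separates all of $\gamma\setminus\gamma_0$ from $P$, which gives $\gamma\cap P=\{p_0\}$ immediately, and since $P$ and $P_0$ share the common perpendicular $\gamma_0$, every point of $P_0$ is at distance at least $L/2$ from $P$, so the shadow $\pi_P(P_0)$ --- which contains both $p_0$ and $\pi_P(p_n)$ --- is a disk of radius $O(e^{-L/2})$, giving the projection bound. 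You instead run an induction on the distance to $P$ and the alignment of the terminal tangent with the outward normal, and then integrate the Jacobian bound $|d\pi_P|\leq 1/\cosh d(\cdot,P)$ along $\gamma$. This does work, and the step you flag as the ``real work'' goes through: writing $f(t)=\sinh d(\gamma_i(t),P)$ along a unit-speed segment, one has $f''=f$, so $f(t)=\sinh(d_i)\cosh t+\cosh(d_i)\cos\alpha_0\,\sinh t$ with $\cos\alpha_0\geq\cos(2\pi/3+\delta)\geq -1+c$; hence $f$ stays positive (so the segment never meets $P$), dips by at most a universal additive constant, grows linearly, and the terminal misalignment is $O(e^{-L})$ \emph{independently} of the incoming $\delta$ --- so the inductive hypothesis is self-improving rather than degrading, which settles the worry in your last paragraph. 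Two small points: the last segment $\gamma_{n-1}$ carries no length hypothesis (condition (2) stops at $i=n-2$), but your integral handles it, since once $f\geq c\,e^{d_{n-1}}\cosh(t-t_0)$ the quantity $\int dt/\cosh d$ converges even over an arbitrarily long segment; and since you only assert the trigonometric estimate rather than carry it out, your write-up is at about the same level of detail as the paper's, which likewise relegates the disjointness of consecutive bisectors to an exercise. As for what each approach buys: the paper's separation argument is shorter and yields $\gamma\cap P=\{p_0\}$ with no induction at all, while yours gives the quantitatively stronger conclusion that $d(\gamma(s),P)$ grows essentially linearly in arclength.
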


Note that $i=0,\ldots,n-2$ in (2),  as well as in (1). So, it is not a problem if the \emph{last} segment we are concatenating is short. Also,  $\pi/3$ and $\sinh^{-1}(\frac{\epsilon}{2\pi})$  could be replaced by arbitrary positive constants, at the expense of increasing $L$. We phrase it as above  just to cut down on notation.

\begin {proof}
 For each $i=0,\ldots,n-2$, let $P_i$  be the plane that perpendicularly bisects  the segment $\gamma_i$. A hyperbolic geometry exercise shows that if $L$ is at least some  constant depending on $\alpha$, then each $P_i$  is disjoint from $P_{i+1}$, when defined.   Similarly, we can assume that $\gamma_{n-1}$ is disjoint from $P_{n-2}$.  It follows that each $P_i$  intersects the path $\gamma$  exactly once. 

Now if if $L$  is  also at least some constant depending on $\epsilon$, then $$\diam(\pi_P(P_{0})) \leq \sinh^{-1}(\frac{\epsilon}{2\pi}).$$ 
The first part of the claim follows since $P_0$  separates $p_n$ from $P$. For the second part,  just note that $P_0$  separates every point on $\gamma \setminus \gamma_0$ from $P$, so $(\gamma \setminus \gamma_0) \cap P = \emptyset$, and  certainly $\gamma_0 \cap P =\{p_0\}.$ \end {proof}

We also need the following, which we leave to the reader. 
\begin{claim}\label {2ndconstants}
There is some $A=A(\epsilon)>L$ such that
\begin {enumerate}
\item[(i)] if an isometry $g : \BH^3 \longrightarrow \BH^3$  has translation  length at least $\epsilon$,  we have that $d(g(x),x) \geq L$ for every point $x\in \BH^3$ that lies at least $A$ away from the axis of $g$, and
\item[(ii)]any geodesic ray emanating from a convex subset $K \subset \BH^3$  that leaves $\CN_A(K)$ meets  $\partial \CN_A(K)$ in an angle of at least $\pi/3$.
\end {enumerate}
\end{claim}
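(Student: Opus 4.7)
The plan is to treat (i) and (ii) as two independent standard computations in $\BH^3$ and then take $A$ to be the maximum of the resulting constants (insisting $A > L$). Neither step requires anything beyond elementary hyperbolic geometry; the only real care is in setting up (ii).

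For (i), the hypothesis that the translation length $\ell$ of $g$ is at least $\epsilon > 0$ forces $g$ to be loxodromic, so its axis $\Axis(g)$ is well defined. I would then invoke the classical displacement formula
\[
\sinh\!\bigl(\tfrac12 d(g(x),x)\bigr) \;=\; \cosh\!\bigl(d(x,\Axis(g))\bigr)\,\sinh(\ell/2).
\]
Since $\cosh$ is increasing and $\sinh(\ell/2) \geq \sinh(\epsilon/2)$, the inequality $d(g(x),x) \geq L$ holds as soon as $d(x,\Axis(g)) \geq A_1$, where $A_1 = A_1(\epsilon,L)$ is defined by $\cosh(A_1)\sinh(\epsilon/2) = \sinh(L/2)$.

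For (ii), let $\gamma$ be a geodesic ray starting in $K$ and exiting $\CN_A(K)$ at $q = \gamma(t_0)$, and set $p' := \pi_K(q)$, so $d(q,p') = A$. A first-order analysis of the minimizer $p'$ of $p \mapsto d(p,q)$ on $K$ puts the tangent cone of $K$ at $p'$ inside the halfspace through $p'$ perpendicular to $qp'$ and pointing away from $q$; convexity of $K$ then upgrades this to $K \subset \bar H^-$, where $H$ is the totally geodesic $2$-plane through $p'$ perpendicular to $qp'$ and $H^-$ is the open halfspace not containing $q$. Consequently $\gamma$ must cross $H$ transversely at some $t_1 \in [0,t_0)$ at some angle $\alpha \in (0,\pi/2]$. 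In Fermi coordinates along $H$, the signed distance $r(t) := d_\pm(\gamma(t),H)$ (positive on the $q$-side) obeys the standard geodesic identity $\sinh r(t) = \sin\alpha \cdot \sinh(t - t_1)$, so differentiating and using $r(t_0) = A$ gives
\[
r'(t_0)^2 \;=\; 1 - \frac{\cos^2 \alpha}{\cosh^2 A} \;\geq\; \tanh^2 A.
\]
The outward unit normal to $\partial \CN_A(K)$ at $q$ coincides with that of $\partial \CN_A(\bar H^-)$---both point along $qp'$ extended past $q$, since $p'$ is the common nearest point---so the angle $\theta$ between $\gamma$ and this normal satisfies $\cos\theta = r'(t_0) \geq \tanh A$. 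The angle between $\gamma$ and $\partial \CN_A(K)$ is $\pi/2 - \theta$, so taking $A_2 := \tanh^{-1}(\sqrt 3/2) = \ln(2+\sqrt 3)$ secures the bound $\pi/2 - \theta \geq \pi/3$.

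Setting $A := \max\{A_1,\,A_2,\,L+1\}$---which depends only on $\epsilon$, since $L = L(\epsilon)$ from Claim~\ref{long segments}---delivers both conclusions with $A > L$ as required. The only delicate ingredient is the supporting-hyperplane step in (ii); the rest is a short explicit calculation.
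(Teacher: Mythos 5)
The paper offers no proof to compare against---Claim \ref{2ndconstants} is explicitly ``left to the reader''---so the only question is whether your argument is correct, and it is: the two-part strategy (displacement estimate for (i), supporting halfspace plus a right-triangle identity for (ii)) is exactly the expected elementary computation, and your constants work. Two small points would make it airtight. First, the identity $\sinh\bigl(\tfrac12 d(gx,x)\bigr)=\cosh\bigl(d(x,\Axis(g))\bigr)\sinh(\ell/2)$ is the formula for a purely hyperbolic (rotation-free) isometry; a general loxodromic of $\BH^3$ has a rotational part $\theta$, and the correct identity is $\cosh d(x,gx)=\cosh\ell\,\cosh^2 r-\cos\theta\,\sinh^2 r$ with $r=d(x,\Axis(g))$. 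Since $\cos\theta\le 1$, this is bounded below by the rotation-free value $1+(\cosh\ell-1)\cosh^2 r$, so rotation only increases the displacement and your $A_1$ still works---but you should say so, because the isometries to which the claim is applied (deck transformations of thick hyperbolic $3$-manifolds) generally do rotate, and as written your ``formula'' is an equality only in the $\theta=0$ case. Second, in (ii) the statement presupposes that the angle with $\partial\CN_A(K)$ is well defined; it is, because $\CN_A(K)$ is convex and its boundary has a well-defined outward normal at $q$, namely the direction of the geodesic from $\pi_K(q)=p'$ through $q$---which is precisely the identification you invoke when writing $\cos\theta=r'(t_0)$, since the gradient of $d(\cdot,H)$ at $q$ points along that same geodesic. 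You should also replace $K$ by its closure at the outset (harmless, as $d(\cdot,K)=d(\cdot,\bar K)$ and the neighborhoods agree) so that the nearest point $p'$ exists, and note $\alpha>0$ because a geodesic tangent to a totally geodesic plane lies in it. With those remarks, the supporting-halfspace step, the identity $\sinh r(t)=\sin\alpha\,\sinh(t-t_1)$, and the bound $\cos\theta\ge\tanh A$ give exactly the claim, and $A=\max\{A_1,A_2,L+1\}$ depends only on $\epsilon$, as required.
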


 To prove Theorem \ref{chainsprop},  it would suffice to show that if $D= 2L$ and $$X\longrightarrow M$$  is a minimal length carrier graph that is not $D$-relatively connected, then there is  an essential closed curve on $\partial CC(M)$ with length at most $\epsilon$ that is nullhomotopic in $M $. We will do this, but \emph{whenever we speak of convex hull length or relative length below, we will use the constant $A$ instead of $1$ in \eqref{chl}.}  One can then use Lemma \ref{convlem1} to translate this back into a statement about relative connectedness with the usual definition, and the $D$ produced will differ from $2L$ only by a constant depending on $A$, which depends only on $\epsilon$.

\medskip

So,  assume that $Y\subset X $  is a proper subgraph such that every edge $e \subset X \setminus Y$ has relative length
$\length_Y(e) \geq D.$ (Again, we are using $A$  instead of $1$  in the definition of convex hull length \eqref{chl}  throughout the rest of the proof.) In a further abuse of power, \emph{we will assume throughout the proof that our carrier graph is  embedded in $M$,} just so that we  can cut down on notation by not naming the map.  Let
$$  \pi : \BH^3\longrightarrow M$$ be a  universal covering map, and let $$\tilde X = \pi^{-1}(X), \ \ \tilde Y = \pi^{-1}(Y).$$
 So, if $(e,v)$  is a directed edge of $X \setminus Y$ with $v\in Y$, we compute its convex hull length by lifting it to a directed edge $(\tilde e,\tilde v)$ in $\tilde X$, letting $\tilde Y_0 \subset \tilde Y$ be the component  containing $\tilde v$,  and setting
\begin{equation}CHL_Y(e,v) = \length \left (\tilde e \cap \CN_A(CH(\tilde Y_0))\right).\label {chl2}	
\end{equation}
 Then $\length_Y(e)$ is defined as in Definition \ref{relative length}.

\medskip

Fix an edge $e_0 \subset \tilde X \setminus \tilde Y$. Since $\length_Y(\pi(e_0))\geq D=2L$, there is some $p \in e_0$ such that  for each component $\tilde Y_0 \subset \tilde Y$  containing a vertex of $e_0$, the point $p$ lies at a distance of at least $L$ along $e_0$ from $\CN_A(CH(\tilde Y_0))$. So for instance, we can take $p$ to be the midpoint of the subsegment of $e_0$ whose length defines $\length_Y(\pi(e_0))$. We claim:

\begin{claim}\label {projectionclaim}
If $P \subset \BH^3$ is the plane through $p$  that is perpendicular to $e_0$,  and $\pi_P : \BH^3 \longrightarrow P$ is the orthogonal projection, then $$\pi_P(\tilde X) \subset B:= B_P\left (p,\sinh^{-1}\big (\frac{\epsilon}{2\pi}\big )\right ) \ \text{ and } \  P \cap \tilde X = \{p\}.$$
\end{claim}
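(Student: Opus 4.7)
The plan is to reduce Claim \ref{projectionclaim} to an application of Claim \ref{long segments}. Fix $q\in\tilde X$ with $q\neq p$ and pick an embedded path $\gamma\subset\tilde X$ from $p$ to $q$. Starting at $p$, the path $\gamma$ first traverses a subsegment of $e_0$ toward a vertex $v_1$ of $e_0$; by the choice of $p$, this subsegment has length at least $L$ before meeting either $v_1$ (if $v_1\notin\tilde Y$) or the boundary $\D\CN_A(CH(\tilde Y_0))$ of the component $\tilde Y_0$ of $\tilde Y$ containing $v_1$ (if $v_1\in\tilde Y$). Thereafter $\gamma$ alternates between edges of $\tilde X\setminus\tilde Y$, each of length at least $2L$ by our hypothesis on $Y$, and subpaths contained in some $\CN_A(CH(\tilde Y_0))$.

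I would build a piecewise geodesic path $\gamma^*$ from $p$ to $q$ by keeping the segments lying outside every $\CN_A(CH(\tilde Y_0))$ unchanged, and replacing each maximal subpath of $\gamma$ inside an $\CN_A(CH(\tilde Y_0))$ by the unique geodesic chord in $\BH^3$ from its entry point to its exit point. The segments of $\gamma^*$ outside these neighborhoods have length at least $L$, while the chord segments may be short. The angle condition $\ge\pi/3$ required by Claim \ref{long segments} is then verified at two sorts of junctions: at vertices of $\tilde X$ lying in $\tilde X\setminus\tilde Y$, White's theorem that minimal length carrier graphs are trivalent with pairwise $2\pi/3$ interior angles applies; at boundary points of $\D\CN_A(CH(\tilde Y_0))$, Claim \ref{2ndconstants}(ii) forces each outgoing (resp., incoming) edge of $\tilde X\setminus\tilde Y$ to meet the boundary at angle at least $\pi/3$.

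The main obstacle is the possibly short chord segments, and the fact that the chord's direction is not directly controlled by the ambient geometry, only by the entry/exit points on $\partial\CN_A(CH(\tilde Y_0))$. I plan to address this by applying Claim \ref{long segments} in stages: first along the initial prefix of $\gamma^*$ ending just before entering a convex-hull neighborhood, which forces the entry point to project near $p$ and the prefix to stay off $P$; then propagate the projection bound and the disjointness from $P$ across the convex-hull neighborhood using convexity of $CH(\tilde Y_0)$ and Lemma \ref{convlem1}; and finally restart Claim \ref{long segments} at the exit point, using the exit angle from Claim \ref{2ndconstants}(ii) and the fact that subsequent segments are again long to relaunch the hypotheses in a rotated configuration. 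The constant $A=A(\epsilon)$ is chosen (via Claim \ref{2ndconstants}) to be large enough that the additive projection contributions across each neighborhood are controlled and sum to a quantity bounded by $\sinh^{-1}(\epsilon/(2\pi))$.

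Applying this analysis to $\gamma^*$ yields both conclusions at once: $d(p,\pi_P(q))<\sinh^{-1}(\epsilon/(2\pi))$, so $\pi_P(q)\in B$, and $\gamma^*\cap P=\{p\}$, so in particular $q\notin P\setminus\{p\}$. Since $q\in\tilde X\setminus\{p\}$ was arbitrary, both statements of the claim follow.
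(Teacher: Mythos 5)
Your setup — the modified path $\gamma'$ obtained by keeping the pieces of edges outside the sets $C_i=\CN_A(CH(\tilde Y_i))$ and replacing each maximal excursion into a $C_i$ by the geodesic chord $[x_i,y_i]$, with angles controlled by White's $2\pi/3$ theorem at vertices and by Claim \ref{2ndconstants}(ii) at $\partial C_i$ — is exactly the paper's construction. But there is a genuine gap at the step you flag as "the main obstacle": the chords are in fact \emph{not} possibly short, and the whole point of the paper's argument is to prove this rather than to work around it. If some chord $[x_i,y_i]$ not incident to $q$ had length less than $L$, then since $x_i,y_i\in\partial C_i$ they lie on edges $e_i,f_i\subset\tilde X\setminus\tilde Y$, and Claim \ref{2ndconstants}(i) (using $\inj(M)\geq\epsilon$, so every translation length is at least $\epsilon$, and $d(x_i,\partial C_i$-to-axis$)\geq A$) forces $\pi(x_i)\neq\pi(y_i)$, so these are distinct edges of $X$. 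One may then replace the sub-arc $e\cap C_i$ (which runs from $\tilde Y_i$ to $\partial C_i$ and hence has length at least $A$) by the chord of length $<L<A$, propagate the change $\pi_1M$-equivariantly, and obtain a strictly shorter equivalent carrier graph, contradicting minimality. With all segments of $\gamma'$ (except possibly the one at $q$) of length at least $L$, Claim \ref{long segments} applies \emph{once} to the entire path and gives both conclusions; no staging is needed. You use minimality only through White's angle theorem and so miss this second, essential use of it.

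Your proposed workaround does not close the gap. Restarting Claim \ref{long segments} at an exit point "in a rotated configuration" controls the projection onto a \emph{new} plane perpendicular to the next long segment, not onto $P$: after a short chord of uncontrolled direction, the direction can turn by up to $2\pi/3$ at each of its two endpoints while respecting the $\pi/3$ angle condition, so the subsequent long segments can head straight back toward, and across, $P$. Convexity of $CH(\tilde Y_i)$ does not keep the chord away from $P$, because nothing established at that stage keeps the hull itself away from $P$ (it is the hull of part of $\tilde X$, which is what you are trying to control), and Lemma \ref{convlem1} only bounds the length of a geodesic confined to an annular shell $\CN_b\setminus\CN_a$, not the displacement of $\pi_P$ across a traversal of a neighborhood. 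Finally, the bound $\sinh^{-1}(\epsilon/2\pi)$ in the actual proof comes from the single bisector plane of the first long segment of $e_0$; there is no per-neighborhood "additive contribution" to sum, and no bound on how many neighborhoods $\gamma'$ meets, so choosing $A$ large cannot make such a summation work.
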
 

 Postponing the proof of the claim for a moment, we finish the proof of Theorem \ref{chainsprop}.  Since $\pi_P$ takes (unparameterized) geodesics to geodesics and the disk $B \subset P$  is convex,  the preimage $\pi_P^{-1}(B) $ is a  convex subset of $\BH^3$ that contains $\tilde X$. Hence, we have  a closed curve $$c:= \partial  B \subset \BH^3 \setminus CH(\tilde X), \ \ \length c = 2\pi \sinh(\sinh^{-1}\big (\frac{\epsilon}{2\pi}\big )) = \epsilon.$$

There is a bi-infinite edge path in $\tilde X$ that  passes through $e_0$  exactly once, and whose inclusion in $\BH^3 $ is proper. (If not, one  of the two components $Z \subset \tilde X \setminus int(e_0)$ is compact, and we can create a shorter equivalent carrier graph by collapsing $Z \cup e_0$ to a point and propogating the change $\pi_1 M$-equivariantly,  contradicting that $X \longrightarrow M$ has  minimal length.) This path intersects $B$  exactly once, so $c=\partial B$ cannot bound a disk in the complement of this path,  implying that $c$ is  homotopically  essential in $\BH^3 \setminus CH(\tilde X)$.

The convex core $CC(M)$ is  the quotient of the convex hull in $\BH^3$ of the limit set $\Lambda(M) \subset S^2_\infty.$ Since $X$  is a carrier graph for $M$, and is compact, we have the two inclusions
$$CH(\Lambda(M)) \subset CH(\tilde X) \subset \CN_r(CH(\Lambda(M)))$$
for some $r>0$.  It follows that $\BH^3 \setminus CH(\Lambda(M))$  deformation retracts onto $\BH^3 \setminus CH(\tilde X)$, since we can map $x \in CH(\tilde X) \setminus CH(\Lambda(M))$ to the unique point in $\partial CH(\tilde X)$ that lies along the ray through $x$ that emanates perpendicularly out from $\partial CH(\Lambda(M))$. So, $c$ is homotopically essential in $\BH^3 \setminus CH(\Lambda(M))$ as well.  The projection $\pi(c)$ is then a loop in $M$ with length $\epsilon$ that is essential in $M \setminus CC(M)$, but not in $M$. Projecting $\pi(c)$ onto $\partial CC(M)$ gives an essential closed curve on $\partial CC(M)$ with length less than $\epsilon$ that is nullhomotopic in $M$, contradicting our assumption in Theorem \ref{chainsprop} that there is no such loop.

\begin{proof}[Proof of  Claim \ref{projectionclaim}]
Let $q\in \tilde X$  be a point that does not lie on the edge $e_0$.  We will construct a path $\gamma'$ from $p$ to $q$ that is a concatenation of geodesic segments connected with angles at least $\pi/3$, such that 
\begin{enumerate}
	\item  except possibly for the segment incident to $q$, all our geodesic segments have length  at least $L$,
\item the segment incident to $p$ is a subsegment of $e_0$.
\end{enumerate}
Claim \ref{projectionclaim}  will then follow from our definition of $L$ in Claim \ref{long segments}.

\medskip

Since $X\longrightarrow M$  is $\pi_1$-surjective, $\tilde X$  is connected. Hence,  there is an embedded path $\gamma$ in $\tilde X$ from $p$ to $q$. Note that $\gamma$  certainly starts at $p$ with a subsegment of $e_0$. We will  modify $\gamma$ to give a path $\gamma'$  as above.

Index the maximal subpaths of $\gamma$ that lie in $\tilde Y$ as $\gamma_i \subset \gamma$, where $\gamma _i$ lies  in the component $\tilde Y_i \subset \tilde Y$, and set $$C_i := \CN_A(CH(\tilde Y_i)).$$ 
For each $i$, let $\gamma_i^{ext} \supset \gamma_i$ be the maximal subpath contained in $C_i$, and let $x_i,y_i$ be  the endpoints of $\gamma_i^{ext}.$ As $i$ varies, the paths $\gamma_i^{ext}$ are disjoint:  if two intersect, they will do so along a lift of an edge $e \subset X \setminus Y$ with $\length_Y(e) =0$, contradicting  our choice of $Y$.  

 Create $\gamma'$ from $\gamma$ by replacing every subpath $\gamma_i^{ext}$ with the geodesic $[x_i,y_i] \subset\BH^3$ connecting its endpoints.   The result is a concatenation of geodesic segments of the following two types, which appear in alternation as one traverses $\gamma'$:
\begin{enumerate}
	\item $(e \cap \gamma)\setminus \cup_{i \in I} C_i$, where $e $ is an edge of $\tilde X \setminus \tilde Y$ and $I$ is the set of  (at most two) indices of components $\tilde Y_i$  that are adjacent to $e$,
\item  segments $[x_i,y_i] \subset C_i$  constructed above.
\end{enumerate}
In (1), note that the intersection $e \cap \gamma=e$  unless $e$ contains  one of the endpoints $p,q$ of $\gamma$,  in which case $e \cap \gamma\subset e$  is some subsegment.  
Before the statement of Claim \ref{projectionclaim}, we chose $p$ to lie at a distance of at least $L$  along $e_0$ from any $C_i$, so $\gamma'$  begins at $p$ with a type (1) subsegment of $e_0$  of length at least $L$. 
We want to show that 
\begin{itemize}
\item all our segments above connect  with angles at least $\pi/3$, and
\item  all the rest of them (except possibly the one incident to $q$) also have length at least $L$.
\end{itemize}

The type (1) segments where $e=e\cap \gamma$ are exactly those  we use to compute relative length, so each of these has length at least $D\geq L$ by our  assumption that there every edge $e \subset X \setminus Y$ has $\length_Y(e)\geq D$. We have already dealt with the segment  incident to $p$, and we do not care about the length of the segment incident to $q$.

Now suppose there is a type (2) segment with length less than $L$, so for some $i$,  we have $d(x_i,y_i) < L.$ Assume that $[x_i,y_i]$ is not incident to $q$. (It can't be incident to $p$, as explained above.) Then $x_i,y_i\in \partial C_i$, since $\gamma_i^{ext}$ was a \emph{maximal}  segment of $\gamma$  contained in $C_i$.   It follows that $x_i,y_i$ lie on edges $e_i,f_i \subset \tilde X \setminus \tilde Y$,  respectively, and these edges have endpoints $x_i',y_i'$ that lie on $\tilde Y_i \subset \tilde Y$. See Figure \ref{surgery!}.

\begin{figure}
\centering\includegraphics{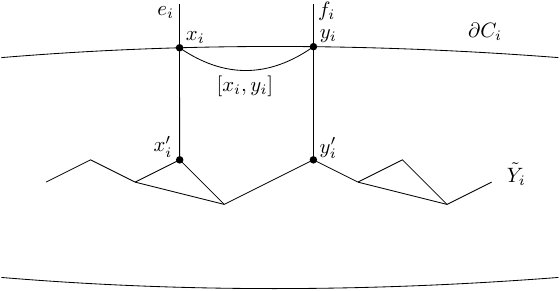}
\caption{}
\label{surgery!}
\end{figure}

Let $[x_i,x_i'] \subset e_i$ and $[y_i,y_i'] \subset f_i$ be the segments with the indicated endpoints. We claim that in $M$, we have $$\pi([x_i,x_i'])\cap  \pi([y_i,y_i']) = \emptyset.$$ If not, there is an element $\gamma \in \Gamma$ with $\gamma([x_i,x_i']) \cap [y_i,y_i']\neq \emptyset$, where $\Gamma $ is the deck group of the covering map $\pi : \BH^3 \longrightarrow M$. In particular, $\gamma(e_i)=f_i$. There are then two cases, depending on the orientation with which $\gamma$ identifies $e_i$ and $f_i$.

First, assume that $\gamma(x_i')=y_i'$. Since the components of $\tilde Y$ are permuted by the $\Gamma$-action, and $x_i',y_i' \in \tilde Y_i$, we have $\gamma(\tilde Y_i)=\tilde Y_i$, and hence the axis of $\gamma$ lies inside the convex hull of $\tilde Y_i$. So, Claim \ref{2ndconstants} (i) implies that the translation distance of $\gamma$ is at least $L$ on $\partial C_i$. However, $\gamma$ preserves $\partial C_i$, so $\gamma(x_i) \in f_i \cap \partial C_i$, and we must have $\gamma(x_i)=y_i$. Since $d(x_i,y_i)<L$ we have a contradiction. 

In the other case, we have $\gamma(e_i)=f_i$, but $\gamma(x_i')$ is the other endpoint of $f_i$, not $y_i'$. By $\Gamma$-invariance of $\tilde Y$, the image $\gamma([x_i,x_i'])$ lies in the $A$-neighborhood of some \emph{other} component of $\tilde Y$. Since we're assuming $\gamma([x_i,x_i'])$ intersects $[y_i,y_i']$, the entire segment $f_i \setminus [y_i,y_i']$ has to be contained in $\gamma([x_i,x_i'])$. But then $f_i$ actually has zero length relative to $\tilde Y$, contrary to our standing assumption.

We have now shown that $\pi([x_i,x_i'])\cap  \pi([y_i,y_i']) = \emptyset$. So, we can replace $[x_i,x_i']$ with the geodesic $[x_i,y_i]$ and propagate this change equivariantly to create a new carrier graph that is equivalent to $X$. Then $$\length [x_i,y_i] < L < A \leq \length [x_i,x_i'].$$ So the new carrier graph is shorter, contradicting  the fact that $X$ has minimal length. 

 Finally, we must check that all the angles in $\gamma'$ are at least $\pi/3$. There are two types of angles: those at vertices of $\tilde X$, and those at the endpoints of the segments $[x_i,y_i]$. Since $X$  has minimal length, the first  kind are all equal to $2\pi/3$, by an observation of White \cite{Whiteinjectivity}.  If some endpoint, say $x_i$, of some $[x_i,y_i]$ is not also an endpoint of $\gamma$, then it is adjacent in $\gamma'$ to a subsegment of an edge $e \subset \tilde X \setminus \tilde Y$ with one vertex on $\tilde Y_i$. By Claim \ref{2ndconstants} (ii), $e$ makes an angle of at least $\pi/3$ with $\partial C_i$, and hence an angle of at least $\pi/3$ with $[x_i,y_i]\subset C_i$.
\end {proof}

\subsection{Further observations about relative connectivity}
Here is another way to phrase the definition of convex hull length, without passing all the way to the universal cover. Suppose $(e,v)$  is a rooted edge and $v$  lies in a component $Y_0\subset Y$. Let $$\pi : \hat M \longrightarrow M$$ be the cover corresponding to the subgroup $f_*(\pi_1 Y_0) \subset \pi_1 M$. Let $$Y_0' = Y_0 \cup_{v} e$$
 be the graph obtained by attaching $e$ to $Y_0$ at $v$, and lift $f | _{Y_0'}$ to 
$$\hat f : Y_0' \longrightarrow \hat M.$$ Let  $C \subset \hat M$ be the smallest convex subset of $\hat M$ containing $\hat f( Y_0)$. Here, we say that a subset of $\hat M$ is \emph {convex} if every geodesic segment with endpoints in the subset lies in the subset. So  for instance, $C$  contains the convex core of $\hat M$.   Then  we have that
\begin{equation}
CHL^f_Y(e,v)= \length ( \hat f(e) \cap \CN_1(C)). \label{rellength2}
\end{equation}

 To reconcile this with the previous definition, just note that the convex hull in \eqref{chl} projects to $C$ under the covering map $\BH^3 \longrightarrow \hat M$.

\vspace{2mm}

 We will need the following consequence of relative connectedness during the main construction in this paper, see \S \ref{short graph subsection}.

\begin {lem}Suppose that $f : X \longrightarrow M$ is a	$D$-relatively connected carrier graph with $k$ edges in a  hyperbolic $3$-manifold $M\neq \BH^3$  with injectivity radius at least $\epsilon>0$. Then the image $f(X)$ lies in a $D'=D'(D,k,\epsilon)$-neighborhood of the convex core $CC(M)$.\label{cgincc}
\end {lem}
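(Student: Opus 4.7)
The plan is to prove this in two parts: first, find a ``seed'' vertex $v_0\in X$ with $d(f(v_0), CC(M))$ bounded in terms of $D,k,\epsilon$; then use $D$-relative connectedness to propagate this bound to all of $f(X)$, exploiting the geometric fact that in $\BH^3$, tubular neighborhoods of convex sets are convex.

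For the seed, I would build a filtration $\emptyset = Y_0\subsetneq Y_1\subsetneq\cdots\subsetneq Y_k = X$ by iterated applications of $D$-relative connectedness, each step adding an edge $e_i$ with $\length^f_{Y_{i-1}}(e_i)\le D$. As long as $Y_i$ is a forest, the fundamental group of each component is trivial, so each lift $\tilde f(\tilde Y^{(0)}_{i,0})\subset\BH^3$ is a finite tree whose convex hull has diameter bounded by the total length. Since the $CHL$ at a newly-added endpoint is at most the diameter of this convex hull plus $2$, the total length $L_i$ of $Y_i$ grows in a recursively bounded way, so that $L_k \le C(D,k)$. A cycle in $X$ is formed precisely when the added edge has both endpoints in the same component of $Y_{i-1}$; each such cycle has length at most the running total $L_j \le C(D,k)$. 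Because $\mathrm{rank}(\pi_1 X)\le k$, at most $k$ independent cycles appear during the filtration. Assuming $\pi_1 M\neq 1$ (else the lemma is vacuous), $\pi_1$-surjectivity forces at least one of these formed cycles $\alpha$ to represent a non-trivial element of $\pi_1 M$, which is hyperbolic of translation length $\ge 2\epsilon$ since $M$ is $\epsilon$-thick. Lemma~\ref{distancetogeodesic} then gives $d(f(\alpha), \gamma^*) \le C_0(D,k,\epsilon)$ where $\gamma^*\subset CC(M)$ is its geodesic representative, providing the seed $v_0\in\alpha$.

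For the propagation, I would start a new filtration $\{v_0\} = Y'_0\subsetneq Y'_1\subsetneq\cdots\subsetneq Y'_k = X$, and show inductively that $f(Y'_i)\subset\mathcal N_{R_i}(CC(M))$ with $R_i \le R_{i-1}+D+1$ and $R_0 = C_0(D,k,\epsilon)$. The key point is that $\pi^{-1}(CC(M)) = CH(\Lambda(\pi_1 M))\subset\BH^3$ is convex, and so is its $R$-tubular neighborhood for every $R\ge 0$. Given the inductive hypothesis, for each component $Y'_{i-1,0}$ we have $\tilde f(\tilde Y'^{(0)}_{i-1,0})\subset\mathcal N_{R_{i-1}}(CH(\Lambda))$; convexity upgrades this to $CH(\tilde f(\tilde Y'^{(0)}_{i-1,0}))\subset\mathcal N_{R_{i-1}}(CH(\Lambda))$, hence $\mathcal N_1(CH(\cdot))\subset\mathcal N_{R_{i-1}+1}(CH(\Lambda))$. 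The new edge $e_i$ is a geodesic whose lift spends at most $D$ total length outside these $1$-neighborhoods at its endpoints, so $\tilde e_i\subset\mathcal N_{R_{i-1}+1+D}(CH(\Lambda))$, giving $R_i\le R_{i-1}+D+1$ and the required bound $D'\le C_0(D,k,\epsilon) + k(D+1)$ after $k$ steps.

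The main obstacle is the possibility that at some step in the propagation, $D$-relative connectedness forces us to add an edge disjoint from $Y'_{i-1}$ (when no incident edge has small relative length). Such an edge satisfies $\length(e_i)\le D$ with both $CHL$s equal to zero, but its image could a priori lie far from $CC(M)$. To handle this, I would run the seed argument from Step (i) separately on each connected component of each intermediate $Y'_i$, noting that because $X$ itself is connected with only $k$ edges, any disconnected component must eventually be joined to the rest by a later edge addition whose lift geodesic, by the same relative-length bookkeeping, cannot stray too far from one of the already-controlled components. This amounts to interleaving the propagation with a bounded number of additional seed constructions, each contributing only an additive $C_0(D,k,\epsilon)$ to the final bound, so the shape of the estimate is preserved.
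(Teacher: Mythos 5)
Your proposal is essentially correct and takes the same route as the paper: Lemma \ref{cgincc} is proved there by a single induction over subgraphs built by iterating $D$-relative connectedness, whose inductive hypothesis is precisely the dichotomy your seed/propagation-with-interleaving reconstructs (each component is either inside a bounded neighborhood of $CC(M)$, or is null-homotopic of bounded total length so that its lifted convex hull, and hence the relevant $CHL$ terms, stay bounded), with short essential cycles pulled near $CC(M)$ via Lemma \ref{distancetogeodesic} and convexity of neighborhoods of the convex hull handling edges attached to already-controlled components. The only caution is that your bound $L_k\le C(D,k)$ should only be asserted up to the first \emph{essential} cycle formed (once a component has nontrivial image in $\pi_1 M$ its lifted convex hull is unbounded and the bookkeeping fails), but that first essential cycle is all the seed step requires, exactly as in the paper's argument.
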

\begin {proof}
For each $i\leq k$, we'll show that for every $f$ as above, there is a subgraph $Y \subset X$ with $i$ edges such that for some $D_i'=D_i'(D,i,\epsilon),$ \emph{every edge $e\subset Y$ with $f_i(e) \not \subset \CN_{D_i'}(CC(M))$ is contained in a component $Y_0 \subset Y$ such that $\length f(Y_0) \leq D_i'$ and the restriction $f |_{Y_0}$ is null-homotopic.} Taking $i=k$ will prove the lemma, since $X$ is connected and $f$ is not null-homotopic.

 The base case $i=0$ is trivial. So, assume the above holds for some $i$ and let $f : X \longrightarrow M$ be as in the statement of the lemma. Pick a subgraph $Y \subset X$ with $i$ edges as given by  the induction hypothesis, and pick an edge $e \subset X \setminus Y$  with $\length^f_{Y}(e) \leq D.$  We want to show that $Y \cup e$ is a subgraph of $X$ with $i+1$ edges that satisfies our claim.

First, if $e $ does not share any vertices with $Y$ then $\length e \leq D$ and we are done if $D_{i+1}'$ is set to be bigger than $D$ and $D_i'$.

Next, assume that $e$  is adjacent to two components $Y_0 \subset Y$ that are not contained in $\CN_{D'_i}({CC(M)})$. Lift $Y_0$ to $\tilde Y_0 \subset \tilde X$, as in Definition \ref{relative length}. Since $f
_{Y_0}$  is null-homotopic and $\length f(Y_0) \leq D_i'$, the convex hull of $\tilde f(\tilde Y_0) \subset \BH^3$ has diameter at most $D_i'$. Hence, at most a segment of $\tilde e$ of length $D_i' + 1$   can lie inside $\CN_1(C_{\tilde Y_0})$. This shows that $\length e$ is bounded above by $D + 2(D_i'+1)$. So, if $Z \subset Y \cup e$ is the subgraph consisting of $e$ and the two adjacent components of $Y$,  then $\length (Z )$  is at most some $D_{i+1}'$  depending on $D,D_i'.$ If the  restriction $f |_{Z}$ is null-homotopic, we are done. If not, there is a simple cycle $\gamma$ in $Z$ whose image is essential in $M$. This $\gamma$ has length at most $D_{i+1}'$, so by Lemma \ref{distancetogeodesic} the distance from $f(\gamma)$ to $CC(M)$ is bounded above by some constant depending on $D_{i+1}'$ and $\epsilon$.  So, $f(Z)$ lies in a  bounded neighborhood of $CC(M)$ and we are done after increasing $D_{i+1}'$.

Next, assume that $e$ is adjacent to two components $Y_0 \subset Y$  that are contained in $\CN_{D'_i}({CC(M)})$. It follows from the definition of relative length that at most a segment of $e$ of  length $D$ can be contained outside $\CN_{D'_i}({CC(M)})$. Hence, we are done with $D_{i+1}'=D_i' + D$.

We leave the mixed case---where exactly one vertex of $e$ lies on a component of $Y$ that is not contained in $\CN_{D'_i}({CC(M)})$---to the reader as the argument is a combination of the previous two paragraphs.
\end {proof}

\begin {figure}
\centering 
\includegraphics{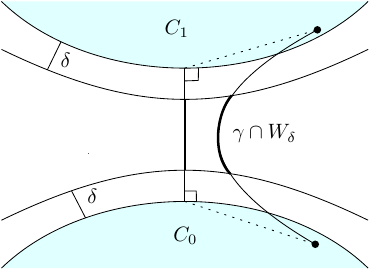}
\caption {The length of $\gamma$  outside a neighborhood of convex sets $C_0,C_1$ containing its endpoints is approximately the distance between $C_0$ and $C_1$.}
\label {lengthquad}	
\end {figure}

 In fact, using the notation of Definition \ref{relative length}, the  following lemma implies that the length of $e$  relative to $Y$ does not depend so sensitively on the geodesic $\tilde e$, but just on the positions of the adjacent components of $\pi^{-1}(Y) \subset\BH^3$.
 
   \begin {lem}
   \label{convexlem}
   Let $C_0,C_1$ be convex sets in $\BH^3$, let $\gamma$  be a geodesic with endpoints $v_i\in C_i$, where $i=0,1$, and let $$W_\delta =  \BH^3 \setminus (\CN_\delta(C_0) \cup \CN_\delta(C_1) ) .$$
   Then $\length (\gamma \cap W_\delta)$ is within some   $D=D(\delta)$ of $d_{\BH^n}(C_0,C_1).$\end{lem}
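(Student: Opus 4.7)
The plan is to establish two bounds separately: $\length(\gamma \cap W_\delta) \geq d(C_0,C_1) - 2\delta$ (easy) and $\length(\gamma \cap W_\delta) \leq d(C_0,C_1) + D(\delta)$ (the hard direction). First, since $\CN_\delta(C_0)$ and $\CN_\delta(C_1)$ are convex subsets of $\BH^n$ (being $\delta$-neighborhoods of convex sets), each intersects the geodesic $\gamma$ in a (possibly empty) subsegment. Because $v_0 \in C_0 \subset \CN_\delta(C_0)$, the intersection $\gamma \cap \CN_\delta(C_0)$ is a subsegment $[v_0, a]$ for some exit point $a \in \partial \CN_\delta(C_0)$, and similarly $\gamma \cap \CN_\delta(C_1) = [b, v_1]$. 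If these two sub-segments overlap (which can only happen when $d(C_0,C_1)\leq 2\delta$), then $\gamma \cap W_\delta = \emptyset$ and the conclusion holds trivially with $D = 2\delta$. Otherwise $\gamma \cap W_\delta = [a,b]$ is a single segment.

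For the lower bound, the triangle inequality yields
\[
  d(C_0, C_1) \;\leq\; d(a, C_0) + d(a, b) + d(b, C_1) \;=\; 2\delta + d(a,b),
\]
so $\length(\gamma\cap W_\delta) = d(a,b) \geq d(C_0,C_1) - 2\delta$.

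For the upper bound, I would introduce the projected feet $p_0 = \pi_{C_0}(a) \in C_0$ and $p_1 = \pi_{C_1}(b) \in C_1$, so that $[p_0,a]$ meets $C_0$ perpendicularly with $d(p_0,a) = \delta$, and similarly at $p_1$. Since $v_0 \in C_0$, the triangle $v_0 p_0 a$ has a right angle at $p_0$, and hyperbolic Pythagoras gives $\cosh d(v_0,a) = \cosh d(v_0,p_0)\cosh\delta$, so $d(v_0,a) \geq d(v_0,p_0)$; similarly $d(b,v_1)\geq d(v_1,p_1)$. Combining the decomposition $T = d(v_0,a)+d(a,b)+d(b,v_1)$ with the triangle inequality $T \leq d(v_0,p_0) + d(p_0,p_1) + d(p_1,v_1)$ yields $d(a,b) \leq d(p_0,p_1)$. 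The triangle inequality through $a,b$ gives the reverse: $d(p_0,p_1)\leq 2\delta+d(a,b)$. Hence $d(a,b)$ and $d(p_0,p_1)$ agree up to $2\delta$, so it suffices to show $d(p_0,p_1) \leq d(C_0,C_1) + D'(\delta)$.

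The hard part — the main obstacle — is bounding $d(p_0,p_1)$ from above. My plan is to show that $p_0$ lies within a bounded hyperbolic distance (depending only on $\delta$) of the foot $x_0 \in C_0$ of a shortest segment $\sigma = [x_0,x_1]$ realizing $d(C_0,C_1) = \ell$, and similarly that $d(p_1,x_1) \leq D''(\delta)$. The intuition is: at $a$, the tangent to $\gamma$ makes angle $\geq \pi/2$ with the inward radial direction $[a,p_0]$ (because $\gamma$ exits $\CN_\delta(C_0)$ there), while simultaneously $\gamma$ points toward $C_1$; these two constraints force $p_0$ to lie in a bounded neighborhood of $x_0$. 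Concretely, comparing $\length\gamma \leq d_{C_0}(v_0,x_0) + \ell + d_{C_1}(x_1,v_1)$ (by going via the piecewise-geodesic path through $\sigma$) with the lower bounds $d(v_0,a) \geq d(v_0,p_0)$ etc.\ derived from the right-triangle identity, rearranging yields $d(a,b) \leq \ell + [d(v_0,x_0)-d(v_0,p_0)] + [d(v_1,x_1)-d(v_1,p_1)] \leq \ell + d(p_0,x_0) + d(p_1,x_1)$, so bounding $d(p_i, x_i) \leq D''(\delta)$ closes the argument.

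To prove $d(p_0,x_0)\leq D''(\delta)$, I would apply the hyperbolic law of cosines to the triangles $p_0 a \pi_1(a)$ and $p_0 x_0 x_1$, using that $\cos$ of the angle at $a$ between $[a,p_0]$ and $[a,\pi_1(a)]$ is close to $-1$ (since these are nearly-opposite perpendicular-like directions), while at $x_0$ the angle is exactly $\pi$; a quantitative comparison, combined with the contraction property of nearest-point projection outside $\CN_\delta(C_0)$ (which is $1/\cosh\delta$-Lipschitz there), gives the required bound. Once $d(p_0,x_0), d(p_1,x_1) \leq D''(\delta)$ are in hand, we conclude
\[
d(p_0,p_1) \;\leq\; d(p_0,x_0) + \ell + d(x_1,p_1) \;\leq\; \ell + 2D''(\delta),
\]
and then $d(a,b) \leq d(p_0,p_1) \leq \ell + 2D''(\delta)$, completing the proof with $D(\delta) = \max\{2\delta,\, 2D''(\delta)\}$. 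I expect the projection-contraction step to be routine, while the angle-comparison argument bounding $d(p_0,x_0)$ will be the most delicate piece of hyperbolic trigonometry.
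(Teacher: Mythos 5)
Your lower bound and the reduction $d(a,b)\le d(p_0,p_1)$ are fine (one small caveat: the angle of $[a,p_0]$ with $[p_0,v_0]$ is only $\ge\pi/2$, not exactly $\pi/2$, so hyperbolic Pythagoras should be the law-of-cosines inequality $\cosh d(v_0,a)\ge\cosh d(v_0,p_0)\cosh\delta$ --- but the consequence $d(v_0,a)\ge d(v_0,p_0)$ you actually use survives). The genuine gap is the central claim of your upper bound: it is \emph{false} that $p_0=\pi_{C_0}(a)$ lies within a distance $D''(\delta)$ of the foot $x_0$ of a shortest segment between $C_0$ and $C_1$. Here is a counterexample. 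Work in a totally geodesic $\BH^2\subset\BH^3$, let $C_0$ be a geodesic line $L$, and let $C_1$ be the geodesic segment $[q_1,q_2]$, where $q_1,q_2$ lie at distance $100$ from $L$ over feet $f_1,f_2\in L$ with $d(f_1,f_2)=2R$ enormous. Since the segment $[q_1,q_2]$ fellow-travels $L$, it dips to within distance $o(1)$ of $L$ near the midpoint $m$ of $[f_1,f_2]$, so $x_0\approx m$ and $x_1$ is near the middle of the segment. Now take $v_0\in L$ far beyond $f_1$ (on the opposite side from $f_2$) and $v_1=q_1$. The geodesic $\gamma$ runs along $L$ and only reaches distance $\delta$ from $L$ at a point $a$ whose projection $p_0$ lies within a bounded distance of $f_1$; hence $d(p_0,x_0)\approx R\to\infty$, and likewise $p_1$ is near $q_1$ while $x_1$ is near the middle, so $d(p_1,x_1)\approx R$ as well. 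The conclusion of the lemma still holds in this example --- but only because $p_0$ and $p_1$ are close \emph{to each other}, not to $x_0,x_1$ --- so your plan of closing the argument via $d(p_0,p_1)\le d(p_0,x_0)+\ell+d(x_1,p_1)\le\ell+2D''(\delta)$ collapses. The intuition you cite ("$\gamma$ points toward $C_1$, so $p_0$ must be near $x_0$") conflates pointing toward $v_1$ with pointing toward the nearest point $x_1$; when $C_1$ is a large convex set these can be totally different directions, and no amount of trigonometry at $a$ will localize $p_0$ near $x_0$. Note also that bounding $d(p_0,p_1)$ by $\ell+D'$ is, up to $2\delta$, equivalent to the statement you are trying to prove, so the reduction itself buys nothing unless the localization claim were true.

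The paper's proof avoids any attempt to locate exit points relative to $x_0,x_1$: it takes the minimizing segment $\alpha=[x_0,x_1]$, forms the geodesic quadrilateral with sides $\gamma$, $\alpha$, $[v_0,x_0]\subset C_0$ and $[x_1,v_1]\subset C_1$, and uses $4$-thinness of quadrilaterals to conclude that $\gamma\cap W_8$ and $\alpha\cap W_8$ lie at bounded Hausdorff distance (any point of $\gamma\cap W_8$ must be $4$-close to $\alpha$, since being $4$-close to the sides in $C_0$ or $C_1$ is excluded). Since both are geodesic segments, their lengths then differ by a bounded amount, and Lemma \ref{convlem1} (length decay of geodesics trapped in $\CN_b(C)\setminus\CN_a(C)$) handles both the passage from $\delta$ to the fixed value $8$ and the comparison of $\length(\alpha\cap W_8)$ with $d(C_0,C_1)$. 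If you want to salvage your outline, the step that must be replaced is precisely the localization of $p_0,p_1$; comparing $\gamma$ with $\alpha$ via thinness is the standard and essentially unavoidable move here.
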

  \begin {proof}
  Let $\alpha$ be a geodesic minimizing the distance between $C_0$ and $C_1$, and form the geodesic quadrilateral where $\alpha,\gamma$ are opposite sides. The other two sides  are contained in $C_0$ and $C_1$, respectively, so as geodesic quadrilaterals in $\BH^3$ are $4$-thin, we have that $$\alpha \cap W_8 \subset \CN_4(\gamma \cap W_4), \ \ \gamma \cap W_8 \subset \CN_4(\alpha \cap W_4).$$
But by Lemma \ref{convlem1}, the  intersections of $\alpha $ and $\gamma$  with $ W_4 \setminus W_8$ have length at most $17$, so this means that the Hausdorff distance between $\alpha \cap W_8$ and $\gamma \cap W_8$  is at most $2 \cdot 17 +4=38$. See Figure \ref{relative length}. So,\begin{align*}
	38 \cdot 2 & \geq 	|\length (\gamma \cap W_8) -\length (\alpha \cap W_8)| \\
	&\geq 	|\length (\gamma \cap W_8) - d(C_0,C_1) - 16 |,
	\end{align*}
 which proves the theorem for $\delta=8$, and the result for arbitrary $\delta$ follows  from  Lemma \ref{convlem1}.
  \end {proof}
 
%
%
%

 The following corollary of Lemma \ref{convexlem}  will be used in \S \ref{shortgraphssec}.
 
\begin {kor}[Edge homotopies]\label {edge homotopies}
Suppose that $M$  is a complete hyperbolic $3$-manifold, $f : X \longrightarrow M$ is a carrier graph with geodesic edges, $Y \subset X$  is a subgraph and $e=(v,w) \subset X \setminus Y$, with $v\in Y$. Let 
$$g : X \longrightarrow M$$
 be a carrier graph such that $g=f$ on $X \setminus e$ and $g|_e$ is a geodesic that is homotopic rel endpoints to $f|_{\gamma  \cdot e}$, where $\gamma \cdot e$ is the concatenation of a loop $\gamma$ in $Y$ based at $v$ and the edge $e$. Then for some uniform $D$, 
$$|\length_Y^f(e) - \length_Y^g(e)| \leq D.$$Moreover, we also have that 
$$|(\length f(e) - CHL_Y^f(e,v)) - (\length g(e) - CHL^g_Y(e,v))| \leq D. $$
\end{kor}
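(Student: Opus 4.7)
The plan is to work in the universal cover $\BH^3$ of $M$. Choose a lift $\tilde{v}_X \in \tilde{X}$ of $v$, a lift $\tilde{e} \subset \tilde{X}$ of $e$ starting at $\tilde{v}_X$ (other endpoint $\tilde{w}_X$), and lifts $\tilde{f}, \tilde{g} : \tilde{X} \to \BH^3$ of $f, g$ with $\tilde{f}(\tilde{v}_X) = \tilde{g}(\tilde{v}_X) =: \tilde{v}$. Let $\tilde{Y}_0$ be the component of $\pi^{-1}(Y) \subset \tilde{X}$ containing $\tilde{v}_X$. Since $f=g$ on $X \setminus e \supset Y$ and the lifts agree at $\tilde{v}_X$, they agree on $\tilde{Y}_0$, so $C_0 := CH(\tilde{f}(\tilde{Y}_0)) = CH(\tilde{g}(\tilde{Y}_0))$. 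Set $\tilde{w}_f := \tilde{f}(\tilde{w}_X)$. Lifting the homotopy $g|_e \sim \gamma \cdot f|_e$ rel endpoints to $\BH^3$ shows that $\tilde{g}(\tilde{e})$ is the geodesic from $\tilde{v}$ to $h \cdot \tilde{w}_f$, where $h \in \pi_1(M)$ represents the loop $\gamma$. The crucial observation is that since $\gamma \subset Y_0$, one has $h \in f_*(\pi_1(Y_0, v))$, so $h$ stabilizes $C_0$.

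For part~2, both $\length f(e) - CHL^f_Y(e,v)$ and $\length g(e) - CHL^g_Y(e,v)$ are the lengths of the portions of $\tilde{f}(\tilde{e})$ and $\tilde{g}(\tilde{e})$ lying outside $\CN_1(C_0)$. Applying Lemma~\ref{convexlem} with $C_0$ and the singletons $\{\tilde{w}_f\}$ and $\{h\tilde{w}_f\}$ (whose $1$-neighborhoods contribute at most $1$ to each quantity), these equal $d(C_0, \tilde{w}_f)$ and $d(C_0, h\tilde{w}_f)$ respectively, up to a uniform additive constant. Since $h$ stabilizes $C_0$, $d(C_0, h\tilde{w}_f) = d(h^{-1} C_0, \tilde{w}_f) = d(C_0, \tilde{w}_f)$, proving part~2.

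For part~1, the case $w \notin Y$ is immediate from part~2 since the $CHL$ at $w$ vanishes. Otherwise, let $\tilde{Y}_1$ be the component of $\pi^{-1}(Y)$ containing $\tilde{w}_X$ and set $C_1 := CH(\tilde{f}(\tilde{Y}_1))$. Since $\tilde{g}$ and $\tilde{f}$ lift the same map on $X \setminus e$ and $\tilde{g}(\tilde{w}_X) = h \tilde{w}_f = h \cdot \tilde{f}(\tilde{w}_X)$, they differ by $h$ on the path-component of $\pi^{-1}(X \setminus e)$ containing $\tilde{w}_X$ (which contains $\tilde{Y}_1$), hence $CH(\tilde{g}(\tilde{Y}_1)) = h \cdot C_1$. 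The key observation is that $\length^f_Y(e)$ equals the length of $\tilde{f}(\tilde{e})$ lying outside $\CN_1(C_0) \cup \CN_1(C_1)$: when these $1$-neighborhoods meet disjoint subsegments of the geodesic this is immediate from the definition, while in the overlap case both $CHL^f_v + CHL^f_w \geq \length \tilde{f}(\tilde{e})$ and the outside-length are zero. Lemma~\ref{convexlem} then gives $\length^f_Y(e) \approx d(C_0, C_1)$ and, analogously, $\length^g_Y(e) \approx d(C_0, h \cdot C_1) = d(C_0, C_1)$, both up to uniform additive constants.

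The main obstacle is the overlap case in part~1: one must verify that the $\max\{0, \cdot\}$ in the definition of relative length agrees with the cleaner ``length outside the union of $1$-neighborhoods'' formula, so that Lemma~\ref{convexlem} applies uniformly in both regimes. Once this identification is made, the $h$-invariance of $C_0$ reduces the entire argument to a single distance computation in $\BH^3$.
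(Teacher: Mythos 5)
Your proof is correct and takes essentially the same route as the paper: lift to $\BH^3$, identify the relative length (and the quantity in part two) with the length of the lifted geodesic outside the $1$-neighborhoods of the relevant convex hulls, and apply Lemma \ref{convexlem} with $\delta=1$, noting that passing from $f$ to $g$ fixes the hull $C_0$ at $v$ and moves the data at $w$ by a deck transformation $h$ stabilizing $C_0$, so the distances are unchanged. The paper's own (much terser) proof records exactly this invariance of the relevant components/hulls and cites Lemma \ref{convexlem} for both parts; your write-up just makes the $h\cdot C_1$ bookkeeping and the $\max\{0,\cdot\}$ reconciliation explicit.
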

\begin {proof}
For concreteness, let's assume that $w\in Y$ as well. The other case is easier. Suppose $\pi : \tilde X \longrightarrow X$ is the universal cover, that an edge $\tilde e \subset \tilde X$ has vertices $\tilde v,\tilde w$, and $\tilde Y_v,\tilde Y_w$ are the component of $\pi^{-1}(Y)$ that contains $\tilde v,\tilde w$. Construct lifts $\tilde f,\tilde g: \tilde X \longrightarrow \BH^3$ that agree on $\tilde v$. Then $\tilde g = \tilde f$ on $\tilde Y_v$, while $\tilde g = \gamma \circ \tilde f$ on $\tilde Y_w$, where abusively we identify the loop $\gamma$ with the deck transformation of $\BH^3 \longrightarrow M$ that takes $\tilde f(\tilde v)$ to the terminal endpoint of the lift of that loop starting at $\tilde f(\tilde v)$. In particular, since $\gamma$ is a loop in $Y$, we have an equality of images $\tilde g(\tilde Y_w) = \tilde f(\tilde Y_w)$. So, the  first part of corollary follows from Lemma \ref{convexlem} with $\delta=1$.  The second part also follows from Lemma~\ref{convexlem} if we take $C_0$ to be the convex hull of $\tilde Y_v$  and $C_1=\{\tilde f(\tilde w)\}.$
\end {proof}

\section{Algebraic and geometric limits} \label {convergence section}
 In this section, we discuss algebraic and geometric limits of sequences of representations $\rho_n : \Gamma \longrightarrow \PSL_2\BC$.  The results  here will be applied to representations of free groups coming from bounded length graphs in  hyperbolic $3$-manifolds.  So while the majority of the available literature on algebraic and geometric limits concerns \emph{faithful} representations, it is crucial that our results hold more generally.

 There are three parts below.  First, we recall some well-known facts about geometric and algebraic convergence, and describe how to extract such limits from sequences of hyperbolic $3$-manifolds with bounded length carrier graphs. Second, we recall and extend some of our work from \cite{Biringeralgebraic}, where we compared algebraic and geometric limits of sequences of (unfaithful) representations and gave a criterion that ensures they coincide, i.e.\  that the convergence is \emph{strong}.  Finally,  we use techniques from \cite{Biringeralgebraic} and  Theorem~\ref{double compression body theorem} to   understand quite precisely the topology of strong limits.

\subsection{Geometric convergence}

A sequence $(G_i)$ of closed subgroups of $\PSL_2\BC$ converges {\em geometrically} to a subgroup $G < \PSL_2\BC$ if every $g\in G$ is the limit of a sequence $g_i\in G_i$, and every accumulation point in $\PSL_2\BC$ of every sequence $g_i \in G_i$ lies in $G$. The induced topology on the space of closed subgroups of $\PSL_2\BC$ is called the \emph{Chabauty topology}, see e.g.\ \cite[Ch E]{Benedettilectures} or \cite{Canarynotes}. It is well-known that the Chabauty topology is compact and metrizable, see e.g.\ \cite[Lemma E.1.1]{Benedettilectures}.

Most of our arguments are based on an interpretation of geometric convergence in terms of the quotient manifolds $\BH^ 3 / G_i $. Below, a \emph{pointed hyperbolic manifold} is a manifold together with a base point.

\begin{defi}\label {geolimits}
A sequence $(M_i,p_i)$ of pointed hyperbolic 3-manifolds converges {\em geometrically} (or \emph{smoothly}) to a pointed manifold $(M_\infty,p_\infty)$ if there is a sequence $(K_i)$  of compact sets with $K_i \subset int(K_{i+1})$ and $\cup_i K_i=M_\infty$ such that for large $i$, there are smooth embeddings $$\phi_i:K_i \longrightarrow M_i, \ \ \phi_i(p_\infty)=p_i, $$ such that $(\phi_i)^* g_i \to g_\infty$ in the $C^\infty$ topology, where $g_i$ is the Riemannian metric tensor on $M_i$.  We will refer to the maps $\phi_i$ as the {\em almost isometric embeddings provided by geometric convergence}. 
\end{defi}

When $A \subset M_\infty$  is compact, we will often talk about {the image $\phi_i(A)$,} understanding implicitly that this only makes sense for large $i$. Relatedly, in notation we will often suppress the domains $K_i$ of the $\phi_i$, since in applications the precise domains are unimportant, and we will almost always be restricting the $\phi_i$ to some specified compact set.

\begin{fact}[Bilipschitz embeddings]
Suppose $(M_i,p_i) \to (M_\infty,p_\infty)	$ geometrically and $(\phi_i)$ is the associated sequence of almost isometric embeddings. If $B \subset M_\infty$ is compact and $\lambda >1$ is given, then for large $i$, the restrictions $\phi_i |_B$ are $\lambda$-bilipschitz. 
\end{fact}
\begin{proof}
The fact that for $\lambda>1$, the $\phi_i$ are \emph {locally} $\lambda $-bilipschitz for large $i$ follows immediately from the $C^\infty$-convergence $(\phi_i)^* g_i \to g_\infty$. 
To upgrade this to a global statement, it suffices to assume that $B$ is a closed metric ball of some radius $R$ around $p_\infty$. Let $B'$ be the ball of radius $3R+1$ around $p_\infty$. The length of any path from $B$ to $\partial B'$ in $M_\infty$ is at least $2R+1$, while any two points in $B$ are joined by a path in $B$ of length at most $2R$. For any given $\lambda>1$, for large $i$ the restrictions $\phi_i|_{B'}$ are locally $\lambda$-bilipschitz, so as long as $\lambda$ is close to $1$, any near-minimal length path between two points of $\phi_i(B)$ has to be contained in $\phi_i(B')$. From this it follows that $\phi_i|_{B}$ is $\lambda$-bilipschitz, not just locally so.
\end{proof}

Fixing some $p \in \BH^3$, every discrete, torsion free subgroup $G < \PSL_2\BC$ gives a pointed hyperbolic $3$-manifold $M=\BH^3 / G$, where we take the projection of $p$ as the basepoint. This gives a surjection
$$\{ \text {discrete, torsion free } G \} \longrightarrow \{ \text{pointed } (M, p_M) \}/\text{pointed isometry},$$
 where the fiber containing $G $ consists of all groups $\gamma G \gamma^{-1}$, where  $\gamma $ is a hyperbolic isometry fixing $p$. We then have the following relationship between geometric convergence of groups and manifolds:

\begin{prop}\label{convergence-convergence-geom}
Let $G_1,G_2,\dots,G_\infty$ be discrete and torsion-free subgroups of $\PSL_2\BC$ and consider for all $i=1,\dots,\infty$ the pointed hyperbolic 3-manifold $(M_i,p_i)$ where $M_i=\BH^3/G_i$ and $p_i$ is the projection of a fixed point $p \in{\BH^3}$. Then the following hold. 
\begin{enumerate}
	\item If $G_i$ converges geometrically to $G_\infty$, then $(M_i,p_i)$ converges geometrically to $(M_\infty,p_\infty)$.
	\item If $(M_i,p_i)$ converges geometrically to $(M_\infty,p_\infty)$, then there are  elements $\gamma_i \in \PSL_2\BC$ with $\gamma_i(p)=p$ such that  the conjugates $\gamma_i G \gamma_i^{-1} $  converge geometrically to $ G$.
\end{enumerate}
Moreover, if $G_i \to G_\infty$ and $M_i \to M_\infty$ one can take the almost isometric maps $\phi_i$  provided by geometric convergence to satisfy
\begin {equation}
	(\phi_i)_*(\gamma)\to \gamma \text{ in } \PSL_2\BC, \label{compatibility}
\end {equation}
for any $\gamma \in G_\infty$. Here, for $i=1,2,\ldots,\infty$ we identify $G_i$ with $\pi_1(M_i,p_i)$ by taking $g\in G_i$ to  be the projection to $M_i$ of a path in $\BH^3$ from $p$ to $g(p)$, so  there is a map $(\phi_i)_* : G_\infty \longrightarrow G_i$  above.

\end{prop}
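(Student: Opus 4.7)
The plan is to translate between isometries of $\BH^3$ and their manifold quotients by carefully bookkeeping lifts of the basepoint, leveraging the fact that for discrete $G < \PSL_2\BC$ the covering map $\BH^3 \to \BH^3/G$ is locally isometric.

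For part (1), I would fix a compact $K \subset M_\infty$ containing $p_\infty$, lift to a compact $\tilde K \ni p$ in $\BH^3$, and pick $R$ with $\tilde K \subset B_R(p)$. The finite set $S_R = \{g \in G_\infty : g(B_R(p)) \cap B_R(p) \ne \emptyset\}$ controls the local structure of $M_\infty$ near $K$: by geometric convergence, for large $i$ each $g \in S_R$ has a unique nearby $g_i \in G_i$, and the complete list of $h \in G_i$ with $h(B_R(p)) \cap B_R(p) \ne \emptyset$ is exactly $\{g_i\}_{g \in S_R}$. Define $\phi_i : K \to M_i$ by lifting to $\tilde K$ and reprojecting via $\pi_i : \BH^3 \to M_i$; this is a local isometry, and injectivity on $K$ for large $i$ follows because a coincidence $\phi_i(x) = \phi_i(y)$ produces $g_i \in G_i$ with $g_i \tilde x = \tilde y$, forcing $g_i$ to lie in the finite list above, so $\tilde y = g \tilde x$ for the corresponding $g \in G_\infty$ and hence $x = y$ already in $M_\infty$. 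Applying this to an exhaustion $K_n \uparrow M_\infty$ yields the desired $\phi_i$.

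For part (2), I lift the almost-isometric maps to $\tilde \phi_i : \tilde K_i \to \BH^3$ with $p \in \tilde K_i$, set $q_i := \tilde \phi_i(p)$, and choose $\gamma_i \in \PSL_2\BC$ taking $q_i$ to $p$, using the rotational freedom in this choice (the stabilizer of $p$ being three-dimensional) to arrange that the differential $d(\gamma_i \circ \tilde \phi_i)|_p$ is within $\epsilon_i$ of the identity. Since the maps $\gamma_i \circ \tilde \phi_i$ are uniformly $(1+\epsilon_i)$-bilipschitz and fix $p$, Arzelà--Ascoli provides subsequential limits, and any such limit is an isometry of $\BH^3$ fixing $p$ with identity differential at $p$, hence equals $\Id$; so $\gamma_i \circ \tilde \phi_i \to \Id$ uniformly on compacta. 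After replacing $G_i$ by $\gamma_i G_i \gamma_i^{-1}$, the covering identity $\pi_i \circ \tilde \phi_i = \phi_i \circ \pi_\infty$ holds on $\tilde K_i$, and I verify both inclusions defining $G_i \to G_\infty$ geometrically: for $g \in G_\infty$, the identity forces $\tilde \phi_i \circ g = g_i \circ \tilde \phi_i$ for some $g_i \in G_i$ on the connected lift $\tilde K_i$, so $g_i = \tilde \phi_i \circ g \circ \tilde \phi_i^{-1} \to g$ in $\PSL_2\BC$; conversely, if $g_i(p) \to y$, then $\tilde \phi_i^{-1}(g_i(p))$ is a lift of $p_\infty$ converging to $y$, which by discreteness of $G_\infty$ must stabilize at some $g(p)$ with $g \in G_\infty$.

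The \emph{moreover} clause then falls out of the same setup: when $\pi_1(M_i, p_i)$ is identified with $G_i$ by lifting loops to paths starting at $p$, the pushforward $(\phi_i)_*(\gamma)$ is the element whose action on $\BH^3$ sends $p$ to $\tilde \phi_i(\gamma \cdot p)$, and the conjugation identity $(\phi_i)_*(\gamma) = \tilde \phi_i \circ \gamma \circ \tilde \phi_i^{-1}$ combined with $\tilde \phi_i \to \Id$ yields $(\phi_i)_*(\gamma) \to \gamma$ in $\PSL_2\BC$. The main obstacle I anticipate is the coherent choice of lifts $\tilde K_i$, basepoint preimages $q_i$, and corrective isometries $\gamma_i$ so that the Arzelà--Ascoli limit of $\gamma_i \circ \tilde \phi_i$ is genuinely forced to be $\Id$ on the full sequence, and so that the covering-space identity passes cleanly into the conjugation relation driving all later convergence statements. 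Once this bookkeeping is in place, the remaining arguments are routine discreteness and local-compactness manipulations in $\PSL_2\BC$ acting on $\BH^3$.
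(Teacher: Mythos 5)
Your overall strategy is the standard one (the paper gives no argument of its own here; it cites Benedetti--Petronio and Kapovich, whose proofs follow the same outline you propose), but as written part (1) has a genuine gap at its central step. The map you define by ``lift $x\in K$ to $\tilde x\in\tilde K$ and reproject by $\pi_i$'' is not a map once $K$ has nontrivial topology: no continuous section of $\pi_\infty$ over such a $K$ exists, and if $\tilde x,\,g\tilde x\in\tilde K$ are two lifts of the same point ($g\in G_\infty$), then $\pi_i(\tilde x)\neq\pi_i(g\tilde x)$ in general, because the identification available in $M_i$ is by the nearby element $g_i\in G_i$ rather than by $g$ itself. So $\phi_i$ is not well defined (let alone a local isometry), and your injectivity argument addresses a different, easier issue. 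The real content of the cited proofs is exactly the missing correction: the two candidate values differ by $d(\tilde x, g_i^{-1}g\tilde x)=O(\epsilon_i)$, and one must interpolate/smooth near the locus where the $g$- and $g_i$-identifications disagree (e.g.\ along the boundary of a Dirichlet domain, or by an equivariant averaging) to produce an honest smooth $(1+\epsilon_i)$-bilipschitz map as required by Definition \ref{geolimits}. Without that step part (1) does not go through.

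In part (2) the forward inclusion and the rotation-correction via Arzel\`a--Ascoli are fine in spirit, but two points need repair. First, the converse inclusion is incomplete: from $g_i(p)\to y$ and discreteness of the orbit you only get $y=g(p)$ for some $g\in G_\infty$, i.e.\ the limit $h$ of the $g_i$ satisfies $h(p)=g(p)$; since the stabilizer of $p$ in $\PSL_2\BC$ is three-dimensional this does not yield $h\in G_\infty$. The standard fix is to compare on an open set: $F_i:=\tilde\phi_i^{-1}\circ g_i\circ\tilde\phi_i$ satisfies $\pi_\infty\circ F_i=\pi_\infty$ on a ball about $p$, hence coincides there with a single element of $G_\infty$, and since $F_i\to h$ uniformly on that ball, discreteness forces the element to stabilize and $h\in G_\infty$. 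Second, be careful where the lifts $\tilde\phi_i$ live: $\pi_\infty^{-1}(K_i)$ need not be connected, and $\phi_i\circ\pi_\infty$ need not lift over it at all (a loop nullhomotopic in $M_\infty$ need not have $\phi_i$-image nullhomotopic in $M_i$); the safe route is to lift over metric balls $B(p,R)\subset\BH^3$, which are simply connected, and run the deck-transformation comparison $\tilde\phi_i\circ\gamma=g_i\circ\tilde\phi_i$ there. With these repairs, and with the ``moreover'' clause obtained most cleanly from the explicit construction in part (1) (so that the lift fixing $p$ is automatically close to the identity, rather than conjugating the groups $G_i$), your argument becomes the standard proof the paper is quoting.
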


This result is well known, and something like it appears in Theorem E.1.13 of \cite{Benedettilectures}. To get the exact statement above, one can apply Lemma 3.7 and Proposition 3.10 of \cite{abert2017growth}, compare with Proposition 2.9 in \cite{Abertunimodular}, noting that the conjugating elements in (2) are necessary because here we are using pointed manifolds instead of framed manifolds.

We will also need the following well-known compactness criterion, which follows from the proposition above and \cite[Theorem E.1.10]{Benedettilectures}. 

\begin{fact}[Compactness]\label{geomcompact}
Suppose that $(M_i,p_i)$ is a sequence of complete hyperbolic $3$-manifolds such that $\inj_{M_i}(p_i)\geq \epsilon$, where $\epsilon >0$ is independent of $i$. Then there is a subsequence of $(M_i,p_i)$ the converges geometrically to a complete hyperbolic $3$-manifold $(M_\infty,p_\infty)$. Moreover, if all the $M_i$ are $\epsilon$-thick, so is $M_\infty$.
\end{fact}

For the `moreover' statement, one can either use the fact that injectivity radius is continuous with respect to geometric convergence of manifolds, see \cite{Erlichcontinuity}, or (more directly) one can view the injectivity radius criterion in the universal cover and use the perspective of geometric convergence of subgroups of $\PSL_2 \BC$.

It will be convenient below to be able to pull back minimizing SRSs in a geometric limit to minimizing SRSs in the approximating manifolds, at least after a small homotopy.

\begin{fact}[Pulling back minimizing SRSs]\label{pullback minimizing}
	Suppose that $(M_i,p_i) \to (M_\infty,p_\infty)$ geometrically, $(\phi_i)$ are the associated almost isometric embeddings,  $ f : (S,\CT) \longrightarrow M_\infty$ is a minimizing SRS with non-elementary $\pi_1$-image, and $U \subset M_\infty$ is a precompact open neighborhood of $f(S)$.
	 Then for large $i$, the map $\phi_i \circ f : (S,\CT) \longrightarrow M_i $ has non-elementary $\pi_1$-image, and is homotopic within $\phi_i(U)$ to a minimizing SRS.
\end{fact}
\begin{proof}
	First, we show that $\phi_i \circ f$ has non-elementary $\pi_1$-image. Pick closed curves $\alpha,\beta$ in $S$ such that $f(\alpha),f(\beta)$ are homotopic in $M_\infty$ to distinct closed geodesics $\hat \alpha,\hat \beta$. Let $\epsilon>0$ be less than a third of the Hausdorff distance between $\hat \alpha,\hat \beta$. For large $i$, the curves $\phi_i \circ f(\alpha) $ and $\phi_i \circ f(\beta)$ are homotopic to $\phi_i(\hat \alpha)$ and $\phi_i(\hat \beta)$, which are loop in $M_i$ with arbitrarily small geodesic curvature, that lie at Hausdorff distance bigger than $2\epsilon$ from each other. By Lemma 1 in the appendix of \cite{leininger2006small}, say, for large $i$ each of these can be homotoped to a closed\footnote{Technically, the cited reference is only about homotopies of paths rel endpoints, so only produces a path in $M_i$ that is geodesic except for one corner. However, one can apply the cited Lemma twice to say that the angle at the corner must be close to $\pi$ for large $i$, in which case the homotopic closed geodesic also lies at small Hausdorff distance.} geodesic in $M_i$ that lies within a Hausdorff distance of $\epsilon$ of the original loop. So, the Hausdorff distance between the two resulting closed geodesics is positive, and $\phi_i \circ f$ has non-elementary image.
	
	We now want to homotope $\phi_i \circ f$ to a minimizing SRS. Pick some $\delta>0$ such that $\CN_{10 \cdot \delta}(f(S)) \subset U$, say. First, we show how to homotope $ \phi_i \circ f$ in $M_i$ rel the vertices of $\CT$ to a pre-simplicial ruled surface $g_i$ via a homotopy  with tracks of length at most $4\delta$. 
	
	For each edge $e$ of $\CT$, the map $f|_e$ is a constant speed geodesic, so for large $i$ the map $\phi_i \circ f|_e$ is a $\lambda$-quasigeodesic with $\lambda \approx 1$. By Lemma 1 in the appendix of \cite{leininger2006small}, say, the image $\phi_i \circ f(e)$ lies in the $\delta$-neighborhood of the geodesic connecting its endpoints, and since $\phi_i \circ f|_e$ is a $\lambda$-quasigeodesic for $\lambda\approx 1$, we can even assume for large $i$ that the straight-line homotopy\footnote{Really, all of this discussion should happen in the universal cover of $M_i$.} from $\phi_i \circ f|_e$ to the constant speed geodesic joining its endpoints has tracks with length at most $2\delta$. Doing this for every edge defines a map $g_i : \CT^1 \longrightarrow M_i$ with constant speed geodesic edges that is homotopic to $\phi_i \circ f|_{\CT^1}$ via a homotopy with tracks of length at most $2\delta$.
	
	 Next, take a triangle $\Delta$ of $\CT$ with ruling vertex $v$ and a point $p$ on the opposite edge, and let $[v,p ] \subset \Delta$ be the geodesic from $v$ to $p$. Then $f|_{[v,p]}$ is a constant speed parameterized geodesic, so the same argument as above says that $\phi_i \circ f$ is homotopic rel endpoints to a constant speed geodesic, via a straight line homotopy with tracks of length at most $2\delta$. This geodesic can then be straight line homotoped with tracks of length at most $\delta$ to the nearby geodesic joining $g_i(v)$ to $g_i(p)$. Varying $\Delta,v,p$, we get a pre-simplicial ruled surface $g_i : (S,\CT) \longrightarrow M_i$ extending our previous definition of $g_i$ on $\CT^1$, that is homotopic to $\phi_i\circ f$ via a homotopy with tracks of length at most $4\delta$. 
	 
	 \medskip

Using Lemma \ref{minimizers}, find a homotopy $(g_i^t)$ with $g_i^0=g_i$ such that $g_i^1$ is a minimizing SRS in $M_i$, with respect to the same edge weights $\omega$ on the triangulation $\CT$ as with $f$. As in the proof of Fact \ref{uniquenessfact}, we may assume that the track $t \mapsto g_i^t(v)$ of each vertex $v$ of $\CT$ is a constant speed geodesic, and that each $g_i^t$ is presimplicial ruled, so that by convexity of the distance function of $M_i$, the function $t \mapsto L(g_i^t,\CT^1,\omega)$ is convex. In particular, since $g_i^1$ is minimizing, we have \begin{equation}
	L(g_i^t,\CT^1,\omega) \leq L(g_i,\CT^1,\omega) \leq L(\phi_i \circ f,\CT^1,\omega),\label{lengthboundhomotopy}
	\end{equation}
for all $t$, where the last inequality uses that on $\CT^1$, the map $g_i$ is obtained by straightening $\phi_i \circ f$ rel vertices.  

If for large $i$, the homotopies $g_i^t$ are all contained in the neighborhoods $\phi_i(U)$, we are done. Hoping for a contradiction, assume that for infinitely many $i$ this is not the case. If all vertices of $\CT$ move only a little bit under the homotopies $g_i^t$, then by convexity of the distance function so do all edges, and then so do all triangles. So under our assumption, for each $i$ there is some smallest $t_i \in [0,1]$ such that if $h_i := g_i^{t_i}$, we have $d(h_i(v),g_i(v)) \geq 5\delta$ for some vertex $v$ of $\CT$. By convexity of the distance function, our specially constructed homotopy $g_i^{t}$ has tracks of length at most $5\delta$ on $[0,t_i]$. In particular, this restricted homotopy is contained in the closed $9\delta$-neighborhood of $\phi_i\circ f(S)$, which is contained in $\phi_i\circ f(U)$ for large $i$. So, the maps $$\phi_i^{-1} \circ h_i : S \longrightarrow U \subset M_\infty$$ are well-defined. By \eqref{lengthboundhomotopy}, the fact that $h_i$ is presimplicial ruled, and the fact that $\phi_i$ is almost isometric, the maps $\phi_i^{-1} \circ h_i$ are uniformly lipschitz (say, with respect to an appropriate piecewise Euclidean metric) and all map into a bounded neighborhood of $f(S)$. So after passing to a subsequence, Arzela-Ascoli implies that they converge to a limit map $h : S \longrightarrow M_\infty$. The same Arzela-Ascoli argument also applies to the concatenation of the homotopies from $\phi_i \circ f$ to $g_i$, and from $ g_i$  to  $h_i$, since all these homotopies are constructed from straight-line homotopies of geodesics, and they all have tracks of small length. So, $h$ is homotopic to $f$ in $U \subset M_\infty$. Moreover, since the $\phi_i$ are almost isometric for large $i$, we have by \eqref{lengthboundhomotopy} that $$L(h,\CT^1,\omega) = \lim_i L(h_i,\CT^1,\omega) \leq \lim_i L(\phi_i \circ f,\CT^1,\omega) = L(f,\CT^1,\omega).$$
In other words, $h$ and $f$ both minimize $L(\cdot ,\CT^1,\omega)$. However, since we chose $h_i$ so that $d(h_i(v),g_i(v)) \geq 5\delta$ for some vertex $v$ of $\CT$, and we have $d(g_i(v),\phi_i\circ f(v)) \leq 4\delta$, it follows that $d(h_i(v),\phi_i \circ f(v)) \geq \delta$ for all $i$, and hence that $h(v)\neq f(v)$. This contradicts uniqueness of $L(\cdot ,\CT^1,\omega)$-minimizers on the $1$-skeleton, as stated in Fact \ref{uniquenessfact}.
\end{proof}

\subsection{Algebraic convergence}

Let $\Gamma$ be a finitely generated group. Recall that a sequence $(\rho_i)$ of representations $\rho_i:\Gamma\longrightarrow\PSL_2\BC$ converges {\em algebraically} to a representation $\rho$ if for every $\gamma\in\Gamma$ we have $\rho_i(\gamma)\to\rho(\gamma)$ in $\PSL_2\BC$.  When a sequence $ (\rho_i) $ converges algebraically to a representation $\rho$ and the images $\rho_i (\Gamma) $ converge geometrically to some group $G$, it is easy to see that $\rho(\Gamma) \subset G$. In other words, the algebraic limit quotient manifold $\BH^3/\rho(\Gamma)$ covers the geometric limit manifold $\BH^3/G$. 

All algebraic and geometric limits we  consider in this paper will be  constructed from sequences of manifolds that are marked by short carrier graphs.  In the following example, we  explain in detail how to extract such limits, and show that the carrier graphs `converge' to a carrier graph in the algebraic limit. We include this here so that later in the paper, we do not have to pollute our main arguments with references to base points and representations.

\begin {bei}[Algebraic limits via short carrier graphs] \label {limit example}  Suppose  that $(M_i)$   is a sequence of complete $\epsilon$-thick hyperbolic $3$-manifolds and that  for each $i $, we have a carrier graph $$f_i : X \longrightarrow M_i$$
where $X$ is a fixed finite graph and $\sup_i \length f_i(X) < \infty.$ Then after picking a base point $ x \in X$ and universal covering maps $$(\BH^3,p) \longrightarrow (M_i,f_i(x))$$  taking a fixed $p\in \BH^3$ to $f_i(x)$, we get holonomy representations
$$\rho_i : \Gamma \longrightarrow \PSL_2\BC, \ \ \ \ \Gamma := \pi_1 (X,x), \ \ \ \ \BH^3 / \rho_i(\pi_1 (X,x)) \cong M_i$$
such that a path from $p$ to $\rho_i(\gamma) (p)$ in $\BH^3$ projects down to a loop in $\pi_1(M_i,f_i(x))$ representing $(f_i)_*(\gamma)$.

Each $f_i(X)$  has bounded length in $M_i$.  So for any fixed finite generating set for $\pi_1 (X,x) $, the image in $ \pi_1 (M,f_i(x))$ of each generator $\gamma$ can be represented by a loop with bounded length. Hence, the isometries $f_i(\gamma)$ will all translate $p \in \BH^3$ a bounded amount, so after passing to a subsequence we can extract an algebraic limit 
$$\rho : \Gamma \longrightarrow \PSL_2\BC, \ \ \ \  M_A := \BH^3 / \rho(\pi_1 (X,x)).$$
By Fact \ref{geomcompact}, after passing to a subsequence, we may also assume that the groups $\rho_i(\Gamma)$ converge geometrically to some subgroup $G < \PSL_2 \BC$. Since the $M_i$ are all $\epsilon $-thick, $G$ is discrete, so by Proposition \ref{convergence-convergence-geom} the pointed manifolds $(M_i,f_i(x))$ converge geometrically to  the quotient  $$M_G:= \BH^3 / G,$$  which we consider based at the projection $p_G\in M_G$ of $p\in\BH^3$. Let $$\pi : M_A \longrightarrow M_G$$  be the associated covering map and let $$\phi_i : K_i \longrightarrow M_i$$
be the almost isometric maps given by geometric convergence, chosen  to satisfy \eqref{compatibility} in Proposition \ref{convergence-convergence-geom}. By Arzela-Ascoli, after passing to a subsequence we may assume that the maps $$(\phi_i)^{-1} \circ f_i : X \longrightarrow M_G, $$ which are defined for large $i$, converge uniformly to some map $$f_G : X \longrightarrow M_G.$$ 
Note that $f_G (x)=p_G$. When we identify $\pi_1(M,p_G)$ with $G$, it follows  from  \eqref{compatibility}  that the image $(f_G)_*(\pi_1(X,x))$  is equal to the subgroup $\rho(\Gamma) $. Hence $f_G$  lifts to a carrier graph
$$f : X \longrightarrow M_A,$$
 such that the composition $\phi_i \circ \pi \circ f$ is homotopic to $f_i$ via a homotopy with tracks of lengths at most $\epsilon_i$, where $\epsilon_i\to 0$ as $i\to \infty$.
\end{bei}
  
\subsection{Strong convergence} A sequence of representations $\rho_n : \Gamma \longrightarrow \PSL_2\BC$ converges \emph {strongly} to $\rho$ if $\rho_n \to \rho$  algebraically and the images $\rho_n(\Gamma)$  converge to $\rho(\Gamma)$ geometrically. In \cite{Biringeralgebraic}, we gave  the following criterion for strong convergence.

\begin{sat}[see \cite{Biringeralgebraic}]\label{max-cyclic-ugly}
Let $\Gamma$ be a finitely generated group and $\rho_i : \Gamma \longrightarrow\PSL_2\BC$ be discrete, torsion-free representations.  Assume that $ (\rho_i) $ converges algebraically to $\rho$ and geometrically to $G <\PSL_2\BC$, and set $M_A=\BH^3 / \rho(\Gamma)$ and $M_G=\BH^3 / G$. If 
\begin{itemize}
\item $\rho(\Gamma)$ does not contain parabolic elements, and 
\item every degenerate end of $M_A$ has a neighborhood which embeds under the covering $\pi: M_A \longrightarrow M_G$,
\end{itemize}
then $G=\rho(\Gamma)$. 
\end{sat}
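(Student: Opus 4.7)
The plan is to show the covering $\pi: M_A\to M_G$ has degree one. By the Tameness Theorem $M_A$ is tame, and the no-parabolics hypothesis gives that it has no cusps; by Geometric Tameness every end of $M_A$ is convex cocompact or degenerate. I would follow the general approach of \cite{Biringeralgebraic}: extend the embedding hypothesis from degenerate ends to all ends, extend these local embeddings as far as possible, and then conclude $\pi$ is a homeomorphism.

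First, I would show each \emph{convex cocompact} end of $M_A$ also has a neighborhood embedding under $\pi$. Since $\rho(\Gamma)<G$, one has $\Omega(G)\subset\Omega(\rho(\Gamma))$. For a component $\Omega_\CE\subset\Omega(\rho(\Gamma))$ corresponding to a convex cocompact end $\CE$, I would argue $\Omega_\CE\subset\Omega(G)$: a limit point of $G$ in $\Omega_\CE$ would come from a sequence $\rho_i(\gamma_i)\to g\in G$ with $\gamma_i\to\infty$ in $\Gamma$, and combining this with the algebraic convergence $\rho_i(\gamma)\to\rho(\gamma)$ for fixed $\gamma$, the persistence of $\Omega_\CE$ as a component of the domain of discontinuity, and the no-parabolics hypothesis on $\rho(\Gamma)$, a Jorgensen-type argument yields a contradiction.

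Second, I would extend each end-neighborhood embedding as far as possible in $M_G$. For a convex cocompact end, Lemma \ref{convex cocompact cover} extends the embedding onto the full component of $M_A\setminus CC(M_A)$ adjacent to $\CE$, with image a component of $M_G\setminus CC(M_G)$. For a degenerate end $\CE$, Canary's Covering Theorem gives either $M_G$ closed with $M_A$ a virtual fiber --- ruled out by the existence of an embedded infinite product neighborhood --- or $\CE$ finitely covers a degenerate end of $M_G$, with degree forced to one by the embedding hypothesis. Consequently $\Lambda(\rho(\Gamma))=\Lambda(G)$, and $\pi$ restricts to a covering $CC(M_A)\to CC(M_G)$ in which each boundary component of $CC(M_A)$ maps homeomorphically onto a boundary component of $CC(M_G)$.

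The main obstacle is the final step. A priori, distinct convex cocompact ends of $M_A$ could still map to the same end of $M_G$ via a nontrivial deck transformation, producing a cover of degree $\geq 2$; this is where the argument is most delicate. I would apply Proposition \ref{radialextension}: take a compact $K\subset M_A$ containing a standard compact core with smooth convex boundary inside $CC(M_A)$, so that $\pi|_K$ is an isometric immersion. Its radial extension produces a complete covering $\pi':\widehat M\to M_G$ into which $K$ embeds isometrically, with $\widehat M\setminus K\cong\partial K\times\BR$. Because $K$ contains a compact core of $M_A$, one has $\pi'_*(\pi_1\widehat M)=\pi_*(\pi_1 K)=\rho(\Gamma)$, identifying $\widehat M$ with $M_A$ as covers of $M_G$. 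The embedded end-neighborhoods of $M_A$ established in the previous step then coincide with the radial product collars of $\widehat M$, so the isometric embedding $K\hookrightarrow\widehat M=M_A$ is compatible with the original inclusion and forces $\pi|_K$ to have been an embedding. Hence any deck transformation of $\pi$ acts trivially on $K$, and so trivially on all of $M_A$, giving $G=\rho(\Gamma)$.
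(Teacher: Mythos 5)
There is nothing in this paper to compare against: Theorem \ref{max-cyclic-ugly} is quoted from \cite{Biringeralgebraic} without proof, so your argument has to stand on its own. Your opening moves (tameness, no cusps, trying to upgrade the hypothesis to convex cocompact ends, Lemma \ref{convex cocompact cover}, the Covering Theorem at degenerate ends) are sensible and in the spirit of the cited work, but the proof has two genuine gaps, and the second is fatal as written.

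The final step fails for three reasons. First, the set $K$ you feed into Proposition \ref{radialextension} need not exist: a compact submanifold with locally convex boundary whose inclusion is a homotopy equivalence must contain $CC(M_A)$, and $CC(M_A)$ is noncompact exactly when $M_A$ has a degenerate end, i.e.\ in the cases the theorem is actually about. Second, even when $K$ exists, the radial extension only reconstructs $M_A$ as the cover of $M_G$ corresponding to $\rho(\Gamma)$; nothing in that construction controls the index $[G:\rho(\Gamma)]$ or ``forces $\pi|_K$ to have been an embedding,'' and the deck-transformation language is misplaced since $\pi$ need not be a regular covering. Third, and most fundamentally, your endgame uses only the statement ``$\pi\colon M_A\to M_G$ is a covering of cuspless hyperbolic $3$-manifolds all of whose ends embed'' and never the sequence $(\rho_i)$; that statement does not imply $G=\rho(\Gamma)$. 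For instance, if $M_G$ is convex cocompact with totally geodesic core boundary whose boundary fails to surject $H_1$ (such manifolds exist, e.g.\ complements of suitably knotted handlebodies inside a ball in a closed hyperbolic manifold with $b_1\ge 1$), then the finite cyclic cover killing the cokernel is a nontrivial cover in which every boundary component of the core, and hence every end, lifts and embeds; your last paragraph applied verbatim would ``prove'' this cover trivial. Any correct conclusion must use geometric convergence of $\rho_i(\Gamma)$ to $G$ at this stage, for example to show that every $g=\lim\rho_i(\gamma_i)\in G$ is carried by a loop homotopic into the $\pi$-image of a compact core of $M_A$, so that this image is $\pi_1$-surjective in $M_G$.

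Your Step 1 is also only asserted, not proved. The claim $\Lambda(G)\cap\Omega_{\mathcal E}=\emptyset$ is precisely the nontrivial content there: it is false for an arbitrary inclusion $\rho(\Gamma)<G$ (e.g.\ a Schottky subgroup of a cocompact lattice), so any proof must again exploit the approximating representations, and since the $\rho_i$ are not assumed faithful the classical J{\o}rgensen--Marden/Anderson--Canary persistence results do not apply off the shelf. ``A J{\o}rgensen-type argument yields a contradiction'' is a placeholder for what is, in \cite{Biringeralgebraic}, a substantive argument; note also that Lemma \ref{convex cocompact cover} only propagates an embedding you already have on some neighborhood, so it cannot supply this step, and the subsequent assertion $\Lambda(\rho(\Gamma))=\Lambda(G)$ is likewise unjustified as stated.
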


 We used Theorem \ref{max-cyclic-ugly} in \cite{Biringeralgebraic}  to prove that geometric limits of algebraically convergent sequences are  finitely generated,  provided that there are no parabolics.  Here is a more precise version of this that we will need here, but which  is unfortunately not stated in \cite{Biringeralgebraic}.
 
\begin{prop}
\label{limits-fg}
Let $\Gamma$ be a finitely generated group and $$\rho_i:\Gamma\longrightarrow \PSL_2\BC$$ be a sequence of discrete, torsion free representations converging geometrically to a group $G\subset\PSL_2\BC$ with infinite co-volume and no parabolic elements. Assume that $\Gamma' \subset \Gamma $ is a rank $k$ subgroup such that the restrictions $\rho_i |_{\Gamma' } $ converge algebraically.

Then  for some $n \leq 2k$ there are maps $\sigma_i : F^n \longrightarrow \Gamma$  with $\Gamma'\subset \sigma_i(F^n) $
such that after passing to a subsequence, the  representations $$\rho_i \circ \sigma_i : F^n \longrightarrow \PSL_2\BC $$  converge algebraically to some $\rho$, and every degenerate end of $\BH^3 / \rho (F^n)$ has a neighborhood  that embeds under the covering $$\BH^3 / \rho(F^n) \longrightarrow \BH^3 / G.$$
 In particular, if  $\Gamma'=\Gamma$  then $G=\rho(F^n)$ by Theorem \ref{max-cyclic-ugly}, and hence $G$ is finitely generated.
\end{prop}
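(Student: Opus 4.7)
The plan is to iteratively enlarge $\Gamma'$ by adjoining, at each step, one element of $\Gamma$ that witnesses a failure of embeddedness for some degenerate end of the current algebraic limit. To set up, write $\Gamma' = \langle \gamma_1,\dots,\gamma_k\rangle$ and let $\sigma^{(0)}\colon F^k\to\Gamma$ send the free generators to the $\gamma_j$. By hypothesis $\rho_i\circ\sigma^{(0)}$ converges algebraically to some $\rho^{(0)}$, giving a cover $\pi^{(0)}\colon M_A^{(0)}:=\BH^3/\rho^{(0)}(F^k)\longrightarrow M_G:=\BH^3/G$. If every degenerate end of $M_A^{(0)}$ has a neighborhood embedding under $\pi^{(0)}$ then we are done with $n=k$, and Theorem \ref{max-cyclic-ugly} furnishes $G=\rho^{(0)}(F^k)$ in the $\Gamma'=\Gamma$ case.

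Otherwise fix a degenerate end $\CE$ of $M_A^{(0)}$ no neighborhood of which embeds. By Proposition \ref{constructing prs}, $\CE$ has a product-region neighborhood $U_\CE\cong\Sigma\times[0,\infty)$; by Corollary \ref{truncating noncompact} we may chop off arbitrarily wide compact subproduct regions. Since no such neighborhood embeds under $\pi^{(0)}$, one can find points $x\ne y$ in $U_\CE$ with $\pi^{(0)}(x)=\pi^{(0)}(y)$; joining $x,y$ to a basepoint and projecting gives a loop in $M_G$ representing an element $g\in G\setminus\rho^{(0)}(F^k)$. Because $G$ is the geometric limit of $\rho_i(\Gamma)$, there are $\alpha_i\in\Gamma$ with $\rho_i(\alpha_i)\to g$. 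Define $\sigma^{(1)}_i\colon F^{k+1}\to\Gamma$ extending $\sigma^{(0)}$ by sending the new generator to $\alpha_i$; after passing to a subsequence, $\rho_i\circ\sigma^{(1)}_i$ converges algebraically to some $\rho^{(1)}$ whose image properly contains $\rho^{(0)}(F^k)$. Iterate.

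The main obstacle will be proving termination with the explicit bound $n\le 2k$. The idea is to control how the compact core changes at each step. By Scott's compact core theorem, $M_A^{(j)}$ admits a compact core $C^{(j)}$, and at each iteration the new algebraic limit strictly enlarges the subgroup. Using Theorem \ref{double compression body theorem} applied to a sufficiently wide subproduct region of $U_\CE$ inside $M_G$ (and Theorem \ref{strong limits}-type reasoning), the newly-adjoined element $g$ must encode either an essential handle that merges two ends of $M_A^{(j)}$ into one, or a compression that seals off the bad end; in either case the Euler characteristic $|\chi(\partial C^{(j+1)})|$ drops strictly below $|\chi(\partial C^{(j)})|$. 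Since the initial core $C^{(0)}$ satisfies $|\chi(\partial C^{(0)})|\le 2(k-1)$ (the compact core of a rank-$k$ manifold has Heegaard genus at most $k$, so its boundary has total absolute Euler characteristic at most $2k-2$), the process terminates after at most $k$ enlargements, yielding $n\le 2k$.

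At termination, every degenerate end of the final $M_A^{(n-k)}$ has an embedded neighborhood, and $\Gamma'\subset\sigma^{(n-k)}_i(F^n)$ by construction. When $\Gamma'=\Gamma$, applying Theorem \ref{max-cyclic-ugly} to the strongly convergent subsequence $\rho_i\circ\sigma^{(n-k)}_i$ and comparing geometric limits (both equal to $G$) gives $G=\rho^{(n-k)}(F^n)$, hence $G$ is finitely generated. The delicate bookkeeping---verifying that a suitable $\alpha_i$ can always be found with $\rho_i(\alpha_i)$ uniformly bounded at the basepoint (using the $\epsilon$-thickness forced by the absence of parabolics in $G$ and Corollary \ref{distancetogeodesic2}) and that the compact-core Euler characteristic genuinely drops---is what the argument in \cite{Biringeralgebraic} handles, and the present proof will amount to organizing those ingredients into the statement above.
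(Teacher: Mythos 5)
Your overall scheme---iteratively adjoining one element of $G$ witnessing the failure of a degenerate end of the current algebraic limit to embed, and bounding the number of iterations by an Euler characteristic count---is the same as the paper's. But the step that makes the iteration terminate is a genuine gap in your write-up. The paper does not merely find \emph{some} $g\in G$ outside the current image: it first applies Canary's Covering Theorem to the non-embedding degenerate end $\CE$, concluding that a neighborhood of $\CE$ maps \emph{finite-to-one} onto a neighborhood of a degenerate end $\CE_G$ of $M_G$, and then takes $g$ to be (the image of) a loop homotopic into $\CE_G$ that does not lift into $\CE$. This forces $g$ to be a \emph{root} of an element of the current group ($g^m$ lies in the image for some $m\ge 2$), and it is precisely the statement ``adjoining a root of an element, when the relevant end finitely covers but does not embed, strictly decreases $|\chi|$'' (proved via a character-variety dimension count in Proposition 5.1 of \cite{Biringeralgebraic}) that yields termination. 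Your replacement---finding two points of $U_\CE$ with the same $\pi^{(0)}$-image and asserting, via Theorem \ref{double compression body theorem} and ``Theorem \ref{strong limits}-type reasoning,'' that the new element must merge two ends or compress the bad end so that $|\chi(\partial C^{(j)})|$ strictly drops---is not justified. Enlarging a Kleinian group by an arbitrary element of $G\setminus\rho^{(j)}(F)$ can perfectly well \emph{increase} the rank and $|\chi|$ of the compact core, and Theorems \ref{double compression body theorem} and \ref{strong limits} concern wide product regions in $\epsilon$-thick manifolds and strong limits, which is exactly what is not yet available at this stage (strong convergence is the conclusion, not a hypothesis). Without the Covering Theorem step identifying $g$ as a root, the monotone quantity you propose has no reason to decrease, and the iteration has no a priori bound.

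Two smaller points. First, your bookkeeping for $n\le 2k$ is inconsistent: you bound $|\chi(\partial C^{(0)})|$ by $2(k-1)$ but then claim at most $k$ enlargements; with your own monotone quantity you would only get roughly $3k$ generators. The paper's count uses $|\chi(M_A)|\le k$ for the core itself, one generator added per step, hence $n\le k+k=2k$. Second, your worry about choosing $\alpha_i\in\Gamma$ with $\rho_i(\alpha_i)$ bounded is a non-issue (any $\rho_i(\alpha_i)\to g$ is automatically bounded); the actual care needed, and taken in the paper, is to use discreteness of $G$ to pick $\gamma_i$ so that $\rho_i(\gamma_i)$ is the \emph{unique} element of $\rho_i(\Gamma)$ in a small neighborhood of $g$, so that the adjoined generators are canonically determined and the limit of $\rho_i\circ\sigma_i$ on the new generator is $g$.
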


 Note that even if $\sigma_i(F^n)=\Gamma'$  for all $i $, the algebraic limit of $\rho_i \circ \sigma_i$ may be bigger than that of $\rho_i |_{\Gamma'}$. The point is that  for large $i$ the generators of $F^n$ may map to very complicated words in $\Gamma'$, words which may otherwise be lost in the algebraic limit.

 \begin {proof}The proof of Proposition \ref{limits-fg} is essentially contained in \cite{Biringeralgebraic}, but here is a quick sketch. 
 Assume that $\rho_i |_{\Gamma'}$  converge algebraically to $\rho' : \Gamma' \longrightarrow\PSL_2\BC$  and that some degenerate end $\CE$ of $M_A=\BH^3/\rho'(\Gamma')$ does not have a neighborhood that embeds under the covering map $$M_A \longrightarrow M_G : =\BH^3/G.$$
 By the Covering Theorem \cite{Canarycovering},  the end $\CE$  then has a neighborhood that maps finite to one onto (but does not embed into) a neighborhood of a degenerate end $\CE_G$ of $M_G$. 


Pick generators $\gamma^1,\ldots,\gamma^k$ for  $\Gamma' \subset G$ and let $g  \in G$ represents a loop in $\pi_1 M_G$ that is homotopic into $\CE_G$, but does not lift homeomorphically into $\CE$. Note that $g^m \in \rho(\Gamma)$  for some $m$; we say $g$ is a \emph{root} of an element of $\rho(\Gamma)$. Since $G $ is discrete  and $\rho_i(\Gamma) \to G$  geometrically,  if we choose a sufficiently small open  neighborhood $U\subset \PSL_2\BC$ of $g$, then for large $i $ there is some $\gamma_i \in \Gamma$ such that $\rho_i(\gamma_i)$  is the unique element of $\rho_i(\Gamma)$  that lies in $U$.  Then for large $i$ we define 
$$ \sigma_i : F^{k+1} \longrightarrow \Gamma$$
 by sending the generators to $\gamma^1,\ldots,\gamma^k,\gamma_i$.   The  sequence $\rho_i \circ \sigma_i$  then algebraically converges to some $\rho : F^{k+1} \longrightarrow \PSL_2\BC$ with $$\rho(F^{k+1}) = \langle \rho'(\Gamma'), \, g\rangle \subset G.$$

If every degenerate end of $\rho(F^{k+1})$ has a neighborhood that embeds in $M_G$,  we are done, but this may not be the case. However,  we obtained $\rho(F^{k+1})$ from $\rho(\Gamma)$ by adjoining a root of some element, and the end $\CE$ of $M_A$  has a neighborhood that finitely covers a neighborhood of an end of $M_A^1:=\BH^3/\rho(F^{k+1})$, but no neighborhood that embeds.  One can use these two properties to show that  Euler characteristics satisfy
$$ |\chi(M_A^1)| < |\chi(M_A)|.$$
 This is done during the proof of Proposition 5.1 in \cite{Biringeralgebraic}; the point is to show that the dimension of the $\PSL_2\BC$-character variety decreases.

We now iterate the process above,  each time adding a single root, constructing new algebraic limits, and decreasing the absolute value of the Euler characteristic.  This process must terminate in at most $|\chi(M_A)| \leq k$ steps, so as in each step we added a single generator to our representations, the end result is a sequence of maps 
$$\sigma_i : F^n \longrightarrow \Gamma, \ \ \rho_i \circ \sigma_i \to \rho$$
where $n\leq 2k$ and where every degenerate end of $\BH^3 / \rho (F^n)$ has a neighborhood  that embeds under the covering \[\BH^3 / \rho(F^n) \longrightarrow \BH^3 / G.\qedhere\]
\end{proof}

\subsection{The structure of strong limits}  We conclude with the following result, whose proof combines techniques from \cite{Biringeralgebraic} with  Theorem \ref{complementary compression bodies}.

\begin {sat}\label {strong limits}
 Suppose that $\Gamma$  is a finitely generated group and $$\rho_n : \Gamma \longrightarrow \PSL_2\BC$$ are discrete, torsion free representations that  converge strongly to a representation $\rho_\infty$, and assume that for $i=1,2,\ldots,\infty$ and $\gamma \in \Gamma,$ no element $\rho_i(\gamma)$ is  parabolic. Let $\epsilon>0$, let
$$M_i = \BH^3 / \rho_i(\Gamma), \ \ i=1,2,\ldots,\infty,$$ and let $(\phi_i)$ be a sequence of almost isometric maps given by the geometric convergence $M_i \to M_\infty$, as  in Definition \ref{geolimits}.
Given a compact $C_0 \subset M_\infty$ and $r>0$, there is a standard compact core $$ C \subset M_\infty, \ \ C \supset C_0, $$ and a sequence $L_i \to \infty$ such that after excluding finitely many $i $, the following holds for every component $S \subset \partial C$.
\begin{enumerate}
	\item[(a)]  If $S$  faces a degenerate end   of $M_\infty$,  then $\phi_i(S)$  is a boundary component of a product region $$V_i^S  \subset CC(M_i) \setminus int(\phi_i(C))$$  that has $\epsilon $-width at least $L_i$.  Moreover, the  component of $M_i \setminus int(\phi_i(C))$ adjacent to $\phi_i(S)$ is a compression body with missing interior boundary.
 \item[(b)] If $S$  faces a convex-cocompact end  of $M_\infty $, then $\phi_i(S)$ bounds a component $E_i^S \subset M_i \setminus int(\phi_i(C))$ that  is a  product neighborhood of a  convex cocompact end of $M_i$, where $$E^S_i \subset M_i \setminus \CN_r(CC(M_i)).$$
\end{enumerate} 
\end {sat}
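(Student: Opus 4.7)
The plan is to build a standard compact core $C \supset C_0$ of $M_\infty$ (which is tame by the Tameness Theorem, since it has no cusps) whose boundary components are arranged so that each $S \subset \D C$ facing a convex cocompact end of $M_\infty$ bounds a product neighborhood $E^S$ of that end with $E^S \cap \CN_2(CC(M_\infty)) = \emptyset$, while each $S$ facing a degenerate end bounds a genuine product region $E^S$ in the sense of Definition~\ref{prdef}. The existence of such a $C$ follows by enlarging any initial standard compact core provided by the Tameness Theorem, pushing convex-cocompact boundary components arbitrarily far into the convex cocompact end neighborhoods, and using Proposition~\ref{constructing prs} to arrange product region neighborhoods on the degenerate side. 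I would then transfer this structure to each $M_i$ via the almost isometric maps $\phi_i$.

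For case (a), I would exhaust each infinite product region $E^S$ by compact subproduct regions $E^S_n \subset E^S \cap CC(M_\infty)$ of $\epsilon$-width at least $n$, using Corollary~\ref{truncating noncompact} together with Corollary~\ref{prs in the convex core}. For a sequence $n_i \to \infty$ growing slowly enough that $E^S_{n_i} \subset K_i$ and $\phi_i|_{E^S_{n_i}}$ is $(1+o(1))$-bilipschitz, Proposition~\ref{bilipschitzpreservation} supplies a genuine product region $V_i^S \subset \phi_i(E^S_{n_i})$ adjacent to $\phi_i(S)$ with $\epsilon$-width at least $n_i - O(1)$; after a small adjustment ensuring $V_i^S \subset CC(M_i)$, I set $L_i = n_i - O(1)$.

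For case (b), I would use that strong convergence of $\rho_i$ without parabolics forces Hausdorff convergence of the limit sets $\Lambda(\rho_i(\Gamma)) \to \Lambda(\rho_\infty(\Gamma))$ in $S^2_\infty$, and hence Hausdorff convergence of their convex hulls on compact subsets of $\BH^3$. Lifting $E^S$ to an appropriate $\pi_1 S$-invariant region of $\BH^3$ disjoint from $\CN_2(CH(\Lambda(\rho_\infty(\Gamma))))$, the analogous region for $\rho_i$ exists for large $i$ and descends to a region $E_i^S \subset M_i \setminus \CN_1(CC(M_i))$. Applying Lemma~\ref{convex cocompact cover} to the cover of $M_i$ corresponding to $\pi_1 \phi_i(S)$ then identifies $E_i^S$ with the component of $M_i \setminus int(\phi_i(C))$ containing $\phi_i(S)$, and characterizes this component as a product neighborhood of a convex cocompact end.

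The main obstacle is the topological conclusion in (a), namely that each degenerate-facing complementary component of $\phi_i(C)$ is a compression body with missing interior boundary. I would apply Theorem~\ref{complementary compression bodies} to $N_i = \phi_i(C) \cup \bigcup_{S \text{ cvx cpct}} \overline{E_i^S}$. The required $\pi_1$-surjectivity of $N_i \hookrightarrow M_i$ follows from strong convergence: algebraically $\rho_i(\Gamma) = \pi_1 M_i$, and $(\phi_i)_*$ sends a generating set of $\pi_1 M_\infty$ to elements converging to the corresponding $\rho_\infty$-images, which by the no-parabolics hypothesis together with discreteness of $\pi_1 M_i$ must eventually equal $\rho_i$ of the same generators of $\Gamma$, thereby generating $\pi_1 M_i$. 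All boundary components of $N_i$ face degenerate ends and bound the wide product regions $V_i^S$ from case (a). The one remaining subtlety is the noncompactness of $N_i$, which I would handle by further truncating each $E_i^S$ deep enough into the convex cocompact end that the outer complement is trivially a product and contributes nothing to the topology of the interesting components; alternatively one may pass to the cover of $M_i$ corresponding to $\pi_1 N_i$ via Lemma~\ref{extending covering maps} and apply Theorem~\ref{complementary compression bodies} there, with the conclusion descending to yield the compression body with missing interior boundary structure in $M_i$.
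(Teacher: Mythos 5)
Your construction for part (a) follows the paper's route (push a product-region neighborhood of each degenerate end forward by $\phi_i$, apply Proposition \ref{bilipschitzpreservation}, and diagonalize), but your part (b) assumes the crux rather than proving it. You assert that strong convergence with no parabolics ``forces Hausdorff convergence of the limit sets,'' and all of your control on $CC(M_i)$ hangs on this. Geometric convergence only gives the easy inclusion $\Lambda(\rho_\infty(\Gamma))\subset\liminf\Lambda(\rho_i(\Gamma))$; the reverse containment (equivalently, that $CH(\Lambda(\rho_i(\Gamma)))$ does not invade a fixed compact region at definite distance from $CH(\Lambda(\rho_\infty(\Gamma))))$ is not a formal consequence of strong convergence, is not available as a lemma in the paper, and is essentially the statement (b) you are trying to prove. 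The paper argues in the opposite direction: it takes the $\phi_i$-images of the components of $\partial\CN_1(CC(M_\infty))$ facing convex cocompact ends, notes that these remain locally convex for large $i$, lets $N_i$ be the region they bound on the core side, uses strong convergence to see that $N_i\hookrightarrow M_i$ is $\pi_1$-surjective, and applies Proposition \ref{radialextension}: the radial extension of the inclusion is a covering map, trivial by surjectivity, so the outer components are product neighborhoods of ends of $M_i$ and $CC(M_i)\subset N_i$. This \emph{derives} the needed upper semicontinuity instead of assuming it. Your subsequent appeal to Lemma \ref{convex cocompact cover} also requires its hypothesis (an embedded convex cocompact end neighborhood in the cover corresponding to $(\phi_i)_*\pi_1 S$) to be established first, which is delicate precisely because $S$ may be compressible in $M_\infty$.

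Two further gaps in (a). First, Proposition \ref{bilipschitzpreservation} produces a wide product region somewhere inside $\phi_i(E^S_{n_i})$, with boundary error; it does not make $\phi_i(S)$ a boundary component, and by Definition \ref{prdef} that requires knowing $\phi_i(S)$ lies within distance $1$ of a NAT simplicial ruled surface of $M_i$. The paper arranges this by choosing $S$ in the $1$-neighborhood of a useful simplicial hyperbolic surface of $M_\infty$, perturbing its $\phi_i$-image to an honest NAT simplicial hyperbolic surface of $M_i$, and then cutting the product region along $\phi_i(S)$; some such step is needed in your argument. Second, your two fixes for the noncompactness of $N_i$ in the appeal to Theorem \ref{complementary compression bodies} do not work as stated: truncating $E_i^S$ creates boundary components whose complementary components are convex cocompact end neighborhoods, which are not wide product regions (by Lemma \ref{bdlpr}, points deep in a wide product region lie near bounded-length closed geodesics, hence near $CC(M_i)$), so hypothesis (2) of that theorem fails at those components; and the cover corresponding to $\pi_1 N_i$ is $M_i$ itself, since the inclusion is $\pi_1$-surjective, so nothing is gained. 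A repair is possible --- first prove (b), observe that the convex-cocompact components of $M_i\setminus int(\phi_i(C))$ are trivial products, and then run the proof (not the statement) of Theorem \ref{complementary compression bodies}, or work in the tame compactification --- but as written the compression-body conclusion is not justified.
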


 Above, $CC$  always denotes the convex core and $\CN_r(\ \cdot \ )$ is a  closed metric $r$-neighborhood. 
Also, note that $\pi_1 M_\infty$ is  finitely generated, so $M_\infty$ is tame by the Tameness Theorem (see \S \ref{sec:ends}). Since $M_\infty $ has no cusps, every end of $M_\infty $ is either convex cocompact or degenerate.

\begin {proof}
We'll construct the compact core $C \subset M_\infty$ by constructing its boundary components $S^\CE$, one for each of the ends $\CE$ of $M_\infty$. 

First, fix some $R \geq r+2$  large enough so that $C_0 \subset \CN_R(CC(M_\infty))$. When $\CE$ is a convex cocompact end of $M_\infty$, we let $$S^\CE \subset \partial \CN_R(CC(M_\infty))$$ be the component that faces $\CE$. 

Next, let $\CE$  be a degenerate end of $M_\infty$.  Using  Proposition \ref {constructing prs} and  Corollary \ref{truncating noncompact}, pick a product region $U^\CE \subset M_\infty$ that is a neighborhood of $\CE$, where $U^\CE \cap C_0=\emptyset$ .   Fixing some to-be-determined $\ell_->1$, Lemma \ref{minsrsinpr} gives a minimizing simplicial ruled surface $$f^\CE : S \longrightarrow U^\CE$$   in the homotopy class of a level surface whose $\epsilon$-distance from $\partial U^\CE$ lies  in the interval $[\ell_-,2\ell_-]$.  Using Freedman--Hass--Scott (see Lemma \ref{getembedded}), let $S^\CE$ be a level surface of $U^\CE$ with 
$$S^\CE  \subset \CN_{\frac 12}(f^\CE(S)) \subset U^\CE. $$

 We now let $C \subset M_\infty$ be the standard compact core bounded by the surfaces $S^\CE$, where $\CE$  ranges over all the ends of $M_\infty$. By construction, $C_0 \subset C$. Our goal now is to verify that parts (a) and (b) of the Theorem hold for the surfaces $S_\CE$, at least for sufficiently large $\ell_-$.

\medskip

We first work on (b). If $\CE$  is a convex cocompact end of $M_\infty$, let $$S^\CE_1 \subset \partial \CN_1(CC(M_\infty))$$  be the component facing $\CE$, and  consider the component 
$$F_i^\CE \subset M_i \setminus \phi_i(S^\CE_1)$$
on the side of $\phi_i(S^\CE_1)$  corresponding to the side of $S^\CE_1$  facing $\CE$. Let $$N_i =  M_i \setminus \cup_\CE F_i^\CE.$$
By strong convergence, the inclusion $N_i \hookrightarrow M_i$ is $\pi_1$-surjective. Moreover, $N_i$ has convex\footnote{While it is true that $N_i$ has convex boundary, it's a little bit annoying to write down a proof. Indeed, one would like to say that $S_1^\CE$ is strictly convex, and use the $C^\infty$-convergence $\phi_i^* g_i \to g_\infty$ to say that $\phi_i(S_1^\CE)$ is strictly convex as well, since its principal curvatures approaches those of $S_1^\CE$. However, $S_1^\CE$ is only $C^2$-a.e., so this is hard to make precise. To fix it, one could either cinch one's belt and work in the lower regularity setting, or one could replace $S_1^\CE$ in the argument above by a smooth approximation of it that is still strictly convex.  One way is to find such an approximation is to use Theorem 2 (a) of \cite{greene1976c}, which says that any strictly convex function $f$ on a Riemannian manifold can be uniformly approximated by a smooth strictly convex function $g$. One applies this to the function $f(x) = d(x, CC(M_\infty))$, noting that by negative curvature, $f$ is strictly convex on $X\setminus CC(M_\infty)$. 
	Pick some $g$ that approximates $f$ closely and choose a regular value $a \approx 1$ for $g$. The level set $g^{-1}(a)$ is a smooth strictly convex surface that can be used in place of $S_1^\CE$ above.}  boundary in $M_i$  for large $i$, since the  $S^\CE_1$ are strictly convex and the $\phi_i$ are almost isometric.
 So, Proposition~\ref{radialextension}  implies that every component of $M_i \setminus N_i$ is  a product neighborhood of a convex cocompact end of $M_i$, and every $\phi_i(S^\CE_1)$ bounds one of these product neighborhoods. Moreover, since $N_i$  is convex, we have $CC(M_i) \subset N_i$. Since $R \geq r+2$ and $S^\CE\subset \CN_R(CC(M_\infty))$, we have that $S^\CE$ lies at least $r+1$ away from $S^\CE_1$. It follows that  for large $i$  the surfaces $\phi_i(S^\CE)$ will lie at least $r$ away from $\phi_i(S^\CE_1)$, and therefore at least $r$ from $N_i$. So, the component $E_i^{S_\CE} $ of $M_i \setminus int(\phi_i(C))$ bounded by $\phi_i(S^\CE)$ lies at least $r$ outside the convex core of $M_i$, as desired in (b).

\medskip

We now work on part (a), where $\CE$ is degenerate. Fix some large $\ell>0$. By Corollary \ref{truncating noncompact}, there is a compact subproduct region $$U^\CE_\ell \subset U^\CE,$$
that has $\epsilon$-width at least $\ell$ such that $ \partial U^{\CE} \subset \partial U^\CE_{\ell}$. 
 If $\ell$ is  at least some constant depending only on $\epsilon,g,\ell_-$ and $i$ is large, Proposition~\ref{bilipschitzpreservation}  implies that the topological product region $\phi_i(U^\CE_\ell)$  contains a topological subproduct region $$W_{i,\ell}^\CE \subset \phi_i(U^\CE_\ell)$$
 that is a (geometric)  product region of nearly full $\epsilon$-width.   Moreover,  if $\ell_-$  is at least some constant depending only on $\epsilon,g$, and $\ell$ is much larger than $\ell_-$, we can assume that $\CN_2(\phi_i(S^\CE)) \subset W_{i,\ell}^\CE$. And in light of Corollary \ref{prs in the convex core}, we can also assume each $W_{i,\ell}^\CE \subset CC(M_i)$. Now  the surface $\phi_i(S^\CE) $ divides $W_{i,\ell}^\CE$ into two  topological subproduct regions. In fact, these are actually (geometric) subproduct regions: since $S^\CE$ is contained in the $1/2$-neighborhood of a minimizing SRS in the homotopy class of a level surface, Fact \ref{pullback minimizing} implies that $\phi_i(S^\CE) $ is contained in the $1$-neighborhood of a minimizing SRS $g_i$ in the homotopy class of a level surface  of $W_{i,\ell}^\CE$, and this $g_i$ is $1$-NAT, say, since $$\CN_1(\mathrm{Im}(g_i)) \subset \CN_2(\phi_i(S^\CE)) \subset W_{i,\ell}^\CE.$$
 
Let $V_{i,\ell}^\CE \subset W_{i,\ell}^\CE \subset CC(M_i)$ be the one  of these two  subproduct regions that faces the side of $\phi_i(S^\CE) $  that is the $\phi_i$-image of the side of $S^\CE$ facing $\CE$.  Then as long as $\ell$  is much larger than $\ell_-$,  we will have, say,  that the  $\epsilon$-width of $V_{i,\CE}^\ell$ is at least $ \ell/4.$
 So, taking a  sequence of  constants $\ell$  converging to infinity, and discarding finitely many $(M_i)$ and reindexing, we can assume that for each $i$,  the surface $\phi_i(S^\CE) $  is a boundary component of some product region $V_i^\CE \subset CC(M_i)$ that lies on the side of $\phi_i(S^\CE) $  that is the $\phi_i$-image of the side of $S^\CE$ facing $\CE$, and where the $\epsilon$-widths of the product regions $V_i^\CE$ go to infinity.
\end {proof}

\section{Splitting along barriers}

\label {splitting along barriers}

Let $M$  be a complete hyperbolic $3$-manifold.

\begin {defi}[Barrier] Let $U \subset M$ be a compact product region and let $ \mathcal {DC}(M,U)$ be  the associated double compression body, as defined in \S \ref{Double compression bodies}. A \emph{barrier} of $U$ is a component of $\partial \mathcal {DC}(M,U)$ that is incompressible in $M$.
\end{defi}

Recall that Theorem \ref{double compression body theorem} (1) states that if $U $  has genus $g$ and $\epsilon $-width at least some $L=L(g,\epsilon)$, then any boundary component of $\mathcal{DC}(M, U) $  that is not isotopic to a level surface of $U$ is incompressible in $M$. Also, the reader should note that $ \mathcal {DC}(M,U)$ is only defined up to isotopy. So really, we'll call any incompressible surface isotopic to a component of $\partial \mathcal {DC}(M,U)$ a `barrier' of $U$.

\medskip

\noindent \it Convention: \rm In the rest of this paper a \emph{$(k,L)$-product region}  will be a product region with genus at most $k$ and $\epsilon$-width  at least $L$. A barrier of a $(k,L)$-product region  is called a \emph{$(k,L)$-barrier.} 

\medskip


Essentially, the goal of this section is to bound the complexity of a decomposition of $M$ along a maximal collection of disjoint, nonisotopic barriers. There is some necessary complication in our statements, however, because we need a priori choices of constants.

\begin{sat}[Splitting along barriers]\label{cutting along barriers} \label{maxsplitting}
Fix $k$ and $\epsilon$ and a function $K:\BN\longrightarrow \BN$. Then there are constants $k'=k'(k,K),$ $g=g(k,K)$, and $L=L(k,\epsilon,K)$ such that in every  hyperbolic 3-manifold $M$ with no cusps and $\rank(\pi_1(M))\le k,$ there is an embedded incompressible surface $\Sigma\subset M$, with at most $15k$ connected components, each of which has genus at most $g$, such that
\begin{enumerate}
\item for every component $N \subset M \setminus \Sigma$,  we have $\rank(\pi_1(N))\le k'$,
\item every $(K(g),L)$-barrier in $M$ is isotopic to a component of $\Sigma$.
\end{enumerate}
\end{sat}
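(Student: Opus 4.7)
The plan is to build $\Sigma$ by an inductive procedure, adding one barrier at a time, with termination forced by an acylindricity argument together with Weidmann's theorem on ranks of acylindrical splittings.

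The key inputs are the following. First, Lemma~\ref{acylindrical} (alluded to in the introduction), which asserts that, provided $L$ is large in terms of $g,\epsilon$, any collection of pairwise disjoint, pairwise non-isotopic barriers of $(g,L)$-product regions induces a $7$-acylindrical splitting of $\pi_1 M$. Second, Weidmann's bound \cite{Weidmannrank} on ranks and number of edges in acylindrical splittings, which supplies constants $N(k)\le 5k$ and a function $\beta$ such that any $7$-acylindrical splitting of a rank-$k$ group along incompressible surfaces has at most $N(k)$ edges and vertex groups of rank at most $\beta(k)$. Third, Theorem~\ref{double compression body theorem}(1), which ensures barriers are incompressible in $M$. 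Fourth, Proposition~\ref{incompressible-incompressible}, which I would use to isotope one bounded-genus incompressible surface off of a wide product region of bounded genus (in our setting, off of another barrier's witnessing product region).

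The construction is as follows. Define genus levels recursively by $g_0 := k$ and $g_{i+1} := K(\beta(g_i))$ for $i = 0,\ldots,5k$, and choose $L=L(k,\epsilon,K)$ larger than the constants from Lemma~\ref{acylindrical}, Theorem~\ref{double compression body theorem}(1), and Proposition~\ref{incompressible-incompressible}, all applied with genus bound $g_{5k+1}$. Start with $\Sigma_0 := \emptyset$. Given a union $\Sigma_i$ of $i$ pairwise disjoint, pairwise non-isotopic $(g_i, L)$-barriers in $M$, I ask whether there is a $(K(\beta(g_i)),L)$-barrier $T$ in $M$ that is not isotopic to any component of $\Sigma_i$. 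If there is no such $T$, I stop and declare $\Sigma := \Sigma_i$ and $k' := \beta(g_i)$; conclusion (2) is the stopping condition, and conclusion (1) follows from Weidmann's bound applied to the splitting of $\pi_1 M$ along $\Sigma$. If such a $T$ exists, Theorem~\ref{double compression body theorem}(1) makes $T$ incompressible in $M$, so Proposition~\ref{incompressible-incompressible} applied to $T$ against the wide product regions witnessing the components of $\Sigma_i$ lets me isotope $T$ to be disjoint from $\Sigma_i$: any essential residual intersection would force $T$ to be isotopic to a level surface of one of these product regions, hence to the corresponding component of $\Sigma_i$, contradicting the choice of $T$. I then set $\Sigma_{i+1} := \Sigma_i \cup T$ and continue.

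The iteration must terminate after at most $5k$ steps, because any union of pairwise disjoint, pairwise non-isotopic barriers in $M$ has at most $5k = N(k)$ components by Lemma~\ref{acylindrical} and Weidmann's theorem. Thus some $\Sigma_i$ with $i \le 5k$ meets the stopping criterion, and by construction $k' = \beta(g_{5k})$ and $L$ depend only on $k,\epsilon$, and $K$. The main obstacle I expect is Lemma~\ref{acylindrical}: one must rule out chains of $7$ essential annuli in $M\setminus\Sigma$ composing to an essential annulus in $M$. The plan there is to use Theorem~\ref{double compression body theorem}(2) to show that each annulus in such a chain is absorbed into the $\pi_1$-injective double compression body adjacent to a barrier, and then use the geometry of very wide product regions (via the $\pi_1$-injectivity of $U_2 \setminus \Lambda_2$ provided by Theorem~\ref{links}(3)) to bound how often a chain can cross successive barriers before closing up or contradicting incompressibility.
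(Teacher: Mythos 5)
Your overall scheme is the same as the paper's (iterate, with genus levels defined recursively through $K$ and the rank bound from Weidmann's theorem, and terminate using the edge bound coming from Lemma \ref{acylindrical} plus Weidmann), but the step on which everything hinges --- enlarging the disjoint system when a new barrier shows up --- is not justified as you've written it. Proposition \ref{incompressible-incompressible} cannot do what you ask of it: its hypotheses require the surface to \emph{already} be embedded in the complement of all the product regions involved, and its conclusion compares compressibility in two complements; it says nothing about removing intersections of $T$ with product regions, and it certainly does not yield an isotopy of $T$ off the old barrier surfaces, which are generally \emph{not} contained in the witnessing product regions (they are boundary components of double compression bodies extending well beyond them). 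In general two non-isotopic incompressible surfaces cannot be isotoped apart, so the claim that "any essential residual intersection forces $T$ to be a level surface, hence a component of $\Sigma_i$" needs a genuine argument that is missing. A second, smaller issue: Lemma \ref{acylindrical} is a dichotomy --- either $7$-acylindrical, or $\Sigma$ is a regular fiber of a fibration over $S^1$ or $S^1/(z\mapsto\bar z)$ --- and you quote it without the fibered case; in that case Weidmann does not apply and the rank bound for the complementary pieces has to be computed directly (it is $\le 2g$ resp.\ $\le g-1$), as the paper does. (Also, $7$-acylindricity feeds $2\cdot 7+1=15$ into Weidmann's inequality, so the edge/step bound you get this way is $15k$, not $5k$.)

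The paper avoids your problematic step by maintaining disjointness at the level of \emph{product regions}, never isotoping one barrier off another. If a new product region $V$ has a barrier not isotopic to any component of the current $\Sigma^{(i)}$, then by Lemma \ref{recognizing barriers} $V$ cannot share a level surface with any old region $U$, so Lemma \ref{intersectprs} confines $U\cap V$ to bounded neighborhoods of the boundaries, and Corollary \ref{cutting-product} extracts a still-wide subproduct region $V'\subset V$ disjoint from all the old regions --- this is exactly why the incoming width $L$ must be taken much larger than the maintained width. The disjoint realization of all the barriers of the enlarged collection is then done in one stroke by Lemma \ref{non-nested}: after discarding "nested" (redundant) regions, the double compression bodies, and hence their incompressible boundary components, can be made pairwise disjoint, and this rests on Bonahon's characteristic compression body theory (Proposition \ref{prop:rel-compbody}), not on any general position argument for incompressible surfaces. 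If you replace your Proposition-\ref{incompressible-incompressible} step with this product-region bookkeeping and treat the fibered case of Lemma \ref{acylindrical} separately, your induction matches the paper's and closes.
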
 

Here, remember that we fix $\epsilon$ in the theorem statement so we can talk about the $\epsilon$-width $L$ of the barriers in (2).

\medskip

If $\mathcal U$ is a collection of disjoint compact product regions, we say that a surface $\Sigma \subset M$ \emph{represents the barriers of $\mathcal U$} if \begin{itemize}
	\item  every component of $\Sigma$  is a barrier of some $U \in \mathcal U$,
\item every barrier of every $U\in \mathcal U$ is isotopic to a component of $\Sigma$.
\end{itemize}
The key step  in the proof of Theorem \ref{maxsplitting} is the following. 

\begin {lem}[Acylindricity]\label {acylindrical}
Given $g,\epsilon>0$,  there is some $L>0$  as follows. Let $M$ be a   hyperbolic $3$-manifold and let $\mathcal U$  be a collection of disjoint, compact $(g,L)$-product regions in $M$.  Then there is a surface $\Sigma \subset M$  that represents the barriers of $\mathcal U$ such that either
\begin{enumerate}
	\item $\Sigma$ is connected, and is a regular  fiber with respect to some fibration of $M$ over $S^1$ or $S^1 / (z \mapsto -z)$, or
\item the splitting of $\pi_1 M$  associated to $\Sigma$ is $7$-acylindrical.
\end{enumerate} 
\end {lem}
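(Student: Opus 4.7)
The plan is first to build $\Sigma$ by collecting one representative from every isotopy class of barrier of the product regions in $\mathcal U$, and then to analyze essential annuli in $M\setminus\Sigma$ using characteristic submanifold theory together with the rigidity forced by the wide product regions in $\mathcal U$.

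For the construction, take $L\ge L_0(g,\epsilon)$ so that Theorem~\ref{double compression body theorem}(1) applies to every $U\in\mathcal U$. Then for each $U$, the components of $\partial\mathcal{DC}(M,U)$ that are not isotopic to a level surface of $U$ are incompressible in $M$, so each is a barrier of $U$. Let $\{\Sigma_1,\ldots,\Sigma_r\}$ be a set of representatives, one per isotopy class, of all barriers arising this way across all $U\in\mathcal U$. Since distinct product regions in $\mathcal U$ are disjoint and each $\Sigma_i$ is incompressible, Waldhausen's cobordism theorem (see \S\ref{intervalbundle}) guarantees that any two $\Sigma_i,\Sigma_j$ can be isotoped to be disjoint (if homotopic they bound a trivial interval bundle, contradicting representative choice; otherwise apply Freedman--Hass--Scott, Theorem~\ref{getembedded}, to realize them disjointly one at a time inside small neighborhoods). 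This yields the desired embedded incompressible $\Sigma$ representing the barriers of $\mathcal U$.

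For the acylindricity dichotomy, suppose towards contradiction that $\Sigma$ is not a regular fiber of a fibration of $M$ over $S^1$ or $S^1/\langle z\mapsto\bar z\rangle$, and that there exists an annulus $A\subset M$ that decomposes as a concatenation $A=A_1\cup\cdots\cup A_7$ of essential annuli in $M\setminus\Sigma$. Each $A_j$ lies in a component $N_j\subset M\setminus\Sigma$ bounded by barriers, hence with incompressible boundary. By the characteristic submanifold theorem of Jaco--Shalen--Johannson (as applied in Proposition~\ref{distinct comps}), each $A_j$ can be properly homotoped into a component $(C_j,X_j)$ of the characteristic submanifold of $(N_j,\partial N_j)$, i.e.\ into an $I$-bundle or a Seifert-fibered solid torus attached to $\partial N_j$ along vertical annuli. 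Now, each boundary component $\Sigma_i\subset\partial N_j$ is a barrier of some $U\in\mathcal U$, and Theorem~\ref{double compression body theorem}(2) together with Corollary~\ref{unknotted} force the piece of $\mathcal{DC}(M,U)$ on the side of $\Sigma_i$ lying inside $N_j$ to be a compression body whose interior boundary is $\Sigma_i$. Consequently the ``attached $I$-bundle'' components of the characteristic submanifolds on opposite sides of any $\Sigma_i$ can only match up in two ways: either they piece together into a global $I$-bundle structure on a neighborhood of $\Sigma$ in $M$, or not.

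Tracking the annuli $A_1,\ldots,A_7$ across successive components $N_j$ then amounts to following a path in a finite combinatorial graph whose vertices are the $I$-bundle/Seifert pieces at each $\Sigma_i$ and whose edges record how these pieces glue across a barrier. The key observation is that only $I$-bundles can glue into $I$-bundles across a barrier (by vertical annulus matching), and similarly for Seifert pieces fibered by a common slope; any mismatch forces the concatenation to fail at the next gluing, so the only way $A$ can cross $\Sigma$ more than a uniformly bounded number of times is if all the pieces fit into a globally consistent $I$-bundle or Seifert structure on $M$ in which $\Sigma$ is horizontal. In that case $M$ fibers over $S^1$ or $S^1/\langle z\mapsto\bar z\rangle$ (depending on orientation of the gluings) with $\Sigma$ a regular fiber, putting us in case (1); the exclusion of case (1) therefore caps the length of any such concatenation below $7$, which is case (2). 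The hard part will be executing Step~3 rigorously: showing that crossing a barrier $\Sigma_i$ really does constrain the vertical structure of the characteristic submanifold on the far side, so that one cannot ``repair'' a mismatch using the compression body freedom afforded by $\mathcal{DC}(M,U)$; this is where the wide product region in $U$ adjacent to $\Sigma_i$, and in particular Corollary~\ref{unknotted2}, is essential.
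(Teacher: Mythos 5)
There is a genuine gap, and it sits exactly where you flag it ("the hard part will be executing Step~3 rigorously"): your proposal never explains where a \emph{uniform} bound on the length of an annulus chain comes from, nor where the width $L$ of the product regions enters. The characteristic-submanifold bookkeeping you describe is purely topological: each $A_j$ can indeed be homotoped into an $I$-bundle or fibered solid torus piece of $(N_j,\partial N_j)$, but there is no reason a ``mismatch'' of these pieces across a barrier kills the concatenation at the next gluing, and combinatorics of JSJ pieces alone cannot produce a constant ($7$, or any other) independent of $M$, of the number of components of $\Sigma$, and of the topology of the pieces. The paper's argument is geometric at precisely this point: first, Lemma~\ref{reducing to incompressibility} (proved with the links of Theorem~\ref{links}, shrinkwrapping via Corollary~\ref{shrinkwrapping2}, and the Bounded Diameter Lemma) shows the boundary curves of the essential annuli cannot lie on barrier components of $\partial\mathcal{DC}(M,U)$ for compressible $U$, which forces every odd piece of a length-$7$ annulus to be a \emph{vertical annulus in one of the wide product regions themselves}; then one passes to a cover built with Proposition~\ref{distinct comps} and Lemma~\ref{extending covering maps} in which the crossings are linearly ordered, notes that all the barrier surfaces carry curves homotopic to a single closed geodesic $\gamma$, and uses the Interpolation Theorem to sweep each surface to a bounded $\epsilon$-diameter surface near $\gamma$. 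Because the union of these swept surfaces has bounded diameter while the intervening product regions have width $\geq L$, it must lie to one side of the middle product regions, so two of the wide product regions in the chain have homotopic level surfaces; Waldhausen's cobordism theorem then makes the intermediate pieces (possibly twisted) interval bundles, which is how the fibration alternative (1) arises and how $7$ is achieved. None of this is recoverable from the matching heuristic you propose, and your appeal to Theorem~\ref{double compression body theorem}(2) and Corollary~\ref{unknotted} (that one side of each barrier is a compression body) does not by itself constrain essential annuli at all.

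There is also a secondary gap in your construction of $\Sigma$: two non-homotopic incompressible surfaces in an irreducible $3$-manifold cannot in general be isotoped to be disjoint, so ``apply Freedman--Hass--Scott to realize them disjointly one at a time'' does not work, and Waldhausen only handles the homotopic case. Making the barriers of \emph{different} product regions simultaneously disjoint is exactly the content of the paper's Lemma~\ref{non-nested}: one must first pass to a non-nested subcollection of product regions, using Lemma~\ref{recognizing barriers} and the geometric Proposition~\ref{incompressible-incompressible} to discard product regions whose barriers are already accounted for, and then realize the barriers as interior boundaries of disjoint characteristic compression bodies (Proposition~\ref{prop:rel-compbody}). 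Without this step your $\Sigma$ need not be embedded as a disjoint union, and the splitting you analyze is not defined.
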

 
 We'll defer the proof of Lemma \ref{acylindrical} for a moment.  First, we will recall some key facts about acylindrical splittings,  make some remarks about the constant $7$, and in \S \ref{pfmaxsplit} we show how Lemma \ref{acylindrical} implies Theorem \ref{maxsplitting}. In \S \ref{preliminary}, we  essentially show how to construct the barriers $\Sigma$ above, and then in  \S \ref{acylpf} we will prove the dichotomy in Lemma \ref{acylindrical}.

Also, it is worth mentioning that for the purposes of Theorem \ref{maxsplitting},  it would suffice if the $7$ in Lemma \ref{acylindrical}  were replaced by any function of $g$.  We expect that  one could prove this weaker version of  Lemma \ref{acylindrical} using an extension of the topological `vegematic' argument used in \cite[Theorem 4.1]{Debloisexplicit}, for example. See also \cite[\S 7]{boyer2005characteristic}.

\medskip

Recall that a splitting of a group $G$ as a  graph of groups is \emph {$\ell$-acylindrical} if in the associated action of $G$ on the Bass-Serre tree,  there is no element that fixes a segment of length $\ell+1$.  One  can interpret this in our situation more concretely as follows.  If $f : A \longrightarrow M $  is a map from an annulus  such that $f(\partial A) \subset  \Sigma ,$
then after a homotopy rel $\partial$, the map $f$ is transverse to $\Sigma$ and   decomposes as a concatenation of essential annuli in $M_\Sigma$, the manifold with boundary obtained by  splitting $M$ along $\Sigma$. If the number of such essential subannuli is called the `length' of $f$, then the splitting of $\pi_1 M$  determined by $\Sigma$ is $\ell$-acylindrical if and only \emph{there are no such annuli with length $\ell$.} This  geometric interpretation of $\ell$-acylindricity was first described by Sela \cite{sela1997acylindrical}; see also DeBlois \cite{Debloisexplicit} for a very detailed  proof of the correspondence in the case that $\Sigma$ is connected.

The  reason we  care about $\ell$-acylindricity is that Weidmann \cite{Weidmannrank} proved an analogue of Grushko's theorem for such splittings,  a weakened version of which is as follows: if a torsion-free group $G$ splits as a minimal $\ell$-acylindrical graph of groups with vertex groups $\{A_v \ |\ v \in V\} $ and (nontrivial) edge groups $\{A_e \ |\  e \in E\}$, then  we have
\begin {equation}\rank G \geq \frac 1{2\ell+1} \left ( \sum_{v \in V} \mathrm{rrank}(A_v) + |E|\right ).\label{rrankeq}\end{equation}
Here, $\mathrm{rrank}(A_v)$ is the \emph {relative rank} of $A_v$  with respect to its edge groups, i.e.\ the minimal number of elements of $A_v$ that, together with a conjugate of each adjacent edge group, will generate $A_v$.  Note that\footnote{Our graphs of groups do not have directed edges. If one takes edges to be directed, as Weidmann does in \cite{Weidmannrank}, then  there is no $2$ in Equation \eqref{rrank2nd}.}
\begin {equation}\sum_{v \in V} \mathrm{rrank}(A_v) \geq \sum_{v \in V} \rank(A_v) - 2\sum_{e \in E} \rank (A_e).\label{rrank2nd}	
\end {equation}

 The reader may wonder whether the $7$ in Lemma \ref{acylindrical} is optimal, and in fact it is not.  The optimal number should be $4$, and it is true that using $4$ one could produce slightly  better bounds for the $k'$  in Theorem \ref{maxsplitting}, if desired. 
And while the arguments to come could also be used to prove $4$-acylindricity,  everything is a bit easier to organize if we are content with $7$.  Note, however, that the splitting may not be $3$-acylindrical, see Figure \ref{not-2-acyl}.

\begin{figure}
	\centering
\includegraphics{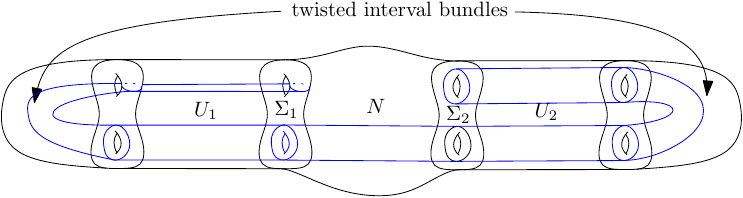}
\caption {One can produce a $3$-manifold by gluing two twisted interval bundles to a hyperbolizable $3$-manifold $N$ that  is not a trivial interval bundle, but where there is an embedded annulus joining its two boundary components. Using the results of BMNS~\cite{Brocksplittings}, one can show  that if the gluing maps are sufficiently complicated, the resulting manifold $M$ is hyperbolic and the gluing surfaces  $\Sigma_1,\Sigma_2$ are adjacent to wide product regions.  We can take the barriers to be the gluing surfaces, and then there will be a length $3$ annulus in $(M,\Sigma_1 \cup \Sigma_2)$.}
\label {not-2-acyl}
\end{figure}

\subsection{Proof of Theorem \ref{maxsplitting}, given Lemma \ref{acylindrical}} \label {pfmaxsplit}
The proof will be algorithmic. In order to present it in a way that is as understandable as possible, we will first give a fairly detailed sketch without worrying about things like the appropriate widths of product regions, and the exceptional cases in Lemma \ref{acylindrical}. Then we will give a  rigorous version of the argument.  This two-step process is necessary, since in order to write down a rigorous proof one has to know in advance how large the genera of the surfaces produced during the algorithm will be, and one cannot know this without going through the algorithm.

 Before starting the first pass, let's briefly discuss how to apply the Weidmann inequality \eqref{rrankeq}, which makes the whole argument work. Suppose that $\mathcal U$  is a collection of $i$ disjoint  product regions in $M$,  each with genus at most $g$, that $\Sigma$ is a surface that represents the barriers of $\mathcal U$, and that the splitting of $\pi_1 M$  given by $\Sigma$  is $7$-acylindrical, as in Lemma \ref{acylindrical}. Then \eqref{rrankeq} implies that 
\begin{equation}\sum_{\substack{\text{ components } \\ N \subset M \setminus \Sigma}}\hspace{-5mm} \mathrm{rrank}(\pi_1 N) \leq (2\cdot 7 + 1) \rank(\pi_1 M) \leq 15k.\label{weidconseq}\end{equation}
Similarly, the number of components of $\Sigma$, which is $|E|$ in \eqref{rrankeq}, is at most $15k$; we'll use this later. Combining \eqref{weidconseq} with \eqref{rrank2nd}, we get \begin{equation}
  \label {rrank3}
\sum_{\substack{\text{ components } \\ N \subset M \setminus \Sigma}} \hspace{-5mm} \rank \pi_1 N \leq 15k + 2 \hspace{-5mm} \sum_{\substack{\text{ components } \\ X \subset \Sigma}}\hspace{-5mm}   \rank \pi_1 X \leq 15k + 4\cdot i\cdot g,
  \end{equation}
where the last inequality comes from the fact that  for a surface $X$, we have $\rank \pi_1 X = 2 genus(X)$, and the genera of all the barriers of a product region sum to at most twice the genus of the product region. 

\medskip

\noindent \it The first pass. \rm Start with $\Sigma=\emptyset$, $k^{(0)}:=k$ and  iteratively define
\begin{equation}\label{step}
	k^{(i)} = 15 k + 4\cdot i \cdot g^{(i)}, \ \ g^{(i)} = K(k^{(i-1)})
\end{equation}
  Since $\rank \pi_1 M = k$,  we are done  unless there is some product region $U^{(1)} \subset M$ with genus at most $g^{(1)}$ such that  the union $\Sigma^{(1)}$  of all its barriers is nonempty.  Hopefully,  \eqref{rrank3} applies, in which case  each component of $M \setminus \Sigma^{(1)}$ has  rank at most $k^{(1)}$.  We are then done unless there is some  product region $U^{(2)} \subset M$ with genus at most $g^{(2)}$ that has a barrier that is not isotopic to  a component of $\Sigma^{(1)}$.  Hopefully the two product regions are disjoint, and we let $\Sigma^{(2)} \subset M$ represent all the barriers of $U^{(1)} \cup U^{(2)}$. Hopefully,  \eqref{rrank3} tells us that  each component of $M \setminus \Sigma^{(2)}$ has  rank at most $k^{(2)}$, and then we are done unless there is some $U^{(3)} \subset M$ with  genus at most $g^{(3)}$ that has a  barrier that is not isotopic to a component of $\Sigma^{(2)}$.  Continue this process.

  At each step, the surface $\Sigma^{(i)}$  has at least $i$  components.  And assuming the associated splitting is always $7$-acylindrical, the number of components of $\Sigma^{(i)}$ must always be at most $15k$; as remarked earlier, this follows from \eqref{rrankeq}. So, our process must terminate in at most $15k$ steps. This (more or less) proves Theorem \ref{maxsplitting}, with $k'=g=k^{(15k)}$.
  
\medskip

\noindent  \it The real proof. \rm Let's assume for simplicity of notation that all the relevant constants/functions are increasing. For instance, it suffices to prove Theorem \ref{maxsplitting} when $K$ is an increasing function, just by replacing $n \mapsto K(n)$ by $n \mapsto \max_{i\leq n} K(i)$. And if $K$ is increasing, so are the sequences $k^{(i)}$ and $g^{(i)}$. Using a similar maximum trick,  we can assume that the $L$ in Lemma~\ref{acylindrical} increases with $g$.

In the proof sketch above we never consider product regions that have genus bigger than\footnote{One may think that $g^{(15k)}$  suffices, but to say that the process terminates after $15k$ steps we take $15k+1$ steps and then say we have a contradiction, so we do consider product regions with genus  $g^{(15k+1)}$, if briefly.} $$g_{max} := g^{(15k+1)}.$$
Let $L_{max}$ be the  constant given by Lemma \ref{acylindrical},  with respect to the inputs $\epsilon$ and $g_{max}.$ The main step of the  argument  is as follows.

\begin {claim}[The repeated step]\label {stepclaim}
Suppose $\mathcal U^{(i)}$  is a collection of $i$ disjoint $(g^{(i)}, L_{max})$-product regions in $M$, where $i \leq 15k$. Then  there is a surface $\Sigma^{(i)} \subset M$  that represents the barriers of $\mathcal U^{(i)}$, and  where for every component $N \subset M\setminus \Sigma^{(i)}$  we have
$$\rank \pi_1 N \leq k^{(i)}.$$
 Moreover, there is some $L=L(\epsilon,g_{max},L_{max})$ such that if $V \subset M $ is a $(g^{(i+1)},L)$-product region  that has a barrier  that is not isotopic to a component of $\Sigma^{(i)}$,  there is a subproduct region $V' \subset V$ of $\epsilon$-width at least $L_{max}$ that is disjoint from every $U\in \mathcal U^{(i)}$.
\end {claim}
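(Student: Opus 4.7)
For part (1), I would apply Lemma~\ref{acylindrical} to $\mathcal{U}^{(i)}$, having chosen $L_{max}$ at the outset of the algorithm to exceed the threshold in that lemma for genus $g_{max}$. This yields a representing surface $\Sigma^{(i)}$ for which either (a) the induced splitting of $\pi_1 M$ is $7$-acylindrical, or (b) $\Sigma^{(i)}$ is a single regular fiber of a fibration of $M$. Each $U \in \mathcal{U}^{(i)}$ of genus at most $g^{(i)}$ contributes barriers to $\Sigma^{(i)}$ whose total genus is bounded in terms of $g^{(i)}$ (since barriers arise from iterated compressions of $\partial U$), so $\Sigma^{(i)}$ has $O(i g^{(i)})$ components, each of genus at most $g^{(i)}$. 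In case (a), combining Weidmann's inequality \eqref{rrankeq} with \eqref{rrank2nd} gives $\rank \pi_1 N \leq 15 k + c \cdot i g^{(i)} \leq k^{(i)}$ for every component $N \subset M \setminus \Sigma^{(i)}$, after possibly adjusting the constants in \eqref{step}. Case (b) is immediate, as the single complement component is a fiber times an interval and has rank at most $2 g^{(i)} + 1$.

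For part (2), let $V$ be a $(g^{(i+1)}, L)$-product region with a barrier $\Sigma_V$ not isotopic to any component of $\Sigma^{(i)}$. The heart of the argument is a \emph{thin-intersection claim}: for each $U \in \mathcal{U}^{(i)}$, the intersection $V \cap U$ is contained in the union of at most two subproduct regions of $V$ adjacent to the two components of $\partial V$, each of $\epsilon$-width at most some $C = C(\epsilon, g_{max}, L_{max})$. Granted this, the total $V$-height occupied by $\bigcup_{U \in \mathcal{U}^{(i)}} U$ is at most $2 i C \leq 30 k C$, so choosing $L$ larger than $30 k C + 3 L_{max}$ and invoking Corollary~\ref{cutting-product} carves out a subproduct region $V' \subset V$ of $\epsilon$-width at least $L_{max}$ from the unoccupied middle portion of $V$, disjoint from every $U \in \mathcal{U}^{(i)}$.

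To establish the thin-intersection claim, I would argue by contradiction: suppose two $V$-level surfaces $\Sigma_1, \Sigma_2 \subset V \cap U$ are separated by large $\epsilon$-distance in $V$ and each lies deep inside $U$ (at $\epsilon$-distance from $\partial U$ exceeding the constant of Lemma~\ref{intersectprs}). Then each $\Sigma_j$ is isotopic to a level surface of $U$, so by Waldhausen's cobordism theorem (see \S \ref{intervalbundle}) the topological subproduct region of $V$ they cobound is also a topological subproduct region of $U$. Tracking the characteristic compression bodies of $\partial V$ and $\partial U$ on the two outer sides via Proposition~\ref{prop:rel-compbody}, together with Theorem~\ref{double compression body theorem}(1) to ensure that those barriers are incompressible in $M$, one deduces that every barrier of $V$ is isotopic to a barrier of $U$, contradicting the hypothesized novelty of $\Sigma_V$.

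The main obstacle is formalizing the last step: one must verify that the ``outside'' of $V$ and the ``outside'' of $U$ see the same compression-body structure, given only that $V$ and $U$ share a wide topological subproduct region. This requires controlling how the compressions defining the characteristic compression bodies of $\partial V$ and $\partial U$ interact with the shared region, and the argument relies decisively on the width hypothesis on $U$ together with Theorem~\ref{double compression body theorem}(1), which guarantees that barriers of wide product regions are genuinely incompressible in all of $M$ rather than merely in the local complement.
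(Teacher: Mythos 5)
Your part (1) is essentially the paper's argument, with one small omission: in the exceptional case of Lemma \ref{acylindrical} the fibration may be over the orbifold $S^1/(z\mapsto\bar z)$, in which case the complementary components are twisted interval bundles over non-orientable surfaces rather than ``fiber times interval''; their rank is at most $g^{(i)}-1$, so this is harmless but should be said. The real problem is in part (2), exactly at the step you yourself flag as ``the main obstacle''. The logical core of part (2) is the implication: if $V$ meets some $U\in\mathcal U^{(i)}$ deeply enough that (via Lemma \ref{intersectprs}) they share a level surface, then every barrier of $V$ is already a barrier of $U$, hence isotopic to a component of $\Sigma^{(i)}$, contradicting the hypothesis on $V$. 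You propose to get this by ``tracking the characteristic compression bodies'' of $\partial V$ and $\partial U$ using Proposition \ref{prop:rel-compbody} and Theorem \ref{double compression body theorem}(1), but that does not work as stated: $\mathcal{DC}(M,V)$ and $\mathcal{DC}(M,U)$ are defined by compressing in the two different complements $M\setminus int(V)$ and $M\setminus int(U)$, and sharing a wide topological subproduct region gives no direct comparison between these two compression processes. The paper closes precisely this gap with Lemma \ref{recognizing barriers}: for a sufficiently wide product region, an embedded incompressible surface is a barrier if and only if it is obtained from a level surface by compressions and isotopies \emph{in all of $M$}, not merely in the complement of the region. With that characterization a shared level surface instantly transfers barriers from $V$ to $U$; without it (its proof passes to a cover and uses Theorem \ref{complementary compression bodies} and Theorem \ref{double compression body theorem}(2), so it is not a routine consequence of Proposition \ref{prop:rel-compbody}), your contradiction argument is incomplete --- you have correctly located the missing piece but not supplied it.

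A secondary issue: your ``thin-intersection claim'' is misstated. It is not true that $V\cap U$ lies in two bounded-width collars of $V$ adjacent to $\partial V$: a region $U$ can cut across the middle of $V$, provided the intersection stays close to $\partial U$. What the contrapositive of Lemma \ref{intersectprs} actually gives, once shared level surfaces are ruled out, is that every point of $U\cap V$ lies within a uniform $\epsilon$-distance $D$ of $\partial U\cup\partial V$; the parts near $\partial U$ form at most $2i$ sets of uniformly bounded $\epsilon$-diameter (Bounded Diameter Lemma together with Fact \ref{width diameter}), and these can sit anywhere along $V$. Your final bookkeeping --- take $L$ huge compared to $i$ times a uniform constant plus $L_{max}$, then apply Corollary \ref{cutting-product} to carve out $V'$ --- is the right conclusion and coincides with the paper's once the claim is corrected to this form.
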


 This claim fulfills all our hopes from the first pass.  Namely, the constants $k',L$ in  Theorem \ref{maxsplitting} will just be $k^{(15k)}$ and the $L$ in Claim~\ref{stepclaim}, and the proof proceeds as follows.  Starting with $\mathcal U^{(0)} = \emptyset$, assume that we have constructed a  collection $\mathcal U^{(i)}$ of disjoint genus $(g^{(i)},L_{max})$ product regions in $M$  with at least $i$ barriers. Claim \ref{stepclaim} shows that the components of $M \setminus \Sigma^{(i)}$ have rank at most $k^{(i)}$, so we are done with $\Sigma=\Sigma^{(i)}$ unless there  is some $(g^{(i+1)},L)$-product region $V \subset M$ that has a barrier that is not isotopic to a barrier from $\mathcal U^{(i)}$. If there is such a $ V$, we apply Claim \ref{stepclaim}, and set $\mathcal U^{(i+1)}= \mathcal U^{(i)} \cup \{V'\}$. Since our  product regions  all have $\epsilon$-width at least $L_{max}$, the process terminates in at most $15k$ steps, as explained in the first pass. (So, the requirement $i\leq 15k$ in Claim \ref{stepclaim}  is not an issue.) 

\begin{proof}[Proof of Claim \ref{stepclaim}]
Since $i\leq 15k$,  we have $g^{(i)} \leq g_{max}$, so we know that Lemma \ref{acylindrical} applies to the collection $\mathcal U^{(i)}$, giving a surface $\Sigma^{(i)} \subset M$ that represents the barriers of $\mathcal U^{(i)}$. If the  associated splitting of $\pi_1M$ is $7$-acylindrical,  each component of $ M \setminus \Sigma^{(i)}$ has rank at most $k^{(i)}$, by \eqref{rrank3}.  If $\Sigma^{(i)}$  is a fiber in  a fibration of $M$  over the circle,  $$\rank \pi_1( M \setminus \Sigma^{(i)}) = 2\cdot\mathrm{genus} (\Sigma^{(i)}) \leq 2g^{(i)} \leq k^{(i)}$$
as desired. If $\Sigma^{(i)}$ is a regular fiber in a fibration over $S^1 / (z \mapsto -z)$, then each component of $ M \setminus \Sigma^{(i)}$  compactifies to a twisted interval bundle over  a non-orientable surface with Euler characteristic
$$\chi = \chi(\Sigma^{(i)})/2 \leq (2g^{(i)} -2)/2=g^{(i)} -1.$$
This surface is a connected sum of at most 
$g^{(i)} -1$ copies of $\BR P^2$, and hence its fundamental group has rank at most $g^{(i)} -1 \leq k^{(i)}$. 

 Assume now that $V \subset M $ is a $(g^{(i+1)},L)$-product region that has a barrier that is not isotopic to a component of $\Sigma^{(i)}$, for some huge constant $L=L(\epsilon,g_{max},L_{max})$ soon to be determined. (Note that $g^{(i+1)} \leq g_{max} := g^{(15k+1)}$, since $i\leq 15k$.) Then $V$  cannot share a level surface with any $U\in \mathcal U^{(i)}$, so by Lemma \ref{intersectprs}, there is some $D=D(\epsilon,g_{max})$  such that for any $U\in \mathcal U^{(i)}$,
$$p \in U \cap V \Longrightarrow d_\epsilon(p,\partial U \cup \partial V) \leq D.$$
By the  Bounded Diameter Lemma and Fact \ref{width diameter}, each 
$$B_U = \{p \in U \ | \ d_{\epsilon}(p,\partial U) \leq D\}, \ \ U \in \mathcal U^{(i)}$$
 is a union of $2$ sets, one for each component of $\partial U$, each of which has  $\epsilon $-diameter at most some $D'=D'(\epsilon,g_{max})$. Since $\mathcal U^{(i)}$ only has $i$ product regions in it,  this means that if $L$ is huge with respect to $i$ and $D'$,  there is a point in $V$ that lies at an $\epsilon $-distance much bigger than $L_{max}$ from  both $\partial V$ and from  every $U$.  We  can then use Corollary \ref{cutting-product}   to find a subproduct region $V' \subset V$  with $\epsilon$-width at least $L_{max}$  that is disjoint from every $U \in \mathcal U^{(i)}$.
\end{proof}

\subsection{Setting up the barriers}\label {preliminary}
Working towards Lemma \ref{acylindrical}, we first need to discuss how to best represent the barriers of a collection $\mathcal U$ of product regions.   Note that the barriers of each $U\in \mathcal U$  are only well-defined up to isotopy, so that there is some ambiguity about how to best place the barriers of one product region in relation to those of another.  One would  ideally like to isotope all the double compression bodies $\mathcal{DC}(M,U)$, where $U\in \mathcal U$ to be pairwise disjoint, and then just take the barriers to be the corresponding incompressible boundary components.  In general this may not be possible, since product regions can be `nested', in the sense that some $V \in \mathcal U$ is contained in one of the compression bodies used in constructing $\mathcal{DC}(M,U)$.  However:

\begin{lem}[Non-nested subcollections]\label {non-nested}
	 There is some $L=L(g,\epsilon)$  as follows. Suppose $\mathcal U$  is a finite collection of  disjoint $(g,L)$-product regions in $M$. Then  there is some sub-collection $\mathcal V \subset\mathcal U$   such that
\begin{enumerate}
	\item every barrier of every $U\in \mathcal U$  is also a barrier of some $V\in \mathcal V$,
\item no two  distinct product regions $V_0,V_1 \in \mathcal V$ are isotopic in $M$,
\item all the double compression bodies $\mathcal{DC}(M,V)$, where $V \in \mathcal V $,  can be realized in $M$  in such a way that $$V_0 \neq V_1 \ \implies \ \mathcal{DC}(M,V_0) \cap \mathcal{DC}(M,V_1) =\emptyset.$$
\end{enumerate} 
\end{lem}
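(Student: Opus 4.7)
The plan is to construct $\mathcal V$ by iteratively removing product regions from $\mathcal U$ while preserving, at each step, the set of all barriers (up to isotopy in $M$). First, since two isotopic product regions have isotopic double compression bodies, and hence the same barriers, we replace $\mathcal U$ by a maximal subcollection $\mathcal W$ of pairwise non-isotopic product regions; this establishes (2) and preserves (1).

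Next we introduce nesting. Given $V_0\neq V_1\in\mathcal W$, say $V_0$ is \emph{nested in} $V_1$ if, letting $A$ denote the component of $M\setminus V_1$ containing $V_0$ and $S_1=\partial V_1\cap\partial A$, the product region $V_0$ can be isotoped in $A$ into the characteristic compression body $C:=C(A,S_1)$. The key claim is: \emph{if $V_0$ is nested in $V_1$, then every barrier of $V_0$ is a barrier of $V_1$.} Granting this, iteratively delete nested product regions: while some $V_0\in\mathcal W$ is nested in some $V_1\in\mathcal W$, replace $\mathcal W$ by $\mathcal W\setminus\{V_0\}$. The process terminates since $|\mathcal W|$ strictly decreases; call the resulting collection $\mathcal V$. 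Properties (1) and (2) are preserved, and no $V\in\mathcal V$ is nested in any other element of $\mathcal V$.

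To prove the key claim, isotope so that $V_0\subset C$. A standard innermost-disk argument, using that $\partial_{int}C$ is incompressible in $A$, shows every compression disk for $\partial V_0$ in $M\setminus V_0$ can be isotoped into $C$; hence $\mathcal{DC}(M,V_0)$ has a representative contained in $C$. A barrier $\Sigma_0$ of $V_0$ is an embedded closed incompressible surface in $M$, isotopic to a component of $\partial\mathcal{DC}(M,V_0)$, and in particular realizable inside $C$ as an incompressible surface there. By Fact \ref{incincomp}, $\Sigma_0$ is then isotopic in $C$ to a component of $\partial_{int}C\subset\partial\mathcal{DC}(M,V_1)$. Because $\Sigma_0$ is incompressible in $M$, it is a barrier of $V_1$ by definition. (Note that Theorem \ref{double compression body theorem}(1) is not even needed for the claim itself, although it will be used implicitly through $L$ elsewhere.)

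Finally, for (3), fix representatives as follows. Set $N_V:=M\setminus\bigcup_{V'\in\mathcal V}\mathrm{int}(V')$, viewed as a submanifold of $M\setminus V$, and for each boundary component $S\subset\partial V$ let $\widetilde C_V(S):=C(N_V,S)$ be the characteristic compression body of $S$ in $N_V$. Setting $\widetilde{\mathcal{DC}}(M,V):=V\cup\bigcup_S\widetilde C_V(S)$, these are tautologically disjoint across distinct $V\in\mathcal V$. The bulk of the work, and the main obstacle, is verifying that $\widetilde{\mathcal{DC}}(M,V)$ is isotopic in $M$ to a representative of $\mathcal{DC}(M,V)$ — equivalently, that $\widetilde C_V(S)$ equals $C(M\setminus V,S)$ up to isotopy in $M\setminus V$. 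Only the inclusion $\widetilde C_V(S)\subseteq C(M\setminus V,S)$ is immediate; the reverse requires showing that no compression disk for $\partial_{int}\widetilde C_V(S)$ in $M\setminus V$ essentially crosses another $V'\in\mathcal V$. Assuming such a disk existed, an innermost-disk surgery converts it into a compression disk for $\partial V'$ realized in the complement of $V'$, and then the non-nesting of $V'$ in $V$ together with the unknotting Corollary \ref{unknotted} (with $L$ chosen sufficiently large in terms of $g$ and $\epsilon$) produces an isotopy of $V'$ into a characteristic compression body of $V$, contradicting the exit condition of the iterative procedure.
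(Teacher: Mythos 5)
There is a genuine gap, and it sits exactly where you claim the geometry is not needed. In your key claim you isotope $V_0$ into $C=C(A,S_1)$ and then assert that a ``standard innermost-disk argument, using that $\partial_{int}C$ is incompressible in $A$'' pushes every compressing disk for $\partial V_0$ in $M\setminus int(V_0)$ into $C$. Such a disk lives in all of $M\setminus int(V_0)$: it may cross $S_1$, run through $V_1$, and continue on the far side, and there the incompressibility of $\partial_{int}C$ \emph{in $A$} says nothing. To kill the innermost circles you need $\partial_{int}C$ to be incompressible in $M$ (or at least in $M\setminus int(V_0)$), and whether that holds is precisely the geometric content of Proposition \ref{incompressible-incompressible} / Theorem \ref{double compression body theorem}(1): for a merely topological product region it can fail (compare the knotted-neighborhood discussion before Theorem \ref{double compression body theorem}), and when it fails the disk that escapes through $V_1$ is exactly the phenomenon you must rule out. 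So your parenthetical remark that Theorem \ref{double compression body theorem}(1) is not needed for the claim is where the proof breaks; the paper's argument is organized around this point. (The paper takes a \emph{minimal} subcollection satisfying (1), and when a component of $\partial_{int}C(M_{\mathcal V},S)$ compresses in $M\setminus int(V)$ it invokes Proposition \ref{incompressible-incompressible} to conclude that component is homotopic to a level surface of some other $W\in\mathcal V$, and then Lemma \ref{recognizing barriers} --- which allows the compressions and isotopies to pass through product regions --- to delete $W$ without losing barriers. Lemma \ref{recognizing barriers} plays no role in your argument, and its job is exactly the step you are eliding.)

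The final step for (3) has the same problem in a second form. You must show that a compressing disk for $\partial_{int}\widetilde C_V(S)$ in $M\setminus int(V)$ cannot essentially cross another $V'\in\mathcal V$, and your route --- surger to get a compressing disk for $\partial V'$ in $M\setminus int(V')$, then use ``non-nesting together with Corollary \ref{unknotted}'' to isotope $V'$ into a characteristic compression body of $V$ --- is not justified. Corollary \ref{unknotted} requires an ambient compression body (or a compression-body complementary component), which you have not produced in this situation, and the mere existence of a compressing disk for $\partial V'$ on one side does not imply $V'$ is nested in $V$ in your sense. As it stands, both the key claim and step (3) need the width hypothesis fed in through Proposition \ref{incompressible-incompressible} (or an equivalent), so the proof is not complete.
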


 The proof of Lemma \ref{non-nested} will  appear below, and will be a quick consequence of the following lemma and Proposition \ref{incompressible-incompressible}.

\begin{lem}[Recognizing barriers]\label {recognizing barriers}
	There is some $L=L(g,\epsilon)$   such that if $U$ is a $(g,L)$-product region in $M$, an embedded incompressible  surface $\Sigma \subset M $ is a barrier of $U$ if and only if it can be obtained from a level surface of $U$ via a sequence of compressions and isotopies. 
\end{lem}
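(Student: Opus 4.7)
Taking $L$ larger than the constant in Theorem \ref{double compression body theorem} so that both parts of that result apply. For the forward direction, if $\Sigma$ is a barrier then by definition it is isotopic in $M$ to a component $T$ of $\partial\mathcal{DC}(M,U)=\partial(C_1\cup\CN(U)\cup C_2)$ that is incompressible in $M$. Such a $T$ is either parallel in $\mathcal{DC}(M,U)$ to a level surface of $U$ (when the corresponding characteristic compression body $C_i$ is trivial) or is an interior boundary component of some nontrivial $C_i=C(M\setminus\mathrm{int}(U),S_i)$. In the latter case, Fact \ref{compressions isotopies} applied to the pair $(M\setminus\mathrm{int}(U),S_i)$ writes $T$ as the result of a sequence of isotopies and compressions starting from the level surface $S_i$ of $U$, all carried out inside $M\setminus\mathrm{int}(U)\subset M$.

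For the reverse direction, suppose $\Sigma\subset M$ is incompressible and arises from a level surface $S=S_0$ of $U$ via a sequence $S_0,S_1,\ldots,S_n=\Sigma$ of compressions and isotopies. The first step I would take is to observe that the image of $\pi_1$ in $\pi_1 M$ is monotone along the sequence: isotopy preserves this image, while a compression of $S_{i-1}$ along a disk $D$ places $S_i$ in a regular neighborhood of $S_{i-1}\cup D$, so the induced map $\pi_1 S_i\to\pi_1 M$ factors through $\pi_1(S_{i-1}\cup D)$, whose image in $\pi_1 M$ coincides with that of $\pi_1 S_{i-1}$ since $\partial D$ already bounds $D\subset M$. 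Iterating, the image of $\pi_1\Sigma\to\pi_1 M$ is contained in the image of $\pi_1 U$, and hence in the image of $\pi_1\mathcal{DC}(M,U)$.

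By Theorem \ref{double compression body theorem} (2), either $\mathcal{DC}(M,U)\hookrightarrow M$ is $\pi_1$-injective or $U$ bounds a twisted interval bundle to one side. In the $\pi_1$-injective case I would let $\hat M\to M$ be the cover corresponding to $\pi_1\mathcal{DC}(M,U)$; by Theorem \ref{double compression body theorem} (1) and Lemma \ref{unique-compact}, $\mathcal{DC}(M,U)$ lifts homeomorphically to a standard compact core of $\hat M$, and by the image-preservation, $\Sigma$ also lifts homeomorphically to an embedded incompressible closed surface in $\hat M$. A standard innermost-disk intersection-elimination against $\partial\mathcal{DC}(M,U)$, using incompressibility of both surfaces and irreducibility of $\hat M$, then isotopes this lift to be disjoint from $\partial\mathcal{DC}(M,U)$, placing it either in a product end $\partial_i\mathcal{DC}\times[0,\infty)$---where Waldhausen's cobordism theorem makes it isotopic to the level $\partial_i\mathcal{DC}$---or in $\mathcal{DC}=C_1\cup U\cup C_2$, where Fact \ref{incincomp} applied within each $C_i$ together with the product structure on $U$ identifies it up to isotopy with a component of $\partial\mathcal{DC}(M,U)$. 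Because the covering map embeds $\mathcal{DC}$ in $M$, the isotopy descends. The twisted-interval-bundle case is handled directly: the level surface $S_1$ adjacent to the bundle is itself incompressible in $M$, so $\pi_1 S_1$ injects into $\pi_1 M$; the image-preservation forces $\pi_1\Sigma$ into a conjugate of $\pi_1 S_1$, and since compressions only decrease $|\chi|$, an Euler-characteristic comparison forces equality, whence $\Sigma$ is isotopic to $S_1\subset\partial\mathcal{DC}(M,U)$.

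The main obstacle will be the intersection-elimination step inside $\mathcal{DC}(M,U)$: once the lift of $\Sigma$ has been pushed into $\mathcal{DC}$, one must further cut along $\partial U\subset\mathcal{DC}$, which is not incompressible there when either $C_i$ is nontrivial, so some care is needed to conclude that every incompressible closed surface in the union-of-compression-bodies $\mathcal{DC}$ is isotopic to a component of $\partial\mathcal{DC}$ rather than something more exotic. The twisted-interval-bundle subcase also rests on the claim that the bundle's boundary is incompressible in all of $M$, which I expect to be extractable from the proof of Theorem \ref{double compression body theorem} (2).
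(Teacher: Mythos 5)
Your forward direction and your monotonicity observation (that compressions and isotopies can only shrink the $\pi_1$-image, so the image of $\pi_1\Sigma$ lands in a conjugate of the image of $\pi_1 U$) are both fine, and the latter is implicitly how the paper also gets $\Sigma$ to lift to a cover. The serious problem is your twisted-interval-bundle case. In that alternative of Theorem \ref{double compression body theorem} (2) the conclusion is that $U$ \emph{compresses to one side}; since the two boundary components of $U$ are homotopic through $U$, the level surface adjacent to the bundle is therefore \emph{compressible} in $M$, not incompressible as you assert, and the image of $\pi_1 U$ in $\pi_1 M$ is a proper quotient of a surface group, so the Euler-characteristic/index argument has nothing to stand on. Moreover the conclusion you aim for is not even the right one: in this case the barriers of $U$ are the interior boundary components of the nontrivial characteristic compression body on the compressible side, and one must show $\Sigma$ is isotopic to one of \emph{those}. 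This is exactly where the paper does its real work (Claim \ref{projectbar}): it passes to the cover $\hat M$ corresponding to the image of $\pi_1 U$ (the cover for $\pi_1\mathcal{DC}(M,U)$ is not available here, since that inclusion is not $\pi_1$-injective) and produces a compact core of $\hat M$ of the form $\hat C_1\cup\hat U_1\cup\hat I\cup\hat U_2\cup\hat C_2$ by unwrapping the twisted bundle in a double cover of $M$, using Theorem \ref{double compression body theorem} (1) to see this core has incompressible boundary projecting to barriers.

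In the $\pi_1$-injective case, the step you flag yourself is a genuine gap, not a routine innermost-disk argument: components of $\partial\mathcal{DC}(M,U)$ isotopic to level surfaces of $U$ can be compressible in $\hat M$ (e.g.\ when exactly one characteristic compression body is trivial), so you cannot invoke incompressibility of both surfaces to remove intersections, and after pushing the lift into $\mathcal{DC}(M,U)$ you must cut along $\partial U$, which is compressible there; what you would need is an analogue of Fact \ref{incincomp} for double compression bodies, which you do not prove and which the paper never proves either. Instead the paper exploits the width hypothesis geometrically: it splits $\hat U=V_1\cup V_2\cup V_3$, applies Theorem \ref{complementary compression bodies} to see that the complement of $\hat U$ consists of compression bodies with missing interior boundary, and represents the lift of $\Sigma$ by a minimizing simplicial ruled surface; by the Bounded Diameter Lemma this representative either meets $V_2$, hence is trapped in $\hat U$, forcing $U$ to be incompressible and $\Sigma$ a level surface, or it misses $V_2$ and lies in a complementary compression body whose interior boundary consists only of ends, where Fact \ref{incincomp} does apply. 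So as written, one half of your converse rests on an unproved classification lemma and the other half on a false statement; both would need to be repaired, and the width hypothesis (which your argument never really uses beyond quoting Theorem \ref{double compression body theorem}) is what the paper's proof leans on to avoid exactly these issues.
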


To understand the point of the lemma, note  that Fact \ref{compressions isotopies}  implies that  the barriers of $U $  are exactly the  incompressible surfaces that are obtained from some  component $S \subset\partial U$ via  a sequence of compressions and isotopies in $M \setminus int(U)$, rather than $M$.  So, the point is that if $U$ is wide enough, one does not produce any extra surfaces if one allows the compressions and isotopies to pass through $U$.

\begin {proof}
 The obvious direction is obviously obvious, so let $\Sigma$  be an incompressible surface in $M$  obtained from a level surface of $U$ via a sequence of compressions and isotopies in $M$.  We want to show that $\Sigma$  is a barrier of $U$. To do this, we work in a cover of $M$.

Let $\pi : \hat M \longrightarrow M$ be  the cover corresponding to the  image  of the map $\pi_1 U \longrightarrow \pi_1 M$ induced by inclusion.  Then $U$  lifts  homeomorphically to a  product region $\hat U$  such that the inclusion $\hat U\hookrightarrow \hat M$  is $\pi_1$-surjective.  Using Corollary \ref{cutting-product}, split $\hat U$ into three  adjacent product regions $$\hat U = V_1 \cup V_2 \cup V_3.$$ Taking $L$  to be large, we can assume  that each $V_i$ has $\epsilon$-width at least any given $L'=L'(g,\epsilon)$. With $V_2$ as the $N$ in Theorem \ref{complementary compression bodies}, it follows that  for large $L'$, each component of $\hat M \setminus\hat U$ is a compression body with missing interior boundary. 

 Regard $\hat M$ as the interior of a manifold with boundary $\bar M$, so that  each component of $\bar M \setminus int(\hat U)$ is an actual compression body. 
\begin {claim}
	Every  incompressible surface in $\hat M$ that is homotopic to a component of $\partial \bar M$  projects to a barrier of $U$ in $M$.\label {projectbar}\end {claim}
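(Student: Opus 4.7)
My plan is to establish that the covering projection $\pi$ restricts to a homeomorphism on each compression body component $\bar C_j$ of $\bar M \setminus int(\hat U)$, identifying it with a characteristic compression body $C_i$ appearing in $\mathcal{DC}(M,U) \subset M$. Once this identification is in place the claim becomes an immediate corollary of Theorem \ref{double compression body theorem} (1).

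First I would apply Theorem \ref{double compression body theorem} (2) to $U$, which---outside of the exceptional twisted-interval-bundle case discussed at the end---gives that $\mathcal{DC}(M,U) \hookrightarrow M$ is $\pi_1$-injective. A van Kampen computation then shows that the image of $\pi_1\mathcal{DC}(M,U)$ in $\pi_1 M$ agrees with the image of $\pi_1 U$, since each compression body $C_i$ glued to $U$ has fundamental group generated by that of its exterior boundary, which is a component of $\partial U$ and so already lives inside the image of $\pi_1 U$. Injectivity then forces $\pi_1\mathcal{DC}(M,U) = \pi_1\hat M$, so $\mathcal{DC}(M,U)$ lifts homeomorphically to a compact core of $\hat M$. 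Now by Proposition \ref{prop:rel-compbody} applied to the pair $(\bar M \setminus int(\hat U), \partial\hat U)$, the characteristic compression body of each component of $\partial\hat U$ is unique up to isotopy rel boundary; and both the lifted $C_i$ and the given $\bar C_j$ are characteristic compression bodies for the same boundary component of $\hat U$: $\bar C_j$ trivially, since its interior boundary sits in $\partial\bar M$ and is therefore incompressible in $\bar M \setminus int(\hat U)$; the lifted $C_i$ by construction, using the $\pi_1$-injectivity just established. Hence after an isotopy rel $\hat U$, the lifted $C_i$ coincides with $\bar C_j$, and $\pi|_{\bar C_j}$ becomes the homeomorphism onto $C_i \subset M$.

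Given this setup, the conclusion of the claim is straightforward. The hypothesis provides $\hat\Sigma \subset \hat M$ incompressible and homotopic in $\hat M$ to some $\hat T \subset \partial\bar M$ lying in $\partial_{int}\bar C_j$. Under the identification, $\hat T$ corresponds to a component $\Sigma_0$ of $\partial_{int} C_i \subset \partial\mathcal{DC}(M,U)$. If $\Sigma_0$ is not isotopic to a level surface of $U$, Theorem \ref{double compression body theorem} (1) says it is incompressible in $M$ and is therefore a barrier; if it is a level surface, then $C_i$ is a trivial compression body, forcing $\partial U$ to be incompressible in $M \setminus int(U)$ and hence in $M$ (since $U$ is a product), so $\Sigma_0$ is again a barrier. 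Finally $\pi(\hat\Sigma)$ is homotopic in $M$ to $\pi(\hat T) = \Sigma_0$, which is a barrier of $U$.

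The main obstacle is the exceptional case of Theorem \ref{double compression body theorem} (2), where $U$ compresses to one side in $M$ and bounds a twisted interval bundle over a non-orientable surface on the other: there $\mathcal{DC}(M,U) \hookrightarrow M$ fails to be $\pi_1$-injective, the identification above breaks, and a separate argument is needed. The natural strategy is to pass to the double cover $M' \to M$ associated with orienting that twisted bundle, run the argument above in $M'$, and project the resulting barrier back down to $M$; in the outer proof of Lemma \ref{acylindrical} this exceptional topology is precisely what forces $\Sigma$ to appear as a regular fiber in a fibration over $S^1/\langle z \mapsto \bar z\rangle$, i.e., the alternative conclusion of that lemma.
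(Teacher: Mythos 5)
Your overall architecture (split on whether $\mathcal{DC}(M,U)\hookrightarrow M$ is $\pi_1$-injective via Theorem \ref{double compression body theorem}~(2), lift $\mathcal{DC}(M,U)$ to a compact core of $\hat M$ in the injective case, and handle the twisted-interval-bundle case by passing to a double cover) is the same as the paper's, but your key identification step has a genuine gap. You assert that the lifted $C_i$ is a characteristic compression body for the corresponding component of $\partial\hat U$ in $\bar M\setminus int(\hat U)$ ``by construction, using the $\pi_1$-injectivity just established,'' and then invoke Proposition \ref{prop:rel-compbody} to isotope it onto $\bar C_j$. That requires the interior boundary of the lifted $C_i$ to be incompressible in $\bar M\setminus int(\hat U)$, and this does not follow from $\pi_1$-injectivity of $\mathcal{DC}(M,U)$ in $M$. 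In the generic non-exceptional case ($U$ compressing to one side only), the relevant $C_i$ is \emph{trivial}: its interior boundary is parallel to a level surface, which is in general compressible in $M$ (a boundary-parallel surface in a handlebody is the standard example --- this also shows your level-surface sub-case implication ``incompressible in $M\setminus int(U)$ and hence in $M$ since $U$ is a product'' is false, since compressions can pass through $U$). For the lifted copy, incompressibility of $\partial_{int}C_i$ in $M\setminus int(U)$ does not lift either, because $\hat M\setminus int(\hat U)$ does not project into $M\setminus int(U)$ (there are other preimage components of $U$); and incompressibility of the lift in $\bar C_j$ is equivalent to $\bar C_j$ being a trivial compression body, which is essentially what you would need to prove. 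So the isotopy $\text{lifted }C_i\simeq\bar C_j$ is unjustified exactly where you use it.

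The repair is to drop the side-by-side identification entirely, as the paper does: once $\mathcal{DC}(M,U)$ lifts homeomorphically to a compact core $\hat{DC}\subset\hat M$, the cited result of McCullough (as used in Lemma \ref{unique-compact}) gives that every component of $\partial\bar M$ is homotopic in $\bar M$ to \emph{some} component of $\partial\hat{DC}$; if that component of $\partial\bar M$ is homotopic to an incompressible surface $\hat\Sigma$, then the corresponding component of $\partial\mathcal{DC}(M,U)$ is $\pi_1$-injective in $M$ (since $\pi_1\hat M\le\pi_1 M$), hence a barrier, and $\pi(\hat\Sigma)$ is homotopic to it --- no appeal to Theorem \ref{double compression body theorem}~(1) or to which side anything lies on is needed. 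Finally, your exceptional case is only a sketch: ``run the argument in the double cover $M'$'' can be made to work, but the paper's actual content there is the lift of $K'=C_1'\cup U_1'\cup\hat I\cup U_2'\cup C_2'$ to $\hat M$ together with Theorem \ref{double compression body theorem}~(1) applied to the nontrivial side $C$ to see the resulting core has incompressible boundary; and the fibration over $S^1/\langle z\mapsto\bar z\rangle$ in Lemma \ref{acylindrical} arises from long annuli in the outer argument, not from this claim, whose conclusion in the exceptional case is still that the surfaces project to barriers (components of $\partial_{int}C$).
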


 Postponing the proof the claim for a moment, let's finish the lemma. Compressions and isotopies lift to covers, so $\Sigma$  lifts homeomorphically to an incompressible surface $\hat \Sigma \subset\hat M$. By Lemma \ref{minimizers}, we may homotope the  inclusion of $\hat \Sigma$ to a simplicial ruled surface $f : \hat \Sigma \longrightarrow \hat M.$

If $f(\hat \Sigma)$  intersects $V_2$, the  Bounded Diameter Lemma implies that $f(\hat \Sigma) \subset \hat U$ as long as $L'$ is large.  But then $f$ is a $\pi_1$-injective map from a surface into a product region, where the surface has genus at most that of the product region,  implying that $f : \hat \Sigma \longrightarrow \hat U$ is a homotopy equivalence.  In other words, $U \subset M$ is incompressible and $\Sigma$ is homotopic to a level surface in $U$, and hence $\Sigma$ is a barrier of $U$.

So,  suppose that $f(\hat \Sigma)$   does not intersect $V_2$. Then $f(\hat \Sigma)$ is an incompressible surface in a component of $\bar M \setminus int(V_2)$, which is a compression body  whose interior boundary is a union of components of $\partial \bar M$. So by Fact \ref{incincomp}, $f$ is homotopic to a  component of $\partial \bar M$. Hence,  Claim \ref{projectbar}  implies that  the (embedded) surface $\Sigma$  is a barrier of $U$.

\medskip

  It remains to prove Claim \ref{projectbar}. To do this, it suffices to construct a compact core for $\hat M$ and  prove that every incompressible boundary component of this core projects to a  barrier of $U$ in $M$. So, let $$DC:= \mathcal {DC}(M,U)$$ be the double compression body of $U$. If $DC\hookrightarrow M$ is $\pi_1$-injective, then $DC$ lifts homeomorphically to a compact core $\hat {DC} \subset \hat M$, and the claim is clear. We can therefore  assume that $DC \hookrightarrow M$ is \emph {not} $\pi_1$-injective.
If $L$  is large, Theorem \ref{double compression body theorem} (2)  then implies that some component $I \subset M \setminus int(U) $ is a twisted interval bundle and that if $S \subset\partial U$ faces the \emph{other}  component of $M \setminus int(U) $, the compression body $C:= C(M \setminus int(U),S)$ is nontrivial. 

Now $I$ is double covered by a trivial  interval bundle $\hat I \longrightarrow I$, and we can construct a double cover $M' \longrightarrow M$ by gluing two copies of $M\setminus int(I)$ to the boundary components of $\hat I$. This $M'$ contains a union $$K' := C_1' \cup U_1' \cup \hat I \cup U_2' \cup C_2',$$
where each subset shares a boundary component with the next, each $C_i'$  projects homeomorphically onto $C$, and each $U_i'$  projects homeomorphically onto $U$.  Moreover, we can  assume that  $\pi$  factors as $$\hat M \longrightarrow M' \longrightarrow M$$  in such a way that $\hat U$ projects homeomorphically onto $U_1'$, say. Since the inclusion $U_1' \hookrightarrow K'$ is $\pi_1$-surjective, we can then lift $K'$  homeomorphically to a subset $$\hat K := \hat C_1 \cup \hat U_1 \cup \hat I \cup \hat U_2 \cup \hat C_2 \subset\hat M,  \text{ where } \hat U_1 = \hat U.$$
But the compression body $C$ was nontrivial, so Theorem \ref{double compression body theorem} (1) implies that $\partial_{int} C$ is incompressible in $M$, and hence $\hat K \subset\hat M$ has incompressible boundary.  Therefore $\hat K \hookrightarrow \hat M$ is $\pi_1$-injective,  in addition to being $\pi_1$-surjective, implying that $\hat K$  is a compact core for $\hat M$.  And by construction,  every boundary component of $\hat K$  projects to a component of $\partial_{int} C$, and hence to a barrier of $U$ in $M$.\end{proof}

 We are now ready to prove Lemma \ref{non-nested}.

\begin{proof}[Proof of Lemma \ref{non-nested}]
Let $\mathcal V \subset\mathcal U$ be a  minimal sub-collection satisfying (1). Then (2) holds automatically. Let $$ M_{\mathcal V} := M \setminus \cup_{V \in \mathcal V} int(V).$$
 It suffices to show that whenever $V \in \mathcal V$ and $S \subset \partial V$  is some boundary component,  the characteristic compression body $C(M_{\mathcal V}, S)$ is also a characteristic compression body for $S$ in  the larger manifold $M\setminus int(V)$.  If this is true, then  for all $V$, say with $\partial V = S_1 \cup S_2$,  we will have
$$\mathcal {DC}(M,V) = C(M_{\mathcal V}, S_1) \cup V \cup C(M_{\mathcal V}, S_2).$$
And since the characteristic compression bodies of the boundary components of $M_{\mathcal V}$ can all be taken to be disjoint, see Proposition \ref{prop:rel-compbody}, this will prove property (3) above.

So, let $V\in \mathcal V$  and let $S \subset \partial V$  be a boundary component. Assume $$C := C(M_{\mathcal V},S)$$ is  not a characteristic compression body for $S$ in $M \setminus int(V).$ Then some component $X \subset\partial_{int} C$ is compressible in $M \setminus int(V)$. If $L$  is large, then Proposition \ref{incompressible-incompressible} implies that $X$  is homotopic in $M_{\mathcal V}$  to a boundary component of some $W \in \mathcal V$, where $W \neq V$. In particular, a level surface of $W$ can be obtained from a  level surface of $V$ via by a sequence of compressions and isotopies in $M$, where we use  a theorem of Waldhausen \cite{Waldhausenirreducible}   to convert the homotopy given by Proposition \ref{incompressible-incompressible} to an isotopy. It follows from Lemma \ref{recognizing barriers} that  any barrier of $W$ is also a barrier of $V$. So, we can eliminate $W$ from our collection of product regions without changing the collection of associated barriers, contradicting  the minimality of $\mathcal V$.
\end{proof}

\subsection{Proof of Lemma \ref{acylindrical}} \label {acylpf} As in the statement of the lemma, let $M$ be a  complete hyperbolic $3$-manifold, let $\mathcal U$  be a collection of disjoint $(g,L)$-product regions in $M$. Since the conclusion of Lemma \ref{acylindrical} does not involve the collection of product regions, but only the associated collection of barriers, we may assume via Lemma \ref{non-nested} that no two  distinct $U,V \in \mathcal U$ are isotopic in $M$, and the double compression bodies $\mathcal {DC}(M,U)$, where $U\in \mathcal U$, have been chosen so that $$U \neq V \ \implies \ \mathcal {DC}(M,U) \cap \mathcal {DC}(M,V)= \emptyset.$$

Let $\Sigma \subset M$ be the surface representing the barriers of $\CU$ whose components are defined explicitly as follows. For each incompressible product region $U \in \CU$, we include in $\Sigma$ a level surface of $U$ that lies at an $\epsilon$-distance at least $L/3$, say, from $\partial U$. Here, recall that $L=L(\epsilon,g)$ is a lower bound for the $\epsilon$-widths of our product regions. For each compressible $U\in \CU$, we include in $\Sigma$ all incompressible boundary components of $\mathcal {DC}(M,U)$, realized as in the previous paragraph. 

We want to show that if $L=L(\epsilon,g)$ is large, then either
\begin{enumerate}
	\item $\Sigma$ is connected, and is a regular fiber with respect to a fibration of $M$ over $S^1$ or $S^1 / (z \mapsto -z)$, or
\item the splitting of $\pi_1 M$  associated to $\Sigma$ is $7$-acylindrical.
\end{enumerate} 

Here,  the splitting of $\pi_1 M$ in (2) has  vertices corresponding to the fundamental groups of components of the manifold with boundary $$M_\Sigma := \overline{M \setminus \Sigma}$$ obtained by cutting $M $ along $\Sigma$, and edges corresponding to the fundamental groups of components of $\Sigma$. Note that all these components $\pi_1$-inject into $M$. 

\begin{defi}
A \emph{length $n$ annulus in $(M,\Sigma)$} is a map $$f : S^1 \times [0,n] \longrightarrow M$$ that is the concatenation of `essential' maps
$$f_i : (S^1 \times [i,i+1], S^1 \times \{i,i+1\} )\longrightarrow (M_\Sigma, \partial M_\Sigma),$$
such that for each $i$, the loops $f(S^1 \times \{i-\epsilon\})$ and $f(S^1 \times \{i+\epsilon\})$ lie on opposite sides of $\Sigma$ for small $\epsilon>0$. Above, `essential' means $\pi_1$-injective and not homotopic rel boundary into $\partial M_\Sigma$.
\end{defi} 

Then the splitting of $\pi_1 M$ associated to $\Sigma$ is  $n$-acylindrical exactly when there is no length $n$  annulus in $(M,\Sigma)$. See \cite{Debloisexplicit} for a very detailed treatment of this in the case that $\Sigma$ is connected.

\medskip

We'll prove the dichotomy (1) or (2)  in a moment, but first we prove the following Lemma, which will allow us to ignore compressible product regions $U\in \mathcal U^c$.

\begin{lem}[Reducing to the incompressible case]\label {reducing to incompressibility}
Suppose that a component $\Sigma_0 \subset \Sigma$ is an incompressible boundary component of $\mathcal{DC}(M,U)$, where $U \in \CU$ is compressible. If $L=L(\epsilon,g)$ is large, then there is no essential annulus $$f : (A,\partial A)\longrightarrow (M_\Sigma,\partial M_\Sigma), \ \ A = S^1 \times [0,1]$$ such that $f(S^1 \times \{0\}) \subset \Sigma_0$ and $f(S^1 \times \{t\}) \subset \mathcal {DC}(M,U)$ for small $t$. 
\end{lem}

\begin {proof}[Proof of Lemma \ref{reducing to incompressibility}]
 Let $N \subset M_\Sigma$
 be the component containing the image of $f$, and let $\Sigma_1 \subset \Sigma$ be the component containing $f(S^1 \times \{1\})$.  By the Annulus Theorem (c.f.\ Scott \cite{Scottstrong}) we can assume that $f$ is an \emph{embedded} annulus in $N$. In particular, this means that the two curves
 $$\gamma_i := f(S^1 \times \{i\}) \subset \Sigma_i$$
 are \emph{simple}. By Corollary~\ref{cutting-product}, if $L$  is large we can write $U$  as a union of three adjacent product regions
$$U = U_1 \cup U_2 \cup U_3,$$
each of which has $\epsilon$-width at least some large $L'=L'(\epsilon,g)$, to be determined. Write 
$$\mathcal {DC}(M,U) = C \cup U_2 \cup D,$$
where $C,D$ are  compression bodies containing $U_1,U_3$, and assume that $S \subset\partial C$. Since $U$ is not $\pi_1$-injective, but $S$ is, $C$ is a \emph {nontrivial} compression body.  Let $\gamma$ be the geodesic in $M$ homotopic to $\gamma_0,\gamma_1$.

Assume first that $\gamma$ intersects $ U_2$. By Corollary \ref{construction}, the inclusion of $\Sigma_0$  is then homotopic to a simplicial ruled surface $f : \Sigma_0 \longrightarrow M $ whose image intersects $U_2$. Since $d_\epsilon(U_2,\partial U)$ is large, the Bounded Diameter Lemma implies that $f(\Sigma_0) \subset U$. As $\Sigma_0$ is incompressible in $M$, it follows from Lemma~\ref{its a level surface lem} that $f$ is homotopic within $U$ to a level surface. Hence, $U$ is incompressible in $M$, a contradiction.

Now assume that $\gamma \subset M\setminus U_2$. 

\begin {claim}\label{not both homo}
The curves $\gamma_0,\gamma_1$ are not homotopic within $M \setminus U_2$.
\end {claim}
\begin {proof}

Suppose there is such a homotopy, which we can regard as a map $g: A \longrightarrow M\setminus U_2$ from an annulus $A$ that agrees with $f$ on $\partial A$. Since $M$ has no cusps, $M$ is atoroidal, so annuli that agree on their boundary curves are homotopic rel $\partial$. In other words, $f$ is homotopic rel $\partial A$ to an annulus $g$ in $M\setminus U_2$.
 
We claim that actually, $f$ is homotopic rel $\partial A$ \emph{in $N$ to an annulus in $N \setminus U_2$}. To see this, consider the cover $\tilde M \longrightarrow M$ corresponding to $\pi_1 N$. Since $N \subset M$ has incompressible boundary, it lifts to a standard compact core $\tilde N \subset \tilde M$, and $f$ lifts to an annulus $\tilde f  : A \longrightarrow \tilde N$. We can then lift the homotopy $H$ that takes $f$ to $g$ to a homotopy $\tilde H$ from $\tilde f $ to some new annulus in $\tilde M $. Compose $\tilde H$ with a retraction $\tilde M \longrightarrow \tilde N$ that takes $\tilde M \setminus \tilde N$ into $\partial  \tilde N$ and then project the result to $N \subset M$. This gives a homotopy rel $\partial A$ in $N$ from $f$ to an annulus in $N\setminus U_2$.


We  can therefore assume  after a homotopy rel  boundary that the image of $f$  does not intersect $U_2$. All  the interior boundary components of $C$  are boundary components of $N$. So, $C$ is the entire component of $N \setminus int(U_2)$   that contains the image of $f$.  In other words, $f$ is a  properly embedded annulus in $(C,\partial_{int} C)$. A quick surgery argument then proves that $f$ is inessential, a  contradiction. (Cut $C$ into trivial interval bundles via a system of discs, and note that $f$  cannot intersect the discs essentially.) 
\end {proof}

\medskip

By Claim \ref{not both homo}, one of $\gamma_0,\gamma_1$ is not homotopic to $\gamma$ in $M\setminus U_2$. Let's assume it's $\gamma_1$ that has this property. As long as the $\epsilon $-width of $U_2$ is large, there is by  Theorem \ref{links} and Fact \ref{homotopyfact} a  bounded length, $0.025$-separated geodesic link\footnote{In the  notation of Theorem \ref{links}, this  $\Lambda$ is really $\Lambda \cup \Lambda_2$.} $\Lambda \subset U_2$ such that $\gamma_1$ is not homotopic to $\gamma$ in $M \setminus \Lambda$, which we equip with the associated negatively curved metric. Since $\gamma_1 \subset \Sigma_1$ is simple, we can use Corollary \ref{construction}  to homotope $\Sigma_1$ to a map $f : \Sigma_1 \longrightarrow M\setminus \Lambda$ that maps $\gamma_1$ nearly to a geodesic $\gamma'$ or far out a cusp. In the first case, since $\gamma_1$ is not homotopic to $\gamma$ in $M\setminus \Lambda$, the geodesic $\gamma'$ in $M\setminus \Lambda$ must intersect the neighborhood of $\Lambda$ in which curvature is variable, and in the second case the cusp must correspond to a component of $\Lambda$. Hence, in either case $f(\Sigma_1)$ lies in a bounded-radius neighborhood of $\Lambda$, with respect to the metric on $M$. Since $\Lambda \subset U_2$ and $d_\epsilon(U_2,\partial U)$ is large, $f(\Sigma_1) \subset U$. As in the previous case, Lemma~\ref{its a level surface lem} implies that $f$ is homotopic to a level surface in $U$, so $U$ is incompressible, a contradiction.
\end {proof}

Next, we show that for $n$-acylindricity, it is sufficient to consider length $n$ annuli between \emph{simple} closed curves on $\Sigma$. Note that we cannot just cite the Annulus Theorem here, like we did at the beginning of the proof of Lemma \ref{reducing to incompressibility}, since a priori a length $n$ annulus may pass through a component of $\Sigma$ multiple times, and therefore there is no chance of making the entire annulus embedded in $M$.

\begin{lem}\label{simplesuffices}
If there is a length $n$ annulus $f$ in $(M,\Sigma)$, then there is also a length $n$ annulus $g: S^1 \times [0,n]\longrightarrow M$ in $(M,\Sigma)$ such that for all $i=0,\ldots,n-1$, the curve $g(S^1 \times \{i\}) \subset M $ is simple. 
\end{lem}
\begin{proof}
	The idea here is simple. We use the characteristic submanifold theory of Jaco--Shalen and Johannsson to say that after a homotopy, the annulus $f$ factors through a `length $n$ $X$-homotopy' for some surface $X$. Any such $X$ will contain an essential simple closed curve $\gamma$, and restricting the $X$-homotopy to $\gamma$ gives a length $n$ annulus $g$ as desired.
	
Fix a compact surface $X$ with boundary. If $N$ is a $3$-manifold with boundary, an \emph{essential $X$-homotopy} in $N$ is a map $$F : (X \times [0,1], X \times \{0,1\}) \longrightarrow (N,\partial N)$$
	that is $\pi_1$-injective, an embedding\footnote{Really, we just want $F(X \times \{i\})$, $i=0,1$, to be essential subsurfaces of $\partial N$. However, it will be technically convenient to take these subsurfaces to have geodesic boundary, with respect to some hyperbolic metric on $\partial N$, in which case the right condition is that the \emph{interiors} embed.}  on both $int(X)\times \{0\}$ and $int(X) \times \{1\}$, and not homotopic rel $X \times \{0,1\}$ into $\partial N$.	We then define a \emph{length $n$ $X$-homotopy} in $(M,\Sigma)$ to be a map $$F: X \times [0,n] \longrightarrow M$$ that is the concatenation of essential $X$-homotopies $F_i := F|_{X \times [i,i+1]}$ in $ M_\Sigma,$ and where for each $i$, the images $F(X \times \{i-\epsilon\})$ and $F(X \times \{i+\epsilon\})$ lie on opposite sides of $\Sigma$ for small $\epsilon>0$.

A \emph{vertical annulus} in a length $n$ $X$-homotopy $F$ as above is a length $n$ annulus of the form $$S^1 \times [0,n] \longrightarrow M, \ \ (x,t) \longmapsto F(\gamma(x),t),$$
where $\gamma : S^1 \longrightarrow X$ is a closed curve. Two length $n$ annuli $f,g$ are \emph{homotopic in $(M,\Sigma)$} if there is a homotopy from $f$ to $g$ that is a concatenation of proper homotopies from each $f_i$ to $g_i$ in $M_\Sigma$.
To prove the lemma, it suffices to show that \emph{any length $n$ annulus in $(M,\Sigma)$ is homotopic in $(M,\Sigma)$ to a vertical annulus in  a length $n$ $X$-homotopy $F$ as above}. For then any vertical annulus corresponding to a \emph{simple} closed curve in $X$ is a length $n$ annulus $g$ as desired in the statement of the lemma.

The proof of the italicized statement is by induction. The base case, where $n=1$, is an easier version of the inductive case, and we leave that to the reader. So, suppose that we have a length $n+1$ annulus $$f : S^1 \times [0,n+1] \longrightarrow M.$$ By induction, the restriction of $f$ to $S^1 \times [0,n]$ is homotopic in $(M,\Sigma)$ to a vertical annulus in some length $n$ $X$-homotopy $$F : X \times [0,n] \longrightarrow M.$$ Extending the homotopy to a homotopy of $f$, we may assume that $f|_{S^1 \times [0,n]}$ is vertical in $F$, so that there is a closed curve $\gamma : S^1 \longrightarrow X$ such that $f(x,t) = F(\gamma(x),t)$ for all $t\leq n$. 
	
	Let $N \subset M_\Sigma$ be the component containing the image of the restriction $f_{n} := f|_{S^1 \times [n,n+1]}$. By work of Jaco--Shalen~\cite{Jacolectures} and Johannsson~\cite{johannson2006homotopy} (JSJ), \emph{there is some compact surface with boundary $Y$ and a $Y$-homotopy $G : Y \times [n,n+1] \longrightarrow N$ such that $f_{n}$ is properly homotopic to some  vertical annulus in $G$.} Here, the real result in the cited references is that $f_{n}$ is homotopic into 
	 a component $C$ of the characteristic submanifold of $(N,\partial N)$. Such a component $C$ is either a trivial interval bundle, a twisted interval bundle or a solid torus fibered by circles; JSJ show that in the first two cases our annulus is homotopic to a vertical annulus in $C$, and in the last case it is homotopic to a fibered annulus in $C$. So, in the first case, we take $G$ to parametrize $C$. In the second case, we take $G$ to be a trivial interval bundle double covering $C$. And in the third case, $f_{n}$ is homotopic to a cover of an embedded annulus, and then we can take $G$ to be a parametrization of a regular neighborhood of this embedded annulus. Just for extra clarity, at this point we are only assuming that $f_{n}$ is \emph{homotopic to a vertical annulus in $G$}, not that $f_{n}$ \emph{is} a vertical annulus in $G$.
	
 Let $\Sigma_n$ be the component of $\partial N$ containing $f(S^1 \times \{n\})$, and equip $\Sigma_n$ with a hyperbolic metric. After homotopies in $(M,\Sigma)$, we can assume that $f(S^1 \times \{n\})$ is a geodesic and $F(X \times \{n\})\subset \Sigma_n$ has geodesic boundary. Isotope $G$ so that $G(Y \times \{n\}) \subset \Sigma_n$ has geodesic boundary as well. Then we have that $$f(S^1 \times \{n\}) \subset G(Y \times \{n\}) \cap \Sigma_n.$$ Modifying the abritrary homotopy from $f_{n}$ to a vertical annulus in $G$ within a collar neighborhood of $\Sigma_n$, one can show that $f_{n}$ is properly homotopic to a vertical annulus in $G$ \emph{relative to $S^1 \times \{n\}$}. This homotopy of $f_{n}$ then extends to a homotopy of $f$ in $(M,\Sigma)$, so that we can now assume that $f$ has the property that $f|_{S^1 \times [n,n+1]}$ is vertical in $G$.
	
 Finally, since both $F(X \times \{n\})$ and $G(Y \times \{n\})$ are subsurfaces of $\Sigma_n$ with geodesic boundary, and there is a closed geodesic contained in their intersection, there is an essential subsurface $Z \subset \Sigma_n$ (not a disc) that is contained in both. We can then construct a length $n+1$ $Z$-homotopy in which $f$ is vertical by concatenating the restrictions of $F,G$ to the essential subsurfaces of $X,Y$ corresponding to $Z$.
\end{proof}

We are now ready to start the main part of the proof of Lemma \ref{acylindrical}. So, with $\Sigma$ as above, we want to show that either the splitting of $\pi_1 M$ induced by $\Sigma$ is $7$-acylindrical, or $\Sigma$ is a fiber. Assume the splitting is not $7$-acylindrical and let $$f : S^1 \times [0,7] \longrightarrow M$$  be a length $7$  annulus in $(M,\Sigma)$. 
We claim that $\Sigma$  is a regular fiber in a fibration of $M$ over $S^1$ or $S^1 / (z \mapsto -z)$.   For later use, we set $$f_i := f|_{S^1 \times [i,i+1]}, \ \ \gamma_i := f(S^1 \times \{i\}), \ \ i=0,\ldots,6 $$
and let $\Sigma_i$ be the component of $\Sigma$ that contains  $\gamma_i$. By Lemma~\ref{simplesuffices}, we can assume that each $\gamma_i$ is simple. For each $i$, let $N_i$ be the component of $M_\Sigma$ that contains the image of $f_i$.

Suppose that for some $i=1,\ldots,6$, the component $\Sigma_i \subset \Sigma$  is an incompressible boundary component of $\mathcal {DC}(M,U)$, where $U \in \CU$ is compressible. Then either $f_{i-1}$ or $f_{i}$, which end and start on opposite sides of $\Sigma_i$, must enter $\mathcal {DC}(M,U)$, so we get a contradiction by applying Lemma \ref{reducing to incompressibility}. This implies that \emph{for $i=1,\ldots,6$, the surface $\Sigma_i$ is the level surface we constructed deep inside an incompressible product region $U_i \in \CU$}. We'll now  restrict everything we do to the interval $[1,6]$, so that we can completely forget about components of $\Sigma$ that are barriers of compressible product regions.

Since $ M $ has no cusps, for $i=1,6$ the simple closed curves $\gamma_i \subset \Sigma_i$ are both homotopic in $M$ to a single closed geodesic $\hat \gamma$. Using our minimizing SRS technology, we will now homotope $\Sigma_1,\Sigma_{6}$ to surfaces that both realize $\hat \gamma$. Specifically, fixing some arbitrary\footnote{At this point, it really doesn't matter what $a$ is. Later on, it will be notationally convenient to readjust the parametrizations of our homotopies so that $a$ is a specific value, to be determined.} $a\in (1.5,5.5)$, we can construct a pair of homotopies $$h_t : \Sigma_1 \longrightarrow M, \ t\in [1,a], \ \ h_t' : \Sigma_{6} \longrightarrow M, \ t\in [a,6],$$ with the following properties:
\begin{enumerate}
	\item[(a)] $h_1$, $h_{6}'$ are the inclusions of the surfaces $\Sigma_1$ and $\Sigma_{6}$, 
	\item[(b)] $h_t(\Sigma_1) \subset U_1 \ \forall t\in [1,1.5]$, and $h_t'(\Sigma_{6}) \subset U_{6} \ \forall t\in [5.5,6],$
	\item[(c)] for $t\in [1.5,a]$, the surfaces $h_t(\Sigma_1)$ have $\epsilon$-diameter at most some $D=D(\epsilon,g)$, and similarly for $h_t'(\Sigma_{6})$, where $t\in [a,5.5]$.
	\item[(d)] we have $h_a(\gamma_1)=\hat \gamma = h_a'(\gamma_6)$, and also the parametrizations agree, namely $h_a \circ f(x,1) = h_a' \circ f(x,6)$ for all $x\in S^1$.
\end{enumerate}

Let's explain the construction of $h_t$. First, one uses Lemma~\ref{minsrsinpr} to construct a homotopy in $U_1$ from the inclusion $h_1 : \Sigma_1 \longrightarrow M$ to a minimizing SRS $h_{1.5}$, and uses Corollary~\ref{construction} to construct a minimizing SRS $\sigma : \Sigma_1 \longrightarrow M$ such that $\sigma(\gamma_1) $ is homotopic to $ \hat \gamma$ in $M$ with tracks of length at most $1$, say. The Interpolation Theorem then applies, giving a homotopy from $h_{1.5}$ to $\sigma$ through surfaces with bounded (extrinsic, say) $\epsilon$-diameter. We then concatenate the homotopy from $h_1$ to $h_{1.5}$ in $U_i$, the homotopy through bounded $\epsilon$-diameter surfaces from $h_{1.5}$ to $\sigma$, and a homotopy with tracks of length at most $1$ from $\sigma$ to a map $h_a$ that takes $\gamma_1$ to $\hat \gamma$ exactly. The construction of  $h_t'$ is the same, but with a backwards parameterization. A priori, we only have 
$$h_a(\gamma_1)= \hat \gamma  = h_a'(\gamma_{6})$$
but we can then adjust the parametrization of $h_a'$, say, so that (d) holds.

From the two homotopies above we now obtain a map
$$g : S^1 \times [1,6] \longrightarrow M, \ \ g(x,t) = \begin{cases}
h_t \circ f(x,1) & t\leq a \\ 
h_t' \circ f(x,6) & t \geq a,
\end{cases}$$
 which is continuous because of (d). Then $g(x,i) = f(x,i)$ for all $x\in S^1$ and $i=1,6$, so $g$ and $f|_{S^1 \times [1,6]}$ are two (singular) annuli in $M$ that agree on their boundaries. Since $M $ has no cusps, it is atoroidal, and hence $g$ and $f|_{S^1 \times [1,6]}$ are homotopic rel boundary.
 %

\begin{claim}\label{trackingclaim}
After adjusting $a\in (1.5,5.5)$ and reparametrizing the time coordinates in $h_t,h_t'$, we can assume that
$$g(S^1 \times \{i\}) \cap \Sigma_i \neq \emptyset, \ \ \forall i=2,3,4,5$$
\end{claim}
\begin{proof}
We work in the universal cover $\pi: \BH^3 \longrightarrow M$. Since all the surfaces $\Sigma_i$ are incompressible, each component of each preimage $\pi^{-1}(\Sigma_i) \subset \BH^3$ is a topological plane that separates $\BH^3$. The union of all such topological planes models the Bass-Serre tree of the splitting of $\pi_1 M$ induced by the $\Sigma_i$, see \cite[pg 7]{shalen2002representations}, where vertices correspond to complementary components and edges correspond to the planes.

Fix $x\in S^1$ and lift the path $t \mapsto f(x,t), \ t\in [1,6]$ to a path $$t \mapsto \alpha(t), \  t\in [1,6]$$ in $\BH^3$. Then at time $t=i\in \{1,\ldots,6\}$, the path $\alpha$ passes through a topological plane $\tilde \Sigma_i$ that is a component of $\pi^{-1}(\Sigma_i)$, while otherwise $\alpha$ is disjoint from the topological planes. Moreover, the fact that $f$ is a concatenation of \emph{essential} annuli means that in the universal cover, subsequent intersections of $\alpha$ with the union of topological planes cannot occur on the same plane! In other words, $\alpha$ proceeds through the Bass-Serre tree with no backtracking, and when we write $$\alpha(1)\in \tilde \Sigma_1, \tilde \Sigma_{2},\tilde \Sigma_{3},\tilde \Sigma_{4},\tilde \Sigma_{5},\tilde \Sigma_6 \ni \alpha(6)$$
every plane $\tilde \Sigma_i$ separates all the planes on its left in the list from the planes on its right, in $\BH^3$.

Since $f$ is homotopic to $g$ rel boundary on $S^1 \times [1,6]$, the path $t\mapsto g(x,t), \ t\in [1,6]$ lifts to a path $t\mapsto \beta(t), \ t\in [1,6]$ in $\BH^3$ with the same endpoints as $\alpha$. Hence, $\beta$ must pass through all the topological planes $\tilde \Sigma_{2},\tilde \Sigma_{3},\tilde \Sigma_{4},\tilde \Sigma_5$ and we can choose intersections of $\beta$ with those planes that happen in increasing order. (There may be additional intersections, which we ignore.) Adjusting $a \in (1.5,5.5)$ and reparametrizing the time coordinates in $h_t,h_t'$, we can assume that for $i=2,3,4,5$, the path $\beta$ intersects $\tilde \Sigma_i$ at time $i$, which proves the claim.
\end{proof}

We now divide into two cases, depending on whether $a>3$, or $a<4$. The two arguments are exactly the same, up to reversing parameterizations, so let's assume $a>3$. By Claim \ref{trackingclaim}, for $i=2,3$ the surface $h_{i}(\Sigma_1)$ intersects the level surface $\Sigma_{i}$, which lies in $U_i$ at large $\epsilon$-distance from $\partial U_i$. Since $h_{i}(\Sigma_1)$ has bounded $\epsilon$-diameter, if $L$ is large enough we can assume that $h_{i}(\Sigma_1)$ is contained in $U_i$. But then since $h_{i}(\Sigma_1)$ is incompressible, it is homotopic in $U_i$ to a level surface, by Lemma \ref{its a level surface lem}. In other words, $\Sigma_1$ is homotopic to $\Sigma_i$ in $M$. So, $\Sigma_1,\Sigma_2,\Sigma_3$ are all homotopic.

Recall that $N_i \subset M_\Sigma$ is the component containing the image of $f_i$. We claim that  $N_1,N_2$ are both (possibly twisted) interval bundles. Let's focus first on $N_1$. The argument for $N_2$ is exactly the same, so we leave it to the reader. Let $$\hat M \longrightarrow M$$ be the cover corresponding to $\pi_1 N_1$, and lift $N_1$ homeomorphically to $\hat N_1  \subset \hat M$.  Lift the surfaces $\Sigma_1, \Sigma_{2} \subset M$ to the corresponding boundary components $\hat \Sigma_1,\hat \Sigma_2 \subset \partial \hat N_1$, lift the annuli $f,g$ on $S^1 \times [1,2]$ to $$\hat f,\hat g :S^1 \times [1,2]\longrightarrow \hat M,$$ and lift the homotopy $h_t$, for $t\in [1,2]$, to a homotopy $\hat h_t$, making sure that all these lifts start on the surface $\hat \Sigma_1 \subset \hat M$. Note that we have $\hat f(S^1 \times \{2\}) \subset \hat \Sigma_{2}$.   Returning to the notation of the proof of Claim~\ref{trackingclaim}, for our fixed $x\in S^1$ the paths $t \mapsto \hat f(x,t)$ and $t\mapsto \hat g(x,t)$ lift to paths in $\BH^3$ starting at the same point when $t=1$, and where the endpoints at $t=2$ are both on the \emph{same} topological plane $\tilde \Sigma_{2}$. Hence, in $\hat M$ we have that $\hat g(x,2) \in \hat \Sigma_{2}$, so applying the argument from the previous paragraph, the surface $\hat h_{2}(\Sigma_1) \subset \hat U_2$, and is homotopic to a level surface therein. So, $\hat h_t, t\in [1,2]$ is a homotopy from a level surface of $\hat U_1$ to a surface in $\hat U_2$ that is homotopic to a level surface. 

Suppose for a moment that $\hat U_1\neq \hat U_2$. Then Waldhausen's Cobordism Theorem implies $\hat U_1$ and $\hat U_2$  bound a trivial interval bundle in $\hat M$. Now, $ \hat N_1 \subset \hat M$ is a (standard) compact core that intersects $\hat U_1$ and $\hat U_2$ in subproduct regions bounded by $\hat \Sigma_1,\hat \Sigma_2$. So, $\hat N_1 \setminus (\hat U_1\cup \hat U_2)$ is also a standard compact core of $\hat M$, obtained by removing collar neighborhoods of two boundary components of $\hat N_1$.  The difference $\hat M \setminus (\hat U_1 \cup \hat U_2)$ consists of $\hat N_1\setminus (\hat U_1 \cup \hat U_2)$ and two neighborhoods of ends that are only adjacent to one of the product regions $\hat U_1,\hat U_2$. So, $\hat N_1$ is a trivial interval bundle, and hence the same is true for $N_1$. 

On the other hand, suppose $\hat U := \hat U_1=\hat U_2$. The homotopy $\hat h_t, t\in [1,2]$ cannot be homotoped rel boundary into $\hat U$, since when we pass to $\BH^3$ as in the proof of Claim \ref{trackingclaim}, the two points $\beta(1),\beta(2)$ lie on \emph{different} topological planes $\tilde \Sigma_1\neq \tilde \Sigma_2$. So, even in this case Waldhausen's theorem still applies, showing $N_1$ is a possibly twisted interval bundle.

We now know that $N_1,N_2$ are both (possibly twisted) interval bundles. When we initially chose our product regions, we assumed that no two distinct product regions $U \in \CU$ were isotopic. So, we actually have $$U:=U_1=U_2=U_3.$$  If $U$ is nonseparating, then since $N_1$ is an interval bundle and $M=U\cup N_1$, we have that $M$ fibers over the circle with $\Sigma$ a fiber. If $U$ is separating, then it bounds twisted interval bundles $N_1,N_2$ to both sides, and hence $M$ fibers over $S^1/(z\mapsto -z)$, with $\Sigma$ a regular fiber.

This proves Lemma \ref{acylindrical}.

\section {Short carrier graphs}
\label {shortgraphssec}

Fix $\epsilon>0$  and assume that \emph{$M $ is an $\epsilon$-thick complete hyperbolic $3$-manifold such that every essential closed curve on  $\partial CC(M)$  that is nullhomotopic in $M$ has length at least $\epsilon $.} Recall from Theorem~\ref{chainsprop} that the assumption on $\partial CC(M)$ implies that any minimal length carrier graph for $M$ is $D$-relatively connected for some $D=D(\epsilon)$.


 \label {short graph subsection} 

\medskip

Here is the main result of this section.

\begin{prop}[No barriers implies short carrier graph]\label {no barriers implies}
Given $\epsilon,k,L_0$, there are  $K_0,r_0,D$, where $K_0,r_0$ depend only on $k$, as follows.  
If $\rank \pi_1 M \leq k$ and every barrier of a $(K_0,L_0)$-product region in $M$ is peripheral, there is a carrier graph $f : X \longrightarrow M$ with $\rank \pi_1 X \leq r_0$ and $\length f(X) \leq D$.
\end{prop}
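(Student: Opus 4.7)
The plan is to construct the desired carrier graph by iteratively surgering a minimal length, minimal rank carrier graph, using the hypothesis on barriers to convert long edges into short ones while keeping the rank bounded. Start with a minimal length, minimal rank carrier graph $f_0 \colon X_0 \to M$, so $\rank \pi_1 X_0 \leq k$, $X_0$ is trivalent (White), and by Theorem \ref{chainsprop} the map $f_0$ is $D_*$-relatively connected for some $D_* = D_*(\epsilon)$. Choose a threshold $L_1$ much larger than $D_*$, large enough that the machinery of Lemma \ref{bdlpr}, Corollary \ref{extending product regions}, and Proposition \ref{incompressible-incompressible} all apply to product regions produced from subsegments of length $L_1$. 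Declare an edge \emph{long} if its length exceeds $L_1$, and \emph{short} otherwise, and write $X^{sh}_j \subset X_j$ for the subgraph of short edges at stage $j$.

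The main step is the following analogue of Corollary \ref{less long edges}: given a carrier graph $f_j \colon X_j \to M$ of rank $r_j$ that is $D_j$-relatively connected and has at least one long edge, produce $f_{j+1} \colon X_{j+1} \to M$ of rank $r_{j+1} \leq C \cdot r_j$, that is $D_{j+1}$-relatively connected, and has strictly fewer long edges. To carry this out, apply relative connectedness to the proper subgraph $X^{sh}_j \subset X_j$ to obtain a long edge $e$ with $\length^{f_j}_{X^{sh}_j}(e) \leq D_j$; so $f_j(e)$ spends all but $D_j$ of its length in $1$-neighborhoods of hulls of adjacent components of $X^{sh}_j$. For clarity assume this length concentrates in the hull of a single adjacent component $Y_0 \subset X^{sh}_j$. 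Pass to the cover $M^{Y_0}$ corresponding to $(f_j)_* \pi_1(Y_0)$; the lift of $e$ traverses a huge piece of a convex set of diameter bounded in terms of $r_j$ and $L_1$, so Lemma \ref{bdlpr} yields a wide product region $\hat U \subset M^{Y_0}$ of genus $\leq r_j$ crossed by $e$ soon after it leaves $Y_0$. For $L_1$ sufficiently large in terms of $L_0$ and the final rank bound, the image $U := p(\hat U) \subset M$ is a $(K_0, L_0)$-product region in the sense of the hypothesis.

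Now use the hypothesis: every barrier of $U$ is peripheral, so by Theorem \ref{double compression body theorem}(1) and the convex cocompactness of $M$, the boundary component of $\mathcal{DC}(M,U)$ facing the side of $U$ into which $e$ proceeds lies on a component $C \subset M \setminus \mathrm{int}(U)$ which is a compression body whose interior boundary lies entirely in the boundary of a product neighborhood of a convex cocompact end. Thus $C \cup U$ is a compression body with missing interior boundary, and $\partial(C \cup U) \hookrightarrow C \cup U$ is $\pi_1$-surjective of genus $\leq r_j$. Form $X_{j+1}$ by removing the portion of $X_j$ that sits inside $C \cup U$ and attaching, at the cut arcs, a minimal length, minimal rank carrier graph for the closed surface $\partial(C \cup U)$ (itself of rank $\leq 2r_j$); the result is still a carrier graph of $M$, and $r_{j+1} \leq C \cdot r_j$ for some absolute $C$. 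By Corollary \ref{extending product regions}, $U$ extends almost out to the convex hull of the lift of $Y_0$, so the long edge $e$ no longer survives: the bulk of $e$ lay inside $C \cup U$, and Corollary \ref{edge homotopies} applied to the replacement surface shows every edge of the new carrier graph for $\partial(C \cup U)$ is short. The number of long edges therefore strictly decreases.

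The main obstacle is to verify that $f_{j+1}$ remains relatively connected with a controlled constant $D_{j+1}$, since our surgery may destroy minimality. One checks this by tracking how hulls of subgraphs are affected: for any proper subgraph $Y \subset X_{j+1}$, either $Y$ contains the new piece built on $\partial(C \cup U)$, in which case the relative edge from $X_j$ that produced an edge of controlled relative length now does so relative to $Y$ as well (again by Corollary \ref{edge homotopies}, since the hull of the new piece is contained in a bounded neighborhood of $CH(Y_0) \cup U$), or $Y$ is essentially a subgraph of $X_j$, in which case we apply the old relative connectedness estimate. Either way $D_{j+1} \leq D_j + D'$ for some $D' = D'(\epsilon, L_1, r_j)$. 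After at most $|E(X_0)| \leq 3k - 3$ iterations, no long edges remain, and the final carrier graph has rank $r_0 \leq C^{3k-3} \cdot k$ and total length at most $|E(X_{\mathrm{final}})| \cdot L_1$, both depending only on $k$, $\epsilon$, $L_0$, which furnishes the claimed constants $r_0$, $K_0$, $D$.
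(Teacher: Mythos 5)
Your plan follows the paper's own outline (surger a minimal-length carrier graph along long edges of small relative length), but the step that carries all the weight is unsupported. You claim that because the lift of $e$ spends a huge length in the convex hull of $Y_0$ in the cover $M^{Y_0}$, ``Lemma \ref{bdlpr} yields a wide product region $\hat U$ of genus $\leq r_j$ crossed by $e$.'' Lemma \ref{bdlpr} does the opposite: it \emph{assumes} a wide product region and produces geodesics and simplicial ruled surfaces inside it; nothing in the paper gives an effective, finite statement of the form ``huge convex core in a bounded-rank cover contains a wide product region.'' Producing that product region is exactly the hard point, and the paper gets it only through the compactness argument in Lemma \ref{kernel}/Claim \ref{thatclaim}: extract algebraic and geometric limits of the covers marked by the short subgraph, upgrade to strong convergence via Proposition \ref{limits-fg} (which forces a doubling of the rank of the marking, hence the constants $K=3k$), show using Theorem \ref{strong limits} that the end exited by the distance-minimizing ray is \emph{degenerate} (if it were convex cocompact the convex hull lengths $l_i$ would stay bounded), and then pull back a product region from the limit via Propositions \ref{constructing prs} and \ref{bilipschitzpreservation}. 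Without some version of this, your $\hat U$ does not exist, and the rest of the surgery has nothing to act on.

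Two further points would also need repair. First, one surgery need not kill the long edge: $\length^{f_j}_{X^{sh}_j}(e)\le D_j$ only after subtracting the convex hull lengths at \emph{both} endpoints, and your simplification that the length concentrates at a single adjacent component is not without loss of generality. The paper's Lemma \ref{kernel} chops at one vertex and records (its conclusion (1)) that the remaining length minus the hull length at the \emph{other} vertex is bounded, and Corollary \ref{less long edges} applies the lemma a second time at that vertex before the edge becomes short; your bookkeeping would have to do the same. Relatedly, the shortness of the replacement graph comes from Corollary \ref{lipschitzcorollary} (bounded-length markings of thick surfaces), not Corollary \ref{edge homotopies}, and preserving relative connectedness after a non-minimizing surgery is genuinely delicate (the paper's ``pinch and replace'' Claims \ref{pinch and replace} and \ref{rellength don't change}), not a one-line consequence of Corollary \ref{edge homotopies}. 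Second, you assert that the compression body lies on the side of $U$ into which $e$ proceeds ``by convex cocompactness of $M$''; Theorem \ref{double compression body theorem} only says $U$ compresses to one side (possibly the other one), the two configurations ($Y_0$ inside or outside $C$) need separate arguments (Corollary \ref{extending product regions} versus the distance-minimizing property of $e$), and the paper must in addition exclude and treat separately the $(K,L)$-exceptional manifolds (Claim \ref{exceptional}), where the surgery scheme breaks down and a short carrier graph is instead built directly from a simplicial ruled surface in the wide product region.
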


Here, a surface $S \subset M$ is \emph{peripheral} if it bounds an end neighborhood of $M$ homeomorphic to $S \times [0,\infty)$.

 We first prove Proposition \ref{no barriers implies} in a special case. We say $M$ is $(K,L)$-\emph{exceptional} if  there is a $(K,L)$-product region $V\subset M$  such that each component of $M\setminus int(V)$ is either  a (possibly trivial) compression body with missing interior boundary or a  twisted interval bundle over a non-orientable surfaces.  Note that if $M$ is exceptional and all barriers of the associated $V$ are peripheral, then it cannot be that both components of $M\setminus int(V)$ are twisted interval bundles, so in this case either both components are compression bodies, or one is a compression body and the other is a twisted interval bundle.

\begin {claim}[The exceptional case]\label {exceptional}
Given $K,\epsilon$,  there are $L,D$ such that if $M$ is $(K,L)$-exceptional and every barrier of the associated product region $V\subset M$ is peripheral, then $M$ admits a  carrier graph $f : X \longrightarrow M$ with $\rank \pi_1 X \leq 2K$ and $\length f(X) \leq D$.
\end {claim}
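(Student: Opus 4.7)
The plan is to exploit the defining property of the product region $V$: by Definition~\ref{prdef}, every point of $V$ lies in the image of a NAT simplicial ruled surface $h\colon\Sigma_g\to\CN(V)$ that is a homotopy equivalence, where $g\le K$. Since $M$ is $\epsilon$-thick, the NAT condition forces the pullback path metric on $\Sigma_g$ to be $\epsilon$-thick, so Corollary~\ref{lipschitzcorollary} produces a wedge $Y$ of $2g\le 2K$ loops on $\Sigma_g$ generating $\pi_1(\Sigma_g)$ with total length at most some $L_1=L_1(K,\epsilon)$. Composing with $h$ gives $f\colon Y\to M$ of rank at most $2K$ and length at most $L_1$, provided $L=L(K,\epsilon)$ is chosen large enough to realize Definition~\ref{prdef} and the BDL for NAT surfaces inside $V$.

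It remains to verify that $f$ is $\pi_1$-surjective. Because $h$ is a homotopy equivalence onto its image, the image of $\pi_1(Y)\to\pi_1(M)$ coincides with that of $\pi_1(V)\to\pi_1(M)$. Under the exceptional hypothesis, each component of $M\setminus\mathrm{int}(V)$ is either a compression body $C$ with missing interior boundary, whose fundamental group is generated by its exterior boundary $\partial_{e}C\subset\partial V$, or a twisted interval bundle $I$ over a nonorientable surface, in which case $\pi_1(\partial I)\hookrightarrow\pi_1(I)$ has index two. A van Kampen computation then shows that $\pi_1(V)\twoheadrightarrow\pi_1(M)$ whenever both components are compression bodies, so in this subcase $f$ is already a carrier graph and the claim holds with $D=L_1$.

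The main obstacle is the subcase where one side of $V$ is a twisted interval bundle $I$, which contributes one extra $\BZ/2$-generator. The rank bound is not the issue, since $\pi_1(M)\cong\pi_1(I)\cong\pi_1(N_{g+1})$ has rank at most $g+1\le K+1\le 2K$; the challenge is to realize the missing generator as a loop of bounded length in $M$. My plan is to adjoin to $Y$ a single extra edge representing a closed geodesic $\alpha\in\pi_1(I)\setminus\pi_1(\partial I)$ whose square $\alpha^2\in\pi_1(V)$ is already one of the short loops in $Y$; by Lemma~\ref{distancetogeodesic}, the geodesic for $\alpha$ then lies at bounded $\epsilon$-distance from that for $\alpha^2$, and a short connecting arc completes the augmented carrier graph. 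Establishing that one can choose $\alpha$ with $\alpha^2$ short---essentially because right-multiplying $\alpha$ by elements of $\pi_1(V)$ shifts $\alpha^2$ within its coset---is where I expect the details to be most delicate. The fallback approach is to shrinkwrap the core nonorientable surface of $I$ via Theorem~\ref{shrinkwrapping} (applied in a suitable orientable cover, or after drilling a geodesic link as in Theorem~\ref{links}) to produce a bounded-diameter realization of $N_{g+1}$ directly, after which Corollary~\ref{lipschitzcorollary} again supplies $g+1\le 2K$ short generating loops for $\pi_1(I)=\pi_1(M)$.
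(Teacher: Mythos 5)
Your first case (both complementary components compression bodies) is essentially the paper's argument, and it is fine: a NAT simplicial ruled surface homotopic to a level surface of $V$ is $\epsilon$-thick, Corollary~\ref{lipschitzcorollary} gives a bounded-length wedge of at most $2K$ loops on it, and $\pi_1$-surjectivity follows from van Kampen. The gap is in your primary plan for the twisted interval bundle case, and it is not merely a delicate detail---it fails for a geometric reason. The image of $\pi_1 V$ in $\pi_1 M$ is a normal subgroup of index two, and the quotient $\BZ/2$ is detected by mod-$2$ intersection with the one-sided core surface $S$ of $I$; hence \emph{any} loop representing a class outside $\mathrm{im}(\pi_1 V)$ must travel from its basepoint to $S$ and back. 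The hypothesis of the claim imposes no maximality on $V$ and no diameter control on $I$: the component $I$ may be a small twisted core preceded by a product region of enormous width, so $d(Y,S)$ is unbounded in terms of $K,\epsilon,L$. In such examples no element with a bounded-length representative near $Y$ lies outside $\mathrm{im}(\pi_1 V)$ at all; in particular no choice of $\alpha$ with $\alpha^2$ among your short generators can exist (if it did, the axis of $\alpha$, which equals the axis of $\alpha^2$, would be a short geodesic near $Y$ crossing $S$), and the right-multiplication observation cannot manufacture one---note also that $[\alpha^2]=2[\alpha]$ in $H_1(I;\BZ)$, so a generator whose class is primitive is never such a square. (There is also a small numerical slip: adjoining an edge to the rank-$2g$ wedge gives rank $2K+1>2K$ when $g=K$, though this matters little for the application.)

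The fix is exactly your fallback, which is the paper's proof: do not anchor the graph at $V$, but put it on a bounded-diameter realization of the core surface $S$ itself, wherever that sits in $M$. Concretely, the paper drills a link $\Lambda\subset V$ far from $\partial V$ using Theorem~\ref{links}, so that $M\setminus int(V)$---and hence $I$ and its core---is $\pi_1$-injective in $M\setminus\Lambda$; homotopes the core surface to a simplicial ruled surface in $M\setminus\Lambda$; uses the Bounded Diameter Lemma plus the fact that its image cannot lie entirely in $V$ to conclude it misses $\CN_{0.025}(\Lambda)$, so it is an honest simplicial hyperbolic surface in $M$ whose induced metric is $\epsilon$-thick; observes it is $\pi_1$-surjective in $M$ because the other complementary component is a compression body with missing interior boundary; and then applies Corollary~\ref{lipschitzcorollary} to get a carrier graph of rank $K+1\le 2K$ and bounded length. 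Your sketch names the right tools, but these verifications---especially why the shrinkwrapped surface avoids $\Lambda$ and is $\epsilon$-thick, which is what makes Corollary~\ref{lipschitzcorollary} applicable---are the substance of the argument and would need to be supplied.
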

\begin{proof}
 Suppose that $V\subset M$ is a $(K,L)$-product region, that $C,N$  are the components of $M \setminus V$, that $C$ is a compression body with missing interior boundary, and that $N$ is either a compression body with missing interior boundary or a twisted interval bundle.

If $N$  is a compression body with missing interior boundary, then the inclusion $V \hookrightarrow N$ is $\pi_1$-surjective. As long as $L$ is large, we can find a NAT simplicial ruled surface $f : S \longrightarrow V$ in the homotopy class of a level surface. Since $M$ is $\epsilon$-thick, Corollary \ref{lipschitzcorollary}  says that when $S$  is endowed with the pullback metric, there is a  minimal rank carrier graph $X\subset S$  with bounded length. Since then $\rank(X) = \rank \pi_1 S \leq 2K$,  restricting $f$ to $X$  proves the claim.

Now suppose $N$  is a twisted interval bundle over a non-orientable surface $S$. By Theorem \ref{links}, $V$  contains a geodesic link $\Lambda $  such that $$M \setminus V \hookrightarrow M  \setminus \Lambda $$ is $\pi_1$-injective and $\Lambda $ lies far from $\partial V$. Fix a metric with pinched negative curvature on $M  \setminus \Lambda $ that agrees with the hyperbolic metric outside of $\CN_{0.025}(\Lambda )$, as  in  Lemma \ref{neg-metric}, and homotope the base of the twisted interval bundle $N \longrightarrow S $ to a simplicial ruled surface $$f  : S  \longrightarrow M  \setminus \Lambda .$$
Here, $|\chi(S )| = K-1$, so for instance we know that $S $ is a connected sum of at most $K+1$ copies of $\BR P^2$. 


 Since the $\pi_1$-image of $f $ is the same as that of the twisted interval bundle, which $\pi_1$-injects in $M  \setminus \Lambda $, the image $f (S )$ cannot be completely contained in $V$. So, since $\Lambda $ is far from $\partial V$, the Bounded Diameter Lemma (applied in $M \setminus \Lambda $)  says that the image $f (S )$ lies (much) further than a distance of $\epsilon$ from $\Lambda $. So, the fact that $M \setminus \Lambda $ is $\epsilon$-thick away from $\Lambda$, and $S$ is incompressible in $M\setminus \Lambda$, implies that $S $ is $\epsilon$-thick as well when considered with the pullback metric.

Now $f  : S  \longrightarrow M $ is $\pi_1$-surjective, since the other component $C$ of $M  \setminus int(V)$ is a compression body  with missing interior boundary. By Corollary \ref{lipschitzcorollary},  there are carrier graphs $X \longrightarrow S $ with rank at most $K+1$ and bounded length. Composing with $f $ gives a rank $K+1$ bounded length carrier graph for $M $.
 \end {proof}

%

  The idea in general is to start with a minimal length carrier graph $$f : X \longrightarrow M$$and  iteratively modify it until it has bounded length.   Recall from \S \ref{carrier} that our assumption that there are no $\epsilon$-short compressible curves on $\partial CC(M)$ implies that any such $f$ is `$D$-relatively connected' for some uniform $D=D(\epsilon)$, meaning that for every proper subgraph $Y\subset X$ there is an edge $e \subset X \setminus Y$  that has \emph{relative length} $$\length_Y^f(e) \leq D.$$ Here, relative length is obtained from the actual length by subtracting the  \emph{convex hull length} $CHL^f_Y(e,v)$ of each of the two associated \emph {rooted edges} $(e,v)$, and then taking a max with zero, see Definition \ref{relative length}.

When $f : X \longrightarrow M$ is $D$-relatively connected, and the constant $D$ is specified, we say that an edge $e$ of $X$ is \emph{short} if $\length f(e) \leq D,$ and \emph{long} otherwise. Let $$X^{sh} \subset X$$ be  the subgraph  consisting of all short edges in $X$.  The key step  that is iterated in the proof of Proposition \ref{no barriers implies}  is the following lemma.  Essentially, it says that if $e$ is a long edge that has bounded length relative to $X^{sh}$, and $v$ is a vertex of $e$, then we can replace the part of $e$ lying in the convex hull adjacent to $v$, i.e.\ the part whose length is $CHL^f_{X^{sh}}(e,v)$, with a collection of new short edges.

\begin{lem}[Modifying carrier graphs]\label {kernel}
Given $\epsilon,k,D,L$, there are $K=3k$ and $D'$ as follows.  Suppose that  $M$ is not $(K,L)$-exceptional and every barrier  of a $(K,L)$-product region is peripheral in $M$.  

Let $f : X \longrightarrow M$  be a $D$-relatively connected carrier graph with at most $k$ edges, and suppose there is an edge $e =(v,w)\subset X $ with $$\length_{X^{sh}}^f(e) \leq D.$$ Then there is a $D'$-relatively connected carrier graph $f' : X' \longrightarrow M$ with at most $9k$ edges, and an edge $e'=(v',w')$ in $X'$ such that 
\begin{enumerate} \item $\length f'(e') - CHL^{f'}_{(X')^{sh}}(e',w') \leq D'$
\item $ \length f'(e') \leq \length f(e) - CHL^f_{X^{sh}}(e,v) + D'$, and \vspace{1mm}
\item the number of long edges in $X' \setminus e'$ is at most the number of long edges in $X \setminus e$. 
\end{enumerate}
\end{lem}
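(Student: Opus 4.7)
The plan is to perform a surgery on $X$: after isolating a wide product region $U \subset M$ near $f(v)$ whose complement on the $v$-side is a compression body $C$ with missing interior boundary, I cut off the part of $X$ lying in $C \cup U$ and graft on a short carrier graph for its outer boundary. The barrier hypothesis is precisely what forces the existence of such a $C$, which in turn makes the outer boundary $\pi_1$-surjective in $M$, so the resulting graph is still a carrier graph for $\pi_1 M$. Concretely, I set $K = 3k$ and let $Y_0 \subset X^{sh}$ be the component containing $v$; then $\length f(Y_0) \leq kD$ and $\rank \pi_1 Y_0 \leq k$. Let $\hat M \to M$ be the cover associated to $\Gamma_0 := f_*(\pi_1 Y_0)$, let $\hat f$ lift $f|_{Y_0 \cup e}$, and let $\bar C_0 \subset \hat M$ be the smallest convex subset containing $\hat f(Y_0)$. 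Per \eqref{rellength2}, $CHL^f_{X^{sh}}(e,v)$ equals the length of the initial segment of $\hat e$ inside the $1$-neighborhood of $\bar C_0$. Using Corollary \ref{edge homotopies}, I first replace $f|_e$ by its length-minimizing homotopic cousin in the cover, which changes both $CHL$ and relative length by only uniform additive constants.

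The core step is to produce a $(K,L)$-product region $U \subset M$ of genus at most $k/2$ traversed by $\hat e$ soon after it leaves $\bar C_0$. If $\Gamma_0$ is elementary, $\bar C_0$ has bounded extent in $\hat M$ and $CHL^f_{X^{sh}}(e,v)$ is automatically bounded in terms of $\epsilon$, so the conclusion holds trivially with $e' := e$ after absorbing the bound into $D'$. Otherwise $\hat M$ is tame with $\rank \pi_1 \hat M \leq k$ and is $\epsilon$-thick; when $CHL^f_{X^{sh}}(e,v)$ is much larger than the Bounded Diameter Lemma constants, the initial segment of $\hat e$ lies deep in $CC(\hat M)$. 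A geometric limit argument modeled on Proposition \ref{limits-fg} and Theorem \ref{strong limits}, together with Proposition \ref{constructing prs} applied to any degenerate end appearing in the limit, extracts a wide product region $\hat U \subset \hat M$ of genus at most $\rank \Gamma_0 / 2 \leq k/2$ that $\hat e$ traverses. Theorem \ref{PR covering theorem} then allows (after possibly restricting $\hat U$ to a subproduct region) an embedded projection $U \subset M$, with the compact-interval-bundle alternative of that theorem ruled out by the assumption that $M$ is not $(K,L)$-exceptional.

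Once $U$ is in hand, Theorem \ref{double compression body theorem}(2) plus non-exceptionality give $\pi_1$-injectivity of $\mathcal{DC}(M, U) \hookrightarrow M$, and Corollary \ref{extending product regions} extends $U$ to a product region $U^e$ so that the component $C \subset M \setminus \mathrm{int}(U^e)$ on the $v$-side is a compression body with missing interior boundary. Let $\Sigma = \partial(C \cup U^e)$; its genus is at most $k/2$, so Corollary \ref{lipschitzcorollary} applied to a NAT simplicial hyperbolic surface realizing $\Sigma$ yields a bounded-length trivalent carrier graph $Z \subset \Sigma$ with at most $3k$ edges. I define $X'$ by removing from $X$ every edge contained in $C \cup U^e$, truncating any edge crossing $\Sigma$ at its first intersection, and attaching $Z$ along the truncated endpoints; the resulting $f' : X' \to M$ is a carrier graph because $\Sigma \hookrightarrow C \cup U^e$ is $\pi_1$-surjective, with at most $9k$ edges. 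The edge $e'$ is the truncated image of $e$, with new endpoint $w' \in Z$. Conclusion (2) follows because the truncation removes precisely the initial segment of $\hat e$ inside $U^e$, which contains the length $CHL^f_{X^{sh}}(e,v)$ up to a bounded error; (3) follows because surviving long edges are unchanged while $Z$ introduces only short edges; and (1) follows because $w' \in Z \subset (X')^{sh}$. Finally, $D'$-relative connectedness of $X'$ is obtained by combining the $D$-relative connectedness of $X$ with the bounded-length structure of $Z$ via Corollary \ref{edge homotopies}.

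The main obstacle is the construction of $\hat U \subset \hat M$ with the correct genus bound and the traversal-soon-after-$Y_0$ property. This is where one must combine the rank bound, the $\epsilon$-thickness, and the long initial geodesic segment of $\hat e$ in $CC(\hat M)$, likely via a contradiction plus geometric limit: take a sequence of counterexamples with $CHL^f_{X^{sh}}(e,v) \to \infty$; the associated pointed covers $\hat M_i$ have a convergent subsequence whose limit is a tame $\epsilon$-thick hyperbolic $3$-manifold of rank at most $k$ whose convex core contains an unbounded geodesic ray, forcing a degenerate end which pulls back via Proposition \ref{constructing prs} and the bilipschitz preservation results of \S \ref{wide product regions} to the desired wide product region in $\hat M_i$ for large $i$.
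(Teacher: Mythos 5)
Your overall strategy is the right one (it is essentially the paper's own outline: find a wide product region traversed by $e$, use peripherality of barriers and non-exceptionality to get a compression body $C$ on the far side, chop off $X\cap(C\cup U)$ and graft on a short marking of the exterior boundary), but there is a genuine gap at the single most delicate point: producing an \emph{embedded} wide product region in $M$ itself that the edge traverses. Your limiting argument, as sketched, only produces a wide product region in the cover $\hat M$ corresponding to $\pi_1 Y_0$; to run the surgery you need it (or a subproduct region) to project homeomorphically to $M$, since barriers, $\mathcal{DC}(M,U)$, Corollary \ref{extending product regions} and the compression body $C$ all live downstairs. Your appeal to Theorem \ref{PR covering theorem} to get this is circular: that theorem takes as a \emph{hypothesis} that the covering map already restricts to an embedding on an adjacent wide product region, which is exactly what is missing. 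The actual proof gets embeddedness by a different mechanism: it runs the compactness argument on sequences $(M_i,f_i)$, passes to the covers $\hat M_i$ marked by $Y$, and then uses Proposition \ref{limits-fg} to \emph{enlarge the marking} (adjoining roots, roughly doubling the rank) so that the convergence becomes strong; only then do degenerate-end neighborhoods of the algebraic limit embed in the geometric limit $M_G$ of the $M_i$, and the product region is transported to an embedded $W_i\subset M_i$ by the almost isometric maps of geometric convergence (Proposition \ref{bilipschitzpreservation}). This augmentation is not cosmetic: it is where $K=3k$, the genus bound for the product regions, and the $9k$ edge count come from, and it is why your claimed genus bound $\rank\Gamma_0/2\le k/2$ is not justified.

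A second, smaller but real gap is in the verification phase. Relative connectedness of $X'$ does not follow from Corollary \ref{edge homotopies} alone, which only controls homotoping a single edge across a loop in a fixed subgraph; after wholesale replacement of $X\cap(C\cup U^e)$ by $Z$ one must compare convex hulls of \emph{modified} subgraphs, which is the content of the paper's ``convex hulls can't shrink'' claim and uses that the collapsed material lies in $C$, hence within bounded distance of the convex core of the relevant cover. Likewise, conclusion (1) is not implied by ``$w'\in Z\subset (X')^{sh}$'': it asserts that all but a bounded length of $e'$ lies in the hull attached to its \emph{other} endpoint, and proving it (together with (2)) requires the hypothesis $\length^f_{X^{sh}}(e)\le D$, the hull comparison just mentioned, the bounded diameter of $\tilde E\cap\CN_1(CC)$ from Corollary \ref{extending product regions}, and a case split according to whether $f(Y_0)$ lies deep inside $C$ or near $\partial C$ (in the latter case the minimizing property of $\hat e$ makes $e'$ outright short). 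As written, these steps are asserted rather than proved, and the first paragraph's gap would have to be closed before they can even be set up.
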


In (3), remember that `short' and `long' are determined by the constant of relative connectedness, so `long' in $X$ means length at least $D$, while in $X'$ it means length at least $D'$.

Note that in Lemma \ref{kernel}, instead of setting $k$ directly to be an upper bound on $\rank \pi_1 M$, as in Proposition \ref{no barriers implies}, we set $k$ to be an upper bound for the number of edges of a given carrier graph, which is a bit easier to work with. When a carrier graph with $k$ edges exists, we have of course that $\rank \pi_1 M \leq k$. Conversely, if $\rank \pi_1 M \leq k$, then any minimal length, minimal rank carrier graph has at most $3(k-1)$ edges. So, if one does not care about precise constants, a rank bound is equivalent to a bound on the number of edges of some carrier graph.

  Really, the point of Lemma \ref{kernel} is the following  consequence.

\begin{kor}
Given $\epsilon,k,D,L$, there are $K=27k$ and $D''$ as follows. Assume $M$ and $f : X \longrightarrow M$ are as in Lemma \ref{kernel}, and that not all edges of $X $ are short. Then there is a $D''$-relatively connected carrier graph $f'' : X'' \longrightarrow M$ with at most $81k$ edges, where the number of long edges in $X''$ is strictly less than the number in $X$.\label {less long edges} \end{kor}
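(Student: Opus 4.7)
The plan is to apply Lemma \ref{kernel} twice, once from each end of a long edge, so as to chop off the portions of its length spent near the convex hulls of the adjacent components of $X^{sh}$; a single application chops only one side.

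First I would produce an edge on which to apply Lemma \ref{kernel}. Since $X^{sh} \neq X$ is a proper subgraph and $X$ is $D$-relatively connected, applying relative connectedness with $Y = X^{sh}$ yields an edge $e = (v,w) \subset X \setminus X^{sh}$ with $\length^f_{X^{sh}}(e) \leq D$. The hypotheses on $M$ for $K = 3k$ follow from the stated hypothesis for $K = 27k$, because a $(3k,L)$-product region is in particular a $(27k,L)$-product region and both the exceptional and barrier conditions are monotone in this sense. So Lemma \ref{kernel} produces a $D_1'$-relatively connected carrier graph $f' : X' \longrightarrow M$ with at most $9k$ edges and an edge $e' = (v', w')$ satisfying conclusions (1)--(3). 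Conclusion (1) and the non-negativity of convex hull length give
\begin{align*}
\length^{f'}_{(X')^{sh}}(e') \leq \length f'(e') - CHL^{f'}_{(X')^{sh}}(e', w') \leq D_1',
\end{align*}
so I would re-orient $e'$ as $(w', v')$ and apply Lemma \ref{kernel} a second time, now with parameters $(\epsilon, 9k, D_1', L)$; this is precisely why the corollary takes $K = 27k = 3 \cdot 9k$. The output is a $D_2'$-relatively connected carrier graph $f'' : X'' \longrightarrow M$ with at most $81k$ edges and an edge $e''$ whose length is bounded by conclusion (2) of the second application and conclusion (1) of the first:
\begin{align*}
\length f''(e'') \leq \length f'(e') - CHL^{f'}_{(X')^{sh}}(e', w') + D_2' \leq D_1' + D_2'.
\end{align*}

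Setting $D'' := D_1' + D_2'$, the graph $X''$ is certainly $D''$-relatively connected, the edge $e''$ is $D''$-short, and every $D''$-long edge is in particular $D_2'$-long. Applying conclusion (3) of both invocations of Lemma \ref{kernel}, together with the fact that $e$ was itself $D$-long by construction, gives
\begin{align*}
\#\{D''\text{-long edges in } X''\} &\leq \#\{D_2'\text{-long edges in } X'' \setminus e''\} \\
&\leq \#\{D_1'\text{-long edges in } X' \setminus e'\} \\
&\leq \#\{D\text{-long edges in } X \setminus e\} \\
&= \#\{D\text{-long edges in } X\} - 1,
\end{align*}
which is the desired strict decrease. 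The main obstacle here is really packaged inside Lemma \ref{kernel}: the one-sided bound in its conclusion (1), which controls $\length f'(e') - CHL^{f'}_{(X')^{sh}}(e',w')$ rather than the symmetric relative length, is precisely what makes the iteration possible, since it guarantees there is still a bounded amount of length to chop off at the second vertex after the first surgery.
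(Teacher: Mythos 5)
Your proof is correct and follows essentially the same route as the paper: use relative connectedness with $Y=X^{sh}$ to find a long edge of bounded relative length, apply Lemma \ref{kernel} once from each vertex (noting $27k=3\cdot 9k$), combine conclusion (1) of the first application with conclusion (2) of the second to make the resulting edge short, and use conclusion (3) twice to get the strict decrease in long edges. The only (harmless) deviation is that you apply the second surgery unconditionally where the paper first disposes of the case that $e'$ is already short, and your explicit $D_1',D_2'$ bookkeeping is if anything cleaner than the paper's.
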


\begin {proof}
Since $f$ is $D$-relatively connected and $X^{sh} \neq X$, there is a long edge $e=(v,w)$ in $X$ such that $\length_{X^{sh}}^f(e) \leq D.$ Let $f' : X' \longrightarrow M$ and $e'=(v',w') \subset X'$  be as given by Lemma~\ref{kernel}. If $e'$ is short, we are done by property (3) of the lemma. If $e'$ is long, we apply Lemma~\ref{kernel} to $f'$ and $e'$, reversing the roles of the two vertices of $e'$, to get  a $D'' $-relatively connected carrier graph $$f'' : X'' \longrightarrow M$$
 and an edge $e''=(v'',w'')$  such that
$$\length f''(e'') \leq \length f'(e') - CHL^{f'}_{(X')^{sh}}(e',w') + D''.$$
(Since $K=27k = 3 \cdot 9k$, the pair $(M,f')$  still satisfies the assumptions of the lemma.) But  $f'$ satisfies property (1) in Lemma \ref{kernel}, so $$\length f''(e'') \leq D'+D''.$$ So, after replacing $D''$ by $D'+D''$, the edge $e''$ is short.  It follows from property (3)  in the lemma that the number of  long edges in $X''$ is strictly less than the number in $X$.\end {proof}

Here is how  Corollary \ref{less long edges} implies  Proposition \ref{no barriers implies}.

\begin {proof}[Proof of Proposition \ref{no barriers implies}]
Suppose that $\rank \pi_1 M\leq k$. Fix a constant $K_0=K_0(k)$, whose value we will make explicit at the end of the proof. Given this $K_0$, pick $L\geq L_0$ big enough so that Claim \ref{exceptional} applies. Assume that all barriers of $(K_0,L)$-product regions are peripheral. If $M$ is $(K_0,L)$-exceptional, then we are done by Claim \ref{exceptional}, since $K_0=K_0(k)$. So, we may assume as well that $M$ is not $(K_0,L)$-exceptional.

 Pick a minimal length rank $k$ carrier graph $f : X \longrightarrow M$. By Theorem~\ref{chainsprop}, since there are no essential curves of length less than $\epsilon$ on $\partial CC(M)$ that are nullhomotopic in $M$, the carrier graph $f$ is $D^{(0)}$-relatively connected  for some $D^{(0)}=D^{(0)}(\epsilon)$. Since $ X $ is trivalent, the  total number of edges in $X$ is at most $n=3(k-1)$.  So, after  iterating  Corollary \ref{less long edges} a total of $n$ times, we obtain a $D^{(n)}=D^{(n)}(k,\epsilon)$-relatively connected carrier graph $$f^{(n)}  :  X^{(n)} \longrightarrow M$$ with at most $ n \cdot 81^{n} $ edges, and no long edges.  In other words, $$\length f^{(n)}(X^{(n)}) \leq D^{(n)} \cdot n \cdot 81^{n}$$
 is bounded as desired. Similarly, $\rank \pi_1 X^{(n)} \leq r_0 := n \cdot 81^{n}.$

The above works as long the constant $K_0$ in Proposition \ref{no barriers implies} is  initially chosen large enough so that we {can} apply the corollary $n$ times.  In particular, all  carrier graphs that we produce  above have at most $ n \cdot 81^{n} $ edges, so it suffices to take \[K_0 = 3 \cdot n \cdot 81^{n}  =  3 \cdot  3(k-1) \cdot 81^{3(k-1)}.\qedhere\]
 \end {proof}

 It remains to prove the lemma. This following argument is in some sense the kernel of the entire paper, so it merits its own subsection.

\subsection {Proof of Lemma \ref{kernel}}
The constant $D'$ in the statement of the lemma will not be explicitly constructed. Rather, we reduce the lemma to the following claim about sequences of carrier graphs in manifolds. Below, a sequence of carrier graphs $f_i$ is \emph{uniformly relatively connected} if all the $f_i$ are $D$-relatively connected for some $D$ independent of $i$.

\begin {claim} \label {thatclaim}Let $M_i$ be a  sequence of pairwise-nonisometric hyperbolic $3$-manifolds satisfying the assumptions of the beginning of the section, i.e.\ all $M_i$ are $\epsilon$-thick, and all essential curves on $\partial CC(M_i)$ that are nullhomotopic in $M_i$ have length at least $\epsilon$.
Set $K=3k$ and assume that every barrier of a $(K,L)$-product region in $M_i$ is peripheral, and no $M_i$ is $(K,L)$-exceptional.

Let $f_i : X \longrightarrow M_i$ be a sequence of uniformly relatively connected carrier graphs with at most $k$ (geodesic) edges,  and assume the limit
\begin{equation}\label {length limit}\lim_{i \to \infty} \length(f_i(e)) \in [0,\infty]\end{equation}
 exists for every edge $e \subset X$. Call an edge $e$ \emph{short} if this limit is finite, and \emph {long} otherwise. Let $X^{sh} \subset X$ be the subgraph consisting of all short edges, and fix an edge $e_0=(v_0,w_0) \subset X$  such that $$\sup_i \length^{f_i}_{X^{sh}}(e_0) < \infty.$$
Then after passing to a subsequence of $(M_i)$, there is  a sequence of uniformly relatively connected carrier graphs $$f_i' : X' \longrightarrow M_i$$ with at most $9k$ edges, and an edge $e_0' = (v_0',w_0')$ in $X'$ such that 
\begin{enumerate} \item $sup_i \big ( \length f_i'(e_0') - CHL^{f_i'}_{(X')^{sh}}(e_0',w_0') \big )<\infty.$
\item $\sup_i \big ( \length f_i'(e_0')- \length f_i(e_0) +CHL^{f_i}_{X^{sh}}(e_0,v_0) \big ) < \infty$, and\vspace{1mm}
\item  the number of `long' edges in $X' \setminus e_0'$, i.e.\ those $e'$ for which $$\sup_i \length f_i'(e') = \infty,$$ is at most the number of long edges in $X \setminus e_0$.
\end{enumerate}
\end {claim}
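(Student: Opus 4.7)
The plan is to handle this subsequentially in two cases according to the behavior of $CHL^{f_i}_{X^{sh}}(e_0, v_0)$. After passing to a subsequence, this quantity is either uniformly bounded or tends to infinity. In the bounded case, I would just take $X' = X$ and $e_0' = e_0$: the hypothesis $\sup_i \length^{f_i}_{X^{sh}}(e_0) < \infty$ together with the bounded $CHL$ at $v_0$ force $\length f_i(e_0) - CHL^{f_i}_{X^{sh}}(e_0, w_0) < \infty$, which is conclusion (1); conclusions (2) and (3) are immediate. The entire argument thus reduces to the case $CHL^{f_i}_{X^{sh}}(e_0, v_0) \to \infty$.

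In that case, let $Y_0 \subset X^{sh}$ be the component containing $v_0$ and let $\hat{M}_i \to M_i$ be the cover corresponding to $(f_i)_*\pi_1 Y_0$. Because the edges of $Y_0$ are uniformly short, $f_i|_{Y_0}$ lifts to a graph of bounded length in $\hat{M}_i$, and each $\hat{M}_i$ is $\epsilon$-thick with $\rank \pi_1 \hat{M}_i \le k$. The lift $\tilde{e}_0$ of $e_0$ starting at the lift of $v_0$ spends length at least $CHL^{f_i}_{X^{sh}}(e_0, v_0) \to \infty$ in the $1$-neighborhood of $CH(\tilde{Y}_0)$, which by Lemma \ref{cgincc} forces $\diam CC(\hat{M}_i) \to \infty$. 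Using the $\epsilon$-thick bounded-rank condition and Proposition \ref{constructing prs}, I would extract wide product regions $\hat U_i \subset \hat M_i$ of genus at most $k$ through which $\tilde e_0$ passes shortly after leaving $\tilde v_0$, then use Corollary \ref{extending product regions} together with the covering Theorem \ref{PR covering theorem} to push these down to wide product regions $U_i \subset M_i$.

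With $K=3k$ chosen so that the genera of all the product regions in play (and the double compression bodies $\mathcal{DC}(M_i, U_i)$) stay within $K$, the barrier-peripheral hypothesis together with Theorem \ref{double compression body theorem} and the non-$(K,L)$-exceptional assumption guarantee that the component $C_i \subset M_i \setminus \mathrm{int}(U_i)$ on the $v_0$-side of $U_i$ is a compression body with missing interior boundary. The surgery from Section \ref{pfsketch} then applies: I would chop off the portion of $X$ whose image lies in $C_i \cup U_i$ and replace it by a bounded-length, bounded-rank carrier graph for the exterior boundary of the compression body $C_i \cup U_i$ (this is $\pi_1$-surjective onto $C_i \cup U_i$ by Corollary \ref{lipschitzcorollary} applied to a NAT simplicial ruled surface from Lemma \ref{bdlpr}). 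The resulting $X'$ has at most $9k$ edges because $X$ contributes at most $k$ and the surface carrier graph contributes $O(k)$; after replacing $f_i'$ by a minimal-length representative in its equivalence class, Theorem \ref{chainsprop} provides uniform relative connectedness.

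For the verification of the three conclusions, I would split into subcases according to whether $Y_0 \subset C_i$ or not, exactly as in the outline in Section \ref{pfsketch}. Conclusion (2) follows because $f_i(e_0)$ expends all but a bounded amount of its length inside $C_i \cup U_i$: if $Y_0 \subset C_i$ this is where Corollary \ref{extending product regions} is essential, showing the product region $U_i$ extends out to the boundary of $CH(\tilde Y_0)$; if $Y_0 \not\subset C_i$ the distance-minimizing property of the geodesic representative of $e_0$ in $\hat{M}_i$, combined with Lemma \ref{convexlem}, prevents $e_0$ from exiting $C_i \cup U_i$ after entering $U_i$. Conclusion (3) is clear since the surgery only affects $e_0$ and the short graph $Y_0$, while all newly added edges are short. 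Conclusion (1) follows from the original hypothesis $\sup_i \length^{f_i}_{X^{sh}}(e_0) \le D$ together with Corollary \ref{edge homotopies}, which controls how the convex-hull length at $w_0'$ transforms under the edge replacement since $(X')^{sh}$ contains the relevant component of $X^{sh}$ adjacent to $w_0$. The main obstacle will be the case $Y_0 \subset C_i$, which requires the delicate use of Corollary \ref{extending product regions} to upgrade a wide product region in the cover to one that embeds far enough in $M_i$ for the chopping to swallow essentially all of $f_i(e_0)$.
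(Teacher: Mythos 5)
Your overall architecture (reduce to $CHL^{f_i}_{X^{sh}}(e_0,v_0)\to\infty$, pass to the cover of $Y_0$, find wide product regions, locate a compression body $C_i\cup U_i$ on the $v_0$-side using the barrier/exceptional hypotheses, then chop and replace with a short marking of its exterior boundary) is the paper's architecture, and your case split on whether $Y_0\subset C_i$ matches the paper's final verification. But there is a genuine gap at the step where you claim uniform relative connectedness of the new graphs: you propose to ``replace $f_i'$ by a minimal-length representative in its equivalence class'' and invoke Theorem \ref{chainsprop}. This cannot work as stated, because the claim requires the \emph{same} sequence $f_i'$ to be uniformly relatively connected \emph{and} to carry the distinguished edge $e_0'$ satisfying (1)--(3); minimizing length over the equivalence class replaces the graph and the map entirely, destroying the correspondence with $e_0$ and hence the length comparisons in (1) and (2), and possibly changing which edges are long, breaking (3). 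Indeed the whole reason Lemma \ref{kernel} and Corollary \ref{less long edges} are delicate is that the surgered graphs are \emph{not} minimal length, so Theorem \ref{chainsprop} is unavailable after the first surgery; the paper instead proves relative connectedness of the surgered graphs directly (Claim \ref{rellength don't change}, ``convex hulls can't shrink''), and the mechanism is precisely that the bounded-length marking $\omega_i:W\to V_i$ is wedged into $X'$ and sits inside $(X')^{sh}$, so convex hulls relative to subgraphs containing $W$ can only grow under the pinch-and-replace homotopy. Without some argument of this kind your iteration has no relative-connectedness input at the next stage.

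A second, smaller gap is in how you produce the wide product regions. Proposition \ref{constructing prs} is a statement about a single manifold with a degenerate end; to extract, for the \emph{sequence} $\hat M_i$, product regions of uniformly growing width through which the distance-minimized $\tilde e_0$ passes, and to rule out the convex-cocompact alternative, the paper passes to algebraic/geometric limits and, crucially, first augments the markings via Proposition \ref{limits-fg} so that the convergence is strong and the degenerate end of the limit embeds in the geometric limit --- this is what lets the product regions be pushed down embedded into $M_i$, and it is also where the constants $K=3k$ (genus bound $3k$, not $k$) and the $9k=3k+6k$ edge count come from. Your sketch asserts genus $\le k$ and ``$X$ contributes at most $k$,'' which overlooks this augmentation; and your appeal to Theorem \ref{PR covering theorem} to push the regions down would still need the strong-limit structure (or an equivalent) to know which end is degenerate and to control where $\tilde e_0$ meets the region, which is exactly the dichotomy the paper's Case 1/Case 2 verification of (1) and (2) rests on.
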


 This claim implies the lemma, by the following contradiction argument. One supposes that there is no uniform $D'$ that satisfies the  conclusion in Lemma \ref{kernel} and takes a sequence 
$$f_i : X_i \longrightarrow M_i, \ \ e_i =(v_i,w_i) \subset X_i$$ of  manifolds, carrier graphs  and distinguished edges satisfying the  hypotheses of Lemma \ref{kernel} for which larger and larger $D'$ fail. Note that after passing to a subsequence, we can assume all the $M_i$ are pairwise non-isometric, since if infinitely many of them are isometric we can just take the $f'$ in Lemma \ref{kernel} to be a single fixed carrier graph, independent of $i$, which satisfies the properties (1)-(3) if we set $D'=\length(f')$, say.  Since all the $X_i$ have at most $k$ edges, we can  assume after passing to a subsequence that $X_i=X$ is some fixed graph, the  distinguished edges are all the same, and that  the limit \eqref{length limit}  exists for every edge. Apply Claim~\ref{thatclaim} and take $D'$ larger than all the suprema in its conclusion, and large enough so that the $f_i'$ are $D'$-relatively connected. For this $D'$, the conclusion of Lemma \ref{kernel} then holds for all $i$, contradicting that it was supposed to fail for larger and larger $D'$ as $i\to \infty$.

\medskip

 We now start on the proof of the claim.  Set
$$l_i := CHL^{f_i}_{X^{sh}}(e_0,v_0).$$
If $\sup_i l_i < \infty$, we can just set $f_i'=f_i$ and $e_i'=e$ and properties (1)--(3) above will be trivially satisfied.  So, we assume from now on that $$ l_i  \to \infty.$$
Our goal is to modify the carrier graphs $f_i$ without adding too many edges, and in particular without adding \emph{any} long edges, so that  we can chop off the part of $f_i(e_0)$  whose length gives $l_i$.

\vspace{2mm}

 Let's recall the definition of the convex hull length $l_i$ given in \S \ref{carrier}, specifically the version in \eqref{rellength2}.  If $v_0 \not \in X^{sh}$, then $l_i=0$  for all $i$ by definition, contrary to our assumption that $l_i \to \infty$. So,  suppose $v_0$ lies in a connected component $Y \subset X^{sh}$. Let $$\pi_i : \hat M_i \longrightarrow M_i$$ be the cover corresponding to $(f_i)_*(\pi_1Y) \subset \pi_1 M_i$, and let
$$Y' = Y \cup_v e_0$$ 
be the graph obtained by attaching $e_0$ to $Y$ along $v_0$, and let $$\hat f_i : Y' \longrightarrow \hat M_i,$$ be a lift of $f_i |_{Y'}$ into $\hat M_i$. Then
\begin{equation}
	l_i = \length(\hat f_i(e_0) \cap \CN_1(CH(\hat f_i(Y)))).\label {len3}
\end{equation}

\medskip

\noindent \it Step 1:  extracting limits and simplifying the problem. \rm The maps $$\hat f_i |_Y : Y \longrightarrow \hat M_i$$ are all $\pi_1$-surjective and bounded length, so  as in Example \ref{limit example} in \S \ref{convergence section}  we can pass to a subsequence  so that when marked by these graphs, the manifolds $(\hat M_i)$  algebraically converge to some $\hat M_A$. Since all our manifolds are $\epsilon $-thick, we can also assume that when a basepoint $y\in Y$ is fixed, we have a geometric limit
$$(M_i,f_i(y)) \to (M_G,\star).$$ Let $(\phi_i)$ be a sequence of almost isometric maps given by this geometric convergence. By Arzela-Ascoli, we have $$ ( f_i|_Y : Y \longrightarrow M_i) \overset{i\to \infty}{\longrightarrow} (  f_G : Y \longrightarrow  M_G)$$
 after passing to a subsequence, where here  convergence means that $(\phi_i)^{-1} \circ f_i$ is defined for large $i$ and converges to $f_G$ as $i\to \infty$.  Moreover, as long as our maps $(\phi_i)$  were selected to be compatible with the Chabauty convergence of the images of the algebraically convergent sequence of representations we picked (see \eqref{compatibility} in Proposition \ref{convergence-convergence-geom}), we can assume that  there is a covering map $$\pi : \hat M_A \longrightarrow M_G $$
and a $\pi_1$-surjective map $\hat f_A : Y \longrightarrow \hat M_A$ such that $f_G = \pi \circ \hat f_A$. Again, we refer the reader to Example \ref{limit example} in \S \ref{convergence section} for details. Note that if we were following Example \ref{limit example} exactly, we would be taking a geometric limit of the manifolds $\hat M_i$. However, all the same arguments work here.

\begin{claim}\label {get strong}
Replacing the  carrier graphs $f_i$ by new carrier graphs with at most $3k$ edges, and replacing the covers $\hat M_i$ and limiting objects $\hat f_A, \hat M_A$ with their analogues coming from the new carrier graphs, we can assume  after passing to a subsequence that
\begin{enumerate}
	\item $(\hat M_i,\hat f_i(y))$  converges geometrically to $(\hat M_A,\hat f_A(y)),$
\item  every degenerate end of $\hat M_A$  has a neighborhood on which the covering map $\pi : \hat M_A \longrightarrow M_G$ restricts to an embedding.
\end{enumerate}
\end{claim}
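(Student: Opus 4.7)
The plan is to apply the root-adjoining procedure from the proof of Proposition~\ref{limits-fg} to enlarge $\pi_1 Y$ so that the resulting algebraic limit in $\PSL_2\BC$ coincides with the geometric limit, and then realize the added generators as bounded-length loops that can be wedged onto the carrier graphs at the basepoint $y$.

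First, pass to a subsequence so that the image subgroups $\hat G_i := \rho_i(Y_i) \subset \PSL_2\BC$, where $Y_i = (f_i)_*(\pi_1 Y)$, converge in the Chabauty topology to a subgroup $\hat G_G \supset \hat G_A$, and so that the full images $\rho_i(\pi_1 M_i)$ converge to a group $G \supset \hat G_G$, giving $(M_i, f_i(y)) \to (M_G, \star)$ with $M_G = \BH^3/G$. Since every $M_i$ is $\epsilon$-thick, every non-identity element of $\hat G_i$ has translation length at least $2\epsilon$, and by continuity the same holds in $\hat G_G$; in particular $\hat G_G$ has no parabolics. Now run the inductive step in the proof of Proposition~\ref{limits-fg}: if some degenerate end of the current algebraic limit $\BH^3/\rho_\infty^{(j)}(F^{n_j})$ fails to embed under the cover to $M_G$, pick a root $\hat g^{(j+1)} \in \hat G_G$ of an element of $\rho_\infty^{(j)}(F^{n_j})$, approximate it by $\rho_i(\gamma_i^{(j+1)}) \to \hat g^{(j+1)}$ with $\gamma_i^{(j+1)} \in Y_i \subset \pi_1 M_i$, and adjoin a new free generator that $\sigma_i$ sends to $\gamma_i^{(j+1)}$. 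The Euler-characteristic argument from \cite{Biringeralgebraic} shows the process terminates in at most $\rank(\pi_1 Y) \leq k$ steps, giving a final rank $n \leq 2k$, maps $\sigma_i: F^n \to \pi_1 M_i$ with $Y_i \subset \sigma_i(F^n)$, and an algebraic limit $\rho_\infty: F^n \to \PSL_2\BC$ with every degenerate end of $\BH^3/\rho_\infty(F^n)$ embedding under the cover to $M_G$. Because $\rho_\infty(F^n) \subset \hat G_G$ is parabolic-free, Theorem~\ref{max-cyclic-ugly} upgrades this to strong convergence, so $\rho_\infty(F^n) = \hat G_G$.

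Each $\gamma_i^{(j)} \in Y_i \subset \pi_1 M_i$ is represented by a loop in $M_i$ based at $f_i(y)$, and since $\rho_i(\gamma_i^{(j)})$ converges in $\PSL_2\BC$ the displacement of the base frame is uniformly bounded; hence the loop may be taken of uniformly bounded length. Form $f_i': X' \to M_i$ by wedging $X$ at $y$ with at most $n - \rank(\pi_1 Y) \leq k$ new edge-loops, each mapped to the corresponding bounded-length loop. Then $X'$ has at most $|X| + k \leq 2k \leq 3k$ edges, and $(f_i')_*$ remains surjective onto $\pi_1 M_i$ since it contains $(f_i)_*$. The new short subgraph is $Y' = Y \cup \{\text{new loops}\}$ with $(f_i')_*(\pi_1 Y') = \sigma_i(F^n)$, so the corresponding cover $\hat M_i'$ has $\pi_1 \hat M_i'$ identified with $\rho_i(\sigma_i(F^n))$, which converges both algebraically and geometrically (by the strong convergence above) to $\rho_\infty(F^n) = \hat G_G$. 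This gives the geometric convergence in~(1), and (2) is exactly the embedding property delivered by the root-adjoining step.

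The hard part is that Proposition~\ref{limits-fg} is stated for a fixed domain group $\Gamma$ with varying representations, whereas in our setting the ambient groups $\pi_1 M_i$ themselves vary. The workaround is to work abstractly with free products $\pi_1 Y \ast \langle t_1, \ldots, t_m \rangle$ and define the $i$-th representation by sending each new generator $t_j$ to the approximating element $\gamma_i^{(j)} \in Y_i \subset \pi_1 M_i$. The root-adjoining construction goes through verbatim, since it only requires us to produce, at each step, an element of the ambient group whose image in $\PSL_2\BC$ approximates a designated element of $\hat G_G$, together with control on its translation to guarantee the corresponding edge-loop is short.
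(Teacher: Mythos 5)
Your key step does not work as stated: you choose the root $\hat g^{(j+1)}$ inside $\hat G_G$, the Chabauty limit of the subgroups $\hat G_i=(f_i)_*(\pi_1 Y)$, and correspondingly insist that the approximating elements $\gamma_i^{(j+1)}$ lie in $Y_i$. But the obstruction to property (2) lives in $G$, not in $\hat G_G$. When a degenerate end of the current algebraic limit fails to embed under the cover to $M_G$, the Covering Theorem gives a neighborhood covering an end of $M_G$ with finite degree $\ge 2$, and the root $g$ it produces is an element of $\pi_1$ of that end of $M_G$, i.e.\ an element of $G$; there is no reason for $g$ to lie in $\hat G_G$, and in general it does not --- the missing element is precisely one that is visible in $\pi_1 M_i$ but not in the subgroup $[\pi_1 Y]$ carried by $Y$ (for instance when the end of $\hat M_A$ double covers an end of $M_G$ because a level surface is homotopic in $M_i$ into a twisted interval bundle whose core generator lies outside $Y_i$). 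Chabauty convergence of the $\hat G_i$ only provides approximants in $Y_i$ for elements of $\hat G_G$, so at such a step your induction has no $\gamma_i^{(j+1)}$ to adjoin and stalls. Moreover, with all adjoined generators taken in $Y_i$ you have $\rho_i(\sigma_i(F^n))=\hat G_i$, so the covers $\hat M_i$ never change and the new algebraic limit is contained in $\hat G_G$; at best this yields strong convergence of the unchanged covers (a version of (1)), but it can never produce the embedding of degenerate ends into $M_G$ in (2), which is the part of the claim that is actually used afterwards to push a product neighborhood of a degenerate end of $\hat M_A$ down into $M_G$ and then into the $M_i$.

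The repair is to allow the adjoined generators to map to arbitrary elements of $\pi_1 M_i$, so that the marked subgroup strictly grows and the covers genuinely shrink; your realization of the new generators as uniformly bounded loops wedged at $y$ is fine, since the displacements $d(p,\rho_i(\gamma_i)p)$ converge. Also, the ``hard part'' you flag at the end is not actually an obstacle: Proposition \ref{limits-fg} applies directly with $\Gamma=\pi_1 X$ (fixed along the subsequence), $\rho_i$ the holonomies induced by the markings $f_i$, and $\Gamma'=\pi_1 Y$, since the varying groups $\pi_1 M_i$ are just the images $\rho_i(\Gamma)$; no free-product workaround is needed. This is how the paper proceeds: it takes the intermediate cover $\hat M_A \to N \to M_G$ with $\rank \pi_1 N \le 2k$ and degenerate ends of $N$ embedding in $M_G$, marks $\pi_1 N$ by a wedge $Z$ of $2k$ circles attached at $y$ in $M_G$, pulls $Y\vee_y Z$ back to the $M_i$ via the almost isometric maps, and then gets (1) from Theorem \ref{max-cyclic-ugly} because embedding in $M_G$ implies embedding in the geometric limit of the new covers. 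Finally, ``we can assume'' also requires checking that the enlarged carrier graphs retain the standing hypotheses (uniform relative connectedness and the bounded relative length of $e_0$ with respect to the new short subgraph); your proposal does not address this, though it is a minor point compared with the issue above.
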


Here, by `we can assume' we mean that the new graphs have all the same properties that we required of the old graphs (except that the edge bound is $3k$) and that proving Claim \ref{thatclaim} with the new carrier graphs in place of the $f_i$ there, will imply it for the original $f_i$. 

\begin{proof}
 Since the $M_i$ are pairwise non-isometric and $\epsilon$-thick, the geometric limit $M_G$ has infinite volume. So by Proposition \ref{limits-fg}, the  covering map $\pi$  factors through an intermediate cover $\hat M_A \longrightarrow N \longrightarrow M_G$, where $$\rank(\pi_1 N) \leq 2\rank(Y) \leq 2k,$$  and where every degenerate end in $N$ has a neighborhood that embeds in $M_G$.  
Create a new graph 
 in $M_G$ by letting $Z$ be a wedge of $2k$ circles, letting $Y \vee_y Z$ be the graph obtained by attaching the wedge point of $Z$ to our basepoint $y\in Y$, and extending $ f_G$ to $$\bar f_G : Y \vee_y  Z \longrightarrow M_G$$ in such a way that $(\bar f_G)_*(\pi_1 (Z,y)) = \pi_1 (N) \subset \pi_1 (M_G,f_G(y)).$  Define $$\bar f_i : X \vee_y Z \longrightarrow M_i, \ \ \bar f_i |_{X} = f_i$$ so that the image of each edge of $X$ is  a geodesic segment in $M_i$, and so that  the compositions $(\phi_i)^{-1} \circ  \bar f_i |_{Y \vee_y Z}$  are defined and converge to $\bar f_G$ as $i\to \infty$.
Then the sequence of covers $$\hat M_i' \longrightarrow M_i, \ \ \pi_1(M_i') = (\bar f_i)_*(\pi_1 Y \vee_y Z) \subset \pi_1(M_i)$$  converges algebraically to $N$, when each $\hat M_i'$ is marked by a lift of the bounded length graphs $\bar f_i |_{Y \vee_y Z}$.  
After passing to a subsequence, we can also assume that $(\hat M_i',\bar f_i(y))$  converges geometrically to some $\hat M_G'$. 
There are then covering maps $$N \overset{p}{\longrightarrow} \hat M_G' \longrightarrow M_G.$$ Since degenerate ends of $N $ embed in $M_G$, they also embed in $M_G'$, and hence $p : N \longrightarrow \hat M_G'$ is an isometry by Theorem \ref{max-cyclic-ugly}. So,  the required properties (1) and (2) hold for the  augmented maps $\bar f_i$.

The $\bar f_i$ have at most $3k$ edges, and  otherwise have all the same properties we required of the $f_i$.  For instance, the graphs $\bar f_i(Z)$  have bounded length and $\bar f_i|_X$ is a slight perturbation of $f_i$ for large $i$, so  the carrier graphs  $\bar f_i$  are uniformly relatively connected. And including $Z$ into the short subgraph $(X \vee_y Z)^{sh}$,  we still have that: $$\sup_i \length^{\bar f_i}_{(X \vee_y Z)^{sh}}(e_0) \leq \sup_i \length^{f_i}_{X^{sh}}(e_0)  < \infty$$
Finally, note that proving Claim \ref{thatclaim} for the $\bar f_i$ will  prove it for the original $f_i$.  The only thing to  be careful with is that property (2) in Claim \ref{thatclaim} references convex hull length in relation to the short subgraph, and this could change when we add in $Z$. However, the only way that it can change is to increase, and proving (2) with a larger value for the convex hull length  certainly proves it for a smaller value.
 \end{proof}

 Recall that $e_0$  is adjacent to $Y \subset X$ and the maps $f_i|_{Y \cup_v e_0}$ lift to 
\begin{equation}\hat f_i : Y \cup_v e_0 \longrightarrow \hat M_i, \ \ \ l_i =\length(\hat f_i(e_0) \cap \CN_1(CH(\hat f_i(Y)))).\label{lengtheq}	
\end{equation}

\begin{claim}\label {geominimizing}
 After replacing the $f_i$ by new carrier graphs satisfying all the  same properties, we may assume that $\hat f_i (e_0)$  minimizes the distance in  $\hat M_i$ between its endpoints.
\end{claim}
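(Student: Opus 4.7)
The plan is to replace each $f_i$ by a new carrier graph $f_i'$ that agrees with $f_i$ off $e_0$ and straightens $e_0$ in the cover $\hat M_i$. Specifically, let $\hat g_i$ be the distance-minimizing geodesic in $\hat M_i$ from $\hat f_i(v_0)$ to $\hat f_i(w_0)$, and define $f_i'(e_0) := \pi_i \circ \hat g_i$. Since $\pi_i$ is a local isometry, $f_i'(e_0)$ is a geodesic segment in $M_i$ with $\length f_i'(e_0) \leq \length f_i(e_0)$, and its natural lift (equal to $\hat f_i$ on $Y$ and to $\hat g_i$ on $e_0$) realizes the minimum distance in $\hat M_i$ by construction. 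After passing to a further subsequence, the limits $\lim_i \length f_i'(e)$ exist in $[0,\infty]$ for every edge; only the limit for $e_0$ can change, and only by decreasing, so no previously short edge becomes long.

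First I would verify that $f_i'$ is indeed a carrier graph of the desired kind. The paths $\hat g_i$ and $\hat f_i(e_0)$ share both endpoints in $\hat M_i$, so their concatenation represents an element of $\pi_1 \hat M_i = (f_i)_*(\pi_1 Y)$; hence $f_i'(e_0)$ is homotopic rel endpoints in $M_i$ to $f_i(\delta) \cdot f_i(e_0)$ for some loop $\delta$ in $Y \subset X^{sh}$ based at $v_0$. In particular $(f_i')_* \pi_1 X = (f_i)_* \pi_1 X = \pi_1 M_i$, so $f_i'$ is a carrier graph on the same graph $X$ (giving at most $3k \leq 9k$ edges). Since $\hat f_i|_Y$ is unchanged, the algebraic and geometric convergence $(\hat M_i, \hat f_i(y)) \to (\hat M_A, \hat f_A(y))$ and the strong-convergence structure from Claim \ref{get strong} persist verbatim.

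The main obstacle is verifying that the $f_i'$ remain uniformly relatively connected and that the standing bound $\sup_i \length^{f_i'}_{X^{sh}}(e_0) < \infty$ survives, since for any subgraph $Z$ containing $e_0$ the image $f_i'(Z)$ differs from $f_i(Z)$ along $e_0$. Both facts follow from Corollary \ref{edge homotopies}, which compares relative lengths when a single edge is modified through a loop in a given subgraph. For the bound on $e_0$, applying the corollary with subgraph $X^{sh}$ (which contains $\delta$) yields a uniform $D$ with $|\length^{f_i}_{X^{sh}}(e_0) - \length^{f_i'}_{X^{sh}}(e_0)| \leq D$. For uniform relative connectedness, I would argue case by case on each proper subgraph $Z \subset X$: if $e_0 \notin Z$ and the witness edge supplied by $f_i$-relative connectedness is not $e_0$, the same edge works unchanged. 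In the remaining cases I would pass to the enlarged subgraph $Z \cup Y$ (which contains $\delta$), combine Corollary \ref{edge homotopies} with the monotonicity $\length^f_{Z'}(e) \leq \length^f_Z(e)$ for $Z \subset Z'$, and assemble a single uniform constant $D''$ such that the $f_i'$ are $D''$-relatively connected, completing the claim.
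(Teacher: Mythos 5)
Your construction is the same as the paper's---straighten $e_0$ to the distance-minimizing geodesic in $\hat M_i$, observe that the new edge is homotopic rel endpoints to the concatenation of a loop $\delta\subset Y$ with the old one (so the modified map is still a carrier graph), and use Corollary \ref{edge homotopies} to preserve $\sup_i\length^{f_i}_{X^{sh}}(e_0)<\infty$---but the mechanism you propose for the step you yourself flag as the main obstacle, uniform relative connectedness, does not work as stated. For a proper subgraph $Z$ not containing $e_0$ whose only available witness was $e_0$, you propose to bound the relative length with respect to the enlarged subgraph $Z\cup Y$ and then invoke monotonicity; but monotonicity runs the wrong way there: $Z\subset Z\cup Y$ gives $\length^{f_i'}_{Z\cup Y}(e_0)\le\length^{f_i'}_{Z}(e_0)$, so a bound on the enlarged quantity says nothing about $\length^{f_i'}_{Z}(e_0)$, which is what $D''$-relative connectedness demands. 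And when $e_0\in Z$, Corollary \ref{edge homotopies} does not apply at all: it compares relative lengths of the \emph{modified} edge while the image of the subgraph is held fixed, whereas here the candidate witness $e\subset X\setminus Z$ is untouched and it is the image of the subgraph (through $e_0\subset Z$) that has changed. The paper's case division repairs both points: (i) if $X^{sh}\not\subset Z$, any edge of $X^{sh}\setminus Z$ has uniformly bounded absolute, hence relative, length and is a witness; (ii) if $X^{sh}\subset Z$ and $e_0\notin Z$, the preserved bound on $\length^{f_i'}_{X^{sh}}(e_0)$ together with monotonicity---now in the useful direction, since $X^{sh}\subset Z$---bounds $\length^{f_i'}_{Z}(e_0)$; (iii) if $X^{sh}\subset Z$ and $e_0\in Z$, the convex hull of the image of the component of $Z$ containing $e_0$ is unchanged, because the new edge is a geodesic joining two points of the old image of that component (here one uses $\delta\subset Y\subset Z$) and conversely, so the relative length of every edge of $X\setminus Z$ is literally unchanged and the old witness survives.

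A smaller omission: ``all the same properties'' must also include that proving conclusion (2) of Claim \ref{thatclaim} for the modified graphs proves it for the originals; that conclusion involves the difference $\length f_i(e_0)-CHL^{f_i}_{X^{sh}}(e_0,v_0)$, and noting only that the length of $e_0$ decreases is not enough, since the convex hull length changes as well. This is exactly what the second inequality of Corollary \ref{edge homotopies} controls. Finally, after the modification the quantity $l_i=CHL^{f_i}_{X^{sh}}(e_0,v_0)$ need no longer tend to infinity; one should note that if it stays bounded along a subsequence then the modified graphs already satisfy the conclusion of Claim \ref{thatclaim} outright, so that one may pass to a subsequence and retain the standing assumption $l_i\to\infty$.
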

\begin{proof}
Replace $\hat f_i(e_0)$ by  a distance minimizing geodesic in $\hat M_i$ joining its endpoints, and modify $f_i$ on $e_0$ accordingly. Since  
$$\hat f_i |_Y: Y \longrightarrow \hat M_i$$ is $\pi_1$-surjective, the new $ f_i (e_0)$ is  homotopic to a concatenation of the old one and the image under $ f_i$ of a loop in $Y$. In particular, $f_i$ is still a carrier graph for $M_i$.  We claim it has all the same properties as before.

First, Corollary \ref{edge homotopies} implies that $\length_{X^{sh}}^{f_i}(e_0)$ changes by a bounded amount when $f_i$ is modified. So, we still have
\begin {equation}\sup_i \length_{X^{sh}}^{f_i}(e_0) <\infty.\label {bddrellength}\end{equation}
 Second, we claim that we can still assume that the  $f_i$ are  uniformly relatively  connected.  It suffices to show that if $Z \subset X$ is a proper subgraph,  there is always an edge
$e \subset X \setminus Z$ with $$ \sup_i \length_Z^{f_i}(e)<\infty.$$
 We do this in three cases.
\begin {enumerate}
\item   If $Z$  does not contain $X^{sh}$, just pick $e \subset X^{sh}$.
 \item  If $Z$ contains $X^{sh}$ but not $e_0$, set $e=e_0$ and use \eqref{bddrellength}.
\item Suppose $Z$  contains $X^{sh}$ and also $e_0$. Note that the convex hull of $f_i(Y \cup_v e_0)$ in $\hat M_i$ is unchanged when $f_i$ is modified. So, for any edge $e \subset X \setminus Z$ we have that $\length_Z^{f_i}(e)$ is unchanged, so there is still some $e$ with bounded relative length.
\end {enumerate}
Finally, if after the modification $l_i$ is bounded, then we are done, setting $f_i'=f_i$ and $e_0'=e_0$ as in the beginning of the proof of Claim~\ref{thatclaim}. So, we can assume  after passing to a subsequence that $l_i \to \infty$. 

In summary, all the properties that held before still hold and  {we also have that $\hat f_i(e_0)$  minimizes the distance in $\hat M_i$ between its endpoints.}  The only thing left to mention is that by Corollary \ref{edge homotopies},  the difference
$$\length(f_i(e_0)) - CHL_{X^{sh}}^{f_i}(e_0,v_0)$$
 only changes by a bounded amount when we do the  above modifications. So, proving (2) in Claim~\ref{thatclaim}  for the new carrier graphs will prove it for the old ones.
\end{proof}

 Abusing notation, let's define new maps $$ f_i : Y \cup_v [0,\infty) \longrightarrow M_i,$$
where $v\in Y$ is glued to $0\in [0,\infty)$,
by identifying $e_0$ by arclength with $$[0,\length   f_i(e_0)] \subset [0,\infty)$$ and then defining $  f_i$ to be constant on  $[\length   f_i(e_0),\infty)$.  After  choosing appropriate parametrizations and passing to a subsequence, the maps $(\phi_i)^{-1} \circ   f_i $  converge on compact sets\footnote{Note that one has to first restrict to a compact subset of $Y \cup_v [0,\infty)$ before one can even say that the maps $(\phi_i)^{-1} \circ   f_i $ are  defined for large $i$.}
to a map 
$$f_G : Y \cup_v [0,\infty)\longrightarrow M_G$$
 that extends our original $f_G : Y \longrightarrow M_G$. Lift this $f_G$ to a map
$$\hat f_A : Y \cup_v [0,\infty) \longrightarrow \hat M_A$$
 that also extends our original map with the same name. Then because the edges $\hat f_i(e_0)$ are distance minimizing in $\hat M_i$, the map $\hat f_A$ takes $[0,\infty)$ to a distance minimizing geodesic ray in $\hat M_A$.

The ray $\hat f_A([0,\infty))$ must exit an end $\hat \CE$ of $\hat M_A$.  As $\hat M_A$ is  an algebraic limit, its fundamental group is  finitely generated, and hence it is  topologically and geometrically tame by the Tameness Theorem (or  more historically, by \cite{Canarylimits} or \cite{Brockalgebraic} in this case). Note that since $\hat M_A$ has a carrier graph with at most $3k$ edges,  we have $ \rank \pi_1 \hat M_A \leq 3k.$ In particular,  the first homology of a $3$-manifold compactification of $\hat M_A$ has rank at most $3k$, so by half-lives-half-dies, we have
\begin {equation}
genus(\hat \CE) \leq 3k.	
\end {equation}

\begin{claim}
	The end $\hat \CE$  is degenerate.\end{claim}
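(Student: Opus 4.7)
The plan is a contradiction argument. By Claim \ref{get strong}, the sequence $(\hat M_i, \hat f_i(y))$ converges geometrically (in fact strongly) to $(\hat M_A, \hat f_A(y))$. Since the $\hat M_i$ are $\epsilon$-thick without cusps, so is $\hat M_A$; as $\hat M_A$ is tame with finitely generated fundamental group, every end is either convex cocompact or degenerate. I suppose for contradiction that $\hat \CE$ is convex cocompact, and derive the contradiction by showing the distance-minimizing ray $\hat f_A([0,\infty))$ is trapped in a bounded neighborhood of $CC(\hat M_A)$.

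First I would show $\hat f_A([0,\infty)) \subset \CN_2(CH(\hat f_A(Y)))$. For each $i$, the $1$-neighborhood of the convex set $CH(\hat f_i(Y))$ in $\hat M_i$ is itself convex, so the geodesic segment $\hat f_i(e_0)$ meets $\CN_1(CH(\hat f_i(Y)))$ in a connected subsegment; since $\hat f_i(v_0) \in \hat f_i(Y) \subset CH(\hat f_i(Y))$, this is an initial segment, of length exactly $l_i$ by \eqref{lengtheq}. Given any $T>0$, for large $i$ we have $T < l_i$, hence $\hat f_i([0,T]) \subset \CN_1(CH(\hat f_i(Y)))$. Pulling back through the almost isometric maps of the geometric convergence and using the uniform convergence $\hat f_i|_Y \to \hat f_A|_Y$ on the compact graph $Y$, we obtain $\hat f_A([0,T]) \subset \CN_2(CH(\hat f_A(Y)))$, and the claim follows by letting $T \to \infty$.

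Second, I would show $CH(\hat f_A(Y))$ lies in a bounded neighborhood of $CC(\hat M_A)$. Since $\hat f_A|_Y$ is $\pi_1$-surjective and $\pi_1 \hat M_A$ is nontrivial (the end $\hat \CE$ exists), there is an essential loop in $Y$ whose image has length at most $\length\hat f_A(Y)$, which is uniformly bounded in terms of $k$ and $D$ because every edge of $Y \subset X^{sh}$ is short. Concatenating with paths inside $Y$, every point of $\hat f_A(Y)$ is the basepoint of an essential loop of length bounded in terms of $k$ and $D$, and by Lemma \ref{distancetogeodesic}(1) such a point lies within some distance $R = R(k, D, \epsilon)$ of the corresponding closed geodesic in $CC(\hat M_A)$. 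Hence $\hat f_A(Y) \subset \CN_R(CC(\hat M_A))$, and since $\CN_R(CC(\hat M_A))$ is convex we get $CH(\hat f_A(Y)) \subset \CN_R(CC(\hat M_A))$.

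Combining, $\hat f_A([0,\infty)) \subset \CN_{R+2}(CC(\hat M_A))$. But a distance-minimizing geodesic ray exiting a convex cocompact end must eventually leave every bounded neighborhood of $CC(\hat M_A)$, contradicting this containment. Hence $\hat \CE$ is degenerate. The main obstacle is the limit argument in the first step, but this is a routine consequence of the strong convergence from Claim \ref{get strong} together with the convexity of $\CN_1$ of a convex set.
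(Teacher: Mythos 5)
Your overall strategy is the same as the paper's---assume $\hat \CE$ is convex cocompact and play $l_i\to\infty$ off against the fact that the minimizing ray must escape every bounded neighborhood of $CC(\hat M_A)$---and your second and third steps are fine (as is the observation that $\hat f_i(e_0)\cap \CN_1(CH(\hat f_i(Y)))$ is an initial segment of length $l_i$). The gap is in your first step, where you pass the containment $\hat f_i([0,T])\subset \CN_1(CH(\hat f_i(Y)))$ to the limit. Knowing $\hat f_i(s)$ lies within $1$ of $CH(\hat f_i(Y))$ only gives a nearby witness $q_i$ in that hull; since $\hat f_i|_Y$ is $\pi_1$-surjective onto $\pi_1\hat M_i$, the hull $CH(\hat f_i(Y))$ contains all of $CC(\hat M_i)$, so $q_i$ may sit in a part of $CC(\hat M_i)$ spanned by orbit points or group elements with no bounded representatives, and neither the almost isometric maps nor the uniform convergence $\hat f_i|_Y\to\hat f_A|_Y$ on $Y$ says anything about where such points land relative to $CH(\hat f_A(Y))$. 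What you actually need is the upper semicontinuity of convex cores (equivalently of limit sets) under strong convergence: points of $CC(\hat M_i)$ in the pulled-back region must be uniformly close to $CC(\hat M_A)$. This is true here, but it is not a formal consequence of Chabauty convergence plus convexity of $\CN_1$ of a convex set---for merely geometric (non-strong) convergence it fails outright---and in this paper it is exactly the content supplied by Theorem \ref{strong limits}, whose part (b) (proved via Proposition \ref{radialextension} and the $\pi_1$-surjectivity coming from strongness) says that the pullback of $\partial_{\hat\CE}\CN_1(CC(\hat M_A))$ bounds, for large $i$, a region of $\hat M_i$ disjoint from $\CN_1(CC(\hat M_i))$.

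For comparison, the paper's proof runs the transfer in the opposite direction but uses this same nontrivial input: it takes the last time $t$ with $\hat f_A(t)\in\CN_{R+1}(CC(\hat M_A))$, notes $\hat f_i(Y)\subset \CN_{R-1}(CC(\hat M_i))$ (your step two applied in $\hat M_i$ rather than in the limit), so that $\CN_1(CH(\hat f_i(Y)))\subset \CN_R(CC(\hat M_i))$, and then invokes Theorem \ref{strong limits} to conclude $\hat f_i(t)\notin\CN_R(CC(\hat M_i))$ for large $i$, forcing $l_i\le t$ and contradicting $l_i\to\infty$. So your argument is repairable, but only by explicitly invoking Theorem \ref{strong limits} (or otherwise proving convergence of convex cores under strong limits) at the step you describe as routine; as written, that step is the genuine gap.
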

\begin{proof}
Suppose not. Since $\hat M_A$ is geometrically tame and has no cusps, the end $\hat \CE$ must then be convex cocompact. 
Choose a lift $$\hat \phi_i : \hat M_A \longrightarrow \hat M_i, \ \ \pi_i \circ \hat \phi_i = \phi_i \circ \pi$$  that takes the basepoint $\hat f_A(y) \in \hat M_A$ to $\hat f_i(y) \in \hat M_i$. So, $$(\hat \phi_i)^{-1} \circ \hat f_i\overset{i\to\infty}{\longrightarrow} \hat f_A.$$
Since $\sup_i \length \hat f_i(Y)<\infty$ and each $\hat f_i(Y)$ has nontrivial image in $\pi_1 \hat M_i$, Lemma \ref{distancetogeodesic} gives some $R$ independent of $i$ such that \begin{equation} \label{inr} \hat f_i(Y) \subset \CN_{R}(CC(\hat M_i)).\end{equation}

 Since the convergence $\hat M_i \to \hat M_A$ is strong, Theorem \ref{strong limits} gives a standard compact core $C \subset \hat M_A$ as follows. Suppose that $S \subset \partial C$ faces the end $\hat \CE$. Then for large $i$, the image $\hat \phi_i(S)$ faces a component $$E_i\subset \hat M_i \setminus \hat \phi_i(C)$$ that is a neighborhood of a convex cocompact end of $\hat M_i$, and where \begin{equation}E_i \subset \hat M_i \setminus \CN_{R+2}(CC(\hat M_i)),  \label{abbbove} \end{equation} say.  Since $\hat f_A([0,\infty))$ exits $\hat \CE$, we can pick some $t\in [0,\infty)$ such that  $\hat f_A(t) \in S \subset \partial C$. As $i\to \infty$, we have $(\hat \phi_i)^{-1} \circ \hat f_i(t) {\longrightarrow} \hat f_A(t),$ so using this, equation \eqref{abbbove}, and the fact that the $\phi_i$ are almost isometric, for large $i$ we have that $\hat f_i(t) \subset \hat M_i \setminus \CN_{R+1}(CC(\hat M_i))$. In light of \eqref{inr}, this means that $\hat f_i(t) \in \hat M_i \setminus \CN_1(CH(\hat f_i(Y)))$. So, $l_i \leq t$ for large $i$, contradicting that $l_i \to \infty$.  
\end{proof}

\medskip

\noindent \it Step 2: Finding product regions. \rm The surgeries we will perform on the carrier graphs $f_i$ will be directed by wide product regions in the $M_i$, which we produce using the degenerate end $\hat \CE$. By Claim \ref{get strong}, there is a  product neighborhood of $\hat \CE$ that embeds under the covering $$\pi : \hat M_A \longrightarrow M_G,$$ whose image in $M_G$ we can choose to be disjoint from $f_G(Y)$. 
By Proposition~\ref{constructing prs} and Corollary \ref{truncating noncompact}, there is a very wide, compact product region $\hat U \subset \hat M_A$ contained as a topological subproduct region of this end neighborhood.  Then $\pi$  restricts to a homeomorphism from $\hat U$  onto some product region $$U =\pi(\hat U)\subset M_G.$$
Proposition \ref{bilipschitzpreservation}   then implies that if $U$ is sufficiently wide and $i$ is large, the topological product regions $ \phi_i(U) \subset M_i$ and $\hat \phi_i(\hat U) \subset \hat M_i$ contain very wide (geometric) subproduct regions $$W_i\subset M_i, \ \ \hat W_i \subset \hat M_i, \ \ \pi_i(\hat W_i)=W_i.$$
We are being intentionally vague at the moment about how wide we want our product regions to be. While we could write down all the properties that we want their widths to satisfy right now, we think it will be easier for the reader if we introduce them one by one. So, below we will repeatedly say things like \emph{`as long as the $W_i$ were chosen wide enough...'} understanding that this can be arranged by starting with a larger $\hat U$, as long as the desired widths are independent of $i$.

 As a first example,  we may assume that for large $i$, $W_i$ is wide enough relative to $\epsilon$ and the genus bound $3k$ so that Theorem \ref{double compression body theorem} applies, and so that $W_i$ is wider than the $L$ in the statement of the current lemma, guaranteeing that all barriers of $W_i$ are peripheral in $M_i$, and that  it is not the case that both components of $M_i \setminus int(W_i)$ are  either compression bodies with missing interior boundary or twisted interval bundles over non-orientable surfaces.
By Theorem \ref{double compression body theorem} (1), any boundary component of the double compression body $ DC(W_i) \subset M_i$ that has smaller genus than $W_i$ is incompressible, hence a barrier, and therefore is peripheral in $M_i$. Since  $M_i$ is non-exceptional, one can then see that $M_i \setminus int(W_i)$ is the disjoint union of a (possibly trivial) compression body $C_i'$ with missing interior boundary, and a $3$-manifold with incompressible boundary that is not a twisted interval bundle over a non-orientable surface.


Second, we can assume that $W_i$ is wide enough so that Corollary \ref{extending product regions} applies.  This means that $W_i$ is a subproduct region of some $V_i \subset M_i$ such that  the following paragraph holds.  Let $$C_i :=C_i' \cup V_i,$$
which is a compression body with missing interior boundary in which $V_i$ is a  product neighborhood of the exterior boundary $\partial_{e} C_i$. Let $$\tilde M_i \longrightarrow M_i$$ be  the cover corresponding to $\pi_1 V_i$. If $\tilde C_i \subset \tilde M_i$  is a homeomorphic lift of $C_i$, Corollary \ref{extending product regions} says that the difference $$\tilde E_i := \tilde M_i \setminus \tilde C_i$$ is a  product neighborhood of an end $\CE_i$ of $\tilde M_i$. This end $\CE_i$ is convex cocompact: if  not, Canary's Covering Theorem \cite{Canarycovering}   implies that either
\begin{itemize}
\item $M_i$  fibers over the circle and level surfaces of $W_i$ are fibers,	\item $M_i \setminus C_i$ is a product neighborhood of a degenerate end of $M_i$.
\end{itemize} The first option is impossible, since then any level surface of $W_i$ is a barrier that is not peripheral in $M_i$, while in the second case, $M_i$ is $(K,L)$-exceptional. So, Corollary~\ref{extending product regions} says that  the diameter of the intersection $\tilde E_i\cap \CN_1(CC(\tilde M_i)) $  is  at most some constant depending only on $K,\epsilon$.
Informally, we have  now constructed $C_i$ so that $\partial C_i$  is at bounded distance from the boundary of the `convex hull' of $C_i$ in $M_i$.

\medskip
\noindent \it Step 3, the modification. \rm  Here, we modify each carrier graph $f_i$ by `chopping off'  the part of it that lies in $C_i$ and replacing it by a short carrier graph for $C_i$ that lies near  $\partial C_i$.   We will obtain  a new sequence of carrier graphs $$f_i' : X' \longrightarrow M_i,$$ and   the goal  is to show that these $f_i'$ are still $D'$-relatively connected for some $D'$ independent of $i$, and that the edge $e_0=(v_0,w_0) \subset X$  corresponds to an edge $e_0'=(v_0',w_0') \subset X'$ such that properties (1)--(3) in Claim \ref{thatclaim} hold.

\begin {claim}[Short markings near $\partial_{e} C$]\label {short markings claim} Let $W$ be a graph with one vertex  $\star \in W$ and $6k$ edges.  Then there are
 $\pi_1$-surjective maps $$\omega_i : W\longrightarrow V_i,	\ \ \star_i := \omega_i(\star)
$$
 such that each edge of $W$ is mapped to a geodesic segment by $\omega_i$,  the images $\omega_i(W)$ have length bounded above independent of $i$, and 
$$\sup_i d(\star_i, \partial C_i) < \infty.$$\end {claim}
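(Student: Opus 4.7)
The plan is to produce the required marking of $V_i$ from a NAT simplicial ruled surface placed near $\partial_e C_i$, using the definition of a product region together with Corollary~\ref{lipschitzcorollary}. First, observe that $V_i$ contains $W_i$ as a subproduct region, and $W_i$ is bilipschitz to a subproduct region of $\hat U \subset \hat M_A$, whose genus equals that of the degenerate end $\hat\CE$. The half-lives-half-dies argument already recorded in the proof gives $\mathrm{genus}(\hat\CE)\le 3k$, so $V_i$ has genus $g\le 3k$ and thus $\mathrm{rank}(\pi_1 V_i)=2g\le 6k$.

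Next, pick any point $p_i\in V_i$ at distance roughly $\epsilon$ from the boundary component $\partial_e C_i=\partial C_i$; this is possible because $V_i$ is a product neighborhood of $\partial_e C_i$. By condition (1) of Definition~\ref{prdef}, there is a NAT simplicial ruled surface $h_i:\Sigma_g\to\CN(V_i)$ passing through $p_i$ that is a homotopy equivalence. Because $M_i$ is $\epsilon$-thick and $h_i$ is NAT, the pullback path metric on $\Sigma_g$ is also $\epsilon$-thick. Combined with the Bounded Diameter Lemma for NAT surfaces (\S\ref{bdlsec}), this forces the intrinsic diameter of $\Sigma_g$, and hence the extrinsic diameter of $h_i(\Sigma_g)$ in $M_i$, to be bounded by some $B=B(k,\epsilon)$. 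Consequently every point of $h_i(\Sigma_g)$ lies within $B+\epsilon$ of $\partial C_i$.

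Let $W$ be the wedge of $6k$ circles at $\star$. By Corollary~\ref{lipschitzcorollary}, the $\epsilon$-thick surface $\Sigma_g$ admits a minimal-size generating set for $\pi_1\Sigma_g$ realized as a wedge of $2g\le 6k$ loops based at a single point $q\in\Sigma_g$, with total length bounded by some $L'=L'(k,\epsilon)$. Send the first $2g$ edges of $W$ to these loops and the remaining edges to the constant loop at $q$. Composing with $h_i$ (which may be homotoped slightly into $V_i$, since $\CN(V_i)$ deformation retracts onto $V_i$) and then tightening each edge image to its geodesic representative rel endpoints gives a map $\omega_i:W\to V_i$ with geodesic edges of uniformly bounded total length. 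Since $h_i$ is a homotopy equivalence and the chosen loops generate $\pi_1\Sigma_g$, the map $\omega_i$ is $\pi_1$-surjective. Finally, the basepoint $\star_i=\omega_i(\star)=h_i(q)$ lies on $h_i(\Sigma_g)$, hence within $B+\epsilon$ of $\partial C_i$, giving the uniform bound on $d(\star_i,\partial C_i)$.

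There is no serious obstacle here: the only thing to check is that a NAT simplicial ruled surface through a point close to $\partial_e C_i$ stays close to $\partial C_i$, which is immediate from the Bounded Diameter Lemma and $\epsilon$-thickness of $M_i$. The genus bound $g\le 3k$, set up earlier in the proof via the tameness of $\hat M_A$ and half-lives-half-dies, is what makes the number of edges of $W$ depend only on $k$.
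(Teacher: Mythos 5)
Your proposal is correct and follows essentially the same route as the paper: the paper's (one-line) proof likewise takes $\epsilon$-thick simplicial ruled surfaces in $V_i$ in the correct homotopy class at bounded distance from $\partial_e C_i$ and applies Corollary~\ref{lipschitzcorollary}, with the genus bound $g\le 3k$ (hence $2g\le 6k$ loops) coming from the earlier half-lives-half-dies estimate for the end $\hat\CE$. Your extra remarks (NAT implies the pullback metric is $\epsilon$-thick, bounded diameter keeps $\star_i$ near $\partial C_i$, homotopy equivalence gives $\pi_1$-surjectivity) are exactly the details the paper leaves implicit.
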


Recall that $genus(V_i)=genus(\hat \CE)\leq 3k.$

\begin {proof}
Pick $\epsilon$-thick simplicial ruled surfaces in $V_i$ in the homotopy classes of level surfaces that lie at bounded distance from $\partial_{e} C_i$, and apply Corollary~\ref{lipschitzcorollary} to find short graphs marking these surfaces.
\end {proof}

 Proceeding with the construction, for each $i$ homotope $\partial C_i$ slightly so that it is transverse to $f_i$, and then let $\mathcal C_i \subset X$ be the union of all components of $f^{-1}_i(C_i) \subset X$ that contain vertices of $X$. Each component of $ \mathcal C_i$ is the union of a subgraph of $X$ with some adjacent subsegments of edges. After passing to a subsequence, we can assume that the combinatorics of $\mathcal C_i$ is independent of $i$, i.e.\ the only difference when $i$ varies is that the endpoints of the subsegments change. And then after reparameterizing edges\footnote{ These reparametrizations may change the fact that the restrictions $f_i |_Y$ pointwise converge, but we will not use this again in the proof.} we can actually assume that $\mathcal C_i$ is itself independent of $i$. So, instead of $\mathcal C_i$ we now write
$$\mathcal C \subset X.$$


Form a new graph $X / \mathcal C$ by  collapsing $\mathcal C$ to a point $\star$, and let $$q : X \longrightarrow X / \mathcal C$$  be the quotient map.  Glue $\star \in  X / \mathcal C$ to $\star \in W$ to create  the wedge $$X':=X/\mathcal C \vee_\star W.$$ 

%

\begin {claim}[Pinch and replace]\label {pinch and replace} There is a homotopy $F^t_i : X  \longrightarrow M_i$, where $t\in [0,1]$, from $F_i^0 = f_i$ to a composition $$X \overset{h}{\longrightarrow} X' \overset{f_i'}\longrightarrow M_i, \ \ f_i' \circ h = F_i^1,$$  such that the following properties hold.
\begin {enumerate}
\item $f_i'$ has geodesic edges and $f_i' |_W= \omega_i$, as in Claim \ref{short markings claim}.
\item $h=q$ on $X \setminus \mathcal C$, while $h(\mathcal C) \subset W$.
\item If $x \in X$ is on an edge that does not intersect $\mathcal C$, then the path $t \mapsto F_i^t(x)$ is constant.
\item If $x\in X \setminus \mathcal C$, the path $t \mapsto F_i^t(x)$ has length at most some universal  constant independent of $i$ and $x$.
\item If $x \in \mathcal C$, the path $t \mapsto F_i^t(x)$ is contained in $C_i$.
 \end{enumerate}\end{claim}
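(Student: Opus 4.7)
The plan is to construct $f_i'$, $h$, and the homotopy $F_i^t$ simultaneously. First, for each vertex $v \in V(X) \cap \mathcal{C}$, I fix a path $\gamma_v$ in $C_i$ from $f_i(v)$ to $\star_i$; this will be the vertex-level homotopy $t \mapsto F_i^t(v)$, while $F_i^t(v) \equiv f_i(v)$ for $v \notin \mathcal{C}$. Define $f_i' : X' \to M_i$ by $f_i'|_W = \omega_i$ (so $f_i'(\star) = \star_i$) and, on each edge of $X/\mathcal{C}$ coming from an edge $e$ of $X$ not entirely in $\mathcal{C}$, by the geodesic in $M_i$ between its endpoint images (each being $f_i(v)$ for $v \notin \mathcal{C}$, or $\star_i$ for $\star$). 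This yields property (1).

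Next, define $h$. On $X \setminus \mathcal{C}$, set $h = q$, giving property (2). On each component $\mathcal{C}_j$ of $\mathcal{C}$, I choose $h|_{\mathcal{C}_j} : \mathcal{C}_j \to W$ sending $\partial \mathcal{C}_j$ to $\star$ so that, in $\pi_1(M_i, \star_i)$, the loops $\omega_i \circ h|_{\mathcal{C}_j}$ represent the classes of $f_i|_{\mathcal{C}_j}$ conjugated by the $\gamma_v$ at boundary vertices. This is possible because $f_i(\mathcal{C}_j) \subset C_i$, so the relevant classes lie in the image of $\pi_1 C_i \to \pi_1 M_i$, and $\omega_i$ hits this image: $\omega_i$ is $\pi_1$-surjective onto $V_i$, which in turn is $\pi_1$-surjective onto the compression body $C_i$ since $V_i$ is a product neighborhood of its exterior boundary.

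I build the homotopy $F_i^t$ edge by edge. On edges of $X$ disjoint from $\mathcal{C}$, it is constant (property 3). On edges $e$ meeting $\mathcal{C}$, I construct it in two phases: phase~1 ($t \in [0,1/2]$) slides each endpoint $v \in \mathcal{C}$ along $\gamma_v$ and correspondingly deforms $f_i|_{e \cap \mathcal{C}}$ through $C_i$ to a loop at $\star_i$; phase~2 ($t \in [1/2,1]$) replaces this loop by $\omega_i \circ h|_{e \cap \mathcal{C}} \subset V_i$ and slides the remaining segment $f_i|_{e \setminus \mathcal{C}}$ onto $f_i'|_{q(e \setminus \mathcal{C})}$ by a straight-line homotopy in $\BH^3$. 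Property (5) holds because all motion on points of $\mathcal{C}$ stays inside $C_i$.

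The hard part will be verifying property (4), the uniformly bounded tracks at points of $X \setminus \mathcal{C}$. The key observation is that $\partial C_i = \partial_e C_i$ has uniformly bounded diameter: it is a boundary component of the bounded-genus product region $V_i$ inside the $\epsilon$-thick manifold $M_i$, hence has bounded $\epsilon$-diameter, which equals its ordinary diameter by $\epsilon$-thickness. Since $\star_i$ lies within uniformly bounded distance of $\partial C_i$ by Claim~\ref{short markings claim}, the distance $d(\star_i, f_i(e(t_0)))$ is uniformly bounded. The two geodesics $f_i|_{e \setminus \mathcal{C}}$ and $f_i'|_{q(e \setminus \mathcal{C})}$ share the endpoint $f_i(v_1)$ and have their other endpoints at this bounded distance apart, so they bound a uniformly thin hyperbolic triangle. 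I will parametrize $f_i'|_{q(e \setminus \mathcal{C})}$ so that for each $s \in e \setminus \mathcal{C}$, the point $f_i'(q(e(s)))$ is the nearest point on the new geodesic to $f_i(e(s))$; then the phase~2 track at each interior $s$ is bounded by the universal thin-triangles constant in $\BH^3$, yielding the required uniform bound.
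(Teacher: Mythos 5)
Your construction is essentially the paper's argument: pinch the points of $\partial\mathcal C$ to $\star_i$ along uniformly bounded paths (using that $\star_i$ lies at bounded distance from the bounded-diameter surface $\partial_e C_i$, by Claim \ref{short markings claim}), use the $\pi_1$-surjectivity of $\omega_i$ to replace $f_i|_{\mathcal C}$ by a map factoring through $W$ while staying in $C_i$, and re-straighten the geodesic pieces outside $\mathcal C$, with tracks controlled because the original edges were geodesic and their endpoints moved a bounded amount. The paper runs the same three stages, only less explicitly.

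Two points should be tightened. First, you choose $h|_{\mathcal C_j}$ so that $\omega_i\circ h|_{\mathcal C_j}$ matches the (track-conjugated) classes of $f_i|_{\mathcal C_j}$ in $\pi_1(M_i,\star_i)$; as written this only guarantees that the phase-2 replacement homotopy exists in $M_i$, which does not give property (5). You should match the classes in $\pi_1(C_i,\star_i)$ instead, which is possible by exactly the surjectivity chain you cite ($\pi_1 W\twoheadrightarrow\pi_1 V_i\twoheadrightarrow\pi_1 C_i$, the latter because $V_i$ is a neighborhood of the exterior boundary of the compression body $C_i$); then the phase-2 homotopy can be taken inside $C_i$ and (5) follows. (In this particular setting $\pi_1 C_i\to\pi_1 M_i$ is in fact injective, since the cover $\tilde M_i$ corresponding to $\pi_1 V_i$ deformation retracts onto the homeomorphic lift $\tilde C_i$, so your version can be salvaged; but it is cleaner not to rely on this.) Second, the ``nearest point'' parametrization of $f_i'|_{q(e\setminus\mathcal C)}$ need not take endpoints to endpoints and need not be a genuine parametrization of the new geodesic; instead parametrize both geodesic segments proportionally to arclength and use convexity of $t\mapsto d(\alpha(t),\beta(t))$ along geodesics in $\BH^3$, which bounds every track of the straightening by the (uniformly bounded) endpoint displacements --- this is precisely the paper's ``since $f_i$ had geodesic edges'' step.
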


See Figure \ref{pinch and replace fig} for an illustration.

\begin {proof}
 Since $\sup_i d(\star_i, \partial C_i) < \infty$, there is for each $i$ a homotopy $$(F_i^t), \ t\in [0,1/3], \ \ F_i^0=f_i$$   such that (3), (4), (5) all hold, and where $f^{1/3}_i(x) =\star_i$  for all $x\in \partial C_i$. (One can even make this homotopy supported in an arbitrarily small neighborhood of $\partial \mathcal C$: just pick a  bounded length path from each $f_i(x), \ x\in \partial \mathcal C$ to $\star_i$ and near each $x$, drag $f_i$ along this path.) 

Since each $\omega_i : W \longrightarrow V_i$    is $\pi_1$-surjective,  we can now find a homotopy $(F_i^t), \ t\in [1/3,2/3]$  supported on $\mathcal C$ and satisfying (5),  such that $f^{2/3}_i |_{\mathcal C}$ factors  through a map $h : (\mathcal C,\partial C) \longrightarrow (W,\star)$. Finally, extend this to a homotopy $(F_i^t), \ t\in [2/3,1]$ that is constant on $\mathcal C$ and homotopes each edge of $X \setminus int(\mathcal C)$ rel endpoints to be geodesic. (Here,  we are regarding $X \setminus int(\mathcal C)$ as a graph itself, so an `edge' can be a segment of an edge of $X$ that has an endpoint in $\partial \mathcal C$.)  Since $f_i$ had geodesic edges,  we can do this so that (3) and (4) hold. The resulting map $F_i^1$  clearly factors as indicated in the claim, and satisfies conditions (1)--(5).\end {proof}

\begin{figure}
	\centering
	\includegraphics{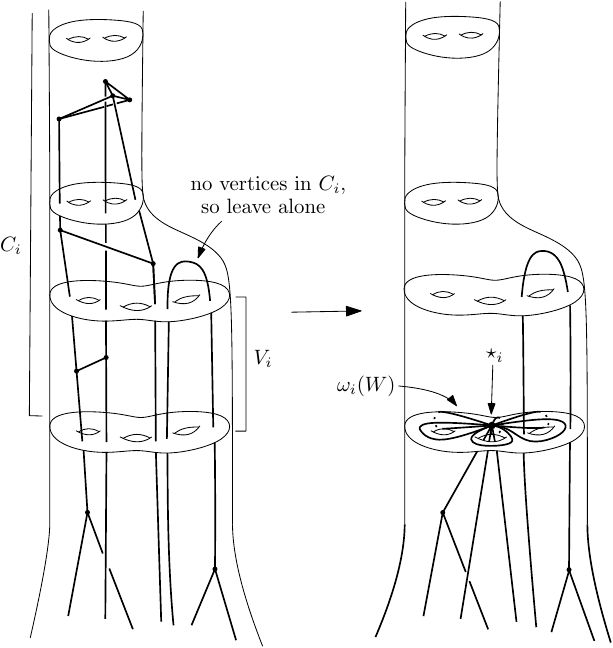}
		\caption{In Claim \ref{pinch and replace}, we create new carrier graphs $f_i'$ by pinching all intersections of $f_i$ with $\partial C_i$ to the point $\star_i$, and then replacing $f_i(X) \cap C_i$ with $\omega_i(W)$. However, if part of an edge of $f_i(X)$ enters and exits $C_i$ without going through any vertices, we leave it alone. }
	\label{pinch and replace fig}
	
\end{figure}

Focus now on $f_i' : X' \longrightarrow M_i$. This $f_i'$ is a carrier graph for $M_i$, since $f_i$ is homotopic to something that factors through it.  The number of edges of $X'$ is at most $9k=3k+6k$, where $3k$ is a bound for the number of edges of $X$ and $6k$ is the number of edges in $W$.

\begin{claim}[Convex hulls can't shrink]\label {rellength don't change}
Fix a  proper subgraph $Z \subset X$ and let $Z'=q(Z) \cup W$. Suppose $e$ is an edge of $X$ that is not completely contained in $\mathcal C$. Let $v$ be a vertex of $e$ and let $e'=q(e)$ and $v'=q(v)$  be the corresponding edge and vertex in $Z'. $ Let $e_{v,i}, e_{v',i}'$  be the segments of $e,e'$  beginning at $v,v'$  that have length
$$\length e_{v,i} = CHL_{Z}^{f_i}(e,v), \ \ \length e_{v',i}' = CHL_{Z'}^{f_i'}(e',v'), \ \ .$$ Then if $x\in e_{v,i}$,  we have that $$\sup_i d_{e'}(q(x),e_{v',i}')< \infty,$$ where here $d_{e'}(\cdot,\cdot)$  is the distance along the edge $e'$, endowed with the pullback metric induced by $f_i'$.\end{claim}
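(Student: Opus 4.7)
My plan is to transfer the comparison to the covers of $M_i$ naturally associated with $Z_0$ and $Z_0'$ and to finish with Lemma \ref{convlem1}. Since $h(Z_0) \subset Z_0'$ and the homotopy $F_i^t$ satisfies $F_i^0 = f_i$ and $F_i^1 = f_i' \circ h$, the image $(f_i)_*(\pi_1(Z_0, v))$ is contained in $(f_i')_*(\pi_1(Z_0', v'))$, yielding a Riemannian covering $p : \hat M_i \to \hat M_i'$ of the two covers used in defining the two convex-hull lengths. Choose lifts so that $p \circ \hat f_i(v) = \hat f_i'(v')$, and lift $F_i^t|_{Z_0 \cup e}$ to a homotopy $\hat F^t$ in $\hat M_i'$ starting at $p \circ \hat f_i|_{Z_0 \cup e}$. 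By property~(4) of Claim \ref{pinch and replace}, the track at every $y \in (Z_0 \cup e) \setminus \mathcal C$ has length at most a universal constant $T$, giving $d_{\hat M_i'}(p \circ \hat f_i(y), \hat f_i'(q(y))) \leq T$ for all such $y$.

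The central step is to show $p(C) \subset \mathcal N_R(C')$ for some $R$ independent of $i$, where $C$ and $C'$ are the smallest convex subsets of $\hat M_i$ and $\hat M_i'$ containing $\hat f_i(Z_0)$ and $\hat f_i'(Z_0')$ respectively. For $y \in Z_0 \setminus \mathcal C$ this follows immediately from the previous paragraph, since $\hat f_i'(q(y)) \in C'$. For $y$ in a component $\mathcal C_j$ of $Z_0 \cap \mathcal C$, the argument is more delicate because the lifted track at $y$ stays in a lift of $C_i$ but can have unbounded length; however, the frontier points of $\mathcal C_j$ in $Z_0$ lie in $Z_0 \setminus \mathcal C$ and so map within $T$ of $C'$, and the image $(f_i)_*(\pi_1 \mathcal C_j) \subset \pi_1 C_i$ in $\pi_1 M_i$ equals $(\omega_i)_*(\pi_1 W)$ (up to conjugacy) by the $\pi_1$-surjectivity of $\omega_i : W \to V_i$ combined with that of $V_i \hookrightarrow C_i$. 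Exploiting this, one homotopes $f_i|_{\mathcal C_j}$ rel $\partial \mathcal C_j$ through loops in $\omega_i(W) \subset V_i$ of bounded length (Claim \ref{short markings claim}), which places $p \circ \hat f_i(\mathcal C_j)$ within a uniformly bounded distance of $\hat f_i'(h(\mathcal C_j)) \subset C'$. Combining both cases, $p \circ \hat f_i(Z_0)$ lies in a uniformly bounded neighborhood of $C'$, and convexity of metric neighborhoods of convex sets in the hyperbolic manifold $\hat M_i'$ then gives the desired $p(C) \subset \mathcal N_R(C')$.

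Given this containment, the claim follows quickly. If $x \in e_{v,i} \cap \mathcal C$, then necessarily $v \in \mathcal C$, so $q(x) = v' \in e_{v',i}'$ and the distance is zero. If instead $x \in e_{v,i} \setminus \mathcal C$, then $\hat f_i(x) \in \mathcal N_1(C)$ projects into $\mathcal N_{1+R}(C')$, and the track bound gives $\hat f_i'(q(x)) \in \mathcal N_{1+R+T}(C')$. Since $\hat f_i'(e')$ is a geodesic in $\hat M_i'$ meeting the convex set $\mathcal N_1(C')$ exactly in the initial subsegment $e_{v',i}'$, the portion of $\hat f_i'(e')$ lying in the annular region $\mathcal N_{1+R+T}(C') \setminus \mathcal N_1(C')$ has length bounded uniformly in $i$ by Lemma \ref{convlem1}, which bounds $d_{e'}(q(x), e_{v',i}')$ uniformly. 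The main obstacle is the middle step: rigorously controlling $p \circ \hat f_i$ on the $\mathcal C$-components, whose lifted tracks can a priori be long, and this requires careful use of both the $\pi_1$-surjectivity of $\omega_i$ and the bounded length of $\omega_i(W)$ near $\partial C_i$.
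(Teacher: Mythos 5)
Your overall skeleton matches the paper's: pass to the cover(s) determined by $Z_0'$, show the old convex hull sits in a uniformly bounded neighborhood of the new one, and finish with convexity of metric neighborhoods plus Lemma \ref{convlem1}. The endgame in your last paragraph is essentially the paper's. But the central step has a genuine gap, and in fact the assertion you make there is false as stated. You claim that because $(f_i)_*(\pi_1\mathcal C_j)$ lands in $(\omega_i)_*(\pi_1 W)$ up to conjugacy, one can homotope $f_i|_{\mathcal C_j}$ rel $\partial\mathcal C_j$ into $\omega_i(W)$ and conclude that $p\circ\hat f_i(\mathcal C_j)$ lies within a uniformly bounded distance of $\hat f_i'(h(\mathcal C_j))\subset C'$. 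The existence of such a homotopy gives no metric control on its tracks, and the conclusion cannot hold in general: $\omega_i(W)$ has bounded length and lies at bounded distance from $\partial C_i$ (Claim \ref{short markings claim}), whereas $f_i(\mathcal C_j)$ is only constrained to lie in the compression body $C_i$, and it can penetrate arbitrarily deep into $C_i$ (distance to $\partial C_i$ unbounded in $i$). So "bounded distance from the lift of $\omega_i(W)$" is exactly the kind of statement the construction does \emph{not} provide, and your proof of the containment $p(C)\subset\CN_R(C')$ collapses at this point.

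What is actually true, and what the paper proves, is the weaker but sufficient statement that for $x\in Z_0\cap\mathcal C$ the point $\tilde f_i(x)$ lies within a uniform distance of $CC(\tilde M_i')$, hence inside a bounded neighborhood of $C'$, since $C'$ contains the convex core of the $Z_0'$-cover ($Z_0'\supset W$ is $\pi_1$-surjective for that cover). Proving this needs two inputs that never appear in your proposal: (i) the original carrier graph satisfies $f_i(X)\subset\CN_r(CC(M_i))$ with $r$ uniform, which comes from uniform relative connectedness and $\epsilon$-thickness via Lemma \ref{cgincc}; and (ii) a structural fact about the homeomorphic lift $\tilde C_i$ of $C_i$ to the cover: $\tilde V_i\subset\CN_r(CC(\tilde M_i'))$ by Lemma \ref{bdlpr}, so any component of $\tilde C_i\setminus\CN_r(CC(\tilde M_i'))$ is a product neighborhood of a convex cocompact end of the cover that maps homeomorphically onto a component of $M_i\setminus\CN_r(CC(M_i))$. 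Combining (i) and (ii), a graph point of $Z_0\cap\mathcal C$ cannot lie far from $CC(\tilde M_i')$ without projecting outside $\CN_r(CC(M_i))$, a contradiction. Without this convex-core argument your central containment is unproved. As a minor additional point, your dismissal of the case $x\in e_{v,i}\cap\mathcal C$ via "necessarily $v\in\mathcal C$, so $q(x)=v'$" is not justified: $x$ could lie in a component of $\mathcal C$ meeting $e$ only through its other endpoint, so $q(x)$ need not equal $v'$.
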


Here,  we assume that $e$ is not completely contained in $\mathcal C$ just  so that $e'=q(e)$  is actually an edge, rather than a vertex. However, suitably interpreted the claim also holds  in this degenerate case.

 Deferring the  proof for a moment, we have:

\begin{kor}
	The $f_i'$ are  uniformly relatively connected.
\end{kor}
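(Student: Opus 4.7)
The plan is to show, for any proper subgraph $Z'\subset X'$, that there is an edge $e'\subset X'\setminus Z'$ with $\length_{Z'}^{f_i'}(e')$ bounded independently of $i$. I will split according to whether $W\subset Z'$. If $W\not\subset Z'$, I simply take $e'$ to be any edge of $W$ not in $Z'$, and Claim~\ref{short markings claim} gives $\length f_i'(e')\le \length\omega_i(W)\le $ const, so the relative length is bounded. The substantive case is $W\subset Z'$.

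In the case $W\subset Z'$, the plan is to pull $Z'$ back to a proper subgraph of $X$ and apply uniform relative connectedness of $(f_i)$. Concretely, I define $Z\subset X$ to consist of $\mathcal C$ together with every edge $e$ of $X$ such that $q(e)\subset Z'$. Since $W\subset Z'$ and $Z'\ne X'$, there must exist an edge $e$ of $X\setminus \mathcal C$ with $q(e)\not\subset Z'$, so $Z\subsetneq X$. Uniform relative connectedness of $(f_i)$ then yields an edge $e\subset X\setminus Z$ with $\length_Z^{f_i}(e)\le D_1$ independent of $i$, and $e\not\subset \mathcal C$ implies that $e':=q(e)$ is a genuine edge of $X'\setminus Z'$.

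The key step is then to transfer the relative length bound from $(e,Z)$ to $(e',Z')$. For each vertex $v$ of $e$ with corresponding vertex $v'=q(v)$ of $e'$, Claim~\ref{rellength don't change} applied to $x$ at the far endpoint of $e_{v,i}$ shows that $q(x)$ lies within uniformly bounded $d_{e'}$-distance of the initial segment $e_{v',i}'$, hence $d_{e'}(q(x),v')\le CHL_{Z'}^{f_i'}(e',v')+O(1)$. Combining this with Claim~\ref{pinch and replace} parts (3)--(4), which say the pinch-and-replace homotopy moves points outside $\mathcal C$ by bounded amounts and leaves edge segments disjoint from $\mathcal C$ untouched, one gets $d_{e'}(q(x),v')\ge CHL_Z^{f_i}(e,v)-O(1)$; the same analysis applies at $w$. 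Finally, $f_i'(e')$ is a geodesic between its endpoints, which lie within bounded distance (by Claim~\ref{pinch and replace} (4) or equal $\star_i$ near $\partial C_i$ by Claim~\ref{short markings claim}) of $f_i(v),f_i(w)$, so $\length f_i'(e')\le \length f_i(e)+O(1)$. Subtracting, $\length_{Z'}^{f_i'}(e')\le \length_Z^{f_i}(e)+O(1)\le D_1+O(1)$, uniformly in $i$.

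The main obstacle I anticipate is the bookkeeping for what happens at vertices of $e$ that lie in $\mathcal C$, since there $v'=\star$, the adjacent subsegment $[v,v_1]\subset \mathcal C$ of $e$ gets collapsed by $q$, and the new endpoint sits at $\star_i\in V_i$ rather than at $f_i(v)$. One has to check that the length discrepancies introduced by this collapse -- both for the edge length and for the convex hull lengths measured in the appropriate covers -- are absorbed into the $O(1)$ constants, which is exactly what Claim~\ref{pinch and replace} (4)--(5) and the proximity $d(\star_i,\partial C_i)\le $ const from Claim~\ref{short markings claim} are designed to deliver.
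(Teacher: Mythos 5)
Your overall route is the same as the paper's: the same dichotomy on whether $W\subset Z'$, the same pulled-back subgraph $Z\subset X$, uniform relative connectedness of the $f_i$ to produce an edge $e\subset X\setminus Z$ with $e\not\subset\mathcal C$, and Claims \ref{rellength don't change} and \ref{pinch and replace} to transfer the bound to $e'=q(e)$. The gap is in how you execute the transfer. You reduce to two length comparisons across the quotient, namely $\length f_i'(e')\le \length f_i(e)+O(1)$ and $d_{e'}(q(x),v')\ge CHL_Z^{f_i}(e,v)-O(1)$, and the second fails in exactly the case you flag at the end. If $v\in\mathcal C$, the subsegment of $e$ adjacent to $v$ lying in $\mathcal C$ is a geodesic piece of $f_i(e)$ inside $C_i$, and nothing bounds its length: $C_i$ can be geometrically enormous, so $f_i(e)$ may spend an arbitrarily long time in $C_i$ before exiting, and this portion may lie in $\CN_1(CH(\tilde f_i(\tilde Y_0)))$ and so count toward $CHL_Z^{f_i}(e,v)$. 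After pinching, that entire portion is collapsed to $\star$, so $CHL_{Z'}^{f_i'}(e',v')$ can be smaller than $CHL_Z^{f_i}(e,v)$ by an unbounded amount. The bound $d(\star_i,\partial C_i)=O(1)$ from Claim \ref{short markings claim} and parts (4)--(5) of Claim \ref{pinch and replace} only control what happens at the frontier $\partial\mathcal C$; they do not make these discrepancies $O(1)$, contrary to your closing paragraph.

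What saves the statement in that case is a cancellation, not absorption: the collapse removes from $\length f_i'(e')$ essentially the same amount it removes from the convex hull length at $v$, so the difference defining $\length_{Z'}^{f_i'}(e')$ stays controlled -- but your crude upper bound $\length f_i'(e')\le \length f_i(e)+O(1)$, which does not see the collapse, cannot detect this, and combined with the failed lower bound it only yields $\length_{Z'}^{f_i'}(e')\le \length_Z^{f_i}(e)+(\text{collapsed length})+O(1)$, which is useless. To repair it you would have to redo the bookkeeping with the uncollapsed segment of $e$ in place of $e$ itself (and measure everything on compatible lifts, since neither $f_i(e)$ nor $f_i'(e')$ need be distance minimizing in $M_i$, so displacements in $M_i$ do not control $d_{e'}$). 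The paper avoids all of this by never comparing lengths of $e$ and $e'$: Claim \ref{rellength don't change} -- whose $x\in\mathcal C$ case is precisely the trivial one, since then $q(x)$ is an endpoint of $e'$ -- together with part (4) of Claim \ref{pinch and replace} shows directly that every point of $e'$ lies within bounded $d_{e'}$-distance of $e'_{v',i}\cup e'_{w',i}$, i.e.\ that these two segments exhaust all but a bounded $f_i'$-length of $e'$, which is exactly the desired bound on $\length_{Z'}^{f_i'}(e')$.
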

\begin {proof}
	Let $Z' \subset X'$  be a proper subgraph.  It suffices to show that there is an edge $e'$ in $X'\setminus Z'$ with $$\sup_i \length_{Z'}^{f_i'}(e') < \infty.$$ 
 If $Z'$  does not contain $W$, then  we can set $e'$ to be any edge of $W$ since $f_i'(W)$ has bounded  (absolute) length. 

Suppose $Z' \supset W$,  and let $Z\subset X$ be the  subgraph  formed by all edges $e \subset X$ with $q(e)\subset Z'$.  Since $Z'$  is a proper subgraph of $X'$ and contains $W$,  the subgraph $Z \subset X$  is also proper.  The $f_i$  are uniformly relatively connected, so there is an edge $e=[v,w] \subset X \setminus Z$  with
$$\sup_i \length_{Z}^{f_i}(e) < \infty.$$
 Note that $e \not \subset \mathcal C$, since $\mathcal C \subset Z$, and let $e'=[v',w']$ be the $q$-image of $e=[v,w].$
Apply Claim \ref{rellength don't change} twice, once for each vertex of $e$, remembering that the two subsegments $e_{v,i},e_{w,i}$ exhaust all but a segment of $e$ with bounded $f_i$-length. In light of part (4) of Claim \ref{pinch and replace}, the result is that the two segments $e'_{v',i},e'_{w',i}$ exhaust all but a segment of $e'$  with bounded $f_i'$-length. So, $e'$ has bounded length relative to $Z'$  as desired.
 \end {proof}

 We now prove the claim.

\begin{proof}[Proof of Claim \ref{rellength don't change}]
 The claim is trivial unless $v$ lies in some component $Z_0 \subset Z$, for if it does not then $e_{v,i}=\{v\} \subset e_{v',i}'$. Let $Z_0' \subset Z'$ be the component containing $q(Z_0)$.
 
Let $\tilde M_i'$ be  the cover of $M_i$  corresponding to the subgroup $$(f_i')_*(\pi_1 Z_0') \subset \pi_1 M_i.$$
Letting $Z_0 \cup_v e$ and $Z_0' \cup_{v'}e'$ be the graphs obtained by attaching $e,e'$ to $Z_0,Z_0'$ exactly at $v,v'$, we can now lift the entire homotopy $F_i^t|_{Z_0 \cup_v e}$ to $\tilde M_i'$, along with the associated maps $f_i|_{Z_0 \cup_v e}$ and $f_i'|_{Z_0' \cup_{v'}e'}$,  giving 
\begin{align*}
\tilde F^t_i : Z_0 \cup_v e  \longrightarrow \tilde M_i', \ \ \tilde f_i : Z_0 &\cup_v e  \longrightarrow \tilde M_i', \ \  \tilde f_i' : Z_0' \cup_{v'} e' \longrightarrow \tilde M_i' \\ \tilde F_i^0 =\tilde  f_i, \ \ & \tilde F_i^1 = \tilde f_i' \circ h. \end{align*}

We claim that for some $r>0$ independent of $i$, we have
\begin{equation}\label {contains2258}
	\tilde f_i(Z_0) \subset \CN_r(CH(\tilde f_i'(Z_0'))),  
\end{equation}
where here as in \S \ref{carrier}, $CH$ stands for convex hull in $\tilde M_i'$. If $Z_0\cap \mathcal C=\emptyset$, then the homotopy from $f_i $ to $ f_i' \circ q$ is constant on $Z_0$, so lifting, we have $\tilde f_i = \tilde f_i' \circ q$ on $Z_0$ as well. As $Z_0'=q(Z_0)$, we have $\tilde f_i(Z_0)=\tilde f_i'(Z_0')$, so \eqref{contains2258} is trivial. Therefore, we may assume from now on that $Z_0$ intersects $\mathcal C$, in which case we have $Z_0' \supset W$. 

Since $W \subset Z_0'$ and the map $$\omega_i : W \longrightarrow V_i \subset C_i$$ is $\pi_1$-surjective, we can then lift $V_i \subset C_i$ homeomorphically to $$\tilde V_i \subset \tilde C_i \subset \tilde M_i'$$ in a way that is compatible with the lifted homotopy above; that is, so that $ f_i' |_W$ is the  composition of $\tilde f_i' |_W$ and the projection $\tilde V_i \longrightarrow V_i$. So, \begin{equation}\tilde f_i(x) \in\tilde C_i , \ \ \forall x \in Z_0 \cap \mathcal C.\label{inc} \end{equation}

By Lemma \ref{geosinprs} and the fact that the $M_i$ are all $\epsilon $-thick, $\tilde V_i$  is contained in an $r$-neighborhood of $CC(\tilde M_i')$  for some $r$  independent of $i$. Any components of $\tilde C_i \setminus \CN_r(CC(\tilde M_i'))$ must then be product neighborhoods of  convex cocompact ends of $\tilde M_i'$ that map down homeomorphically to components of $M_i \setminus \CN_r(CC(M_i))$, see Lemma \ref{convex cocompact cover}. By Lemma \ref{cgincc}, after increasing $r$ by some amount depending only on $D$,  we can assume that the image $f_i(X)$ is contained in $\CN_r(CC(M_i))$. So, it follows that for all $x$ such that $\tilde f_i(x) \in \tilde C_i$, e.g.\ by \eqref{inc} for all $x\in Z_0 \cap \mathcal C$, we have $$\tilde f_i(x) \in \CN_r(CC(\tilde M_i')) \subset \CN_r(CH(\tilde f_i'(Z_0'))).$$
Here as in \S \ref{carrier}, $CH$ stands for the convex hull in $\tilde M_i'$. But if $x \in Z_0 \setminus \mathcal C$, part (2) of Claim \ref{pinch and replace} says that the path $x \longmapsto \tilde F_i^t(x)$ has length bounded independently of $i$, so after increasing $r$ further, we have that equation \eqref{contains2258} holds as desired.

\medskip

Let's return now to the edges $e$ and $e'$. Note that \begin {equation}
 	e_{v',i}'=(\tilde f_i')^{-1}(\CN_1(CH(\tilde f_i'(Z_0')))) \cap e'.\label {echeq}
 \end {equation}
 A similar formula holds for $e_{v,i}$: one passes to the cover of $M_i$  corresponding to the subgroup $(f_i)_*(\pi_1 Z_0)$, and takes the part of $e$ that an appropriate lift of $f_i$ takes into the $1$-neighborhood of the convex hull of the image of $Z_0$. This cover factors through $\tilde M_i$, and the  corresponding convex hull maps into $CH(\tilde f_i(Z_0)) \subset \tilde M_i$. So by \eqref{contains2258}, for some bounded $r$ we have
\begin{equation}
	\tilde f_i(e_{v,i}) \subset \CN_r(CH(\tilde f_i'(Z_0')).\label{subs2}
\end{equation}

So, let $x \in e_{v,i}.$ If $x\in \mathcal C$, then $q(x) = v'$, so we're done. Therefore, we may assume $x \not \in \mathcal C$. By part (2) of Claim \ref{pinch and replace},  the path $x \longmapsto \tilde F_i^t(x)$  has uniformly bounded length, independent of $x,i$. So,  for some new uniform $r$, we have by \eqref{subs2} that  $\tilde f_i' \circ q(x) \in  \CN_r(CH(\tilde f_i'(Z_0'))$.  In particular, Lemma \ref{convlem1} implies that there is a point on $e'$ at a uniformly bounded distance from $q(x)$ that is in $e_{v',i}'$.
\end{proof}

  We now need to show that the carrier graphs $f_i'$ satisfy properties (1)--(3) in the statement of  Claim \ref{thatclaim}.   In other words, we need to show that there is an edge $e_0' = (v_0',w_0')$ in $X'$ such that 
\begin{enumerate} \item $\sup_i \big ( \length f_i'(e_0') - CHL^{f_i'}_{(X')^{sh}}(e_0',w_0') \big )<\infty.$
\item $\sup_i \big ( \length f_i'(e_0')- \length f_i(e_0) +CHL^{f_i}_{X^{sh}}(e_0,v_0) \big ) < \infty$, and\vspace{1mm}
\item  the number of `long' edges in $X' \setminus e_0'$, i.e.\ those $e'$ for which $$\sup_i \length f_i'(e') = \infty,$$ is at most the number of long edges in $X \setminus e_0$.
\end{enumerate}
 
If our original edge $e_0 \subset \mathcal C$,  just choose $e_0'$ to be any edge of $W$. Since $f_i'(e_0)$ has bounded length, (1) and (2) are trivially satisfied, and (3) is satisfied since no new long edges are created when we pinch and replace $f_i$ to get $f_i'$,  but the long edge $e_0 \subset \mathcal C$ is collapsed to a point.

So, suppose $e_0$  is not contained in $\mathcal C$. Let $e_0'=q(e_0)$ be the corresponding edge of $X'$, and let $v_0'=q(v_0)$ and $w_0'=q(w_0)$.  Property (3)  is immediate, since no new long edges were created in $f_i'$.
 To  prove (1) and (2),  we need to remember some of the details of the proof so far. 

Recall   that $e_0=[v_0,w_0]$  was an edge of $X$ with $$\sup_i \length_{X^{sh}}^{f_i}(e_0) < \infty,$$ and  that if $Y $ is the component of $X^{sh}$  in which $v_0$ lies, then $$l_i:=CHL_Y^{f_i} (e_0,v_0) \to \infty.$$    We  defined the sequence of covers $\pi_i : \hat M_i \longrightarrow M_i$ corresponding to the  subgroups $(f_i)_*(\pi_1 Y)\subset\pi_1 M_i$,  which  converged strongly to $\hat M_A$ when marked with the appropriate lifts $\hat f_i : Y \longrightarrow \hat M_i$.  After extending these lifts to $Y \cup_{v_0} e_0$,  we saw that they limited to a map
$$\hat f_A : Y \cup_{v_0} [0,\infty) \longrightarrow\hat M_A,$$ where $\hat f_A |_{[0,\infty)}$ is a distance minimizing ray that exits a  degenerate end $\CE $ of $\hat M_A$.  Moreover, if $M_G$ is the corresponding geometric limit of $(M_i)$ then this $\CE$ has a neighborhood that embeds  under the covering $\hat M_A \longrightarrow M_G$. We   then constructed product regions $$\hat W_i \subset \hat V_i \subset \hat M_i, \ \ W_i \subset V_i \subset M_i, \ \ W_i = \pi_i(\hat W_i), \ \ V_i=\pi_i(\hat V_i),$$   showed that $V_i$ is a neighborhood of the exterior boundary inside a compression body $C_i \subset M_i$  with missing interior boundary, and used this $C_i$  in the construction above. Let $\hat C_i \subset \hat M_i$ be the homeomorphic lift of $C_i$ such that $\hat V_i \subset \hat C_i$.

We now  break into cases, depending on whether $ \hat f_i(Y)$ is contained deep inside of the compression body $  \hat C_i$ or not. Namely, let $$D=D(\epsilon,K)$$ be a constant to be determined later. After passing to a subsequence, we can assume that either
\begin{enumerate}
\item the distance $d( \hat f_i(Y), \hat M_i \setminus  \hat C_i)$  is less than $D$  for all $i$, or
\item it is greater than or equal to $D$ for all $i$.
\end{enumerate}  
These will be the two cases below.  Intuitively, one should imagine the two cases as depending on which of the two components of $M_i \setminus int(W_i)$ contains our subgraph $f_i(Y)$. One component is a compressionbody contained in $C_i$, and the other component may have more complicated topology. If $f_i(Y)$ lies in the compressionbody component, we're in Case 2 below, while otherwise we're in Case 1.  

\medskip

\noindent \it Case 1, when $d( \hat f_i(Y), \hat M_i \setminus  \hat  C_i)\leq D$ for all $i$. \rm Recall that if $(\phi_i)$  are the almost isometric maps coming from the geometric convergence $\hat M_i \to \hat M_A$, the product regions $\hat W_i$  were constructed inside the images $\phi_i(\hat U)$,  where $\hat U$  is a fixed product region in the degenerate end $\hat \CE$ of $\hat M_A$.  So as $\phi^{-1}_i \circ \hat f_i(v_0)$ converges to a point in $\hat M_A$,
\begin {equation}
	\sup_i \ \diam_{\hat M_i}(\hat f_i(v_0) \cup \hat W_i) < \infty. \label {diambdd}
\end {equation}

As long as the $\hat W_i$ were constructed to be sufficiently wide,  we can use Lemma~\ref{level surfaces}  to pick  level  surfaces $S_i \subset \hat W_i$ such that
\begin{equation}
	d(S_i,\partial \hat C_i) >2\diam(\hat f_i(Y)) + D + \diam(\partial_{e} \hat C_i).\label{thosesurfaces}
\end{equation} 
Here,  we are using that both terms on the right are bounded independently of $i$. The first term is bounded since $Y \subset X^{sh}$ and is connected, while the second term is bounded by the Bounded Diameter Lemma, since it is a boundary component of a product region and therefore lies in the $1$-neighborhood of a NAT simplicial ruled surface.

In our construction, we chose  the  product region $\hat U \subset \hat M_A$ from a neighborhood of $\CE$  that is disjoint from  $\hat f_A(Y)$. The ray $\hat f_A |_{[0,\infty)}$ starts on $\hat f_A(Y)$ and exits $\CE$, so it passes through $\hat U$.  Excluding finitely many $i$, we may therefore assume that $\hat f_i(e_0)$  intersects $S_i$ for all $i$. Write
$$ e_0 =  b_i \cup  c_i,$$
where $ b_i $ is the segment \emph{b}eginning at $v_0$ and ending at the first point $x\in e_0$ where $\hat f_i(x) \in  S_i$. By Claim \ref{geominimizing}, $\hat f_i(e_0)$  minimizes the distance in $\hat M_i$ between its endpoints, so \eqref{diambdd} implies that $$\sup_i \length \hat f_i( b_i )< \infty.$$

Combining \eqref{thosesurfaces}, the fact that we're in case 1, and the fact that $\hat f_i(e_0)$  is distance minimizing, we  see that $\hat f_i (c_i) \cap \partial_{e} \hat C_i=\emptyset$.  So,
$$\hat f_i(c_i) \subset \hat C_i.$$
It follows that $c_i \subset \mathcal C$. So  by Claim \ref{pinch and replace}, we have that $\length f_i'(e_0')$ is  at most a bounded amount larger than $\length \hat f_i( b_i )$. In particular,
$$\sup_i \length f_i'(e_0') < \infty.$$
So, $e_0'$ is short, and properties (1) and (2) of Claim \ref{thatclaim}  follow.

\medskip

\noindent \it Case 2, when $d(\hat f_i(Y),\hat M_i\setminus \hat C_i) \geq D$ for all $i$. \rm  Here, we have $f_i(Y) \subset C_i$. Recalling that $C_i$ lifts homeomorphically to $\hat C_i \subset \hat M_i$, which is the cover corresponding to $(f_i)_*(\pi_1 Y)$, it follows that $\hat M_i$ is also the cover corresponding to the inclusion-induced images $$\mathrm{Im}(\pi_1 V_i)=\mathrm{Im}(\pi_1 C_i)\subset \pi_1 M_i.$$ We called this cover $\tilde M_i$ at the end of Step 2. So from Step 2,
\begin{itemize}
	\item  the complement $\hat E_i := \hat M_i \setminus \hat C_i$ is  a  product neighborhood of a convex cocompact end $\CE_i$ of $\hat M_i$,
 	\item  the diameter of the intersection $\hat  E_i\cap \CN_1(CC(\hat M_i))$ is at most some constant depending only on $K,\epsilon$.
\end{itemize}

As long as  the constant $D$ is large enough, we can use Lemma \ref{level surfaces} to find for each $i $ a level surface $S_i \subset\hat V_i$ such that 
\begin{itemize}
	\item $S_i$  separates $\hat f_i(Y)$  from $\hat M_i \setminus \partial_{e} \hat C_i$ in $\hat M_i$,
\item $\diam S_i < 2d(S_i,\partial_{e} \hat C_i)$,
\item $\sup_i d(S_i,\partial_{e} \hat C_i) < \infty.$
\end{itemize}
 Note that if $X_i$ is the topological product region bounded by $S_i$ and $\partial_{e} \hat C_i$,  then this last condition implies that $$\sup_i \diam X_i < \infty,$$ for if $
\gamma_i$  is a geodesic realizing the distance between $S_i$ and $\partial_{e} \hat C_i$, and we choose a point  $p\in X_i$, then any bounded diameter surface through $p$  that is in the homotopy class of a level surface must intersect $S_i \cup \partial_{e} C_i \cup \gamma_i$, a set with  diameter bounded independently of $i$.

 Recall that we have assumed that it is not the case that $e_0\subset \mathcal C$. Since $v_0 \in \mathcal C$,  this means that we can write $$e_0 = c \cup b,$$ where $c$ is the  maximal segment of $e_0$ containing $v_0$ that lies in $\mathcal C$. Now $\hat f_i(c)$ intersects $S_i$, since $S_i$  separates $\hat f_i(v_0)$ from $\partial_{e} \hat C_i$. It follows that $\hat f_i(b)$  \emph {cannot} intersect $S_i$:  if it did, the length minimizing geodesic $\hat f_i(e_0)$ would intersect $S_i$, then $\partial_{e} \hat C_i$,  and then $S_i$ again,  contradicting that $$\diam S_i < 2d(S_i,\partial_{e} \hat C_i).$$
 So, $\hat f_i(b) \cap \CN_1(CC(\hat M_i))$  is contained in the union 
 of the topological product region $X_i$  bounded by $S_i$ and $\partial_{e} \hat C_i$  and the  intersection $\tilde E_i\cap \CN_1(CC(\tilde M_i))$. Since $\hat f_i(e_0)$ is length minimizing and these  sets both have bounded diameter,  it follows that $$\sup_i \, \length(\hat f_i(b) \cap \CN_1(CC(\hat M_i))) <\infty.$$

 Adopting the notation of Claim \ref{rellength don't change}, let $(e_0)_{v_0,i}$ be the segment of $e_0$ adjacent to $v_0$ that has length $CHL_{X^{sh}}^{f_i}(e_0,v_0)$. By definition, this is just the part of $e_0$ that $\hat f_i$ maps into $\CN_1(CC(\hat M_i)),$ so  we have \begin{equation}\label{asdf}\sup_i \, \length f_i\big ((e_0)_{v_0,i} \cap b \big ) < \infty.\end{equation}
But since $\length_{X^{sh}}^{f_i}(e_0) $  is bounded,  it follows from \eqref{asdf} that 
\begin{equation}
	\sup_i \, \length f_i\big (b \setminus (e_0)_{w_0,i} \big ) < \infty.\label{asdf2}
\end{equation}

 Informally, the convex hull corresponding to $w_0$ contains all but a bounded subsegment of the part of $e_0$ that is not collapsed to a point when we create  the new graph $X'$.  Claim \ref{rellength don't change} and part (4) of Claim \ref{pinch and replace} then show that 
$$\sup_i \big ( \length f_i'(e_0') - CHL^{f_i'}_{(X')^{sh}}(e_0',w_0') \big )<\infty,$$
 which was the first of the properties that we wanted to verify for $f_i'$. Similarly, part (4) of Claim \ref{pinch and replace} implies that $$\sup_i \, \length f_i'(e_0') - \length f_i(b) < \infty,$$
and \eqref{asdf} shows that
$$\sup_i \,  \length f_i(b) - \length f_i(e_0) +CHL^{f_i}_{X^{sh}}(e_0,v_0)  < \infty,$$
implying the second property we were trying to verify for the $f_i'$:
$$\sup_i \big ( \length f_i'(e_0')- \length f_i(e_0) +CHL^{f_i}_{X^{sh}}(e_0,v_0) \big ) < \infty. $$
 This concludes the proof of Claim \ref{thatclaim}, and  therefore Lemma \ref{kernel}.

\section{When all barriers are peripheral}\label {when all barriers}

%

This section is devoted to a proof of the following theorem. 


\begin{sat}[No barriers gives PRs and diameter bounds]\label {nonhakenthm}
Given $k,\epsilon,L_0$, there are constants $K_0,g,n,B$, where $g,n$ depend only on $k$, as follows. 
Let $M$ be an orientable hyperbolic $3$-manifold  such that 
 \begin {itemize}
 \item 	$\rank(\pi_1(M))\le k$ and $\inj(M)\ge\epsilon,$
 \item every essential closed curve on  $\partial CC(M)$  that is nullhomotopic in $M$ has length at least $\epsilon $,
\item all barriers of $(K_0,L_0)$-product regions in $M$ are peripheral.
 \end {itemize}
 Then there is a collection $\mathcal U$ of at most $n$ $(g,L_0)$-product regions in $int(CC(M))$ such that $\diam (N) \leq B$ for every component $$N \subset \CN_1(CC(M)) \setminus \cup_{U \in \mathcal U} int(U).$$
\end{sat}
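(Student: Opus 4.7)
First I would apply Proposition \ref{no barriers implies} to obtain a carrier graph $f: X \longrightarrow M$ with $\rank \pi_1 X \leq r_0 = r_0(k)$ and total length at most some $D = D(k,\epsilon,L_0)$, choosing $K_0$ at least as large as the $K_0$ provided by that proposition. The bulk of the argument will be a contradiction: suppose the conclusion fails for some fixed $k,\epsilon,L_0$, so for any prospective $g,n,B$ there is a sequence of counterexamples $M_i$ carrying such short carrier graphs $f_i : X_i \to M_i$. Since the $X_i$ are trivalent with $\rank \pi_1 X_i \leq r_0$, we may pass to a subsequence and assume $X_i = X$ is a fixed finite graph. The goal is to extract limits and produce the required decomposition for large $i$.

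Marking fundamental groups as in Example \ref{limit example}, we obtain holonomy representations $\rho_i : F \to \PSL_2\BC$, where $F = \pi_1 X$ and $M_i = \BH^3/\rho_i(F)$. Since $f_i(X)$ has bounded length, $(\rho_i)$ subconverges algebraically, and $\epsilon$-thickness gives a simultaneous geometric limit. By Proposition \ref{limits-fg}, after possibly enlarging $F$ to a free group $F'$ of rank at most $2r_0$ (at the cost of adding boundedly many short loops to each $f_i$, which does not affect the hypotheses), we may assume $\rho_i \to \rho_\infty$ strongly, with limit $M_\infty := \BH^3/\rho_\infty(F')$. Apply Theorem \ref{strong limits} with $C_0 \supset f_i(X)$: this produces a standard compact core $C_\infty \subset M_\infty$ and, for large $i$, almost-isometric embeddings $\phi_i$ whose images $N_i^c := \phi_i(C_\infty)$ are bounded on one side by wide product regions $V_i^S$ (for components $S \subset \partial C_\infty$ facing degenerate ends of $M_\infty$), and on the other by convex-cocompact end neighborhoods lying outside $\CN_1(CC(M_i))$. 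The $\epsilon$-widths of the $V_i^S$ go to infinity with $i$, and using Corollary \ref{prs in the convex core} we can push each $V_i^S$ into $int(CC(M_i))$ at the cost of a bounded boundary collar. Setting $\mathcal{U}_i^c := \{V_i^S\}$, the set $N_i^c$ is a bounded-diameter "central" building block of fixed topological type, with very wide adjacent product regions $\mathcal{U}_i^c$.

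Next I would extend $\mathcal{U}_i^c$ to a maximal collection $\mathcal{U}_i$ of pairwise disjoint $(g,L_0)$-product regions in $int(CC(M_i))$, where $g$ depends only on $k$ and the fixed genera of the components of $\partial C_\infty$. By Theorem \ref{complementary compression bodies} applied to $N_i^c$ with its adjacent product regions $\mathcal{U}_i^c$ (combined with Theorem \ref{strong limits}(b) on the convex cocompact side), every component of $\CN_1(CC(M_i)) \setminus int(N_i^c)$ is a compression body with missing interior boundary. Applying Corollary \ref{unknotted2} inside each such compression body then shows that every component of $\CN_1(CC(M_i)) \setminus \bigcup_{U\in \mathcal{U}_i} int(U)$ is itself a compression body, while simultaneously bounding the number of \emph{nontrivial} components (and hence the number of product regions in $\mathcal{U}_i$ that genuinely split off new topology) by an explicit function of the genus $g$ and the complexity of $C_\infty$. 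Any trivially-splitting regions in $\mathcal{U}_i$ can be deleted without affecting maximality. This produces the universal bound $|\mathcal{U}_i| \leq n = n(k)$ and finitely many topological types for the complementary components.

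It remains to bound the diameters of the complementary components, which is the main obstacle. Since $\diam(N_i^c) \leq \diam(C_\infty)+1$ is bounded independently of $i$, we may consider a sequence of \emph{other} complementary components $N_i$ and must show their diameters cannot escape to infinity. After possibly absorbing into $N_i$ any adjacent product regions in $\mathcal{U}_i$ whose widths stay bounded (this only enlarges $N_i$ by bounded $\epsilon$-diameter by Fact \ref{width diameter}), we may assume every $U \in \mathcal{U}_i$ adjacent to $N_i$ has $\epsilon$-width tending to infinity. Base each $M_i$ at a deep interior point of $N_i$ and extract a second strong limit, invoking Proposition \ref{limits-fg} together with the rank bound on $\pi_1 N_i$ coming from Corollary \ref{unknotted2} to ensure finite generation in the limit. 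Applying Theorem \ref{strong limits} to this second limit, either its standard compact core pulls back to a subset of $M_i$ that contains $N_i$ (giving the needed diameter bound, since the core has fixed topology and the bilipschitz factor tends to $1$), or else the limit has a degenerate end whose wide product neighborhood pulls back to a $(g,L_0)$-product region in $M_i$ \emph{disjoint} from $\bigcup \mathcal{U}_i$, contradicting maximality. The hardest part of the whole argument is this last step: one must carefully arrange that any product region produced from the second limit lies outside the existing collection $\mathcal{U}_i$, which requires comparing the almost-isometric image of the second core with the limit behavior of the product regions adjacent to $N_i$, and using Fact \ref{homotopyfact} to promote homotopies in link complements to honest isotopies of product regions in $M_i$.
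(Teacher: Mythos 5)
Your outline follows the paper's own route almost step for step (short carrier graph via Proposition \ref{no barriers implies}, strong limit via Proposition \ref{limits-fg} and Theorem \ref{strong limits} giving a central core flanked by wide product regions, a maximal collection controlled by Corollary \ref{unknotted2}, then a second limiting argument for the diameter bound), and most of those steps are set up correctly. But there is a genuine gap at the start of the second limiting argument. You write that one can ``base each $M_i$ at a deep interior point of $N_i$ and extract a second strong limit, invoking Proposition \ref{limits-fg} together with the rank bound on $\pi_1 N_i$.'' A rank bound alone does not let you do this: Proposition \ref{limits-fg} and Theorem \ref{strong limits} operate on a sequence of representations of a fixed finitely generated group, i.e.\ they require carrier graphs of \emph{uniformly bounded length}, based at bounded distance from $N_i$, whose $\pi_1$-image is exactly $[\pi_1 N_i]$. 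A priori the generators of $[\pi_1 N_i]$ based at a point of $N_i$ could have length comparable to $\diam N_i$, which is precisely the quantity you are trying to bound; without short markings the algebraic limit may capture only a proper subgroup of $[\pi_1 N_i]$, and then the compact core produced by Theorem \ref{strong limits} need not pull back to a set containing $N_i$, so no diameter bound follows. The paper fills this hole with a specific construction (Claim \ref{short graphs claim}, which is hypothesis (1) of Proposition \ref{diameterandtopbounds}): since $N_i$ is a compression body whose exterior boundary faces an adjacent product region of width tending to infinity, Lemma \ref{bdlpr} produces an $\epsilon$-thick NAT simplicial hyperbolic surface in that product region, in the correct homotopy class, whose $\pi_1$-image is all of $[\pi_1 N_i]$, and Corollary \ref{lipschitzcorollary} then yields a bounded-length wedge of loops on it. You have the ingredients for this (you already know the complementary components are compression bodies), but the step is essential and is not supplied by ``finite generation in the limit.''

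A secondary, smaller point: for the final contradiction you need maximality in a quantified form, namely the paper's Claim \ref{max prs}(2) that every sufficiently wide product region of genus at most $4g$ (not just $g$) meets $\bigcup\mathcal U_i$, since the product regions pulled back from the second limit only have genus bounded by the (doubled) rank of $N_i$; one then uses Corollary \ref{unknotted2} to cut their genus down to $g$ inside the ambient compression body, and Lemma \ref{intersectprs} to convert intersection into a shared level surface with a member of $\mathcal U_i$ adjacent to $N_i$. Relatedly, parallel product regions should be merged via Lemma \ref{trivial interval bundles} rather than deleted, or the quantified maximality you invoke can be destroyed. These are fixable bookkeeping issues, but as written the maximality you appeal to is not quite strong enough to rule out the product regions coming from the second limit.
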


Although it is not necessary to state this above,  as mentioned in the introduction the complementary components $N$ are connected by the product regions $U$  in the pattern of a rooted tree, where all $N$  other than the root are compression bodies. See Figure~\ref{nonhakenpic}. 
 
\medskip

The proof of  Theorem \ref{nonhakenthm}  is a complicated contradiction argument, similar to that employed in Proposition \ref{no barriers implies}.   First, we define some of the constants above. Let $K_0,r_0$ be as  in Proposition \ref{no barriers implies},  and set
$$g = 2 r_0, \  \ n = 4g = 8r_0.\ \ \ $$

The constant $B$  will not be explicitly constructed. Rather, we assume that there is no $B$ as required in the statement of the theorem, and take a sequence of counterexamples $M_i$ that illustrate the failure of larger and larger constants $B$.  To derive a contradiction, we will show that after passing to a subsequence, there are collections $\mathcal U_i$ of $(g,L_i)$-product regions in $CC(M_i)$, where $|\mathcal U_i| \leq n$, where $L_i \to \infty$, and where $\sup_i \, \diam(N_i) < \infty$  for every choice of components $$N_i \subset \CN_1(CC(M_i)) \setminus \cup_{U \in \mathcal U_i} int(U).$$
%

As a first step, let's show how to construct the \emph {central component} $N_i^c$ in Figure \ref{nonhakenpic}, corresponding to the root of the rooted tree, and the collection $\mathcal U_i^c$ of product regions adjacent to $N_i^c$.


\begin{lem}[The central component]\label {the root component}
Passing to a subsequence, there is a  sequence $L_i \to \infty$, a collection $\mathcal U_i^c$ of $(g,L_i)$-product regions in $int(CC(M_i))$, and  a component $$N_i^c  \subset \CN_1(CC(M_i)) \setminus \cup_{U \in \mathcal U_i^c} int(U),$$
 such that the following properties hold:
\begin{enumerate}
\item $\rank \pi_1 N^c_i \leq g$,
 \item  $\sup_i \diam (N^c_i) < \infty$, 
\item each component of $M_i \setminus int(N^c_i)$ is a compression body with missing interior boundary.
\end{enumerate}
\end{lem}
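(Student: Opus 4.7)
The plan is to construct $N_i^c$ as a near-pullback, via the almost-isometric maps coming from geometric convergence, of a standard compact core of a strong limit of the $M_i$. First, I will apply Proposition~\ref{no barriers implies} to obtain for each $i$ a carrier graph $f_i : X_i \longrightarrow M_i$ with $\rank(\pi_1 X_i) \leq r_0$ and uniformly bounded total length. Since the number of edges of $X_i$ is uniformly bounded, after passing to a subsequence I may assume the $X_i$ all equal a single finite graph $X$ with fixed basepoint $x$, and that each edge length converges. Following Example~\ref{limit example}, these carrier graphs determine holonomy representations $\rho_i : F = \pi_1(X,x) \longrightarrow \PSL_2\BC$ with $\BH^3/\rho_i(F) = M_i$, and by passing to a further subsequence I may assume $(\rho_i)$ converges algebraically to some $\rho$ and that $(M_i,f_i(x))$ converges geometrically to $(M_G, \star)$ for some discrete $G < \PSL_2\BC$. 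The $\epsilon$-thickness of the $M_i$ ensures no element of $G$ or $\rho(F)$ is parabolic, and discarding the (at most finite, by Mostow rigidity and the finiteness of $\epsilon$-thick closed hyperbolic $3$-manifolds of bounded volume) sub-collection producing a closed geometric limit lets me further assume $G$ has infinite covolume.

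Next, I will upgrade the algebraic limit to a strong one by adjoining roots. Proposition~\ref{limits-fg} furnishes maps $\sigma_i : F^n \longrightarrow F$ with $n \leq 2r_0 = g$, such that after a further subsequence $\rho_i \circ \sigma_i$ converges algebraically to some $\rho_\infty$ with $\BH^3/\rho_\infty(F^n) = M_G$, and this convergence is then automatically strong. Set $M_\infty := \BH^3/\rho_\infty(F^n) = M_G$. Applying Theorem~\ref{strong limits} with $C_0 \subset M_\infty$ chosen large enough to contain the image of the limit carrier graph, I obtain a standard compact core $C \subset M_\infty$ with $C_0 \subset C$, a sequence $L_i \to \infty$, and almost-isometric maps $\phi_i$ enjoying properties (a) and (b) of that theorem.

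I will then define $\mathcal U_i^c$ to be the collection of wide product regions $V_i^S \subset CC(M_i)$ from Theorem~\ref{strong limits}(a), one for each component $S \subset \partial C$ facing a degenerate end of $M_\infty$, and take $N_i^c$ to be the component of $\CN_1(CC(M_i)) \setminus \cup_{U \in \mathcal U_i^c} \text{int}(U)$ containing $\phi_i(C_0)$. Since $\phi_i(S)$ lies strictly outside $\CN_1(CC(M_i))$ for convex-cocompact $S$ (Theorem~\ref{strong limits}(b)), this $N_i^c$ will coincide with the connected component of $\phi_i(C) \cap \CN_1(CC(M_i))$ containing $\phi_i(C_0)$: on the degenerate sides it is bounded by the surfaces $\phi_i(S)$, while on the convex-cocompact sides it is bounded by parts of $\partial \CN_1(CC(M_i))$. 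Topologically, $N_i^c$ will then be homotopy equivalent to $C$, hence to $M_\infty$, giving property (1) since $\rank \pi_1 M_\infty \leq n \leq g$. Property (2) will be immediate from the almost-isometric property of $\phi_i$ on the fixed compact set $C$.

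The main obstacle will be verifying property (3). For this, I will analyze each component $Q \subset M_i \setminus \text{int}(N_i^c)$ separately. On a degenerate-end side, $Q$ is the union of $V_i^S$ with the part of the compression body from Theorem~\ref{strong limits}(a) lying beyond $V_i^S$; since attaching a product region to a compression body along a boundary component yields a (larger) compression body, $Q$ is a compression body with missing interior boundary, as desired. On a convex-cocompact side, $Q$ is a product neighborhood of a convex-cocompact end of $M_i$, hence a trivial compression body with missing interior boundary. The delicate point is ensuring that the topological operation of truncating $\phi_i(C)$ along $\partial \CN_1(CC(M_i))$ preserves these structures and does not disconnect $N_i^c$; I will address this by choosing $C_0$ large enough that, for every convex-cocompact end of $M_\infty$, the corresponding end neighborhood in $\phi_i(C)$ crosses a single boundary component of $\partial \CN_1(CC(M_i))$ transversely and cleanly, so that the truncation merely replaces the surface $\phi_i(S)$ by a parallel surface inside the convex-cocompact end neighborhood.
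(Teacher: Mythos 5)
Your proposal is correct and follows essentially the same route as the paper's proof: bounded-length carrier graphs from Proposition~\ref{no barriers implies}, limits extracted as in Example~\ref{limit example}, an upgrade to strong convergence via Proposition~\ref{limits-fg} after doubling the rank, and then Theorem~\ref{strong limits} applied to a standard compact core $C$, with $\mathcal U_i^c$ the degenerate-side product regions and $N_i^c$ essentially $\phi_i(C)\cap \CN_1(CC(M_i))$. The only remark worth making is that your ``delicate point'' at the end needs no transversality or special choice of $C_0$: by Theorem~\ref{strong limits}(b) the surfaces $\phi_i(S)$ facing convex-cocompact ends already lie entirely outside $\CN_1(CC(M_i))$, so each component of $\phi_i(C)\setminus int(\CN_1(CC(M_i)))$ is a trivial interval bundle between such a $\phi_i(S)$ and a component of $\partial\CN_1(CC(M_i))$, which is exactly how the paper concludes that the truncation preserves the topology of $\phi_i(C)$.
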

\begin{proof} By Proposition \ref{no barriers implies},  after passing to a subsequence  so that the isomorphism type of graph stabilizes, there is a fixed graph $X $ with
$\rank \pi_1 X \leq r_0 $ and  a sequence of carrier graphs $$f_i : X \longrightarrow M_i, \ \ \sup_i \length f_i(X) < \infty.$$

After passing to a subsequence, we can extract algebraic and geometric limits of $( M_i)$ marked by the carrier graphs $f_i$, as in Example \ref{limit example}. Moreover, Proposition \ref{limits-fg} says that \emph {after doubling the ranks of the markings}, we may assume that the algebraic and geometric limits coincide.  So, picking basepoints $  p_i \in    f_i(X) \subset   M_i$,  we can assume 
\begin {enumerate}
	\item[(a)]  the pointed manifolds $( M_i,  p_i)$  converge geometrically to an $\epsilon$-thick hyperbolic $3$-manifold $( M_G, p_G)$,
\item[(b)]  with respect to suitable markings of the fundamental groups by a free group on $g=2r_0$ generators, this convergence is strong. 
\end {enumerate}
Note that (b)  implies that $\rank \pi_1  M_G \leq g $, so $M_G$ is tame by the Tameness Theorem \cite{Agoltameness,Calegarishrinkwrapping}, and all ends of $M_G$ have neighborhoods homeomorphic to $S \times (0,\infty)$, where $S$ has genus at most $g$. 

Let $(\phi_i)$ be a sequence of almost isometric maps given by the geometric convergence $M_i \to M_G$.
 Since the convergence $M_i\to M_G $ is strong, Theorem~\ref{strong limits} gives a   standard  compact core $C  \subset M_G$ and $L_i\to \infty$  such that  for large $i$, the following holds for every component $S \subset \partial C$.
\begin{enumerate}
	\item[(a)]  If $S$  faces a degenerate end of $M_G$,  then $\phi_i(S)$ bounds a product region $V_i^S \subset CC(M_i) \setminus int(\phi_i(C))$ with width at least $L_i$.  Moreover, the  component of $M_i \setminus int(\phi_i(C))$ containing $\phi_i(S)$ is a compression body with missing interior boundary.
 \item[(b)] If $S$  faces a convex-cocompact end  of $M_G $, then $\phi_i(S)$ bounds a component $E_i^S \subset M_i \setminus int(\phi_i(C))$ that  is a  product neighborhood of a  convex cocompact end of $M_i$, where $$E_i^S \subset M_i \setminus \CN_1(CC(M_i)).$$
\end{enumerate} 
%
%

Let $\mathcal U_i^c$  be the  collection of  product regions $V_i^S$ above, where $S$  ranges over the components of $\partial C $ facing  degenerate ends of $M_G$, and let $$N_i^c \  = \ \phi_i(C) \cap \CN_1(CC(M_i)).$$
Since  all our product regions are contained in $CC(M_i)$, the manifold $N_i^c$  is a component of $\CN_1(CC(M_i)) \setminus \cup_{U \in \mathcal U_i^c} \, int(U).$ Moreover,
(a) and (b)  imply that  each component of $\phi_i(C) \setminus int(\CN_1(CC(M_i)))$ is a trivial interval bundle co-bounded by some $\phi_i(S)$, where $S \subset \partial C$ bounds  a convex cocompact end of $M_G$, and a  component of $\CN_1(CC(M_i))$. Hence, each
$N_i^c$  is homeomorphic to $\phi_i(C)$, and therefore to $C$. Hence, $$\rank \pi_1 N_i^c = \rank \pi_1 M_G \leq g,$$
 so property (1) of the lemma holds.  The diameter bound in property (2)  is immediate, since each $N_i^c  \subset \phi_i(C)$, and for large $i$ the latter is a $2$-bilipschitz image of the fixed compact subset $C \subset M_G$.  Property (3) is stated directly in (a)  above.\end{proof}


We now construct  the collections of product regions $\mathcal U_i$ in $M_i$.

\begin{claim}\label {max prs}
  After passing to a subsequence, there is some $A$  independent of $i$, a sequence $L_i \to \infty$, and a collection $\mathcal U_i$ of at most $n$  $(g,L_i)$-product regions in $int(CC(M_i)) $  such that:
\begin{enumerate}
\item $N_i^c$  is a component of $\CN_1(CC(M_i)) \setminus \cup_{U \in \mathcal U_i} int(U),$
\item  every $(4g,A)$-product region  in $ M_i$  intersects some $U \in \mathcal U_i$.
\end{enumerate} 
\end{claim}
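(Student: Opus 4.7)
The strategy is to extend $\mathcal{U}_i^c$ by iteratively adding the widest available $(g,L)$-product region in $\text{int}(CC(M_i))$ disjoint from and non-isotopic to the current collection. Because Lemma \ref{the root component}(3) forces every added product region to lie in a compression body component of $M_i\setminus\cup\mathcal{U}_i^c$, Corollary \ref{unknotted2} will bound the cardinality after termination, while for the maximality statement (2), Corollary \ref{unknotted} applied with the doubled genus bound $4g$ will reduce the genus of any hypothetical counterexample $(4g,A)$-product region down to $g$, contradicting maximality.

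Concretely, set $\mathcal{U}_i^{(0)}=\mathcal{U}_i^c$ and inductively let $W_i^{(j)}$ denote the supremum of $\epsilon$-widths over all product regions of genus at most $g$ in $\text{int}(CC(M_i))$ that are disjoint from and non-isotopic to $\mathcal{U}_i^{(j)}$. After a diagonal extraction over $j$, either $W_i^{(j)}\to\infty$, in which case a near-maximal-width such product region is added to form $\mathcal{U}_i^{(j+1)}$ (using Corollary \ref{prs in the convex core} to keep it inside $\text{int}(CC(M_i))$), or $W_i^{(j)}$ stays bounded and we halt with $\mathcal{U}_i:=\mathcal{U}_i^{(j)}$. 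Since $N_i^c$ has bounded diameter by Lemma \ref{the root component}(2), Fact \ref{width diameter} forces all added product regions into compression body components of $M_i\setminus\cup\mathcal{U}_i^c$, thereby preserving $N_i^c$ as a component of $\CN_1(CC(M_i))\setminus\cup\mathcal{U}_i$. Corollary \ref{unknotted2} applied to each such compression body yields a tree with at most $2g-2$ nontrivial vertices, and non-isotopy together with Lemma \ref{trivial interval bundles} restricts trivial (I-bundle) vertices to leaves facing the missing interior boundary (else two parallel $(g,L_i)$-product regions would merge into one). Combined with $|\mathcal{U}_i^c|\leq O(g)$ inherited from the rank-$g$ compact core of $M_G$, this yields $|\mathcal{U}_i|\leq n=4g$.

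For property (2), suppose $V\subset M_i$ is a $(4g,A)$-product region disjoint from $\cup\mathcal{U}_i$, with $A$ to be chosen. Fact \ref{width diameter} together with Lemma \ref{the root component}(2) rules out $V\subset N_i^c$ once $A$ exceeds twice the diameter bound, so $V$ lies in a compression body component $K'$ of $M_i\setminus\cup\mathcal{U}_i$. Corollary \ref{unknotted} applied with its genus parameter set to $4g$ then shows $V$ is a product neighborhood of an interior boundary component $\Sigma$ of a subcompressionbody of $K'$; since every compression body arising in the iteration has exterior boundary of genus at most $g$, its interior boundary components also have genus at most $g$, so $\text{genus}(V)=\text{genus}(\Sigma)\leq g$ and $V$ is in fact a $(g,A)$-product region. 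Taking $A$ larger than both $L(4g,\epsilon)$ from Corollary \ref{unknotted} and the subsequential limit of $W_i^{(j)}$ at termination, maximality of $\mathcal{U}_i$ forces $V$ to be isotopic to some $U\in\mathcal{U}_i$; the disjoint pair $V,U$ then cobounds a trivial I-bundle, and Lemma \ref{trivial interval bundles} combines them into a single strictly wider product region, contradicting the near-maximum-width selection of $U$ at its step. The principal obstacle is orchestrating the diagonal subsequence argument uniformly so that the cardinality bound $n=4g$ is achieved while all widths diverge; a secondary challenge is the disjoint-but-isotopic subcase of (2), where Lemma \ref{trivial interval bundles} must be arranged to contradict the maximum-width selection rather than merely suggest an alternate addition --- this is why selecting near-maximum-width product regions at each step is essential.
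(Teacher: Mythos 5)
Your overall strategy---extend $\mathcal U_i^c$ to a maximal collection, bound its size via Corollary \ref{unknotted2}, and prove (2) by reducing the genus of a putative $(4g,A)$-product region inside a compression body complementary component and contradicting maximality---matches the paper's, but the way you implement maximality leaves genuine gaps in part (2). The paper fixes a width threshold $L''$ once and for all, first seeds the collection with noncompact product neighborhoods of the degenerate ends (Proposition \ref{constructing prs}, truncated by Corollary \ref{truncating noncompact}) so that what remains of $CC(M_i)$ is compact and a finite maximal collection $\mathcal U_i'$ of $(g,L'')$-regions satisfying (1) exists, uses ``no two members cobound a trivial interval bundle'' (merging via Lemma \ref{trivial interval bundles}) rather than non-isotopy as the non-redundancy condition, and only afterwards extracts, by pigeonhole along a subsequence, the subcollection $\mathcal U_i\subset\mathcal U_i'$ whose widths diverge; the contradiction in (2) is then simply that the offending region could have been added to the fixed-threshold collection $\mathcal U_i'$, so no isotopy case and no width comparison ever arises. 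In your greedy scheme: (i) when $M_i$ has a degenerate end, $W_i^{(j)}$ is infinite before an end-parallel region is added, so ``near-maximal-width selection'' is not defined as stated; (ii) in the isotopic subcase of (2), your assertion that the disjoint pair $V,U$ cobounds a trivial interval bundle is unproved---level surfaces of product regions may be compressible, so Waldhausen's theorem does not apply, and Lemma \ref{trivial interval bundles} takes the interval-bundle complementary component as a hypothesis rather than producing it; (iii) even granting the merge, contradicting the near-maximal-width choice of $U$ at its step requires the merged region to have been \emph{available} (disjoint from, and non-isotopic to, $\mathcal U_i^{(j)}$) at that step, and other members of the collection may lie in the region between $U$ and $V$; none of this is addressed.

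A second gap: in (2) the region $V$ is only assumed to be a $(4g,A)$-product region in $M_i$ disjoint from $\cup\mathcal U_i$, so ruling out $V\subset N_i^c$ is not enough. $V$ may meet $N_i^c$ (and the convex-cocompact collars outside $\CN_1(CC(M_i))$) without being contained in it, in which case the component of $M_i\setminus\cup\mathcal U_i$ containing $V$ is not a compression body and Corollary \ref{unknotted} does not apply as you invoke it. The paper handles exactly this by first passing, via Corollaries \ref{cutting-product} and \ref{prs in the convex core}, to a subproduct region $V'\subset V$ of width at least $L''$ lying in $CC(M_i)$ and avoiding $N_i^c$ together with the discarded bounded-width regions (possible because these are boundedly many sets of uniformly bounded diameter), and only then runs the genus reduction (Corollary \ref{unknotted2}) and the maximality contradiction on $V'$; you need this intermediate step. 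Finally, your count $n=4g$ ignores that $M_i\setminus int(N_i^c)$ can have up to $g$ compression body components, so the bound should be on the order of $g^2$; this is harmless since any $n=n(k)$ suffices, but as written it is not justified.
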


Recall that $n=4g$ were constants that were fixed at the beginning of the section. The $4g$ in the statement of the claim could be replaced by any integer, but we will apply it in Proposition \ref{diameterandtopbounds} using $4g$ and so we just state the claim as above to avoid  further proliferation of constants.

\begin{proof}
Fix some $L''>0$  larger than the widths required in Lemma \ref{intervalbundle} and Corollary \ref{unknotted2} for the genus bound $g$.
 By Lemma \ref{the root component}, for large $i $ the collection $\mathcal U_i^c$  consists of $(g,L'')$-product regions in $CC(M_i)$,  and satisfies (1). We want to extend $\mathcal U_i^c$  to a \emph{maximal} such collection.  To do this, first choose for each degenerate end $\CE$ of $M_i$ a (noncompact) product region  $U_\CE$ that is a (closed) neighborhood of $\CE$, as  given by Proposition~\ref{constructing prs}. By Corollary \ref{prs in the convex core}, we can assume that each $U_\CE \cap N_i^c = \emptyset$ and $U_\CE \subset int(CC(M_i))$.  As the difference $CC(M_i) \setminus \cup_{\CE} int(U_\CE)$ is compact, we can construct a  maximal collection $\mathcal U_i'$ of $(g,L'')$-product regions  in $int(CC(M_i))$  that satisfies (1) by adding the $U_\CE$ and some other  compact product regions to our initial collection $U_i^c$. Finally, Lemma \ref{trivial interval bundles}  says that whenever two $(g,L'')$-product regions co-bound a trivial interval bundle, the union of the two regions with the interval bundle is also a product region. So, we can assume that \emph{no two  distinct product regions $U,V$ in $\mathcal U_i'$ co-bound a trivial interval bundle in $M_i \setminus \cup_{U \in \mathcal U_i^c} int(U)$, unless that trivial interval bundle is $N_i^c$.} 

Corollary~\ref{unknotted2} gives a bound of $2g-2$  for the number of non-trivial-interval-bundle complementary components of a set of  compact product regions in a compression body $C$ with genus at most $g$.  By construction, no compact component of $M_i \setminus \cup_{U \in \mathcal U_i'} int(U)$ is a trivial interval bundle, except possibly $N_i^c$.   However, there may be peripheral interval bundles if some $U \in \mathcal U_i'$ bounds a convex cocompact end of $M_i$. In any case, though,  the number of  compact product regions in each compression body $C$ that is a component of $M_i \setminus int(N_i^c)$ is at most, say, $4g$.   Every noncompact product region in $C$ is a product neighborhood of one of the (missing)  interior boundary components of $C$, and hence  there are at most $g$ of these, for a total of at most $5g$.
Since $\rank \pi_1 N_i^c \leq g,$  the number of  boundary components of $N_i^c$, and hence the number components $C$ as above, is at most $g$, by half-lives-half-dies. So summing over all $C$, the number of product regions in $\mathcal U_i'$ is at most $n:=5g^2$, say. (This bound is not optimal.)
 
We claim that after passing to a subsequence, there is some $L' \geq L''$ independent of $i$ such that if $\mathcal U_i \subset \mathcal U_i'$  is the subset of product regions with width \emph{bigger} than $L'$, we have that
\begin{equation}\lim_{i\to \infty} \min\{\width(U) \ | \ U \in \mathcal U_i\} = \infty.\label {toinft}\end{equation}
Setting $L_i$ to be the  sequence of minima above (these may be larger than the  constants called $L_i$ in Lemma \ref{the root component}), each $U \in \mathcal U_i$  will then be a $(g,L_i)$-product region, and  we will have $L_i \to \infty$  as desired. 

To   construct  the collections $\mathcal U_i$ satisfying \eqref{toinft}, pass to a subsequence so that the sets $\mathcal U_i'$ can all be marked by some common index set $\{1,\ldots,n\}$, and then pass to a further subsequence so that for each index, the widths of the corresponding product regions either converge to $\infty$ or are bounded. Take $L'$ to be the maximum width of any product region corresponding to a `bounded' index, and $\mathcal U_i$ to be the subset corresponding to the `unbounded' indices.

Property (1) still holds for the collections $\mathcal U_i$, since by Lemma \ref{the root component} the product regions adjacent to $N_i^c$ have widths tending to infinity, so are still present in $\mathcal U_i$  after excluding finitely many $i$.

Finally, we prove (2).   By Lemma \ref{the root component} (2) and  Fact \ref{width diameter}, the diameters of $N_i^c$  and of all the product regions $U \in \mathcal U_i' \setminus \mathcal U_i$ are  uniformly bounded.  Since there are only boundedly many of these sets, Corollary \ref{cutting-product}  says there is some huge $A$  independent of $i$ such that any $(4g,A)$-product region $V \subset M_i$ contains a subproduct region $V'$ of width at least $L''$  that is disjoint from all $U \in \mathcal U_i' \setminus \mathcal U_i$ and $N_i^c$.  Moreover,  Corollary \ref{prs in the convex core}  says that  if we take $A$  slightly larger we may assume that $V' \subset CC(M_i)$.

 So, assume that $V$ is a $(4g,A)$-product region in $M_i$  that is disjoint from all $U \in \mathcal U_i$. If $V'$  is as above, it lies in some component $C \subset M_i \setminus int(N_i^c) $, and  Lemma \ref{the root component} (3)  says that $C$ is homeomorphic to a compression body with missing interior boundary.  The exterior boundary $\partial_e C$  is a boundary component of $N_i^c$, so has  genus at most $$\rank \pi_1 N_i^c \leq g.$$
Increasing $A$ and discarding finitely many $i$,  we can assume that all our product regions are wide enough so that
 Corollary \ref{unknotted2} applies,  in which case we see that in fact  the product region $V' \subset C$ has  genus at most $g$. Since $V $  does not intersect any $U\in \mathcal U_i$,  it follows from the definition  that $V'$  is a $(g,L'')$-product region  that intersects neither $N_i^c$ nor any $U \in \mathcal U_i'$. But $\mathcal U_i'$  was chosen to be a maximal collection of $(g,L'')$-product regions in $CC(M_i)$ satisfying (1), which is a contradiction.
\end{proof}
 %
%

For each $i$,  fix now some component  $$N_i \subset \CN_1(CC(M_i)) \setminus \cup_{U \in \mathcal U_i} int(U).$$ We  claim that  after passing to a subsequence,  we have $$\rank \pi_1 N_i \leq 2g, \ \ \sup_i \diam N_i < \infty.$$ Lemma \ref{the root component}  shows that $\rank \pi_1 N_i^c \leq g$ and gives a uniform diameter bound for $N_i^c$, so from now on we assume that $N_i\neq N_i^c$. 

%
The rank bound is easy.  Discarding finitely many $i$,  we can assume that all our project regions are wide enough so that Corollary \ref{unknotted2} applies.  Since $N_i \neq N_i^c$, it lies in  some component $C \subset M_i \setminus int(N_i^c)$, which is a compression body with missing interior boundary.  The exterior boundary $\partial_{e} C$ has genus at most $\rank \pi_1 N_i^c \leq g$, and Corollary~\ref{unknotted2}  implies that \emph {$N_i$  is a compression body (possibly with some missing interior boundary components)} whose exterior boundary also has genus at most $g$. Hence, $\rank \pi_1 N_i \leq 2g.$

A first step toward the diameter bound is the following, which says that the rank bound can be realized with short carrier graphs. %

\begin {claim}[Short graphs]\label {short graphs claim}
 After discarding  finitely many $i$, there are graphs $X_i$ with $\rank \pi_1 X_i \leq 2g$ and maps $$f_i: X_i \longrightarrow M_i, \ \ (f_i)_*(\pi_1 X) = [\pi_1N_i]$$ such that 
$\sup _i \length(f_i(X))<\infty$  and $\sup_i d(f_i(X),N_i) < \infty.$
\end {claim}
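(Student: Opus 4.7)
The plan is to realize $[\pi_1 N_i]$ by a short carrier graph supported on a NAT simplicial ruled surface placed adjacent to $\partial N_i$ inside the neighboring product region. Since $N_i \neq N_i^c$, Corollary \ref{unknotted2}, applied inside the complementary compression body of $M_i \setminus int(N_i^c)$ containing $N_i$ together with the product regions of $\mathcal U_i$ it meets, exhibits $N_i$ as a compression body with a well-defined exterior boundary $\partial_e N_i$ facing a unique product region $U_i \in \mathcal U_i$ of genus at most $g$. Since $\partial_e N_i$ is (isotopic to) a level surface of $U_i$, the inclusion $\partial_e N_i \hookrightarrow N_i$ is $\pi_1$-surjective and the images of $\pi_1 N_i$ and $\pi_1 U_i$ in $\pi_1 M_i$ coincide; it therefore suffices to realize $[\pi_1 U_i]$.

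By Definition \ref{prdef}(2), $\partial_e N_i \subset \partial U_i$ lies in the $1$-neighborhood of a NAT simplicial ruled surface $\sigma_i : \Sigma_i \to \CN(U_i)$ that is a homotopy equivalence onto $\CN(U_i) \simeq U_i$. Since $\sigma_i$ is NAT and $M_i$ is $\epsilon$-thick, the pullback metric on $\Sigma_i$ is $\epsilon$-thick, so the Bounded Diameter Lemma bounds $\diam \sigma_i(\Sigma_i)$ purely in terms of $g$ and $\epsilon$; consequently $\sigma_i(\Sigma_i)$ lies within uniform distance of $N_i$. Now apply Corollary \ref{lipschitzcorollary} to $\sigma_i$ to obtain a wedge $X_i$ of $\rank \pi_1 \Sigma_i \leq 2g$ loops on $\Sigma_i$ realizing a minimal generating set for $\pi_1 \Sigma_i$, with total length under the pullback metric bounded by some $L' = L'(g,\epsilon)$, and let $g_i : X_i \to \Sigma_i$ be the wedge inclusion.

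Set $f_i := \sigma_i \circ g_i$. Then $\rank \pi_1 X_i \leq 2g$; $\length(f_i(X_i))$ equals the pullback length of $g_i(X_i)$, hence is at most $L'$; and $(f_i)_*(\pi_1 X_i) = (\sigma_i)_*(\pi_1 \Sigma_i) = \pi_1 U_i = [\pi_1 N_i]$ as subgroups of $\pi_1 M_i$, where the middle equality uses that $\sigma_i$ is a homotopy equivalence. The bound $\sup_i d(f_i(X_i), N_i) < \infty$ is immediate from $f_i(X_i) \subset \sigma_i(\Sigma_i)$ and the preceding paragraph. The only mildly delicate step is identifying $\partial_e N_i$ and the adjacent $U_i \in \mathcal U_i$ via the rooted-tree structure from Corollary \ref{unknotted2}; the rest is a direct application of Definition \ref{prdef}, the Bounded Diameter Lemma, and Corollary \ref{lipschitzcorollary}. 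Notably, no $\pi_1$-injectivity hypothesis on $U_i$ or $N_i$ in $M_i$ is required, as the goal is to realize the image subgroup $[\pi_1 N_i]$ in $\pi_1 M_i$ rather than the abstract group $\pi_1 N_i$.
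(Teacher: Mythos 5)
Your argument is correct and is essentially the paper's own proof: both realize $[\pi_1 N_i]$ by placing a NAT simplicial ruled surface in the product region adjacent to $\partial_e N_i$ (whose $\epsilon$-thickness follows from NAT plus $\inj(M_i)\geq\epsilon$, so the Bounded Diameter Lemma and Corollary \ref{lipschitzcorollary} give a wedge of at most $2g$ loops of uniformly bounded length at bounded distance from $N_i$). The only cosmetic difference is that the paper produces the surface via Lemma \ref{bdlpr} after discarding finitely many $i$, while you read it off directly from Definition \ref{prdef}(2); either works.
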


Here, $[\pi_1N_i]$  is the image of  the map $\pi_1 N_i \longrightarrow \pi_1 M_i$ induced by  the inclusion $N_i \hookrightarrow M_i$.

\begin {proof}  Let $U_i^{e}$ be  the  product region that contains the exterior boundary $\partial_e N_i$  as a boundary component. Discarding finitely many $i$,  we may assume that $U_i^{e}$ is wider  than the constants in Lemma \ref{minsrsinpr}.  We can then construct a NAT simplicial ruled surface $\phi_i : S_i \longrightarrow U_i^{e}$  in the homotopy class of a level surface,    whose image lies at a bounded distance from $N_i$.  Since $M_i$ is $\epsilon$-thick and $S_i$ has genus at most $g$, Corollary \ref{lipschitzcorollary} implies that if $X$ is a wedge of $2g$ circles, there is a  carrier graph $$\psi_i : X_i \longrightarrow S_i$$ where  the length of $ \phi_i \circ \psi_i(X_i)$ is bounded independently of $i$.  The maps $f_i=\phi_i \circ \psi_i$ are carrier graphs for $N_i^{e}$, so we are done.
\end{proof}

 In light of Claims \ref{max prs} and \ref{short graphs claim},  the following  proposition will finish our proof. We state it carefully  here  so that we can reuse it in \S \ref{the proof}.

\begin{prop}[Diameter and topology bounds]\label{diameterandtopbounds}
Suppose that $M_i$  is a  sequence of complete hyperbolic $3$-manifolds with $\inj(M_i) \geq \epsilon$ and $N_i \subset \CN_1(CC(M_i))$ are submanifolds with the following two properties:
\begin{itemize}
\item there are graphs $X_i$ with $\rank \pi_1 X_i \leq g$ and  a sequence of carrier graphs $f_i: X_i \longrightarrow M_i $ such that  $(f_i)_*(\pi_1 X) = \mathrm{Im}(\pi_1N_i) \subset \pi_1 M_i$,
$\sup _i \length(f_i(X))<\infty$  and $\sup_i d(f_i(X),N_i) < \infty.$\item each component $S_i \subset \partial N_i$  is either a component of $\CN_1(CC(M_i))$, or is a boundary component of  a $(g,L_i)$-product region $U_i \subset CC(M_i) \setminus int(N_i)$, where $L_i \to \infty$,
	\item  for some $A$ independent of $i$, there is no $(2g,A)$-product region  contained  in $N_i$.
\end{itemize}
Then after  passing to a subsequence, all the manifolds $N_i$ are homeomorphic, and $\sup_i \diam( N_i \cap \CN_R(CC(M_i))) < \infty$.
\end{prop}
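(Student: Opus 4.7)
The plan is to extract a strong geometric limit of the $(M_i, p_i)$ from the short carrier graphs provided by hypothesis (1), apply Theorem \ref{strong limits} to identify a standard compact core $C \subset M_G$ of the limit whose boundary components pull forward to the wide product regions and convex cocompact end regions described in hypothesis (2), and finally use hypothesis (3) to force $N_i \subset \phi_i(C)$ for large $i$.

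First, after passing to a subsequence so the graphs $X_i = X$ stabilize (finitely many graphs have rank $\leq g$), I would set $p_i = f_i(x_0)$ for a fixed basepoint $x_0 \in X$ and extract algebraic and geometric limits of the $(M_i, p_i)$ marked by the carrier graphs $f_i$, as in Example \ref{limit example}; in particular $(M_i, p_i) \to (M_G, p_G)$ geometrically. Applying Proposition \ref{limits-fg} with the rank-$\leq g$ subgroup $[\pi_1 N_i] \subset \pi_1 M_i$ as $\Gamma'$, after replacing the markings by a free group of rank $\leq 2g$ and passing to a further subsequence, the convergence becomes strong and $\rank \pi_1 M_G \leq 2g$. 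By Theorem \ref{strong limits} there exists a standard compact core $C \subset M_G$, which I enlarge to contain a large neighborhood of the limit graph $f(X) \subset M_G$ so that $f_i(X_i) \subset \phi_i(C)$ for large $i$, together with a sequence $L_i' \to \infty$ and almost isometric maps $\phi_i$ defined on a neighborhood of $C$, such that each component $S \subset \partial C$ facing a degenerate end of $M_G$ pulls forward to the boundary of a genus-$\leq 2g$, width-$\geq L_i'$ product region $V_i^S \subset CC(M_i)$, while each $S$ facing a convex cocompact end pulls forward to a surface bounding a product neighborhood of a convex cocompact end of $M_i$ contained in $M_i \setminus \CN_1(CC(M_i))$.

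The main obstacle is showing $N_i \subset \phi_i(C)$ for large $i$, after possibly isotoping $C$ inside $M_G$. Suppose not; then a component $W$ of $M_i \setminus \phi_i(C)$ meets $N_i$. By the structure from Theorem \ref{strong limits}, $W$ is either a wide product region $V_i^S$ of genus $\leq 2g$ and width $\geq L_i' > A$ (for large $i$), or $W \subset M_i \setminus \CN_1(CC(M_i))$; the latter is excluded by $N_i \subset \CN_1(CC(M_i))$. In the former case, $W \not\subset N_i$ by hypothesis (3), so $W$ meets some component $T_i \subset \partial N_i$. By hypothesis (2), $T_i$ is either a component of $\partial \CN_1(CC(M_i))$, which is disjoint from $CC(M_i)$ and cannot meet the wide product region $W \subset CC(M_i)$, or else $T_i$ bounds a wide product region $U_i \subset CC(M_i) \setminus N_i$. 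So $W = V_i^S$ and $U_i$ are two wide product regions of bounded genus overlapping near $T_i$; Lemma \ref{intersectprs} then yields a common level surface, and Waldhausen's cobordism theorem allows me to isotope $C$ inside $M_G$ so that $\phi_i(S)$ coincides with this shared level surface, thereby aligning $\phi_i(\partial C)$ with $\partial N_i$ on the relevant component. After finitely many such matchings across components of $\partial C$, one achieves $N_i \subset \phi_i(C)$, so $\diam N_i \leq 2 \diam C$ is uniformly bounded; the topological finiteness then follows because each $N_i$ becomes, up to isotopy, a submanifold of the fixed compact core $C$ cut out by surfaces in $\partial C$, of which only finitely many isotopy classes occur along the subsequence.
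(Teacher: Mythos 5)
Your overall strategy (extract a strong limit from the short carrier graphs, apply Theorem \ref{strong limits} to get a compact core framed by wide product regions and convex cocompact end neighborhoods, then use hypothesis (3) to trap $N_i$) is indeed the paper's, but there is a genuine gap at the central step. The representations induced by the $f_i$ have image $[\pi_1 N_i]$, which is in general a \emph{proper} subgroup of $\pi_1 M_i$ (the proposition does not even bound $\rank \pi_1 M_i$, and no bounded-length marking of all of $\pi_1 M_i$ near $p_i$ is available). Consequently Proposition \ref{limits-fg} and Theorem \ref{strong limits} apply to the quotients of the images, i.e.\ to the covers $\hat M_i \to M_i$ corresponding to $[\pi_1 N_i]$, not to the $M_i$ themselves; your assertion that the geometric limit $M_G$ of the full manifolds $(M_i,p_i)$ is a strong limit with $\rank \pi_1 M_G \le 2g$ is unjustified, since a priori the geometric limit of the $M_i$ is much bigger than the algebraic limit of the markings. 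The paper's proof passes to the covers $\hat M_i$, proves separately that $(M_i,p_i)$ and $(\hat M_i,\hat p_i)$ have the \emph{same} geometric limit --- using that the covering maps embed larger and larger balls about $\hat p_i$ because the lifted $\hat N_i^{ext}$ (which is $\hat N_i$ together with the adjacent product regions, whose widths tend to infinity by hypothesis (2), and the adjacent end components) embeds with boundary receding to infinity --- and then pushes the conclusions of Theorem \ref{strong limits} down from $\hat M_i$ to $M_i$: the bounded-width subproduct regions $W_i^{S,A}$ descend because they lie in bounded balls around the basepoint, and the convex cocompact end neighborhoods descend by a separate argument ($\partial \hat E_i \subset \hat N_i^{ext}$ plus Lemma \ref{convex cocompact cover}). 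None of this appears in your proposal, and without it the claim that $M_i \setminus \phi_i(C)$ has the structure you describe does not follow.

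The endgame also needs repair. Theorem \ref{strong limits}(a) does not say that a degenerate-end component of $M_i \setminus \phi_i(C)$ \emph{is} the wide product region $V_i^S$; it says that component is a compression body merely containing such a collar, so your dichotomy for a component $W$ meeting $N_i$ is false, and $N_i$ could a priori extend far past $V_i^S$ into that compression body. The actual mechanism is: bound the genus of the pulled-back product regions (via Corollary \ref{unknotted2}), use hypothesis (3) to rule out their containment in $N_i$ and Lemma \ref{intersectprs} to force each one to share a level surface with a product region $U_i$ adjacent to $N_i$ supplied by hypothesis (2), with the side determined by $p_i \in \phi_i(C)\cap N_i$, and then recognize $\phi_i(C)\cup\bigcup_S W_i^{S,A}$ as a standard compact core of $int(N_i^{ext})$ containing $N_i$; this yields both the diameter bound and $N_i \cong C$ directly, with no isotopy of $C$ in $M_G$ (which in any case would not be uniform in $i$). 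Your sketch gestures at the shared-level-surface step but supplies neither the genus control nor the argument that the relevant product region is one adjacent to $N_i$ on the correct side.
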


Note that while proving Proposition \ref{diameterandtopbounds} we can assume that any product regions given in the second bullet point are compact. For if they are not, we can  replace them with appropriate truncations using Corollary \ref{truncating noncompact}.

\medskip

 The rest of the section is devoted to the proof of Proposition \ref{diameterandtopbounds}.  The idea is to use the short graphs to extract algebraic and geometric limits,  in the same way that we did when we constructed the central component $N_i^c$ in  Lemma \ref{the root component}.  In fact, the following argument is really a fancier version of the proof of Lemma \ref{the root component},  so we suggest that the reader understand that fully before proceeding further. 
Let $$N_i^{ext} \subset M_i$$ be  the union of $N_i$ with all adjacent product regions $ U \in\mathcal U_i$, and all  adjacent components of $M_i \setminus int(\CN_1(CC(M_i)))$, where in each case \emph{adjacent}  means sharing a boundary component with $N_i$. Let 
 $$\pi_i: \hat M_i \longrightarrow M_i$$   be the cover corresponding to  the  subgroup $\mathrm{Im}(\pi_1 N_i)\subset\pi_1 M_i$. Then $N_i \subset N_i^{ext}$ lift isometrically to submanifolds $$\hat N_i \subset \hat N_i^{ext} \subset \hat M_i$$  and the $f_i$ lift to  bounded length carrier graphs $$\hat f_i : X_i \longrightarrow  \hat M_i$$ that lie at bounded distance from the submanifolds $\hat N_i \subset \hat M_i$. So, passing to a subsequence we can extract algebraic and geometric limits of $(\hat M_i)$ marked by these carrier graphs, as in Example \ref{limit example}. Moreover,  Proposition \ref{limits-fg} says that \emph {after doubling the ranks of the markings}, we may assume that the algebraic and geometric limits coincide.  In other words, for any choice of basepoints $\hat p_i \in \hat M_i $ such that $$\sup_i d(\hat p_i, \hat f_i(X_i)) < \infty,$$  we can assume  after passing to a subsequence that 
\begin {enumerate}
	\item  the pointed manifolds $(\hat M_i,\hat p_i)$  converge geometrically to an $\epsilon$-thick hyperbolic $3$-manifold $( M_G, p_G)$,
\item  with respect to suitable markings of the fundamental groups by a free group on $4g$ generators, this convergence is strong. 
\end {enumerate}
Note that (2)  implies that $\rank \pi_1  M_G \leq 4g$.   In fact, \begin {enumerate}
	\item[(3)] if $p_i =\pi(\hat p_i) \in M_i,$  the pointed manifolds $(M_i,p_i)$  also converge geometrically to $(M_G,p_G)$, 
\end {enumerate}  which explains  the notation $M_G$.  To prove (3), note that $\pi_i$ is an embedding on $N_i^{ext}$ and that $d(\hat p_i ,\partial \hat N_i^{ext})\to \infty$, so we have: \begin {enumerate}
	\item[(4)] For any fixed $t>0$, the map $\pi_i : \hat M_i \longrightarrow M_i$  restricts to an embedding on the radius-$t$ ball $B_{\hat M_i}(\hat p_i,t)$.\end {enumerate}
Finally, we note that since  $\hat f_i(X_i) $ lies at a bounded distance from $\hat N_i$, we can assume above that
\begin {enumerate}
	\item[(5)] $\hat p_i \in \hat N_i$ and $p_i \in N_i.$\end {enumerate}

%
%

 Let
$( \hat \phi_i)$ and $(\phi_i)$ be a  sequences of almost isometric maps given by the geometric convergence in (1) and (3) above, as in Definition \ref{geolimits}, and where $\pi_i \circ \hat \phi_i  = \phi_i$  for all $i$.
Since $\hat M_i \to M_G$  strongly, Theorem~\ref{strong limits} implies that  the following holds after discarding  finitely many $i$. \emph{There is a standard compact core $ C \subset M_G$ containing  the basepoint $p_G$ and a sequence $L_i \to \infty$ such for every component $S \subset \partial C$, we have:
\begin{enumerate}
	\item[(a)]  If $S$  faces a degenerate end of $M_G$,  then $\hat \phi_i(S)$ bounds a product region $\hat V_i^S \subset CC(\hat  M_i) \setminus int(\hat \phi_i(C))$ with width at least $L_i$.  
 \item[(b)] If $S$  faces a convex-cocompact end  of $M_G $, then $\hat \phi_i(S)$ bounds a component $\hat  E_i \subset \hat M_i \setminus int(\hat \phi_i(C))$ that  is a  product neighborhood of a  convex cocompact end of $\hat M_i$, and $\hat E_i \subset \hat M_i \setminus \CN_1(CC(\hat M_i)).$
\end{enumerate}}
%
 We would like to  translate  statements (a) and (b) above, which are about $\hat M_i$, to  similar statements in $M_i$.   We first claim:
\begin{claim}
 Let $A>0$. If $S$  bounds a degenerate end of $M_G$, then for large $i$ the surfaces $\phi_i(S) \subset M_i$ bound  product regions $$W_i^{S,A} \subset CC(M_i)\setminus int(\phi_i(C))$$
 such that $ A \leq \width W_i^{S,A}$  for each $i $, and $\sup_i \width W_i^{S,A} <\infty $.
\end{claim}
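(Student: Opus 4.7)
The plan is to obtain $W_i^{S,A}$ by truncating the wide product region $\hat V_i^S \subset \hat M_i$ (provided by strong convergence) to a subproduct region of bounded $\epsilon$-width whose boundary contains $\hat \phi_i(S)$, and then pushing this subregion down to $M_i$ via the covering $\pi_i$. The key point is that while the full region $\hat V_i^S$ may eventually leave every fixed ball around $\hat p_i$, a truncation of width $\approx A$ will stay in a ball of radius depending only on $A$, $\epsilon$, $g$ and $\diam C$, on which $\pi_i$ is injective for large $i$ by property (4).

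First, since $\width_\epsilon(\hat V_i^S) \geq L_i \to \infty$, Corollary \ref{cutting-product} gives a subproduct region $\hat W_i \subset \hat V_i^S$ having $\hat \phi_i(S)$ as a boundary component and $\epsilon$-width in $[A, A+D]$ for some $D = D(\epsilon, g)$. By Fact \ref{width diameter}, $\diam_\epsilon \hat W_i$ is bounded by a constant $T = T(A,\epsilon,g)$; since $\hat M_i$ is $\epsilon$-thick, the ordinary diameter is also bounded. Because $p_G \in C$ and $S \subset \partial C$, we have $d_{\hat M_i}(\hat p_i, \hat \phi_i(S)) \leq 2\diam_{M_G}(C) =: D'$ for large $i$, so $\hat W_i \cup \hat \phi_i(C) \subset B_{\hat M_i}(\hat p_i, R)$ for some $R = R(A,\epsilon,g,C)$ independent of $i$.

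By property (4), for large $i$ the covering map $\pi_i$ is injective on $B_{\hat M_i}(\hat p_i, R)$, hence on $\hat W_i \cup \hat \phi_i(C)$. I would then set $W_i^{S,A} := \pi_i(\hat W_i)$. Since $\pi_i$ is a local isometry that embeds $\hat W_i$, the image $W_i^{S,A}$ is a genus $g$ product region with the same $\epsilon$-width as $\hat W_i$, so $A \leq \width_\epsilon(W_i^{S,A}) \leq A+D$; and $\phi_i(S) = \pi_i(\hat \phi_i(S))$ is one of its boundary components. For the required containments: $\hat V_i^S \subset CC(\hat M_i)$ by Theorem \ref{strong limits}(a), and $\pi_i$ sends $CC(\hat M_i)$ into $CC(M_i)$ since the limit set of a subgroup is contained in the limit set of the ambient group, so $W_i^{S,A} \subset CC(M_i)$. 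Finally, $\hat W_i$ lies on the side of $\hat \phi_i(S)$ disjoint from $int(\hat \phi_i(C))$, and injectivity of $\pi_i$ on $\hat W_i \cup \hat \phi_i(C)$ transfers this disjointness to $M_i$, giving $W_i^{S,A} \subset CC(M_i) \setminus int(\phi_i(C))$.

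The only real obstacle is producing the embedding of the truncated region, which is handled by the diameter bound combined with property (4); the upper width bound $A+D$ being independent of $i$ follows immediately from Corollary \ref{cutting-product}, and the lower bound $A$ was built into the construction.
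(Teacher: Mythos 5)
Your proposal is correct and follows essentially the same route as the paper: truncate $\hat V_i^S$ via Corollary \ref{cutting-product} to a subproduct region of width roughly $A$ adjacent to $\hat\phi_i(S)$, bound its diameter via Fact \ref{width diameter}, note it stays in a fixed-radius ball about $\hat p_i$ (since it abuts $\hat\phi_i(C)$, which has bounded diameter and contains the basepoint), and then use property (4) to embed it into $M_i$ under $\pi_i$. The only difference is that you spell out the containments $W_i^{S,A}\subset CC(M_i)\setminus int(\phi_i(C))$, which the paper leaves implicit.
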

\begin {proof}
Since the widths of the product regions $\hat V_i^S$ tend to infinity, we can  use Corollary \ref{cutting-product} to find a subproduct region $\hat W_i^{S,A} \subset \hat V_i^S$ that shares the boundary component $\hat S_i$, has width  at least $A$, and diameter bounded independently of $i$. (Corollary \ref{cutting-product} allows us to prescribe  the width of  $\hat W_i^{S,A}$  up to some bounded error, and Fact \ref{width diameter} translates a width upper bound into a diameter upper bound.) Since $\hat W_i^{S,A}$ shares a  boundary component with $\hat \phi_i(C)$, which has bounded diameter and contains $\hat p_i$,  it follows that $\hat W_i^{S,A}$ lies in a $t$-ball around $p_i$  for some $t$  independent of $i$. So by (4), the covering map $\pi_i$  restricts to an embedding on $\hat W_i^{S,A}$, and its image is  the desired  $W_i^{S,A}$.
\end{proof}

We would also like to say that part (b) can be stated in $M_i$.

\begin {claim}\label {cc-ends embed}
If $S$  faces a convex-cocompact end  of $M_G $, then for large $i$,  the surface $  \phi_i(S)$ bounds a component $  E \subset  M_i \setminus int( \phi_i(C))$ that  is a  product neighborhood of a  convex cocompact end of $ M_i$, and $$ E \subset  M_i \setminus \CN_1(CC( M_i)).$$
\end {claim}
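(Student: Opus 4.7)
The plan is to apply Theorem \ref{strong limits} directly to the sequence $(M_i, p_i) \to (M_G, p_G)$. First, I would verify this convergence is strong: composing the markings $\rho_i : F \to \hat \Gamma_i$ that witness the strong convergence $\hat M_i \to M_G$ with the subgroup inclusions $\hat \Gamma_i \hookrightarrow \Gamma_i := \pi_1 M_i$ gives $\rho_i' : F \to \Gamma_i$. These represent the same isometries of $\BH^3$, so $\rho_i' \to \rho_\infty$ algebraically, and the geometric limit of $\Gamma_i$ is forced to coincide with $\rho_\infty(F) = \hat \Gamma_G$ since both $M_G$ and its quotient structure are determined by the pointed geometric limit $(M_G, p_G)$. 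In particular, there are no parabolics in any $\rho_i'$, so Theorem \ref{strong limits} applies.

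Next, I would re-apply Theorem \ref{strong limits} to the strong convergence $(M_i) \to M_G$ with $C_0 := C$, the compact core already chosen in the $\hat M_i$ application. This produces a standard compact core $C' \supset C$ of $M_G$ and a sequence $L_i' \to \infty$ such that for every component $S'$ of $\partial C'$ facing a convex-cocompact end of $M_G$, the surface $\phi_i(S')$ bounds a component $E_i^{S'} \subset M_i \setminus \operatorname{int}(\phi_i(C'))$ that is a product neighborhood of a CC end of $M_i$, with $E_i^{S'} \subset M_i \setminus \CN_1(CC(M_i))$. Inspecting the proof of Theorem \ref{strong limits}, the core $C'$ is constructed so that $\partial C'$ sits at distance at least $3$ from $CC(M_G)$ along any CC end.

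For the specific $S \subset \partial C$ facing a CC end $\CE$ of $M_G$, there is a corresponding component $S' \subset \partial C'$ also facing $\CE$. By Waldhausen's cobordism theorem, $S$ and $S'$ cobound a compact trivial interval bundle $R \subset M_G$. Since both $\partial C$ and $\partial C'$ lie outside $\CN_1(CC(M_G))$ at CC ends (the original $C$ being constructed the same way, and $R$ sitting between their respective $\partial\CN_R$-boundary components of $CC(M_G)$), the bridging region $R$ lies in $M_G \setminus \CN_2(CC(M_G))$. For large $i$, $\phi_i$ is a $(1+\epsilon_i)$-bilipschitz embedding on $C' \supset C \cup R$, yielding $\phi_i(R) \subset M_i \setminus \CN_1(CC(M_i))$ once one checks that the convex cores behave continuously under the convergence. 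Concatenating along $\phi_i(S')$, the component of $M_i \setminus \operatorname{int}(\phi_i(C))$ adjacent to $\phi_i(S)$ equals $E := \phi_i(R) \cup E_i^{S'}$, a product neighborhood of a CC end of $M_i$ contained in $M_i \setminus \CN_1(CC(M_i))$.

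The main obstacle I anticipate is the convex-core approximation step: verifying $\phi_i(R) \subset M_i \setminus \CN_1(CC(M_i))$ from $R \subset M_G \setminus \CN_2(CC(M_G))$ requires that the images $\phi_i(CC(M_G))$ approximate $CC(M_i)$ up to small Hausdorff error on compact sets. In the absence of parabolics, this reduces to Chabauty convergence of the domains of discontinuity $\Omega(\Gamma_i) \to \Omega(\Gamma_G)$ on $\partial_\infty \BH^3$ and continuity of convex hulls of closed sets under this convergence. A secondary bookkeeping point is the compatibility of the two applications of Theorem \ref{strong limits}, which is handled by the enlargement $C' \supset C$ together with Waldhausen's theorem.
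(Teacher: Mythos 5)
Your plan breaks down at its first step: the convergence $(M_i,p_i)\to(M_G,p_G)$ is only \emph{geometric}, not strong in the sense required by Theorem \ref{strong limits}. That theorem takes representations $\rho_i$ of a fixed finitely generated group and draws conclusions about the quotients $\BH^3/\rho_i(\Gamma)$; to get statements about the manifolds $M_i$ themselves you would need markings whose images are the full groups $\pi_1 M_i$, converging algebraically and geometrically to the same limit. The representations you build, $\rho_i'=\iota\circ\rho_i$, have image only the subgroup $[\pi_1 N_i]\subset\pi_1 M_i$ uniformizing the cover $\hat M_i$, so applying Theorem \ref{strong limits} to them merely reproduces the statements (a) and (b) about $\hat M_i$ that are already in hand, and says nothing about ends of $M_i$ or about $CC(M_i)$. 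Nor can this be repaired by re-marking the full groups: any element of $\pi_1(M_i,p_i)$ not carried by $N_i^{ext}$ is represented only by loops through $p_i$ whose lengths tend to infinity (the base points sit deeper and deeper inside $\hat N_i$), so no sequence of markings of $\pi_1 M_i$ by a fixed finitely generated group converges algebraically with image the full group. The full groups converge to $\Gamma_G$ only geometrically, which is exactly statement (3) of the setup and is strictly weaker than what your second paragraph uses.

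Relatedly, the step you flag as ``convex-core approximation'' is not bookkeeping but the actual content of the claim, and it fails under mere geometric convergence: limit sets are only semicontinuous in the Chabauty topology, so a point of $M_G\setminus\CN_2(CC(M_G))$ can perfectly well be carried by $\phi_i$ into $CC(M_i)$, precisely because elements of $\pi_1 M_i$ that are invisible in the limit may force convexity near the base point. The paper's route avoids this by staying in the cover, where Theorem \ref{strong limits} legitimately applies, and then descending: one shows that the end neighborhood $\hat E_i\subset\hat M_i$ from (b) actually embeds under $\pi_i$, because $\partial\hat E_i$ stays at bounded distance from $\hat p_i$ and hence inside $\hat N_i^{ext}$, where $\pi_i$ is injective, while no boundary component of $\hat N_i^{ext}$ can lie in $\hat E_i$ (such a component is adjacent to a wide product region, so by Lemma \ref{bdlpr} it has a bounded-length closed geodesic nearby, which would then lie in $int(\hat E_i)\subset\hat M_i\setminus CC(\hat M_i)$, impossible since closed geodesics lie in the convex core). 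Lemma \ref{convex cocompact cover} then pushes both the product-neighborhood statement and the containment $E\subset M_i\setminus\CN_1(CC(M_i))$ down to $M_i$. Your proposal bypasses this covering argument entirely, and that is where the real work lies.
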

\begin {proof}
Let $\hat E_i$ be  as in (b)  above. By Lemma \ref{convex cocompact cover}, it suffices to show that $\hat E_i$ embeds in $M_i$.  
Since $\phi_i(C)$  has bounded diameter, the base points $\hat p_i \in \hat \phi_i(C)$ lie at bounded distance from the surfaces $\hat \phi_i(S)$. Hence,
 $$\sup_i d(\partial \hat E_i,\hat p_i) < \infty.$$
Since $d(\hat p_i,\partial \hat N_i^{ext}) \to \infty$  with $i$ and $\hat p_i \in \hat N_i \subset \hat N_i^{ext}$,  it follows that $$\partial \hat E_i \subset \hat N_i^{ext}, \ \ \forall \text{ large } i.$$  

 Suppose for a moment that $\hat N_i^{ext}$ has a boundary component $\Sigma$ contained in $\hat E_i$. Since $\Sigma$ is adjacent to a wide product region, Lemma \ref{geosinprs} implies that there is a closed geodesic $\gamma$ in $\hat M_i$ of  bounded length at a bounded distance from $\Sigma$, where here `bounded' means independent of $i$. But  for large $i$, the surface $\Sigma $ lies very far from $\partial \hat E_i$, so if $\Sigma \subset \hat E_i$ then $\Sigma$ lies \emph{deep} within $\hat E_i$ for large $i$.  This means that  for large $i$, the closed geodesic $\gamma \subset int(\hat E_i)$, which is impossible since closed geodesics always lie in the convex core.

So, we may assume that for large $i$, there are no boundary components of $\hat N_i^{ext}$ contained in $\hat E_i$. It follows that $\hat E_i \subset \hat N_i^{ext}$, and  the claim follows since $\pi_i$  restricts to an embedding on $\hat N_i^{ext}$. \end {proof}

 To finish Theorem \ref{nonhakenthm}, it suffices to prove the following  claim.  

\begin{claim}\label {last claim}
 For large $i$, all the $N_i$ are homeomorphic to the standard compact core $C \subset M_G$. Moreover,   for some $A>0$  independent of $i$,
$$N_i \ \subset \ C_i^A \ := \ \phi_i(C) \ \  \cup \hspace{-7mm}\bigcup_{\substack{S \subset \partial C \text{ facing }\\  \text{ degenerate ends of } M_G }} \hspace{-7mm} W_i^{S,A}.$$
\end{claim}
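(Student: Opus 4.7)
The plan is to establish the inclusion $N_i \subset C_i^A$ first, and then derive the homeomorphism $N_i \cong C$ via uniqueness of compact cores in the cover $\hat M_i$.

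For the inclusion, I argue by contradiction. Since $p_i \in \phi_i(C) \cap N_i$ and $N_i$ is connected, any point of $N_i$ outside $C_i^A$ is reached from $p_i$ by a path in $N_i$ that leaves $\phi_i(C)$. Because $N_i \subset \CN_1(CC(M_i))$ and, by Claim \ref{cc-ends embed}, the components of $M_i \setminus \phi_i(C)$ facing convex cocompact ends of $M_G$ are disjoint from $\CN_1(CC(M_i))$, such a path can only exit $\phi_i(C)$ across $\phi_i(S)$ for some $S \subset \partial C$ facing a degenerate end of $M_G$. If no uniform $A$ works, then after passing to a subsequence, for some such $S$ the $\epsilon$-distance $w_i$ from $\phi_i(S)$ to the adjacent bounding wide product region $U_S \in \mathcal U_i$ tends to infinity. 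Lifting isometrically to $\hat M_i$, the region between $\hat \phi_i(S)$ and $\hat U_S$ sits inside $\hat N_i^{ext}$ as a topological subproduct region of the product neighborhood $\hat V_i^S$ of the $S$-degenerate end of $\hat M_i$ provided by Theorem \ref{strong limits}. Applying Lemma \ref{bdlpr} and Corollary \ref{cutting-product} inside $\hat V_i^S$, I extract a geometric subproduct region of width tending to infinity and genus at most $2g$ (by half-lives-half-dies applied to $\rank \pi_1 M_G \leq 2g$). Since $\pi_i$ embeds on $\hat N_i^{ext}$, projecting yields a $(2g, A)$-product region of arbitrarily large width inside $N_i$, contradicting hypothesis (3) of Proposition \ref{diameterandtopbounds}.

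For the homeomorphism, observe that by hypothesis (1) the carrier graph $f_i$ realizes $[\pi_1 N_i]$, so the isometric lift $\hat N_i \hookrightarrow \hat M_i$ is a $\pi_1$-isomorphism and $\hat N_i$ is a compact core of $\hat M_i$. By Theorem \ref{strong limits}, $\hat \phi_i(C)$ is also a standard compact core of $\hat M_i$ for large $i$, with boundary components canonically identified with $\partial C$. The boundary components of $\hat N_i$ match $\partial C$ naturally: type (i) components (lifts of components of $\partial \CN_1(CC(M_i))$) face convex cocompact ends of $\hat M_i$, matching the convex cocompact components of $\partial C$; type (ii) components (lifts of level surfaces of wide $U \in \mathcal U_i$) face degenerate ends of $\hat M_i$ through the adjacent lifted product regions $\hat U$, matching the degenerate components of $\partial C$. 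Since each end of $\hat M_i$ is thus faced by exactly one boundary component of $\hat N_i$, the compact core $\hat N_i$ is standard. By uniqueness of standard compact cores (Lemma \ref{unique-compact} in the freely indecomposable case, and \cite{mccullough1985uniqueness} in general), there is an ambient homeomorphism $(\hat M_i, \hat N_i) \cong (\hat M_i, \hat \phi_i(C))$; in particular $\hat N_i \cong \hat \phi_i(C) \cong C$, and projecting via $\pi_i$ (which embeds $\hat N_i$) gives $N_i \cong C$.

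The main difficulty lies in the inclusion step, specifically in promoting the topological extension of $N_i$ into a degenerate-end direction into a genuine geometric $(2g, A)$-product region that contradicts hypothesis (3). This relies on the interplay of Theorem \ref{strong limits} (providing the limit product structure $\hat V_i^S$ in $\hat M_i$), Corollary \ref{cutting-product} (extracting geometric subproduct regions of controlled width inside the topological collar), and the embedding property of $\pi_i$ on $\hat N_i^{ext}$ (projecting the contradiction back to $M_i$).
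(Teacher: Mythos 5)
Your overall plan (inclusion first, homeomorphism second) parallels the paper, but the inclusion step has a genuine gap. Your contradiction hinges on two assertions: that if $N_i\not\subset C_i^{A_i}$ then the $\epsilon$-distance from $\phi_i(S)$ to ``the adjacent bounding wide product region $U_S$'' tends to infinity, and that ``the region between $\hat\phi_i(S)$ and $\hat U_S$'' sits inside $\hat N_i^{ext}$ as a topological subproduct region of $\hat V_i^S$. Neither is justified, and together they are essentially the statement being proved: a priori there is no single well-defined bounding product region lying ``beyond'' $\phi_i(S)$, the portion of $N_i$ on that side need not be a product at all, and a bounding product region could meet $W_i^{S,A_i}$ near $\phi_i(S)$ without separating it, so that $N_i$ reaches past the far boundary of $W_i^{S,A_i}$ while your $w_i$ stays bounded. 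All that actually follows from the failure of the inclusion is a path in $N_i$ traversing $W_i^{S,A_i}$ from $\phi_i(S)$ to its far boundary while avoiding the interiors of all bounding product regions; turning this into a wide product region \emph{contained in} $N_i$ (or, as the paper does, into the positive statement that for $A$ large $W_i^{S,A}$ shares a level surface with a product region adjacent to $N_i$, whose far boundary is then itself such a level surface) requires exactly the machinery you skip: the bounded-diameter collars of the boundaries of the product regions (the set $X$), Lemma \ref{intersectprs} to promote a deep intersection into a shared level surface, and Claim \ref{max prs} (2) / hypothesis (3). Your genus bookkeeping is also shaky: the proof records $\rank\pi_1 M_G\le 4g$ (the markings are doubled via Proposition \ref{limits-fg}), so half-lives-half-dies gives only $4g$ a priori, and the paper has to improve the genus of $W_i^{S,A}$ to $g$ using the compression-body structure and Corollary \ref{unknotted2}; you cannot simply quote $2g$ and feed it into hypothesis (3).

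The second half also rests on an unjustified step: you claim $\hat N_i\hookrightarrow\hat M_i$ is a $\pi_1$-isomorphism because the carrier graph realizes $[\pi_1 N_i]$, but that only gives $\pi_1$-surjectivity; injectivity of $\pi_1 N_i\to\pi_1 M_i$ is not known at this stage (the bounding product regions are typically compressible), so neither ``$\hat N_i$ is a (standard) compact core'' nor your assertion that the lifted bounding product regions face degenerate ends of $\hat M_i$ is available. The paper avoids this entirely: once $N_i\subset C_i^A\subset N_i^{ext}$ is established, with each component of $\partial C_i^A$ either a level surface of a product region adjacent to $N_i$ or a surface parallel to a component of $\partial N_i$ outside $\CN_1(CC(M_i))$, the regions between $\partial N_i$ and $\partial C_i^A$ are trivial interval bundles, whence $N_i\cong C_i^A\cong C$ with no appeal to cores of $\hat M_i$.
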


 Here, the desired uniform diameter bound on $N_i$  follows since $ \phi_i(C)$ has  bounded diameter and the widths (and hence diameters by Fact \ref{width diameter}) of  the product regions $W_i^{S,A}$ are also bounded above.

\begin {proof}
A word is in order about the genus of $W^{S,A}_i$. A priori, we only know $\rank \pi_1 M_G \leq 4g$, so the usual half-lives-half-dies argument gives that the genus of $W_i^{S,A}$  is at most $4g.$ Actually, though, we claim $$ genus(W_i^{S,A}) \leq g.$$

 To see this,  choose product regions $U_i \in \mathcal U_i$ such that the  component of $M_i \setminus int(U_i)$  containing $N_i$ is a compression body with missing interior boundary, and hence exterior boundary of genus at most $g$.  For instance,  one  could  take $U_i$  to be the product region adjacent to the central component $N_i^c$ that separates $N_i^c$ from $N_i$, appealing to Lemma \ref{the root component}. 
 The product regions $W_i^{S,A}$ all lie at bounded distance from $N_i$, and the widths of the $U_i$ tend to infinity, so for large $i$ there are subproduct regions $U_i' \subset U_i$  such that 
\begin{itemize}
\item $U_i'$ has width at least $A$, \item $W_i^{S,A}$ is contained in a component of $M_i \setminus int(U_i')$  that is a compression body with missing interior boundary. \end{itemize}
As long as $A$ is bigger than the constant $L$ in Corollary \ref{unknotted2}, given the genus bound of $4g$, it follows that $$genus(W_i^{S,A})\leq genus(U_i')\leq g.$$  

Let $D$ be the constant from Lemma \ref{intersectprs}. Then the set $$X=\bigcup_{U \in \mathcal U_i} \{p\in U \ | \ d(p,\partial U) \leq D\} $$ 
 is a union of at most $2n$ sets of diameter at most $D$, where we have one  such set for each boundary component of each $U$.  So, if $A$  is very large, each $W_i^{S,A}$ has a subproduct region $Y$  such that
\begin{enumerate}
\item[(i)] $Y$ is disjoint from $X$,
\item[(ii)] $d(Y,\partial W_i^{S,A}) \geq D$,
\item[(ii)] $\width(Y)$ is at least the constant $A$ in Claim \ref{max prs}.
\end{enumerate}
By Claim \ref{max prs} (2), it follows that $Y$ intersects some  product region $U_i^S \in \mathcal U_i$.
 In light of (i) and (ii), Lemma \ref{intersectprs}  then implies that $W_i^{S,A}$ shares a level surface $\Sigma_i$ with some $U_i^S \in \mathcal U_i$. 

This $U_i^S$ must share a boundary component with $N_i$, since its  distance to $N_i$ is  bounded above,   and any product region not adjacent to $N_i$ is separated from $N_i$ by other product regions, all of whose widths tend to infinity. Moreover, $\phi_i(C) \cap N_i \neq \emptyset,$  as they both contain $p_i$, so  the side of the level surface $\Sigma \subset W_i^{S,A}$ facing $\phi_i(C)$ is the same side that faces $N_i$ when we consider $\Sigma$  as a level surface of $U_i^S$.  In particular, since $d(\Sigma,N_i)$ is bounded and $\width(U_i^S) \to \infty$, the component of $\partial W_i^{S,A}$ that lies on the \emph{other} side of $\Sigma$, i.e.\ the  one that is also a component of $\partial C_i^A$, is also a level surface of $U_i^S$.

Since $C_i^A$  intersect $N_i$ and  has bounded diameter, $C_i^A \subset N_i^{ext}$  for large $i$. Moreover, we know from  Claim \ref{cc-ends embed}  that whenever $S \subset \partial C$ bounds a convex cocompact end of $M_G$,  the boundary component $\phi_i(S) \subset \partial C_i^A$ lies outside $\CN_1(CC(M_i))$ and is  isotopic to a component of $\partial \CN_1(CC(M_i))$ that is also a component of $\partial N_i$. And  whenever $S \subset \partial C$ bounds a degenerate of $M_G$,  the component of $\partial C_i^A$  that is contained in $W_i^{S,A}$  is a level surface in some product region $U_i^S \in \mathcal U_i$ adjacent to $N_i$.
 So,  it follows that $$N_i \subset C_i^A \subset N_i^{ext},$$ and that $C_i^A$ is a standard compact core for $int(N_i^{ext})$.  In particular, $N_i$ is homeomorphic to $C_i^A$, and therefore to $C$.
\end{proof}
%


%
%

\section{When there are no short compressible curves}
\label {the proof}

 In this section we prove the following result.

\begin{sat}\label {main}
 Given $k,\epsilon$,  there are a  finite collection $\mathcal M$ of  compact $3$-manifolds, and constants $n=n(k), g=g(k)$ and $B=B(k,\epsilon)$ as follows.
 Let $M$ be an orientable hyperbolic $3$-manifold such that  \begin {itemize}
 \item 	$\rank(\pi_1(M))\le k$ and $\inj(M)\ge\epsilon,$
 \item every essential closed curve on  $\partial CC(M)$  that is nullhomotopic in $M$ has length at least $\epsilon $.
\end{itemize} Then $int(CC(M))$  contains a collection $\mathcal U$ of at most $n$ product regions, each with genus at most $g$, such that every component $$N \subset CC(M) \setminus \cup_{U \in \mathcal U} int(U)$$ has diameter at most $B$ and  is homeomorphic to an element of $\mathcal M$. \end{sat}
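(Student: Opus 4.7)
The plan is to reduce Theorem \ref{main} to Theorem \ref{nonhakenthm} via the barrier splitting of Theorem \ref{maxsplitting}. First, fix a function $K : \BN \longrightarrow \BN$ so that $K(k')$ is (much) larger than the constant $K_0$ given by Theorem \ref{nonhakenthm} when applied with rank bound $k'$. Apply Theorem \ref{maxsplitting} to $M$ with inputs $k,\epsilon,K$. This yields constants $k'=k'(k,K)$ and $L=L(k,\epsilon,K)$, and an embedded incompressible surface $\Sigma \subset M$ with at most $5k$ components of genus at most $k'$, such that every component $N \subset M \setminus \Sigma$ has $\rank(\pi_1(N)) \leq k'$ and every $(K(k'),L)$-barrier in $M$ is isotopic to a component of $\Sigma$. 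Since the surfaces in $\Sigma$ are incompressible, they are in particular not null-homotopic, so our hypothesis on short essential compressible curves in $\partial CC(M)$ transfers appropriately to the relevant covers.

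For each component $N \subset M \setminus \Sigma$, let $M^N \longrightarrow M$ be the cover corresponding to $\pi_1 N \subset \pi_1 M$. The submanifold $N$ lifts homeomorphically to a compact core $N \subset M^N$, whose boundary is incompressible in $M^N$ (as $\Sigma$ is incompressible in $M$). I would next verify that $M^N$ satisfies the three hypotheses of Theorem \ref{nonhakenthm}: the rank bound $\rank \pi_1 M^N = \rank \pi_1 N \leq k'$ comes from the splitting; $M^N$ inherits the injectivity radius bound $\epsilon$ from $M$; and the short-compressible-curve condition on $\partial CC(M^N)$ follows from the hypothesis on $M$ together with Corollary \ref{bdlcc} and the fact that components of $\partial CC(M^N)$ either project to components of $\partial CC(M)$ or face lifts of product regions in $M$. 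Crucially, every barrier of a $(K(k'),L)$-product region in $M^N$ projects to a barrier of a $(K(k'),L)$-product region in $M$, hence is isotopic to some component of $\Sigma$, which is peripheral in $M^N$. Thus the third hypothesis (all barriers peripheral) of Theorem \ref{nonhakenthm} holds in $M^N$, giving a collection $\mathcal U^N$ of at most $n(k')$ product regions of genus at most $g(k')$ in $int(CC(M^N))$ with every complementary component in $\CN_1(CC(M^N))$ of diameter at most $B(k',\epsilon)$.

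Now I would push these collections down: each $U \in \mathcal U^N$ projects to a product region in $M$ (by Proposition \ref{bilipschitzpreservation} applied to the covering map restricted to $U$, after possibly passing to a slightly thinner subproduct region via Corollary \ref{cutting-product} to avoid any accidental self-intersection). Setting $\mathcal U = \bigsqcup_N \mathcal U^N$, we obtain a collection of at most $n = 5k \cdot n(k')$ product regions in $int(CC(M))$, each of genus at most $g := g(k')$. The components of $CC(M) \setminus \cup_{U \in \mathcal U} int(U)$ are precisely the images of the components of $CC(M^N) \setminus \cup_{U \in \mathcal U^N} int(U)$ as $N$ ranges over components of $M \setminus \Sigma$; these have diameter at most $B := B(k',\epsilon)$ by Theorem \ref{nonhakenthm}. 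The main obstacle here — and what I expect to be the most delicate point — is verifying that the product regions produced in different covers $M^N$ can be made to project to pairwise disjoint (geometric) product regions in $M$ without losing width, and that the boundary error between $CC(M)$ and $\CN_1(CC(M))$ can be absorbed into the complementary-component diameter bound; this should follow from the maximality of $\Sigma$ and the fact that wide product regions in covers near $\partial N$ would contribute peripheral barriers already captured by $\Sigma$.

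Finally, for the finiteness of the collection $\mathcal M$: each complementary component has rank bounded by $2g$ (by Theorem \ref{nonhakenthm}, where the complement components are compression bodies attached along product regions) and diameter at most $B$. Since $M$ is $\epsilon$-thick, any such component admits an $\epsilon$-net of uniformly bounded cardinality, yielding only finitely many homeomorphism types. Equivalently, the proof of Theorem \ref{nonhakenthm} (via the strong limit arguments of \S \ref{when all barriers} and Proposition \ref{diameterandtopbounds}) already shows that after passing to subsequences the topological type of each complementary component stabilizes, so altogether the number of topological types of building blocks that can arise is bounded in terms of $k,\epsilon$ alone.
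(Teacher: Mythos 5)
Your overall strategy matches the paper's: split along the maximal barrier surface $\Sigma$ of Theorem \ref{maxsplitting}, pass to the covers $M^N$, apply Theorem \ref{nonhakenthm} there, push the resulting product regions back to $M$, and get finiteness of topological types via Proposition \ref{diameterandtopbounds}. But there is a genuine gap at the two places where product regions in the cover interact with the lift $\hat N$ of $N$. Your verification of the ``all barriers are peripheral'' hypothesis for $M^N$ rests on the assertion that every wide product region in $M^N$ projects to one in $M$, whose barriers are then isotopic into $\Sigma$. That is not true as stated: a wide product region in $M^N$ need not lie in the homeomorphically embedded lift $\hat N$ --- it can meet $\partial\hat N$, or sit entirely inside one of the ends of $M^N\setminus \hat N$ (homeomorphic to $S\times[0,\infty)$), where the covering map is not injective, so there is no projected product region in $M$ to which the maximality of $\Sigma$ applies. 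The paper first homotopes $\Sigma$ to lie within distance $\tfrac 12$ of minimal surfaces, so each component has diameter at most $d(k,\epsilon)$, and then runs a three-case analysis (Claim \ref{l0}): a sufficiently central subproduct region that meets $\partial\hat N$ must contain a whole component of $\partial\hat N$, which is an incompressible level surface, so all barriers are isotopic to it; if it lies in $\hat N$ one projects and invokes Theorem \ref{maxsplitting}; if it lies in an end one uses Corollary \ref{unknotted2} to see it is a topological subproduct region of $S\times[0,\infty)$. Your proposal contains none of this, and never introduces the bounded-diameter (near-minimal) realization of $\Sigma$ that drives it.

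The same issue recurs when you push the collections $\mathcal U^N$ down: Theorem \ref{nonhakenthm} places its product regions in $int(CC(M^N))$, not in $\hat N$, and the covering map restricted to such a region is only a local isometry, so Proposition \ref{bilipschitzpreservation} (which requires a bilipschitz \emph{embedding}) does not apply. The paper instead trims each $U\in\mathcal U^N$ (Claim \ref{pruning prs}), using the diameter bound on components of $\partial\hat N$ and the fact that $\hat N$ separates, keeping the at most one complementary piece lying in $int(\hat N)$ and absorbing the discarded bounded-diameter pieces into the complementary components; it then needs Claim \ref{ccsub} (again via the near-minimal realization of $\Sigma$, together with Lemma \ref{convex cocompact cover}) to know that $N\cap CC(M)$ is contained in the projection of $\hat N\cap \CN_1(CC(\hat M^N))$, so that the diameter bounds from the covers actually control the complementary components in $M$. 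Your appeal to ``maximality of $\Sigma$'' does not supply these steps. Finally, your first finiteness argument (an $\epsilon$-net bounding the number of homeomorphism types directly) is insufficient as stated; in the paper the net only bounds the rank of a complementary component, and finiteness of topological types comes from the contradiction/limit argument of Proposition \ref{diameterandtopbounds}, which your second suggestion correctly identifies.
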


 The  outline of the proof is as follows. We use  Theorem \ref{maxsplitting} to split $M$ into  submanifolds in which all barriers are peripheral, construct collections of product regions in these submanifolds using Theorem \ref{nonhakenthm}, and show that the union of all these collections satisfies the assumptions in Theorem \ref{main}. The fact of the complementary components $N$  can be selected from a a finite number of topological types is a consequence of the diameter bound, so we will prove that separately at the end of the section.
We now fill in the details.

\subsection{Product regions and diameter bounds}

  The $K_0$  in Proposition \ref{no barriers implies} depends only on the single input $k$, so we can consider it as a function $K_0 : \BN \longrightarrow \BN$. Let $\Sigma \subset M$ be the incompressible surface provided by Theorem \ref{maxsplitting}, for our given $k,\epsilon$ and the  function $k \mapsto K_0(k)$. So, $\Sigma$  has at most $5k$  connected components, each with genus at most some $k'=k'(k)$.

By work of Schoen--Yau \cite{Schoenexistence}, Sacks--Uhlenbeck \cite{sacks1982minimal} and Freedman--Hass--Scott \cite{Freedmanleast}, we can assume $\Sigma$ is homotopic to a minimal surface via a homotopy with tracks of length at most $\frac 12$, say. One would like to just assume that $\Sigma$ is minimal,  but it may be that a minimal surface homotopic to $\Sigma$ is not embedded.  However, the only way this can happen is if a component of the minimal surface double covers an embedded  non-orientable surface, in which case we can homotope take the corresponding component of $\Sigma$ to be an  embedded perturbation of  the minimal surface. See e.g.\ Souto \cite[\S 4]{souto2007geometry}  for details.
There is a Bounded Diameter Lemma  for connected minimal surfaces in  $\epsilon $-thick hyperbolic $3$-manifolds (again, see \cite[\S 4]{souto2007geometry}), and this gives a slightly larger bound for the diameters of the components of $\Sigma $.  In particular, \emph{after an isotopy we may assume that each component of $\Sigma$ has diameter at most some constant $d=d(k,\epsilon)$.}

 For each component $N \subset M\setminus \Sigma$, let $\pi^N : \hat M^N \longrightarrow M$ be the cover of $M$ corresponding to $\pi_1 N$, so that $N \subset M$  lifts  isometrically to the \emph{interior of} a submanifold $\hat N \subset \hat M^N.$ Note that every component $S \subset \partial\hat N$ is an isometric lift of some component of $\Sigma $, and bounds a submanifold of $\hat M^N$  homeomorphic to $S \times [0,\infty).$

Then  for each $N$,  we have that
 \begin {enumerate}
 \item 	$\rank(\pi_1(\hat M^N))\le k$ and $\inj(\hat M^N)\ge\epsilon,$
 \item every essential closed curve on  $\partial CC(\hat M^N)$  that is nullhomotopic in $\hat M^N$ has length at least $\epsilon $,
\item  every barrier of a $(K_0,L)$-product region in $N$ is peripheral, where $L$ is as given by Theorem \ref{maxsplitting}.
\end{enumerate} 
 To see that (2) holds, note that every component of $\partial CC(\hat M^N)$  either projects homeomorphically down to a component of $\partial CC(M)$, in which case (2) follows from the similar assumption on $M$, or projects to a surface isotopic to some component of $\Sigma$, and hence is incompressible.

 We would like to apply Theorem \ref{nonhakenthm}, but (3) is not quite what we need, since it applies to $N$  and not $\hat M^N$.  However:

\begin {claim}\label {l0}
	 There is some $L_0=L_0(k,\epsilon)$  such that every barrier of a $(K_0,L_0)$-product region in $\hat M^N$ is peripheral.
\end {claim}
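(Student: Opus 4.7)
The plan is to reduce the claim to Theorem \ref{maxsplitting} applied in $M$, exploiting the fact that $\hat N \subset \hat M^N$ is an isometric lift of $\bar N \subset M$. First I would locate a wide subproduct region of $\hat U$ lying entirely inside the interior of $\hat N$: each component of $\partial \hat N$ has $\epsilon$-diameter at most $d=d(k,\epsilon)$ by the minimal-surface choice of $\Sigma$, and level surfaces of $\hat U$ have $\epsilon$-diameter bounded in terms of $K_0$ and $\epsilon$ by the Bounded Diameter Lemma. Hence the set of level surfaces of $\hat U$ meeting $\partial \hat N$ occupies an $\epsilon$-width bounded by some constant $c=c(k,\epsilon)$. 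Taking $L_0$ larger than $c+L+2D$, where $L$ is the constant from Theorem \ref{maxsplitting} and $D$ comes from Corollary \ref{cutting-product}, Corollary \ref{cutting-product} produces a subproduct region $\hat U' \subset \hat U$ of $\epsilon$-width at least $L$ contained in the interior of $\hat N$. Since $\hat U'$ is a subproduct region of $\hat U$, the double compression bodies $\mathcal{DC}(\hat M^N,\hat U')$ and $\mathcal{DC}(\hat M^N,\hat U)$ are isotopic and share the same barriers up to isotopy, so it suffices to treat barriers of $\hat U'$.

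Next, I would transport $\hat U'$ via the isometry $\pi^N$ to a $(K_0,L)$-product region $U' \subset N \subset M$. Let $\hat\Sigma^*$ be a barrier of $\hat U'$ in $\hat M^N$. By Theorem \ref{double compression body theorem}(1), $\hat\Sigma^*$ is incompressible in $\hat M^N$; by tameness of $\hat M^N$, one may isotope it into the interior of $\hat N$ and transport it to a surface $\Sigma^* \subset N$. Lemma \ref{recognizing barriers} identifies $\hat\Sigma^*$, hence $\Sigma^*$, as being obtained from a level surface of $\hat U'$ (respectively $U'$) by a sequence of compressions and isotopies in $\hat M^N$ (respectively in $N \subset M$). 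Moreover $\Sigma^*$ is incompressible in $M$: any compressing disc in $M$ can be made disjoint from $\Sigma$ by an innermost-curve surgery using incompressibility of $\Sigma$, so it would lie in $N$, contradicting the incompressibility of $\hat\Sigma^*$ in $\hat M^N$. Lemma \ref{recognizing barriers} applied in $M$ then identifies $\Sigma^*$ as a barrier of $U'$ in $M$, and Theorem \ref{maxsplitting}(2) yields that $\Sigma^*$ is isotopic in $M$ to some component $\Sigma_0$ of $\Sigma$.

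It remains to promote this $M$-isotopy to an isotopy in $\hat M^N$ from $\hat\Sigma^*$ to a component of $\partial \hat N$. By Waldhausen's cobordism theorem, $\Sigma^*$ and $\Sigma_0$ cobound a trivial interval bundle $I \cong \Sigma^* \times [0,1]$ in $M$. Each component of $I \cap \Sigma$ is a closed incompressible surface in $I$ (since the components of $\Sigma$ distinct from $\Sigma_0$ do not meet $\partial I$, and $\Sigma$ is incompressible in $M$), and hence is isotopic in $I$ to a level surface of the product structure. Replacing $\Sigma_0$ by the component of $\Sigma \cap I$ closest to $\Sigma^*$, one obtains a sub-bundle $I' \subset I$ cobounded by $\Sigma^*$ and the new $\Sigma_0$ whose interior is disjoint from $\Sigma$; hence $I' \subset \bar N$ and the new $\Sigma_0$ is a component of $\partial N$. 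The isotopy from $\Sigma^*$ to $\Sigma_0$ realized inside $I' \subset \bar N$ then lifts via the isometry $\bar N \to \hat N$ to an isotopy in $\hat N$ between $\hat\Sigma^*$ and a component of $\partial \hat N$, establishing peripherality. The main obstacle is precisely this last step --- a priori the interval bundle between $\Sigma^*$ and $\Sigma_0$ need not lie in $\bar N$, so the $M$-isotopy need not lift --- and its resolution crucially uses incompressibility of $\Sigma$ to classify $I \cap \Sigma$ as level surfaces and then to replace $\Sigma_0$ by an innermost such component, against which the resulting sub-bundle is contained in $\bar N$.
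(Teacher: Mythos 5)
Your first step contains the essential gap: you assert that, once $L_0$ is large, Corollary \ref{cutting-product} produces a wide subproduct region $\hat U' \subset \hat U$ lying in $int(\hat N)$. Your bounded-width estimate only shows that the level surfaces of $\hat U$ meeting $\partial \hat N$ occupy bounded width; it does not place the remaining wide part on the $\hat N$-side. The wide part of $\hat U$ may lie entirely in a component $C \subset \hat M^N \setminus \hat N$, and indeed $\hat U$ may be disjoint from $\hat N$ altogether: the components of $\hat M^N \setminus int(\hat N)$ are product neighborhoods $S\times[0,\infty)$ of ends of $\hat M^N$, and when such an end is degenerate it contains arbitrarily wide product regions, a priori of genus up to $K_0$, which may exceed the genus of $S$. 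Your argument says nothing about these product regions, so the claim is not proved for them. The paper's proof is a three-case analysis on where the middle subproduct region $U'$ sits: (i) if $U'$ meets $\partial\hat N$, then a whole component of $\partial\hat N$ (incompressible, diameter $\le d$, and far from $\partial U$) is trapped inside $U$ and is therefore a level surface, so $U$ is a product neighborhood of an incompressible surface and all its barriers are isotopic to that boundary component, hence peripheral; (ii) if $U' \subset \hat N$, one projects to $N$ and invokes the maximality statement of Theorem \ref{maxsplitting}, as you do; (iii) if $U'$ lies in a component $C\cong S\times[0,\infty)$ of $\hat M^N\setminus \hat N$, one applies Corollary \ref{unknotted2} in the cover corresponding to $\pi_1 C$ to conclude that $U'$ is a topological subproduct region of $C$, so its barriers are isotopic to level surfaces of $C$ and again peripheral. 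Cases (i) and (iii) are exactly what your construction skips, and case (iii) in particular needs the unknottedness input (Corollary \ref{unknotted2}) with the genus parameter taken large enough (the paper uses $\max\{4K_0,k'\}$); nothing in your argument substitutes for it.

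In the case you do treat ($\hat U'$ inside $\hat N$), your filling-in of the transfer of barriers between $\hat M^N$ and $M$ is in the right spirit and goes beyond what the paper writes (the paper simply says ``we are done by (3)''), and your final innermost-interval-bundle argument promoting the $M$-isotopy to an isotopy inside $\bar N$ is correct. But note one step you assert rather than prove: from ``$\hat\Sigma^*$ is obtained from a level surface of $\hat U'$ by compressions and isotopies in $\hat M^N$'' you conclude the same for $\Sigma^*$ ``in $N\subset M$''. The covering map is only injective on $\hat N$, and the compressions and isotopies in $\hat M^N$ may run through the ends, so they do not automatically project to compressions and isotopies of embedded surfaces in $M$; you need an extra argument (e.g.\ first isotope the relevant compression bodies into $int(\hat N)$ rel $\hat U'$, or argue that compressibility of an embedded surface in $N$ is detected in $\bar N$ by an innermost-disc surgery against $\Sigma$) before quoting Lemma \ref{recognizing barriers} downstairs. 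This second issue is repairable; the missing cases (i) and (iii) are the substantive gap.
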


\begin{proof}
Fix $L_0$, to be determined during the course of the proof. Let $U$ be a $(K_0,L_0)$-product region in $\hat M^N$. If $L_0$ is large enough,  we can use Corollary \ref{cutting-product}  to find a subproduct region $U' \subset U$  such that $$\width U' \geq L, \ \ d(U',\partial U) > d,$$ where $d$ is  the upper bound for the diameters of components of $\Sigma$, and $L$  is from (3)  above.

First assume $U' \cap \partial\hat N \neq \emptyset$. By our choice of $d$,  the product region $U$  must contain some component $S \subset \partial\hat N$. Any incompressible embedded surface in a project region is a level surface, so this means $U$  is a regular neighborhood of $S$, which is incompressible in $\hat M^N$. Hence, all barriers of $U$ are isotopic to $S$, and are therefore peripheral.

 If $U' \subset \hat N$, then $U'$ projects to a product region in $N$ and we are done by (3).  If $U \cap \hat N = \emptyset$,  then $U' $ is contained in  a component $$C \subset \hat M^N \setminus \hat N,$$ and $C \cong S \times [0,\infty)$  for some  surface $S$ of genus at most $k'$.  Applying Corollary \ref{unknotted2} in the cover of $\hat M^N$  corresponding to $ \pi_1 C$, it follows that $U'$ is a topological subproduct region of $C$ as long as $L_0$ is  larger than the constant $L$ from  Corollary \ref{unknotted2}, given the input $g=\max\{4K_0,k'\}$. Then as before, all barriers of $U$ are isotopic to level surfaces of $C$, and  therefore are peripheral in $\hat M^N$.
\end{proof}

 We can now apply Theorem \ref{nonhakenthm} with the inputs of $k',\epsilon$ and the $L_0$ in  Claim \ref{l0}. The result is that there are $g,n'$ depending on $k'$ and $B'=B'(k',\epsilon)$ such that $CC(\hat M^N)$  contains a collection $\mathcal U^N$ of at most $n'$ $(g,L_0)$-product regions such that every component of $$\CN_1(CC(\hat M^N)) \setminus \cup_{U \in \mathcal U^N} int(U)$$
 has diameter at most $B'$.   We  now want to alter the product regions $U \in \mathcal U^N$  so that they lie in $\hat N$.  Namely, we will show:

\begin{claim}\label {pruning prs}
	 There is a collection $\mathcal V^N$ of at most $n'$  product regions $$V \subset CC(\hat M^N) \cap int(\hat N),$$  each with  genus at most $g$, such that  every component $$C \subset \hat N \cap \CN_1(CC(\hat M^N)) \setminus \cup_{V \in \mathcal V^N} int(V)$$
 has diameter at most some $B'=B'(k',\epsilon).$
\end{claim}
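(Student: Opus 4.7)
The plan is to obtain $\mathcal V^N$ by truncating each $U \in \mathcal U^N$ to one or more sub-product regions lying in $\mathrm{int}(\hat N)$, losing only uniformly bounded $\epsilon$-diameter in the process, and discarding any $U$ disjoint from $\hat N$. Granting this, the diameter bound follows at once: any component $\mathcal C$ of $\hat N \cap \CN_1(CC(\hat M^N)) \setminus \cup_{V \in \mathcal V^N} \mathrm{int}(V)$ lies in the union of a component of $\CN_1(CC(\hat M^N)) \setminus \cup_{U \in \mathcal U^N} \mathrm{int}(U)$ (of diameter at most $B'$ by Theorem \ref{nonhakenthm}) together with boundedly many truncation pieces, each of uniformly bounded $\epsilon$-diameter; Lemma \ref{differentepsilons} converts this to a uniform absolute diameter bound.

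For the truncation itself, note that $\partial \hat N$ has at most $5k$ components by Theorem \ref{maxsplitting}, each an isometric lift of a component of $\Sigma$ and hence of diameter at most $d=d(k,\epsilon)$. Fix $U \in \mathcal U^N$ meeting $\hat N$, and let $S_1, \ldots, S_m$, with $m \leq 5k$, be the components of $\partial \hat N$ that meet $U$. Each such $S_i$ is closed and incompressible in $\hat M^N$, so by Fact \ref{incincomp} an $S_i$ contained entirely in $U$ is isotopic within $U$ to a level surface of $U$, while an $S_i$ that meets $\partial U$ but is not contained in $U$ satisfies $\diam(S_i \cap U) \leq d$. The key topological observation is that any sub-product region of $U$ cobounded by two distinct components $S_i, S_j \subset \partial \hat N$ (realized as level surfaces after these isotopies) must lie in $\hat N$, since the ends of $\hat N$ facing $S_i$ and $S_j$ are disjoint and only the $\hat N$-side is consistent with both surfaces being boundaries.

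Applying Corollary \ref{cutting-product} and Lemma \ref{level surfaces}, cut $U$ along level surfaces lying in $\mathrm{int}(\hat N)$ at $\epsilon$-distance at most some $C=C(d,g,\epsilon)$ from each $S_i \cap U$. This produces a decomposition of $U$ into at most $m+1 = O(k)$ consecutive sub-product regions, and the observation above shows that all `interior' pieces lie in $\mathrm{int}(\hat N)$; add each such interior piece to $\mathcal V^N$. The resulting $\mathcal V^N$ has cardinality bounded by a constant depending only on $k$ and $\epsilon$, each element has genus at most $g$, and $\diam_\epsilon\big(U \setminus \bigcup_i V_i\big) \leq O(kC)$, which gives the required diameter estimate on truncation pieces.

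The main obstacle is the case where some $S_i$ meets $\partial U$ without being contained in $U$: then $S_i \cap U$ is a collection of incompressible planar surfaces of diameter $\leq d$, and we must choose the truncating level surface of $U$ to be disjoint from $S_i \cap U$ while remaining close to $S_i$ on the $\hat N$-side so that the resulting piece of $U$ on the far side lies entirely in $\mathrm{int}(\hat N)$. This is possible because $U$ is sufficiently wide, Lemma \ref{level surfaces} provides level surfaces through arbitrary interior points, and each $S_i \cap U$ is confined to a $d$-ball that is much smaller than a typical level surface of $U$.
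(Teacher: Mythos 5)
Your overall strategy (truncate each $U\in\mathcal U^N$ near $\partial\hat N$, keep the part in $\mathrm{int}(\hat N)$, and assemble the diameter bound from the old complementary components plus boundedly many bounded truncation pieces) matches the paper's, but the rule you use to decide which pieces of $U$ to keep has a genuine gap. Your ``key topological observation'' only covers pieces cobounded by two distinct components of $\partial\hat N$ that lie entirely inside $U$ as level surfaces, and you then keep exactly the ``interior'' pieces. This is wrong in both directions. First, in the typical case where $U$ meets $\partial\hat N$ in a single level surface, or only in crossing pieces near one end of $U$, the portion of $U$ lying in $\mathrm{int}(\hat N)$ is a piece adjacent to a component of $\partial U$; your rule discards it, and since $U$ can be arbitrarily wide, the complementary component $C$ containing it has unbounded diameter, so the claimed bound fails. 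Second, an ``interior'' piece whose bounding cuts come from crossing components $S_i\cap U$ (those meeting $\partial U$) need not lie in $\hat N$ at all: being just past $S_i\cap U$ inside $U$ does not determine on which global side of the separating surface $S_i$ you sit, because $S_i$ exits $U$; such a piece can lie in an end of $\hat M^N$ attached to a different boundary component, violating the requirement $V\subset\mathrm{int}(\hat N)$.

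The correct criterion is simply to cover $\partial\hat N\cap U$ by boundedly many bounded-diameter subproduct regions $U_i$ (the paper gets at most four, using that at most two components of $\partial\hat N$ can be level surfaces of $U$) and then keep the components of $U\setminus\cup_i\mathrm{int}(U_i)$ that lie in $\mathrm{int}(\hat N)$; each complementary component lies either in $\mathrm{int}(\hat N)$ or outside $\hat N$, since its interior misses $\partial\hat N$. The separation property you never exploit is that every component of $\partial\hat N$ separates $\hat M^N$, with a product end on one side; this shows that at most \emph{one} complementary piece of each $U$ lies in $\mathrm{int}(\hat N)$, which is exactly what yields the count ``at most $n'$'' demanded by the statement, whereas your construction only gives on the order of $k\cdot n'$ regions (you concede a $k$-dependent bound). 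Two smaller points: converting $\epsilon$-diameter bounds into genuine diameter bounds here uses the $\epsilon$-thickness of $\hat M^N$, not Lemma \ref{differentepsilons}; and a component $C$ may contain several components of $\CN_1(CC(\hat M^N))\setminus\cup_{U}\mathrm{int}(U)$ joined through discarded pieces, so the final assembly should be phrased as a union of boundedly many bounded-diameter sets rather than a single such component plus truncation pieces.
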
 
\begin{proof}
 We will construct each $ V \in \mathcal V^N$  as a subproduct region of some $U \in \mathcal U^N$.   To begin with,  recall that each component $S \subset \hat \partial N$  is incompressible in $\hat M^N$ and has  diameter at most some $d=d(k,\epsilon)$.  So if $S$  intersects some $U \in \mathcal U^N$,  then either
\begin{itemize}
\item $S \subset U$,  and is therefore a level surface of $U$, or
	\item $S \cap \partial U \neq \emptyset$, and $S$  is contained in a $d$-neighborhood of $ \partial U $.
\end{itemize} 
 Since $\hat N$  separates $\hat M^N $,  \emph{at most two} components of $\partial \hat N$  can be level surfaces of $U$. (This only happens if $\hat N $  itself happens to be a topological subproduct region of $U$.)  So, the intersections of $U$ with $\partial \hat N$  can be roughly grouped into at most four   bounded diameter sets:  at most two adjacent to  the boundary components of $U$, and at most two  enclosing level surfaces of $U$.  More precisely,  Corollary \ref{cutting-product} gives some $D=D(g,\epsilon)$ and a collection of disjoint subproduct regions \begin{equation}
 	U_i \subset U, \ i\in I,\  \text{ where } |I|\leq 4, \label {ui}
 \end{equation}
 such that $\partial \hat N \cap U \subset \cup_i int(U_i)$, and where  for each $i$, we have $$\diam U_i \leq D.$$

 By construction, each component of $U \setminus \cup_i int(U_i)$  is a product region\footnote{Here, take $int(U_i)$ to mean the interior of $U_i$ as a subset of $U$. The $U_i$  may share boundary components with $U$, so if we take the interior in $\hat M^N$ then the difference may contain components that are surfaces.} and either lies in $int(\hat N)$,  or lies outside $\hat N$. In fact, \emph{at most one} such component can lie inside $int(\hat N)$,  since $\hat N$  separates $\hat M^N$. If  such a component exists, call it $V_U$ and call $U$ \emph{admissible}. Let $$\mathcal V^N = \{ V_U \ | \ U \in \mathcal U^N \text{ is admissible} \}.$$  Then $\mathcal V^N$  is a collection of at most $n'$ product regions in $CC(\hat M^N) \cap \hat N$,  each with  genus at most $g$.  Moreover, every component $$C \subset \hat N \cap \CN_1(CC(\hat M^N)) \setminus \cup_{V \in \mathcal V^N} int(V)$$
 is a union of (boundedly many)   submanifolds of the  following types:
\begin{itemize}
\item 	components of $\CN_1(CC(\hat M^N)) \setminus \cup_{U \in \mathcal U^N} int(U)$, 
\item  subproduct regions $U_i \subset U$  as in \eqref{ui},
\item  components of $U \setminus \cup_i U_i$ as in \eqref{ui} that lie outside $\hat N$.
\end{itemize} 
All three of these types of submanifolds have bounded  diameter intersections with $\hat N$,  so the claim follows.\end{proof}

 We now construct the collection of product regions in $M$. Let
$$\mathcal U = \bigcup_{N} \big \{ \pi^N(V) \ | \ V \in \mathcal V^N \big  \},$$
 where the union is over all components $N \subset M \setminus \Sigma$.  Since for each $N$,  we have that $int(\hat N) \subset \hat M^N$  projects isometrically under $\pi^N$ to $N \subset M $, the elements of $\mathcal U$ are all product regions, and are pairwise disjoint.   The size $|\mathcal U|$ is bounded, at worst, by $$n:=10k \cdot n',$$where $10k$  is a bound for the number of components $N$ and $n'$  is a bound for the size of each $\mathcal V^N$.  Since for each $N$ we have
$$\pi^N(CC(\hat M^N)) \subset CC(M),$$  every $U\in \mathcal U$  is contained in the convex core of $M$.  

\begin {claim}\label{ccsub}
	 For each  component $N\subset M \setminus \Sigma$, we have that 
$$N \cap CC(M) \subset  \pi^N(\hat N \cap \CN_1(CC(\hat M^N))).$$ 
\end {claim}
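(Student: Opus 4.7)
The plan is to prove the contrapositive: I will show that if $\hat p\in\hat N$ is the canonical lift of $p\in N$ and $\hat p\notin\CN_1(CC(\hat M^N))$, then $p\notin CC(M)$. To set this up, first I would verify that $\hat N$ is a standard compact core of $\hat M^N$: each component of $\partial\hat N$ is an isometric lift of a component $\Sigma_0\subset\Sigma$, and each such $\Sigma_0$ is incompressible in $M$ (this is the content of Theorem \ref{maxsplitting}), hence incompressible in $\hat M^N$. Using the Tameness Theorem together with Lemma \ref{unique-compact} (and Bonahon's extension when $\pi_1N$ is freely decomposable), ends of $\hat M^N$ correspond bijectively to components of $\partial\hat N$ together with the ends of $N$ lying inside $N\subset M$, and each such end is convex cocompact or degenerate.

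The central geometric input is that $\partial\hat N\subset\CN_{1/2}(CC(\hat M^N))$. Recall we arranged $\Sigma$ to be homotopic to a minimal surface in $M$ through tracks of length $\leq 1/2$; lifting this homotopy to $\hat M^N$, each boundary component $S\subset\partial\hat N$ is within $1/2$ of an incompressible minimal surface $S^*\subset\hat M^N$ (the lift of the minimal representative of $\Sigma_0$; lifts of minimal surfaces are minimal, and incompressibility is preserved because $\pi_1\Sigma_0\subset\pi_1 N$). The standard convex-projection argument then shows $S^*\subset CC(\hat M^N)$: if $S^*$ had a point outside the convex core, projecting that part of $S^*$ by nearest-point projection onto $\partial CC(\hat M^N)$ would strictly decrease area, contradicting minimality. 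Hence $S\subset\CN_{1/2}(S^*)\subset\CN_{1/2}(CC(\hat M^N))$.

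Now suppose $\hat p\in\hat N$ has $d(\hat p,CC(\hat M^N))>1$, and let $C$ denote the component of $\hat M^N\setminus\overline{\CN_1(CC(\hat M^N))}$ containing $\hat p$. Components of this set are unbounded flare regions, one for each convex cocompact end of $\hat M^N$. Since $\partial\hat N\cap C=\emptyset$ by the previous paragraph and $C$ is connected, $C$ is contained in exactly one component of $\hat M^N\setminus\partial\hat N$, namely either some collar $S_i\times(0,\infty)$ of a boundary component $S_i\subset\partial\hat N$, or $\mathrm{int}(\hat N)$. If the former, $C$ is disjoint from $\hat N$, contradicting $\hat p\in C\cap\hat N$. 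If the latter, then $C$ must be an unbounded flare of $\hat N$ itself, going out to a convex cocompact end $\CE$ of $\hat M^N$ which is simultaneously an end of $\hat N$; since $\Sigma$ is compact this end corresponds under the homeomorphism $\pi^N\vert_{\mathrm{int}(\hat N)}:\mathrm{int}(\hat N)\to N$ to an end of $M$ on the $N$-side. A neighborhood of $\CE$ in $\hat M^N$ therefore embeds via $\pi^N$ onto a neighborhood of an end of $M$, and Lemma \ref{convex cocompact cover} then gives $\pi^N(C)\subset M\setminus CC(M)$. In particular $p=\pi^N(\hat p)\notin CC(M)$, which is the desired contradiction.

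The main obstacle is verifying that the lifted minimal representative $S^*$ actually lies in $CC(\hat M^N)$ (and not merely in the $\pi^N$-preimage of $CC(M)$, which is strictly larger in general); this requires the convex-projection/maximum-principle argument in the cover rather than the base. Once this is in place, the rest of the proof is a routine case analysis using connectedness of the flare components and Lemma \ref{convex cocompact cover}.
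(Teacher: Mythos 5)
Your proof is correct and follows essentially the same route as the paper's: the key bound $\partial\hat N\subset\CN_{1/2}(CC(\hat M^N))$ via the lifted minimal surface, the connectedness argument showing the component of $\hat M^N\setminus\CN_1(CC(\hat M^N))$ containing $\hat p$ lies inside $\hat N$ and hence embeds under $\pi^N$, and then Lemma \ref{convex cocompact cover} to conclude its image misses $CC(M)$. The preliminary discussion of $\hat N$ as a standard compact core is harmless but unnecessary for the argument.
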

\begin{proof}
Above, we assumed that $\Sigma$ was homotopic to a minimal surface in $M$ via a  homotopy with tracks of length at most $\frac 12$. Minimal surfaces are always contained in the convex core, so lifting the homotopy  and applying this observation in $\hat M^N$, we see that \begin{equation}
	\partial \hat N \subset \CN_{\frac 12}(CC(\hat M^N)).\label{hatbound}
\end{equation} 

So, assume $p\in N \cap CC(M)$, and lift $p$ to $\hat p \in \hat N$, $\pi^N(\hat p)=p$.  Suppose that $\hat p \not \in \CN_1(CC(\hat M^N))$, for otherwise we are done. Then  \eqref{hatbound}  implies that the  entire component $E \subset \hat M^N \setminus \CN_1(CC(\hat M^N))$  containing $\hat p$  is contained in $\hat N$. (If not,  there would have to be some component of $\partial \hat N$ that intersects $E$.) It follows that $\pi^N$  restricts to an embedding on $E$, in which case Lemma \ref{convex cocompact cover} implies that $\pi^N(E) \subset M \setminus CC(M)$, contrary to the assumption that $p\in CC(M)$.
\end{proof}

 It now follows from Claims \ref{pruning prs} and \ref{ccsub} that for each  component
\begin{equation}
	C \subset N \cap \CN_1(CC(M)) \setminus \cup_{V \in \mathcal V^N} int(\pi^N(V)),\label {thec}
\end{equation} 
 the diameter of $C \cap CC(M)$ is at most some $B'=B'(k',\epsilon)$.  But  since every component of $CC(M)$ is intrinsically an $\epsilon$-thick hyperbolic surface, the  Bounded Diameter Lemma implies that every component of $\CN_1(CC(M)) \setminus CC(M)$ has diameter bounded above by some constant depending on $k,\epsilon$. In particular, this means that the components $C$ in \eqref{thec} themselves have diameter bounded above in terms of $k,\epsilon$. Since  every component of $\CN_1(CC(M)) \setminus \cup_{U \in \mathcal U} int(U)$ is a union of (boundedly many) components $C$ as in \eqref{thec}, we are done.

\subsection{Bounding the number of topological types}
 We have shown that in each  hyperbolic $3$-manifold $M$ such that 
\begin{enumerate}
\item $\rank \pi_1 M \leq k$,
\item $\inj(M)\geq \epsilon$,
\item every essential closed curve on  $\partial CC(M)$  that is nullhomotopic in $M$ has length at least $\epsilon $,
\end{enumerate}  there is a collection $\mathcal U$  of at most $n=n(k)$ product regions in $CC(M)$, each with genus at most $g=g(k)$,  such that every component $$C \subset \CN_1(CC(M)) \setminus \cup_{U \in \mathcal U} int(U)$$
 has diameter at most some $B=B(k,\epsilon)$.  We want to show that we can choose these $\mathcal U$  so that the complementary components $C$ take on only finitely many topological types.

 The argument is by contradiction.  Choose an enumeration of the set of all homeomorphism types of compact $3$-manifolds. If the claim is false,  there is a sequence $(M_i)$ of  hyperbolic $3$-manifolds satisfying (1)--(3) above such that $M_i$ \emph{does not} admit any collection $\mathcal W_i$ of at most $n$ genus $g$  product regions in $CC(M_i)$ such that for every component $$C \subset \CN_1(CC(M_i)) \setminus \cup_{W \in \mathcal W_i} int(W),$$
 we have that both $\diam C \leq B$, and $C$ is homeomorphic to one of the first $i$ compact $3$-manifolds, with respect to our enumeration. 

For each $i$, let $\mathcal U_i$ be a collection of at most $n$ genus $g$ product regions in $CC(M_i)$  such that every component of $\CN_1(CC(M_i)) \setminus \cup_{U \in \mathcal U_i} int(U)$ has diameter at most some $B$  independent of $i$. We can assume that $$\min \{\width(U) \ | \ U \in \mathcal U_i \} \to \infty,$$
 for if it does not,  the fact that the number of product regions is bounded independently of $i$ means that there is some $L$  independent of $i$ such that the widths of all product regions that are wider than $L$ go to infinity with $i$. We can then exclude the product regions with width less than this $L$ from our collections, at the expense of replacing $B$ by some larger constant independent of $i$. See the proof of Claim \ref{max prs}, where the same argument was used, for more details.

By our assumption, for each $i$ there must be a component $$N_i \subset \CN_1(CC(M_i)) \setminus \cup_{U \in \mathcal U_i} int(U)$$
 that is not homeomorphic to one of the first $i$  compact $3$-manifolds.  We would like to apply Proposition \ref{diameterandtopbounds} to $N_i$, so we now verify the relevant assumptions.

First, the ranks of  the subgroups $[\pi_1 N_i] \subset \pi_1 M$  are bounded above, independently of $i$.   For if we take an $\epsilon$-net in $N_i$, the number of points in the net is bounded above independent $i$, by the uniform diameter bound, and the incidence graph $X_i$ of the collection of $\epsilon$-balls in $M$ centered at points in our net $\pi_1$-surjects onto $[\pi_1 N_i]$. Note that since the boundary components of $N_i$ are framed by wide topological product regions, the image of $\pi_1 X_i$  cannot strictly contain $[\pi_1 N_i]$. Also, it is perhaps worth mentioning that one could get much better rank bounds by tracing through the construction of the $\mathcal U_i$, but this is irrelevant for the  current argument.

 By construction, each component of $\partial N_i$  is either a component of $\CN_1(CC(M_i))$, or is adjacent to a product region whose widths goes to infinity with $i$. Since $\diam N_i \leq B$, no $N_i$  can contain a product region with width bigger than $2B$, by Fact \ref{width diameter}. So,  Proposition \ref{diameterandtopbounds} applies, where we set the referenced $g$ to be the maximum of the genus bound for the product regions in our $\mathcal U_i$ and the rank bound for the $N_i$, say.  It follows that there is some subsequence $N_{i_j}$ in which all the manifolds are homeomorphic, say to some fixed compact $3$-manifold $N$.  For some $j$, we must have that $i_j$ is larger then the index of $N$ in our enumeration. So,  this contradicts our assumption that no $N_i$  is  homeomorphic to one of the first $i$ compact $3$-manifolds.

\section{The general case}
\label {section general}
Let  $M$  be a complete, $\epsilon_0$-thick hyperbolic $3$-manifold with finitely generated fundamental group. The  convex core boundary $\partial CC(M)$ is a compact pleated surface \cite{Thurstongeometry}, so  in particular it is intrinsically hyperbolic. Assume for convenience that $\epsilon_0$  is less than the $2$-dimensional Margulis constant. Then if $\epsilon<\epsilon_0$, any component $$A \subset (\partial CC(M))_{< \epsilon }$$ of the $ \epsilon $-thin part of  the convex core boundary  is a (compact) annulus that is \emph {compressible}, i.e.\ nullhomotopic in $M$. 

 Somewhat following Bowditch \cite{bowditch2010upper}, we define:

\begin{defi}\label {epsilon handle}
	An \emph{$\epsilon$-thin $1$-handle} is  a topological ball $H \subset CC(M)$ whose  boundary  decomposes as a union $\partial H = D_1 \cup A \cup D_2,$ where  $A$  is a component of $( \partial CC(M))_{< \epsilon}$ and $D_1,D_2$ are  properly embedded discs in $CC(M)$, each with  intrinsic diameter less than $2$, say, such that $$\partial A  = \partial D_1 \cup \partial D_2.$$
\end{defi}

The terminology  reflects the fact that  the ball $H$ is most naturally thought of as a $1$-handle.  For convenience, we say $H$ is \emph{associated to} $A$ and we define the \emph{horizontal} and \emph{vertical} boundaries of $H$ as $$\partial_h H := A, \ \ \partial_v H := D_1 \cup D_2,$$  respectively. Of course, the precise constant $2$ in Definition \ref{epsilon handle} is not important, but it will be useful to have \emph{some} diameter bound.   

At this point, the reader could skip ahead and read the statement of Theorem \ref{general case}, if desired. However, we will find it convenient to first establish a few properties about $\epsilon$-thin $1$-handles.

\begin{claim}
For every $\epsilon<\epsilon_0$ and every component $A \subset 	( \partial CC(M))_{< \epsilon}$, there is an $\epsilon$-thin $1$-handle $H$ with $\partial_h H =A$. 
\end{claim}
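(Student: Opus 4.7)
The plan is to realize $H$ as the ball enclosed by a $2$-sphere of the form $D_1\cup A\cup D_2$, where each $D_i$ is a small cone disc over a short, nullhomotopic curve.

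First I would analyze the geometry of $A$ and its boundary. Since $A$ is a compressible component of the $\epsilon$-thin part of the intrinsically hyperbolic surface $\partial CC(M)$, and $\epsilon$ is below the $2$-dimensional Margulis constant, $A$ is a standard collar around a simple closed geodesic $\gamma\subset A$ of some length $\ell\le 2\epsilon$, and its two boundary curves $c_1,c_2=\partial A$ are the loci where the injectivity radius of $\partial CC(M)$ equals $\epsilon$. A direct computation in the collar shows that each $c_i$ has length exactly $2\epsilon$ in $\partial CC(M)$. Since $A$ is compressible in $M$, the core $\gamma$ is nullhomotopic in $M$; each $c_i$ is homotopic to $\gamma^{\pm 1}$ in $A$, so each $c_i$ is also nullhomotopic in $M$, and hence in $CC(M)$ (as $CC(M)\hookrightarrow M$ is a homotopy equivalence).

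Next I would construct $D_i$. Lift $c_i$ to a closed curve $\tilde c_i\subset \partial CH(\Lambda(M))$ in $\BH^3$, pick a point $\tilde p_i\in\tilde c_i$, and form the cone disc $\tilde D_i$ consisting of all geodesic segments from $\tilde p_i$ to the points of $\tilde c_i$. Convexity of $CH(\Lambda(M))$ gives $\tilde D_i\subset CH(\Lambda(M))$. The extrinsic diameter of $\tilde D_i$ in $\BH^3$ is at most $\length(\tilde c_i)/2=\epsilon$, and its intrinsic diameter is at most $2\epsilon$, since any two points are joined through $\tilde p_i$ by a path of length at most $\epsilon+\epsilon$. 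Since $\inj M\ge\epsilon_0>\epsilon$, the covering projection $\BH^3\to M$ is injective on every ball of radius $\epsilon_0$, hence on $\tilde D_i$, and the image $D_i:=\pi(\tilde D_i)\subset CC(M)$ is an embedded disc of intrinsic diameter at most $2\epsilon<2$ (where I use that the $2$-dimensional Margulis constant is less than $1$, and shrink $\epsilon_0$ slightly if need be). A small perturbation of the interior of $D_i$ rel boundary arranges $\mathrm{int}(D_i)\subset\mathrm{int}(CC(M))$, so $D_i$ is properly embedded; a further small perturbation, or a cut-and-paste between the two cone discs, ensures $D_1\cap D_2=\emptyset$.

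Finally, the $2$-sphere $S:=D_1\cup A\cup D_2\subset CC(M)$ bounds a ball $H\subset M$ by the irreducibility of $M$ (a consequence of the Cartan--Hadamard theorem). To see $H\subset CC(M)$, recall that $M\setminus CC(M)$ is a disjoint union of unbounded product ends $\partial_j CC(M)\times[0,\infty)$, one for each boundary component; since $H$ is bounded and the product end adjacent to $A$ is unbounded, $H$ cannot meet that end, so $H\subset CC(M)$. This $H$ is the desired $\epsilon$-thin $1$-handle, with $\partial_h H=A$ and $\partial_v H=D_1\cup D_2$. The main technical point is the intrinsic diameter bound on the cone disc, which rests on the convexity of $CH(\Lambda(M))$ together with the $\epsilon_0$-thickness of $M$; everything else is a standard use of irreducibility and the product structure of $M\setminus CC(M)$.
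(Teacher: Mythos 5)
Your proposal is correct and is essentially the paper's own argument: cone off the two short, nullhomotopic boundary curves of $A$ by small-diameter discs lying in $CC(M)$ (via convexity of the convex hull), make the discs disjoint, and let irreducibility produce a ball bounded by the sphere $D_1\cup A\cup D_2$, which must lie in $CC(M)$ since the components of $M\setminus CC(M)$ are unbounded. One small remark: injectivity of the covering projection on $\tilde D_i$ does not by itself make $D_i$ embedded, since the cone over $\tilde c_i$ may self-intersect; this is handled by the same general-position/cut-and-paste surgery you invoke for disjointness of $D_1$ and $D_2$, exactly as in the paper.
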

\begin{proof}
As $\epsilon$ is less than the $2$-dimensional Margulis constant, which is approximately $.2629$ by work of Yamada \cite{yamada1981marden}, see \cite[pg 440]{gehring1996margulis}, one can check that each component of $\partial A$ has length less than $\frac 12$. Coning each component to one of its points, we obtain properly embedded discs $D_1,D_2 \subset CC(M)$, each with intrinsic diameter strictly less than $1$, such that $\partial A  = \partial D_1 \cup \partial D_2$. Perturbing to general position and  employing the usual surgery arguments, these disks can be made disjoint, at the expense of doubling (after perturbing) the diameter bounds. So, $2$ is a diameter bound for the new $D_1,D_2$. The union $D_1 \cup \partial A \cup D_2$ is  then an embedded sphere in $CC(M)$, which bounds a ball $H$.\end{proof}

\begin{claim}\label{nestedhandles}
 For fixed $\epsilon'<\epsilon_0$,  the following holds for sufficiently small $\epsilon < \epsilon'$. If  $A\subset A'$ are  components of the $\epsilon$-thin and $\epsilon'$-thin parts  of $\partial CC(M)$, and $H, H'$ are associated $\epsilon$-  and $\epsilon'$-thin $1$-handles, we have  $$H \subset H', \ \ d_M(H, \partial_v H') \geq 2.$$
\end{claim}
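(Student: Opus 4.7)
The plan is to show that for $\epsilon$ sufficiently small, the annulus $A$ sits strictly deep inside $A'$---both in the intrinsic metric on $\partial CC(M)$ and extrinsically in $M$---which, together with the intrinsic diameter bound $\diam(D_i)<2$ on the vertical disks, will force both the nesting $H\subset H'$ and the distance bound $d_M(H,\partial_v H')\geq 2$.

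Since $A$ and $A'$ are thin-part components of the intrinsically hyperbolic pleated surface $\partial CC(M)$, they share a common core geodesic on $\partial CC(M)$ of some length $\ell\leq 2\epsilon$ and are standard Margulis collars of intrinsic widths $r_\epsilon=\arccosh(2\epsilon/\ell)$ and $r_{\epsilon'}=\arccosh(2\epsilon'/\ell)$. A short computation with $\arccosh$ yields
$$r_{\epsilon'}-r_\epsilon\ \geq\ \log\bigl(\epsilon'/(2\epsilon)\bigr),$$
which tends to $+\infty$ as $\epsilon\to 0$ with $\epsilon'$ fixed, so the intrinsic distance on $\partial CC(M)$ from $\partial A$ to $\partial A'$ can be made arbitrarily large.

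The main obstacle is converting this intrinsic buffer into an ambient one, $d_M(\partial A,\partial A')\geq L$, for any prescribed $L$. I would use that $CC(M)$ is convex and that $M$ is $\epsilon_0$-thick: in any embedded ball $B(p,\epsilon_0)\subset M$, the intersection $CC(M)\cap B(p,\epsilon_0)$ is a convex subset of $\BH^3$ with $\partial CC(M)\cap B(p,\epsilon_0)$ a connected piece of its convex boundary, and a standard comparison for boundaries of convex bodies in $\BH^3$ yields a universal $C=C(\epsilon_0)$ such that intrinsic distance on this piece is at most $C$ times the extrinsic $M$-distance. Iterating along a near-geodesic chain then promotes intrinsic distance $\geq R$ to ambient distance $\gtrsim R/C$, so one can force $d_M(\partial A,\partial A')\geq L$ by choosing $\epsilon$ small enough.

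To conclude, observe that $\diam(D_i)<2$ with $\partial D_i\subset \partial A$ places $D_1\cup D_2\subset \mathcal{N}_2(\partial A)$, and symmetrically $\partial_v H'\subset \mathcal{N}_2(\partial A')$. Lifting $H$ to $\BH^3$ (it is simply connected), the lift is a $3$-ball enclosed by a sphere contained in $\mathcal{N}_2(\tilde A)$; a convex-hull argument (a topological $3$-ball in $\BH^3$ lies in the convex hull of its bounding sphere) then confines $H$ in $M$ to a tubular neighborhood of $A$ of width controlled by the $D_i$ and the local geometry of the Margulis annulus. Choosing $\epsilon$ small enough that $L=d_M(\partial A,\partial A')$ exceeds this width plus $10$, the inequality $d_M(H,\partial_v H')\geq 2$ is immediate. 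For the nesting, near any $p\in\partial D_i\subset\operatorname{int}(A')$ the half-ball of radius $\epsilon_0$ in $CC(M)$ at $p$ is cut off by $A'\subset\partial H'$ and, by the $d_M$-bound, does not reach $\partial_v H'$; hence it lies in $\overline{H'}$, forcing $D_i\subset\overline{H'}$ and so $\partial H\subset\overline{H'}$; then $H\subset H'$ follows by connectedness, using that $H\cap H'\neq\emptyset$ just inside any common point of $A$. The hardest technical step is the intrinsic-to-extrinsic comparison in the regime $\ell\to 0$, where the Margulis annulus becomes geometrically wide and one must verify that the comparison constant $C(\epsilon_0)$ stays uniform in $\ell$.
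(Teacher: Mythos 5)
There is a genuine gap, and it sits exactly where you flag it: the ``intrinsic-to-extrinsic'' step. Your collar computation only gives that $\partial A$ and $\partial A'$ are far apart \emph{in the intrinsic metric of the pleated surface} $\partial CC(M)$, and your proposed conversion to an ambient bound $d_M(\partial A,\partial A')\geq L$ does not work. The local comparison you invoke is unjustified: $\partial CC(M)\cap B(p,\epsilon_0)$ need not be connected (the boundary can have several sheets entering a small ball, e.g.\ the two sides of the thin tube itself, or the tube passing near other parts of the core), and no uniform local bilipschitz comparison between the intrinsic and extrinsic metrics is available for a \emph{compressible} boundary. Even granting such a comparison on connected pieces, the chaining argument fails, because a short $M$-geodesic from $\partial A$ to $\partial A'$ need not stay anywhere near the surface -- it can plunge through the interior of $CC(M)$, so there is no chain of surface pieces to compare along. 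Relatedly, you conflate intrinsic width of the annular collar with width of the solid handle: what one actually needs is a lower bound on the \emph{path metric of the ball} $H'$ between $\partial H$ and $\partial_v H'$, and an intrinsically wide pleated annulus could a priori be crumpled so that $H'$ is short lengthwise; your convex-hull confinement of $H$ has the same problem, since without some tautness input the geodesics joining points of the lifted annulus need not stay near it.

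The missing idea (and the paper's fix) is to use the bending lamination as an ambient-geodesic certificate. Since $A'$ is compressible, its core cannot be disjoint from the bending lamination (otherwise it would be a closed geodesic of $M$, hence not nullhomotopic), so some leaf gives a spanning arc $\alpha$ of $A'$ that is a geodesic \emph{of $M$}; its surface length equals its $M$-length, and your collar estimate makes the subarcs of $\alpha$ crossing $A'\setminus \operatorname{int}(A)$ longer than $7$ once $\epsilon\ll\epsilon'$. Because $H'$ is a ball, it lifts isometrically to $\BH^3$, where $\alpha$ is globally minimizing; hence $d_{H'}(\partial A,\partial A')>6$ and, subtracting the diameters of the vertical disks, $d_{H'}(\partial H,\partial_v H')>2$. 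Finally, any $M$-geodesic from $H$ to $\partial_v H'$ has endpoints in the convex set $CC(M)$, so it stays in $CC(M)$ and cannot cross $A'\subset\partial CC(M)$; it therefore contains a subsegment running from $\partial H$ to $\partial_v H'$ inside $\overline{H'}$, which yields $d_M(H,\partial_v H')\geq 2$, and the nesting $H\subset H'$ follows since $\partial_v H\cap\partial_v H'=\emptyset$. Note that this convexity step is also what makes the statement you wanted ($d_M(\partial A,\partial A')$ large) true at all -- it cannot be obtained by purely local comparisons along the surface.
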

\begin {proof}
It suffices to show that the \emph{intrinsic} distance $d_{H'}(\partial H,\partial_v {H'}) \geq 2$, since any geodesic in $M$ from $H$ to $\partial_v {H'}$ has a segment that  runs from $\partial H$ to $\partial_v {H'}$ within ${H'}$. Note that  if we prove this, we will  obviously have $\partial_v H \cap \partial_v {H'}=\emptyset$, so it will follow automatically that $H \subset {H'}$.

Since ${A'}$  is compressible,  there must be some spanning arc $\alpha$ of ${A'}$ that is part of the  bending lamination of $\partial CC(M)$. Each component of ${A'} \setminus int(A)$ is spanned by a segment of $\alpha$, and  if we take $\epsilon'<<\epsilon$  we can assume that the length of any spanning arc in ${A'} \setminus int(A)$ is at least $7$. Since $\alpha$ is a geodesic in ${H'}$, which is simply connected, it is the shortest path in ${H'}$ between its endpoints, so we then have
$$d_{H'}(\partial A,\partial {A'}) \geq 7 - \epsilon' - \epsilon > 6,$$
where the $\epsilon',\epsilon$ are  bounds for the diameters of the boundary components of $A,{A'}$,  respectively. Since  the components of $ \partial_v H$ and $\partial_v {H'}$ all have diameter at most $2$, it   follows that \[d_{H'}(\partial H,\partial_v {H'}) > 6-2-2 =2.\qedhere\]
\end {proof}
%
%
We can now state the most general version of our result.  It is slightly more convenient to state the following using $CC(M)$ rather than its $1$-neighborhood, which we used in  Theorem \ref{main}, so we include below the assumption that $CC(M)$ is a $3$-dimensional submanifold of $M$.  There is no loss of generality in assuming this, since if Theorem \ref{main} does not already apply, then $M$  is the interior of a manifold with compressible boundary,  in which case $CC(M)$ is automatically $3$-dimensional.

\begin{sat}[Decomposition by $\epsilon$-thin $1$-handles and PRs]\label{general case}
Given $k,\epsilon_0$, there are a  finite collection $\mathcal M$ of  compact $3$-manifolds, and constants $n=n(k),$ $ g=g(k)$, $\epsilon=\epsilon(\epsilon_0)<\epsilon_0$ and $B=B(k,\epsilon_0)$ as follows.

 Suppose $M$ is a  complete,  orientable hyperbolic $3$-manifold with $$\rank(\pi_1(M))\le k, \ \ \inj(M)\ge\epsilon_0,$$ and assume that $CC(M)$ is a $3$-dimensional submanifold of $M$. Then $CC(M)$ contains a  collection $\mathcal H$ of  pairwise disjoint $\epsilon$-thin $1$-handles and a  collection $\mathcal U$ of at most $n$ product regions, each with genus at most $g$, such that  for each $U\in \mathcal U$,  we have $$U \subset int(CC(M)) \setminus \cup_{H \in \mathcal H} H,$$ and  where the closure of every component of $$N \subset CC(M) \setminus ((\cup_{U \in \mathcal U} U ) \cup (\cup_{H \in \mathcal H} H))$$ has diameter at most $B$ and  is homeomorphic to an element of $\mathcal M$. 
\end{sat}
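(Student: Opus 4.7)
The plan is to reduce Theorem~\ref{general case} to Theorem~\ref{main} by a cut-and-cover argument along $\epsilon$-thin $1$-handles. Fix $\epsilon_0$, and choose $\epsilon=\epsilon(\epsilon_0)$ small enough that Claim~\ref{nestedhandles} applies (with $\epsilon'=\epsilon_0/2$, say), so that any two associated $\epsilon$-thin $1$-handles with nested horizontal boundaries are themselves nested and well-separated. Because $M$ is $\epsilon_0$-thick while $\partial CC(M)$ may have thin compressible annuli, every component of $(\partial CC(M))_{\leq\epsilon}$ is a compressible annulus; let $\mathcal{H}$ be the collection of $\epsilon$-thin $1$-handles, one associated to each such annulus, chosen by Claim~\ref{nestedhandles} to be pairwise disjoint.

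Next, for each component $P$ of $CC(M)\setminus\bigcup_{H\in\mathcal{H}}H$, let $\pi^P:M^P\to M$ be the cover corresponding to $\pi_1 P\subset\pi_1 M$, so that $P$ lifts isometrically to a submanifold $\hat P\subset M^P$ whose frontier consists of discs isometric to the vertical boundary discs $\partial_v H$ of the adjacent handles. I would then verify that $CC(M^P)$ is obtained from $\hat P$ by attaching a product neighborhood along each frontier disc (via an argument analogous to the convex-hull extension in Proposition~\ref{radialextension}), so that every compressible thin annulus of $\partial CC(M)$ that borders $P$ has been ``capped off'' in $M^P$. The key topological point is that each frontier disc in $\partial \hat P$ is homotopic rel boundary into an $\epsilon$-thick region of $\partial CC(M^P)$: any essential compressible curve on $\partial CC(M^P)$ of length less than $\epsilon$ would project to a thin compressible annulus of $\partial CC(M)$ whose associated $1$-handle we removed when defining $P$, a contradiction. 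Thus Theorem~\ref{main} applies to $M^P$ with constants $k'=k,\epsilon'=\epsilon$.

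Apply Theorem~\ref{main} to each $M^P$ to obtain a collection $\mathcal{U}^P$ of at most $n(k)$ product regions, each of genus at most $g(k)$, such that every component of $CC(M^P)\setminus\bigcup_{U\in\mathcal{U}^P}\mathrm{int}(U)$ is a building block of diameter at most $B(k,\epsilon)$ and lies in the fixed finite list $\mathcal{M}(k,\epsilon)$. Push the elements of $\mathcal{U}^P$ that lie in the interior of $\hat P\subset M^P$ forward via $\pi^P$ to $M$; as in Claim~\ref{pruning prs} and Claim~\ref{ccsub} of \S\ref{the proof}, one may have to trim each $U\in\mathcal{U}^P$ slightly (using Corollary~\ref{cutting-product}) so that the trimmed subproduct region sits inside $\hat P$, with bounded-diameter error absorbed into the adjacent building blocks. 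Setting $\mathcal{U}=\bigsqcup_P \pi^P(\mathcal{U}^P_{\mathrm{trimmed}})$, the total count $|\mathcal{U}|\leq n(k)\cdot\#\{P\}$ is bounded in terms of $k$ alone, since the number of components $P$ is at most a function of the number of $1$-handles, which is bounded by the number of compressible annuli in $\partial CC(M)$, and this is controlled by $\mathrm{rank}(\pi_1 M)$ via standard half-lives-half-dies arguments.

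The main obstacle will be the second paragraph: verifying that $\partial CC(M^P)$ is $\epsilon$-thick in the sense required by Theorem~\ref{main}, i.e.\ that every essential closed curve on $\partial CC(M^P)$ nullhomotopic in $M^P$ has length at least $\epsilon$. This requires a careful analysis of how the convex core boundary of $M^P$ sits with respect to $\hat P$ and the product-like collars attached along the lifted discs $\partial_v H$; one must rule out the possibility that new thin compressible annuli appear in $\partial CC(M^P)$ arising from curves that wind around the disc fillings. Here the separation estimate in Claim~\ref{nestedhandles}, combined with the fact that each $\partial_v H$ has intrinsic diameter less than~$2$ and lies $\epsilon$-deep inside the thin part along its boundary, will force any short compressible curve on $\partial CC(M^P)$ to project back to a component of $(\partial CC(M))_{\leq \epsilon}$ already accounted for by $\mathcal{H}$, yielding a contradiction and closing the reduction.
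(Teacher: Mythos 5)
Your overall skeleton matches the paper's: remove the $\epsilon$-thin $1$-handles, pass to the covers $M^P$ of the complementary pieces, apply Theorem \ref{main} there, and push the product regions back down. But the step you yourself flag as ``the main obstacle'' is where the proposal genuinely breaks. You claim that a short essential compressible curve on $\partial CC(M^P)$ would ``project to a thin compressible annulus of $\partial CC(M)$ whose associated $1$-handle we removed.'' There is no such projection: $CC(M^P)$ maps into $CC(M)$, but $\partial CC(M^P)$ has no reason to land on (or near) $\partial CC(M)$, and its thin annuli are new objects with no direct relation to the thin annuli of $\partial CC(M)$. Relatedly, your structural picture is backwards: you assert that $CC(M^P)$ is obtained from $\hat P$ by \emph{attaching} product collars along the frontier discs (so $CC(M^P)\supset\hat P$), whereas what is true --- and what the paper proves as Claim \ref{verify} (1)--(2) --- is the containment $CC(M^P)\subset\hat P$, with $\hat P\setminus CC(M^P)$ a union of bounded-diameter trivial interval bundles. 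That containment is exactly what lets the product regions produced by Theorem \ref{main}, which lie in $int(CC(M^P))$, be pushed down into $int(N)\subset int(CC(M))$ with no trimming at all; your trimming step is a symptom of the wrong picture. Moreover, proving the containment is itself nontrivial: it uses the two-scale structure of Claim \ref{nestedhandles} (each $\epsilon$-handle $H$ nested in an $\epsilon'$-handle $H'$ with $d_M(H,\partial_v H')\ge 2$) together with an annulus/transversality argument showing that a geodesic loop based in the smaller piece cannot leave $N$; you invoke Claim \ref{nestedhandles} only to make the handles disjoint, never for this.

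The correct mechanism for the ``no short compressible curves'' hypothesis is also different from what you propose, and weaker than what you assert: one does not get length at least $\epsilon$ on $\partial CC(M^P)$, only at least some $\delta=\delta(k,\epsilon)$. Each component of $\partial\hat P$ is a union of $\epsilon$-thick pieces of $\partial CC(M)$ and diameter-$2$ discs, hence has intrinsic diameter bounded in terms of $k,\epsilon$ (Bounded Diameter Lemma plus the genus bound coming from $\rank(\pi_1 M)\le k$); the component of $\partial\hat P$ lying in a convex-cocompact end neighborhood of $M^P$ is a level surface there, and the $1$-Lipschitz nearest-point retraction maps it onto the corresponding component of $\partial CC(M^P)$. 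So $\partial CC(M^P)$ has bounded diameter, hence injectivity radius at least $\delta(k,\epsilon)$, and Theorem \ref{main} is applied with $\delta$ in place of $\epsilon$. Without an argument of this kind (your projection claim cannot be repaired into one), the reduction to Theorem \ref{main} is not established.
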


\begin {proof}
Pick $\epsilon'<\epsilon_0$ and let $\epsilon$ be given by Claim \ref{nestedhandles}. Let  $\mathcal H$  be a collection of $\epsilon$-thin $1$-handles, one for each component of $(\partial CC(M))_{<\epsilon}$, choose a collection $\mathcal H'$ of $\epsilon'$-thin $1$-handles, such that  there is a bijection 
\begin{equation}
\label {HH'}	\mathcal H \xrightarrow{H \mapsto H'} \mathcal H',  \text{ where } H \subset H', \ d_M(H, \partial_v H') \geq 2.
\end{equation}

Note that for any two $H_1,H_2 \in \mathcal H$, we have that $\partial_h H_2 \cap H_1'=\emptyset$, so as each  component of $\partial_v H_2$ has  diameter less than's $2$,  $H_2 \cap H_1=\emptyset$. That is, the $\epsilon$-thin $1$-handles in $\mathcal H$  are pairwise disjoint, as required.

Fix a component
$N \subset CC(M) \setminus \cup_{H \in \mathcal H} H,$
and define the \emph {horizontal} and \emph{vertical} boundaries of $N$ as
\begin {equation}\label {boundarieseq}\partial_h N = \partial N \cap \partial CC(M), \ \ \partial_v N = \partial N \cap int(CC(M)).\end{equation}
So, $\partial_v N$ is a union of vertical boundary components of $\epsilon$-thin $1$-handles.
Since $\epsilon$-thin $1$-handles are balls and $CC(M) \hookrightarrow M$  is a homotopy equivalence, $\pi_1 N$ injects into $\pi_1 M$  as a free factor, and hence has rank at most $k$ by Grushko's theorem. Call the corresponding cover $$\pi^N : M^N \longrightarrow M.$$


 This $M^N$  has rank at most $k$ and injectivity radius at least $\epsilon$, and $N$ lifts homeomorphically to $\hat N \subset M^N$. 

\begin{claim} \label{verify}
There are constants $\delta,B$ depending only on $\epsilon,k$ such that
\begin{enumerate}
	\item $CC(M^N) \subset \hat N, $
\item  each component of $\hat N \setminus CC(M^N)$  is a trivial interval bundle that has diameter at most $B,$
\item every  essential closed curve on $\partial CC(M^N)$ has length at least $\delta$.
\end{enumerate}
\end{claim}


\begin{proof}
The manifold $N$  contains a unique component $$N' \subset CC(M) \setminus \cup_{H' \in \mathcal H'} int(H').$$
 Note that $N \setminus N'$ is a union of balls,  each of which is a component of some $H \setminus H'$, where $H\in \mathcal H$. Define the horizontal and vertical boundaries of $N'$ just as in \eqref{boundarieseq}, e.g.\ $\partial_h N' = \partial N' \cap \partial CC(M).$

%

 Pick a point $\hat  p \in \hat N'$, and let $\hat \gamma$  be a geodesic segment in $M^N$  that starts and ends at $ \hat p$. To show (1), it suffices to prove that $\hat \gamma \subset \hat N$.  We will work in $M$ rather than $M^N$, so project $\hat \gamma$ down to a geodesic $\gamma$ that  starts and ends at some $p\in N'$. We want  to show that $\gamma \subset N$. 

Now $\gamma$ is contained in $CC(M)$ and  is homotopic within $CC(M)$ into $N'$, so we can pick a smooth map $$f: A \longrightarrow CC(M)$$
 from an annulus $A$ such that $\partial A = a \sqcup b$ and $f|_a $  parameterizes $\gamma$ while $f(b) \subset N'$.   After a perturbation, we can assume that $\partial_v N'$ is smooth and transverse to $f$.  Consider the $1$-manifold $$I_f := f^{-1}(\partial_v N')\subset A.$$

Since $\gamma$  is homotopically nontrivial and each  component of $\partial_v N'$  is a disc, no component of $I_f$ is an essential simple closed curve on $A$. And as $CC(M)$ is irreducible, we can assume after a homotopy of $f$ that no component of $I_f$  is an  inessential  simple closed curve. So as $f(b) \subset N'$,  every component of $I_f$  is an arc in $A$  with both endpoints on $a$. Let $a_{sep} \subset a$ be the set of all points that are separated from $b$ by $I_f$. Then $$f(a\setminus a_{sep}) \subset N',$$
since $f(b) \subset N'$.  Every component  $a'\subset a_{sep}$ maps under $f$ to a segment $\gamma' \subset \gamma$ that is homotopic rel endpoints to an arc in some component of $\partial_v N'$. Since every component of $\partial_v N'$ has intrinsic diameter less than $2$, the geodesic segment  $\gamma'$ has length less than $2$. By \eqref{HH'}, we have $d(\partial_v N', H) \geq 2$ for all $H \in \mathcal H$, so it follows that $\gamma \subset N$ as desired.

We now have proven (1), i.e.\ that $CC(M^N) \subset \hat N$. Let $\CE$ be a convex-cocompact  end of $M^N$, let $\partial_\CE CC(M^N)$  be the  component of the  convex core boundary that faces $\CE$, and let $$E \subset M^N \setminus int(CC(M^N))$$ be the  component that is a  neighborhood of $\CE$.  We cannot have $E \subset \hat N$, since then $\pi^N(E) \subset N$ would be  a convex cocompact end of $M$, but $N \subset CC(M)$. So, some  component  $S \subset \partial \hat N$ is contained in $E$.  By the uniqueness of compact cores\footnote{Both $CC( M^N)$ and $\hat N$ may be  noncompact, but  one can apply the uniqueness theorem to compact cores of each.} up to homeomorphism \cite{mccullough1985uniqueness}, $S$ is  homeomorphic to $\partial_\CE CC(M^N)$. Since $S$ separates $\partial_\CE CC(M^N)$ from infinity in $E$,  it follows that $S$ is a level surface of $E$.

Now $\partial N$ is  a union of components of the $\epsilon$-thick part of $\partial CC(M)$ and   a collection of discs, each with  intrinsic diameter at most $2$. So by the bounded diameter lemma, each component of $\partial N$ has intrinsic  diameter bounded by  some constant depending on $\epsilon$ and its genus. As the genus of any component of $\partial CC(M)$ is at most $\rank \pi_1 M \leq k$ and our $S$ above is isometric to the component $\pi^N(S) \subset \partial N$, we have that $S$ has  intrinsic diameter at most some constant depending only on $k,\epsilon$.

 We are now in a position to prove (2) and (3). Let 
$$p : M^N \longrightarrow CC(M^N)$$
 be the nearest point retraction. Then $p(S) = \partial_\CE CC(M^N)$ and $p$ is $1$-lipschitz, so $\partial_\CE CC(M^N)$ has  intrinsic diameter bounded above by some constant depending only on $k,\epsilon$. As $\partial_\CE CC(M^N)$ is a hyperbolic surface, its injectivity radius is then  at least some $\delta$  depending only on $k,\epsilon$, which proves (3).  For (2), recall that if $x \in E $, then the lipschitz constant of $p$ near $x$ is  an exponential in $-d(x,p)$, which follows from Fact \ref{expdecrease}. So, the fact that $p(S) =\partial_\CE CC(M^N)$, which has some diameter bounded above by some uniform constant, implies that $S$ intersects a radius $r$  neighborhood of $\partial_\CE CC(M^N)$, for some $r=r(k,\epsilon)$. Since $S$ has bounded diameter, it is then \emph{contained} in $\CN_r(\partial_\CE CC(M^N))$ for some larger $r=r(k,\epsilon)$.   Finally, as  $\partial_\CE CC(M^N))$  has bounded diameter and $S$ is a level surface of $E$, it  follows that the component of $\hat N \setminus CC(M^N)$  contained in $E$ has  diameter at most some  constant $B=B(k,\epsilon)$, as desired.
\end{proof}

 We now finish the proof of Theorem \ref{general case}. For each  component $N \subset CC(M) \setminus \cup_{H \in \mathcal H}H$,  the manifold $M^N$ has  rank at most $k$ and injectivity radius at least $\epsilon$, so Claim \ref{verify} (3) implies that $M^N$  satisfies the assumptions of Theorem \ref{main}, at least after replacing $\epsilon$ with $\delta$, which only depends on $k,\epsilon$. Hence, for each $N$ there is a collection $\hat {\mathcal U}^N$ of at most $n=n(k)$ product regions in $int(CC(M^N))$, each with genus at most $g=g(k)$, such that every component of
$$CC(M^N) \setminus \cup_{\hat U \in \hat {\mathcal U}^N} int(\hat U)$$
has diameter at most $B=B(k,\epsilon)$ and is homeomorphic to some element of a  subset $\mathcal M$ of  compact $3$-manifolds depending on $k,\epsilon$. Define $$\mathcal U^N = \{ \pi^N(\hat U) \ | \ \hat U \in \hat {\mathcal U}^N\}, \ \ \mathcal U = \cup_N \mathcal U^N.$$ 
By Claim \ref{verify} (1), each $U^N \in \mathcal U$ lies in $int(N) \subset int(CC(M))$. By Claim \ref{verify} (2), for each $N$ every component
$$C \subset N \setminus \cup_{U \in \mathcal U^N} int(U)$$
is  isometric to the union of a component of $$CC(M^N) \setminus \cup_{\hat U \in \hat {\mathcal U}^N} int(\hat U)$$
with some  bounded diameter trivial interval bundles glued onto its boundary components. Hence, $C$ has diameter at most some constant  depending on $k,\epsilon$, and  is homeomorphic to an element of $\mathcal M$ above.  As  the closure of  every component of $$CC(M) \setminus ((\cup_{U \in \mathcal U} U ) \cup (\cup_{H \in \mathcal H} H))$$ is equal to such a $C$, we are done.\end {proof}

\subsection{Proofs of the three corollaries}
\label {corollaries}

The reader might guess that we prove here the three corollaries from the introduction.

Before doing so, we first show that  both $\epsilon$-thin $1$-handles and  product regions  are quasi-isometric to intervals.  The proofs are similar.

\begin {lem}\label {QI1}
Fix $0<\epsilon<1$ smaller than the Margulis constant.  Suppose that $M$  is a hyperbolic $3$-manifold, $H \subset CC(M)$ is an $\epsilon$-thin $1$-handle. Then  for some  compact interval $I\subset \BR$, there is a $(1,12)$-quasi-isometry $\rho : (H,d_H) \longrightarrow I$  such that the two components of $\partial_v H$ map to  the the two endpoints of $I$,  respectively. 
\end {lem}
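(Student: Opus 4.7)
The plan is to take $\rho:H\to I:=[0,L']$ defined by $\rho(p):=d_H(p,D_1)$, with $L':=d_H(D_1,D_2)$, as the quasi-isometry. The 1-Lipschitz bound $|\rho(p)-\rho(q)|\le d_H(p,q)$ is immediate from the triangle inequality, and $\rho$ is onto $I$ since $H$ is connected and $\rho$ is continuous. What needs work is the reverse estimate $d_H(p,q)\le|\rho(p)-\rho(q)|+12$, which reduces to showing that every level set $\rho^{-1}(t)$ has $d_H$-diameter at most $12$.

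The geometric input is the structure of $A=\partial_h H$. Since $A$ is an annular component of the $\epsilon$-thin part of the intrinsically hyperbolic surface $\partial CC(M)$ and $\epsilon$ is less than the $2$-dimensional Margulis constant, $A$ is isometric to a standard hyperbolic Margulis collar around a short geodesic. In particular it admits a foliation by cross-section circles $C_t$ for $t\in[0,L]$, each of length at most $\epsilon<1$, with $C_0=\partial D_1$ and $C_L=\partial D_2$. The plan is to cut $H$ along small discs $E_t\subset H$ with $\partial E_t=C_t$. To produce these, I would pass to the universal cover: since $A$ is compressible in $M$ (so $\pi_1 A\to\pi_1 M$ is trivial) and $H$ is simply connected, both lift homeomorphically to embedded $\tilde A,\tilde H\subset\mathbb H^3$. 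The lift $\tilde C_t$ has length $<1$, and since $\tilde H$ is a topological $3$-ball, $\tilde C_t$ bounds some embedded disc $\tilde E_t\subset\tilde H$. Taking $\tilde E_t$ of least area and combining an isoperimetric bound in $\mathbb H^3$ with the observation that $\partial H=A\cup D_1\cup D_2$ is thin (the annulus has cross-sections of length $<1$ and each cap has intrinsic diameter $<2$) should give a uniform bound $\diam_{d_H}(E_t)\le 4$.

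Given such slicing discs, each $E_t$ separates $H$ into subballs $H_t^-\supset D_1$ and $H_t^+\supset D_2$, and for $p,q\in\rho^{-1}(t)$ one picks $s$ with $|s-t|$ at most $\diam(E_s)\le 4$ so that $E_s$ is close to both points; travelling $p\rightsquigarrow E_s\rightsquigarrow q$ gives $d_H(p,q)\le 4+4+4=12$. The main obstacle will be the slicing-disc construction: a short closed curve on the boundary of an abstract $3$-ball need not bound a small disc inside the ball, so the argument must genuinely exploit the ambient geometry of $H\subset M$—specifically the fact that $H$, being a handle attached to the thin part of $\partial CC(M)$, is itself geometrically thin in $M$. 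Once this intrinsic diameter bound is established (plausibly by a least-area argument, or alternatively by directly constructing $E_t$ as a small perturbation of a cone on $\tilde C_t$ inside a tubular neighbourhood of $\tilde A$ and intersecting with $\tilde H$), the rest is a straightforward slicing argument.
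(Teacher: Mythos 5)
Your reduction of the problem to bounding the $d_H$-diameter of the level sets of $\rho(p)=d_H(p,D_1)$ is fine (and the Lipschitz/surjectivity part is fine, modulo the cosmetic point that $\rho$ sends $D_2$ to an interval of width at most $2$ near $L'$, not to a single endpoint, so the map must be adjusted there at the cost of a slightly larger additive constant). The genuine gap is exactly the step you flag: the existence of slicing discs $E_t\subset H$ with $\partial E_t=C_t$, uniformly bounded $d_H$-diameter, and enough continuity in $t$. Neither of your proposed routes fills it. For the least-area route, $\partial H$ is only a rectifiable topological sphere (pleated annulus plus coned caps), so constrained area-minimizers in $H$ are not covered by off-the-shelf existence/regularity, and more importantly they are not minimal where they touch $\partial H$, so monotonicity does not apply: small area does \emph{not} force small diameter, since a thin strip hugging the long annulus $A$ has tiny area and huge diameter. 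For the cone route, a cone on $\tilde C_t$ is a \emph{singular} disc in $\BH^3$ with no reason to lie in $\tilde H$ (the handle is a topological ball that may be extrinsically folded, and the caps may pass extrinsically close to a middle cross-section). Finally, even granted bounded-diameter discs $E_t\subset H$, your last step silently assumes that every point of $H$ lies within bounded distance of \emph{some} $E_s$; least-area discs can jump discontinuously in $t$, and nothing in your construction prevents a point deep in $H$ from being far from every $E_s$. That sweeping property is the real content of the lemma.

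The paper's proof is designed to sidestep both difficulties. Since $A=\partial_h H$ is compressible, some spanning arc $\gamma$ of the bending lamination crosses it; $\gamma$ is a geodesic of $\BH^3$, hence $d_H$-minimizing after lifting $H$ isometrically to $\tilde H\subset\BH^3$. One cones each cross-section circle $C_t$ to the point $\gamma(t)$, obtaining a continuously varying family of \emph{singular} discs of diameter at most $2\epsilon$ which are not required to lie in $\tilde H$ at all; capping the two ends with the vertical discs $D_1,D_2$ produces small singular spheres, which are filled by small singular balls. The resulting singular $3$-chain has boundary $\partial\tilde H$, so the swept family must pass through every point of $\tilde H$; hence every $x\in\tilde H$ satisfies $d_{\BH^3}(x,\gamma)\le 3$, upgraded to $d_H(x,\gamma)\le 5$ by a short path along $\partial\tilde H$, and then ``closest parameter along $\gamma$'' is the desired $(1,12)$-quasi-isometry. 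So what your outline is missing is precisely this degree/boundary argument (or some substitute for it) that lets one work with small singular discs instead of embedded discs inside $H$; without it, the bounded-diameter slicing discs you need are not available.
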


Here, $d_H$  is the  path metric in $H$.

\begin {proof}
Since  the annulus $\partial_h H$ is compressible,  it must have a spanning arc $\gamma : I \longrightarrow \partial_h H $ that is part of the  bending lamination of $\partial CC(M)$, where here $I \subset \BR$  is a compact interval. So, this $\gamma$  is a geodesic segment in $M$, which is distance minimizing  with respect to $d_H$.
 We claim that for each point $x \in H$  we have
$$d_H(x,\gamma(I)) \leq 5.$$

Since $H$  is a ball,  it lifts  isometrically to $\tilde H \subset\BH^3.$ Lift $\gamma$ as well to $\tilde \gamma : I \longrightarrow \partial_h \tilde H.$
  Foliate $\partial_h \tilde H$ by  circles $C_t$ with length at most $2\epsilon$, where each $C_t$ intersects $\tilde \gamma$ transversely at $\gamma(t)$. In $\BH^3$, cone each circle $C_t$ to $\tilde \gamma(t)$,  producing a  continuously varying family of disks $D_t$,  each with diameter at most $2\epsilon$. (For instance, the circles $C_t$  can be the boundary components of regular neighborhoods of the core geodesic of the hyperbolic annulus $\partial_h \tilde H$.) The discs $D_a$ and $D_b$  have the same boundaries as the components of $D_1,D_2 \subset \partial_v \tilde H$, each of which has diameter at most $2$.  So, $D_a \cup D_1$ and $D_b \cup D_2$ are two (singular) spheres, each with diameter at most $3$, say. Fill in these spheres  with (singular) balls of  diameter at most $3$, e.g.\ by  coning again. Regard the balls as  homotopies rel $\partial$ from  $D_a $ to $ D_1$, and from $D_b $ to $ D_2$, respectively.   Concatenating with our original interpolation,  this gives an interpolation via disks of diameter at most $3$ between $D_1,D_2$.  Moreover, if we regard this  interpolation as a $3$-chain in $\BH^3, $ its boundary is $\partial \tilde H$. So, the interpolation must pass through every point of $\tilde H$.  This implies that every point $x \in \tilde H$  is within $3$ of $\tilde \gamma(I)$ in $\BH^3$.  Since every point of $\partial \tilde H$  can be joined to $\tilde \gamma(I)$  by a path on $\partial \tilde H$  of length at most $2$, the \emph {intrinsic}  distance from any $x \in \tilde H$ to $\gamma(I)$  is at most $5$.

So, given $x \in H$, let $\rho(x)$ be any $t\in I$  for which the distance $d_H(x,\gamma(t))$ is minimal. Since $\gamma$ is $d_H$-minimizing, we have  \begin {equation}
 	d_H(x,y)-10 \leq |\rho(x)-\rho(y)| \leq d_H(x,y)+10.\label {qi}
 \end {equation}
 Moreover, $\rho$ is surjective, since $\rho(\gamma(t))=t$. Hence, $\rho$ is a $(1,6)$-quasi-isometry.  Since the endpoints of $\gamma$  lie on components of $\partial_v H$,  it follows that these  components map within the $2$-neighborhoods of the endpoints of $I$. So, we can just change $\rho$  so that it maps the components of $\partial_v H$  to the endpoints, at the expense of changing $10$ to $12$.
 \end {proof}

\begin {lem}\label {QI2}
 Given $g,\epsilon>0$,  there is some $C$  as follows. Suppose that $M$  is $\epsilon $-thick, $U\subset M $ is a product region and consider $U$  with its path metric $d_U$. Then for some properly embedded interval $I \subset \BR$, there is a $(1,C)$-quasi-isometry $\rho : (U,d_U) \longrightarrow I$ such that each component $S \subset \partial U$  maps to an endpoint of $I$,  in a way that induces  a bijection between the components of $\partial U$ and the  endpoints of $I$. \end {lem}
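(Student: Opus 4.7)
The plan is to mimic the proof of Lemma~\ref{QI1}, replacing the geodesic spanning arc of the bending lamination by a $d_U$-minimizing path joining the two ends (or boundary components) of $U$. First I treat the compact case, where $\partial U = \partial_- U \sqcup \partial_+ U$. Set $L := d_U(\partial_- U, \partial_+ U) < \infty$ and choose a $d_U$-geodesic $\gamma : [0,L] \to U$ realizing this distance, parameterized by arclength, with $\gamma(0) \in \partial_- U$ and $\gamma(L) \in \partial_+ U$. Let $I = [0,L]$ and define $\rho : U \to I$ by taking $\rho(x)$ to be a parameter $t$ that minimizes $d_U(x, \gamma(t))$.

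The core step is to show there is a constant $C_1 = C_1(\epsilon, g)$ such that $d_U(x, \gamma(I)) \leq C_1$ for every $x \in U$. Given $p \in U$, Lemma~\ref{level surfaces}(1) produces a level surface $S_p \subset U$ within $d_\epsilon$-distance $D = D(\epsilon, g)$ of $p$, lying in the $1$-neighborhood of a NAT simplicial ruled surface $f : \Sigma_g \to \CN(U)$ in the correct homotopy class. Since $M$ is $\epsilon$-thick, the BDL gives that $f(\Sigma_g)$ has intrinsic diameter at most $B_0 = 8(g-1)/\epsilon$ in the pullback path metric, and by shrinking $\CN(U)$ we can arrange $f(\Sigma_g) \subset U$, so these intrinsic estimates yield $d_U$-estimates between any two points of $f(\Sigma_g)$; combining gives a uniform $d_U$-diameter bound for $S_p \cup \{p\}$. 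Because $S_p$ is a level surface, it separates $\partial_- U$ from $\partial_+ U$ in $U$, so $\gamma$ crosses $S_p$ at some point $q = \gamma(t_p)$, yielding $d_U(p,q) \leq C_1$. With this bound, the triangle inequality together with the identity $d_U(\gamma(s), \gamma(t)) = |s-t|$ gives $|d_U(x,y) - |\rho(x) - \rho(y)|| \leq 2C_1$, so $\rho$ is a $(1, 2C_1)$-quasi-isometry onto $I$. A final adjustment on the bounded collar where $\rho$ is within $C_1$ of $0$ or $L$ ensures that the components of $\partial U$ are sent exactly to the endpoints of $I$, at the cost of enlarging the constant to some $C = C(\epsilon, g)$.

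The non-compact cases (when $U \cong \Sigma_g \times [0,\infty)$ or $\Sigma_g \times \BR$) are handled by exhausting $U$ by compact subproduct regions via Corollary~\ref{truncating noncompact} and applying Arzel\`a--Ascoli (using that $M$ is $\epsilon$-thick, hence has locally bounded geometry) to extract a $d_U$-minimizing ray or line $\gamma$; the same argument then produces a $(1, C)$-quasi-isometry $\rho : U \to I$ for $I = [0,\infty)$ or $\BR$, respectively, with the required endpoint behavior. The main obstacle is the step converting the $d_M$-type estimates from Lemma~\ref{level surfaces} and the BDL into genuine $d_U$-estimates, since a priori an $M$-geodesic between nearby points of $U$ might exit $U$; the resolution, sketched above, is that the estimates we actually use come from intrinsic path metrics on NAT simplicial ruled surfaces whose images can be taken inside $U$, so the associated paths are automatically $U$-paths.
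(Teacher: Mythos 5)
Your proposal is correct and follows essentially the same route as the paper's proof: a $d_U$-minimizing geodesic realizing the width, Lemma \ref{level surfaces} to place a level surface (which must meet the geodesic) within uniformly bounded distance of every point of $U$, and then nearest-point projection to the geodesic, with a final adjustment at the boundary. Your extra care about converting $d_\epsilon$/extrinsic estimates into $d_U$-estimates and your Arzel\`a--Ascoli treatment of the noncompact cases are details the paper handles more tersely, but the argument is the same.
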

\begin {proof}
Pick a $d_U$-minimizing, unit speed geodesic $\gamma : I \longrightarrow U$ that realizes the width of $U$. Here, we can take $I=\BR, \ [0,\infty)$ or $I= [0,L] $ for some $ L.$ By Lemma \ref{level surfaces},  there is some uniform  constant $D=D(\epsilon,g)$  such that for each $x\in U$,  there is a level surface $S_x \subset U $ that is contained in the $D$-neighborhood of $x$,  say with respect to  the metric $d_U$. This $S_x$ must  intersect $\gamma$, so $$d_U(x,\gamma(I))\leq D.$$

Given $x \in U$, let $\rho(x)$ be any $t\in I$  for which the distance $d_U(x,\gamma(t))$ is minimal. Since $\gamma$ is $d_U$  minimizing, it follows that
 \begin {equation}
 	d_U(x,y)-2D \leq |\rho(x)-\rho(y)| \leq d_U(x,y)+2D.\label {qi2}
 \end {equation}
 Moreover, $\rho$ is surjective, since $\rho(\gamma(t))=t$. Hence, $\rho$ is a $(1,C)$-quasi-isometry.   As in the previous lemma, we can ensure that the components of $\partial U$  map to endpoints of $I$  by changing $\rho$ and increasing $C$.\end {proof}

\begin{kor}\label{qicor}
  There is some $C=C(k,\epsilon)$  such that the convex core of any  hyperbolic $3$-manifold $M$ with $$\rank(\pi_1(M))\le k, \ \  \inj(M)\ge\epsilon$$
is $(1,C)$-quasi-isometric to a metric graph  with at most $n=n(k)$ edges, some of which some of which may be half-infinite or biinfinite.\end{kor}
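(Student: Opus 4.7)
The plan is to apply Theorem \ref{general case} to produce a geometric decomposition of $CC(M)$ into bounded diameter building blocks (the components $N$), $\epsilon$-thin $1$-handles $H \in \mathcal H$, and product regions $U \in \mathcal U$. The graph $X$ is then the nerve of this decomposition: one vertex for each building block, one edge for each element of $\mathcal H \cup \mathcal U$, with the edge attached to the vertices of the building blocks it touches (with the convention that a noncompact product region with one or zero adjacent building blocks contributes a half-infinite or biinfinite edge, respectively). The length of each edge is chosen to equal the length of the interval $I$ supplied by Lemma \ref{QI1} or Lemma \ref{QI2}, respectively.

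To bound $|E(X)|$, note that $|\mathcal U| \leq n(k)$ directly from Theorem \ref{general case}, and $|\mathcal H|$ is bounded by the number of components of $(\partial CC(M))_{\leq \epsilon}$, which is in turn bounded by the topological complexity of $\partial CC(M)$, hence by a function of $k$ via half-lives-half-dies applied to a compact core. The number of vertices is bounded by $2|E(X)|+1$, so the whole graph has size at most some $n=n(k)$.

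Next I would define $\rho: CC(M) \to X$ piecewise: each building block $N$ is collapsed to its vertex, and on each handle $H$ (respectively each product region $U$), $\rho$ is the $(1,C_0)$-quasi-isometry onto the corresponding edge furnished by Lemma \ref{QI1} (respectively Lemma \ref{QI2}). Since both lemmas guarantee that the boundary components $\partial_v H$ and $\partial U$ map to the endpoints of the corresponding interval, the piecewise definition extends continuously across piece boundaries, and the bilipschitz errors are uniform, depending only on $k$ and $\epsilon$.

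To verify the quasi-isometry property, given $x,y \in CC(M)$, I would take a rectifiable path $\gamma$ from $x$ to $y$ in $CC(M)$ realising $d_{CC(M)}(x,y)$ up to $\epsilon$, and subdivide $\gamma$ according to the pieces of the decomposition it visits. Inside each piece $P$, the restriction of $\rho$ is $(1,C_0)$-quasi-isometric with respect to the intrinsic path metric $d_P$, and the intrinsic distance on $P$ is controlled by the length of $\gamma$ inside $P$. Summing, $d_X(\rho(x),\rho(y))$ is at most $\mathrm{length}(\gamma)$ plus $C_0$ times the number of pieces $\gamma$ crosses. The converse direction is analogous: a path in $X$ from $\rho(x)$ to $\rho(y)$ lifts through the QIs (using the bounded-diameter building blocks to bridge between adjacent edges) to a path in $CC(M)$ of length at most $d_X(\rho(x),\rho(y))$ plus a uniformly bounded error per edge traversed.

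The main obstacle will be to control the accumulation of additive error across many pieces, since a priori a geodesic could oscillate between pieces many times. The key point is that the total number of pieces is at most $n(k)$, and each time $\gamma$ enters a handle or product region it must exit through the opposite boundary component by virtue of the Bounded Diameter Lemma and Lemmas \ref{QI1}--\ref{QI2} (otherwise $\gamma$ could be shortened), so the number of distinct handle/product-region crossings is bounded by $|E(X)| \leq n(k)$. This absorbs the additive error into a single constant $C=C(k,\epsilon)$ as required.
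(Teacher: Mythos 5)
Your proposal follows essentially the same route as the paper: apply Theorem \ref{general case}, send each thin $1$-handle and product region to an interval via Lemmas \ref{QI1} and \ref{QI2}, collapse each bounded-diameter building block to a vertex of the nerve graph, and absorb the additive error using the uniformly bounded number of full crossings of a (near-)minimizing path. The only point the paper treats that you omit is the degenerate case where $CC(M)$ is not a $3$-dimensional submanifold (a hypothesis of Theorem \ref{general case}); this is handled trivially, since then $CC(M)$ is a bounded-diameter totally geodesic surface, a closed geodesic, or empty.
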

\begin {proof}
 The corollary is obvious if $CC(M)$ is not $3$-dimensional. For if it is $2$-dimensional, it is a totally geodesic surface with bounded genus, and hence has bounded diameter, by the Bounded  Diameter Lemma. If it is $1$-dimensional, it is a  closed geodesic, which is itself  a metric graph. If $M=\BH^3$, then $CC(M)=\emptyset$ is an empty graph. We can  therefore assume that Theorem \ref{general case} applies.

Take $C$  to be the maximum of the constants $12$ and $C$ from  Lemmas \ref{QI1} and \ref{QI2}, and the constant $B$  from Theorem \ref{general case}. Let $\mathcal H, \mathcal U$ be the $\epsilon$-thin $1$-handles and product regions in Theorem \ref{general case}. For each  $H \in \mathcal H$, pick an interval $I_H \subset \BR$ and a  $(1,C)$-quasi-isometry $$\rho_H : H \longrightarrow I_H $$ as in Lemma  \ref{QI1}, and define  for each $U \in \mathcal U$ a  $(1,C)$-quasi-isometry $\rho_U : H \longrightarrow I_U $  similarly using Lemma \ref{QI2}. Finally, for each  component $$N \subset CC(M) \setminus ((\cup_{U \in \mathcal U} U ) \cup (\cup_{H \in \mathcal H} H))$$
define a  point called $v_N$.

  Build a  graph $G$ with vertices the $v_N$ and edges the $I_H$ and $I_U$ as follows.   For every surface $S$  that is a component of some $\partial N$ and also  of some $\partial U$,  we identify the endpoint $\rho_U(S) \in I_U$ with $v_N$.  Similarly, if $S\subset\partial N$ and $S \subset \partial_v H$,  we identify $\rho_H(S) \in I_H$ with $v_N$. Then the maps $\rho_H,\rho_U$  combine  to give a  surjective map
$$\rho : CC(M) \longrightarrow G,$$
 which we can  define to be constant on each $N$.  Let $n=n(k)$  be an upper bound for the total number of thin $1$-handles $H$  and  product regions $U$,  as given by Theorem \ref{general case}. 

We claim  that the map $\rho$  is a $(1,C')$-quasi-isometry, for some uniform $C'=C'(C,n)$. So, let $x,y\in CC(M)$.   For simplicity, let us assume that $x,y$ do not lie in the union of all the thin $1$-handles and  product regions. We will comment on the modifications necessary to deal with the remaining case during the proof.

For the upper bound, take any minimizing geodesic $\gamma$  connecting points $x,y\in CC(M)$, and  orient $\gamma$ from $x$ to $y$. Then  there are  disjoint segments $\gamma_1,\ldots,\gamma_k$ of $\gamma$, indexed in the order they appear along $\gamma$, with
\begin {enumerate}
\item  the initial endpoint of $\gamma_1$  lies in the same complementary component $N$ as $x$, and the  terminal endpoint of $\gamma_k$  lies in the same complementary component as $y$,
\item each $\gamma_i$ is contained in  some $H \in \mathcal H$, or in some $U\in \mathcal U$, and  the initial and terminal endpoints of $\gamma_i$  lie on  distinct components of $\partial_v H$ or $\partial U$,  respectively,
\item the terminal  endpoint of $\gamma_i$  is  lies on the boundary of the same complementary component $N$ as the initial endpoint of $\gamma_{i+1}$,
\item if $i\neq j$, then $\gamma_i,\gamma_j$ do not lie in the same $1$-handle $H$ or product region $U$.
\end {enumerate}
Here, (4) implies that the number of segments $k$  is at most $n$. Then each $\gamma_i$ maps into some edge $e_i$ of $G$ under $\rho$, and the lengths $\gamma_i$ is within $C$ of  the length of $e_i$. By (3), $e_1,\ldots,e_k$ is  an edge path in $G$, so $$d_G(\rho(x),\rho(y)) \leq \sum_i \length \gamma_i + nC \leq d(x,y)+nC.$$
 Note that if $x$ or $y$ are allowed to lie in  thin $1$-handles $H$ or product regions $U$,  the same argument essentially works, but the constant will be $(n+1)C$ instead of $nC$ since  the path $\gamma$ may start and end inside of a single $H \in \mathcal H$, say, but may  exit this $H$ in between. (So, (4) will have an exception for $\{i,j\}=\{1,k\}.$)

 For the lower bound, take $x,y\in CC(M)$ and a length minimizing edge path $\gamma$ in $G$  connecting $\rho(x)$ and $\rho(y)$.  This path is simple, and hence traverses at most $n$ edges. For each  such edge, find an intrinsically minimizing geodesic in the associated thin $1$-handle or product region that  joins the corresponding (vertical,  if a $1$-handle) boundary components.
Concatenate these  geodesics with (at most $n+1$) minimizing geodesics in  complementary component $N$, to create a path from $x$ to $y$. Since $\rho$  restricts to a $(1,C)$-quasi-isometry on each segment of our concatenation,  the resulting path has length at most $\length(\gamma) - (2n+1)C.$ So, we have
$$d(x,y)-(2n+1)C \leq d_G(\rho(x),\rho(y)).$$
(The proof if $x,y$ lie in $1$-handles product regions is basically the same, but the constant will be $(2n+2)C$, for the same reason as above.)
\end {proof}

\begin{kor}\label{embedded balls cor}
	If $M$  is a hyperbolic $3$-manifold with rank  at most $k$ and injectivity radius at least $\epsilon$,  the radius $r_M$ of the largest embedded ball in $CC(M)$ is at most some  function of $k,\epsilon$.
\end{kor}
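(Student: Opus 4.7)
The plan is to combine the quasi-isometry provided by Corollary \ref{qicor} with the exponential volume growth of metric balls in $\BH^3$. An embedded ball of radius $r$ in $CC(M)$ has volume exactly $V(r) := \pi(\sinh(2r) - 2r) \sim e^{2r}/2$, while I will show that this volume is bounded above by a linear function of $r$ with constants depending only on $k$ and $\epsilon$, which forces $r$ to be bounded.

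First, I would apply Corollary \ref{qicor} to obtain a $(1,C)$-quasi-isometry $\rho : CC(M) \longrightarrow G$, where $G$ is a metric graph with at most $n = n(k)$ edges, built by applying Lemmas \ref{QI1} and \ref{QI2} to each thin $1$-handle and product region in the geometric decomposition of Theorem \ref{general case}. If $B := B_M(x,r) \subset CC(M)$ is an embedded ball, then $\rho(B) \subset G$ has diameter at most $2r + C$, so $\rho(B)$ meets each edge of $G$ in a sub-interval of length at most $2r + C$, and in total meets at most $n$ edges and $O(n)$ vertices.

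Next I would bound the volume of the preimage of each such sub-interval and vertex. For an edge corresponding to a product region $U$ of genus at most $g = g(k)$, Corollary \ref{cutting-product} decomposes the preimage of any length-$\ell$ sub-interval of $I_U$ into $O(\ell)$ sub-product regions of bounded $\epsilon$-width; each is $\epsilon$-thick, so by Fact \ref{width diameter} has diameter bounded in terms of $g$ and $\epsilon$, and hence lies in a ball in $\BH^3$ of bounded radius, giving volume at most $V(\mathrm{const}(g,\epsilon))$. Summing, the preimage has volume $O(\ell) + O(1)$. For an edge corresponding to a thin $1$-handle $H$, the proof of Lemma \ref{QI1} shows that $H$ lies within an intrinsic $5$-neighborhood of a geodesic spanning arc of the annulus $\partial_h H$; hence the preimage of a length-$\ell$ sub-interval of $I_H$ lies within a bounded neighborhood in $\BH^3$ of a geodesic segment of length $O(\ell)$, with volume $O(\ell) + O(1)$ by the standard tube volume formula. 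Finally, each vertex of $G$ corresponds to a piece from Theorem \ref{general case} of diameter at most $B(k,\epsilon)$, so volume at most $V(B(k,\epsilon))$.

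Summing these contributions yields
$$V(r) \;=\; \vol(B) \;\leq\; C_1(k,\epsilon) \cdot r \;+\; C_2(k,\epsilon),$$
and the exponential growth of $V(r)$ then forces $r \leq R(k,\epsilon)$. The main technical point is the volume bound inside product regions: the argument reduces via Corollary \ref{cutting-product} to bounded-width pieces and uses Fact \ref{width diameter} together with $\epsilon$-thickness to convert the resulting diameter bound into a volume bound. (As noted after the statement of the corollary, when $M$ is closed an alternative proof follows by combining Corollary \ref{genus cor} with Bachman--Cooper--White \cite{Bachmanlarge}, but the volumetric approach above handles the general case uniformly.)
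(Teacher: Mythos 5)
Your proof is correct in outline, but it derives the contradiction by a genuinely different mechanism than the paper's. The paper's argument uses only the statement of Corollary \ref{qicor} together with a soft coarse fact: a huge embedded ball contains a slightly smaller ball whose $\rho$-image lies in a single edge of $G$, and a large ball in $\BH^3$ admits no $(1,C)$-quasi-isometry to an interval (three spread-out points near the sphere of radius $s$ are pairwise at distance roughly $2s$, which is incompatible with being almost aligned on a line); no volume appears. You instead compare the exponential volume $\pi(\sinh 2r-2r)$ of the embedded ball with a linear-in-$r$ upper bound obtained by bounding the volumes of preimages of bounded-length subintervals of the edges. That volume control is not a formal consequence of Corollary \ref{qicor}---quasi-isometries do not see volume---so you correctly go back into the decomposition itself: bounded genus, $\epsilon$-thickness, Fact \ref{width diameter} and Corollary \ref{cutting-product} for product-region edges, and the tube around the spanning geodesic from the proof of Lemma \ref{QI1} for $1$-handle edges. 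Your route is more quantitative (it gives an explicit bound on $r_M$ in terms of the decomposition constants), at the price of these extra estimates; the paper's is shorter and softer. Two small repairs are needed in your write-up: (i) $\rho(B)\cap e$ has $G$-diameter at most $2r+C$, but since the endpoints of $e$ may be close in $G$ through the rest of the graph, this only places it in at most \emph{two} subintervals of length $O(r)$ (any point of the set lying far along the edge from another must be near an endpoint), which still yields a linear bound after doubling the constant; (ii) Corollary \ref{cutting-product} is stated for compact product regions, so for half-infinite or biinfinite edges either invoke Corollary \ref{truncating noncompact} or, more simply, use the same tube estimate as for the handles, since by the proof of Lemma \ref{QI2} every point of $U$ lies within uniformly bounded $d_U$-distance of the spine geodesic.
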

\begin{proof}
 This follows from the previous corollary. For  suppose that $$\rho : CC(M) \longrightarrow G$$ is a $(1,C)$-quasi-isometry, where $C=C(k,\epsilon)$ and $G$  is a graph with at most $n=n(k)$  edges. If $CC(M)$ contains an  embedded ball with  huge radius, it also contains an embedded ball  with (not quite as) huge radius whose image under $\rho$  is contained in a  single edge of $G$. But  for every $C$,  it is easy to see that there is some $r$  such that an $r$-ball in $\BH^3$  is not $(1,C)$-quasi-isometric to an interval. \end{proof}

\begin{kor}\label{genus cor}
The Heegaard genus $g(M)$ of a  hyperbolic $3$-manifold $M $ is bounded above by some function of $\rank(\pi_1 M)$ and $\inj(M)$.	
\end{kor}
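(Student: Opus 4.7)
The plan is to apply Theorem \ref{general case} to decompose $CC(M)$ into a bounded number of pieces of bounded topological complexity, and then use subadditivity of Heegaard genus under amalgamation along surfaces of bounded genus. First I would dispose of the easy cases: if $CC(M)$ is not $3$-dimensional, then either $M=\mathbb H^3$ (Heegaard genus $0$), $CC(M)$ is a closed geodesic (so $M$ is a solid torus interior, Heegaard genus $1$), or $CC(M)$ is a totally geodesic surface, in which case the topology of $M$ is controlled by a surface of genus at most $\rank \pi_1 M$ and $g(M)$ is immediately bounded. So from now on I may assume $CC(M)$ is $3$-dimensional, and Theorem \ref{general case} applies.

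By the Tameness Theorem, $M$ is the interior of a compact $3$-manifold $\bar M$, and by definition $g(M)=g(\bar M)$. The components of $M\setminus CC(M)$ are product neighborhoods of convex-cocompact ends, so they can be absorbed as collars into the adjacent building blocks without changing topology; thus a decomposition as in Theorem \ref{general case} yields a decomposition of $\bar M$ as a union $\bar M = \bigcup_i N_i \cup \bigcup_{U\in\mathcal U} \bar U \cup \bigcup_{H\in\mathcal H'} H$, where the $N_i$ are (compactifications of) the building blocks, the $\bar U$ are compactifications of the product regions, and $\mathcal H'$ is a finite subcollection of $\mathcal H$ consisting of the $\epsilon$-thin $1$-handles needed to separate the remaining pieces (all but boundedly many of the $H\in\mathcal H$ lie inside a single $N_i$ and can be absorbed into it). The cardinalities $|\mathcal U|$ and $|\mathcal H'|$ are bounded in terms of $k$, the number of $N_i$ is at most $n(k)+1$, and each $N_i$ is homeomorphic to an element of the finite list $\mathcal M$ (depending on $k,\epsilon$), so has Heegaard genus at most some $g_0=g_0(k,\epsilon)$.

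Next I would bound the Heegaard genus of each individual piece. The compactified product regions $\bar U$ are homeomorphic to $\Sigma_h\times [0,1]$ for some $h\le g(k)$, so $g(\bar U)\le h+1\le g(k)+1$. Each thin $1$-handle is a $3$-ball, hence has Heegaard genus $0$. The building blocks contribute at most $g_0(k,\epsilon)$ each. All gluing surfaces are either level surfaces of product regions (genus $\le g(k)$), components of boundaries of building blocks (bounded genus, coming from the finite list $\mathcal M$), or vertical disks of $1$-handles (genus $0$). So every gluing surface has genus at most some $g_1=g_1(k,\epsilon)$.

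Finally, I would iterate the standard amalgamation inequality: if $P_1,P_2$ are compact $3$-manifolds glued along a common boundary surface $S$ of genus $g_S$, then $g(P_1\cup_S P_2)\le g(P_1)+g(P_2)+g_S$, obtained by picking Heegaard splittings of each $P_i$ in which a collar of $S$ lies on one side and tubing them together along $S$. Starting from a spanning tree of the dual graph of the decomposition and amalgamating one edge at a time, after the bounded number $|\mathcal U|+|\mathcal H'|+\#\{N_i\}-1\le n'(k)$ of gluings we obtain $\bar M$ with $g(\bar M)\le n'(k)\cdot(g_0(k,\epsilon)+g(k)+1) + n'(k)\cdot g_1(k,\epsilon)$, which is a bound depending only on $k$ and $\epsilon$. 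The main technical point I would have to be careful with is the precise amalgamation inequality and the absorption of boundary collars and residual $1$-handles so that the pieces being amalgamated are genuinely compact with well-defined Heegaard genus; once the decomposition is set up correctly and the amalgamation inequality applied iteratively, the bound is automatic.
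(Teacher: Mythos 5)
Your proposal is correct and follows essentially the same route as the paper: apply the decomposition of Theorem \ref{general case}, bound the Heegaard genus of each of the boundedly many pieces using the finite list $\mathcal M$ and the genus bound on the product regions, and then amalgamate the splittings (the paper simply cites Schultens for the amalgamation step). One small remark: the number of $\epsilon$-thin $1$-handles in $\mathcal H$ is already bounded in terms of $k$ (since the genus of $\partial CC(M)$ is bounded by the rank), so no argument about absorbing ``all but boundedly many'' of them into a building block is needed -- and as stated that absorption claim is slightly off, since the handles are disjoint from the complementary components $N_i$.
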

\begin {proof}
 With the notation of Theorem \ref{general case}, each component
$$N \subset CC(M) \setminus ((\cup_{U \in \mathcal U} U ) \cup (\cup_{H \in \mathcal H} H))$$  is homeomorphic to some element of $\mathcal M$,  which is finite and depends only on $k,\epsilon$. So, each such $N$ has Heegaard genus at most some $g=g(k,\epsilon)$. But then a  standard compact core $C \subset M$  can be created topologically by gluing these components $N$  along subsurfaces  of their boundaries. One can then combine the Heegaard splittings for the components $N$  to give a Heegaard splitting for $C$ via  a process called \emph {amalgamation}, see Schultens \cite{schultens1993classification}.   Since  the number of components $N$  is bounded above  by some function of $k$,   the genus of the resulting splitting is bounded above by some function of $k,\epsilon$.
\end {proof}

%
%
%

\bibliographystyle{amsplain}
\bibliography{total}

\end{document}